\documentclass[10pt,reqno]{amsart} 

\usepackage{appendix}
\usepackage{float} 

\usepackage{soul}

\usepackage{hhline}

\usepackage{yhmath}\usepackage{epsfig,amssymb,amsmath,version}
\usepackage{amssymb,version,graphicx,fancybox,mathrsfs}
\usepackage{subcaption}
\usepackage{enumitem}
\usepackage{bbm}
\usepackage{url,hyperref}
\usepackage{color}
\usepackage{stmaryrd}
\usepackage{multirow}
\usepackage{booktabs,siunitx}
\usepackage{booktabs}
\usepackage{multicol,epstopdf}
\usepackage{xypic}
\usepackage[]{graphicx}
\usepackage[all]{xy}
\usepackage{tikz,ifthen}
\usetikzlibrary{positioning, math, decorations.markings, arrows.meta}
\usetikzlibrary{calc,shapes,cd}
\usetikzlibrary{knots} 
\usepgfmodule{decorations}
\allowdisplaybreaks
\tikzset{knotarrow/.pic={ \draw[edge, <-] (0,0) -- +(-.001,0);}}

\tikzset{edge/.style={line width=0.8}}
\tikzset{wall/.style={very thick}}

\tikzset{->-/.style n args={2}{decoration={markings, mark=at position #1 with {\arrow{#2}}}, postaction={decorate}}} 

\ExplSyntaxOn
\cs_new_eq:NN \ifstreqF \str_if_eq:nnF
\cs_new_eq:NN \ifstreqTF \str_if_eq:nnTF
\ExplSyntaxOff

\tikzset{-o-/.code 2 args={\ifstreqF{#2}{} 
{\ifstreqTF{#2}{>}
   {\pgfkeysalso{decoration={markings,mark=at position #1 with {\arrow[scale=0.8]{#2}}}
                    ,postaction={decorate}}
    }
   {\ifstreqTF{#2}{<}
       {\pgfkeysalso{decoration={markings,mark=at position #1 with {\arrow[scale=0.8]{#2}}}
                    ,postaction={decorate}}
        }
       {\pgfkeysalso{decoration={markings,
                    mark=at position #1 with
                    {\draw[black, fill={#2}] circle[radius=2pt];}}
                    ,postaction={decorate}}
        }
     }
  }}}

\allowdisplaybreaks[4]

\newtheorem{theorem}{Theorem}[section]
\newtheorem{lemma}[theorem]{Lemma}
\newtheorem{definition}[theorem]{Definition}
\newtheorem{corollary}[theorem]{Corollary}
\newtheorem{proposition}[theorem]{Proposition}
\newtheorem{remark}[theorem]{Remark}
\newtheorem{conjecture}[theorem]{Conjecture}

\newcommand{\Rlabel}[1]{\hyperref[R#1]{(R#1)}}

\newcommand{\gaa}{\overline{\mathsf{g}}}
\newcommand{\ga}{\mathsf{g}}

\newcommand{\bp}{\begin{proposition}}
\newcommand{\ep}{\end{proposition}}
\newcommand{\bpr}{\begin{proof}}
\newcommand{\epr}{\end{proof}}

\newcommand{\bt}{\begin{theorem}}
\newcommand{\et}{\end{theorem}}

\newcommand{\bl}{\begin{lemma}}
\newcommand{\el}{\end{lemma}}

\newcommand{\bcr}{\begin{corollary}}
\newcommand{\ecr}{\end{corollary}}

\newcommand{\be}{\begin{equation}}
\newcommand{\ee}{\end{equation}}

\newcommand{\bes}{\begin{equation*}}
\newcommand{\ees}{\end{equation*}}

\newcommand{\ba}{\begin{align}}
\newcommand{\ea}{\end{align}}

\newcommand{\bas}{\begin{align*}}
\newcommand{\eas}{\end{align*}}

\DeclareMathOperator{\skeleton}{\textbf{sk}}
\DeclareMathOperator{\im}{\mathrm{Im}}

\DeclareMathOperator{\Int}{\mathrm{Int}}

\DeclareMathOperator{\sgn}{\mathrm{sgn}}

\def\huang#1{(\textcolor{red}{Huang: #1})}

\newcommand{\todo}[1]{\textcolor{blue}{Need to do: #1}}

\graphicspath{{./Figures/} }

\begin{document}
\bibliographystyle{alpha}

\title{Quantum cluster algebra realization for stated ${\rm SL}_n$-skein algebras and rotation-invariant bases for polygons}

\author[Peigen Cao]{Peigen Cao}
\address{Peigen Cao, School of Mathematical Sciences, University of Science and Technology of China, Hefei, China}
\email{peigencao@126.com}

\author[Min Huang]{Min Huang}
\address{Min Huang, School of Mathematics (Zhuhai), Sun Yat-sen University, Zhuhai, China}
\email{huangm97@mail.sysu.edu.cn}

\author[Zhihao Wang]{Zhihao Wang}
\address{Zhihao Wang, School of Mathematics, Korea Institute for Advanced Study (KIAS), 85 Hoegi-ro, Dongdaemun-gu, Seoul 02455, Republic of Korea}
\email{zhihaowang@kias.re.kr}

\keywords{Quantum cluster algebras, (reduced) stated ${\rm SL}_n$-skein algebras, rotation-invariant bases}

 \maketitle

\begin{abstract}
We construct a quantum cluster structure on the skew-field of fractions ${\rm Frac}({\mathscr S}_\omega(\mathfrak{S}))$ of the stated ${\rm SL}_n$-skein algebra ${\mathscr S}_\omega(\mathfrak{S})$, where $\mathfrak{S}$ is a triangulable pb surface without interior punctures. This work complements the construction for the projected stated skein algebra $\widetilde{\mathscr S}_\omega(\mathfrak{S})$ given in \cite{huang2025quantum}, and to the best of our knowledge, establishes the first such structure for any $n$.

A key feature of our setup is that frozen variables are not invertible in ${\mathscr S}_\omega(\mathfrak{S})$. Accordingly, we introduce ${\mathscr A}_\omega(\mathfrak{S})$ and ${\mathscr U}_\omega(\mathfrak{S})$ to denote the associated quantum cluster algebra and quantum upper cluster algebra inside ${\rm Frac}({\mathscr S}_\omega(\mathfrak{S}))$, respectively, in the non-invertible frozen variable setting. We prove the inclusion
\[
{\mathscr S}_\omega(\mathfrak{S}) \subset {\mathscr A}_\omega(\mathfrak{S}).
\]
Moreover, when $n=2$, we show that
\[
{\mathscr S}_\omega(\mathfrak{S})
=
{\mathscr A}_\omega(\mathfrak{S})
=
{\mathscr U}_\omega(\mathfrak{S}).
\]

We write ${\mathscr S}_\omega^{\rm fr}(\mathfrak{S})$, ${\mathscr A}_\omega^{\rm fr}(\mathfrak{S})$, and ${\mathscr U}_\omega^{\rm fr}(\mathfrak{S})$ for the localizations of ${\mathscr S}_\omega(\mathfrak{S})$, ${\mathscr A}_\omega(\mathfrak{S})$, and ${\mathscr U}_\omega(\mathfrak{S})$, respectively, at the multiplicative set generated by all frozen variables. Let $\overline{\mathscr A}_\omega(\mathfrak{S})$ and $\overline{\mathscr U}_\omega(\mathfrak{S})$ denote the quantum cluster algebra and quantum upper cluster algebra associated to ${\rm Frac}(\widetilde{\mathscr S}_\omega(\mathfrak{S}))$, in the invertible frozen variable setting.
We prove the equalities
\[
\widetilde{\mathscr S}_\omega(\mathfrak{S})
=
\overline{\mathscr A}_\omega(\mathfrak{S})
=
\overline{\mathscr U}_\omega(\mathfrak{S})
\quad \text{and} \quad
{\mathscr S}_\omega^{\rm fr}(\mathfrak{S})
=
{\mathscr A}_\omega^{\rm fr}(\mathfrak{S})
=
{\mathscr U}_\omega^{\rm fr}(\mathfrak{S})
\]
when $\mathfrak{S}$ is a polygon. In this setting, the projected stated ${\rm SL}_n$-skein algebra $\widetilde{\mathscr S}_\omega(\mathfrak{S})$ coincides with the reduced stated ${\rm SL}_n$-skein algebra $\overline{\mathscr S}_\omega(\mathfrak{S})$.

As a consequence, when $\mathfrak{S}$ is a polygon, we show that the theta basis of $\overline{\mathscr U}_\omega(\mathfrak{S})$ (resp.\ ${\mathscr U}_\omega^{\rm fr}(\mathfrak{S})$) yields a rotation-invariant basis of $\overline{\mathscr S}_\omega(\mathfrak{S})$ (resp.\ ${\mathscr S}_\omega^{\rm fr}(\mathfrak{S})$) with several desirable properties, including positivity and a natural parametrization.

In the special case where $n=3$ and $\mathfrak{S}$ is the bigon, we prove that
\[
{\mathscr S}_\omega(\mathfrak{S})
=
{\mathscr A}_\omega(\mathfrak{S})
=
{\mathscr U}_\omega(\mathfrak{S}).
\]
We further give a complete web-theoretic characterization of cluster variables and clusters in ${\mathscr S}_\omega(\mathfrak{S}) = \mathcal O_q({\rm SL}_3)$. Consequently, inspired by the work of L{\^e} and Sikora \cite{le_sikora2025}, we establish a web interpretation of the dual canonical basis of $\mathcal O_q({\rm SL}_3)$.
\end{abstract}

\tableofcontents

\newcommand{\ca}{{\cev{a}  }}

\def\fS{\mathfrak{S}}
\def\dS{\widetilde{\cS}_\omega(\fS)}

\def\BZ{\mathbb Z}
\def\Id{\mathrm{Id}}
\def\Mat{\mathrm{Mat}}
\def\BN{\mathbb N}

\def \cb {\color{blue}}
\def \cred {\color{red}}
\def \cbf {\color{blue}\bf}
\def \credf {\color{red}\bf}
\definecolor{ligreen}{rgb}{0.0, 0.3, 0.0}
\def \cg {\color{ligreen}}
\def \cgf {\color{ligreen}\bf}
\definecolor{darkblue}{rgb}{0.0, 0.0, 0.55}
\def \dbf {\color{darkblue}\bf}
\definecolor{anti-flashwhite}{rgb}{0.55, 0.57, 0.68}
\def \afw {\color{anti-flashwhite}}
\def\cF{\mathbb F}
\def\cP{\mathcal P}
\def\embed{\hookrightarrow}
\def\pr{\mathrm{pr}}
\def\cV{\mathcal V}
\def\ot{\otimes}
\def\buu{{\mathbf u}}


\def \ri {{\rm i}}
\newcommand{\bs}[1]{\boldsymbol{#1}}
\newcommand{\cev}[1]{\reflectbox{\ensuremath{\vec{\reflectbox{\ensuremath{#1}}}}}}
\def\bS{\bar \fS}
\def\cE{\mathcal E}
\def\fB{\mathbb{P}_2}
\def\cR{\mathcal R}
\def\cY{\mathcal Y}
\def\cS{\mathscr S}
\def\rS{\overline{\cS}_\omega}

\def\fS{\mathfrak{S}}

\def\MN {(M)}
\def\cN {\mathcal{N}}
\def\SL{{\rm SL}_n}

\def\bP{\mathbb P}
\def\bR{\mathbb R}

\def\SS{\cS_{\omega}(\fS)}
\def\rdS{\overline \cS_{\omega}(\fS)}
\def\rdP{\overline \cS_{\omega}(\mathbb{P}_4)}

\def\Vm{\mathcal V_{\text{mut}}}
\def\Vc{\mathcal V}

\def\bT{\mathbb T}

\def\tr{{\rm tr}_{\lambda}}

\def\bk{{\bf k}}
\def\V{\overline{V}_\lambda}

\def\barV{\overline{V}}
\def\barK{\overline{K}}
\def\barVl{\overline{V}_\lambda}
\def\barKt{\overline{K}_\tau}

\def\barVt{\overline{V}_\tau}

\def\Y{A}

\def\a{\overline{A}}

\def\A{\overline{\mathcal{A}}_{\omega}(\fS,\lambda)}

\def\Ap{\overline{\mathcal{A}}_{\omega}^{+}(\fS,\lambda)}

\def\bZ{\mathbb Z}

\def\trA{\overline{{\rm tr}}_{\lambda}}
\def\SK{\overline{\cS}_\omega(\fS)}

\def\dS{\widetilde{\cS}_\omega(\fS)}

\def\bN{\mathbb N}
\def\VA{V'_\lambda}
\def\OVA{\overline{V}_{\lambda^\ast}}
\def\Qast{\overline{Q}_{\lambda^\ast}}
\def\Past{\overline{P}_{\lambda^\ast}}
\def\Ql{Q_{\lambda}}
\def\Pl{P_{\lambda}}

\def\sfQ{Q^{\ast}}
\def\bZ{\mathbb Z}
\def\barQ{Q}

\def\cX{\mathcal Z^{\ast}}
\def\cbX{\mathcal Z^{\ast{\rm bl}}}
\def\barX{\mathcal Z}
\def\sfC{C}

\def\bJ{\mathbb{J}}

\def\As{\mathcal A_{\omega}(\fS,\lambda)}
\def\Aa{\overline{\mathcal A}_{\omega}(\fS^\ast,\lambda^\ast)}
\def\Asp{\mathcal A^{+}_{\omega}(\fS,\lambda)}

\def\oa{\overline{\mathcal{A}}_{\omega}}
\def\cA{\mathcal{A}_{\omega}}

\def\TB{\bT_\omega(\overline{Q}_\lambda^\ast;B_\lambda)}
\def\bZs{\mathcal Z_\omega^{\ast{\rm bl}}(\fS,\lambda)}

\def\sgn{\text{sgn}}

\def\Exc{\mathsf{Exch}_\mathsf{S}}

\def\Fr{\text{Frac}}
\def\OvA{\overline{\mathscr{A}}_{\omega}(\fS)}
\def\OvU{\overline{\mathscr{U}}_{\omega}(\fS)}
\def\OvS{\overline{\mathsf{S}}(\fS)}

\def\RVlast{\mathring{\overline{V}}_{\lambda^\ast}}
\def\SSa{\cS_\omega(\fS^\ast)}
\def\dSS{\widetilde{\cS}_\omega(\fS^\ast)}
\def\lll{(\lambda')^\ast}
\def\last{\lambda^\ast}

\newcommand{\beq}{\begin{equation}}
	\newcommand{\eeq}{\end{equation}}

\section{Introduction}\label{sec-intro}
Throughout the paper, we fix a positive integer $n$ and a ground ring $R$ with an invertible element $\omega^{\frac{1}{2}}\in R$. Set  $\xi = \omega^{n}$ and $q=\omega^{n^2}$ with
$\xi^{\frac{1}{2n}} = \omega^{\frac{1}{2}}$
and $q^{\frac{1}{2n^2}} = \omega^{\frac{1}{2}}$.  
Define the following constants:
\begin{align}\label{intro-constants}
\mathbbm{c}_{i}= (-q)^{i-n} q^{\frac{1-n}{2n}},\quad
\mathbbm{t}= (-1)^{n-1} q^{\frac{1-n^2}{n}},\quad 
\mathbbm{a} =   q^{-\frac{n+1-2n^2}{4}}.
\end{align}
Unless otherwise specified, all algebras in this paper are assumed to be $R$-algebras.

We use $\mathbb{N}$ and $\mathbb{Z}$ to denote the sets of nonnegative integers and integers, respectively.

\subsection{(Quantum) cluster algebras}
Cluster algebras, introduced by Fomin and Zelevinsky \cite{FZ}, are a class of commutative algebras equipped with a distinguished set of generators known as cluster variables. These variables are grouped into collections called clusters, which are related by certain birational transformations known as mutations. In \cite{BFZ}, Berenstein, Fomin, and Zelevinsky introduced the notion of upper cluster algebras, defined as the intersection of the Laurent polynomial rings associated with each cluster. From a geometric perspective, upper cluster algebras are often more natural objects to consider. Cluster variables are rational functions in initial cluster variables by construction. The remarkable Laurent phenomenon \cite{FZ} says that all the cluster variables are actually Laurent polynomials in initial cluster variables. Moreover, the coefficients of these Laurent polynomials are non-negative, cf. \cite{LS,GHKK,D,H2,H3}.

The quantum (upper) cluster algebras  (Definition \ref{def-quan-cluster-algebra}) were introduced by Berenstein and Zelevinsky in \cite{BZ} and it is proved that the Laurent phenomenon also holds in the quantum setting.

For a (quantum) cluster algebra $\mathscr A$ and its upper cluster algebra $\mathscr U$, the Laurent phenomenon ensures that $\mathscr A\subseteq \mathscr U$. However, in general, $\mathscr A\neq \mathscr U$. The problem of whether a cluster algebra coincides with its upper cluster algebra was posed by Berenstein, Fomin, and Zelevinsky and has been studied in \cite{BFZ}. This question has attracted considerable attention since its inception; see, for example, \cite{M,M1,CLS,GY,SW21,L,MW,IOS,CGGLS,QY} and references therein. If $\mathscr A=\mathscr U$, then the cluster algebra $\mathscr A$ enjoys several desirable properties, including the existence of a generic basis and a theta basis \cite{CKQ,Q,GLS,GHKK,GLS1}. 

The search for (quantum) cluster algebra and upper (quantum) cluster algebra structures on important algebraic and geometric objects has drawn significant interest; see, for example, 
\cite{FZ,S,SW21,CGGLS,GLSB,QY} 
and the references therein. Cluster algebras and cluster varieties have also been constructed in the context of Teichm\"{u}ller theory \cite{FG06,FST,GSV,GS19}, and are closely related to skein theory \cite{muller2016skein,ishibashi2023skein,LY22,LY23,KimWang,Kim21,IY1}. 

\subsection{(Reduced) stated ${\rm SL}_n$-skein algebras and quantum trace maps}\label{intro-sec-reduced}

A  \emph{pb surface} $\fS$ is obtained from a compact oriented surface $\overline{\fS}$ by removing finitely many points, which are called \emph{punctures}, such that every boundary component of $\fS$ is diffeomorphic to an open interval. 
An embedding $c:(0,1)\rightarrow \fS$ is called an \emph{ideal arc} if both
$\bar c(0)$ and $\bar c(1)$ are punctures, where $\bar c\colon [0,1] \to \overline{\fS}$ is the `closure' of $c$.
For any positive integer $k$, we use $\mathbb P_k$ to denote the pb surface obtained from the closed disk by removing $k$ punctures from the boundary. 

The \emph{stated ${\rm SL}_n$–skein algebra} $\cS_\omega(\fS)$ of a pb surface $\fS$  
is the quotient of the $R$–module freely generated by the set of isotopy classes of stated $n$–webs  
(Definition~\ref{def-n-web}) in $\fS \times (-1,1)$, subject to the relations \eqref{w.cross}–\eqref{wzh.eight}.  
For two stated $n$-webs $\alpha,\beta$, their product $\alpha\beta$ is defined by stacking $\alpha$ over $\beta$.
The stated ${\rm SL}_n$–skein algebra was introduced by L{\^e} and Sikora \cite{LS21} as a generalization of the ${\rm SL}_n$–skein algebra  
\cite{Sik05} to the stated setting, extending the stated ${\rm SL}_2$– and ${\rm SL}_3$–skein algebras  \cite{le2018triangular,higgins2020triangular} to the ${\rm SL}_n$ case.  
The \emph{reduced stated ${\rm SL}_n$–skein algebra} $\overline{\cS}_\omega(\fS)$ \cite{LY23} is defined as the quotient of $\cS_\omega(\fS)$  
by the two–sided ideal generated by all bad arcs (see Figure~\ref{Fig;badarc}).

\vspace{0.2cm}

Let $\fS$ be a triangulable pb surface (see \S\ref{sec-traceX}),  
and let $\lambda$ be a triangulation of $\fS$—that is, a maximal collection of pairwise disjoint, non-isotopic ideal arcs in $\fS$  
(note that self-folded triangles are not allowed; see \S\ref{sec-traceX}).  
Cutting $\fS$ along all ideal arcs that are not isotopic to any component of $\partial\fS$ yields a collection of triangles (copies of $\mathbb P_3$),  
denoted by $\mathbb F_\lambda$.  
For each $\tau=\mathbb P_3\in \mathbb F_\lambda$, there is a weighted quiver $\Gamma_\tau$ inside $\tau$ (see Figure~\ref{Fig;coord_ijk})  
whose arrows have weight $1$, except those lying on the boundary, which have weight $\tfrac{1}{2}$.  
The vertices of $\Gamma_\tau$ are called \emph{small vertices}.  
By gluing the triangles in $\bigsqcup_{\tau\in \mathbb F_\lambda}\tau$ back together to recover $\fS$,  
we obtain a quiver $\Gamma_\lambda$ on $\fS$: whenever two edges are identified, the small vertices on these edges are identified as well,  
and any pair of arrows of equal weight but opposite direction between two vertices cancel each other.
We denote by $\overline V_\lambda$ the vertex set of $\Gamma_\lambda$; each element of $\overline V_\lambda$ is called a \emph{small vertex}.
Let $\overline Q_\lambda\colon \overline V_\lambda\times\overline V_\lambda\rightarrow \tfrac{1}{2}\mathbb Z$ be the signed adjacency matrix of $\Gamma_\lambda$ (see \eqref{eq-def-Q-lambda-re}).

Assume that $\fS$ has no interior punctures.  
There is another antisymmetric matrix \eqref{eq-anti-matric-P-def}
\begin{align*}
  \overline  P_\lambda\colon\overline V_\lambda\times \overline V_\lambda\rightarrow n\mathbb Z
\end{align*}
associated with the triangulation $\lambda$ of $\fS$, which equals $-\overline{\mathsf P}_\lambda$ defined in \cite[Equations~(163) and (205)]{LY23}.
Put $\overline\Pi_\lambda=\frac{1}{n}\overline P_\lambda$.
The \emph{$\mathcal A$-version quantum torus} of $(\fS,\lambda)$ is defined by
\begin{equation*}
\overline{\mathcal{A}}_{\omega}(\fS,\lambda)
= R \langle 
\overline A_v^{\pm 1},\, v \in V_\lambda \rangle \big/ \bigl(
\overline A_v\overline A_{v'}= \xi^{\overline\Pi_\lambda(v,v')} \overline A_{v'}\overline A_v
\text{ for } v,v'\in\overline V_\lambda \bigr).
\end{equation*}

There is an algebra homomorphism \cite{LY23} (see Theorem~\ref{thm-trace-A})
\[
\overline{\rm tr}_\lambda\colon
\overline{\cS}_\omega(\fS)\longrightarrow \overline{\mathcal{A}}_{\omega}(\fS,\lambda)
\]
with the following properties:
\begin{enumerate}[label={\rm (\alph*)}]\itemsep0.3em
    \item\label{intro-thm-trace-A-a}
    $\overline{\mathcal{A}}_{\omega}^{+}(\fS,\lambda)\subset\im \overline{\rm tr}_\lambda\subset \overline{\mathcal{A}}_{\omega}(\fS,\lambda)$,  
    where $\overline{\mathcal{A}}_{\omega}^{+}(\fS,\lambda)$ is the $R$-subalgebra of $\overline{\mathcal{A}}_{\omega}(\fS,\lambda)$ generated by $\overline A^{\bf k}$ for ${\bf k}\in\mathbb N^{V_\lambda}$.  
    Here $\overline A^{\bf k}$ is the Laurent monomial defined using the Weyl-ordered product (see \eqref{def-monomial-for-A}).

    \item \label{intro-thm-trace-A-b}
    If $n=2,3$, or $n>3$ and $\fS$ is a polygon (i.e., $\fS=\mathbb P_k$ for some $k>2$), then $\overline{\rm tr}_\lambda$ is injective.

\end{enumerate}

Define the \emph{projected $\SL$-skein algebra} by
\[
\widetilde{\cS}_\omega(\fS):=
\overline{\cS}_\omega(\fS)\big/ \ker \overline{\rm tr}_\lambda.
\]
It is shown in \cite{huang2025quantum} that 
$\widetilde{\cS}_\omega(\fS)$ is independent of the choice of the triangulation $\lambda$.
Then property \ref{intro-thm-trace-A-a} of 
$\overline{\rm tr}_\lambda$ implies that 
\begin{align}\label{intro-identity-Frac}
    {\rm Frac}\bigl(\widetilde{\cS}_\omega(\fS)\bigr)
= {\rm Frac}\bigl(\overline{\mathcal{A}}_{\omega}(\fS,\lambda)\bigr),
\end{align}
where ${\rm Frac}(-)$ denotes the skew-field of fractions of the corresponding Ore domain.

\vspace{2mm}

A pb surface is called \emph{generalized triangulable} if each of its connected components is either a triangulable pb surface or a bigon. The \emph{generalized triangulation} of a bigon is the set of its two boundary edges.

Let $\fS$ be a generalized triangulable pb surface without interior punctures.
Attach a triangle $\bP_3$ to each boundary edge of $\fS$, and let $\fS^\ast$ denote the resulting triangulable pb surface; see Figure~\ref{Fig;attaching}(A).
We label the boundary edges of $\bP_3$ by $e_1$, $e_2$, and $e_3$ (see Figure~\ref{Fig;attaching}(A)).
For any generalized triangulation $\lambda$ of $\fS$, let $\lambda^\ast$ be the ideal triangulation of $\fS^\ast$ whose restriction to $\fS$ is $\lambda$.
Define the extended $A$-vertex set $\VA\subset \overline{V}_{\lambda^\ast}$ to be the subset of all small vertices not on
the edge $e_2$ in the attached triangles.

There is an  antisymmetric matrix (see \eqref{def-matrix-Pl})  \cite{LY23}
$$P_\lambda \colon \VA\times\VA\to\mathbb Z$$
which is the extended version of $\overline P_\lambda$.
Put $\Pi_\lambda=\frac{1}{n} P_\lambda$.
The \emph{extended $\mathcal A$-version quantum torus} of $(\fS,\lambda)$ is defined by
\begin{equation*}
{\mathcal{A}}_{\omega}(\fS,\lambda)
= R \langle 
 A_v^{\pm 1},\, v \in V_\lambda' \rangle \big/ \bigl(
 A_v A_{v'}= \xi^{\Pi_\lambda(v,v')}  A_{v'} A_v
\text{ for } v,v'\in V_\lambda' \bigr).
\end{equation*}

There is an algebra embedding  \cite{LY23} (see Theorem~\ref{thm-trace-A})
\[
{\rm tr}_\lambda\colon
{\cS}_\omega(\fS)\longrightarrow {\mathcal{A}}_{\omega}(\fS,\lambda)
\]
such that 
\begin{equation}\label{eq-p-s-a}
\Asp \;\subset\; \im\tr \;\subset\; \As,
\end{equation}
where $\Asp$ is defined similarly as $\overline{\mathcal{A}}_{\omega}^{+}(\fS,\lambda)$.

\vspace{2mm}

As we will see in the next subsection, the quantum trace maps relate the (projected) stated $\SL$-skein algebras to quantum cluster algebras, which are two quantizations of the same underlying object \cite{IOS}.

\subsection{Quantum cluster structures arising from the stated ${\rm SL}_n$-skein algebra}
Let $\fS$ be a triangulable pb surface without interior punctures, and let $\lambda$ be a triangulation of $\fS$.
The last two authors constructed a quantum seed (see Definition~\ref{def-quantum-seed}) inside the skew-field of fractions ${\rm Frac}\bigl(\widetilde{\cS}_\omega(\fS)\bigr)$ of 
$\widetilde{\cS}_\omega(\fS)$ in \cite{huang2025quantum} (see also \cite{muller2016skein,ishibashi2023skein,LY22} for the cases $n=2,3$) as follows.

To construct a quantum seed, we require a set of vertices $\mathcal V$ and a subset of mutable vertices $\mathcal V_{\rm mut}$.
Set $\mathcal V = \overline V_\lambda$, and let $\mathcal V_{\rm mut} = \mathring{\overline V}_\lambda$ be the subset of vertices lying in the interior of $\fS$.  
Using the identification in \eqref{intro-identity-Frac}, it is shown in \cite{huang2025quantum} that the triple 
$\overline{\mathsf s}_\lambda = (\overline Q_\lambda,\overline\Pi_\lambda,\overline M_\lambda)$ 
forms a quantum seed (Definition~\ref{def-quantum-seed}) in the skew-field ${\rm Frac}\bigl(\widetilde{\cS}_\omega(\fS)\bigr)$ (Theorem~\ref{intro-thm-skein-inclusion-A}), where  
\begin{align*}
    \overline M_\lambda \colon \mathbb Z^{\mathcal V} \longrightarrow {\rm Frac}\bigl(\widetilde{\cS}_\omega(\fS)\bigr), 
    \qquad {\bf t} \longmapsto \overline A^{\bf t}.
\end{align*}
Moreover, it is proved in \cite{huang2025quantum} that the quantum seeds 
$\overline{\mathsf s}_{\lambda}$ and $\overline{\mathsf s}_{\lambda'}$ are mutation-equivalent (see Definition~\ref{def-quantum-class}) for any two triangulations $\lambda,\lambda'$ of $\fS$.

Consequently, one obtains a quantum cluster algebra and a quantum upper cluster algebra (see Definition~\ref{def-quan-cluster-algebra})
\[
\overline{\mathscr A}_\omega(\fS)
\quad \text{and} \quad 
\overline{\mathscr U}_\omega(\fS),
\]
associated with the quantum seed $\overline{\mathsf s}_{\lambda}$; these are independent of the choice of triangulation $\lambda$.
It is further shown 
(see Theorem~\ref{intro-thm-skein-inclusion-A}(c)) \cite{huang2025quantum} that 
\begin{align}\label{intro-aaaa}
    \widetilde{\cS}_\omega (\fS) \subset \overline{\mathscr A}_\omega(\fS)
\end{align}
whenever each connected component of $\fS$ contains at least two punctures.

\vspace{2mm}

Let $\fS$ be a generalized triangulable pb surface without interior punctures.
In this paper, we construct a quantum cluster structure in the skew-field of fractions ${\rm Frac}\bigl({\cS}_\omega(\fS)\bigr)$ of the stated $\SL$-skein algebra ${\cS}_\omega(\fS)$.

Let $\lambda$ be a generalized triangulation of $\fS$.
Set $\mathcal V = \VA$ and 
$\mathcal V_{\mathrm{mut}} = \mathring{\overline V}_{\lambda^\ast},$
where $\mathring{\overline V}_{\lambda^\ast} \subset \VA$ denotes the subset of vertices lying in the interior of $\fS^\ast$.
Let 
\[
Q_\lambda \colon \mathcal V \times \mathcal V \to \bZ
\]
be the restriction of 
\[
\overline Q_{\lambda^\ast} \colon \OVA \times \OVA \to \bZ.
\]

Equation~\eqref{eq-p-s-a} implies that
\[
{\rm Frac}\bigl({\cS}_\omega(\fS)\bigr)
=
{\rm Frac}\bigl(\mathcal A_\omega(\fS,\lambda)\bigr).
\]
Finally, define
\begin{align*}
    M_\lambda \colon \mathbb Z^{\mathcal V} \longrightarrow {\rm Frac}\bigl({\cS}_\omega(\fS)\bigr),
    \qquad 
    {\bf t} \longmapsto A^{\bf t}.
\end{align*}
We prove that the triple 
\[
{\mathsf s}_\lambda = ( Q_\lambda,\Pi_\lambda, M_\lambda)
\]
forms a quantum seed (Definition~\ref{def-quantum-seed}) in the skew-field ${\rm Frac}\bigl({\cS}_\omega(\fS)\bigr)$ (Lemma~\ref{lem-seed-skein}).
Let $\mathsf{S}_{\fS,\lambda}$ denote the quantum mutation class (Definition~\ref{def-quantum-class}) containing the quantum seed ${\mathsf s}_\lambda$.

Let $\lambda$ and $\lambda'$ be two generalized triangulations of $\fS$. 
It is shown in Proposition~\ref{prop-naturality} that
\[
\mathsf{S}_{\fS,\lambda}=\mathsf{S}_{\fS,\lambda'}.
\]
This establishes the naturality of the constructed quantum cluster structure inside 
${\rm Frac}\bigl({\cS}_\omega(\fS)\bigr)$.

We note that frozen variables are not invertible in ${\cS}_\omega(\fS)$. 
Therefore, we introduce the following definitions:
\[
\mathsf{S}(\fS):=\mathsf{S}_{\fS,\lambda},\quad
\mathscr{A}_{\omega}(\fS):=
\mathscr{A}_{\mathsf{S}(\fS)}^+,\quad
\mathscr{U}_{\omega}(\fS):=
\mathscr{U}_{\mathsf{S}(\fS)}^+,\quad
\mathscr{A}_{\omega}^{\rm fr}(\fS):=
\mathscr{A}_{\mathsf{S}(\fS)},\quad
\mathscr{U}_{\omega}^{\rm fr}(\fS):=
\mathscr{U}_{\mathsf{S}(\fS)},
\]
where the superscript $+$ indicates that frozen variables are not inverted (see Definition~\ref{def-quan-cluster-algebra}).

\vspace{2mm}

Note that the weighted quiver associated with the exchange matrix $Q_\lambda$ is obtained from that associated with $\overline Q_{\lambda^*}$ by deleting  certain frozen vertices. 
We then define a quantum cluster algebra 
$\mathscr A_{\omega}^{\rm qc}(\fS^\ast)$ obtained from
$\mathscr A_{\omega}(\fS)$ by localizing all frozen variables and then adjoining the same number of isolated invertible frozen variables (see~\S\ref{sub-quas-sl2}). 
We prove that $\mathscr A_{\omega}^{\rm qc}(\fS^\ast)$ is quasi-isomorphic (Definition~\ref{def:QQH}) to
$\mathscr A_{\omega}(\fS^\ast)$ (see Proposition~\ref{Prop-quasi-iso}).

The following sequence highlights the close relationship between the quantum cluster algebra $\mathscr A_{\omega}(\fS)$ constructed here and the quantum cluster algebra $\overline{\mathscr A}_{\omega}(\fS^\ast)$ constructed in \cite{huang2025quantum}:
\begin{align}\label{intro-eq-AA}
    \mathscr A_{\omega}(\fS)\lhook\joinrel\longrightarrow
    \mathscr A_{\omega}^{\rm qc}(\fS^\ast)\xrightarrow{\text{quasi-isomorphism}} \overline{\mathscr A}_{\omega}(\fS^\ast).
\end{align}

As will be shown in the following theorem, the skein-theoretic counterpart of \eqref{intro-eq-AA} is the algebra embedding
\begin{align}\label{intro-eq-iota}
    \iota_*\colon {\cS}_\omega(\fS) \lhook\joinrel\longrightarrow
    \widetilde{\cS}_\omega(\fS^{*}),
    \qquad \text{(see Corollary~\ref{cor-injectivity})},
\end{align}
which is induced by a natural embedding 
$\iota\colon \fS\hookrightarrow \fS^*$.

A properly embedded oriented arc in $\fS$ is called an \emph{essential arc}  
if its endpoints lie on two distinct boundary components of $\partial \fS$.  

We now state the first main result, which establishes the inclusion 
\[
\SS \subset \mathscr A_{\omega}(\fS)
\]
whenever every component of $\fS$ contains at least two punctures, as well as the compatibility between the inclusions
$\SS \subset \mathscr A_{\omega}(\fS)$ and 
$\widetilde{\cS}_\omega(\fS^*) \subset \overline{\mathscr A}_{\omega}(\fS^*)$ via the algebra embeddings in 
\eqref{intro-eq-AA} and \eqref{intro-eq-iota}.

\begin{theorem}[Theorem~\ref{thm-skein-inclusion-A}]
\label{intro-A-skein}
Let $\fS$ be a generalized triangulable pb surface without interior punctures, such that every connected component of $\fS$ contains at least two punctures. 
    Then we have $$\SS\subset \mathscr A_{\omega}(\fS)\subset \mathscr{U}_{\omega}(\fS).$$
Moreover, every stated essential arc is a cluster variable
in $\mathscr A_{\omega}(\fS)$.

The inclusion $\SS\subset \mathscr A_{\omega}(\fS)$ is compatible with the inclusion $\widetilde{\cS}_\omega(\fS^*) \subset \overline{\mathscr A}_{\omega}(\fS^*)$ in \eqref{intro-aaaa}. Specifically, the following diagram commutes:
\begin{equation}\label{intro-eq-com}
     \begin{tikzcd}[
    row sep=0.7cm,       
    column sep=1cm,   ]
    \SS \arrow[d, " "', hook] \arrow[rrrr, "\iota_\ast",hook] &&&& \dSS \arrow[d, " ", hook] \\  
    {\mathscr A}_{\omega}(\fS) \arrow[rr, 
        "{\begin{minipage}{3cm}\centering \scriptsize adding isolated \\ frozen variables \end{minipage}}"',hook] 
        &&  {\mathscr A}_{\omega}^{\rm qc}(\fS^\ast) \arrow[rr, "\text{quasi-iso}"'] &&  \overline {\mathscr A}_{\omega}(\fS^\ast)
\end{tikzcd}.
\end{equation}
\end{theorem}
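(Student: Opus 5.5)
Our approach is to reduce everything to the result \eqref{intro-aaaa} of \cite{huang2025quantum} for the surface $\fS^\ast$, which has at least two punctures on every component, by way of the embedding $\iota_\ast$. We then pull the cluster-theoretic information back to $\fS$ along the bottom row of \eqref{intro-eq-com}.

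\textbf{Step 1: commutativity of the diagram at the level of tori.} Use the compatibility of quantum traces: the composite $\overline{\rm tr}_{\lambda^\ast}\circ\iota_\ast$ should equal ${\rm tr}_\lambda$ followed by the map $\mathcal A_\omega(\fS,\lambda)\to\overline{\mathcal A}_\omega(\fS^\ast,\lambda^\ast)$ that sends $A_v\mapsto \overline A_v$ for $v\in\VA$. This is essentially how ${\rm tr}_\lambda$ is constructed in \cite{LY23}. The vertices on the edges $e_2$ are exactly the ones that do not occur. By Proposition~\ref{Prop-quasi-iso}, the quasi-isomorphism $\mathscr A^{\rm qc}_\omega(\fS^\ast)\to\overline{\mathscr A}_\omega(\fS^\ast)$ is given on the initial seed by sending each $A_v$ to $\overline A_v$ times a Laurent monomial in the $e_2$-frozen variables, which are the isolated frozen variables. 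The diagram therefore commutes on the initial cluster, and hence on all cluster variables, because quasi-homomorphisms commute with mutation. This gives commutativity of \eqref{intro-eq-com} once the left vertical inclusion exists.

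\textbf{Step 2: the inclusion $\SS\subset\mathscr A_\omega(\fS)$.} The algebra $\SS$ is generated by stated arcs; for generalized triangulable surfaces without interior punctures, we will rely on the known generation by stated (essential and corner) arcs from \cite{LS21,LY23}. It therefore suffices to show that each generator lies in $\mathscr A_\omega(\fS)$. For a stated essential arc $\alpha$ in $\fS$, its image $\iota_\ast(\alpha)$ is a stated essential arc in $\fS^\ast$. By \cite{huang2025quantum} it is a cluster variable of $\overline{\mathscr A}_\omega(\fS^\ast)$, reached by some mutation sequence $\mu$ from $\overline{\mathsf s}_{\lambda^\ast}$. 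The mutable parts of the two seeds coincide, so applying the same $\mu$ to $\mathsf s_\lambda$ produces a cluster variable $x$ of $\mathscr A_\omega(\fS)$. The quasi-isomorphism sends $x$ to $\iota_\ast(\alpha)$ up to a monomial in the $e_2$-frozen variables. Injectivity of $\iota_\ast$ (Corollary~\ref{cor-injectivity}), together with a degree argument in the $e_2$-variables, then shows that ${\rm tr}_\lambda(\alpha)=x$, so that the monomial is trivial.

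For arcs that are not essential (corner arcs, or arcs with a bad state), write them via the skein relations and the height exchange as polynomials in essential arcs and frozen variables. Alternatively, after a suitable choice of $\lambda$, show that they are themselves frozen or initial variables. Finally, $\mathscr A_\omega(\fS)\subset\mathscr U_\omega(\fS)$ follows from the quantum Laurent phenomenon of \cite{BZ} in the non-inverted-frozen form. This needs the standard fact that exchange polynomials are not divisible by frozen variables, which holds here because every frozen vertex of $\Gamma_{\lambda^\ast}$ restricted to $\VA$ is adjacent to a mutable vertex.

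\textbf{Main obstacle.} The delicate point is Step 2 for the non-inverted frozen setting. We must ensure that the cluster variable $x$ matches $\alpha$ exactly, and not merely up to inverses of frozen variables. We must also ensure that non-essential arcs avoid negative powers of frozen variables. We plan to handle this with the positivity inclusion \eqref{eq-p-s-a}: ${\rm tr}_\lambda(\alpha)$ has leading term in $\mathcal A^{+}_\omega$ with no $e_2$-dependence, which pins down the monomial factor.
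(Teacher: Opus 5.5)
Your strategy (push into $\fS^\ast$ by $\iota_\ast$, use \cite{huang2025quantum} there, and pull back along the bottom row of \eqref{intro-eq-com}) is the paper's. However, the one step that carries real content, namely that $\alpha$ equals the cluster variable $x$ exactly, is not established. The mechanism you propose for it also rests on a wrong description of the torus map.

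The composite $\overline{\mathrm{tr}}_{\lambda^\ast}\circ\iota_\ast$ is $L_\lambda\circ\tr$, where $L_\lambda(A^{\bf k})=\overline A^{{\bf k}C}$ (see \eqref{eq-def-L-lamda} and Lemma~\ref{lem-com-reduced-stated}). Concretely, $A_v\mapsto[\overline A_v\overline A_{p(v)}^{-1}]$ for $v$ in an attached triangle, with $p(v)$ on $e_2$ (Lemma~\ref{lem-iota-gv}). It is not $A_v\mapsto\overline A_v$, and with your formula Step~1 is inconsistent with the quasi-isomorphism you quote in the next sentence. Consequently both $\iota_\ast(\alpha)$ and the image of $x$ genuinely contain $e_2$-variables. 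Your remark that $\tr(\alpha)$ has ``no $e_2$-dependence'' is vacuous, since no element of $\As$ has any. Also, \eqref{eq-p-s-a} only gives $\Asp\subset\im\tr$; it does not say that $\tr(\alpha)$ has a leading term in $\Asp$.

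What you have a priori is only proportionality. First, \cite{huang2025quantum} gives $\iota_\ast(\alpha)=[x'\,\overline A_{in}^{-1}\overline A_{j0}^{-1}]$ with $x'$ a cluster variable, so $\iota_\ast(\alpha)$ is not itself a cluster variable. Second, the quasi-isomorphism sends $x$ to $[x'\,m_1]$ for some monomial $m_1$ in all frozen variables of $\fS^\ast$, including those on the $e_3$-edges. Pulled back, this says only that $x=[\tr(\alpha)\,A^{\bf k}]$ for some ${\bf k}$ supported on the frozen vertices of ${\mathsf s}_\lambda$. That is precisely the ambiguity to be removed, and your claim that the discrepancy involves only $e_2$-variables is asserted, not proved.

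The paper computes $m_1$ exactly:
\begin{itemize}
\item Proposition~\ref{prop-flips-L} (via Lemmas~\ref{lem-P4-flip-iso} and \ref{lem-P4-flip-iso-1}) tracks $L_\lambda$ through the flips of $\lambda^\ast$, at $e$ for corner arcs and at $c_2,c_4$ otherwise.
\item Lemma~\ref{lem-gL} tracks it through $\mu^\diamondsuit_j$, $\mu^\triangle_i$ and $\mu^\diamondsuit_{(i;j)}$.
\end{itemize}
Together these give $m_1=\overline A_{in}^{-1}\overline A_{j0}^{-1}$ on the nose. Lemma~\ref{lem-com-reduced-stated} and injectivity then yield \eqref{pro-inclusion-3} and \eqref{pro-inclusion-4}.

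Your idea could instead be completed by an extended $g$-vector computation. The full subquivers on the non-$e_2$ vertices coincide, so the $g$-vector ${\bf g}'$ of $x'$ agrees with ${\bf g}C$ off $e_2$, where ${\bf g}$ is the $g$-vector of $x$. Hence $m_1=\overline A^{{\bf g}C-{\bf g}'}$ is supported on $e_2$. The discrepancy with $\iota_\ast(\alpha)$ then has the form $\overline A^{{\bf k}C}$ and is supported on $e_2$, which forces ${\bf k}=0$ because $C$ is the identity on the $\VA$-columns. Some such computation is indispensable, though.

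Finally, corner arcs are essential arcs in the paper's sense and need no skein-relation detour. The clockwise ones are simply removed from the generating set by Lemma~\ref{lem-essential}.
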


The inclusion $\mathscr A_{\omega}(\fS)\subset \mathscr{U}_{\omega}(\fS)$ was established in \cite[Theorem 2.5]{goodearl2017quantum}.

Lemma~\ref{lem-essential} shows that the algebra ${\cS}_\omega(\fS)$ is generated by finitely many stated essential arcs.
To prove the inclusion $\SS\subset \mathscr A_{\omega}(\fS)$, it therefore suffices to show that each stated essential arc lies in 
$\mathscr A_{\omega}(\fS)$.

The algebra embedding $\iota_*$ in \eqref{intro-eq-iota}
 sends each stated essential arc in ${\cS}_\omega(\fS)$ to a stated essential arc in $\widetilde{\cS}_\omega(\fS^{*})$.
Moreover, every stated essential arc in $\widetilde{\cS}_\omega(\fS^{*})$ coincides with a cluster variable in $\overline{\mathscr A}_{\omega}(\fS^{*})$, up to multiplication by frozen variables \cite{huang2025quantum} (see \eqref{intro-eq-Cij}, \eqref{intro-eq-bar-Cij}, and \eqref{into-eq-Dij}).

For a generalized triangulation $\lambda$ of $\fS$, we show that $\iota_*$ extends to an algebra embedding
\begin{align}\label{intro-eq-L}
    L_\lambda\colon \mathcal{A}_\omega (\fS,\lambda)
\lhook\joinrel\longrightarrow \overline{\mathcal{A}}_\omega (\fS,\lambda)
\qquad \text{(see Lemma~\ref{lem-com-reduced-stated})}.
\end{align}
We then prove that the embedding $L_\lambda$ behaves well with respect to certain special mutation sequences; see Proposition~\ref{prop-flips-L} and Lemma~\ref{lem-gL}.
Combining this with the formulas in \eqref{intro-eq-Cij}, \eqref{intro-eq-bar-Cij}, and \eqref{into-eq-Dij}, we obtain explicit cluster realizations of each stated essential arc in ${\cS}_\omega(\fS)$ (see \eqref{pro-inclusion-3}, \eqref{pro-inclusion-4}, and \eqref{eq-alpha-counterclock}).
This shows that every stated essential arc is a cluster variable in $\mathscr A_{\omega}(\fS)$.

The commutative diagram \eqref{intro-eq-com} follows directly from the construction of $L_\lambda$ (see \eqref{eq-def-L-lamda}), the compatibility between $\iota_*$ and $L_\lambda$ (Lemma~\ref{lem-com-reduced-stated}), and the construction of the quasi-isomorphism in \eqref{intro-eq-AA} (see Proposition~\ref{Prop-quasi-iso}).

\vspace{2mm}

For any triangulation $\lambda$ of $\fS$, Equation~\eqref{eq-p-s-a} implies that the cluster variables $A_v$ (see \S\ref{sec-mutation-quantum}), $v \in V_{\lambda}'$, are contained in ${\cS}_\omega(\fS)$. In particular, all frozen variables (see \S\ref{sec-mutation-quantum}) lie in ${\cS}_\omega(\fS)$. We denote by $\mathcal{F}$ the multiplicative subset of ${\cS}_\omega(\fS)$ generated by all frozen variables.
We use ${\cS}_\omega^{\rm fr}(\fS)$ to denote the localization of ${\cS}_\omega(\fS)$ at the multiplicative subset $\mathcal{F}$, called the \emph{localized stated ${\rm SL}_n$-skein algebra}.

We formulate the following conjecture concerning the full equivalence between the localized stated $\SL$-skein algebra and the quantum (upper) cluster algebra. In the next subsection, we present several confirming cases.

\begin{conjecture}\label{con-equality-Skein-A-U}
      Under the same assumption of Theorem~\ref{intro-A-skein},
     we have 
     $${\cS}_\omega^{\rm fr}(\fS)= \mathscr{A}_{\omega}^{\rm fr}(\fS)
     =\mathscr{U}_{\omega}^{\rm fr}(\fS).$$
     
\end{conjecture}

\subsection{Equivalence of quantum cluster algebras with localized or reduced  stated ${\rm SL}_n$-skein algebras}

The following theorem verifies Conjecture~\ref{con-equality-Skein-A-U} in the case $n=2$, providing the first evidence for a quantum cluster algebra structure in the (localized) stated skein theory.

\begin{theorem}[Theorem~\ref{thm-skein-eq-A-two}]\label{intro-thm-sl2}
Let $\fS$ be a pb surface without interior punctures. We require that every component of $\fS$ contains at least two punctures.
    When $n=2$, we have 
 $${\cS}_\omega(\fS)= \mathscr{A}_{\omega}(\fS)=\mathscr{U}_{\omega}(\fS).$$
    
\end{theorem}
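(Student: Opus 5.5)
Since Theorem~\ref{intro-A-skein} already gives $\cS_\omega(\fS)\subset \mathscr A_\omega(\fS)\subset \mathscr U_\omega(\fS)$, the whole proof reduces to showing $\mathscr U_\omega(\fS)\subset \cS_\omega(\fS)$ when $n=2$. The plan is to work one connected component at a time and fix a generalized triangulation $\lambda$ of $\fS$, with the quantum trace ${\rm tr}_\lambda\colon \cS_\omega(\fS)\embed \mathcal A_\omega(\fS,\lambda)$ as the basic tool.

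\textbf{Step 1: a skein basis and leading terms.} For $n=2$, $\cS_\omega(\fS)$ has an $R$-basis of simple stated diagrams with increasing states, i.e. crossingless multicurves and arcs carrying states. Each such basis element $x$ has an image ${\rm tr}_\lambda(x)$ with a well-defined leading Laurent monomial $A^{{\bf k}(x)}$ with respect to a suitable dominance order. Its exponent ${\bf k}(x)$ is computed from intersection numbers of the underlying curve with the edges of $\lambda^\ast$, together with the states at the boundary. I will show that $x\mapsto {\bf k}(x)$ is injective, and I will identify its image $\Lambda\subset\mathbb Z^{\mathcal V}$ explicitly: it should be the lattice points satisfying balancing and triangle-inequality conditions on each triangle, with nonnegative frozen coordinates.

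\textbf{Step 2: the upper bound for $\mathscr U_\omega(\fS)$.} Take $z\in \mathscr U_\omega(\fS)$. It lies in the based quantum torus of the seed ${\mathsf s}_\lambda$ with nonnegative powers of the frozen variables. I will argue by induction on the leading exponent of $z$, and this needs two facts.

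First, every leading exponent of an element of $\mathscr U_\omega(\fS)$ lies in $\Lambda$. To prove this, I will use the adjacent seeds: for each flip $\lambda\to\lambda'$ (the $n=2$ mutations are exactly flips), $z$ is Laurent in the variables of ${\mathsf s}_{\lambda'}$ as well. Comparing the tropical (leading-term) transformation of the exponent under the flip with the skein coordinate change should force the triangle and parity inequalities. Frozen nonnegativity is built into the superscript $+$ definition.

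Second, given this, I subtract a suitable $R$-multiple of the skein basis element with the same leading term and recurse. Termination requires the dominance order to be well-founded on the relevant exponents, which I will check using positivity of the frozen directions.

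\textbf{Alternative route via $n=2$ literature.} The equality $\overline{\cS}=\overline{\mathscr A}=\overline{\mathscr U}$ for reduced/projected stated skein algebras with at least two punctures per component is known in the Muller / Ishibashi--Yuasa framework. I could transport that equality through the diagram \eqref{intro-eq-com}. Concretely, $\mathscr U_\omega(\fS)$ maps into $\overline{\mathscr U}_\omega(\fS^\ast)=\widetilde\cS_\omega(\fS^\ast)$ via $L_\lambda$ and the quasi-isomorphism. I would then show that the image of $\iota_\ast$ is exactly the part of $\widetilde\cS_\omega(\fS^\ast)$ that has nonnegative degree in the isolated attached frozen variables and lies in the $L_\lambda$-image. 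This uses a grading by the attached triangles: diagrams in $\fS^\ast$ whose components enter the attached triangles only through edge $e_1$/$e_3$ patterns can be isotoped back into $\fS$.

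\textbf{Expected main obstacle.} The hardest step is the first fact of Step 2: showing that elements of $\mathscr U_\omega(\fS)$, with frozen variables \emph{not} inverted, have leading exponents in the skein lattice. In particular I must exclude elements that are Laurent in every cluster yet require negative frozen powers in some cluster. I would first try Muller's locally acyclic technique: cover by Banff/isolation of frozen vertices, using that cluster algebras from surfaces with at least two punctures per component are locally acyclic, so that $\mathscr A=\mathscr U$ holds even in the non-inverted-frozen setting, provided each frozen variable is prime in $\mathscr A_\omega(\fS)$. Primality of the boundary arcs would in turn be deduced skein-theoretically, from the quotient by a frozen arc being a domain.
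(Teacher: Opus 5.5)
Your reduction to $\mathscr U_\omega(\fS)\subset\cS_\omega(\fS)$ is correct, but none of the routes you sketch actually proves it.

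\textbf{Steps 1--2.} The decisive claim, that the leading exponent of every $z\in\mathscr U_\omega(\fS)$ lies in the skein lattice $\Lambda$, is only asserted (``should force''). Laurentness in the adjacent seeds gives local tropical constraints. You give no argument that these constraints cut out exactly $\Lambda$, nor that the subtraction procedure terminates.

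\textbf{Alternative route.} The characterization of $\iota_\ast(\cS_\omega(\fS))$ inside $\widetilde{\cS}_\omega(\fS^\ast)$ is likewise left as a hand-wave.

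\textbf{Last paragraph.} Here you point to the mechanism the paper actually uses. First, get $\mathscr A^{\rm fr}_\omega(\fS)=\mathscr U^{\rm fr}_\omega(\fS)$. The paper does this by transporting Muller's $\overline{\mathscr A}_\omega(\fS^\ast)=\overline{\mathscr U}_\omega(\fS^\ast)$ through the quasi-isomorphism of Proposition~\ref{Prop-quasi-iso}, rather than by local acyclicity of $\mathsf s_\lambda$ itself, since $\mathsf s_\lambda$ is not literally a surface seed. Second, show each frozen variable is prime skein-theoretically: for $n=2$ the frozen variables are bad arcs, which are normal with domain quotient. Third, invoke \cite[Theorem~C]{QY} to remove the localization.

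However, this route has a genuine missing step. You never show $\mathscr A_\omega(\fS)\subset\cS_\omega(\fS)$, i.e.\ that a cluster variable produced by an \emph{arbitrary} mutation sequence is a skein element. Theorem~\ref{intro-A-skein} only gives the opposite inclusion, plus the fact that certain special cluster variables are stated arcs. Without the missing inclusion:
\begin{itemize}
\item the equality $\mathscr A_\omega(\fS)=\mathscr U_\omega(\fS)$ tells you nothing about whether $\mathscr U_\omega(\fS)\subset\cS_\omega(\fS)$;
\item the primality you would prove skein-theoretically is primality in $\cS_\omega(\fS)$, not in $\mathscr A_\omega(\fS)$ as your formulation requires.
\end{itemize}

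The paper supplies this step using $n=2$ specifically:
\begin{enumerate}
\item Every mutation is a flip of $\lambda^\ast$ at an edge interior to $\fS^\ast$. So a cluster variable is $\mu_{v_m}\cdots\mu_{v_1}(A_v)$ with $v$ an edge of a triangulation $\bar\lambda$ of $\fS^\ast$.
\item Proposition~\ref{prop-flips-L} gives $L_\lambda\bigl(\mu_{v_m}\cdots\mu_{v_1}(A_v)\bigr)=\bigl[\gaa_{v,\bar\lambda}\prod_{e\in V(\bar\lambda,v)}\overline A_{e,1}^{-1}\bigr]$.
\item Relation~\eqref{wzh.seven} identifies this with $\iota_\ast$ of an explicit stated arc in $\fS$.
\item Lemma~\ref{lem-com-reduced-stated}, together with injectivity of $L_\lambda$, shows that the cluster variable equals that stated arc.
\end{enumerate}
Only once $\cS_\omega(\fS)=\mathscr A_\omega(\fS)$ is established do the primality lemma and \cite[Theorem~C]{QY} yield $\cS_\omega(\fS)=\mathscr U_\omega(\fS)$.
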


Note that ${\cS}_\omega(\fS)= \mathscr{A}_{\omega}(\fS)=\mathscr{U}_{\omega}(\fS)$ implies 
${\cS}_\omega^{\rm fr}(\fS)= \mathscr{A}_{\omega}^{\rm fr}(\fS)=\mathscr{U}_{\omega}^{\rm fr}(\fS)$.

We briefly outline the proof of Theorem~\ref{thm-skein-eq-A-two}. Using the algebra embeddings $\iota_*$ and $L_\lambda$ in \eqref{intro-eq-iota} and \eqref{intro-eq-L}, together with Lemma~\ref{lem-com-reduced-stated}, which establishes the compatibility between $\iota_*$ and $L_\lambda$, and Proposition~\ref{prop-flips-L}, which shows the compatibility of $L_\lambda$ with certain mutation sequences, we prove that each cluster variable in $\mathscr A_{\omega}(\fS)$ is represented by a stated arc in $\SS$. This yields the inclusion $\mathscr A_{\omega}(\fS)\subset \SS$. Combining this with Theorem~\ref{intro-A-skein}, we obtain
\begin{equation}\label{intro-eq-skein=a}
    {\cS}_\omega(\fS)= \mathscr{A}_{\omega}(\fS).
\end{equation}

It was shown in \cite{muller2016skein} that
\[
\overline{\mathscr A}_\omega(\fS^*)
=
\overline{\mathscr U}_\omega(\fS^*).
\]
The quasi-isomorphism in Proposition~\ref{Prop-quasi-iso} (see Definition~\ref{def:QQH}) then implies
\[
\mathscr{A}_{\omega}^{\rm fr}(\fS)
=
\mathscr{U}_{\omega}^{\rm fr}(\fS).
\]
Combined with \eqref{intro-eq-skein=a}, this yields
\begin{equation}\label{intro-eq-skein=a=U}
    {\cS}_\omega^{\rm fr}(\fS)
    =
    \mathscr{A}_{\omega}^{\rm fr}(\fS)
    =
    \mathscr{U}_{\omega}^{\rm fr}(\fS).
\end{equation}
We further show that each frozen variable is a prime element in ${\cS}_\omega(\fS)$; see Lemma~\ref{lem-prime-ele}. 
Together with \eqref{intro-eq-skein=a=U}, this implies, by \cite[Theorem~C]{QY}, that
\begin{equation}\label{intro-eq-skein=U}
    {\cS}_\omega(\fS)
    =
    \mathscr{U}_{\omega}(\fS).
\end{equation}
This completes the proof of Theorem~\ref{intro-thm-sl2}.

\vspace{2mm}

It is conjectured in \cite{huang2025quantum} that
\begin{align}\label{into-eq-reduced}
    \widetilde\cS_{\omega}(\fS)
    =
    \overline{\mathscr A}_\omega(\fS)
    =
    \overline{\mathscr U}_\omega(\fS)
\end{align}
under the same assumptions as in Theorem~\ref{intro-A-skein}, except that ``generalized triangulable'' is replaced by ``triangulable.''
Results in \cite{IOS} verify \eqref{into-eq-reduced} in the classical case, i.e., when $\omega^{\frac{1}{2}}=1$. However, the stated case—namely, Conjecture~\ref{con-equality-Skein-A-U}—is not addressed in \cite{IOS}.

The following theorem verifies Conjecture~\ref{con-equality-Skein-A-U} and \eqref{into-eq-reduced} in the case where the pb surface $\fS$ is a polygon. 
As we will see in the next subsection, this result further implies the existence of a rotation-invariant and positive basis of the localized or reduced stated 
$\SL$-skein algebra of a polygon.
Note that the property \ref{intro-thm-trace-A-b} of 
$\overline{\rm tr}_\lambda$ implies
$\widetilde\cS_{\omega}(\mathbb P_{k+2})=\overline\cS_{\omega}(\mathbb P_{k+2})$
for any $k\geq 1$.

\begin{theorem}[Theorems~\ref{thm:poly1} and \ref{thm:poly2}]\label{intro-thm-equa}
    (a) For any $k\geq 1$,
    we have 
    $$\overline\cS_{\omega}(\mathbb P_{k+2})=\overline{\mathscr A}_\omega(\mathbb P_{k+2})=\overline{\mathscr U}_\omega(\mathbb P_{k+2}).$$

    (b) For any $k\geq 0$,  
    we have 
    $$\cS_{\omega}^{\rm fr}(\mathbb P_{k+2})={\mathscr A}_\omega^{\rm fr}(\mathbb P_{k+2})={\mathscr U}_\omega^{\rm fr}(\mathbb P_{k+2}).$$
\end{theorem}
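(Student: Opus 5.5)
The plan is to establish the chain of inclusions
\[
\overline\cS_{\omega}(\mathbb P_{k+2})\subset\overline{\mathscr A}_\omega(\mathbb P_{k+2})\subset\overline{\mathscr U}_\omega(\mathbb P_{k+2})\subset\overline\cS_{\omega}(\mathbb P_{k+2}),
\]
first in the reduced setting (a), and then to transfer it to the localized stated setting (b) through the diagram \eqref{intro-eq-com}.

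For (a), the first inclusion is \eqref{intro-aaaa}. It applies because $\mathbb P_{k+2}$ has at least three punctures, and by property \ref{intro-thm-trace-A-b} of $\overline{\rm tr}_\lambda$ we have $\widetilde\cS_{\omega}(\mathbb P_{k+2})=\overline\cS_{\omega}(\mathbb P_{k+2})$. The second inclusion is the quantum Laurent phenomenon \cite{BZ}. The real content is therefore $\overline{\mathscr U}_\omega\subset\overline\cS_\omega$.

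For this I would not attack the upper cluster algebra directly. Instead I would exhibit $\overline\cS_{\omega}(\mathbb P_{k+2})$ as a quantum nilpotent algebra, that is, a symmetric CGL extension in the sense of Goodearl--Yakimov, and then invoke their theorem that such an algebra equals both its quantum cluster algebra and its quantum upper cluster algebra. Concretely, I would proceed in three steps.
\begin{enumerate}
\item Fix a fan triangulation $\lambda$ of $\mathbb P_{k+2}$. Order the finitely many stated (corner and essential) arcs that generate $\overline\cS_\omega(\mathbb P_{k+2})$ (Lemma~\ref{lem-essential}).
\item Verify that adjoining these arcs one at a time gives iterated Ore extensions with the CGL commutation conditions. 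Here I would use the height-exchange and commutation relations for stated arcs, together with the torus action that scales states.
\item Show that the Goodearl--Yakimov initial seed, built from the homogeneous prime elements of the successive subalgebras, is mutation equivalent to $\overline{\mathsf s}_\lambda$. I would do this by identifying those prime elements, via $\overline{\rm tr}_\lambda$, with the Weyl-ordered monomials $\overline A_v$, and comparing the two exchange matrices.
\end{enumerate}
The frozen variables in this seed are the boundary-parallel corner arcs. In the reduced algebra these are invertible, which matches the invertible-frozen convention used for $\overline{\mathscr A}_\omega$ and $\overline{\mathscr U}_\omega$.

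For (b), note that attaching triangles to the boundary edges of $\mathbb P_{k+2}$ produces a larger polygon, so part (a) applies to $\fS^\ast$. The quasi-isomorphism of Proposition~\ref{Prop-quasi-iso}, combined with the equality $\overline{\mathscr A}_\omega(\fS^\ast)=\overline{\mathscr U}_\omega(\fS^\ast)$ from (a), yields $\mathscr A^{\rm fr}_\omega(\mathbb P_{k+2})=\mathscr U^{\rm fr}_\omega(\mathbb P_{k+2})$. Definition~\ref{def:QQH} should preserve $\mathscr A=\mathscr U$ once the isolated frozen variables are discarded. Theorem~\ref{intro-A-skein} then gives $\cS^{\rm fr}_\omega\subset\mathscr A^{\rm fr}_\omega$ after localizing at $\mathcal F$. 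For the reverse inclusion I would show that every cluster variable of $\mathscr A_\omega(\mathbb P_{k+2})$ lies in $\cS^{\rm fr}_\omega$. To do this I would use the embedding $L_\lambda$ of \eqref{intro-eq-L} and its compatibility with mutations (Proposition~\ref{prop-flips-L}). The image of a cluster variable under $L_\lambda$ is a cluster variable of $\overline{\mathscr A}_\omega(\fS^\ast)$ times frozen factors, and by (a) it lies in $\overline\cS_\omega(\fS^\ast)$. It then remains to show it is pulled back under $\iota_*$ into $\cS_\omega$ up to frozen denominators. The bigon case $k=0$ must be handled separately, since (a) is stated only for $k\ge1$. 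There I would use $\cS_\omega(\mathbb P_2)\cong\mathcal O_q({\rm SL}_n)$ together with the known Goodearl--Yakimov cluster structure on it, checking that its seed agrees with $\mathsf s_\lambda$.

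I expect the main obstacle to be step 3 of part (a): matching the Goodearl--Yakimov seed with $\overline{\mathsf s}_\lambda$, and in particular checking that the Goodearl--Yakimov exchange matrix agrees with $\overline Q_\lambda$. If a direct identification fails, I would try a fallback. This would use the quantum-torus description of $\overline\cS_\omega(\mathbb P_{k+2})$ from \cite{LY23} as the set of elements of $\overline{\mathcal A}_\omega(\fS,\lambda)$ satisfying balancedness conditions. I would then check that membership in the Laurent rings of $\overline{\mathsf s}_\lambda$ and of its one-step mutations already forces these conditions. This is a starfish-lemma-type argument, whose codimension-two hypothesis I would verify using the fact that the frozen variables are prime (the analogue of Lemma~\ref{lem-prime-ele}).
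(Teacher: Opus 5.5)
\textbf{There is a genuine gap: in both parts the inclusion of the cluster algebra into the skein algebra is not established.}

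\textbf{Part (a).} The inclusions $\overline\cS_\omega(\mathbb P_{k+2})\subset\overline{\mathscr A}_\omega(\mathbb P_{k+2})\subset\overline{\mathscr U}_\omega(\mathbb P_{k+2})$ are fine. Everything then rests on $\overline{\mathscr U}_\omega\subset\overline\cS_\omega$, and your route to it breaks at step~2.
\begin{itemize}
\item A CGL extension is an iterated Ore extension $\mathbb K[x_1][x_2;\sigma_2,\delta_2]\cdots[x_N;\sigma_N,\delta_N]$. A leading-degree argument shows its only units are nonzero scalars.
\item $\overline\cS_\omega(\mathbb P_{k+2})$ has non-scalar units, as you note yourself. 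By \eqref{intro-eq-Cij}, the corner arc $C_{11}$ equals $\overline A_{10}^{-1}$, and $\overline A_{10}$ lies in $\overline\cS_\omega$. For $n=3$, \eqref{eq:frozen} gives $C(1)_{11}\,\overline C(1)_{33}=1$.
\item So adjoining the generating arcs one at a time cannot produce Ore extensions, and $\overline\cS_\omega$ is not a quantum nilpotent algebra.
\end{itemize}
At best you would need a symmetric CGL subalgebra whose localization at frozen elements is $\overline\cS_\omega$, built over a field and then descended to $R=\mathbb Z[\omega^{\pm 1/2}]$. Nothing in your plan produces such a subalgebra. Step~3, showing that the Goodearl--Yakimov seed literally coincides in ${\rm Frac}$ with a seed mutation-equivalent to $\overline{\mathsf s}_\lambda$, is the whole difficulty and you leave it open.

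The fallback does not help either.
\begin{itemize}
\item The paper gives no exact description of ${\rm Im}\,\overline{\rm tr}_\lambda$ beyond $\overline{\mathcal A}^+_\omega(\fS,\lambda)\subset{\rm Im}\,\overline{\rm tr}_\lambda\subset\overline{\mathcal A}_\omega(\fS,\lambda)$. Balancedness is a condition on the $\mathcal X$-torus, whose balanced part corresponds to the entire $\mathcal A$-torus, so it cannot cut out $\overline\cS_\omega$.
\item A quantum starfish lemma needs Noetherian and normality properties of $\overline\cS_\omega$ that you have not established.
\item In $\overline\cS_\omega$ the frozen variables are units, so they are not prime elements.
\end{itemize}

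The missing idea is the one the paper uses.
\begin{itemize}
\item Via the quasi-isomorphism of Proposition~\ref{prop-quasi-polygon}, Qin's theorem \cite[Theorem~8.10]{Q2024} gives $\overline{\mathscr A}_\omega=\overline{\mathscr U}_\omega$ (Proposition~\ref{pro:standard}).
\item The same result shows this algebra is generated by the finitely many standard cluster variables $\overline E(js^i)$, coming from the Shen--Weng mutation sequences \cite{SW21}, together with the frozen variables and their inverses.
\item The frozen variables and their inverses are corner-arc expressions by \eqref{intro-eq-Cij}.
\item Theorem~\ref{thm:standardgenerators}, proved via the $g$-vector computations behind Lemmas~\ref{lem:quasi2}, \ref{lem:B-d} and~\ref{lem:B4}, shows each standard variable is a stated essential arc times a frozen monomial.
\end{itemize}
This yields $\overline{\mathscr U}_\omega\subset\overline\cS_\omega$ by checking finitely many explicit elements, with no structure theory needed on the skein side.

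\textbf{Part (b).} Two of your steps are fine:
\begin{itemize}
\item deducing $\mathscr A^{\rm fr}_\omega=\mathscr U^{\rm fr}_\omega$ from (a) applied to $\fS^\ast$ together with Proposition~\ref{Prop-quasi-iso};
\item deducing $\cS^{\rm fr}_\omega\subset\mathscr A^{\rm fr}_\omega$ from Theorem~\ref{intro-A-skein}.
\end{itemize}
For $k=0$ you have $\fS^\ast=\mathbb P_4$, to which (a) applies, so the bigon needs no separate $\mathcal O_q({\rm SL}_n)$ argument.

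The reverse inclusion, however, is not proved.
\begin{itemize}
\item Knowing that $L_\lambda(x)$ lies in $\overline\cS_\omega(\fS^\ast)$ for a cluster variable $x$ says nothing about whether $x\in\cS^{\rm fr}_\omega(\mathbb P_{k+2})$. For that you would need to characterize $\iota_\ast(\cS^{\rm fr}_\omega(\mathbb P_{k+2}))$ inside $\overline\cS_\omega(\fS^\ast)$. This pull-back is exactly the step you defer with ``it then remains to show''.
\item Proposition~\ref{prop-flips-L} only controls cluster variables reached by flip sequences $\mu_{\lambda'\lambda^\ast}$. For $n\ge 3$ these are not all the cluster variables. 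The general statement that the image is a cluster variable times frozen factors comes from the quasi-isomorphism, not from that proposition.
\end{itemize}
The paper avoids any pull-back by running the analogous argument to (a). Since $\mathscr A^{\rm fr}_\omega=\mathscr U^{\rm fr}_\omega$ is generated by standard cluster variables and frozen variables with their inverses, you only need to place those finitely many standard variables in $\cS^{\rm fr}_\omega$. As in Theorem~\ref{thm:standardgenerators}, this amounts to identifying each of them with a stated essential arc up to frozen monomials.
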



The proofs of Theorem~\ref{intro-thm-equa}(a) and (b) follow the same strategy; here we briefly outline the argument for Theorem~\ref{intro-thm-equa}(a).

There is a cluster algebra $\overline{\mathscr A}^{\rm qc}_\omega(\mathbb P_{k+2})$ (see \S\ref{sub-quasi-polygon}) studied in \cite{SW21,Q2024}, which is quasi-isomorphic (Definition~\ref{def:QQH}) to 
$\overline{\mathscr A}_\omega(\mathbb P_{k+2})$ (see Proposition~\ref{prop-quasi-polygon}).
In \cite{SW21}, a collection of so-called \emph{standard cluster variables} is constructed using certain maximum green sequences (see \eqref{eq:mustandard}).
It is proved in \cite{Q2024} that, as an $R$-algebra,
\[
\overline{\mathscr A}^{\rm qc}_\omega(\mathbb P_{k+2})
=
\overline{\mathscr U}^{\rm qc}_\omega(\mathbb P_{k+2})
\]
is generated by these standard cluster variables together with the frozen variables in $\overline{\mathscr A}_\omega(\mathbb P_{k+2})$ (see Proposition~\ref{pro:standard}).

We establish technical Lemmas~\ref{lem:B-d} and~\ref{lem:B4}, which employ (extended)
$g$-vectors to realize standard cluster variables via different mutation sequences and to analyze the equivalences of full subquivers.
As mentioned above, \cite{huang2025quantum} shows that every stated essential arc in $\overline{\cS}_\omega(\mathbb P_{k+2})$ coincides with a cluster variable in $\overline{\mathscr A}_{\omega}(\mathbb P_{k+2})$, up to multiplication by frozen variables (see \eqref{intro-eq-Cij}, \eqref{intro-eq-bar-Cij}, and \eqref{into-eq-Dij}).
Combining these formulas with Lemmas~\ref{lem:B-d} and~\ref{lem:B4}, we prove that each standard cluster variable coincides with a stated essential arc in $\overline{\cS}_\omega(\mathbb P_{k+2})$, again up to multiplication by frozen variables (Theorem~\ref{thm:standardgenerators}).

Moreover, it follows from \eqref{intro-eq-Cij} that all frozen variables in $\overline{\mathscr A}_\omega(\mathbb P_{k+2})$ also lie in 
$\overline\cS_{\omega}(\mathbb P_{k+2})$. 
Combining this with \eqref{intro-aaaa}, Propositions~\ref{prop-quasi-polygon}, and \ref{pro:standard}, we conclude that
\[
\overline\cS_{\omega}(\mathbb P_{k+2})
=
\overline{\mathscr A}_\omega(\mathbb P_{k+2})
=
\overline{\mathscr U}_\omega(\mathbb P_{k+2}).
\]



\subsection{Rotation-invariant bases for the reduced and localized stated ${\rm SL}_n$-skein algebras of a polygon}\label{intro-sub-basis}

One important motivation for studying the equivalence between the quantum cluster algebra and the (reduced) stated ${\rm SL}_n$-skein algebra is the construction of ``good"  bases for the latter. 
The existence of such  good bases is crucial for the study of the (reduced) stated ${\rm SL}_n$-skein algebra, with applications including the injectivity of the quantum trace map, the description of the image of general webs under the Frobenius map constructed in \cite{kim2025frobenius} (see Remark~\ref{rem-Fro}), and the structure of the center of the algebra.

At present, a ``good'' basis is known only in limited cases. 
For ${\rm SL}_2$, the bracelet basis provides a ``good'' basis of the (reduced) stated ${\rm SL}_2$-skein algebra \cite{Q,thurston2014positive}.
For ${\rm SL}_3$, a so-called reduced non-elliptic basis was constructed in \cite{frohman20223,higgins2020triangular}. 
This basis behaves well with respect to the quantum trace map \cite{douglas2024tropical,Kim20}, but it does not satisfy a positivity property.
For $n>3$, the existence of ``good'' bases of the (reduced) stated ${\rm SL}_n$-skein algebra remains an important open problem.
When $n=4$ and the surface is a polygon, a rotation-invariant basis was constructed in \cite{gaetz2025rotation}; however, as far as we know, its further structural properties are not yet understood.

As an application of Theorem~\ref{intro-thm-equa}, we show that the theta basis (see \cite{GHKK,davison2021strong}) provides a ``good" basis for both the reduced and the localized stated ${\rm SL}_n$-skein algebras of a polygon, enjoying the following properties.
The theta basis is rotation-invariant (Theorems~\ref{thm:poly1} and \ref{thm:poly2}) and satisfies a  positivity property \cite{davison2021strong}. Moreover, it admits a natural parametrization: by $\mathbb{Z}^{\overline V_\lambda}$ in the reduced case and by $\mathbb{Z}^{V_\lambda}$ in the localized case \cite{GHKK,davison2021strong,Q}. In particular, each basis element has a unique highest monomial term, whose exponent vector coincides with the corresponding coordinate.
Finally, the theta basis is compatible with the Frobenius map \cite{mandel2021scattering}.

At present, however, a web-theoretic interpretation of the theta basis is not known.
We plan to investigate this question in future work.

\subsection{A web interpretation of the dual canonical basis of the quantum group $\mathcal O_q({\rm SL}_3)$.}

It was proved in \cite{LS21} that the stated
$\mathrm{SL}_n$-skein algebra of the bigon $\cS_\omega(\mathbb P_2)$ is isomorphic to the quantum group $\mathcal O_q(\mathrm{SL}_n)$ (see \S\ref{sec:bigon}).
For the special case $n=3$, explicit formulas for cluster variables and clusters are computed in 
\S\ref{sec:clustervar} for ${\mathscr{A}}_\omega(\mathbb{P}_{2})={\cS}_{\omega}(\mathbb{P}_{2})=\mathcal O_q({\rm SL}_3)$ (see Lemma~\ref{lem:A=U=skein}). 
In particular, each cluster variable of ${\cS}_{\omega}(\mathbb{P}_{2})=\mathcal O_q({\rm SL}_3)$ is of one of the following forms:
\begin{align}\label{intro-eq-cluster-va}
 \raisebox{-.20in}{
\begin{tikzpicture}
\tikzset{->-/.style={
    decoration={markings,mark=at position #1 with {\arrow{latex}}},
    postaction={decorate}
}}
\filldraw[draw=white,fill=gray!20] (0,0) rectangle (1, 1);
\draw [line width=0.8pt,decoration={markings, mark=at position 0.5 with {\arrow{<}}},postaction={decorate}] (0,0.5)--(1,0.5);
\draw[line width=1pt] (0,0)--(0,1);
\draw[line width=1pt] (1,0)--(1,1);
\node[left] at(0,0.5) {$i$};
\node[right] at(1,0.5) {$j$};
\end{tikzpicture}
},\qquad
    \raisebox{-.20in}{
\begin{tikzpicture}
\tikzset{->-/.style={
    decoration={markings,mark=at position #1 with {\arrow{latex}}},
    postaction={decorate}
}}
\filldraw[draw=white,fill=gray!20] (0,0) rectangle (1, 1);
\draw [line width=0.8pt,decoration={markings, mark=at position 0.5 with {\arrow{>}}},postaction={decorate}] (0,0.5)--(1,0.5);
\draw[line width=1pt] (0,0)--(0,1);
\draw[line width=1pt] (1,0)--(1,1);
\node[left] at(0,0.5) {$i$};
\node[right] at(1,0.5) {$j$};
\end{tikzpicture}
},\qquad
\left[\raisebox{-.25in}{

\begin{tikzpicture}
\tikzset{->-/.style=

{decoration={markings,mark=at position #1 with

{\arrow{latex}}},postaction={decorate}}}

\filldraw[draw=white,fill=gray!20] (0,0) rectangle (1.5, 1.5);
\draw [line width =1pt,decoration={markings, mark=at position 0.2 with {\arrow{<}}},postaction={decorate}] (0,0)--(0,1.5);
\draw [line width =1pt,decoration={markings, mark=at position 0.2 with {\arrow{<}}},postaction={decorate}] (1.5,0)--(1.5,1.5);
\draw [line width =0.8pt,decoration={markings, mark=at position 0.5 with {\arrow{<}}},postaction={decorate}](0,0.5)--(0.75,0.5);
\draw [line width =0.8pt,decoration={markings, mark=at position 0.5 with {\arrow{<}}},postaction={decorate}](1.5,0.5)--(0.75,0.5);
\draw [line width =0.8pt,decoration={markings, mark=at position 0.5 with {\arrow{>}}},postaction={decorate}](0,1)--(0.75,1);
\draw [line width =0.8pt,decoration={markings, mark=at position 0.5 with {\arrow{>}}},postaction={decorate}](1.5,1)--(0.75,1);
\draw [line width =0.8pt,decoration={markings, mark=at position 0.5 with {\arrow{>}}},postaction={decorate}](0.75,0.5)--(0.75,1);
\node [left] at(0,0.5) {$3$};
\node [right] at(1.5,0.5) {$1$};
\node [left] at(0,1) {$1$};
\node [right] at(1.5,1) {$3$};
\end{tikzpicture}
}\right]_{\rm norm},\qquad
\left[\raisebox{-.25in}{

\begin{tikzpicture}
\tikzset{->-/.style=

{decoration={markings,mark=at position #1 with

{\arrow{latex}}},postaction={decorate}}}

\filldraw[draw=white,fill=gray!20] (0,0) rectangle (1.5, 1.5);
\draw [line width =1pt,decoration={markings, mark=at position 0.2 with {\arrow{<}}},postaction={decorate}] (0,0)--(0,1.5);
\draw [line width =1pt,decoration={markings, mark=at position 0.2 with {\arrow{<}}},postaction={decorate}] (1.5,0)--(1.5,1.5);
\draw [line width =0.8pt,decoration={markings, mark=at position 0.5 with {\arrow{>}}},postaction={decorate}](0,0.5)--(0.75,0.5);
\draw [line width =0.8pt,decoration={markings, mark=at position 0.5 with {\arrow{>}}},postaction={decorate}](1.5,0.5)--(0.75,0.5);
\draw [line width =0.8pt,decoration={markings, mark=at position 0.5 with {\arrow{<}}},postaction={decorate}](0,1)--(0.75,1);
\draw [line width =0.8pt,decoration={markings, mark=at position 0.5 with {\arrow{<}}},postaction={decorate}](1.5,1)--(0.75,1);
\draw [line width =0.8pt,decoration={markings, mark=at position 0.5 with {\arrow{<}}},postaction={decorate}](0.75,0.5)--(0.75,1);
\node [left] at(0,0.5) {$3$};
\node [right] at(1.5,0.5) {$1$};
\node [left] at(0,1) {$1$};
\node [right] at(1.5,1) {$3$};
\end{tikzpicture}
}\right]_{\rm norm},
\end{align}
where $i,j\in\{1,2,3\}$ and $[\;\cdot\;]_{\rm norm}$ denotes the normalization defined in \S\ref{sub-sec-invariant}.
Theorem~\ref{thm:webint} then provides a geometric characterization of all clusters of 
${\mathscr{A}}_\omega(\mathbb{P}_{2})={\cS}_{\omega}(\mathbb{P}_{2})=\mathcal O_q({\rm SL}_3)$.

It is well known that the set of all cluster monomials (Definition~\ref{def-quan-cluster-algebra}(d)) constitutes the dual canonical basis of ${\cS}_{\omega}(\mathbb{P}_{2})=\mathcal O_q({\rm SL}_3)$ (cf. \cite[Theorem 4.26]{BR}), noting that the definition of (quantum) cluster monomials may differ slightly.
In \cite{le_sikora2025}, L{\^e} and Sikora constructed a web interpretation of this basis.  
Motivated by their work, we provide an alternative web interpretation using the description of cluster variables in \eqref{intro-eq-cluster-va} and the clusters in Theorem~\ref{thm:webint}.  
We emphasize that their results and ideas were circulated prior to our construction.

To describe our web interpretation, we fix three points on each boundary component of $\mathbb{P}_2$, labeled $1,2,3$ in counterclockwise order, as illustrated in Figure~\ref{fig:P2-labeled-points}.  
A \emph{labeled arc} $C$ is a properly embedded oriented curve in $\mathbb{P}_2$ connecting the left point $i$ to the right point $j$ for some $i,j \in \{1,2,3\}$.  
Two labeled arcs are said to be \emph{compatible} if they satisfy certain intersection requirements; see Definition~\ref{def:comp}.  
A \emph{system} is a maximal set of pairwise compatible labeled arcs.  
A \emph{weighted system} is a system in which each labeled arc is assigned a nonnegative integer, called its weight.  
We use $\mathcal{WS}$ to denote the set of all weighted systems.

Let $S$ be a weighted system. We associate to $S$ a stated $3$-web diagram $W(S)$ in $\mathbb P_2$ as follows (see \S\ref{sub-dual-basis-web}).  
We place $S$ in general position so that any two labeled arcs in $S$ realize their minimal intersection numbers.  
First, we replace each labeled arc in $S$ with $m$ parallel copies of that arc, where $m$ is its weight, to obtain a collection $S'$ of labeled arcs.  
Next, we obtain $S''$ from $S'$ by replacing each crossing $\raisebox{-.10in}{
	
	\begin{tikzpicture}[scale=0.6, rotate=90]
		\tikzset{->-/.style=
			
			{decoration={markings,mark=at position #1 with
					
					{\arrow{latex}}},postaction={decorate}}}
		\filldraw[draw=white,fill=gray!20] (-0,-0.2) rectangle (1, 1.2);
		\draw [line width =0.6pt,decoration={markings, mark=at position 0.5 with {\arrow{>}}},postaction={decorate}](0.6,0.6)--(1,1);
		\draw [line width =0.6pt,decoration={markings, mark=at position 0.5 with {\arrow{>}}},postaction={decorate}](0.6,0.4)--(1,0);
		\draw[line width =0.6pt] (0,0)--(0.6,0.6);
		\draw[line width =0.6pt] (0,1)--(0.4,0.6);
		\draw[line width =0.6pt] (0.4,0.6)--(0.6,0.4);
	\end{tikzpicture}
}$ with $\raisebox{-.12in}{
	\begin{tikzpicture}[scale=0.6, rotate=90]
		\tikzset{->-/.style=
			
			{decoration={markings,mark=at position #1 with
					
					{\arrow{latex}}},postaction={decorate}}}
		\filldraw[draw=white,fill=gray!20] (0,-0.2) rectangle (1.2, 1.2);
		\draw [line width =0.6pt,decoration={markings, mark=at position 0.7 with {\arrow{>}}},postaction={decorate}](0,0)--(0.4,0.5);
		\draw [line width =0.6pt,decoration={markings, mark=at position 0.7 with {\arrow{>}}},postaction={decorate}](0,1)--(0.4,0.5);
		\draw[line width =0.6pt] (0.4,0.5)--(0.8,0.5);
		\draw [line width =0.6pt,decoration={markings, mark=at position 0.6 with {\arrow{>}}},postaction={decorate}](0.8,0.5)--(1.2,0);
        \draw [line width =0.6pt,decoration={markings, mark=at position 0.6 with {\arrow{>}}},postaction={decorate}](0.8,0.5)--(1.2,1);
	\end{tikzpicture}
}$.  
Then, we construct a crossingless $3$-web diagram $\widetilde S$ in $\mathbb P_2$ from $S''$ by applying the procedure shown in Figure~\ref{fig:moves}(B) for each $i=1,2,3$.  
Finally, we assign the state $i$ to the endpoints of $\widetilde S$ in Figure~\ref{fig:moves}(B) for each $i=1,2,3$. 
See the following example, where the two red numbers $2$ indicate the weights of the corresponding labeled arcs.

\begin{align}\label{eq-intro-exam}
   W\left( \begin{array}{c}\includegraphics[scale=0.6]{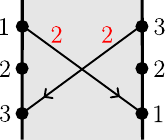}\end{array} \right)
   =\begin{array}{c}\includegraphics[scale=0.6]{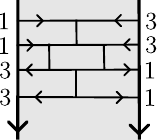}\end{array}.
\end{align}

We prove Lemmas~\ref{lem-basis5}--\ref{lem-basis8}, which establish relations in ${\cS}_{\omega}(\mathbb{P}_{2})$ involving $W(S)$.  
Using these lemmas, we show that $W(S)$ is reflection-normalizable (see \S\ref{sub-sec-invariant}), and that its normalization $[W(S)]_{\rm norm}$ is a cluster monomial in 
${\mathscr{A}}_\omega(\mathbb P_2) = {\cS}_{\omega}(\mathbb{P}_{2})$ (see Proposition~\ref{prop-MS-cluster-monomial}).  
Combining Proposition~\ref{prop-MS-cluster-monomial} with Theorem~\ref{thm:webint}, we obtain the following result, which provides a web interpretation of the dual canonical basis of ${\cS}_{\omega}(\mathbb{P}_{2})=\mathcal O_q({\rm SL}_3)$.

\begin{theorem} [Theorem~\ref{thm-basis-bigon}]
The set $\{[W(S)]_{\rm norm} \mid S \in \mathcal{WS}\}$ is the dual canonical basis of ${\cS}_{\omega}(\mathbb{P}_{2})=\mathcal O_q({\rm SL}_3)$.
\end{theorem}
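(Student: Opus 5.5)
The plan is to combine two facts the paper states earlier. First, the set of quantum cluster monomials of ${\mathscr A}_\omega(\mathbb P_2)={\cS}_\omega(\mathbb P_2)=\mathcal O_q({\rm SL}_3)$ is the dual canonical basis (cf. \cite[Theorem 4.26]{BR}), after matching the normalization of quantum cluster monomials. Second, Proposition~\ref{prop-MS-cluster-monomial} says that each $[W(S)]_{\rm norm}$ is a cluster monomial. What remains is to show that the map
\[
\mathcal{WS}\to\{\text{cluster monomials}\},\qquad S\mapsto [W(S)]_{\rm norm},
\]
is a bijection. Once that is done, the set $\{[W(S)]_{\rm norm}\}$ equals the set of cluster monomials, hence the dual canonical basis.

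To organize the bijection, I will use the cluster variables listed in \eqref{intro-eq-cluster-va}. The arc-type variables (stated arcs $i\to j$ in either orientation) and the two normalized ``H-shaped'' webs are all cluster variables. I would match each of them to a labeled arc, or a weighted frozen or mutable piece, of a system. Theorem~\ref{thm:webint} characterizes clusters geometrically, and I would check that this characterization is exactly compatibility in the sense of Definition~\ref{def:comp}. Then maximal compatible families of labeled arcs, i.e.\ systems, correspond bijectively to clusters, with frozen variables being the labeled arcs that lie in every system. A weight assignment then corresponds to an exponent vector in $\mathbb N^{\text{cluster}}$.

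The key computation is that $[W(S)]_{\rm norm}$ equals the normalized product $\prod_{C\in S}X_C^{m_C}$, where $X_C$ is the cluster variable attached to $C$. I would prove this by induction on the total weight. Parallel copies give powers. Each crossing resolved into the H-shape, and each endpoint move of Figure~\ref{fig:moves}(B), is handled by Lemmas~\ref{lem-basis5}--\ref{lem-basis8}. These lemmas express the relevant local pieces of $W(S)$ as $q$-commuting products of cluster variables of one cluster, up to a power of $\omega$ that the normalization $[\cdot]_{\rm norm}$ absorbs. Reflection-normalizability ensures the normalization is well defined and agrees with the Weyl-ordered cluster monomial.

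Injectivity then follows from the fact that distinct cluster monomials are distinct, in fact linearly independent, via their distinct $g$-vectors. This needs the weight vector together with the system to be recoverable from the monomial, which holds because different clusters share monomials only on their common variables. Surjectivity follows because every cluster arises from a system by Theorem~\ref{thm:webint}.

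I expect the main obstacle to be the precise dictionary between compatibility of labeled arcs (Definition~\ref{def:comp}) and the cluster description in Theorem~\ref{thm:webint}. In particular I must check that two labeled arcs crossing in a compatible way produce exactly the H-web cluster variable, and not a product that lies outside one cluster. I would first verify this by enumerating the finitely many clusters of this finite-type seed. $\mathcal O_q({\rm SL}_3)$ has cluster type $A_1$ or $A_2$ in the mutable part after accounting for frozen variables, so the check is a short case analysis.
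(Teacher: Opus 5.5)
Your outer skeleton matches the paper's. Cluster monomials are the dual canonical basis, Proposition~\ref{prop-MS-cluster-monomial} identifies each $[W(S)]_{\rm norm}$ with a cluster monomial, and what remains is that every cluster monomial arises this way (Lemma~\ref{lem-bij-basis}). The gap is in the dictionary you propose for that last step.

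Theorem~\ref{thm:webint} is phrased via compatibility of \emph{cluster variables}. For two corner arcs this means \emph{strong} compatibility of their labeled arcs, whereas systems are maximal for plain compatibility. The two notions differ exactly on the pairs $\{\langle b_{11}\rangle,\langle\cev b_{33}\rangle\}$ and $\{\langle\cev b_{11}\rangle,\langle b_{33}\rangle\}$. These arcs cross in the interior with opposite orientations and have no endpoint in state $2$, so they are compatible (condition (3)) but not strongly compatible. Indeed $\cev b_{33}=A_{\gamma_1}$ and $b_{11}=A_{\gamma_2^{\rm tag}}$ never lie in a common cluster. So for a system $S$ containing such a pair, your claim that systems correspond bijectively to clusters is false. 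Moreover, $\bigl[\prod_{C\in S}X_C^{m_C}\bigr]$ with both weights positive is not a cluster monomial, hence cannot be $\overset{\omega}{=}W(S)$. The stacked product $b_{11}\cev b_{33}$ is not $\overset{\omega}{=}$ the H-web $\alpha$; compare the two-term expansion in Lemma~\ref{lem-basis1}(b).

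The missing idea is the piecewise-linear rule encoded in $A(S)$ and $f_S$. The H-web $[\alpha]_{\rm norm}$ is attached to the \emph{pair}; this is what $C([\alpha]_{\rm norm})=\{\cev b_{33},b_{11}\}$ records. It gets exponent $\min\{w(\langle b_{11}\rangle),w(\langle\cev b_{33}\rangle)\}$, and only the difference is carried by whichever of $b_{11},\cev b_{33}$ has larger weight. Thus each system containing the pair underlies two clusters, selected by comparing weights, and a bijection exists only between weighted systems and cluster monomials.

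The skein input that makes $W(S)\overset{\omega}{=}M(S)$ true has two parts. First, $W_1^k=W_k$ (Lemma~\ref{lem-basis6}, via the sliding argument of Lemma~\ref{lem-basis5}, which is built on Lemmas~\ref{lem-basis2}--\ref{lem-basis4}), so $k$ mutually crossing pairs give $[\alpha]_{\rm norm}^{\,k}$ up to a power of $\omega$. Second, Lemma~\ref{lem-basis8} absorbs the surplus parallel copies.

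Note also that the mutable part is of type $D_4$ (four mutable and four frozen vertices, $16$ exchangeable cluster variables, $50$ clusters), not $A_1$ or $A_2$. Your ``short case analysis'' is therefore really the tagged-arc model of the once-punctured quadrilateral underlying Theorem~\ref{thm:webint} and Appendix~\ref{app-order}.

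Finally, the set equality needs only surjectivity onto cluster monomials. Injectivity is unnecessary, and it literally fails unless arcs of weight $0$ are disregarded.
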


The web interpretation of the dual canonical basis in \cite{le_sikora2025} uses webs whose orientations point from left to right at each boundary component of $\mathbb P_2$.  
In contrast, the orientations of our webs at each boundary component of $\mathbb P_2$ are mixed.  
At present, it is not clear to us how to relate these two constructions.

\vspace{2mm}

{\bf Acknowledgments:}
 This work was partially supported by the National Natural Science Foundation
of China (No.12471023) (M.H.). 
Z.W. was supported by a KIAS Individual Grant (MG104701) at the Korea Institute for Advanced Study.
We thank Tsukasa Ishibashi for helpful discussions.

\section{Quantum trace maps for (reduced) stated $\SL$-skein algebras}

The quantum trace maps for the (reduced) stated $\SL$-skein algebras were constructed in \cite{LY23}. These maps are algebra homomorphisms from the (reduced) stated $\SL$-skein algebras to quantum tori.
In this section, we review the definition of the (reduced) stated $\SL$-skein algebras and the construction of the quantum trace maps.

\subsection{Stated $\SL$-skein algebras}\label{sub-def-reduced-skein}

\def\Si{\fS}

Let $\fS$ be a pb surface (see \S\ref{intro-sec-reduced}).
Consider the 3-manifold $\Si \times (-1,1)$, the thickened surface. For a point $(x,t) \in \Si \times (-1,1)$, the value $t$ is called the {\bf height} of this point. 
\begin{definition}[\cite{LS21}]\label{def-n-web}
    An {\bf $n$-web} $\alpha$ in $\Si\times(-1,1)$ is a disjoint union of oriented closed curves and a directed finite graph properly embedded into $\Si\times(-1,1)$, satisfying the following requirements:
\begin{enumerate}
    \item[$(1)$] $\alpha$ only contains $1$-valent or $n$-valent vertices. Each $n$-valent vertex is a source or a  sink. The set of $1$-valent vertices is denoted as $\partial \alpha$, which are called \textbf{endpoints} of $\alpha$. For any boundary component $c$ of $\Si$, we require that the points of  $\partial\alpha\cap (c\times(-1,1))$ have mutually distinct heights.
    \item[$(2)$] Every edge of the graph is an embedded oriented  closed interval  in $\Si\times(-1,1)$.
    \item[$(3)$] $\alpha$ is equipped with a transversal \textbf{framing}. 
    \item[$(4)$] The set of half-edges at each $n$-valent vertex is equipped with a  cyclic order. 
    \item[$(5)$] $\partial \alpha$ is contained in $\partial\Si\times (-1,1)$ and the framing at these endpoints is given by the positive direction of $(-1,1)$.
\end{enumerate}
We will consider $n$-webs up to (ambient) \textbf{isotopy} which are continuous deformations of $n$-webs in their class. 
The empty $n$-web, denoted by $\emptyset$, is also considered as an $n$-web, with the convention that $\emptyset$ is only isotopic to itself. 

A {\bf state} for $\alpha$ is a map $s\colon\partial\alpha\rightarrow \{1,2,\cdots,n\}$. A {\bf stated $n$-web} in $\Si\times(-1,1)$ is an $n$-web equipped with a state.
\end{definition}

By regarding $\fS$ as $\fS\times\{0\}$, there is a projection $\text{pr}\colon\fS\times(-1,1)\rightarrow \fS.$
We say the (stated) $n$-web $\alpha$ is in {\bf vertical position} if 
\begin{enumerate}
    \item the framing at everywhere is given by the positive direction of $(-1,1)$,
    \item $\alpha$ is in general position with respect to the projection  $\text{pr}\colon \Si\times(-1,1)\rightarrow \Si\times\{0\}$,
    \item at every $n$-valent vertex, the cyclic order of half-edges as the image of $\text{pr}$ is given by the positive orientation of $\Si$ (drawn counter-clockwise in pictures).
\end{enumerate}

For every (stated) $n$-web $\alpha$, we can isotope $\alpha$ to be in vertical position. For each boundary component $c$ of $\fS$, the heights of $\partial\alpha\cap (c\times(-1,1))$ determine a linear order on  $c\cap \text{pr}(\alpha)$.
Then a {\bf (stated) $n$-web diagram} of $\alpha$ is $\text{pr}(\alpha)$ equipped with the usual over/underpassing information at each double point (called a crossing) and a linear order on $c\cap \text{pr}(\alpha)$ for each boundary component $c$ of $\fS$.

The orientation of $\partial \Si$ induced by the orientation of $\Si$ is called the {\bf positive orientation} of $\partial \Si$. 
The one opposite to the positive orientation of $\partial\Si$ is called the {\bf negative orientation} of $\partial \Si$.
In this paper, we always assume that the orientations of depicted 
surfaces point to the readers, i.e., the surfaces are equipped with the orientations in counterclockwise.  
The orientations of boundary components of a surface also have positive orientations, i.e., these are induced from the orientation of the surface.

A stated $n$-web diagram $\alpha$ is called {\bf negatively ordered}  if the linear order on $\alpha\cap c$, for each boundary component $c$ of $\Si$, is indicated by the negative orientation of $c$.

Let $S_n$ denote the permutation group on the set $\{1,2,\cdots,n\}$. 
For an integer $i\in\{1,2,\cdots,n\}$, we use $\bar{i}$ to denote $n+1-i$.

The \textbf{stated $\SL$-skein algebra} $\cS_{\omega}(\fS)$ of $\fS$ is
the quotient module of the $R$-module (see \S\ref{sec-intro}) freely generated by the set 
 of all isotopy classes of stated 
$n$-webs in $\fS\times (-1,1)$ subject to  relations \eqref{w.cross}-\eqref{wzh.eight} (see \eqref{intro-constants} for involved constants in $R$).

\beq\label{w.cross}
q^{-\frac{1}{n}} 
\raisebox{-.20in}{

\begin{tikzpicture}
\tikzset{->-/.style=

{decoration={markings,mark=at position #1 with

{\arrow{latex}}},postaction={decorate}}}
\filldraw[draw=white,fill=gray!20] (-0,-0.2) rectangle (1, 1.2);
\draw [line width =1pt,decoration={markings, mark=at position 0.5 with {\arrow{>}}},postaction={decorate}](0.6,0.6)--(1,1);
\draw [line width =1pt,decoration={markings, mark=at position 0.5 with {\arrow{>}}},postaction={decorate}](0.6,0.4)--(1,0);
\draw[line width =1pt] (0,0)--(0.4,0.4);
\draw[line width =1pt] (0,1)--(0.4,0.6);
\draw[line width =1pt] (0.4,0.6)--(0.6,0.4);
\end{tikzpicture}
}
- q^{\frac{1}{n}}
\raisebox{-.20in}{
\begin{tikzpicture}
\tikzset{->-/.style=

{decoration={markings,mark=at position #1 with

{\arrow{latex}}},postaction={decorate}}}
\filldraw[draw=white,fill=gray!20] (-0,-0.2) rectangle (1, 1.2);
\draw [line width =1pt,decoration={markings, mark=at position 0.5 with {\arrow{>}}},postaction={decorate}](0.6,0.6)--(1,1);
\draw [line width =1pt,decoration={markings, mark=at position 0.5 with {\arrow{>}}},postaction={decorate}](0.6,0.4)--(1,0);
\draw[line width =1pt] (0,0)--(0.4,0.4);
\draw[line width =1pt] (0,1)--(0.4,0.6);
\draw[line width =1pt] (0.6,0.6)--(0.4,0.4);
\end{tikzpicture}
}
= (q^{-1}-q)
\raisebox{-.20in}{

\begin{tikzpicture}
\tikzset{->-/.style=

{decoration={markings,mark=at position #1 with

{\arrow{latex}}},postaction={decorate}}}
\filldraw[draw=white,fill=gray!20] (-0,-0.2) rectangle (1, 1.2);
\draw [line width =1pt,decoration={markings, mark=at position 0.5 with {\arrow{>}}},postaction={decorate}](0,0.8)--(1,0.8);
\draw [line width =1pt,decoration={markings, mark=at position 0.5 with {\arrow{>}}},postaction={decorate}](0,0.2)--(1,0.2);
\end{tikzpicture}
},
\eeq 
\beq\label{w.twist}
\raisebox{-.15in}{
\begin{tikzpicture}
\tikzset{->-/.style=
{decoration={markings,mark=at position #1 with
{\arrow{latex}}},postaction={decorate}}}
\filldraw[draw=white,fill=gray!20] (-1,-0.35) rectangle (0.6, 0.65);
\draw [line width =1pt,decoration={markings, mark=at position 0.5 with {\arrow{>}}},postaction={decorate}](-1,0)--(-0.25,0);
\draw [color = black, line width =1pt](0,0)--(0.6,0);
\draw [color = black, line width =1pt] (0.166 ,0.08) arc (-37:270:0.2);
\end{tikzpicture}}
= \mathbbm{t}
\raisebox{-.15in}{
\begin{tikzpicture}
\tikzset{->-/.style=
{decoration={markings,mark=at position #1 with
{\arrow{latex}}},postaction={decorate}}}
\filldraw[draw=white,fill=gray!20] (-1,-0.5) rectangle (0.6, 0.5);
\draw [line width =1pt,decoration={markings, mark=at position 0.5 with {\arrow{>}}},postaction={decorate}](-1,0)--(-0.25,0);
\draw [color = black, line width =1pt](-0.25,0)--(0.6,0);
\end{tikzpicture}}
,  
\eeq
\beq\label{w.unknot}
\raisebox{-.20in}{
\begin{tikzpicture}
\tikzset{->-/.style=
{decoration={markings,mark=at position #1 with
{\arrow{latex}}},postaction={decorate}}}
\filldraw[draw=white,fill=gray!20] (0,0) rectangle (1,1);
\draw [line width =1pt,decoration={markings, mark=at position 0.5 with {\arrow{>}}},postaction={decorate}](0.45,0.8)--(0.55,0.8);
\draw[line width =1pt] (0.5 ,0.5) circle (0.3);
\end{tikzpicture}}
= (-1)^{n-1} [n]\ 
\raisebox{-.20in}{
\begin{tikzpicture}
\tikzset{->-/.style=
{decoration={markings,mark=at position #1 with
{\arrow{latex}}},postaction={decorate}}}
\filldraw[draw=white,fill=gray!20] (0,0) rectangle (1,1);
\end{tikzpicture}}
,\ \text{where}\ [n]=\frac{q^n-q^{-n}}{q-q^{-1}},
\eeq
\beq\label{wzh.four}
\raisebox{-.30in}{
\begin{tikzpicture}
\tikzset{->-/.style=
{decoration={markings,mark=at position #1 with
{\arrow{latex}}},postaction={decorate}}}
\filldraw[draw=white,fill=gray!20] (-1,-0.7) rectangle (1.2,1.3);
\draw [line width =1pt,decoration={markings, mark=at position 0.5 with {\arrow{>}}},postaction={decorate}](-1,1)--(0,0);
\draw [line width =1pt,decoration={markings, mark=at position 0.5 with {\arrow{>}}},postaction={decorate}](-1,0)--(0,0);
\draw [line width =1pt,decoration={markings, mark=at position 0.5 with {\arrow{>}}},postaction={decorate}](-1,-0.4)--(0,0);
\draw [line width =1pt,decoration={markings, mark=at position 0.5 with {\arrow{<}}},postaction={decorate}](1.2,1)  --(0.2,0);
\draw [line width =1pt,decoration={markings, mark=at position 0.5 with {\arrow{<}}},postaction={decorate}](1.2,0)  --(0.2,0);
\draw [line width =1pt,decoration={markings, mark=at position 0.5 with {\arrow{<}}},postaction={decorate}](1.2,-0.4)--(0.2,0);
\node  at(-0.8,0.5) {$\vdots$};
\node  at(1,0.5) {$\vdots$};
\end{tikzpicture}}=(-q)^{-\frac{n(n-1)}{2}}\cdot \sum_{\sigma\in S_n}
(-q^{-\frac{1-n}n})^{\ell(\sigma)} \raisebox{-.30in}{
\begin{tikzpicture}
\tikzset{->-/.style=
{decoration={markings,mark=at position #1 with
{\arrow{latex}}},postaction={decorate}}}
\filldraw[draw=white,fill=gray!20] (-1,-0.7) rectangle (1.2,1.3);
\draw [line width =1pt,decoration={markings, mark=at position 0.5 with {\arrow{>}}},postaction={decorate}](-1,1)--(0,0);
\draw [line width =1pt,decoration={markings, mark=at position 0.5 with {\arrow{>}}},postaction={decorate}](-1,0)--(0,0);
\draw [line width =1pt,decoration={markings, mark=at position 0.5 with {\arrow{>}}},postaction={decorate}](-1,-0.4)--(0,0);
\draw [line width =1pt,decoration={markings, mark=at position 0.5 with {\arrow{<}}},postaction={decorate}](1.2,1)  --(0.2,0);
\draw [line width =1pt,decoration={markings, mark=at position 0.5 with {\arrow{<}}},postaction={decorate}](1.2,0)  --(0.2,0);
\draw [line width =1pt,decoration={markings, mark=at position 0.5 with {\arrow{<}}},postaction={decorate}](1.2,-0.4)--(0.2,0);
\node  at(-0.8,0.5) {$\vdots$};
\node  at(1,0.5) {$\vdots$};
\filldraw[draw=black,fill=gray!20,line width =1pt]  (0.1,0.3) ellipse (0.4 and 0.7);
\node  at(0.1,0.3){$\sigma_{+}$};
\end{tikzpicture}},
\eeq
where the ellipse enclosing $\sigma_+$  is the minimum crossing positive braid representing a permutation $\sigma\in S_n$ and $\ell(\sigma)=\#\{(i,j)\mid 1\leq i<j\leq n,\ \sigma(i)>\sigma(j)\}$ is the length of $\sigma\in S_n$.

\beq\label{wzh.five}
   \raisebox{-.30in}{
\begin{tikzpicture}
\tikzset{->-/.style=
{decoration={markings,mark=at position #1 with
{\arrow{latex}}},postaction={decorate}}}
\filldraw[draw=white,fill=gray!20] (-1,-0.7) rectangle (0.2,1.3);
\draw [line width =1pt](-1,1)--(0,0);
\draw [line width =1pt](-1,0)--(0,0);
\draw [line width =1pt](-1,-0.4)--(0,0);
\draw [line width =1.5pt](0.2,1.3)--(0.2,-0.7);
\node  at(-0.8,0.5) {$\vdots$};
\filldraw[fill=white,line width =0.8pt] (-0.5 ,0.5) circle (0.07);
\filldraw[fill=white,line width =0.8pt] (-0.5 ,0) circle (0.07);
\filldraw[fill=white,line width =0.8pt] (-0.5 ,-0.2) circle (0.07);
\end{tikzpicture}}
   = 
   \mathbbm{a} \sum_{\sigma \in S_n} (-q)^{-\ell(\sigma)}\,  \raisebox{-.30in}{
\begin{tikzpicture}
\tikzset{->-/.style=
{decoration={markings,mark=at position #1 with
{\arrow{latex}}},postaction={decorate}}}
\filldraw[draw=white,fill=gray!20] (-1,-0.7) rectangle (0.2,1.3);
\draw [line width =1pt](-1,1)--(0.2,1);
\draw [line width =1pt](-1,0)--(0.2,0);
\draw [line width =1pt](-1,-0.4)--(0.2,-0.4);
\draw [line width =1.5pt,decoration={markings, mark=at position 1 with {\arrow{>}}},postaction={decorate}](0.2,1.3)--(0.2,-0.7);
\node  at(-0.8,0.5) {$\vdots$};
\filldraw[fill=white,line width =0.8pt] (-0.5 ,1) circle (0.07);
\filldraw[fill=white,line width =0.8pt] (-0.5 ,0) circle (0.07);
\filldraw[fill=white,line width =0.8pt] (-0.5 ,-0.4) circle (0.07);
\node [right] at(0.2,1) {$\sigma(n)$};
\node [right] at(0.2,0) {$\sigma(2)$};
\node [right] at(0.2,-0.4){$\sigma(1)$};
\end{tikzpicture}},
\eeq
\beq \label{wzh.six}
\raisebox{-.20in}{
\begin{tikzpicture}
\tikzset{->-/.style=
{decoration={markings,mark=at position #1 with
{\arrow{latex}}},postaction={decorate}}}
\filldraw[draw=white,fill=gray!20] (-0.7,-0.7) rectangle (0,0.7);
\draw [line width =1.5pt,decoration={markings, mark=at position 1 with {\arrow{>}}},postaction={decorate}](0,0.7)--(0,-0.7);
\draw [color = black, line width =1pt] (0 ,0.3) arc (90:270:0.5 and 0.3);
\node [right]  at(0,0.3) {$i$};
\node [right] at(0,-0.3){$j$};
\filldraw[fill=white,line width =0.8pt] (-0.5 ,0) circle (0.07);
\end{tikzpicture}}   = \delta_{\bar j,i }\,  \mathbbm{c}_{i} \raisebox{-.20in}{
\begin{tikzpicture}
\tikzset{->-/.style=
{decoration={markings,mark=at position #1 with
{\arrow{latex}}},postaction={decorate}}}
\filldraw[draw=white,fill=gray!20] (-0.7,-0.7) rectangle (0,0.7);
\draw [line width =1.5pt](0,0.7)--(0,-0.7);
\end{tikzpicture}},
\eeq
\beq \label{wzh.seven}
\raisebox{-.20in}{
\begin{tikzpicture}
\tikzset{->-/.style=
{decoration={markings,mark=at position #1 with
{\arrow{latex}}},postaction={decorate}}}
\filldraw[draw=white,fill=gray!20] (-0.7,-0.7) rectangle (0,0.7);
\draw [line width =1.5pt](0,0.7)--(0,-0.7);
\draw [color = black, line width =1pt] (-0.7 ,-0.3) arc (-90:90:0.5 and 0.3);
\filldraw[fill=white,line width =0.8pt] (-0.55 ,0.26) circle (0.07);
\end{tikzpicture}}
= \sum_{i=1}^n  (\mathbbm{c}_{\bar i})^{-1}\, \raisebox{-.20in}{
\begin{tikzpicture}
\tikzset{->-/.style=
{decoration={markings,mark=at position #1 with
{\arrow{latex}}},postaction={decorate}}}
\filldraw[draw=white,fill=gray!20] (-0.7,-0.7) rectangle (0,0.7);
\draw [line width =1.5pt,decoration={markings, mark=at position 1 with {\arrow{>}}},postaction={decorate}](0,0.7)--(0,-0.7);
\draw [line width =1pt](-0.7,0.3)--(0,0.3);
\draw [line width =1pt](-0.7,-0.3)--(0,-0.3);
\filldraw[fill=white,line width =0.8pt] (-0.3 ,0.3) circle (0.07);
\filldraw[fill=black,line width =0.8pt] (-0.3 ,-0.3) circle (0.07);
\node [right]  at(0,0.3) {$i$};
\node [right]  at(0,-0.3) {$\bar{i}$};
\end{tikzpicture}},
\eeq
\beq\label{wzh.eight}
\raisebox{-.20in}{

\begin{tikzpicture}
\tikzset{->-/.style=

{decoration={markings,mark=at position #1 with

{\arrow{latex}}},postaction={decorate}}}
\filldraw[draw=white,fill=gray!20] (-0,-0.2) rectangle (1, 1.2);
\draw [line width =1.5pt,decoration={markings, mark=at position 1 with {\arrow{>}}},postaction={decorate}](1,1.2)--(1,-0.2);
\draw [line width =1pt](0.6,0.6)--(1,1);
\draw [line width =1pt](0.6,0.4)--(1,0);
\draw[line width =1pt] (0,0)--(0.4,0.4);
\draw[line width =1pt] (0,1)--(0.4,0.6);
\draw[line width =1pt] (0.4,0.6)--(0.6,0.4);
\filldraw[fill=white,line width =0.8pt] (0.2 ,0.2) circle (0.07);
\filldraw[fill=white,line width =0.8pt] (0.2 ,0.8) circle (0.07);
\node [right]  at(1,1) {$i$};
\node [right]  at(1,0) {$j$};
\end{tikzpicture}
} =q^{\frac{1}{n}}\left(\delta_{{j<i} }(q^{-1}-q)\raisebox{-.20in}{

\begin{tikzpicture}
\tikzset{->-/.style=

{decoration={markings,mark=at position #1 with

{\arrow{latex}}},postaction={decorate}}}
\filldraw[draw=white,fill=gray!20] (-0,-0.2) rectangle (1, 1.2);
\draw [line width =1.5pt,decoration={markings, mark=at position 1 with {\arrow{>}}},postaction={decorate}](1,1.2)--(1,-0.2);
\draw [line width =1pt](0,0.8)--(1,0.8);
\draw [line width =1pt](0,0.2)--(1,0.2);
\filldraw[fill=white,line width =0.8pt] (0.2 ,0.8) circle (0.07);
\filldraw[fill=white,line width =0.8pt] (0.2 ,0.2) circle (0.07);
\node [right]  at(1,0.8) {$i$};
\node [right]  at(1,0.2) {$j$};
\end{tikzpicture}
}+q^{\delta_{i,j}}\raisebox{-.20in}{

\begin{tikzpicture}
\tikzset{->-/.style=

{decoration={markings,mark=at position #1 with

{\arrow{latex}}},postaction={decorate}}}
\filldraw[draw=white,fill=gray!20] (-0,-0.2) rectangle (1, 1.2);
\draw [line width =1.5pt,decoration={markings, mark=at position 1 with {\arrow{>}}},postaction={decorate}](1,1.2)--(1,-0.2);
\draw [line width =1pt](0,0.8)--(1,0.8);
\draw [line width =1pt](0,0.2)--(1,0.2);
\filldraw[fill=white,line width =0.8pt] (0.2 ,0.8) circle (0.07);
\filldraw[fill=white,line width =0.8pt] (0.2 ,0.2) circle (0.07);
\node [right]  at(1,0.8) {$j$};
\node [right]  at(1,0.2) {$i$};
\end{tikzpicture}
}\right),
\eeq
where   
$\delta_{j<i}= 
\begin{cases}
1  & j<i\\
0 & \text{otherwise}
\end{cases},\ 
\delta_{i,j}= 
\begin{cases} 
1  & i=j\\
0  & \text{otherwise}
\end{cases}$, and small white dots represent an arbitrary orientation of the edges (left-to-right or right-to-left), consistent for the entire equation. The black dot represents the opposite orientation. When a boundary edge of a shaded area is directed, the direction indicates the height order of the endpoints of the diagrams on that directed line, where going along the direction increases the height, and the involved endpoints are consecutive in the height order. The height order outside the drawn part can be arbitrary.

Our parameter $\omega^{\frac{1}{2}}$ corresponds to $\hat q^{-1}$ in \cite{LY23} (our setting fits well with the quantum cluster theory). 
Note that $R$ carries a natural $\mathbb{Z}[\omega^{\pm \frac{1}{2}}]$-module structure, making it a 
$\mathbb{Z}[\omega^{\pm \frac{1}{2}}]$-algebra. 
For the remainder of this paper, we will assume $R = \mathbb{Z}[\omega^{\pm \frac{1}{2}}]$. 
All results remain valid for a general $R$ by applying the functor 
$-\otimes_{\mathbb{Z}[\omega^{\pm \frac{1}{2}}]} R$.

 The algebra structure for $\cS_{\omega}(\fS)$ is given by stacking the stated $n$-webs, i.e. for any two stated $n$-webs $\alpha,\alpha'
 \subset\fS\times(-1,1)$, the product $\alpha\alpha'$ is defined by stacking $\alpha$ above $\alpha'$. That is, if $\alpha \subset \fS\times(0,1)$ and $\alpha' \subset \fS\times (-1,0)$, we have $\alpha \alpha' = \alpha \cup \alpha'$.

 Let $f$ be a proper embedding $\fS_1\rightarrow\fS_2$ between two pb surfaces. There is an $R$-module homomorphism $f_*\colon\cS_\omega(\Si_1)\rightarrow \cS_\omega(\Si_2)$ defined as following.
Let $\alpha$ be a negatively ordered stated $n$-web diagram in $\Si_1$, define $f_*(\alpha)\in \cS_\omega(\Si_2)$ represented by the negatively ordered stated $n$-web diagram $f(\alpha)$ \cite{LY23}.

For a boundary puncture $p$ of a pb surface $\fS$, 
corner arcs $C(p)_{ij}$ and $\overline{C}(p)_{ij}$ are stated arcs depicted as in Figure \ref{Fig;badarc}.
For a boundary puncture $p$ which is not on a $\mathbb P_1$ component of $\fS$, set 
$$C_p=\{C(p)_{ij}\mid i<j\},\quad\overline{C}_p=\{\overline{C}(p)_{ij}\mid i<j\}.$$  
Each element of $C_p\cup \overline{C}_p$ is called a \textbf{bad arc} at $p$.

\begin{figure}[h]
    \centering
    \includegraphics[width=150pt]{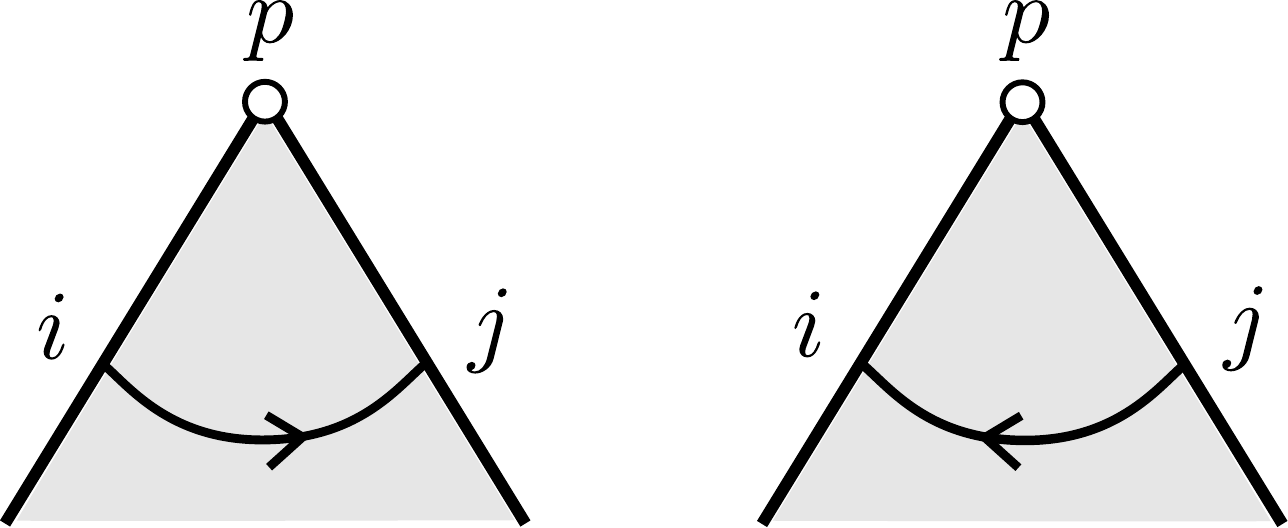}
    \caption{The left is $C(p)_{ij}$ and the right is $\overline{C}(p)_{ij}$.}\label{Fig;badarc}
\end{figure}

For a pb surface $\fS$,  $$\overline \cS_{\omega}(\fS) = \cS_{\omega}(\fS)/I^{\text{bad}}$$ 
is called the \textbf{reduced stated $\SL$-skein algebra}, defined in \cite{LY23}, where $I^{\text{bad}}$ is the two-sided ideal of $\cS_{\omega}(\fS)$ generated by all bad arcs.

\def\al{\alpha}

\def\bT{\mathbb T}

\def\bZ{\mathbb Z}

\def\Oq{\mathcal{O}_q(\mathrm{SL}_n)}
\def\cO{\mathcal{O}}

\subsection{The bigon and $\Oq$}\label{sec:bigon}

 Here we refer to \cite{KS,LS21,LY23} for the definition of $\Oq$.

 We call $\fB$ the {\bf bigon}. We can label the two boundary components of the bigon  by $e_l$ and $e_r$. A bigon with this labeling is called a {\bf directed bigon},
 see an example
$
\raisebox{-.20in}{

\begin{tikzpicture}
\tikzset{->-/.style=

{decoration={markings,mark=at position #1 with

{\arrow{latex}}},postaction={decorate}}}

\filldraw[draw=black,fill=gray!20] (0.5 ,0.5) circle (0.5);
\filldraw[draw=black,fill=white] (0.5,0) circle (0.05);
\filldraw[draw=black,fill=white] (0.5,1) circle (0.05);
\node [left] at(0,0.5) {$e_{l}$};
\node [right] at(1,0.5) {$e_{r}$};
\end{tikzpicture}
}
$. 
We can draw $\fB$ like $
\raisebox{-.20in}{

\begin{tikzpicture}
\tikzset{->-/.style=

{decoration={markings,mark=at position #1 with

{\arrow{latex}}},postaction={decorate}}}

\filldraw[draw=white,fill=gray!20] (0,0) rectangle (1, 1);
\draw[line width =1pt] (0,0)--(0,1);
\draw[line width =1pt] (1,0)--(1,1);
\end{tikzpicture}
}
$, and use $b_{ij}$
to denote 
$
\raisebox{-.20in}{

\begin{tikzpicture}
\tikzset{->-/.style=

{decoration={markings,mark=at position #1 with

{\arrow{latex}}},postaction={decorate}}}

\filldraw[draw=white,fill=gray!20] (0,0) rectangle (1, 1);
\draw [line width =1pt,decoration={markings, mark=at position 0.5 with {\arrow{>}}},postaction={decorate}](0,0.5)--(1,0.5);
\draw[line width =1pt] (0,0)--(0,1);
\draw[line width =1pt] (1,0)--(1,1);
\node [left] at(0,0.5) {$i$};
\node [right] at(1,0.5) {$j$};
\end{tikzpicture}
}
$, where $i,j\in\{1,2,\cdots,n\}$.




For
$i,j,k,l\in \{1,2,\cdots,n\}$,
we have the following coefficients
\beq
 \cR^{ij}_{lk} = q^{\frac 1n} \left(    q^{ -\delta_{i,j}} \delta_{jk} \delta_{il} + (q^{-1}-q)
    \delta_{j<i} \delta_{jl} \delta_{ik}\right),
 \label{R}
\eeq
where $\delta_{j<i}=1$ if $j<i$ and $\delta_{j<i}=0$ otherwise.

Let $\cO_q(M(n))$ be the associative algebra generated by  $u_{ij}$,   $i,j\in\{1,2,\cdots,n\},$
subject to the relations 
\beq
(\buu \ot \buu) \cR = \cR (\buu \ot \buu),  
\eeq
where $\cR$ is the $n^2\times n^2$ matrix given by equation \eqref{R}, and $\buu \ot \buu$ is the $n^2\times n^2$ matrix with entries $(\buu \ot \buu)^{ik}_{jl} = u_{i,j} u_{k,l}$ for $i,j,k,l\in \{1,2,\cdots,n\}$. 

Define  the element 
$$ {\det}_q(\buu):= \sum_{\sigma\in S_n} (-q)^{-\ell(\sigma)}u_{1\sigma(1)}\cdots u_{n\sigma(n)} = \sum_{\sigma\in S_n} (-q)^{-\ell(\sigma)}u_{\sigma(1)1}\cdots u_{\sigma(n)n}.$$

Define $\Oq$ to be  $\cO_q(M(n))/(\det_q(\buu)-1).$ Then 
$\Oq$ is a Hopf algebra with the Hopf algebra structure given by
\begin{align*}
\Delta(u_{ij})  = \sum_{k=1}^n u_{ik} \ot u_{kj}, \quad  \epsilon(u_{ij})= \delta_{ij},\quad 
S({u}_{ij} )
= (-q)^{j-i} {\det}_q(\buu^{ji}), 
\end{align*}
where $\Delta,\ \epsilon$ and $S$ denote the comultiplication, the counit and the antipode respectively, and 
$\buu^{ji}$ is the result of removing the $j$-th row and $i$-th column from $\buu$.

\begin{theorem}[\cite{LS21}]\label{Hopf} 
 There is an algebra isomorphism  $g_{big}\colon \cO_{q}(\SL) \rightarrow \cS_\omega(\fB) $  defined by
  $ g_{big} (u_{ij}) = b_{ij}$.
\end{theorem}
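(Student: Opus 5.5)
The statement is Theorem~\ref{Hopf}, the isomorphism $g_{big}\colon \cO_q(\SL)\to\cS_\omega(\fB)$ with $u_{ij}\mapsto b_{ij}$. I would prove it in three steps: well-definedness, surjectivity, and injectivity.

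\textbf{Well-definedness.} I must show that the elements $b_{ij}$ satisfy the RTT relations $(\buu\ot\buu)\cR=\cR(\buu\ot\buu)$ and that ${\det}_q(b)=1$.

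For the RTT relations, consider the product $b_{ij}b_{kl}$, which stacks $b_{ij}$ above $b_{kl}$. Near the right edge $e_r$ its endpoints are height-ordered one way, and near $e_l$ they are ordered the other way. Applying the boundary height-exchange relation \eqref{wzh.eight} at $e_r$ rewrites $b_{ij}b_{kl}$ as a combination of diagrams with the opposite order at $e_r$. Applying \eqref{wzh.eight} at $e_l$ does the same there. Isotoping the two parallel arcs past each other, by sliding one over the other across the bigon, then identifies the two expansions. The coefficients of \eqref{wzh.eight} are exactly the entries $\cR^{ij}_{lk}$ in \eqref{R}, so the resulting identity is the RTT relation.

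For the determinant, take an $n$-valent sink vertex with $n$ edges coming from $e_l$ and $n$ edges going to $e_r$. Expand it at $e_r$ using \eqref{wzh.five}, which produces $\mathbbm a\sum_\sigma(-q)^{-\ell(\sigma)}$ times a stated diagram. Expand it at $e_l$ in the same way. Comparing the two expansions, with \eqref{wzh.four} used if a sink--source pair appears, gives ${\det}_q(b)$ times a scalar. The scalar is fixed by the normalization $\mathbbm a$ together with \eqref{wzh.six} and \eqref{wzh.seven}, and it gives ${\det}_q(b)=1$. The sign and power of $q$ have to be tracked carefully here.

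\textbf{Surjectivity.} Every stated web in $\fB\times(-1,1)$ must be expressed as a polynomial in the $b_{ij}$ and the reversed arcs.

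First, remove closed components and interior vertices. Push every $n$-valent vertex to the boundary and use \eqref{wzh.five} to replace it by $n$ stated strands. Resolve crossings with \eqref{w.cross}, remove kinks with \eqref{w.twist}, and remove trivial circles with \eqref{w.unknot}. Remove returning arcs with \eqref{wzh.six}. What remains is a product of parallel left--right arcs of either orientation.

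Second, express a right-to-left arc through the $b_{ij}$. Such an arc can be written as a sink or source vertex with $n-1$ of its edges bent back, using \eqref{wzh.seven} and \eqref{wzh.five}. This shows it equals, up to a scalar, $S(u)$-type quantum minors, that is, $(-q)^{j-i}{\det}_q(\buu^{ji})$ evaluated at the $b$'s. Hence the $b_{ij}$ generate $\cS_\omega(\fB)$.

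\textbf{Injectivity.} This is the main obstacle. My first approach is to compare bases. $\cO_q(\SL)$ has a PBW-type basis of monomials in the $u_{ij}$. On the skein side, I would use the basis of $\cS_\omega(\fB)$ given by simple increasingly stated diagrams, which Lê–Sikora construct through a Diamond-lemma argument. I would then show that $g_{big}$ maps the first basis triangularly onto the second, with respect to a filtration by the number of arcs.

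An alternative route is to build a Hopf pairing. Define a map $\cS_\omega(\fB)\to \cO_q(\SL)$ by evaluating webs through the Reshetikhin–Turaev functor of the fundamental representation of $U_q(\mathfrak{sl}_n)$, reading states as basis vectors. Its composite with $g_{big}$ is the identity on the generators $u_{ij}$, which forces $g_{big}$ to be injective.

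Either way, this step requires establishing a basis or a consistent functor for the skein module. I expect it to carry most of the technical weight, since the relations there must be checked against \eqref{w.cross}--\eqref{wzh.eight}.
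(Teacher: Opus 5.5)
The paper gives no argument for this statement; it quotes it from \cite{LS21}, so your proposal has to stand on its own. Your well-definedness and surjectivity steps are the standard ones and are fine in outline, apart from two small corrections. First, \eqref{w.cross} does not resolve a crossing. It only trades a positive crossing for a negative one plus a smoothing, so you need an induction on the number of crossings toward a stacked diagram, using \eqref{wzh.eight} to adjust heights at the boundary. Second, the web in the determinant step should be a single sink with all $n$ edges coming from $e_l$; an $n$-valent vertex cannot have $n$ edges on each side. Sliding it to $e_r$ and applying \eqref{wzh.five} gives $\mathbbm a\,{\det}_q(b)$, and sliding it to $e_l$ gives returning arcs that \eqref{wzh.six} evaluates to a scalar.

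The genuine gap is injectivity, and your first route cannot work. A basis of simple increasingly stated arc diagrams is an $n=2$ phenomenon (\cite{le2018triangular,CLL}); for $n\ge 3$ such diagrams are linearly dependent in $\cS_\omega(\fB)$. Your own surjectivity step writes the one-arc diagram $\cev{b}_{ij}$, which is simple and trivially increasingly stated, as a combination of diagrams made of $n-1\ge 2$ parallel left-to-right arcs. This is because height exchanges between equally oriented strands via \eqref{wzh.eight} never change the number of arcs. In the same way, ${\det}_q(b)=1$ writes the empty diagram as a combination of $n$-arc diagrams. So there is no such basis, and no triangularity with respect to the number of arcs, to compare with a PBW basis. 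For $n=3$ the known diagrammatic basis consists of reduced non-elliptic webs with vertices \cite{higgins2020triangular}.

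Your second route is the right kind of argument, but as written it does not produce a map to $\cO_q(\SL)$. Evaluating a stated web by the Reshetikhin--Turaev functor, with states read as basis vectors, gives a scalar (essentially the counit). Also, a map whose composite with $g_{big}$ fixes the $u_{ij}$ proves nothing unless it is an algebra homomorphism. What is needed is to twist by $U_q(\mathfrak{sl}_n)$. For $x\in U_q(\mathfrak{sl}_n)$, let $f_x(\alpha)$ be the matrix coefficient of $x$ composed with the RT intertwiner of $\alpha$, read from $e_l$ to $e_r$. The coefficient is taken between the basis vector given by the states on $e_l$ and the dual basis vector given by the states on $e_r$. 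You must then verify three things:
\begin{enumerate}
\item $f_x$ kills all of \eqref{w.cross}--\eqref{wzh.eight}. For the boundary relations this means matching $\mathbbm a$, the $\mathbbm c_i$ and $\cR$ with the quantum volume form, the (co)evaluation maps and the $R$-matrix of the vector representation.
\item $f_x(\alpha\beta)=\sum f_{x_{(1)}}(\alpha)f_{x_{(2)}}(\beta)$, with the coproduct order matching the stacking convention.
\item $f_x(b_{ij})=\langle x,u_{ij}\rangle$.
\end{enumerate}
Then $f_x\circ g_{big}=\langle x,-\rangle$ on all of $\cO_q(\SL)$. Injectivity follows from nondegeneracy of the Hopf pairing over $\mathbb Q(\omega^{1/2})$, together with torsion-freeness of $\cO_q(\SL)$ over $\mathbb Z[\omega^{\pm 1/2}]$. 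These checks are the real content of the theorem, and the proposal leaves them undone.
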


\subsection{Reflection}\label{sub-sec-invariant}
Recall that $R = \bZ[\omega^{\pm\frac{1}{2}}]$. 
An $R$-algebra with reflection is
an $R$-algebra $A$ equipped with a $\mathbb Z$-linear anti-involution ${\bf Rf}$, called the reflection, such that ${\bf Rf}(\omega^{\frac{1}{2}}) = \omega^{-\frac{1}{2}}$. In other words, ${\bf Rf}\colon A\rightarrow A$ is a $\mathbb Z$-linear map such that for all
$x,y\in A$,
$${\bf Rf}(xy) = {\bf Rf}(y){\bf Rf}(x),\quad
{\bf Rf}(\omega^{\frac{1}{2}}x)
=\omega^{-\frac{1}{2}}x,\quad
{\bf Rf}^2={\rm Id}_A.$$
An element $z\in A$ is called reflection invariant if ${\bf Rf}(z)=z$. If $B$ is another $R$-algebra
with reflection ${\bf Rf}'$, then a map $f\colon A\rightarrow B$ is {\bf reflection invariant} if $f\circ {\bf Rf} = {\bf Rf}'\circ f$.

For a pb surface $\fS$, 
it is known that there is a unique reflection $$* \colon \cS_{\omega} (\fS)\to \cS_{\omega} (\fS)$$ such that, for a stated n-web diagram $\alpha$, $*(\alpha)$ is defined from $\alpha$ by switching all the crossings and reversing the height order on each boundary edge \cite[Theorem 4.9]{LS21}. 

A stated $n$-web diagram $\alpha$ in a pb surface $\fS$ is \textbf{reflection-normalizable} if $*(\alpha) = \omega^{k}\alpha$ for $k \in \bZ$. Note that such $k$ is unique if each connected component of $\fS$ contains non-empty boundaries \cite{LS21}. We define the \textbf{reflection-normalization} of $\alpha$ by
$$[\alpha]_{\rm norm} := \omega^{\frac{k}{2}}\alpha.$$
Then we have $*([\alpha]_{\rm norm}) = [\alpha]_{\rm norm}$, i.e., $[\alpha]_{\rm norm}$ is reflection invariant.

\subsection{The $n$-triangulation}
\label{sec-traceX}

\begin{figure}[h]
    \centering
    \includegraphics[width=220pt]{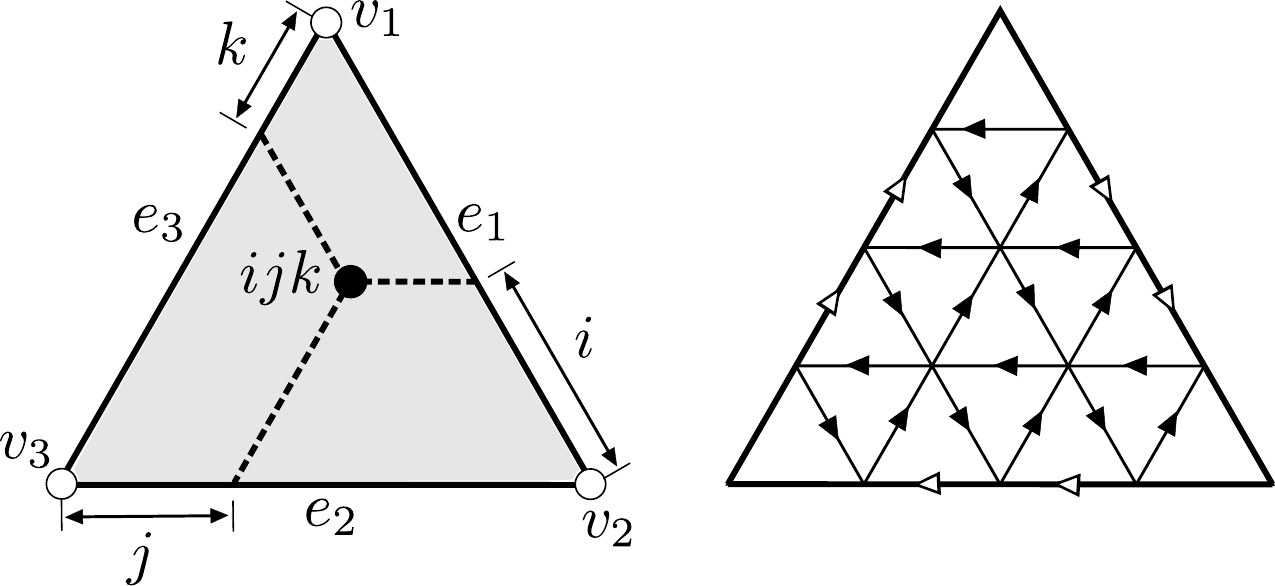}
    \caption{Barycentric coordinates $ijk$ and a $4$-triangulation with its quiver.}\label{Fig;coord_ijk}
\end{figure}

Consider barycentric coordinates for an ideal triangle $\bP_3$ so that
\begin{equation*}
\bP_3=\{(i,j,k)\in\bR^3\mid i,j,k\geq 0,\ i+j+k=n\}\setminus\{(0,0,n),(0,n,0),(n,0,0)\}, 
\end{equation*}
where $(i,j,k)$ (or $ijk$ for simplicity) are the barycentric coordinates. 
Let $v_1=(n,0,0)$, $v_2=(0,n,0)$, $v_3=(0,0,n)$. 
Let $e_i$ denote the edge on $\partial \bP_3$ whose endpoints are $v_i$ and $v_{i+1}$. 
See Figure \ref{Fig;coord_ijk}.

The \textbf{$n$-triangulation} of $\bP_3$ is the subdivision of $\bP_3$ into $n^2$ small triangles using lines $i,j,k=\text{constant integers}$. 
For the $n$-triangulation of $\bP_3$, the vertices and edges of all small triangles, except for $v_1,v_2,v_3$ and the small edges adjacent to them, form a quiver $\Gamma_{\bP_3}$.
An \textbf{arrow} is the direction of a small edge defined as follows. If a small edge $e$ is in the boundary $\partial\bP_3$ then $e$ has the clockwise direction of $\partial \bP_3$. If $e$ is interior then its direction is the same with that of a boundary edge of $\bP_3$ parallel to $e$. Assign weight $\frac{1}{2}$ to any boundary arrow and weight $1$ to any interior arrow.

\def\bZ{\mathbb Z}

Let $\overline{V}_{\bP_3}$ be the set of all
points with integer barycentric coordinates of $\bP_3$:
\begin{align}\label{def-V-P3}
\overline{V}_{\bP_3} = \{ijk \in \bP_3 \mid i, j, k \in \bZ\}.
\end{align}
Note that $\overline{V}_{\bP_3}$ does not contain
$v_1$, $v_2$, or $v_3$.
Elements of $\overline{V}_{\bP_3}$ are called \textbf{small vertices}, and small vertices on the boundary of $\bP_3$ are called the \textbf{edge vertices}. 

A pb surface $\fS$ is said to be {\bf triangulable} if no connected component of $\fS$ is of the following types:  
\begin{enumerate}[label={\rm (\arabic*)}]
    \item the monogon $\mathbb{P}_1$,  
    \item the bigon $\mathbb{P}_2$,  
    \item a sphere with one or two punctures,  
    \item a closed surface (i.e., a pb surface without punctures).  
\end{enumerate}

A {\bf triangulation} $\lambda$ of $\fS$ is a collection of disjoint ideal arcs in $\fS$ with the following properties: (1) any two arcs in $\lambda$ are not isotopic; (2) $\lambda$ is maximal under condition (1); (3) every puncture is adjacent to at least two ideal arcs.
Our definition of triangulation excludes the so-called self-folded triangles.
We will call each ideal arc in $\lambda$ an {\bf edge} of $\lambda$.
If an edge is isotopic to a boundary component of $\fS$, we call such an edge a {\bf boundary edge}.
Let $\partial\lambda$ denote the set of all the boundary edges. Note that $\partial\lambda$ is independent of the choice of $\lambda$.
It is well-known that any triangulable surface admits a triangulation.

Suppose that $\fS$ is a triangulable pb surface with a triangulation $\lambda$.
By cutting $\fS$ along all edges in $\lambda\setminus\partial\lambda$, we have a disjoint union of ideal triangles. Each triangle is called a \textbf{face} of $\lambda$. 
We use $\mathbb F_{\lambda}$ to denote the set of faces of $\lambda$.
Then
\begin{equation}\label{eq.glue}
\fS = \Big( \bigsqcup_{\tau\in\mathbb F_\lambda} \tau \Big) /\sim,
\end{equation}
where each face $\tau$ is regarded as a copy of $\bP_3$, and $\sim$ is the identification of edges of the faces to recover $\lambda$. 
Each face $\tau$ is characterized by a \textbf{characteristic map} 
\begin{align}\label{eq-character-map}
    f_\tau\colon \mathbb P_3 \to \fS,
\end{align}
which is a homeomorphism when we restrict $f_\tau$ to $\Int\mathbb P_3$ or the interior of each edge of $\mathbb P_3$.

An \textbf{$n$-triangulation} of $\lambda$ is a collection of $n$-triangulations of the faces $\tau$ which are compatible with the gluing $\sim$,  where compatibility means, for any edges $b$ and $b'$ glued via $\sim$, the vertices on $b$ and $b'$ are identified. Define
$$\overline{V}_\lambda=\bigcup_{\tau\in\mathbb F_\lambda} \overline{V}_\tau, \quad \overline{V}_\tau=f_\tau(\overline V_{\mathbb P_3}).$$
The images of the weighted quivers $\Gamma_{\mathbb P_3}$ by $f_\tau$ form a quiver $\Gamma_\lambda$ on $\fS$.
Note that when edges $b$ and $b'$ are glued, a small edge on $b$ and the corresponding small edge of $b'$ have opposite directions, i.e. the resulting arrows are of weight $0$. 

For any two $v,v'\in \overline{V}_\lambda$, define
$$
a_\lambda(v,v') = \begin{cases} w \quad & \text{if there is an arrow from $v$ to $v'$ of weight $w$},\\
0 &\text{if there is no arrow between $v$ and $v'$.} 
\end{cases}$$
Let $\overline{Q}_\lambda\colon \overline{V}_\lambda\times \overline{V}_\lambda \to \frac{1}{2}\bZ$ be the signed adjacency matrix of the weighted quiver $\Gamma_\lambda$ defined by 
\begin{equation}\label{eq-def-Q-lambda-re}
\overline{Q}_\lambda(v,v') = a_\lambda(v,v') - a_\lambda(v',v).
\end{equation}
Especially we use $\overline Q_{\bP_3}$ to denote $\overline{Q}_\lambda$ when $\fS=\bP_3$.

\begin{remark}\label{rem-Q-LY}
    Note that the arrows in the quiver of Figure~\ref{Fig;coord_ijk} are oriented oppositely to those in \cite[Figure~10]{LY23}, 
and that $\overline{Q}_\lambda$ in this paper equals 
$-\tfrac{1}{2}\,\overline{\mathsf{Q}}_\lambda$ as defined in \cite{LY23}.  
\end{remark}

\subsection{The $\mathcal A$-version quantum trace maps} \label{sec;A_tori}
In this subsection, assume $\fS$ contains interior punctures.

For any ideal triangle $\tau\in \mathbb F_{\lambda}$, we define a  map $\skeleton_\tau: \overline{V}_{\lambda}\to \mathbb Z[\barV_\tau]$ by the following steps:

Step (i): For a small vertex $v\in \overline{V}_{\lambda}$, we choose an ideal triangle $\nu\in\mathbb F_{\lambda}$ containing $v$, say $v=(ijk)\in \barV_\nu$. 

Step (ii): Draw a weighted directed graph $Y_v$ properly embedded into $\nu$ as in the left of Figure~\ref{Fig;skeleton}, where an edge of $Y_v$ has weight $i$, $j$ or $k$ according as the endpoint lying on the edge $e_1$, $e_2$ or $e_3$ respectively.


\begin{figure}[h]
    \centering
    \includegraphics[width=320pt]{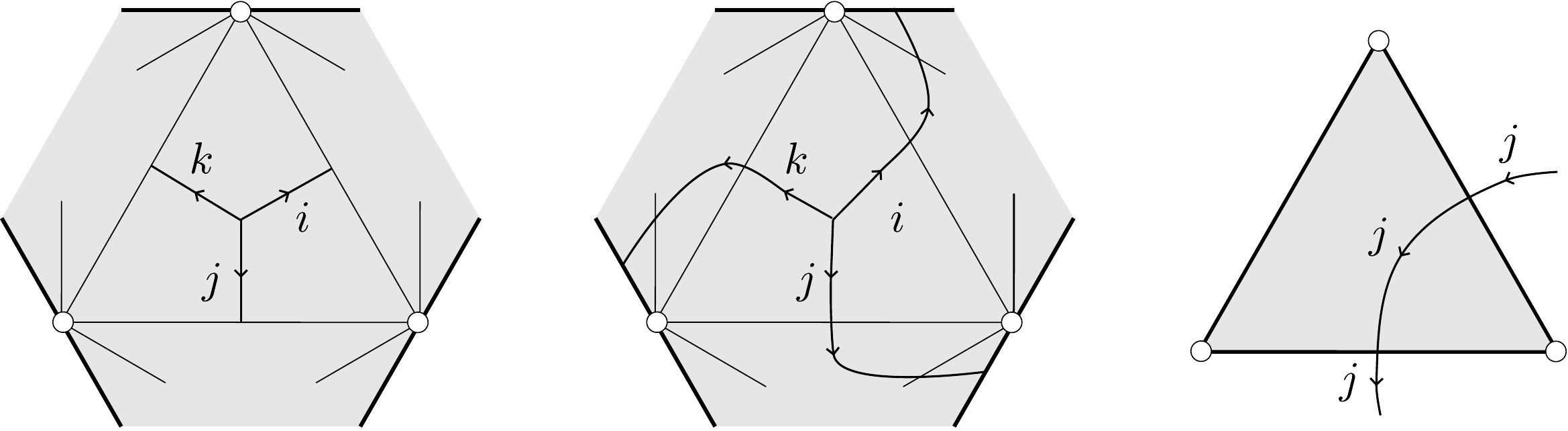}
    \caption{Left: Weighted graph $Y_v$,\quad Middle: Elongation $\widetilde{Y}_v$,\quad Right: Turning left}\label{Fig;skeleton}
\end{figure}

Step (iii): Elongate the nonzero-weighted edges of $Y_v$ to have an embedded weighted directed graph $\widetilde{Y}_v$ as drawn in the middle of Figure~\ref{Fig;skeleton}. Here, each edge is elongated by turning left whenever it enters a triangle. 
The part of the elongated edge in a triangle $\tau$ is called a \textbf{(arc) segment} of $\widetilde{Y}_v$ in $\tau$. In addition, we also regard $Y_v$ as a segment of $\widetilde{Y}_v$, called the \textbf{main segment}.

Step (iv): For the main segment $s=Y_v$, define $Y(s) = v \in \barV_\nu$. For any other arc segment $s$ in an ideal triangle $\sigma \in \mathbb F_{\lambda}$, define $Y(s)\in \overline{V}_\sigma$ to be the small vertex of the following weighted graph. 
$$
s=\begin{array}{c}\includegraphics[scale=0.27]{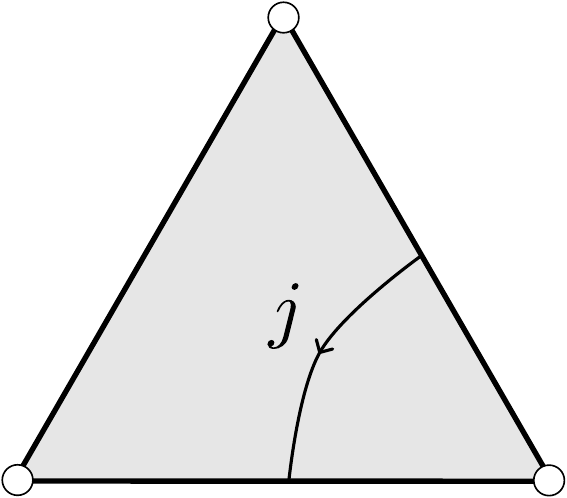}\end{array}
\longrightarrow\quad Y(s):=\begin{array}{c}\includegraphics[scale=0.27]{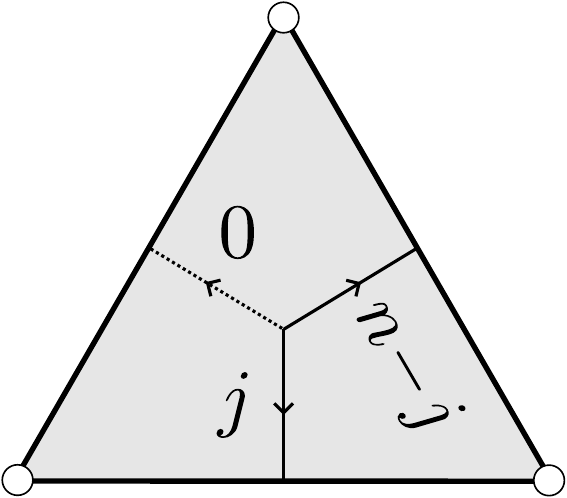}\end{array}
$$
Step (v): Now we focus on the segments contained in the given triangle $\tau$ and define  $\skeleton_\tau(v)\in \bZ [\barVt]$ by
\begin{equation}
\skeleton_\tau(v) = \sum_{s \subset \tau\cap\tilde{Y}_v} Y(s) \in \bZ [\barVt].
\end{equation}
It is known that $\skeleton_\tau(v)$ is well-defined \cite[Lemma~11.4]{LY23}, i.e.,  $\skeleton_\tau(v)$ does not depend on the choice of trianlge $\nu$ in Step (i).

\def\barP{\overline{P}}

Define the $\bZ_3$-invariant the anti-symmetric integer matrix 
\[
\barP_{\mathbb P_3} \colon \barV_{\mathbb P_3}\times\barV_{\mathbb P_3} \to n\mathbb{Z}
\]
as follows. Let \(v = ijk\) and \(v' = i'j'k'\) be two small vertices in \(\overline V_{\mathbb P_3}\).  
If the pair \((v,v')\) satisfies
\[
 \text{either } i \le i' \text{ and } j \le j', \quad \text{or } i \ge i' \text{ and } j \ge j',
\]
then
\begin{equation}\label{def-PP3}
\barP_{\bP_3}(v,v') = -n
\begin{vmatrix}
i & j \\
i' & j'
\end{vmatrix}
= -n(ij' - ji').
\end{equation}
It is known that $\barP_{\mathbb P_3}$ is well-defined \cite[Page~63]{LY23}. 


Recall that $\cF_\lambda$ denotes the set of all the faces of the triangulation $\lambda$. 
Define the anti-symmetric integer matrix $\barP_{\lambda}\colon \overline V_\lambda\times \overline V_\lambda\rightarrow n\mathbb Z$ by 
\begin{equation}\label{eq-anti-matric-P-def}
\overline P_{\lambda} (u,v)=\sum_{\tau\in \mathbb F_\lambda}\overline P_{\tau}(\skeleton_\tau(u),\skeleton_\tau(v)).
\end{equation}


\begin{remark}\label{rem-P-P}
    Our $\overline P_\lambda$ is the matrix $-\overline{\mathsf P}_\lambda$ defined in \cite[Equation~(163) and (205)]{LY23}.
\end{remark}

\cite[Equation~(214)]{LY23} implies that 
\begin{align}\label{eq-prod-PQ}
    \overline P_\lambda \overline{Q}_\lambda
    =\begin{pmatrix}
-2n^2(\text{Id}_{\mathring{\overline{V}}_\lambda\times \mathring{\overline{V}}_\lambda}) & * \\
O    &  * \\
\end{pmatrix},
\end{align}
where $\mathring{\overline{V}}_\lambda\subset \overline{V}_\lambda$ consists of all small vertices contained in the interior of $\fS$.

The following is the \textbf{$\mathcal A$-version quantum torus} of $(\fS,\lambda)$ \cite{LY23}:
\begin{equation}
\overline{\mathcal{A}}_{\omega}(\fS,\lambda) = R \langle 
\overline\Y_v^{\pm 1}, v \in \overline{V}_\lambda \rangle \Big/ (
\overline\Y_v 
\overline\Y_{v'}= \omega^{\overline P_\lambda(v,v')} 
\overline\Y_{v'} 
\overline\Y_v \text{ for } v,v'\in \overline{V}_\lambda ).
\end{equation}
For any $v_1,\ldots,v_r \in \overline{V}_\lambda$ and $a_1,\ldots,a_r \in \mathbb{Z}$,
\begin{align}
\label{Weyl_ordering-Y}
\left[ \overline\Y_{v_1}^{a_1} \overline\Y_{v_2}^{a_2} \cdots \overline\Y_{v_r}^{a_r} \right] := \omega^{-\frac{1}{2}\sum_{i<j}a_i a_j \overline P_\lambda(v_i,v_j)} \overline\Y_{v_1}^{a_1} \overline\Y_{v_2}^{a_2} \cdots \overline\Y_{v_r}^{a_r}
\end{align}
In particular, for ${\bf t} = (t_v)_{v\in \overline{V}_\lambda} \in \mathbb{Z}^{\overline{V}_\lambda}$, define
\begin{align}\label{def-monomial-for-A}
    \overline\Y^{\bf t} := \left[ \prod_{v\in \overline{V}_\lambda} \overline\Y_v^{t_v}\right].
\end{align}


For $v=(ijk) \in  \barV_\nu\subset \overline{V}_\lambda$ with a triangle $\nu$ of $\lambda$,  consider the graph $\widetilde{Y}_v$ defined in \S\ref{sec;A_tori}. 
By replacing a $k$-labeled edge of $\widetilde{Y}_v$ with $k$-parallel edges, we obtain a stated $n$-web $g''_v$, adjusted by a sign: 
$$\widetilde{Y}_v=
\begin{array}{c}\includegraphics[scale=0.38]{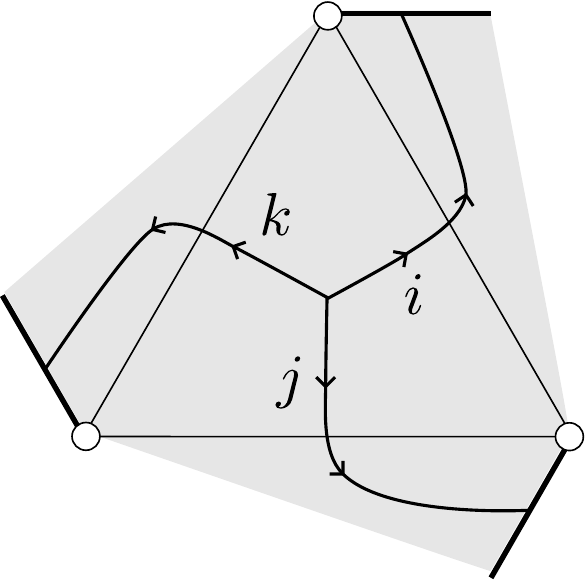}\end{array}\longrightarrow g''_v:=(-1)^{\binom{n}{2}}\begin{array}{c}\includegraphics[scale=0.40]{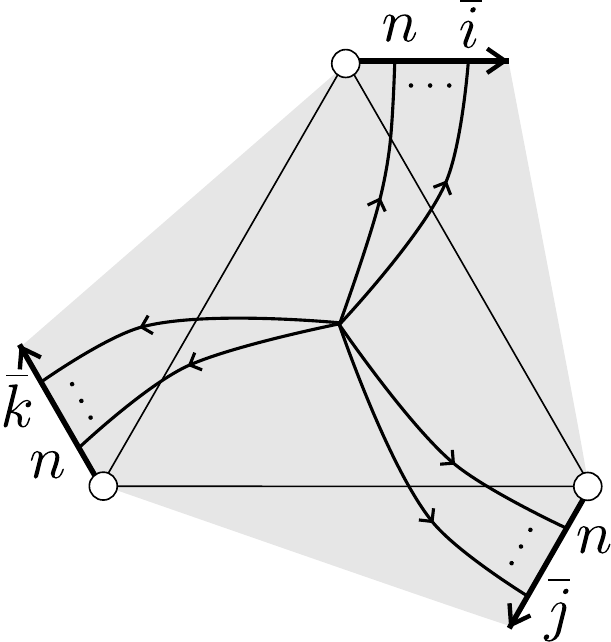}\end{array}. $$
It is known that $g''_v$ is reflection-normalizable \cite[Lemma~4.12]{LY23}.

Define $\ga_v\in \cS_{\omega}(\fS)$ as the reflection invariant of $g_v''$. 
We use $\gaa_v$ to denote the image of $\ga_v$ in $\overline{\cS}_{\omega}(\fS)$ by the projection $\cS_{\omega}(\fS)\to \overline{\cS}_{\omega}(\fS)$. 
The following lemma is shown in \cite{LY23}.

\begin{lemma}\label{gaa-com}
    For any $v,v'\in \overline{V}_\lambda$, we have
    \begin{align}
    \gaa_v\gaa_{v'} = \omega^{\overline P_\lambda(v,v')
    } \gaa_{v'}\gaa_v.
\end{align}
\end{lemma}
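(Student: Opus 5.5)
The plan is to reduce the statement to the behaviour of $\gaa_v$ under the $\mathcal A$-version quantum trace. The key input is the property of $\overline{\rm tr}_\lambda$ recalled in \S\ref{intro-sec-reduced}: it sends the reflection-normalized elementary web $\gaa_v$ attached to a small vertex $v$ to the generator $\overline A_v$ of the quantum torus $\overline{\mathcal A}_\omega(\fS,\lambda)$. This is how $\overline{\mathcal A}^+_\omega(\fS,\lambda)\subset \im\overline{\rm tr}_\lambda$ is established in \cite{LY23}. If this is used, then
\[
\overline{\rm tr}_\lambda(\gaa_v\gaa_{v'}-\omega^{\overline P_\lambda(v,v')}\gaa_{v'}\gaa_v)=\overline A_v\overline A_{v'}-\omega^{\overline P_\lambda(v,v')}\overline A_{v'}\overline A_v=0.
\]
This already gives the relation in the projected algebra. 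To conclude in $\overline\cS_\omega(\fS)$ itself one needs injectivity, which is only known in the cases of property (b). So for a general surface I would not rely on the trace, but argue directly with webs.

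The direct skein argument goes as follows. First, by the splitting (cutting) homomorphism along the interior edges of $\lambda$, which is injective for reduced stated skein algebras, $\gaa_v$ maps to a Weyl-ordered tensor product $\bigotimes_\tau \overline{\mathsf g}_{\tau,s}$ over the faces. Each factor is a reduced corner web in a triangle indexed by the segments $s$ of $\widetilde Y_v$ in $\tau$, i.e.\ by the vertices $Y(s)$ making up $\skeleton_\tau(v)$. Second, I would prove the statement for $\fS=\bP_3$: two elementary webs $\overline{\mathsf g}_{u},\overline{\mathsf g}_{u'}$ in a triangle $\omega^{\overline P_{\bP_3}(u,u')}$-commute. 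In the reduced triangle algebra, each such web is (up to normalization) a product of corner arcs with ``highest'' states. One computes the commutation by pushing webs past each other along the three boundary edges, using the height-exchange relation \eqref{wzh.eight} with bad-arc terms vanishing. The exponents coming from the three corners assemble into $-n(ij'-ji')$ from \eqref{def-PP3}. Third, multiplicativity over faces gives exponent $\sum_\tau \overline P_\tau(\skeleton_\tau(v),\skeleton_\tau(v'))=\overline P_\lambda(v,v')$, by \eqref{eq-anti-matric-P-def} and bilinearity.

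I expect the triangle computation in the second step to be the main obstacle. Height exchanges of clusters of $i$ parallel strands at one corner against $i'$ strands produce $q$-powers depending on the states. One must check that after reduction only the leading term survives. One must also check that the total exponent matches the determinant formula in both orderings allowed in \eqref{def-PP3}, with the $\bZ_3$-symmetry used to reduce to a single corner configuration.
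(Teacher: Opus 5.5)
\textbf{There is a genuine gap.} It sits in your first step: you assert that the splitting homomorphism along the interior edges of $\lambda$ is injective on \emph{reduced} stated ${\rm SL}_n$-skein algebras. This is not available for general $n$ and general $\fS$. In fact it would give exactly the injectivity you set out to avoid.

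The reduced quantum trace is compatible with cutting along $\lambda$. In \cite{LY23} it is built by cutting into faces and applying the trace of each face. Your own claim that the image of $\gaa_v$ is a tensor product of face elements indexed by $\skeleton_\tau(v)$ is precisely this compatibility. Moreover, the trace of $\mathbb P_3$ is injective by the polygon case of Theorem~\ref{thm-trace-A}(c). So injectivity of the reduced splitting map along $\lambda$ would give injectivity of $\overline{\rm tr}_\lambda$ for every $n$ and every triangulable $\fS$. That is exactly what is open, and it is why $\widetilde{\cS}_\omega(\fS)=\overline{\cS}_\omega(\fS)/\ker\overline{\rm tr}_\lambda$ is introduced.

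Consequently your web argument, even with the triangle computation completed, only proves the relation modulo $\ker\overline{\rm tr}_\lambda$, i.e.\ in $\widetilde{\cS}_\omega(\fS)$. Your first paragraph already gives that much. For the same reason the explicit height-exchange computation in $\mathbb P_3$ is unnecessary, since the trace is injective there.

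\textbf{How to close the gap.} Prove the relation upstairs, in the unreduced algebra $\cS_\omega(\fS)$, where injectivity does hold, and only then project.
\begin{enumerate}
\item By Theorem~\ref{thm-stated-trace}(b), ${\rm tr}_\lambda\colon\cS_\omega(\fS)\to\mathcal A_\omega(\fS,\lambda)$ is injective with $\ga_v\mapsto A_v$. Hence $\ga_v\ga_{v'}=\omega^{P_\lambda(v,v')}\ga_{v'}\ga_v$ in $\cS_\omega(\fS)$, which is Theorem~\ref{thm-stated-trace}(a).
\item By definition, $\gaa_v$ is the image of $\ga_v$ under the projection $\cS_\omega(\fS)\twoheadrightarrow\overline{\cS}_\omega(\fS)$. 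So the same relation, with exponent $P_\lambda(v,v')$, holds for the $\gaa_v$ in $\overline{\cS}_\omega(\fS)$.
\item To identify the exponent, apply $\overline{\rm tr}_\lambda$, which sends $\gaa_v\mapsto\overline A_v$. This gives $\omega^{P_\lambda(v,v')}\overline A_{v'}\overline A_v=\overline A_v\overline A_{v'}=\omega^{\overline P_\lambda(v,v')}\overline A_{v'}\overline A_v$ in the quantum torus.
\item Since $\omega^{1/2}$ is an indeterminate and $\overline A_{v'}\overline A_v$ is a unit, this forces $P_\lambda(v,v')=\overline P_\lambda(v,v')$ for $v,v'\in\overline V_\lambda$.
\end{enumerate}
For comparison, the paper itself does not argue this; it simply cites \cite{LY23} for the lemma.
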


We use $\Ap$ to denote the $R$-subalgebra of $\A$ generated by $\overline A^{\bf k}$ for ${\bf k}\in\mathbb N^{\overline{V}_\lambda}$.

\begin{theorem}\cite{LY23}\label{thm-trace-A}
    Let $\fS$ be a triangulable pb surface without interior punctures, and let $\lambda$ be an ideal triangulation of $\fS$. 
    There exists a unique algebra homomorphism 
    $$\trA\colon
    \overline{\cS}_\omega(\fS)\rightarrow\A$$
    with the following properties
    \begin{enumerate}[label={\rm (\alph*)}]\itemsep0,3em

    \item\label{thm-trace-A-a} For each $v\in \overline{V}_\lambda$, we have $\trA(\gaa_v) =\overline  A_v$.

    \item\label{thm-trace-A-b} We have $\Ap\subset\im\trA\subset\A$.

    \item If $n=2,3$, or $n>3$ and $\fS$ is a polygon, then $\trA$ is injective.
    \end{enumerate}
\end{theorem}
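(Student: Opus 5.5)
We would build $\trA$ in three stages: reduce to triangles by cutting, construct the trace on a single triangle, and then verify (a), (b), (c) together with uniqueness.

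\emph{Construction.} Fix the triangulation $\lambda$ and cut $\fS$ along all edges in $\lambda\setminus\partial\lambda$.
\begin{itemize}
\item The stated skein algebra has the L\^e–Sikora cutting (splitting) homomorphism. It descends to the reduced algebras and gives an injective algebra map
\[
\overline{\cS}_\omega(\fS)\to\bigotimes_{\tau\in\mathbb F_\lambda}\overline{\cS}_\omega(\tau),
\]
whose image is the subspace of elements coinvariant for the $\cO_q(\SL)$ comodule structures attached to pairs of glued edges.
\item For a single triangle $\tau=\bP_3$, we define $\overline{\rm tr}_\tau\colon\overline{\cS}_\omega(\bP_3)\to\overline{\mathcal A}_\omega(\bP_3)$ explicitly. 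The reduced algebra of $\bP_3$ is generated by stated corner arcs, so it suffices to assign to each stated arc a Laurent monomial in the $\overline A_v$, $v\in\overline V_{\bP_3}$.
\item We check that these assignments respect the defining relations \eqref{w.cross}–\eqref{wzh.eight} together with the vanishing of bad arcs. The height-exchange relation \eqref{wzh.eight} matches exactly the $\omega^{\overline P_{\bP_3}}$-commutation coming from the determinant formula \eqref{def-PP3}.
\item The product $\bigotimes_\tau\overline{\rm tr}_\tau$ lands in $\bigotimes_\tau\overline{\mathcal A}_\omega(\tau)$. We show that on the coinvariant subalgebra its image lies in the subtorus spanned by the monomials $\prod_\tau \overline A^{\skeleton_\tau(v)}$.
\item The skeleton construction is well defined by \cite[Lemma~11.4]{LY23}, and $\overline P_\lambda$ is defined by \eqref{eq-anti-matric-P-def}. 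Hence $\overline A_v\mapsto\prod_\tau \overline A^{\skeleton_\tau(v)}$ is an algebra embedding $\A\hookrightarrow\bigotimes_\tau\overline{\mathcal A}_\omega(\tau)$, and this is how $\trA$ is defined.
\end{itemize}

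\emph{Property (a) and uniqueness.} Property (a) is a direct computation: cutting $\gaa_v$ along the edges yields, in each triangle, parallel corner-type webs indexed exactly by the segments of $\widetilde Y_v$. For uniqueness we use Lemma~\ref{gaa-com} and prove a triangularity statement: for every stated web $\alpha$ there is a monomial $m$ in the $\gaa_v$ such that $\alpha m$ is an $R$-combination of monomials in the $\gaa_v$. We would obtain this by cutting $\alpha$ along edges and using the highest-degree term of $\alpha$ with respect to the edge filtration. Any homomorphism satisfying (a) is then forced on $\alpha m$ and on $m$. Since $\A$ is a domain and the image of $m$ is invertible there, it is forced on $\alpha$ as well.

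\emph{Property (b).} The inclusion $\Ap\subset\im\trA$ follows from (a), because the $\overline A^{\bf k}$ are images of products of the $\gaa_v$. The inclusion $\im\trA\subset\A$ comes from the construction above.

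\emph{Property (c).} We expect injectivity to be the main obstacle.
\begin{itemize}
\item For $n=2,3$ we would use the known bases: simple diagrams for $n=2$, and reduced non-elliptic webs \cite{frohman20223,higgins2020triangular} for $n=3$. We would show that distinct basis elements have distinct leading exponents under $\trA$ for a suitable monomial order, by tropical degree counting along the edges.
\item For polygons and general $n$, we induct on the number of triangles via the splitting map. The triangle case reduces to showing that $\overline{\cS}_\omega(\bP_3)$, viewed as a quotient of $\cO_q(\SL)$-comodule algebras, embeds into its quantum torus. For this we would try first a PBW-type spanning set of ordered corner-arc monomials and check that their images are distinct monomials.
\item Gluing then preserves injectivity, because the cutting map is injective and the tensor product of injective maps of free $R$-modules remains injective.
\end{itemize}
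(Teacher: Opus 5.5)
This theorem is quoted from \cite{LY23} and not proved in the paper, so your outline must stand on its own. It has genuine gaps in the construction, in (c), and in the uniqueness argument.

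\textbf{The construction.} The triangle step is wrong as stated. For $n\geq 3$ the trace does \emph{not} send stated corner arcs of $\mathbb P_3$ to Laurent monomials. The paper's own formula \eqref{intro-eq-Cij}, which applies to $\fS=\mathbb P_3$, shows that for $1<j<i$ the arc $C_{ij}$ is a mutated cluster variable times a frozen monomial. Already for $n=3$ it gives $C_{32}=[\mu_{21}(\overline A_{21})\cdot\overline A_{22}^{-1}]$, a two-term Laurent polynomial, because $21$ is the interior vertex. Likewise \eqref{wzh.eight} is not a $q$-commutation relation when $j<i$, so it is not simply ``matched'' by the $\omega^{\overline P_{\mathbb P_3}}$-commutation.

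Even with the correct polynomial values, ``checking \eqref{w.cross}--\eqref{wzh.eight}'' on arc generators does not produce a homomorphism. Those are relations among webs with $n$-valent vertices, not a presentation of $\overline{\cS}_\omega(\mathbb P_3)$ by stated arcs.

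The gluing step is only asserted. Nothing in your outline forces the cut image into the span of the monomials $\prod_\tau\overline A^{\skeleton_\tau(v)}$, whose factors are spread over all triangles crossed by $\widetilde Y_v$. Also, in the reduced setting the coaction attached to an edge is by the reduced bigon $\overline{\cS}_\omega(\mathbb P_2)$, not by $\mathcal O_q({\rm SL}_n)$. In \cite{LY23} these problems are avoided by building the $\mathcal X$-version first, in the Fock--Goncharov algebra where gluing along an edge just multiplies the two copies of each edge variable. The $\mathcal A$-version is then obtained by transport along the identification of $\A$ with the balanced part of the $\mathcal X$-torus, and (a) becomes a computation of the $\mathcal X$-trace of $\gaa_v$.

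\textbf{Part (c) proves too much.} Nothing in your gluing argument uses that $\fS$ is a polygon. Suppose the reduced cutting map $\overline{\cS}_\omega(\fS)\to\bigotimes_\tau\overline{\cS}_\omega(\tau)$ were injective in general. Then injectivity on triangles plus your tensor-product argument would make $\trA$ injective for every triangulable $\fS$ and every $n$. That is open, and it is precisely why the theorem is restricted and why the paper works with $\widetilde{\cS}_\omega(\fS)=\overline{\cS}_\omega(\fS)/\ker\trA$. So injectivity of the reduced cut is not an available input. The polygon case needs a polygon-specific argument that you do not supply, and even the triangle case is left as something you ``would try''.

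\textbf{Uniqueness.} Your uniqueness argument has the same defect. The identity $\alpha m\in R\langle\gaa_v\rangle$ must hold in $\overline{\cS}_\omega(\fS)$ itself. A leading-term computation after cutting only establishes it in $\bigotimes_\tau\overline{\cS}_\omega(\tau)$ unless the reduced cut is injective.

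Your plan for $n=2,3$, using bases whose elements have distinct leading exponents, is reasonable. It does, however, rest on substantial outside input: the reduced web bases and the tropical-coordinate results of \cite{douglas2024tropical,Kim20}.
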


\begin{definition}\label{def-key-algebra}
    Let $\fS$ be a triangulable pb surface without interior punctures, and let $\lambda$ be an ideal triangulations of $\fS$. Define 
    $\dS:=\rdS/(\ker \tr)$, called the {\bf projected stated ${\rm SL}_n$-skein algebra}.
    \cite[Lemma~2.12]{huang2025quantum} shows that 
    $\dS$ is independent of the choice of $\lambda$.
\end{definition}

\subsection{Extended quantum trace maps}\label{sub-extended}
A pb surface is called {\bf generalized triangulable} if each of its connected components is either a triangulable pb surface or a bigon. The {\bf generalized triangulation} of a bigon is the set of its two boundary edges.

Let $\fS$ be a generalized triangulable pb surface without interior punctures.
Attach a triangle $\bP_3$ to each boundary edge of $\fS$, and let $\fS^\ast$ denote the resulting triangulable pb surface; see Figure~\ref{Fig;attaching}(A).
Suppose that the attaching edge in each copy of $\bP_3$ is $e_1$.
For any generalized triangulation $\lambda$ of $\fS$, let $\lambda^\ast$ be the ideal triangulation of $\fS^\ast$ whose restriction to $\fS$ is $\lambda$.
Define the extended $A$-vertex set $\VA\subset \overline{V}_{\lambda^\ast}$ to be the subset of all small vertices not on
the $e_2$ edge in the attached triangles.

Note that $\fB^\ast=\mathbb P_4$, and the induced triangulation is illustrated as in Figure~\ref{Fig;attaching}(B).

\begin{figure}[htbp]
    \centering
    \begin{minipage}{0.48\textwidth}
        \centering
        \includegraphics[width=90pt]{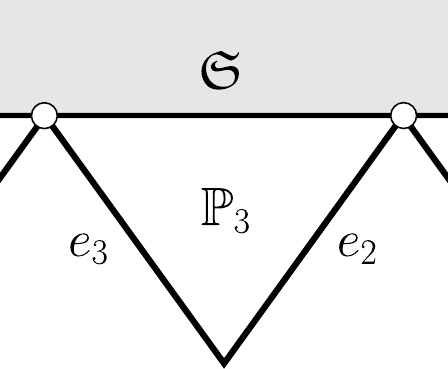}
        \vspace{4pt}
        
        (A)
    \end{minipage}%
    \hspace{-0.05\textwidth} 
    \begin{minipage}{0.48\textwidth}
        \centering
        \includegraphics[width=70pt]{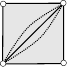}
        \vspace{4pt}
        
        (B)
    \end{minipage}  
    \caption{(A) Attaching triangles. \quad (B) The triangulation for $\fB^\ast=\mathbb P_4$.}
    \label{Fig;attaching}
\end{figure}

Define a map 
\[
p\colon \OVA\setminus \V \;\longrightarrow\; \OVA\setminus V'_\lambda
\]
as follows (see \cite[Figure~18]{LY23} for an illustration).
Each vertex $v\in \OVA\setminus\V$ has coordinates $ijk$ in an attached triangle with $k\neq 0$, while $\OVA\setminus V'_\lambda$ consists of vertices $i'j'k'$ in attached triangles with $i'=0$.
We set
\begin{equation}\label{eq-cov-pdef}
p(v)=(0,n-k,k)
\qquad\text{in the same attached triangle.}
\end{equation}

Define the matrix $\sfC\colon V'_\lambda\times \OVA\to\bZ$ by
\begin{equation}\label{def-matrix-C}
    \begin{aligned}
        \sfC(v,v) &= 1, && v\in V'_\lambda,\\
\sfC(v,p(v)) &= -1, && v\in V'_\lambda\setminus\V,\\
\sfC(v,v') &= 0, && \text{otherwise}.
    \end{aligned}
\end{equation}

The extended matrix $P_\lambda \colon \VA\times\VA\to\mathbb Z$ is then given by
\begin{align}\label{def-matrix-Pl}
    P_\lambda := C \, \Past \, C^{T}.
\end{align}

Define the {\bf extended $A$-version quantum tori} associated to $(\fS,\lambda)$ by
\[
\mathcal A_{\omega}(\fS,\lambda)
=
R\langle 
\Y_v^{\pm1},\; v\in \VA
\rangle \Big/ 
\big(
\Y_v \Y_{v'} = \omega^{P_\lambda(v,v')} \Y_{v'} \Y_v,\;
v,v'\in\VA
\big).
\]

\begin{remark}
Our convention for $P_\lambda$ coincides with the matrix $-\mathsf{P}_\lambda$ in \cite[\S11.4]{LY23}.
\end{remark}

Recall from \S\ref{sec;A_tori} that an element $\ga_v\in\SS$ was defined for every $v\in \V\subset\VA$.
We now extend this definition to all vertices $v\in\VA\setminus\V$ \cite[Page~89]{LY23}.

Let $\binom{\bJ}{k}$ denote the set of all $k$-element subsets of $\bJ=\{1,\dots,n\}$.
For any $I\subset\bJ$, set
\[
\bar{I}=\{\bar{i}\mid i\in I\},\qquad
I^c=\bJ\setminus I,\qquad
\bar{I}^c = (\bar{I})^c.
\]
For $I,J\in\binom{\bJ}{k}$, let $M^I_J(\mathbf{u})\in\Oq$ be the quantum determinant of the $I\times J$ submatrix of~$\mathbf{u}$.
Identifying $\cS_\omega(\fB)=\Oq$, and identifying each generator $u_{ij}$ with the stated arc described in \S\ref{sec:bigon}, we proceed as follows.

Let $a$ be a properly embedded oriented arc in $\fS$, and let $N(a)$ be a small open tubular neighborhood of $a$.
Using an identification $\fB\cong N(a)$ inducing $\cS_\omega(\fB)\to\SS$, denote by $M^I_J(a)$ the image of $M^I_J(\mathbf{u})$.
We depict this schematically as
\begin{align}
\begin{array}{c}\includegraphics[scale=0.43]{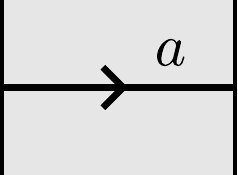}\end{array}
\longrightarrow
I\!\begin{array}{c}\includegraphics[scale=0.43]{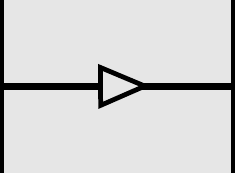}\end{array}\!J
:= M^I_J(a).
\label{eq:quantum_minor}
\end{align}

For an attached triangle $\nu=\bP_3$ and a vertex $v=(ijk)\in \barV_{\nu}\subset V'_\lambda\setminus \barV_\lambda$, let $c$ be the oriented corner arc of $\fS$ starting on $e$ and always turning left:
\[
\begin{array}{c}\includegraphics[scale=0.35]{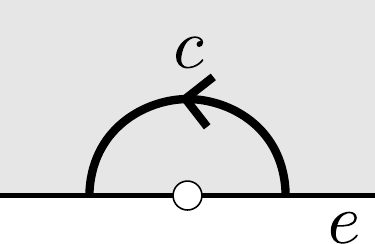}\end{array}
\longrightarrow
g''_v := 
M^{[\,j+1,\; j+i\,]}_{[\bar{i},\,n]}(c)
\begin{array}{c}\includegraphics[scale=0.47]{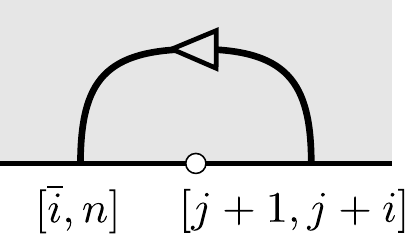}\end{array}.
\]
It is known that $g''_v$ is reflection-normalizable \cite[Lemmas 4.13 and 4.10]{LY23}.
Define $\ga_v$ to be the reflection invariant of $g''_v$.

\begin{theorem}[\cite{LY23}]\label{thm-stated-trace}
Let $\fS$ be a generalized triangulable pb surface with triangulation $\lambda$, and assume that $\fS$ has no interior punctures.
\begin{enumerate}[label={\rm (\alph*)}]\itemsep0.3em

\item For all $v,v'\in\VA$, we have
\[
\ga_v \ga_{v'} \;=\; \omega^{P_\lambda(v,v')}\,\ga_{v'}\ga_v.
\]

\item There exists a unique injective algebra homomorphism
\[
\tr\colon \cS_\omega(\fS)\longrightarrow \As,\qquad
\ga_v \longmapsto A_v,\quad v\in\VA.
\]

\item We have the inclusions
\begin{equation}\label{eq-inclusion-A-s}
\Asp \;\subset\; \im\tr \;\subset\; \As,
\end{equation}
where $\Asp$ is the subalgebra of $\As$ generated by $\{A_v\mid v\in V'_\lambda\}$.

\item $\SS$ is an Ore domain.
\end{enumerate}
\end{theorem}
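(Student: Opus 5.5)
The strategy is to build $\tr$ from the reduced quantum trace of the enlarged surface $\fS^\ast$, but to prove injectivity by cutting $\fS$ into triangles and bigons. Those pieces are polygon cases, where Theorem~\ref{thm-trace-A}(c) gives injectivity for every $n$.

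\textbf{Construction.} Let $\iota\colon\fS\hookrightarrow\fS^\ast$ be the inclusion, with $\iota_\ast\colon\SS\to\overline{\cS}_\omega(\fS^\ast)$ the induced map. I would first show that $\iota_\ast(\ga_v)$ is a Weyl-ordered Laurent monomial in the elements $\gaa_u$, $u\in\OVA$, taken in $\overline{\cS}_\omega(\fS^\ast)$. For $v\in\V$ this monomial is $\gaa_v$. For $v\in V'_\lambda\setminus\V$ it should be $[\gaa_v\gaa_{p(v)}^{-1}]$. This last identity is a local computation inside the attached triangle. There the quantum minor $M^{[j+1,j+i]}_{[\bar i,n]}(c)$, pushed into $\fS^\ast$, factors modulo bad arcs as $\gaa_v$ times the inverse of the edge-vertex element $\gaa_{p(v)}$. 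I would verify this with the relations \eqref{wzh.five}--\eqref{wzh.seven} and the vanishing of bad arcs.

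Consequently $\overline{\rm tr}_{\lambda^\ast}\circ\iota_\ast$ sends $\ga_v$ to $\overline A^{\,C(v,\cdot)}$, which is exactly what the matrix $\sfC$ in \eqref{def-matrix-C} encodes. The map $A_v\mapsto\overline A^{\,C(v,\cdot)}$ is an algebra embedding $\As\hookrightarrow\overline{\mathcal A}_\omega(\fS^\ast,\lambda^\ast)$, precisely because $P_\lambda=\sfC\,\Past\,\sfC^T$. Its injectivity follows since $\sfC$ is unitriangular, hence of full row rank. Part (a) then follows from Lemma~\ref{gaa-com} applied to $\fS^\ast$.

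\textbf{Image in the subtorus.} To define $\tr$, I must check that the image of $\overline{\rm tr}_{\lambda^\ast}\circ\iota_\ast$ lies in this subtorus. By Lemma~\ref{lem-essential}-type generation, $\SS$ is generated by stated arcs, so it suffices to handle each stated arc. For those, the state-sum formula for the trace shows that the exponents of the edge-vertex variables $\overline A_{p(v)}$ are governed by the other exponents in the pattern prescribed by $\sfC$. Uniqueness of $\tr$ is automatic once (c) is proved, since then $\SS$ sits inside ${\rm Frac}$ of $\Asp$.

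\textbf{Injectivity.} I would use the splitting homomorphism $\SS\hookrightarrow\bigotimes_{\tau}\cS_\omega(\tau)$ over the faces and bigons of $\lambda$; this map is injective by \cite{LS21}. I would also show the construction is compatible with gluing. That is, $\As$ embeds into $\bigotimes_\tau\mathcal A_\omega(\tau,\lambda_\tau)$, compatibly with the splitting map, because $P_\lambda$ is additive over faces, much like \eqref{eq-anti-matric-P-def}.

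This reduces injectivity to $\fS=\bP_3$ and $\fS=\fB$. In these cases $\fS^\ast$ is the polygon $\mathbb P_6$, respectively $\mathbb P_4$. So $\overline{\rm tr}_{\lambda^\ast}$ is injective by Theorem~\ref{thm-trace-A}(c), and it remains to show $\iota_\ast$ is injective. For that I would find a basis of $\cS_\omega(\fS)$ whose images in $\overline{\cS}_\omega(\fS^\ast)$ stay linearly independent. This works because the bad arcs of $\fS^\ast$ live at the new corners, which the image of $\iota_\ast$ avoids, and one can compare leading terms.

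\textbf{Inclusions and Ore property.} For (c), the inclusion $\Asp\subset\im\tr$ is immediate from (b) and $\tr(\ga_v)=A_v$, and $\im\tr\subset\As$ holds by construction. For (d), $\SS$ is a domain because it embeds into a quantum torus. For the Ore condition, given $a,b\in\SS$, choose $\mathbf k\in\mathbb N^{V'_\lambda}$ with $\tr(a)A^{\mathbf k},\tr(b)A^{\mathbf k}\in\Asp$. This is possible since every element of $\As$ becomes polynomial after right multiplication by a large monomial. The quantum affine space $\Asp$ is an Ore domain (it is Noetherian), so common multiples exist inside $\Asp\subset\im\tr$, and pulling back gives the Ore condition for $\SS$.

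\textbf{Main obstacle.} I expect the hardest step to be the local identity $\iota_\ast(\ga_v)=[\gaa_v\gaa_{p(v)}^{-1}]$, together with the injectivity of $\iota_\ast$ for $\bP_3$ and $\fB$. I would attack both by an explicit quantum-minor computation in $\mathcal O_q(\SL)$ pushed into $\mathbb P_4$.
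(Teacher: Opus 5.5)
The paper does not prove this theorem; it quotes it from \cite{LY23}. Your construction step agrees with what the paper imports from there: the identity of Lemma~\ref{lem-iota-gv} and the monomial embedding $L_\lambda$ of \eqref{eq-def-L-lamda}. Your arguments for uniqueness, for (c), and for the Ore condition in (d) are correct once injectivity is known.

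The gap is the injectivity argument, on which everything else rests. You assert that ${\rm tr}_\lambda$ is compatible with the splitting map $\Theta\colon\cS_\omega(\fS)\hookrightarrow\bigotimes_\tau\cS_\omega(\tau)$ ``because $P_\lambda$ is additive over faces.'' Additivity of the type \eqref{eq-anti-matric-P-def} only shows that a skeleton-type monomial map $\Psi\colon\mathcal A_\omega(\fS,\lambda)\to\bigotimes_\tau\mathcal A_\omega(\tau)$ is an algebra homomorphism. You would still have to say what $\Psi$ does on the vertices of the triangles attached to the two copies of each cut edge, and why $\Psi$ is injective. More importantly, additivity says nothing about the identity $\Psi\circ{\rm tr}_\lambda=(\bigotimes_\tau{\rm tr}_\tau)\circ\Theta$, which is the real content.

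With your definition ${\rm tr}_\lambda=L_\lambda^{-1}\circ\overline{\rm tr}_{\lambda^\ast}\circ\iota_\ast$, this identity does not come for free. Cutting $\fS^\ast$ along the interior edges of $\lambda$ does not produce $\bigsqcup_\tau\tau^\ast$, because interior edges of $\lambda$ carry no attached triangles in $\fS^\ast$. The copies of a cut edge become bare boundary edges of \emph{reduced} pieces, and the quotient by bad arcs at the adjacent corners loses information. Relating the attached-triangle description to a face-by-face one is a genuine computation. The paper does not carry it out but imports the relevant square from \cite{LY23} as Lemma~\ref{lem-com-reduced-stated}.

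The base cases are not established either. For $\fS=\mathbb P_3,\mathbb P_2$ you need $\iota_\ast\colon\cS_\omega(\fS)\to\overline{\cS}_\omega(\fS^\ast)$ injective, and your reason does not give this. Bad arcs sit at every corner of $\fS^\ast$, old punctures included. What is true is that each bad arc has an endpoint on some edge $e_3$, which $\iota(\fS)$ misses. That only shows no bad arc lies in the image; it does not show that the two-sided ideal $I^{\rm bad}$ meets the image trivially.

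You name no basis of $\overline{\cS}_\omega(\fS^\ast)$ and no order for comparing leading terms, and no explicit basis of reduced stated ${\rm SL}_n$-skein algebras is available in general for $n>3$. If you mean leading terms after applying $\overline{\rm tr}_{\lambda^\ast}$, that is exactly the injectivity to be proved. It requires an explicit basis of $\cS_\omega(\mathbb P_3)$ (e.g.\ built from $\mathcal O_q({\rm SL}_n)$) whose traces have pairwise distinct leading monomials, and that computation is missing. In the paper the implication runs the other way: Corollary~\ref{cor-injectivity} deduces injectivity of $\iota_\ast$ from injectivity of ${\rm tr}_\lambda$.

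Two smaller points. First, Lemma~\ref{gaa-com} gives the relations among the $\iota_\ast(\ga_v)$ only in $\overline{\cS}_\omega(\fS^\ast)$, so (a) can only be concluded after (b). Second, the claim that $\overline{\rm tr}_{\lambda^\ast}\circ\iota_\ast$ lands in $L_\lambda(\mathcal A_\omega(\fS,\lambda))$ is only gestured at. A natural route is the grading by states on the edges $e_3$. You would then have to show that the reduced trace respects this grading and that its degree-zero monomials are exactly those indexed by the lattice $\mathbb Z^{V'_\lambda}C$.
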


\section{The quantum cluster algebras}

In this section, we first review the notion of quantum cluster algebras introduced in \cite{BZ}. 
We then recall the quantum cluster structure on the skew field of $\dS$ constructed in \cite{huang2025quantum}. 
This structure will be used in \S\ref{sec-cluster-skein} to construct a quantum cluster structure on the skew field of $\SS$.

\subsection{Quantum cluster algebras}\label{sec-mutation-quantum}

Fix a non-empty set $\mathcal{V}$, and a non-empty subset $\mathcal{V}_{\rm mut}$ of $\mathcal{V}$. The elements of $\mathcal{V}$, $\mathcal{V}_{\rm mut}$ and $\mathcal{V}\setminus \mathcal{V}_{\rm mut}$ are called {\bf vertices}, {\bf mutable vertices} and {\bf frozen vertices}, respectively. By an (ice) quiver $\Gamma$ we mean a directed graph whose set of vertices is $\mathcal{V}$ 
and equipped with weight on the edges so that the weight on each edge is $1$ unless the edge connects two frozen vertices, in which case the weight is $\frac{1}{2}$. An edge of weight $1$ will be called an {\bf arrow}, and an edge of weight $\frac{1}{2}$ a {\bf half-arrow}. As before, we denote by $Q = (Q(u,v))_{u,v\in \mathcal{V}}$ to denote the signed adjacency matrix of $\Gamma$, which is called an {\bf exchange matrix}. 
We will identify the weighted quiver with its signed adjacency matrix.

Suppose that $k\in\mathcal V_{
\rm mut}$. The {\bf mutation}
$\mu_{k}$ for the quiver $\Gamma$ at the 
mutable vertex $k\in\mathcal V_{
\rm mut}$ is defined by 
 the following procedures:
\begin{enumerate}
    \item for each pair of arrows $i\rightarrow k$ and $k\rightarrow j$ draw an arrow $i\rightarrow j$,
    
    \item reverse all the arrows incident to the vertex $k$,

    \item delete a maximal pairs of arrows $i\rightarrow j$ and $i\leftarrow j$ going in the opposite directions.
\end{enumerate}
We use $Q' = (Q'(u,v))_{u,v\in\mathcal V}$ to denote the signed adjacency matrix of $\Gamma'=\mu_k(\Gamma)$. Then we have 
\begin{align}\label{eq-mutation-Q}
Q'(u,v) = \begin{cases}
    - Q(u,v) & k\in\{u,v\},\\
    Q(u,v) +\frac{1}{2}\Big(Q(u,k)|Q(k,v)|+
    |Q(u,k)|Q(k,v)\Big) & k\notin\{u,v\}.
\end{cases}
\end{align}

\def\Fr{{\rm Frac}}

Let $\mathcal F$ be a skew-field over $R=\mathbb Z[\omega^{\pm\frac{1}{2}}]$. Recall that $\xi=\omega^n$.
\begin{definition}[\cite{BZ}]\label{def-quantum-seed}
    A {\bf quantum seed} in $ \mathcal F$ is a triple $\mathsf s=(Q,\Pi,M)$, where 
\begin{enumerate}
    \item[$(1)$] $Q=(Q(u,v))_{u,v\in\mathcal V}$ is an exchange matrix;

    \item[$(2)$] $\Pi=(\Pi(u,v))_{u,v\in\mathcal V}$ is a anti-symmetric matrix with integral entries satisfying the compatibility relation
    $$\sum_{k\in\mathcal V}Q(k,u) \Pi(k,v)=\delta_{u,v} d_u$$
    for all $u\in\mathcal V_{\text{mut}}$
    and $v\in\mathcal V$, where $d_u$ is a positive integer for $u\in\mathcal V_{\text{mut}}$;

    \item[$(3)$] $M\colon \mathbb Z^{\mathcal V}\rightarrow\mathcal F\setminus\{0\}$ is a function such that 
    \begin{align}\label{eq-M-relation}
        M({\bf k}) M({\bf t}) = \xi^{\frac{1}{2} {\bf k} \Pi {\bf t}^T} M({\bf k} + {\bf t})
    \end{align}
    for row vectors ${\bf k},{\bf t}\in \mathbb Z^{\mathcal V}$, and $M(\mathbb Z^{\mathcal V})$ is a basis of a quantum torus over $R$ whose skew-field is $\mathcal F$.
\end{enumerate}
\end{definition}

For each $i\in\mathcal V$, define ${\bf e}_i\in\mathbb Z^{\mathcal V}$ with
\begin{align}\label{eq-def-vector-ei-ele}
    {\bf e}_i(v)=\delta_{i,v} \text{ for $v\in\mathcal V$}.
\end{align}
Then the function $M$ is uniquely determined by the values $A_i:=M({\bf e}_i)$, which we call {\bf (quantum) cluster variables}.
A (quantum) cluster variable $A_i$ is called a {\bf frozen (quantum) cluster variable} if $i\in\mathcal V\setminus\mathcal V_{\text{mut}}$, and an {\bf exchangeable (quantum) cluster variable} otherwise.

We label the vertex set $\mathcal V$ as 
$\{v_1,\cdots,v_r\}$.
For any ${\bf k}=(k_v)_{v\in\mathcal V}\in \mathbb Z^{\mathcal V}$, we have 
\begin{align}\label{eq-A-power}
    M({\bf k}) = A^{\bf k}:= \left[\prod_{v\in \mathcal V}A_v^{k_v}\right],
\end{align}
where $\left[\prod_{v\in \mathcal V}A_v^{k_v}\right]$ is the Weyl-ordered product 
\begin{align}\label{Weyl-A}
    \left[\prod_{v\in \mathcal V}A_v^{k_v}\right]= \xi^{-\frac{1}{2}\sum_{i<j} k_{v_i} k_{v_j}\Pi(i,j)} A_{v_1}^{k_{v_1}} A_{v_2}^{k_{v_2}} \cdots A_{v_r}^{k_{v_r}}.
\end{align}
Note that the Weyl-ordered product in \eqref{Weyl-A} is independent of how we label the vertex set $\mathcal V$.

Note that the anti-symmetric matrix $\Pi$ is uniquely determined by the function $M$ because of \eqref{eq-M-relation}.

Given a quantum seed $\mathsf s=(Q,\Pi,M)$ in $\mathcal F$, the {\bf mutation} at vertex $k\in\mathcal V_{\text{mut}}$ produces a new quantum seed $(Q',\Pi',M')=\mu_{k}(\mathsf s)$ according to the following rules:
\begin{itemize}
    \item $Q'=\mu_k(Q)$;

    \item the mutation on the quantum cluster variables is
    \begin{equation*}
        A_i':=M'({\bf e}_i)=
        \begin{cases}
            M({\bf e}_i) & i\neq k,\\
            M(-{\bf e}_k + \sum_{j\in\mathcal V}[Q(j,k)]_{+}{\bf e}_j)
            + M(-{\bf e}_k + \sum_{j\in\mathcal V}[-Q(j,k)]_{+}{\bf e}_j)
            & i=k,
        \end{cases}
    \end{equation*}
    where $[a]_+ := \max\{a,0\} = {\textstyle \frac{1}{2}(a+|a|)}$ for $a\in\mathbb R$.

    \item $\Pi'=(\Pi'(i,j))_{i,j\in\mathcal V}$, where, for each pair $i,j\in\mathcal V$, the entry $\Pi'(i,j)$ is the unique integer such that 
    $$M'({\bf e}_i)M'({\bf e}_j)
    =\xi^{\Pi'(i,j)} M'({\bf e}_j)M'({\bf e}_i).$$
\end{itemize}

\begin{definition}\label{def-quantum-class}
Two quantum seeds in $\mathcal F$ are said to be
{\bf mutation-equivalent} if they are transformed to each other by a finite sequence of quantum 
mutations. An equivalence class of quantum seeds is called a {\bf quantum 
mutation class}.
\end{definition}


Let $\mathsf{S}$ be a quantum mutation class of quantum seeds. 
For any $\mathsf s=(Q, \Pi, M)\in \mathsf{S}$, define 
\begin{align}
\textbf{A}^{+}(\mathsf s)&:=\{M({\bf k})\mid {\bf k}\in\mathbb Z_{\geq 0}^{\mathcal V}\}\subset
\textbf{A}(\mathsf s):=\{M({\bf k})\mid {\bf k}\in\mathbb Z^{\mathcal V}\}\subset \mathcal F
\label{eq-A-w}\\
\textbf{A}^{f}(\mathsf s)&:=\{M({\bf k})\mid
{\bf k}\in\mathbb Z^{\mathcal V} \text{ such that }
{\bf k}|_{\mathcal V\setminus \mathcal V_{\rm mut}}\in\mathbb Z_{\geq 0}^{\mathcal V\setminus \mathcal V_{\rm mut}}\} \subset \mathcal F \label{eq-A-f}\\
\mathbb T^f(\mathsf s)&:= \text{span}_R(\textbf{A}^f(\mathsf s))\subset    \mathbb T(\mathsf s):= \text{span}_R(\textbf{A}(\mathsf s))\subset \mathcal F.
    \label{eq-T-w}
\end{align}
We call $\mathbb T(\mathsf s)$ the {\bf based quantum torus} associated to the quantum seed $\mathsf s$.
We use ${\rm Frac}(\mathbb T(\mathsf s))$ to denote 
 the skew-field of fractions of $\mathbb T(\mathsf s)$; for the existence of ${\rm Frac}(\mathbb T(\mathsf s))$, see \cite{Cohn}. 

\def\Exc{\mathsf{S}}

We will define two versions of quantum (upper) cluster algebras, depending on whether the frozen variables are inverted or not.

\begin{definition}\label{def-quan-cluster-algebra}
Let $\mathsf{S}$ be a quantum mutation class of quantum seeds.  

\begin{enumerate}[label={\rm (\alph*)}]\itemsep0.3em
\item
The associated {\bf quantum cluster algebra} is the $R$-subalgebra
$\mathscr{A}_{\mathsf S}\subset \mathcal F$
generated by the set $\bigcup_{\mathsf s\in\Exc} {\bf A}^{+}(\mathsf s)$ (see~\eqref{eq-A-w}) together with the inverses of all frozen quantum cluster variables.  
We also denote by
$\mathscr{A}_{\mathsf S}^+\subset \mathcal F$
the $R$-subalgebra generated by $\bigcup_{\mathsf s\in\Exc} {\bf A}^{+}(\mathsf s)$.

\item Each exchangeable quantum cluster variable of any seed $\mathsf s\in\mathsf S$ is called an {\bf exchangeable quantum cluster variable} of $\mathscr{A}_{\mathsf S}$.

\item The associated {\bf quantum upper cluster algebras} are defined as
\[
\mathscr{U}_{\mathsf S}^+
:= \bigcap_{\mathsf s\in\Exc} \mathbb T^f(\mathsf s)
\;\subset\;
\mathscr{U}_{\mathsf S}
:= \bigcap_{\mathsf s\in\Exc} \mathbb T(\mathsf s)
\;\subset\; \mathcal F,
\]
where $\mathbb T(\mathsf s)$ and $\mathbb T^f(\mathsf s)$ are as in~\eqref{eq-T-w}.

\item Let $\mathsf s=(Q, \Pi, M)\in \mathsf{S}$ be a quantum seed. Let ${\bf k}\in\mathbb Z^{\mathcal V}$ such that 
${\bf k}|_{\mathcal V_{\rm mut}}\in\mathbb Z_{\geq 0}^{\mathcal V_{\rm mut}}$. Then $M({\bf k})$ is called a \textbf{cluster monomial} in $\mathscr{A}_{\mathsf S}$. If, in addition, ${\bf k}|_{\mathcal V}\in\mathbb Z_{\geq 0}^{\mathcal V}$, then $M({\bf k})$ is called a \textbf{cluster monomial} in $\mathscr{A}^+_{\mathsf S}$.
\end{enumerate}
\end{definition}

Given a quantum seed $\mathsf s\in \mathsf S$, we also denote $\mathscr A_{\mathsf S}$ (resp. $\mathscr A^+_{\mathsf S}$, $\mathscr U_{\mathsf S}$, $\mathscr U^+_{\mathsf S}$) as $\mathscr A_{\mathsf s}$ (resp. $\mathscr A^+_{\mathsf s}$, $\mathscr U_{\mathsf s}$, $\mathscr U^+_{\mathsf s}$).


    The following theorem is the so-called Laurent phenomenon. 

\begin{theorem}\cite{BZ,goodearl2017quantum}\label{thm-inclusion-quantum}
    We have the inclusions
    $$\mathscr{A}^{+}_{\mathsf S}\subset \mathscr{U}^{+}_{\mathsf S}\text{ and }\mathscr{A}_{\mathsf S}\subset \mathscr{U}_{\mathsf S}.$$
\end{theorem}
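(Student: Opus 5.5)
We use that all seeds in $\mathsf S$ share the same frozen variables, since mutation only occurs at mutable vertices. The plan is to reduce both inclusions to one statement: every exchangeable quantum cluster variable $A'_i$ of an arbitrary seed $\mathsf s'\in\mathsf S$ lies in $\mathbb T^f(\mathsf s)$ for every $\mathsf s\in\mathsf S$.

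Granting this, the rest is formal. For every $\mathsf s$, the sets $\mathbb T^f(\mathsf s)\subset\mathbb T(\mathsf s)$ are $R$-subalgebras of $\mathcal F$, because products of monomials are monomials by \eqref{eq-M-relation}. Each frozen variable $A_j$ equals $M({\bf e}_j)\in\mathbf A^f(\mathsf s)$, and $A_j^{-1}$ is a monomial in $\mathbb T(\mathsf s)$. Hence the generators of $\mathscr A^+_{\mathsf S}$ lie in $\bigcap_{\mathsf s}\mathbb T^f(\mathsf s)=\mathscr U^+_{\mathsf S}$, and the generators of $\mathscr A_{\mathsf S}$ lie in $\bigcap_{\mathsf s}\mathbb T(\mathsf s)=\mathscr U_{\mathsf S}$. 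A cluster monomial $M'({\bf k})$ with ${\bf k}\ge 0$ is, up to a power of $\xi^{1/2}$, an ordered product of the $A'_i$, so it is covered too.

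To prove the key statement, we follow Berenstein–Zelevinsky's proof of the quantum Laurent phenomenon, but over a different coefficient ring. Let $\mathcal P\subset\mathcal F$ be the $R$-subalgebra generated by the frozen variables (not their inverses). It is a quantum affine space: a noetherian domain whose monomials form an $R$-basis. The first step is to note that the compatibility condition in Definition~\ref{def-quantum-seed}(2) forces the mutable columns of $Q$ to have full rank. The second step is to introduce the upper bound
\[
\mathscr U^f(\mathsf s)=\mathbb T^f(\mathsf s)\cap\bigcap_{k\in\mathcal V_{\rm mut}}\mathbb T^f(\mu_k\mathsf s).
\]
We then show that this upper bound is invariant under mutation, by repeating BZ's ``caterpillar'' argument for rank-two subseeds with $\mathcal P$ in place of their coefficient ring. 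Since $A_k\in\mathbb T^f(\mathsf s)$ for every $k$, this invariance places every cluster variable of every seed in $\mathbb T^f(\mathsf s)$.

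The point of working with $\mathcal P$ is that each exchange binomial
\[
M\Bigl(-{\bf e}_k+\textstyle\sum_j[Q(j,k)]_+{\bf e}_j\Bigr)+M\Bigl(-{\bf e}_k+\textstyle\sum_j[-Q(j,k)]_+{\bf e}_j\Bigr)
\]
involves only nonnegative powers of the frozen variables. So every Laurent expansion produced in the argument is polynomial in the frozen directions, as long as the divisibility steps never require inverting a frozen variable.

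The main obstacle is the coprimality step in the rank-two computation. There one needs that if $x\in\mathbb T^f(\mathsf s)$ and $A_kx\in\mathbb T^f(\mu_k\mathsf s)$ for the relevant $k$, then the frozen exponents of $x$ remain nonnegative. This amounts to showing that the exchange polynomials $P_k$, viewed in the partial Laurent ring $\mathbb T^f(\mathsf s)$, are pairwise coprime and not divisible by any frozen variable. I would prove this by filtering by total frozen degree and passing to leading terms. Both monomials of $P_k$ have exponent vectors that differ by the $k$-th column of $Q$. Full rank implies these difference vectors are linearly independent, and in the $\mathbb Z^{\mathcal V}$-graded quantum torus that forces distinct $P_k$ to have no common factor. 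Moreover, each $P_k$ has a monomial not divisible by a given frozen variable, so no frozen variable can divide $P_k$. If this degree argument turns out to be awkward in the quantum setting, the fallback is to localize at the frozen variables, apply BZ's theorem to obtain $A'_i\in\mathbb T(\mathsf s)$, and then recover nonnegativity of the frozen exponents separately from the form of the exchange binomials.
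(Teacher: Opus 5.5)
The paper does not prove this statement; it quotes it from \cite{BZ} (frozen variables inverted) and \cite[Theorem~2.5]{goodearl2017quantum} (frozen variables not inverted). Your formal reduction to the claim that every cluster variable of every seed lies in every $\mathbb T^f(\mathsf s)$ is correct. Your main plan, rerunning the Berenstein--Zelevinsky upper-bound invariance argument with $\mathcal P$ in place of the Laurent ring in the frozen variables, follows the route of \cite{BZ}. But the one point that is genuinely new relative to \cite{BZ} is not carried out.

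The condition you isolate (``if $x\in\mathbb T^f(\mathsf s)$ and $A_kx\in\mathbb T^f(\mu_k\mathsf s)$, then the frozen exponents of $x$ remain nonnegative'') is vacuous, since elements of $\mathbb T^f(\mathsf s)$ have nonnegative frozen exponents by definition. Reducing the rank-two step to ``the $P_k$ are pairwise coprime and not divisible by frozen variables'' is a UFD argument. In a quantum torus, knowing (say from Newton polytopes in the $\mathbb Z^{\mathcal V}$-grading) that the $P_k$ have no common factor does not by itself give what that step uses: an element divisible by suitable twisted powers of $P_1$ and of $P_2$ must be divisible by their product. So, as written, the main route does not yet establish $\mathscr A^+_{\mathsf S}\subset\mathscr U^+_{\mathsf S}$.

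Your fallback is the efficient fix, provided you use Laurentness in both adjacent seeds rather than the shape of the exchange binomials alone.

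\begin{itemize}
\item Take $\mathscr A_{\mathsf S}\subset\mathscr U_{\mathsf S}$ from \cite{BZ}. For frozen $j$ and $0\neq x\in\mathbb T(\mathsf s)$, let $\nu^j_{\mathsf s}(x)$ be the least exponent of $A_j$ among the monomials $M(\mathbf k)$ occurring in $x$. Since $\mathbb T(\mathsf s)$ is a domain graded by $A_j$-degree, $\nu^j_{\mathsf s}(yz)=\nu^j_{\mathsf s}(y)+\nu^j_{\mathsf s}(z)$.
\item For $\mathsf s'=\mu_k\mathsf s$, let $\mathbb T_{\hat k}$ be the common subtorus spanned by the monomials not involving the $k$-th variable; there $\nu^j$ is the same for both seeds. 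Then $A_k'A_k=B$ with $B\in\mathbb T_{\hat k}$ the exchange binomial. Moreover $\nu^j(B)=0$, because the $A_j$-exponents of its two monomials are $[Q(j,k)]_+$ and $[-Q(j,k)]_+$.
\item Write $x\in\mathbb T(\mathsf s)\cap\mathbb T(\mathsf s')$ as $x=\sum_a c_aA_k^a=\sum_b d_b(A_k')^b$ with $c_a,d_b\in\mathbb T_{\hat k}$. Let $\tau$ be conjugation by $A_k'$; it only rescales monomials. Comparing coefficients in $\mathrm{Frac}(\mathbb T_{\hat k})[(A_k')^{\pm1};\tau]$ gives $d_{-a}=c_a\,\tau^{-1}(B)\cdots\tau^{-a}(B)$ for $a\ge0$, and $c_a=d_{-a}\,\tau^{|a|-1}(B)\cdots\tau(B)B$ for $a<0$. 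Hence $\nu^j_{\mathsf s}(x)=\nu^j_{\mathsf s'}(x)$.
\item A cluster variable $x$ of a seed $\mathsf s''$ lies in every $\mathbb T(\mathsf s)$ by \cite{BZ}, and $\nu^j_{\mathsf s''}(x)\in\{0,1\}$. Following a mutation path gives $\nu^j_{\mathsf s}(x)\ge0$ for every frozen $j$, i.e.\ $x\in\mathbb T^f(\mathsf s)$. This is your key statement.
\end{itemize}

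The same computation shows that your $\mathscr U^f(\mathsf s)$ is cut out of the Berenstein--Zelevinsky upper bound $\mathbb T(\mathsf s)\cap\bigcap_k\mathbb T(\mu_k\mathsf s)$ by the conditions $\nu^j_{\mathsf s}\ge0$. So its mutation invariance also follows from \cite{BZ}, with no coprimality analysis over $\mathcal P$.
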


Let $\mathsf s=(Q,\Pi,M)$ be a quantum seed. Suppose $\mathcal V'=\mathcal V'_1\sqcup \mathcal V'_2$ is a subset of $\mathcal V$ such that there are no arrows between vertices in $\mathcal V'_1$ and those in $\mathcal V\setminus \mathcal V'$. Let $Q'$ be the subquiver of $Q$ induced by the vertices in $\mathcal V'$, with the vertices in $\mathcal V'_2$ frozen. Let $\Pi'$ and $M'$ be the restrictions of $\Pi$ and $M$ to $\mathcal V'$, respectively. The following is immediate.

\begin{lemma}
   Using the above notation, we have:
   \begin{enumerate}[label={\rm (\alph*)}]\itemsep0.3em
       \item The triple $\mathsf s'=(Q',\Pi',M')$ constitutes a quantum seed.
       \item The quantum cluster algebra $\mathscr A_{\mathsf s'}$ is a subalgebra of $\mathscr A_{\mathsf s}$. Specifically, for any sequence of mutable vertices $v_1,\dots,v_m$ in $Q'$, it holds that
       $\widetilde A'_{v_m} = \widetilde A_{v_m}.$ 
       Here, $\widetilde A'_{v_m}$ and $\widetilde A_{v_m}$ are the cluster variable associated with the vertex $v_k$ of $\mu_{v_m}\cdots\mu_{v_1}(\mathsf s')$ and $\mu_{v_m}\cdots\mu_{v_1}(\mathsf s)$, respectively.
    \end{enumerate}
\end{lemma}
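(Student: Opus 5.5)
Both parts reduce to bookkeeping of Definition~\ref{def-quantum-seed} and of the mutation rules.

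For (a), conditions (1) and (3) are inherited directly. $Q'$ is the induced subquiver on $\mathcal V'$ with $\mathcal V'_1$ mutable and $\mathcal V'_2$ frozen. $M'$ is the restriction of $M$ to $\mathbb Z^{\mathcal V'}\subset\mathbb Z^{\mathcal V}$ (extension by zero), so relation~\eqref{eq-M-relation} holds with $\Pi'$. The image $M'(\mathbb Z^{\mathcal V'})$ is linearly independent, being a subset of a basis, and spans a quantum torus; Definition~\ref{def-quantum-seed} should then be read with $\mathcal F$ replaced by the skew-field of this subtorus. The only real check is the compatibility condition. Fix $u\in\mathcal V'_1$ and $v\in\mathcal V'$. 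Since $u$ has no arrows to $\mathcal V\setminus\mathcal V'$, we have $Q(k,u)=0$ for all $k\notin\mathcal V'$. Hence
\[
\sum_{k\in\mathcal V'}Q'(k,u)\Pi'(k,v)=\sum_{k\in\mathcal V}Q(k,u)\Pi(k,v)=\delta_{u,v}d_u,
\]
using that $u$ is mutable in $\mathsf s$. This requires $\mathcal V'_1\subset\mathcal V_{\rm mut}$, which I take as implicit. Weights need no separate attention: edges between frozen vertices of $Q'$ simply keep their weights from $Q$.

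For (b), I argue by induction on $m$ and carry a stronger statement. Write $\mathsf s_t=\mu_{v_t}\cdots\mu_{v_1}(\mathsf s)$ and $\mathsf s'_t$ likewise for $\mathsf s'$. The claims are: (i) $Q'_t$ is the full subquiver of $Q_t$ on $\mathcal V'$; (ii) $Q_t$ still has no arrows between $\mathcal V'_1$ and $\mathcal V\setminus\mathcal V'$; (iii) $M'_t=M_t|_{\mathbb Z^{\mathcal V'}}$, and in particular $A'_{t,w}=A_{t,w}$ for all $w\in\mathcal V'$.

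For the inductive step, mutate at $k=v_{t+1}\in\mathcal V'_1$. By (ii) every neighbor of $k$ in $Q_t$ lies in $\mathcal V'$. So formula~\eqref{eq-mutation-Q} changes entries only between pairs of vertices of $\mathcal V'$, and it changes them exactly as in $Q'_t$; this gives (i). For a new arrow $i\to j$ between $\mathcal V'_1$ and $\mathcal V\setminus\mathcal V'$, both $i$ and $j$ would have to be neighbors of $k$, which is impossible since both are in $\mathcal V'$; this gives (ii). For (iii), the exchange monomials $\sum_j[\pm Q_t(j,k)]_+{\bf e}_j$ are supported on $\mathcal V'$. Since $M_t$ and $M'_t$ agree on $\mathbb Z^{\mathcal V'}$ by induction, the new cluster variables coincide. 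Then $\Pi'_{t+1}$, which is determined by commutation of the new variables, is the restriction of $\Pi_{t+1}$.

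Finally, every generator of $\mathscr A_{\mathsf s'}$ is a cluster variable of some $\mathsf s'_t$, or the inverse of a frozen variable indexed by $\mathcal V'_2$. By (iii) each such generator is a cluster variable of $\mathsf s_t$. The frozen inverses cause the one delicate point: vertices of $\mathcal V'_2$ that are mutable in $\mathsf s$ do not have inverses in $\mathscr A_{\mathsf s}$. I expect the intended reading, consistent with how the lemma will be used, is either $\mathcal V'_2\subset\mathcal V\setminus\mathcal V_{\rm mut}$ or the $+$-version of the statement; I would state the lemma under one of these conventions.
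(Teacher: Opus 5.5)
Your argument is correct and is the routine verification the paper leaves as ``immediate''. You restrict the compatibility sum using the no-arrow hypothesis, then induct on the mutation sequence while carrying two facts: no arrows between $\mathcal V'_1$ and $\mathcal V\setminus\mathcal V'$, and $M'_t=M_t|_{\mathbb Z^{\mathcal V'}}$.

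Your caveat about inverses of frozen variables is a genuine defect in the literal statement. However, the paper's own sub-seeds freeze vertices that are mutable in the ambient seed: in Lemma~\ref{lem:B-d} the frozen vertices $jj^1$ lie on an interior arc. So the convention $\mathcal V'_2\subset\mathcal V\setminus\mathcal V_{\rm mut}$ does not fit the paper's usage. The right reading is the $+$-version $\mathscr A^+_{\mathsf s'}\subset\mathscr A^+_{\mathsf s}$, or simply the equality of cluster variables, which is all the paper uses later.
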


\begin{definition}\label{def:subseed}
With the above notation, we call the seed $\mathsf s'=(Q',\Pi',M')$ a \textbf{sub-seed} of $\mathsf s$ supported on the vertex set $\mathcal V'$, with the vertices in $\mathcal V'_2$ frozen.
\end{definition}

\subsection{Quasi-isomorphism of quantum cluster algebras}
Let $\mathsf s=(Q,\Pi,M)$ and  $\mathsf s'=(Q',\Pi',M')$ be two quantum seeds. Assume that $Q$ and $Q'$ have the same mutable vertices $\mathcal V_{\rm mut}$. Denote the vertices of $Q$ and $Q'$ by $\mathcal V$ and $\mathcal V'$, respectively. 

\begin{definition}\label{def:QQH}\cite{KWQ,CHL}
We call an $R$-algebra homomorphism $\varphi: \mathscr A_{\mathsf s} \to \mathscr A_{\mathsf s'}$ a quantum quasi-homomorphism (with respect to $\mathsf s$ and $\mathsf s'$) if the following conditions are satisfied:
\begin{enumerate}

\item[$(1)$] The mutable subquiver of $Q$ and $Q'$ are the same, i.e, $Q\mid_{\mathcal V_{\rm mut}}=Q'\mid_{\mathcal V_{\rm mut}}$;

\item[$(2)$] For $v\in \mathcal V_{\rm mut}$, $\varphi(M(\mathbf e_v))=M'({\bf f}_v)$ for some ${\bf f}_v=(f_{v,w})_{w\in \mathcal V'}$ with $f_{v,v}=1$ and $f_{v,w}=0$ for all $w(\neq v)\in \mathcal V_{\rm mut}$. 

\item[$(3)$] For $v\in \mathcal V\setminus \mathcal V_{\rm mut}$, $\varphi(M(\mathbf e_v))=M'({\bf f}_v)$ for some ${\bf f}_v=(f_{v,w})_{w\in \mathcal V'}$ with $f_{v,w}=0$ for all $w\in \mathcal V_{\rm mut}$. 

\item[$(4)$] For all $v\in \mathcal V_{\rm mut}$, $\varphi(M(Q(-,v)))=M'(Q'(-,v))$, where $Q(-,v)$ denotes the column vector of $Q$ indexed by $v$.
\end{enumerate}
\end{definition}

\begin{proposition}\label{prop:quasi-iso}\cite[Proposition 2.7]{CHL}
Assume that $Q|_{\mathcal{V}_{\mathrm{mut}}} = Q'|_{\mathcal{V}_{\mathrm{mut}}}$. 
A map 
$$\varphi: \{M(\mathbf{e}_v) \mid v \in \mathcal{V}\} \to \mathbb{T}(\mathsf{s}')$$
induces a quasi-homomorphism $\varphi: \mathscr{A}_{\mathsf{s}} \to \mathscr{A}_{\mathsf{s}'}$ if and only if the following conditions hold:
\begin{enumerate}
    \item[$(1)$] For each $v \in \mathcal{V}_{\mathrm{mut}}$, we have $\varphi(M(\mathbf{e}_v)) = M'(\mathbf{f}_v)$ for some vector $\mathbf{f}_v = (f_{v,w})_{w \in \mathcal{V}'}$ satisfying $f_{v,v} = 1$ and $f_{v,w} = 0$ for all $w \in \mathcal{V}_{\mathrm{mut}} \setminus \{v\}$.

    \item[$(2)$] For each $v \in \mathcal{V} \setminus \mathcal{V}_{\mathrm{mut}}$, we have $\varphi(M(\mathbf{e}_v)) = M'(\mathbf{f}_v)$ for some vector $\mathbf{f}_v = (f_{v,w})_{w \in \mathcal{V}'}$ such that $f_{v,w} = 0$ for all $w \in \mathcal{V}_{\mathrm{mut}}$.

    \item[$(3)$] For each $v \in \mathcal{V}_{\mathrm{mut}}$, the following two equivalent conditions hold:
    \[
        \sum_{w \in \mathcal{V}} Q(w,v)\mathbf{f}_w = \sum_{w' \in \mathcal{V}'} Q'(w',v)\mathbf{e}_{w'},
    \]
      \[
        \varphi([\prod_{w \in \mathcal{V}}A_w^{Q(w,v)}]) = [\prod_{w \in \mathcal{V'}}A_{w'}^{Q'(w',v)}],
    \]
    where $\mathbf{e}_{w'}$ denotes the standard basis vector in $\mathbb{Z}^{\mathcal{V}'}$ associated with $w'$, defined by $(\mathbf{e}_{w'})_{w''} = \delta_{w', w''}$ (i.e., $1$ if $w''=w'$ and $0$ otherwise).

    \item[$(4)$] For all $v_1, v_2 \in \mathcal{V}$, $\Pi(\mathbf{e}_{v_1}, \mathbf{e}_{v_2}) = \Pi'(\mathbf{f}_{v_1}, \mathbf{f}_{v_2})$.
\end{enumerate}
\end{proposition}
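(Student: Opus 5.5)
The plan is to treat the two implications separately. Throughout, use that $\mathbb T(\mathsf s)$ and $\mathbb T(\mathsf s')$ are quantum tori with $R$-bases $M(\mathbb Z^{\mathcal V})$ and $M'(\mathbb Z^{\mathcal V'})$, and that $\xi=\omega^n$ is not a root of unity in $R=\mathbb Z[\omega^{\pm\frac12}]$.

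\emph{Necessity.} Suppose $\varphi$ is a quasi-homomorphism.
\begin{itemize}
\item Conditions (1) and (2) of the proposition are conditions (2) and (3) of Definition~\ref{def:QQH}.
\item For (4), apply the algebra map $\varphi$ to $M(\mathbf e_{v_1})M(\mathbf e_{v_2})=\xi^{\Pi(v_1,v_2)}M(\mathbf e_{v_2})M(\mathbf e_{v_1})$. Comparing with $M'(\mathbf f_{v_1})M'(\mathbf f_{v_2})=\xi^{\Pi'(\mathbf f_{v_1},\mathbf f_{v_2})}M'(\mathbf f_{v_2})M'(\mathbf f_{v_1})$ in the basis $M'(\mathbb Z^{\mathcal V'})$ gives $\Pi(v_1,v_2)=\Pi'(\mathbf f_{v_1},\mathbf f_{v_2})$, because $\xi$ is not a root of unity.
\item For (3), note that by \eqref{eq-M-relation} and (4), $\varphi$ sends a Weyl-ordered monomial $M(\mathbf k)$ to the Weyl-ordered monomial $M'(\sum_w k_w\mathbf f_w)$; the scalar factors in the Weyl ordering agree precisely because $\Pi$ and $\Pi'$ agree on the $\mathbf f$'s. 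Hence condition (4) of Definition~\ref{def:QQH} gives the second form of (3). The first form follows by comparing exponents in the monomial basis, and the same remark shows the two forms are equivalent.
\end{itemize}

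\emph{Sufficiency.} Define $\varphi$ on $\mathbb T(\mathsf s)$ by $M(\mathbf k)\mapsto M'(\sum_w k_w\mathbf f_w)$. By (4) and \eqref{eq-M-relation}, this is an $R$-algebra homomorphism $\mathbb T(\mathsf s)\to\mathbb T(\mathsf s')$. It extends to the skew-fields of fractions because the image of a based quantum torus in a quantum torus is an Ore domain. It remains to show that $\varphi(\mathscr A_{\mathsf s})\subset\mathscr A_{\mathsf s'}$; the conditions of Definition~\ref{def:QQH} then hold by (1)--(3). I would prove the following inductive claim.

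\begin{quote}
For every sequence of mutable vertices $k_1,\dots,k_m$, put $\mathsf t=\mu_{k_m}\cdots\mu_{k_1}\mathsf s$ and $\mathsf t'=\mu_{k_m}\cdots\mu_{k_1}\mathsf s'$. Then $\varphi$ restricted to the cluster variables of $\mathsf t$ again satisfies (1)--(4) with respect to $(\mathsf t,\mathsf t')$, with new vectors $\mathbf f^{\mathsf t}$.
\end{quote}

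Given the claim, each cluster variable of $\mathsf t$ maps to a cluster variable of $\mathsf t'$ times a Laurent monomial in frozen variables. Each frozen variable maps to a frozen Laurent monomial. Both kinds of image lie in $\mathscr A_{\mathsf s'}$, since frozen variables are inverted there.

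\emph{Inductive step (single mutation $\mu_k$).}
\begin{enumerate}
\item Write $\mathbf f_k=\mathbf e_k+\mathbf g$ with $\mathbf g$ supported on frozen vertices of $\mathcal V'$.
\item Apply $\varphi$ to the exchange relation for $A_k'$. The two terms become $M'\bigl(-\mathbf f_k+\sum_w[Q(w,k)]_+\mathbf f_w\bigr)$ and $M'\bigl(-\mathbf f_k+\sum_w[-Q(w,k)]_+\mathbf f_w\bigr)$.
\item The difference of these two exponent vectors is $\sum_wQ(w,k)\mathbf f_w=\sum_{w'}Q'(w',k)\mathbf e_{w'}$ by (3). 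Mutable vertices other than $k$ contribute only through the $\mathbf e_w$ parts, by (1) together with $Q|_{\mathcal V_{\rm mut}}=Q'|_{\mathcal V_{\rm mut}}$.
\item From this I expect to find a frozen vector $\mathbf h$ such that both exponents equal $\mathbf h+\bigl(-\mathbf e_k+\sum_{w'}[\pm Q'(w',k)]_+\mathbf e_{w'}\bigr)$. Condition (4) should make the Weyl-ordering scalars match, so that $\varphi(A'_k)=M''(\mathbf e_k+\mathbf h)$ in the mutated seed $\mu_k\mathsf s'$. Set $\mathbf f^{\mathsf t}_k=\mathbf e_k+\mathbf h$, re-expressed in the new basis, and keep the other $\mathbf f_w$.
\item Conditions (1) and (2) for the new data are then immediate. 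Condition (4) holds because $\varphi$ is a homomorphism.
\end{enumerate}

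\emph{Main obstacle.} The hard part is checking (3) for the mutated matrices. This means verifying that the column vectors of $\mu_k(Q)$ and $\mu_k(Q')$ correspond under the new $\mathbf f$'s. I would do this by expanding with \eqref{eq-mutation-Q}, using that the mutable parts coincide and that (3) holds for the old column at $k$. This reduces it to the standard piecewise-linear identity for the extended exchange-matrix mutation under a change of frozen coordinates, as in \cite{CHL}.
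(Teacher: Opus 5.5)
The paper does not prove this statement; it is quoted from \cite[Proposition~2.7]{CHL}, so there is no internal proof to compare against. Your necessity argument is fine, and a Fraser-style induction over mutations is the natural route for sufficiency. However, one step as written is false.

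\textbf{The extension to fraction fields.} You assert that $\varphi\colon\mathbb T(\mathsf s)\to\mathbb T(\mathsf s')$ extends to the skew-fields of fractions because its image is an Ore domain. Any homomorphism out of a skew-field is injective, so such an extension exists only if $\varphi$ is injective. That means the frozen rows of $(f_{v,w})$ must be linearly independent, and conditions (1)--(4) do not force this. Here is a counterexample.
\begin{itemize}
\item Let $\mathcal V=\{1,2,3\}$ with $1$ mutable, $Q(2,1)=Q(3,1)=1$, $\Pi(2,1)=\Pi(3,1)=1$ and $\Pi(2,3)=0$.
\item Let $\mathcal V'=\{1,2'\}$ with $Q'(2',1)=2$ and $\Pi'(2',1)=1$.
\item Put $\mathbf f_1=\mathbf e_1$ and $\mathbf f_2=\mathbf f_3=\mathbf e_{2'}$.
\end{itemize}
Both seeds are compatible, all four conditions hold, and $\varphi$ does induce a quasi-homomorphism. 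Yet $M(\mathbf e_2-\mathbf e_3)-1\in\ker\varphi$.

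This matters because your inductive step applies $\varphi$ to the exchange relation of a mutated seed $\mathsf t$. That relation contains the inverse of the current variable $X_k$. Once $X_k$ is not an initial cluster variable, $X_k^{-1}\notin\mathbb T(\mathsf s)$, and $\varphi$ is not defined on it.

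The fix is to stay inside $\mathbb T(\mathsf s)$. Use the product form of the exchange relation,
\[
X_kX_k^{\rm new}=\xi^{a}M_{\mathsf t}(\mathbf b_+)+\xi^{b}M_{\mathsf t}(\mathbf b_-),\qquad \mathbf b_\pm=\sum_w[\pm Q_{\mathsf t}(w,k)]_+\mathbf e_w\ge 0 .
\]
All its factors are cluster variables, so they lie in $\mathbb T(\mathsf s)$ by Theorem~\ref{thm-inclusion-quantum}. Then $\varphi(X_k^{\rm new})$ is determined by cancelling $\varphi(X_k)=M'_{\mathsf t'}(\mathbf f_k)\neq 0$ in the domain $\mathrm{Frac}(\mathbb T(\mathsf s'))$. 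The same remark is what actually justifies your claim that (4) propagates ``because $\varphi$ is a homomorphism''. For injective $\varphi$ your shortcut is harmless, but the proposition is stated without that assumption.

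\textbf{Two smaller points.} First, in step 4, the two terms share a common factor $M'(\mathbf h)$ with the same power of $\xi$, but this does not come from (4).
\begin{itemize}
\item Write $\mathbf T_\pm=-\mathbf e_k+\sum_{w'}[\pm Q'(w',k)]_+\mathbf e_{w'}$. You need $\mathbf h\,\Pi'(\mathbf T_+-\mathbf T_-)^T=0$.
\item This follows from the compatibility $\sum_{w'}Q'(w',k)\Pi'(w',-)=d_k\mathbf e_k$ of $(Q',\Pi')$, together with $h_k=0$.
\item Condition (4) enters only through $\varphi(M(\mathbf k))=M'(\sum_w k_w\mathbf f_w)$.
\end{itemize}

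Second, your ``main obstacle'' is a routine computation that you should carry out rather than defer to \cite{CHL}. Explicitly,
\[
\mathbf h=\mathbf e_k-\mathbf f_k+\tfrac12\sum_w|Q(w,k)|\mathbf f_w-\tfrac12\sum_{w'}|Q'(w',k)|\mathbf e_{w'} .
\]
After substituting \eqref{eq-mutation-Q}, both sides of the mutated condition (3) at a mutable $v\neq k$ become
\[
\sum_{w'}Q'(w',v)\mathbf e_{w'}+\tfrac12|Q(k,v)|\sum_{w'}Q'(w',k)\mathbf e_{w'}+Q(k,v)\Bigl(\tfrac12\sum_{w'}|Q'(w',k)|\mathbf e_{w'}-2\mathbf e_k\Bigr),
\]
using $Q(k,v)=Q'(k,v)$. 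The case $v=k$ is immediate.
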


\subsection{Quantum cluster structure inside $\text{Frac}(\dS)$}\label{sec-seed-structure}
Let $\mathcal A$ be an Ore domain over $R$. We use 
$\Fr(\mathcal A)$ to denote the skew-field of $\mathcal A$.

Let $\fS$ be a triangulable pb surface without interior punctures.
It was shown in \cite[Lemma~2.17(b)]{huang2025quantum}
that $\dS$ is an Ore domain.

Let $\lambda$ be a triangulation of $\fS$.  
Set 
$\mathcal V = \V$ and $\mathcal V_{\text{mut}} = \mathring{\overline V}_\lambda$, 
where $\mathring{\overline V}_\lambda$ denotes the set of small vertices lying in the interior of $\fS$.  

Recall the anti-symmetric matrices $\overline Q_\lambda$ (see \S\ref{sec-traceX}) and $\overline P_\lambda$ (see \S\ref{sec;A_tori}).  
Define  
\begin{align}\label{eq-prod-Qpi}
   \overline \Pi_\lambda := \tfrac{1}{n} \overline P_\lambda.
\end{align}
Equations~\eqref{def-PP3} and \eqref{eq-anti-matric-P-def} imply that $\overline \Pi_\lambda$ is an integral matrix.

For each $v\in \mathcal V$, we defined an element $\gaa_v \in \rdS$ (see \S\ref{sec;A_tori}), and we use the same notation $\gaa_v$ for its image under the projection $\rdS \twoheadrightarrow \dS$.  
Recall that $\xi = \omega^n$.  
Then Lemma~\ref{gaa-com} shows that  
\begin{align}\label{eq-gaa-com}
    \gaa_v \gaa_{v'} = \xi^{\overline \Pi_\lambda(v,v')} \gaa_{v'} \gaa_v
    \quad\in \dS.
\end{align}

For any $v_1,\ldots,v_r \in \mathcal V$ and $a_1,\ldots,a_r \in \mathbb{Z}$, define the Weyl-ordered product by  
\begin{align}\label{Weyl_ordering-gaa}
    \left[ \gaa_{v_1}^{a_1} \gaa_{v_2}^{a_2} \cdots \gaa_{v_r}^{a_r} \right] 
    := \xi^{-\frac{1}{2}\sum_{i<j} a_i a_j \overline \Pi_\lambda(v_i,v_j)} 
       \gaa_{v_1}^{a_1} \gaa_{v_2}^{a_2} \cdots \gaa_{v_r}^{a_r}.
\end{align}
For ${\bf t} = (t_v)_{v\in V_\lambda} \in \mathbb{Z}^{\mathcal V}$, set  
\begin{align}\label{def-gaa-monomial}
    \gaa^{\bf t} := \left[ \prod_{v\in V_\lambda} \gaa_v^{t_v}\right].
\end{align}

Finally, define the map  
\begin{align}\label{def-M-lambda}
   \overline M_\lambda \colon \mathbb Z^{\mathcal V} \longrightarrow \Fr(\dS), 
    \quad {\bf t} \longmapsto \gaa^{\bf t}.
\end{align}

A properly embedded oriented arc in $\fS$ is called an {\bf essential arc}  
if its endpoints lie on two distinct components of $\partial \fS$.

\begin{theorem}[\cite{huang2025quantum}]\label{intro-thm-skein-inclusion-A}
Let $\fS$ be a triangulable pb surface without interior punctures, and let $\lambda$ be a triangulation of~$\fS$.

\begin{enumerate}[label={\rm (\alph*)}]\itemsep0.3em

\item The triple $\overline {\mathsf s}_\lambda=(\overline Q_\lambda, \overline \Pi_\lambda, \overline M_\lambda)$ is a quantum seed (Definition~\ref{def-quantum-seed}) inside the skew-field $\Fr(\dS)$.

\item Let $\overline{\mathsf{S}}_{\fS,\lambda}$ denote the quantum mutation class (Definition~\ref{def-quantum-class}) containing the quantum seed $(\overline Q_\lambda, \overline \Pi_\lambda, \overline M_\lambda)$.  
Then for any other triangulation $\lambda'$ of~$\fS$ we have
\[
\overline{\mathsf{S}}_{\fS,\lambda}
=
\overline{\mathsf{S}}_{\fS,\lambda'}.
\]

\item Set 
\begin{align}\label{eq-def-OA-cluster}
    \OvS := \overline{\mathsf{S}}_{\fS,\lambda},\qquad
\OvA := \mathscr{A}_{\OvS},\qquad
\OvU := \mathscr{U}_{\OvS}.
\end{align}
Then
\[
\dS \subset \OvU .
\]
If each connected component of $\fS$ contains at least two punctures, then also
\[
 \dS \subset \OvA.
\]

  \item Let $1 \le j \le i \le n$, and let $C_{ij}$ be the stated corner arc represented by the red arc in Figure~\ref{Fig;tau-v1}(A), oriented counterclockwise around $v_1$.  
Let $\lambda$ be a triangulation of $\fS$ that contains the ideal arcs $e_1,e_2,e_3$ shown in Figure~\ref{Fig;tau-v1}(A).  
For any $j,k$ with $1 \le j < k$, define  
\begin{align}\label{intro-eq-mukj}
\mu_{(k;j)}=\mu_{k j}\cdots\mu_{k1},
\end{align}
where the small vertices $i j \in \overline V_\lambda$ are labeled as in Figure~\ref{Fig;tau-v1}(B).
With the convention that $\overline A_{n0}=\overline A_{00}=1$,
 we have 
     \begin{align}\label{intro-eq-Cij}
        C_{ij}=\begin{cases}
[\overline A_{i1}\cdot\overline A_{i0}^{-1}\cdot \overline A_{11}^{-1}] & \mbox{ if $j=1$},\\
[\overline A_{i0}^{-1}\cdot \overline A_{i-1,0}] & \mbox{ if $j=i$},\\
[\mu_{(j;j-1)}\cdots \mu_{(i-2;j-1)} \mu_{(i-1;j-1)} (\overline A_{j,j-1})\cdot  \overline A_{i0}^{-1}\cdot \overline A_{jj}^{-1}] & \mbox{ if $1<j<i$}.
\end{cases}
     \end{align}
   \item Let $1 \le j \le i \le n$, and let $\overline C_{ij}$ be the stated corner arc represented by the red arc in Figure~\ref{Fig;tau-v1}(A), oriented clockwise around $v_1$.  
Let $\lambda$ be a triangulation of $\fS$ that contains the ideal arcs $e_1,e_2,e_3$ shown in Figure~\ref{Fig;tau-v1}(A).  
For any $k,t$ such that $1\leq t$ and $k+t<n$, we denote 
\begin{align}\label{intro-eq-bar-mukj}
\overline\mu_{(k;t)}=\mu_{\overline{k,k+1}} \cdots \mu_{\overline{k,k+t}},
\end{align}
where the small vertices $\overline{ij} \in\overline V_\lambda$ are labeled as in Figure~\ref{Fig;tau-v1}(C).
With the convention that $\overline A_{\overline {nn}}=\overline A_{\overline {00}}=\overline A_{\overline {0n}}=1$,
 we have
     \begin{align}\label{intro-eq-bar-Cij}
         \overline C_{ij}=\begin{cases}
[\overline A_{\overline{jj}}^{-1}\cdot \overline A_{\overline{j-1,j}}] & \mbox{ if $i=n$},\\
 [\overline A_{\overline{in}}^{-1}\cdot \overline A_{\overline{i-1,n}}] & \mbox{ if $i=j$},\\
[\overline \mu_{(j;n-i)}\cdots \overline \mu_{(i-2;n-i)} \overline \mu_{(i-1;n-i)} (\overline A_{\overline{j,j+1}})\cdot \overline A_{\overline{jj}}^{-1}\cdot \overline A_{\overline{in}}^{-1}] & \mbox{ if $1\leq j<i<n$}.
\end{cases}
     \end{align}

\item For any $1\leq i,j\leq n$, let $D_{ij}$ be the stated essential arc represented by the red arc in Figure~\ref{Fig;essential-P4}(A).
Let $\lambda$ be a triangulation of $\fS$ that contains the ideal arcs $c_1,c_2,c_3,c_4,c_5$ shown in Figure~\ref{Fig;essential-P4}(A) (we allow $c_1=c_3$).
We label the small vertices in $\overline V_\lambda$
contained in the quadrilateral bounded by $c_1\cup c_2\cup c_3\cup c_4$ as Figure~\ref{Fig;essential-P4}(B).

For any $s,t$ with $t<s<n$, denote
\begin{align}\label{def-must}
    \mu^r_{(s;t)}=\mu_{s,s-1}\mu_{s,s-2}\cdots \mu_{s,s-t}.
\end{align}

For any $s,t$ with $s+t<n$, denote 
$$\mu^l_{(s;t)}=\mu_{s,n-t}\cdots \mu_{s,n-2}\mu_{s,n-1}.$$

For any $1\leq i,j\leq n-1$ denote
$$\mu^{\diamondsuit}_j:= \mu^r_{(n-1;j-1)}\mu^r_{(n-2;j-1)}\cdots \mu^r_{(j;j-1)},$$
and 
$$\mu_i^\triangle:=\mu^l_{(n-2;1)}\mu^l_{(n-3;2)}\cdots \mu^l_{(i;n-i-1)},$$
With the convention that $\mu^{\diamondsuit}_j=id$ in case $j=1$, $\mu_i^\triangle=id$ in case $i=n-1$.

For any $i,j$ with $j\geq i\geq 1$, denote $$\mu^{\diamondsuit}_{(i;j)}=\mu_j^\triangle\circ \left(\mu^l_{(j-1;n-j-1)}\mu^l_{(j-2;n-j-1)}\cdots \mu^l_{(i;n-j-1)}\right),$$
where $\mu_{(k;j)}$ is defined as in
\eqref{intro-eq-mukj}, with the convention that in case $i=j$, 
\[\left(\mu^l_{(j-1;n-j-1)}\mu^l_{(j-2;n-j-1)}\cdots \mu^l_{(i;n-j-1)}\right)=id.\]
Then we have
   \begin{align}\label{into-eq-Dij}
       D_{ij}=
    \begin{cases}
        [\overline A_{in}^{-1}\cdot \overline A_{i-1,n-1}]
        & \mbox{ if $j=n$,}\\
        [ \mu^{\diamondsuit}_{j}(\overline A_{n-1,n-2}) \cdot \overline A_{j0}^{-1}] & \mbox{ if $i=n$,}\vspace{1.5mm}\\
        [\left(\mu_{n-1,n-1}\cdots \mu_{i+1,i+1}\mu_{ii}\right)\circ \mu^{\diamondsuit}_j\circ  \mu^{\triangle}_{i}(\overline A_{n-1,n-1}) \cdot \overline A_{in}^{-1}\cdot\overline A_{j0}^{-1}] & \mbox{ if $n>i\geq j$,}\vspace{1.5mm}\\
        [\left(\mu_{n-1,n-1}\cdots\mu_{i+1,i+1}\mu_{ii}\right)\circ  \mu^{\diamondsuit}_j\circ \mu^{\diamondsuit}_{(i;j)}(\overline A_{n-1,n-1}) \cdot \overline A_{in}^{-1}\cdot\overline A_{j0}^{-1}] & \mbox{ if $n>j>i$}.
    \end{cases}
   \end{align}

\end{enumerate}
\end{theorem}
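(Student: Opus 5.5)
The plan is to prove (a) and (b) first, since they set up the cluster structure. Then (d)--(f) come next, as the explicit formulas that carry the real content. Part (c) follows at the end from (d)--(f) together with a generation statement.

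\emph{Part (a).} Condition (3) of Definition~\ref{def-quantum-seed} follows from Lemma~\ref{gaa-com}, written in the form \eqref{eq-gaa-com}, and Theorem~\ref{thm-trace-A}. The map $\trA$ sends $\gaa_v\mapsto \overline A_v$ and induces an injection $\dS\hookrightarrow\A$ by the definition of $\dS$. Together with $\Ap\subset\im\trA$, this gives $\Fr(\dS)=\Fr(\A)$. Under this identification, $\overline M_\lambda({\bf t})=\overline A^{\bf t}$ runs over the standard basis of the quantum torus $\A$, so (3) holds. For the compatibility condition (2), I would use \eqref{eq-prod-PQ}. Since $\overline P_\lambda\overline Q_\lambda$ restricted to the mutable columns is $-2n^2\,\mathrm{Id}$ on the mutable block and $0$ elsewhere, dividing by $n$ and using antisymmetry of both matrices gives
\[
\sum_k \overline Q_\lambda(k,u)\overline\Pi_\lambda(k,v)=2n\,\delta_{u,v}.
\]
So $d_u=2n$, up to checking the sign convention. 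Integrality of $\overline\Pi_\lambda$ is already noted after \eqref{eq-prod-Qpi}.

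\emph{Part (b).} Any two triangulations without self-folded triangles are connected by flips, so it suffices to treat one flip $\lambda\to\lambda'$ inside a quadrilateral. I would realise the flip by the standard Fock--Goncharov sequence of $\binom{n+1}{3}$ mutations at the interior vertices of the quadrilateral, arranged as layered octahedron moves. Two things must then be checked. First, the quiver becomes $\overline Q_{\lambda'}$; this is a combinatorial fact. Second, the new cluster variables are the $\trA$-images of $\gaa_v$ for $\lambda'$. For the second point, the cleanest route is to compare the two quantum traces through the coordinate-change isomorphism $\Fr(\overline{\mathcal A}_\omega(\fS,\lambda))\to\Fr(\overline{\mathcal A}_\omega(\fS,\lambda'))$ of \cite{LY23}, which is compatible with $\trA$ and $\overline{\rm tr}_{\lambda'}$. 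One then verifies that this coordinate change equals the composite of the quantum mutations. Finally, $\overline\Pi$ transforms correctly because it is determined by the commutation relations of the $M'({\bf e}_i)$.

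\emph{Parts (d)--(f).} These are the technical heart, and I expect them to be the main obstacle. I would proceed by induction along the stated mutation sequences, for instance over $\mu_{(k;j)}$ in increasing $j$. At each step the quantum exchange relation must be identified with a skein identity. On the skein side this is a three-term relation between stated arcs or quantum minors $M^I_J(c)$. Such relations come from the height-exchange relation \eqref{wzh.eight} and the $q$-Plücker (Desnanot--Jacobi) relations for quantum minors in $\Oq$, transported to a neighbourhood of the corner via Theorem~\ref{Hopf}. The inductive hypothesis is that the intermediate variables are, up to frozen monomials, the images of minors $M^{I}_{J}$ of the corner or essential arc, so the mutation formula becomes the $q$-Plücker relation. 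The powers of $\omega$ must be matched using reflection-invariance, since both sides are reflection invariant and hence the Weyl normalisation is forced.

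\emph{Part (c).} Assume every component of $\fS$ has at least two punctures. I would show that $\dS$ is generated by stated corner arcs and stated essential arcs, by cutting webs along ideal arcs and using the relations \eqref{wzh.four}--\eqref{wzh.seven}. By (b), one may choose a triangulation containing the configuration of Figure~\ref{Fig;tau-v1} or Figure~\ref{Fig;essential-P4} around any given arc. Formulas (d)--(f) then exhibit each generator as a cluster variable times inverse frozen variables, so $\dS\subset\OvA$. For $\dS\subset\OvU$ in general, I would combine $\dS\subset\mathbb T(\overline{\mathsf s}_\lambda)$ for every $\lambda$ with the Laurent phenomenon (Theorem~\ref{thm-inclusion-quantum}) applied to these generators. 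Where generators are not of this form, I would instead use a starfish-type argument on the seeds adjacent to the triangulation seeds.
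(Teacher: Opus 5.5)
Part (a) is correct and follows the route behind Lemma~\ref{lem-seed-skein}: from \eqref{eq-prod-PQ} one gets $\sum_k\overline Q_\lambda(k,u)\overline\Pi_\lambda(k,v)=2n\,\delta_{u,v}$, so $d_u=2n$. The two-puncture half of (c) also has the right shape: generation by stated essential arcs, as in \cite[Lemma~7.18]{huang2025quantum}, followed by the formulas (d)--(f). The paper itself only imports the theorem from \cite{huang2025quantum}.

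\emph{Gap in (b).} Your sketch has no content at its key step. The images of $\trA$ and $\overline{\rm tr}_{\lambda'}$ generate the skew-fields, so the coordinate change of \cite{LY23} is pinned down by $\Psi\circ\trA=\overline{\rm tr}_{\lambda'}$. Hence ``verify that it equals $\mu_{v_r}\cdots\mu_{v_1}$'' is just a restatement of (b). It says that all $\binom{n+1}{3}$ quantum exchange relations of the Fock--Goncharov sequence hold in $\mathrm{Frac}(\dS)$ and end at the elements $\overline{\mathsf g}_v$ of $\lambda'$. That is \cite[Theorem~4.13]{huang2025quantum}, which is Theorem~\ref{lem-mutation-A-seeds-flip} here. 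The remark after Proposition~\ref{prop-naturality} treats mutation-realizability of the \cite{LY23} coordinate change as a consequence of that work, not as an available input. You need an actual proof of these exchange relations. It is also not a local check in the quadrilateral: for $v$ inside $\mathbb P_{4,e}$, the element $\overline{\mathsf g}_v$ comes from the elongated graph $\widetilde Y_v$, which leaves the quadrilateral and winds around its corner punctures.

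\emph{Gap in (c).} Part (c) asserts $\dS\subset\OvU$ for every triangulable $\fS$, including components with a single puncture, where no generator is covered by (d)--(f). Your fallback starfish argument needs, by the quantum upper-bound theorem of \cite{BZ}, Laurentness of $\dS$ in $\mu_k(\overline{\mathsf s}_\lambda)$ for every mutable $k$. For $n\ge 3$ none of these neighbours comes from a flip. A mutation at an edge vertex is only the first of the $\binom{n+1}{3}$ steps of a flip, and a mutation at a vertex interior to a face begins no flip at all. So the inclusions $\dS\subset\mathbb T(\overline{\mathsf s}_{\lambda''})$ over all triangulations $\lambda''$, which is everything your argument supplies, say nothing about these seeds. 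This is unlike $n=2$, where all neighbours are flips. A genuinely new ingredient is needed here, for instance an embedding of $\dS$ into the quantum tori of these neighbouring seeds.

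Finally, your plan for (d)--(f) via $q$-Plücker relations is reasonable in outline. However, its base case is a nontrivial skein computation that you assume rather than carry out: you must express each initial $\overline{\mathsf g}_v$ met by the mutation sequence as a frozen multiple of a quantum minor of the target arc.
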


\begin{figure}[h]
    \centering
    \includegraphics[width=350pt]{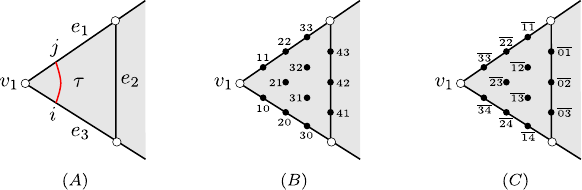}
    \caption{(A) The triangle $\tau$ with edges labeled $e_1,e_2,e_3$, and a distinguished vertex labeled $v_1$ (note that $e_1 \neq e_3$).  
(B) The labeling of the small vertices in $\overline V_\lambda \cap \tau$ for $n=4$.  
(C) An alternative labeling of the small vertices in $\overline V_\lambda \cap \tau$ for $n=4$.
}\label{Fig;tau-v1}
\end{figure}

\begin{figure}[h]
    \centering
    \includegraphics[width=220pt]{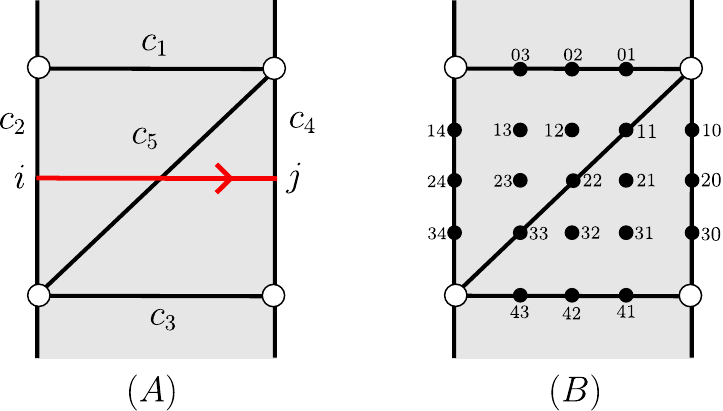}
    \caption{(A) The picture for an essential arc, in red. (B) The labeling for small vertices contained in the quadrilateral bounded by $c_1\cup c_2\cup c_3\cup c_4$ for $n=4$.}\label{Fig;essential-P4}
\end{figure}

\begin{remark}
Our $\OvS$, $\OvA$, and $\OvU$ correspond respectively to
$\mathsf{S}(\fS)$, $\mathscr{A}_{\omega}(\fS)$, and $\mathscr{U}_{\omega}(\fS)$ in \cite{huang2025quantum}.
\end{remark}

\begin{remark}
    In Figure~\ref{Fig;essential-P4}, we choose a different diagonal from the one used in \cite[Figure~2]{huang2025quantum}. 
    We omit the proof of \eqref{into-eq-Dij}, as the argument for \cite[Equation~(12)]{huang2025quantum} applies verbatim in our setting.
\end{remark}

We call the corner arc in the left (resp. right) picture in Figure~\ref{Fig;badarc} \emph{counterclockwise oriented} (resp. \emph{clockwise oriented}).

We state the following lemma, which will be used in \S\ref{subsec-inclusion-A}.

\begin{lemma}\label{lem-essential}
Let $\fS$ be a pb surface satisfying each its component contains at least two boundary components.
    The algebra $\SS$ is generated by finitely many stated essential arcs $C$, where each stated corner arc in $C$ is oriented counterclockwise.
\end{lemma}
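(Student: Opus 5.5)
We reduce to generation by stated arcs and then eliminate non-essential pieces. The starting point is the known generation result for stated $\SL$-skein algebras. Using the splitting/cutting philosophy of \cite{LS21,LY23}, $\SS$ is generated, as an $R$-algebra, by stated arcs (including corner arcs). Concretely, any stated $n$-web diagram can be resolved via \eqref{wzh.four}–\eqref{wzh.seven} into a combination of products of stated arcs. First we push every $n$-valent vertex toward a boundary edge using the isotopy freedom. Then we apply \eqref{wzh.five} to replace each such vertex by a sum of products of arcs ending on the boundary. Closed components are removed by cutting them near a boundary edge with \eqref{wzh.seven}, which turns a closed loop into a sum of stated arcs. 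Crossings are resolved into products by stacking using the height order. Since $\fS$ has finitely many topological types of simple arcs, only finitely many stated arcs are needed up to the algebra structure.

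Next we need arcs whose endpoints lie on distinct boundary components. An arc with both endpoints on the same boundary component $c$ is either a corner arc or winds around. In either case we rewrite it using \eqref{wzh.seven}, applied along a second boundary component $c'\neq c$. Such a $c'$ exists because each component of $\fS$ contains at least two boundary components. The idea is to isotope a piece of the arc so it runs parallel and close to $c'$, then apply \eqref{wzh.seven}. This expresses the arc as a sum of products of two arcs, each going from $c$ to $c'$, and these are essential. Applying this arc by arc, $\SS$ is generated by finitely many stated essential arcs.

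Finally we fix orientations. A clockwise oriented corner arc should be expressible through counterclockwise ones. One route is to replace an arc of opposite orientation by an $(n-1)$-strand web via the antipode formula $S(u_{ij})=(-q)^{j-i}\det_q(\buu^{ji})$, transported through $N(a)\cong\fB$ and Theorem~\ref{Hopf}. This writes a reversed arc as a quantum minor of $(n-1)$ parallel copies of the arc with the other orientation. For a corner arc, reversing orientation while keeping the same endpoints turns the clockwise orientation into the counterclockwise one. So each clockwise corner arc lies in the subalgebra generated by counterclockwise corner arcs.

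The main obstacle is the second step. We must check that the reduction along $c'$ really preserves finiteness and does not produce new non-essential pieces; the delicate point is controlling the height orders when using \eqref{wzh.seven} near $c'$. We expect this to follow from choosing $c'$ adjacent, in a fixed arc system, to the original arc.
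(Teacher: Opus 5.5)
There is a genuine gap, and it sits in your first step, in the finite-generation claim. The assertion that $\fS$ has only finitely many topological types of simple arcs is false. In the annulus with one puncture on each boundary circle (two boundary components, so covered by the hypothesis), the simple arcs joining the two boundary edges form infinitely many isotopy classes, differing by Dehn twists about the core. Finiteness up to homeomorphism does not help, since $\SS$ is not generated by mapping-class orbits. Finite generation is true, but it needs a structural argument, for example an arc system or triangulation together with the splitting map; it cannot come from counting arcs.

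Even the weaker claim that $\SS$ is generated by stated \emph{simple} arcs is not established. After \eqref{wzh.five} and \eqref{wzh.seven} you are left with arcs that may cross themselves, or cross each other in patterns compatible with no stacking order. The relation \eqref{w.cross} only trades one crossing type for the other modulo a smoothing, so it does not by itself remove a self-crossing. You give no argument that such an arc is a polynomial in simple arcs.

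Your second step, which you leave open, is repairable once you have simple arcs. Choose the finger path from the arc to $c'$ inside the complement of the arc, so that \eqref{wzh.seven} yields two disjoint arcs joining $c$ and $c'$. Then fix the height order on $c'$ with the height-exchange relation. This still leaves the finiteness problem untouched.

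Your third step matches the paper's argument. The paper takes a tubular neighbourhood $U\cong\mathbb P_2$ of a clockwise corner arc and uses the induced embedding $\cS_\omega(\mathbb P_2)\to\SS$ from \cite[Corollary~6.3]{LY23}. It then applies \cite[Lemma~6.5]{LS21}, the orientation-reversal (antipode) identity you describe, to place the arc in the subalgebra generated by the counterclockwise arcs $\cev{c}_{ij}$.

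The paper does not attempt your first two steps at all. It takes finite generation of $\SS$ by stated essential arcs directly from \cite[Lemma~7.18]{huang2025quantum}. As written, your proposal establishes the orientation part of the lemma but not the generation part; you need either to cite that result or to give a genuine structural proof.
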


\begin{proof}
    By \cite[Lemma~7.18]{huang2025quantum}, the algebra $\SS$ is generated by a finite set $C'$ of stated essential arcs.

    If $C'$ contains no clockwise oriented corner arcs, then we simply set $C = C'$. Otherwise, suppose that 
    $\alpha \in C'$ is a clockwise oriented corner arc, and let $c$ be its underlying oriented arc.
    For each $1 \le i,j \le n$, denote by $c_{ij}$ the stated oriented arc obtained from $c$ by assigning states $i$ and $j$ to its initial and terminal points, respectively.
    Let $\cev{c}_{ij}$ be the stated oriented arc obtained from $c_{ij}$ by reversing its orientation.

    Let $U$ be a properly embedded open tubular neighborhood of $\alpha$, which is diffeomorphic to $\mathbb P_2$.
    By \cite[Corollary~6.3]{LY23}, the embedding 
    $U \hookrightarrow \fS$ induces an algebra embedding
    \[
        \cS_\omega(\mathbb P_2) \longrightarrow \cS_\omega(\fS),
        \qquad b_{ij} \mapsto c_{ij},
    \]
    where $b_{ij} \in \cS_\omega(\mathbb P_2)$ is defined in \S\ref{sec:bigon}. 
    Then \cite[Lemma~6.5]{LS21} implies that 
    $\alpha$ lies in the subalgebra of $\SS$ generated by the $\cev{c}_{ij}$, $1 \le i,j \le n$, all of which are counterclockwise oriented stated corner arcs.

    Replacing each clockwise oriented corner arc in $C'$ by the corresponding counterclockwise generators above, we eventually obtain the desired generating set $C$.
\end{proof}

\section{Quantum cluster algebras associated with stated skein algebras}\label{sec-cluster-skein}

In this section, we formulate a quantum cluster  structure inside the skew-field of the stated $\SL$-skein algebra (see Proposition~\ref{prop-naturality} and Definition~\ref{def-sln-quantum-cluster-al}). 
We then show that each stated essential arc is a cluster variable (\eqref{pro-inclusion-3}, \eqref{pro-inclusion-4}, \eqref{eq-alpha-counterclock}), which implies that the stated $\SL$-skein algebra is contained in the associated quantum cluster algebra (Theorem~\ref{thm-skein-inclusion-A}), namely
\[
\SS \subset \mathscr{A}_{\omega}(\fS)
\]
(see Definition~\ref{def-sln-quantum-cluster-al}). 
Finally, we prove that $\SS=\mathscr{A}_{\omega}(\fS)=
\mathscr{U}_{\omega}(\fS)$ when $n=2$ (Theorem~\ref{thm-skein-eq-A-two}).

\subsection{Quantum cluster structure inside $\text{Frac}(\SS)$}\label{sec:quantuminside}
Let $\fS$ be a generalized triangulable pb surface without interior punctures, and let $\lambda$ be a generalized triangulation of $\fS$.

Let 
\[
\Ql \colon \VA \times \VA \to \bZ
\]
denote the restriction of 
\[
\Qast \colon \OVA \times \OVA \to \bZ
\]
(see \eqref{eq-def-Q-lambda-re}). 
Recall that $\RVlast \subset \VA$ denotes the set of vertices of $\OVA$ lying in the interior of $\fS^\ast$.
We then have the following.

\begin{lemma}\label{lem-compatible-pair}
\begin{enumerate}[label={\rm (\alph*)}]\itemsep0.3em
    \item  For $x\in \OVA$ and $y\in \RVlast$, we have
\[
\sum_{z\in \VA} C(z,x)\, Q_\lambda(z,y) \;=\; \Qast(x,y),
\]
where $C$ is defined in \eqref{def-matrix-C}.

\item For any $u\in \RVlast$ and $v\in \VA$, we have
\[
\sum_{v_1 \in \VA} Q_\lambda(v_1,u)\, P_\lambda(v_1,v)
    \;=\; 2n^2\, \delta_{u,v},
\]
where $P_\lambda$ is defined in \eqref{def-matrix-Pl}.
\end{enumerate}
\end{lemma}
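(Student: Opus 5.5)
Part (a) is a direct computation from the definition \eqref{def-matrix-C} of $C$. Fix $x\in\OVA$ and $y\in\RVlast$. The sum $\sum_{z\in\VA}C(z,x)Q_\lambda(z,y)$ only involves those $z\in\VA$ with $C(z,x)\neq 0$. There are three cases for $x$.

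\emph{Case $x\in\V$.} Here $x\in\VA$ and $x$ is not in the image of $p$. The image of $p$ consists of vertices on the $e_2$ edges of the attached triangles, and those are not in $\VA$. So the only contributing term is $z=x$, and the sum equals $Q_\lambda(x,y)=\Qast(x,y)$.

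\emph{Case $x\in\VA\setminus\V$.} Again $x$ is not in the image of $p$, so the sum is $Q_\lambda(x,y)=\Qast(x,y)$.

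\emph{Case $x\in\OVA\setminus\VA$.} Then $x=(0,n-k,k)$ lies on $e_2$ of some attached triangle. The contributing $z$ are exactly those $z\in\VA\setminus\V$ with $p(z)=x$, namely the vertices $(i,n-k-i,k)$ with $i\ge 1$ in that triangle, each with coefficient $-1$. We must therefore show
\[
-\sum_{i\ge1}\Qast\big((i,n-k-i,k),y\big)=\Qast\big((0,n-k,k),y\big),
\]
that is, $\sum_{i\ge0}\Qast((i,n-k-i,k),y)=0$ for interior $y$. This is the key local check.

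The vertices with fixed $k$ form a row parallel to $e_1$, running from $e_2$ across the attached triangle to the glued edge $e_1$. Since $y$ is interior, $y$ is adjacent only to vertices of the attached triangle or of the neighbouring face of $\lambda^\ast$. I would verify the vanishing by inspecting the quiver of Figure~\ref{Fig;coord_ijk}. Each row sum of $\overline Q_{\mathbb P_3}$ is zero when paired with any vertex off the boundary line $i=0$, because the arrows between consecutive rows alternate in direction. In the glued situation, the row endpoint on $e_1$ has the half-arrows cancel. I expect to organise this as: the vector $\sum_{i}\mathbf e_{(i,n-k-i,k)}$ lies in the left kernel of the restriction of $\overline Q_{\lambda^\ast}$ to columns $\RVlast$. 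Checking this for each local configuration (interior $y$ inside the attached triangle, or on $e_1$, or in the adjacent face) is routine but is the step requiring care. An alternative is to use the known fact, going back to \cite{LY23} in the construction of $P_\lambda$, that these row sums correspond to the corner-arc / skeleton relations.

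Part (b) follows formally from (a) and \eqref{eq-prod-PQ}. Write $P_\lambda=C\Past C^T$. Then
\[
\sum_{v_1}Q_\lambda(v_1,u)P_\lambda(v_1,v)
=\sum_{x,x'}\Big(\sum_{v_1}C(v_1,x)Q_\lambda(v_1,u)\Big)\Past(x,x')C(v,x').
\]
By (a), the inner sum equals $\Qast(x,u)$. Since $\Qast$ and $\Past$ are antisymmetric, $\sum_x \Qast(x,u)\Past(x,x')=(\Past\Qast)(x',u)$. By \eqref{eq-prod-PQ}, applied to the triangulable surface $\fS^\ast$ with $u$ interior, this equals $2n^2\delta_{x',u}$; the zero block in \eqref{eq-prod-PQ} handles boundary $x'$, and the sign is fixed by the antisymmetry conventions. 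Hence the total is $2n^2C(v,u)$. Finally, $u\in\RVlast$ is never in the image of $p$ (those vertices lie on $e_2$, a boundary edge), so $C(v,u)=\delta_{u,v}$, which gives $2n^2\delta_{u,v}$.

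The main obstacle is the row-sum vanishing in part (a). Part (b) is then just bookkeeping with signs.
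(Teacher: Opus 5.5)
Your proposal is essentially the paper's proof: in (a) the paper likewise reduces to the vanishing $\Qast(x,y)+\sum_{z\in V_x}\Qast(z,y)=0$ for interior $y$, which it simply asserts, and (b) is the same formal computation using $P_\lambda=C\Past C^T$, part (a), \eqref{eq-prod-PQ}, and $C(v,u)=\delta_{u,v}$.

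Two small corrections are needed. First, the row $\{(i,n-k-i,k)\}$ with $k\ge 1$ runs from $e_2$ to $e_3$, both boundary edges of $\fS^\ast$, not to $e_1$. So the local check is only that every interior vertex $y$ adjacent to the row has exactly two neighbours in it, one with a weight-one arrow into $y$ and one with a weight-one arrow out of $y$. (Your claim for all vertices off $i=0$ fails for frozen vertices on $e_3$, but those are excluded anyway.) Second, in (b) one actually has $\sum_x\Qast(x,u)\Past(x,x')=-(\Past\Qast)(x',u)$, and this sign cancels the $-2n^2$ appearing in \eqref{eq-prod-PQ}.
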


\begin{proof}
(a) If $x\in \VA$, then $C(z,x)=\delta_{z,x}$, and the identity follows immediately.

Now suppose $x\in \OVA \setminus \VA$, and let $\tau$ be the attached triangle containing $x$.
Define
\[
V_x := \{\, x' \in \overline V_\tau \cap \VA \subset \OVA \mid p(x') = x \,\},
\]
where $p$ is the map defined in \eqref{eq-cov-pdef}.
Since $y\in \RVlast$, we have
\[
\Qast(x,y) + \sum_{z\in V_x} \Qast(z,y) = 0.
\]
By the definition of $C$, this yields
\[
\sum_{z\in \VA} C(z,x)\, Q_\lambda(z,y)
    = -\sum_{z\in V_x} \Qast(z,y)
    = \Qast(x,y).
\]

(b) We compute:
\begin{align*}
\sum_{v_1\in \VA} Q_\lambda(v_1,u)\, P_\lambda(v_1,v)
&= -\sum_{\substack{v_1\in \VA \\ v_2,v_3\in \OVA}}
     C(v,v_3)\, \Past(v_3,v_2)\, C^T(v_2,v_1)\, Q_\lambda(v_1,u) \\
&= -\sum_{v_2,v_3\in \OVA}
     C(v,v_3)\, \Past(v_3,v_2)\, \Qast(v_2,u)
     \qquad\text{by (a)} \\
&= \sum_{v_3\in \OVA}
     C(v,v_3)\, 2n^2\, \delta_{v_3,u}
     \qquad\text{by \cite[Equation~(214)]{LY23}} \\
&= 2n^2\, \delta_{v,u}
     \qquad\text{by \eqref{def-matrix-C}}.
\end{align*}
\end{proof}

Theorem~\ref{thm-stated-trace}(d) guarantees the existence of the skew-field $\Fr(\SS)$ of $\SS$.  
In what follows, we construct a quantum seed inside $\Fr(\SS)$.

Set  
\[
\mathcal V = \VA, \qquad 
\mathcal V_{\mathrm{mut}} = \mathring{\overline V}_{\lambda^\ast}.
\]
Define
\begin{align}\label{eq-def-pi}
 \Pi_\lambda := \tfrac{1}{n}\, P_\lambda .
\end{align}
By \eqref{def-PP3} and \eqref{eq-anti-matric-P-def}, the matrix $\Pi_\lambda$ is integral.

For each $v\in \mathcal V$, we have defined an element $\ga_v \in \SS$ (see \S\ref{sec;A_tori} and \S\ref{sub-extended}).  
Recall that $\xi = \omega^n$.  
Then Theorem~\ref{thm-stated-trace}(a) implies that
\begin{align}\label{eq-gaa-stated}
    \ga_v\, \ga_{v'} 
    = \xi^{\Pi_\lambda(v,v')}\, \ga_{v'}\, \ga_v
    \qquad\text{in } \SS .
\end{align}

For any $v_1,\ldots,v_r \in \mathcal V$ and $a_1,\ldots,a_r \in \mathbb{Z}$, define the Weyl-ordered product by
\begin{align}\label{Weyl_ordering-gaa-stated}
    \left[ \ga_{v_1}^{a_1} \ga_{v_2}^{a_2} \cdots \ga_{v_r}^{a_r} \right] 
    := 
    \xi^{-\frac{1}{2} \sum_{i<j} a_i a_j\, \Pi_\lambda(v_i,v_j)}\,
       \ga_{v_1}^{a_1} \ga_{v_2}^{a_2} \cdots \ga_{v_r}^{a_r}.
\end{align}
For ${\bf t} = (t_v)_{v \in V_\lambda} \in \mathbb{Z}^{\mathcal V}$, set
\begin{align}\label{def-gaa-monomial-stated}
    \ga^{\bf t} := 
    \left[\, \prod_{v\in V_\lambda} \ga_v^{t_v} \right].
\end{align}

Finally, define
\begin{align}\label{def-M-lambda-stated}
    M_\lambda \colon \mathbb Z^{\mathcal V} \longrightarrow \Fr(\SS),
    \qquad 
    {\bf t} \longmapsto \ga^{\bf t}.
\end{align}

\begin{lemma}\label{lem-seed-skein}
Let $\fS$ be a generalized triangulable pb surface without interior punctures, and let $\lambda$ be a generalized triangulation of $\fS$.  
Then $(Q_\lambda,\Pi_\lambda,M_\lambda)$ is a quantum seed (Definition~\ref{def-quantum-seed}) inside the skew-field $\Fr(\SS)$.
\end{lemma}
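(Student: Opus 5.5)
We check the three conditions of Definition~\ref{def-quantum-seed} in turn, working inside $\Fr(\SS)$ and transporting everything through the quantum trace $\tr$ of Theorem~\ref{thm-stated-trace}.

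\emph{Condition (1).} $Q_\lambda$ is the restriction of the exchange matrix $\Qast$ of the weighted quiver $\Gamma_{\lambda^\ast}$ to $\VA$, and the mutable set $\RVlast$ is contained in $\VA$. Arrows between two mutable vertices, or between a mutable and a frozen vertex, have weight $1$, since they are interior arrows of $\Gamma_{\lambda^\ast}$. Half-arrows occur only between boundary (hence frozen) vertices. Deleting the frozen vertices on the $e_2$ edges of the attached triangles does not alter this, so $Q_\lambda$ is an exchange matrix on $(\mathcal V,\mathcal V_{\rm mut})$.

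\emph{Condition (2).} Integrality of $\Pi_\lambda=\frac1n P_\lambda$ was already noted after \eqref{eq-def-pi}. Indeed $\Past$ takes values in $n\bZ$ and $C$ is integral, so $P_\lambda=C\Past C^T\in n\bZ$. Antisymmetry is inherited from $\Past$. The compatibility relation is exactly Lemma~\ref{lem-compatible-pair}(b) divided by $n$:
\[
\sum_{k\in\mathcal V}Q_\lambda(k,u)\Pi_\lambda(k,v)=2n\,\delta_{u,v},\qquad u\in\mathcal V_{\rm mut},\ v\in\mathcal V.
\]
So we may take $d_u=2n>0$.

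\emph{Condition (3).} This is the only step with real content. By \eqref{eq-gaa-stated} the elements $\ga_v$ $q$-commute with exponents $\xi^{\Pi_\lambda(v,v')}$. Each $\ga_v$ is nonzero, because $\tr(\ga_v)=A_v\neq0$ and $\tr$ is injective (Theorem~\ref{thm-stated-trace}(b)). Since $\SS$ is an Ore domain (Theorem~\ref{thm-stated-trace}(d)), each $\ga_v$ is invertible in $\Fr(\SS)$. Hence the Weyl-ordered monomials $\ga^{\bf t}$ are well defined and nonzero. The identity $M_\lambda({\bf k})M_\lambda({\bf t})=\xi^{\frac12{\bf k}\Pi_\lambda{\bf t}^T}M_\lambda({\bf k}+{\bf t})$ is then the standard computation for Weyl-ordered products in a quantum torus.

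It remains to show that $M_\lambda(\bZ^{\mathcal V})$ is an $R$-basis of a quantum torus whose skew-field is $\Fr(\SS)$. The injective map $\tr\colon\SS\to\As$ extends to an embedding of skew-fields $\Fr(\SS)\hookrightarrow\Fr(\As)$. This is the universal property of Ore localization, applied to the injective homomorphism of Ore domains $\tr$, with target an Ore domain. Under this embedding $\ga^{\bf t}\mapsto A^{\bf t}$. The monomials $A^{\bf t}$ are an $R$-basis of $\As$, so the $\ga^{\bf t}$ are linearly independent. Their span $\mathbb T$ is therefore a subalgebra isomorphic to the quantum torus $\As$.

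Finally, $\Fr(\mathbb T)=\Fr(\SS)$. The inclusion $\Asp\subset\im\tr$ of \eqref{eq-inclusion-A-s} gives $\mathbb T^{+}\subset\SS$ (identifying via $\tr$), so $\Fr(\mathbb T)\subset\Fr(\SS)$. Conversely, $\im\tr\subset\As=\mathbb T$ gives $\SS\subset\mathbb T$, and hence $\Fr(\SS)\subset\Fr(\mathbb T)$.

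The main care needed is in justifying that $\tr$ extends to the fraction skew-fields and that the two inclusions of \eqref{eq-inclusion-A-s} yield equality of skew-fields. Both follow from the Ore property, so this is bookkeeping rather than a genuine obstacle.
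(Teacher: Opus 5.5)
Your proposal is correct and takes essentially the paper's route. The paper's proof simply says that the argument of \cite[Lemma~4.1]{huang2025quantum} goes through using two inputs. Theorem~\ref{thm-stated-trace} supplies the $q$-commutation of the $\ga_v$, injectivity of $\tr$ and its extension to skew-fields, the inclusions \eqref{eq-inclusion-A-s}, and the Ore property. Lemma~\ref{lem-compatible-pair}(b) supplies the compatibility relation with $d_u=2n$. This is exactly what you unpack.

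One small simplification: the inclusion $\Fr(\mathbb T)\subset\Fr(\SS)$ needs no appeal to $\Asp\subset\im\tr$, since every $\ga_v^{\pm1}$ already lies in $\Fr(\SS)$ by construction.
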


\begin{proof}
Using Theorem~\ref{thm-stated-trace} together with Lemma~\ref{lem-compatible-pair}, the argument of \cite[Lemma~4.1]{huang2025quantum} applies in this setting.
\end{proof}

Let $\mathsf{S}_{\fS,\lambda}$ denote the quantum mutation class (Definition~\ref{def-quantum-class}) containing the quantum seed ${\mathsf s}_\lambda=(Q_\lambda,\Pi_\lambda,M_\lambda)$. 
In the remainder of this subsection, we prove that 
\[
\mathsf{S}_{\fS,\lambda} = \mathsf{S}_{\fS,\lambda'}
\]
for any two generalized triangulations $\lambda$ and $\lambda'$ of $\fS$.

Let $e$ be a boundary edge of $\fS$. By convention, $e=e_1$ in the attached triangle.  
There is an embedding $\iota\colon \fS \to \fS^\ast$ such that $\iota(e)=e_2$ (see \cite[Figure~23]{LY23}).  
This embedding induces an algebra homomorphism
\begin{align}\label{def-iota-al}
    \iota_\ast \colon \SS \longrightarrow \SSa.
\end{align}
It is straightforward to check that
\begin{align}\label{eq-gv-surfaces}
    \iota_\ast(\ga_v) = \ga_v \in \SSa \qquad\text{for all } v\in\V.
\end{align}
We continue to use $\iota_\ast$ to denote the composition
\begin{align}\label{eq-def-iota-dS}
    \SS \xrightarrow{\iota_\ast} \SSa \twoheadrightarrow \dSS.
\end{align}
Then we have the following, which will be used later. 

\begin{lemma}\cite[Equation~(240)]{LY23}\label{lem-iota-gv}
    For each $v\in V_\lambda'$, we have 
    $$\iota_\ast (\ga_v)=\begin{cases}
    [\gaa_v \gaa_{p(v)}^{-1}] & \mbox{if $v\in \overline V_{\lambda^\ast}\setminus \overline V_\lambda$}\\
    \gaa_v & \mbox{otherwise},
   \end{cases}$$ 
    where $p$ is defined in \eqref{eq-cov-pdef}.
\end{lemma}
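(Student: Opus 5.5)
The plan is to work in $\dSS$ and reduce everything to the quantum torus. By definition $\dSS=\overline{\cS}_\omega(\fS^\ast)/\ker\overline{\rm tr}_{\lambda^\ast}$, so $\overline{\rm tr}_{\lambda^\ast}$ induces an embedding $\dSS\hookrightarrow\overline{\mathcal A}_\omega(\fS^\ast,\lambda^\ast)$ with $\gaa_v\mapsto\overline A_v$ (Theorem~\ref{thm-trace-A}\ref{thm-trace-A-a}). Hence it suffices to show that $\overline{\rm tr}_{\lambda^\ast}(\iota_\ast(\ga_v))$ equals $[\overline A_v\overline A_{p(v)}^{-1}]$ or $\overline A_v$ in the two cases.

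\emph{Case $v\in\overline V_\lambda$.} This is immediate from \eqref{eq-gv-surfaces}. The embedding $\iota$ carries $\widetilde Y_v$ to the corresponding elongated graph in $\fS^\ast$. Both webs end on boundary edges of $\fS$, so $\iota(e)=e_2$ puts the endpoints on $e_2$ of the attached triangles. With the same reflection normalization this gives $\iota_\ast(\ga_v)=\ga_v$, whose image in $\dSS$ is $\gaa_v$.

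\emph{Case $v=(ijk)$ in an attached triangle $\nu$, $k\neq0$.} Here $\ga_v$ is the reflection normalization of the quantum minor $M^{[j+1,j+i]}_{[\bar i,n]}(c)$ of the left-turning corner arc $c$, multiplied by a web piece. I will:
\begin{enumerate}
\item Identify $\iota(c)$ with a corner arc in $\fS^\ast$ around the vertex of $\nu$ between $e_2$ and $e_3$, and identify the web piece with part of $\widetilde Y_{v}$ computed in $\fS^\ast$.
\item Use the relation defining the reduced algebra: all bad arcs at that corner vanish. Consequently, in $\overline{\cS}_\omega(\fS^\ast)$ the image of the corner-arc matrix is triangular. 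Its quantum minors with the given row/column sets then collapse to a single product of stated corner arcs, of the form that appears in the definition of $\ga_{p(v)}$ and $\ga_v$.
\item Conclude from this that $\iota_\ast(g''_v)\cdot g''_{p(v)}$ equals $g''_v$ (computed in $\fS^\ast$) up to a power of $\omega$, by isotoping the $k$ strands that run along $e_3$. The vertex $p(v)=(0,n-k,k)$ is exactly what absorbs the $k$-weighted edge.
\end{enumerate}

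It then remains to fix the power of $\omega$. Both sides are reflection invariant: the left because $*$ commutes with $\iota_\ast$, the right because it is a Weyl-ordered monomial. Moreover $\gaa_v$ and $\gaa_{p(v)}$ $\omega$-commute. Together these force the scalar to be exactly the Weyl normalization, giving $\iota_\ast(\ga_v)=[\gaa_v\gaa_{p(v)}^{-1}]$. As a consistency check, the commutation exponents agree with $P_\lambda=C\,\Past\,C^T$ from \eqref{def-matrix-Pl}, which matches Theorem~\ref{thm-stated-trace}(a).

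I expect step 2 to be the main obstacle: the explicit triangularity and minor-collapse computation in the reduced skein algebra, and matching the states of the surviving term with the webs defining $\gaa_v$ and $\gaa_{p(v)}$. If this is too messy, my fallback is to evaluate $\overline{\rm tr}_{\lambda^\ast}$ on $\iota_\ast(g''_v)$ directly using the state-sum over the attached triangle and read off the leading monomial.
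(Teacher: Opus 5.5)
Your framing is sound. The case $v\in\overline V_\lambda$ is essentially \eqref{eq-gv-surfaces}. Since $\iota_\ast$ commutes with the reflection, reflection invariance does pin down the power of $\omega^{\frac12}$ once the identity is known up to a unit. It cannot detect a sign $\pm1$, however, and the relations \eqref{wzh.five}--\eqref{wzh.seven} and the sign $(-1)^{\binom n2}$ in $g''_v$ do produce signs, so these must be tracked. The real problem is the attached-triangle case, which is the whole content of the lemma; the paper gives no proof and quotes it from \cite[Equation~(240)]{LY23}.

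\emph{Step 1 is false.} The map $\iota$ sends every boundary edge $b$ of $\fS$ to the edge $b_2$ of the triangle attached along $b$. Hence $\iota(c)$ runs from $e_2$ to $b_2$, where $b$ is the boundary edge on which $c$ ends. Along $\partial\fS^\ast$, an $e_2$-type edge is adjacent only to $e_3$-type edges. So $\iota(c)$ encloses two punctures and is not a corner arc of $\fS^\ast$. Compare Case~1 in the proof of Theorem~\ref{thm-skein-inclusion-A}: there $\iota_\ast$ of a corner arc is the arc $D_{ij}$ joining the opposite sides $e_2$ and $a_2$ of a quadrilateral. The bad-arc relations therefore say nothing about the matrix of $\iota(c)$. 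In fact, by Corollary~\ref{cor-injectivity}, none of its entries vanishes in $\overline{\cS}_\omega(\fS^\ast)$. Step~2, the triangularity and collapse of the minor, consequently has nothing to stand on.

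\emph{Step 3 is the actual identity, and it is only asserted.} Let $T_v$ denote the tripod web of $\fS^\ast$ defining $\gaa_v$. The claim cannot follow from an isotopy: $\iota_\ast\bigl(M^{[j+1,j+i]}_{[\bar i,n]}(c)\bigr)\,g''_{p(v)}$ has $2i$ more endpoints on $e_2$ than $T_v$. The $i$ initial points of $\iota(c)$ and the $n-k=i+j$ legs of $g''_{p(v)}$ take the place of the $j$ legs of $T_v$. Removing these endpoints needs \eqref{wzh.seven}, which introduces a sum over states, together with a proof that all but one term vanish in $\overline{\cS}_\omega(\fS^\ast)$.

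One way to organize this runs in the opposite direction to yours:
\begin{enumerate}
\item Start from $T_v$, push the $i$ strands that leave $\nu$ through $e$ against $e_2$, and cut them there using \eqref{wzh.seven}.
\item The outer pieces are then parallel copies of $\iota(c)$.
\item The inner piece $W_K$ is the $n$-valent vertex with $n-k$ legs on $e_2$ and $k$ legs on $e_3$, where $K$ records the new states. It sits at the genuine corner $p_e=e_2\cap e_3$ of $\nu$.
\end{enumerate}
The bad-arc relations apply to $W_K$, not to $\iota(c)$. You would need to show two things: only the state set matching rows $[j+1,j+i]$ of $\iota(c)$ survives, and the surviving $W_K$ equals $g''_{p(v)}$ up to a signed power of $\omega^{\frac12}$.

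Your fallback does not close the gap either. The leading monomial of $\overline{\rm tr}_{\lambda^\ast}(\iota_\ast(\ga_v))$ identifies only the top term, whereas the lemma asserts that the image is exactly one monomial.
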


Equation~\eqref{def-matrix-Pl} yields an algebra embedding
\begin{align}\label{eq-def-L-lamda}
    L_\lambda\colon \As \longrightarrow \Aa,
   \qquad
   A^{\bf k} \longmapsto \overline A^{{\bf k}C}.
\end{align}

By \cite[Lemma~11.6, Equations~(226), (232), and (235)]{LY23}, we obtain the following.

\begin{lemma}\label{lem-com-reduced-stated}
The following diagram commutes:
\begin{equation*}
\begin{tikzcd}
\SS \arrow[r, "\iota_\ast"]
  \arrow[d, hookrightarrow, "\tr"']
& \dSS
  \arrow[d, hookrightarrow, "\overline{\mathrm{tr}}_{\lambda^\ast}"] \\
\As
  \arrow[r, hookrightarrow, "L_\lambda"]
& \Aa
\end{tikzcd}.
\end{equation*}
\end{lemma}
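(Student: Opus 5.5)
The statement is Lemma~\ref{lem-com-reduced-stated}: the square $L_\lambda\circ\tr=\overline{\mathrm{tr}}_{\lambda^\ast}\circ\iota_\ast$ commutes as maps $\SS\to\Aa$. Both composites are $R$-algebra homomorphisms. Since $\tr$ is injective and $\Asp\subset\im\tr\subset\As$ by Theorem~\ref{thm-stated-trace}, I would first reduce to the elements $\ga_v$, $v\in\VA$. The subalgebra they generate is not all of $\SS$, so the reduction needs an extra argument, given in the third paragraph.

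First I would check the square on generators. By Theorem~\ref{thm-stated-trace}(b), $\tr(\ga_v)=A_v=A^{{\bf e}_v}$, so $L_\lambda(\tr(\ga_v))=\overline A^{{\bf e}_vC}$. By \eqref{def-matrix-C}, ${\bf e}_vC={\bf e}_v$ if $v\in\V$, and ${\bf e}_vC={\bf e}_v-{\bf e}_{p(v)}$ otherwise. On the other side, Lemma~\ref{lem-iota-gv} gives $\iota_\ast(\ga_v)=\gaa_v$ or $[\gaa_v\gaa_{p(v)}^{-1}]$. Applying $\overline{\mathrm{tr}}_{\lambda^\ast}$ and Theorem~\ref{thm-trace-A}(a) turns these into $\overline A_v$ or $[\overline A_v\overline A_{p(v)}^{-1}]=\overline A^{{\bf e}_v-{\bf e}_{p(v)}}$. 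Two points need care here.
\begin{enumerate}
\item The Weyl normalizations must match. This holds because $P_\lambda=C\Past C^T$, so $L_\lambda$ is a well-defined algebra map sending Weyl-ordered monomials to Weyl-ordered monomials.
\item Although $\gaa_{p(v)}$ is not invertible in $\dSS$, Lemma~\ref{lem-iota-gv} is an identity in which $\iota_\ast(\ga_v)$ is a genuine element of $\dSS$. Its image under the homomorphism $\overline{\mathrm{tr}}_{\lambda^\ast}$ is therefore the stated monomial in the quantum torus.
\end{enumerate}

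Next I would extend agreement to all of $\SS$. Set $\Phi_1=L_\lambda\circ\tr$ and $\Phi_2=\overline{\mathrm{tr}}_{\lambda^\ast}\circ\iota_\ast$. Given $x\in\SS$, the inclusion $\Asp\subset\im\tr$ lets me pick a monomial $A^{\bf k}$ with ${\bf k}\in\mathbb N^{\VA}$ large enough that $\tr(x)A^{\bf k}\in\Asp$. Hence $x\,\ga^{\bf k}$ lies in the subalgebra generated by the $\ga_v$, up to a power of $\omega$, using injectivity of $\tr$ and the Weyl-ordering compatibility. So $\Phi_1(x\ga^{\bf k})=\Phi_2(x\ga^{\bf k})$. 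Both maps send $\ga^{\bf k}$ to the same invertible element of the quantum torus $\Aa$, so cancelling gives $\Phi_1(x)=\Phi_2(x)$.

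The main obstacle I expect is the generator check, specifically verifying Lemma~\ref{lem-iota-gv} and its Weyl-order constant. That is where the actual skein computation of \cite{LY23} (their Lemma~11.6 and Equations~(226), (232), (235)) is needed, and I would cite it rather than redo it.
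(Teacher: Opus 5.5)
Your proof is correct, but it is organized differently from the paper. The paper gives no argument of its own: it reads the square off directly from \cite[Lemma~11.6, Equations~(226), (232), (235)]{LY23}.

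You instead isolate the skein-theoretic input as the generator identity of Lemma~\ref{lem-iota-gv}. (The paper attributes that identity to \cite[Equation~(240)]{LY23}, not to the equations you list.) You then propagate agreement from the $\ga_v$ to all of $\SS$ by clearing denominators:
\begin{itemize}
\item $\tr(x)A^{\bf k}\in\Asp=\tr(R\langle \ga_v\mid v\in\VA\rangle)$;
\item injectivity of $\tr$ puts $x\ga^{\bf k}$ in the subalgebra generated by the $\ga_v$;
\item you right-cancel the invertible monomial $L_\lambda(A^{\bf k})=\overline A^{{\bf k}C}$.
\end{itemize}

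What your route buys is a structural explanation. Any two algebra homomorphisms from $\SS$ into a quantum torus that agree on the $\ga_v$ and send them to invertible elements must coincide. So the square is forced by the values on generators, and the Weyl-ordering bookkeeping reduces to $P_\lambda=C\Past C^T$.

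What it does not buy is any reduction of the skein input. Lemma~\ref{lem-iota-gv} is imported from \cite{LY23} just like the square, so the work is relocated rather than avoided. Your argument is non-circular only because the paper cites that identity independently.

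One side remark in your item (2) is inaccurate: $\gaa_{p(v)}$ need not be non-invertible in $\dSS$. Here $p(v)$ lies on a boundary edge adjacent to the new puncture $p_e$, and in the reduced setting such frozen elements typically acquire inverses. Compare \eqref{eq:frozen}, where $\overline A_{01}=\overline C(1)_{33}=(C(1)_{11})^{-1}$, and the use of \eqref{intro-eq-Cij} in the proof of Theorem~\ref{thm:poly1}. Nothing in your argument depends on this: in either case you only apply the homomorphism $\overline{\mathrm{tr}}_{\lambda^\ast}$ to the genuine element $\iota_\ast(\ga_v)$.
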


The lemma above immediately implies the following corollary.

\begin{corollary}\label{cor-injectivity}
The following two composition maps
\begin{align}\label{eq-embedding-stated-reduced}
    \SS \xrightarrow{\iota_\ast} \SSa \twoheadrightarrow \dSS,
\qquad
\SS \xrightarrow{\iota_\ast} \SSa \twoheadrightarrow \overline{\cS}_\omega(\fS^\ast)
\end{align}
are both injective.
\end{corollary}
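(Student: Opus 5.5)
The plan is to deduce injectivity of both composites from the commutative square of Lemma~\ref{lem-com-reduced-stated}, using that $\tr$ and $L_\lambda$ are injective and then factoring through the reduced algebra.

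First composite. By Lemma~\ref{lem-com-reduced-stated}, the composite $\SS\xrightarrow{\iota_\ast}\dSS\xrightarrow{\overline{\rm tr}_{\lambda^\ast}}\Aa$ equals $L_\lambda\circ\tr$.

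\begin{enumerate}[label={\rm (\roman*)}]
\item The map $\tr$ is injective by Theorem~\ref{thm-stated-trace}(b).
\item The map $L_\lambda$ is injective. Indeed, $L_\lambda$ sends the basis $\{A^{\bf k}\}_{{\bf k}\in\mathbb Z^{\VA}}$ of $\As$ to $\{\overline A^{{\bf k}C}\}$. By \eqref{def-matrix-C}, the matrix $C$ restricted to the columns indexed by $\VA\subset\OVA$ is the identity. Hence ${\bf k}\mapsto{\bf k}C$ is injective, and distinct ${\bf k}$ go to distinct monomials, which are $R$-linearly independent in the quantum torus $\Aa$.
\item It remains to check that $L_\lambda$ is an algebra map. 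This is exactly the relation $P_\lambda=C\Past C^T$ from \eqref{def-matrix-Pl}, which the paper already uses when it calls $L_\lambda$ an embedding.
\end{enumerate}

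Since $L_\lambda\circ\tr$ is injective, $\overline{\rm tr}_{\lambda^\ast}\circ\iota_\ast$ is injective, and therefore $\iota_\ast\colon\SS\to\dSS$ is injective.

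Second composite. Write $\pi\colon\overline{\cS}_\omega(\fS^\ast)\twoheadrightarrow\dSS$ for the quotient by $\ker\overline{\rm tr}_{\lambda^\ast}$ (Definition~\ref{def-key-algebra}). The map $\SS\to\dSS$ of the first part factors as
\[
\SS\longrightarrow\SSa\longrightarrow\overline{\cS}_\omega(\fS^\ast)\xrightarrow{\;\pi\;}\dSS .
\]
If an element $x\in\SS$ maps to $0$ in $\overline{\cS}_\omega(\fS^\ast)$, then it also maps to $0$ in $\dSS$, so $x=0$ by the first part. Hence the second composite is injective as well.

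The only point needing care is confirming that $L_\lambda$ is well defined and injective. This reduces to the structure of $C$ (identity block on $\VA$) and the definition $P_\lambda=C\Past C^T$; everything else is formal.
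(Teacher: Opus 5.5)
Your proof is correct and follows the paper's route. The paper also deduces the corollary directly from the commutative square of Lemma~\ref{lem-com-reduced-stated}, using that $\tr$ and $L_\lambda$ are injective, and the second composite follows because the first factors through $\overline{\cS}_\omega(\fS^\ast)$. Your explicit check that the $\VA\times\VA$ block of $C$ is the identity (so $L_\lambda$ is injective) and that $P_\lambda=C\,\overline P_{\lambda^\ast}C^T$ makes $L_\lambda$ an algebra map simply fills in what the paper asserts when it calls $L_\lambda$ an embedding.
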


Let $\fS$ be a triangulable pb surface equipped with a triangulation $\lambda$.  
Suppose $e \in \lambda$ is not a boundary edge. Then there exists a unique ideal arc $e'$ with  
$e' \neq e$ such that 
\[
\lambda' := (\lambda \setminus \{e\}) \cup \{e'\}
\]
is again a triangulation of $\fS$.  
We say that $\lambda$ and $\lambda'$ are obtained from each other by a {\bf flip}.  
In \cite{FG06,GS19}, the authors showed that the quivers $\Gamma_{\lambda'}$ and $\Gamma_\lambda$ are related by a specific sequence of mutations.

Let $\mathbb{P}_{4,e}$ denote the quadrilateral having $e$ as one of its diagonals.  
Note that $\mathbb{P}_{4,e}$ may not be embedded in $\fS$, although its interior is embedded.  
Let $V_{\lambda,e}$ be the set of all vertices of $\overline V_\lambda$ lying in the interior of $\mathbb{P}_{4,e}$; thus $V_{\lambda,e}$ consists of $(n-1)^2$ vertices.  
The vertices in $V_{\lambda,e}$ are labeled as in Figure~\ref{fig:P4-labeling} (the vertices of $\overline V_\lambda$ on the boundary of $\mathbb{P}_{4,e}$ are also labeled, and these labels will be used later).

For each $i = 0,1,\dots,n-2$ and each $0 \le t \le i$, define
\begin{align}
\label{def-Vite}
    V_{\lambda,e}^{i,t}
    &:= \{(i+1-t,\, t+1),\ (i+2-t,\, t+2),\ \ldots,\ (n-1-t,\, n-1-i+t)\}
    \subset V_{\lambda,e}, \\
    \label{def-Vie}
    V_{\lambda,e}^{i}
    &:= \bigsqcup_{0 \le s \le i} V_{\lambda,e}^{i,s} \subset V_{\lambda,e}.
\end{align}
Each $V_{\lambda,e}^{i}$ forms a grid of vertices inside a quadrilateral, containing $(i+1)(n-i-1)$ points.  
Note that these sets for different $i$ are not necessarily disjoint.

\begin{figure}
    \centering
    \includegraphics[width=0.28\linewidth]{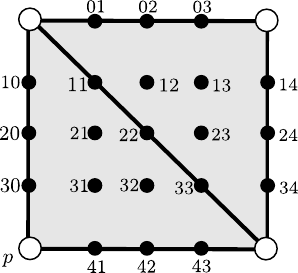}
    \caption{Labeling of the small vertices contained in $\mathbb{P}_{4,e}$ when $n=4$, where $e$ is the diagonal.}
    \label{fig:P4-labeling}
\end{figure}

The mutation sequence of Fock, Goncharov, and Shen proceeds as follows: first mutate at all vertices of $V_{\lambda,e}^{0}$ in any order; then at all vertices of $V_{\lambda,e}^{1}$ in any order; and so on, finishing with mutations at all vertices of $V_{\lambda,e}^{n-2}$ in any order.  
The total length of this mutation sequence is
\[
\sum_{i=0}^{n-2} (i+1)(n-1-i)
    = \sum_{j=1}^{n-1} j(n-j)
    = \frac{1}{6}(n^3 - n)
    =: r.
\]
Let us denote the resulting sequence of vertices by
\[
v_1, v_2, \ldots, v_r.
\]
Thus the first $n-1$ vertices in the sequence are precisely the elements of $V_{\lambda,e}^{0}$, and we have
\[
\Gamma_{\lambda'} = \mu_{v_r} \cdots \mu_{v_2} \mu_{v_1}(\Gamma_\lambda),
\qquad
\overline{Q}_{\lambda'} = \mu_{v_r} \cdots \mu_{v_2} \mu_{v_1}(\overline{Q}_\lambda).
\]

Define
\begin{align}\label{def-eq-mul}
    \mu_{\lambda'\lambda}
   := \mu_{v_r} \cdots \mu_{v_2} \mu_{v_1}.
\end{align}
The proof of \cite[Theorem~4.13]{huang2025quantum} implies the following result, which in turn shows that  
$\mu_{\lambda'\lambda}$ is a well-defined operation on the quantum seed
$(\overline Q_{\lambda}, \overline \Pi_{\lambda}, \overline M_{\lambda})$.  
In particular, $\mu_{\lambda'\lambda}$ is a composition of mutations that does not depend on the choice of ordering of the mutations at vertices in $V_{\lambda,e}^{i}$ for each $i$.

\begin{theorem}\cite[Theorem~4.13]{huang2025quantum}\label{lem-mutation-A-seeds-flip}
Suppose that $\fS$ has no interior punctures. Then
\[
    (\overline Q_{\lambda'}, \overline \Pi_{\lambda'}, \overline M_{\lambda'})
    = \mu_{v_r} \cdots \mu_{v_1}
      (\overline Q_{\lambda}, \overline \Pi_{\lambda}, \overline M_{\lambda}).
\]
\end{theorem}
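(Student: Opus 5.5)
The plan is to compare the seed obtained by mutation with the seed defined directly from $\lambda'$, one component at a time.

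\emph{The quiver component.} We have $\mu_{v_r}\cdots\mu_{v_1}(\overline Q_\lambda)=\overline Q_{\lambda'}$ by the Fock--Goncharov--Shen computation recalled above. So only the equalities $M'=\overline M_{\lambda'}$ and $\Pi'=\overline\Pi_{\lambda'}$ remain.

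\emph{The matrix $\Pi$.} The mutated $\Pi'$ is determined by the commutation relations of the mutated cluster variables. Likewise, $\overline\Pi_{\lambda'}$ is determined by those of the $\gaa^{\lambda'}_v$, via Lemma~\ref{gaa-com} for $\lambda'$. It therefore suffices to prove
\[
\mu_{v_r}\cdots\mu_{v_1}(\overline M_\lambda)({\bf e}_v)=\gaa^{\lambda'}_v \qquad\text{in } \Fr(\dS)
\]
for every $v\in\overline V_{\lambda'}=\overline V_\lambda$.

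\emph{Reduction to the quadrilateral.} For $v$ not in $V_{\lambda,e}$, no mutation touches $v$. In that case I would check directly that $\gaa^{\lambda}_v=\gaa^{\lambda'}_v$, since the elongated graph $\widetilde Y_v$ can be isotoped off the interior of $\mathbb P_{4,e}$ or chosen in a common triangle. For $v\in V_{\lambda,e}$, every exchange relation used involves only vertices of $\overline V_\lambda$ in the closure of $\mathbb P_{4,e}$.

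I would then transport the whole computation through the map $\cS_\omega(\mathbb P_4)\to\cS_\omega(\fS)$ induced by the (interior) embedding of $\mathbb P_{4,e}$. There are two points to handle:
\begin{itemize}
\item One has to check that this map sends the elements $\ga_v$ of $\mathbb P_4$ to $\gaa_v$ of $\fS$, up to multiplication by frozen/boundary elements. These factors are the turning-left elongations leaving the quadrilateral, and they are the same for $\lambda$ and $\lambda'$.
\item One has to verify that the mutation formulas are compatible with those factors.
\end{itemize}
This reduces the statement to $\fS=\mathbb P_4$. There $\overline{\rm tr}_\lambda$ is injective by Theorem~\ref{thm-trace-A}, so identities can be checked inside the quantum torus $\overline{\mathcal A}_\omega(\mathbb P_4,\lambda)$.

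\emph{The quadrilateral: induction along the mutation sequence.} I would proceed by induction on the layers $V^{0}_{\lambda,e},\dots,V^{n-2}_{\lambda,e}$. At each stage I would identify the current cluster variable at a vertex of $V^{i,t}_{\lambda,e}$ with an explicit reflection-normalized skein element: a web in $\mathbb P_4$ built from quantum minors $M^I_J$ of the two diagonals and corner arcs, following the octahedron decomposition underlying the FGS sequence. The exchange relation at each mutation then becomes a three-term identity between these webs. This is a quantum Plücker / Desnanot--Jacobi type relation, which I would derive from the relations \eqref{wzh.four}--\eqref{wzh.eight} in $\cS_\omega(\mathbb P_4)$. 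The powers of $\xi$ are fixed by the Weyl ordering together with reflection invariance: both sides of each exchange relation are reflection invariant, and that pins down the normalization.

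At the last layer, the webs obtained are exactly the $\gaa^{\lambda'}_v$ of the flipped triangulation. This also shows that the order of mutations within each $V^i_{\lambda,e}$ is irrelevant, because vertices in a common layer are not joined by arrows at that stage.

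\emph{Expected obstacle.} The main difficulty is the quantum exchange relations in the middle layers for general $n$. An alternative is to use naturality of the quantum trace under change of triangulation, $\overline{\rm tr}_{\lambda'}=\Phi_{\lambda\lambda'}\circ\overline{\rm tr}_\lambda$, and compare $\Phi_{\lambda\lambda'}$ with the composite quantum mutation map. Both are automorphisms of $\Fr$ that agree classically (by FGS) and are compatible with $\Pi$. I would then argue uniqueness of such a quantization by checking the two maps on generators via leading-term and bar-invariance arguments.
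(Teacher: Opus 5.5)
The paper does not reprove this statement; it quotes it from \cite{huang2025quantum}. Judged on its own, your proposal handles the easy parts correctly: the quiver comes from the Fock--Goncharov--Shen computation, and $\Pi'$ and $M'$ are determined by the mutated cluster variables. The substantive part has two genuine gaps.

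\emph{First gap: the reduction to $\mathbb P_4$ is not set up correctly.} The inclusion of $\mathbb P_{4,e}$ is not a proper embedding of pb surfaces. Its sides are in general interior ideal arcs of $\fS$, and two sides may even be identified. So the inclusion induces no algebra map between the skein algebras. Appending turning-left tails to endpoints on interior sides is at best defined on the unreduced $\cS_\omega(\mathbb P_4)$, and only when the closed quadrilateral embeds. Even then it does not descend to $\overline{\cS}_\omega(\mathbb P_4)$: a bad arc at a corner adjacent to an interior side acquires a tail around the neighbouring puncture, and is then in general not a bad arc of $\fS$. But $\overline{\cS}_\omega(\mathbb P_4)$ is exactly where you invoke injectivity of $\overline{\rm tr}$, so identities proved there do not transport to $\widetilde{\cS}_\omega(\fS)$.

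A correct localization has to work at the level of seeds. Since the quiver is local, the mutations only see the sub-seed on the closed quadrilateral. You must still compare $\overline{\rm tr}_\lambda(\gaa^{\lambda'}_v)$ with its $\mathbb P_4$ counterpart. Compatibility of the whole sequence with the resulting monomial twists is itself a layer-by-layer induction, as in the proof of Lemma~\ref{lem-P4-flip-iso}.

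A smaller point: for $v\notin V_{\lambda,e}$, the tails of $\widetilde Y_v$ can run through $\mathbb P_{4,e}$, so they cannot in general be isotoped off its interior. Also, a vertex on a side lying in $\partial\fS$ has no common triangle in $\lambda$ and $\lambda'$. Invariance of $\gaa_v$ holds for a different reason: a turning-left tail is isotopic to an arc hugging a single puncture, whichever ideal arcs it crosses.

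\emph{Second gap: the heart of the theorem is only announced, as you acknowledge.} That heart is identifying every intermediate cluster variable with an explicit normalized web, and deriving each quantum exchange relation from \eqref{wzh.four}--\eqref{wzh.eight} for general $n$.

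Your fallback does not close the gap, because there is no uniqueness principle of the kind you invoke. For a reflection-invariant $y$ below $x$ in the leading-term order, $x$ and $x+(\omega^{\frac12}-\omega^{-\frac12})^2y$ have the same classical limit and the same leading term, and both are reflection invariant.

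To make the comparison rigorous, start from a flip map already known to be a composition of quantum mutations. The $\mathcal X$-version quantum traces of \cite{LY23} are natural under flips, with coordinate change given by the Fock--Goncharov quantum $\mathcal X$-mutation sequence \cite{KimWang}. Transport this through the monomial map relating the $\mathcal A$- and $\mathcal X$-tori. Because $(\overline Q_\lambda,\overline\Pi_\lambda)$ is a compatible pair (cf.\ \eqref{eq-prod-PQ}), that map intertwines quantum $\mathcal A$-mutations with quantum $\mathcal X$-mutations. This yields $\mu_{v_r}\cdots\mu_{v_1}(\overline A_v)=\overline{\rm tr}_\lambda(\gaa^{\lambda'}_v)$ with no exchange relations to verify in the skein algebra.
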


Suppose that $\lambda$ and $\lambda'$ are any two triangulations of $\fS$.  
A {\bf triangulation sweep} connecting $\lambda$ and $\lambda'$ is a sequence of triangulations
\[
\Lambda = (\lambda_1, \ldots, \lambda_m)
\]
such that $\lambda_1 = \lambda$, $\lambda_m = \lambda'$, and each $\lambda_{i+1}$ is obtained from $\lambda_i$ by a flip for every $1 \le i \le m-1$.  
It is well known that for any pair of triangulations $\lambda$ and $\lambda'$, there exists a triangulation sweep connecting them \cite{Lab09}.  

For any such sweep $\Lambda$, define
\begin{equation}\label{eq-Theta-change2}
    \mu_{\lambda'\lambda}
    := \mu_{\lambda_m\lambda_{m-1}}
       \cdots
       \mu_{\lambda_2\lambda_{1}}.
\end{equation}

Applying Theorem~\ref{lem-mutation-A-seeds-flip} consecutively along the sweep, we obtain
\[
(\overline Q_{\lambda'}, \overline \Pi_{\lambda'}, \overline M_{\lambda'})
    = \mu_{\lambda'\lambda}
      (\overline Q_{\lambda}, \overline \Pi_{\lambda}, \overline M_{\lambda}).
\]
This shows that $\mu_{\lambda'\lambda}$ is a well-defined operation on the quantum seed
$(\overline Q_{\lambda}, \overline \Pi_{\lambda}, \overline M_{\lambda})$, independent of the choice of triangulation sweep $\Lambda$.

Recall that  $\mathsf{S}_{\fS,\lambda}$ is the quantum mutation class of  the quantum seed ${\mathsf s}_\lambda=(Q_\lambda,\Pi_\lambda,M_\lambda)$ in $\Fr(\SS)$. 
The next proposition establishes the naturality of this quantum mutation class. 

\begin{proposition}\label{prop-naturality}
Let $\fS$ be a generalized triangulable pb surface without interior punctures, and let $\lambda$ and $\lambda'$ be any two generalized triangulations of $\fS$.  
Then
\[
\mathsf{S}_{\fS,\lambda} = \mathsf{S}_{\fS,\lambda'}.
\]
\end{proposition}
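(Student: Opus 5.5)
The plan is to reduce to a single flip and transport the statement from $\fS$ to $\fS^\ast$, where Theorem~\ref{lem-mutation-A-seeds-flip} is already available.

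\textbf{Reduction to a flip.} A bigon component has a unique generalized triangulation. On each triangulable component, any two triangulations are joined by a triangulation sweep \cite{Lab09}. It therefore suffices to treat the case where $\lambda'$ is obtained from $\lambda$ by flipping a non-boundary edge $e$. In that case $\lambda'^\ast$ is obtained from $\lambda^\ast$ by flipping the same edge $e$, which is interior in $\fS^\ast$. I will show that
\[
\mu_{v_r}\cdots\mu_{v_1}(Q_\lambda,\Pi_\lambda,M_\lambda)=(Q_{\lambda'},\Pi_{\lambda'},M_{\lambda'}),
\]
with $v_1,\dots,v_r\in V_{\lambda,e}$ the Fock--Goncharov--Shen sequence. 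All the $v_i$ lie in the interior of $\fS$, so they are mutable in both seeds. Note that $V'_\lambda=V'_{\lambda'}$ and that $C$ is the same matrix for $\lambda$ and $\lambda'$, since $C$ only involves the attached triangles.

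\textbf{Quiver part.} $Q_\lambda$ is the restriction of $\overline Q_{\lambda^\ast}$ to $V'_\lambda$. The deleted vertices, namely those on the edges $e_2$ of attached triangles, are frozen. I claim they are not adjacent to any vertex of $\mathbb P_{4,e}$ at any stage of the sequence. The reason is that FGS mutations only create arrows among vertices in the closure of $\mathbb P_{4,e}$, which lies in $\fS$. Hence restriction to $V'_\lambda$ commutes with $\mu_{v_r}\cdots\mu_{v_1}$. Theorem~\ref{lem-mutation-A-seeds-flip} then gives $\mu_{v_r}\cdots\mu_{v_1}(Q_\lambda)=Q_{\lambda'}$.

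\textbf{Cluster variables via $L_\lambda$.} By Corollary~\ref{cor-injectivity} and the Ore property, $\iota_\ast$ extends to an embedding $\Fr(\SS)\hookrightarrow \Fr(\dSS)$. By Lemma~\ref{lem-com-reduced-stated}, it is realized on tori by $L_\lambda\colon A^{\bf k}\mapsto \overline A^{{\bf k}C}$.

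\begin{itemize}
\item For every vertex $z$ in the closure of $\mathbb P_{4,e}$, $C$ acts as the identity on ${\bf e}_z$. Every such $z$ lies in $\overline V_\lambda$, where $C(z,\cdot)={\bf e}_z$.
\item Every vertex adjacent to a $v_i$ at every intermediate stage is of this type.
\item Consequently, each exchange binomial $M(-{\bf e}_k+\sum_j[Q(j,k)]_+{\bf e}_j)+M(-{\bf e}_k+\sum_j[-Q(j,k)]_+{\bf e}_j)$ in the stated seed is carried by $\iota_\ast$ to the corresponding exchange binomial in the reduced seed $\overline{\mathsf s}_{\lambda^\ast}$. This is consistent with Lemma~\ref{lem-compatible-pair}(a).
\item Non-mutated variables are carried to the corresponding monomials $\overline A^{{\bf e}_vC}$ throughout.
\end{itemize}

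By induction along $v_1,\dots,v_r$, the image of the mutated stated seed at vertex $v$ equals $\overline A'^{\,{\bf e}_vC}$. Here $\overline A'$ denotes the variables of $\mu_{v_r}\cdots\mu_{v_1}(\overline{\mathsf s}_{\lambda^\ast})=\overline{\mathsf s}_{\lambda'^\ast}$, by Theorem~\ref{lem-mutation-A-seeds-flip}. On the other hand, Lemma~\ref{lem-iota-gv} applied to $\lambda'$ gives $\iota_\ast(\ga^{\lambda'}_v)=\overline A'^{\,{\bf e}_vC}$ as well, namely $[\gaa_v\gaa_{p(v)}^{-1}]$ or $\gaa_v$. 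Injectivity of $\iota_\ast$ on fractions then yields $M'({\bf e}_v)=\ga^{\lambda'}_v$ for all $v\in V'_\lambda$. Since $\Pi$ is determined by $M$ via \eqref{eq-M-relation}, it follows that $\Pi'=\Pi_{\lambda'}$.

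\textbf{Main obstacle.} The step I expect to need the most care is the locality claim: throughout the FGS sequence, no vertex on an edge $e_2$ of an attached triangle becomes adjacent to a mutated vertex. Equivalently, $C$ acts trivially on every exponent that appears. To settle this, I would check from the explicit description of the intermediate quivers in \cite{FG06,GS19} that the intermediate quivers differ from $\Gamma_\lambda$ only within the closure of $\mathbb P_{4,e}$.
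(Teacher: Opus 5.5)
Your proposal is correct and follows the paper's own argument: you reduce to a single flip, transport along $\iota_\ast$ (realized on tori by $L_\lambda$), apply Theorem~\ref{lem-mutation-A-seeds-flip} together with Lemma~\ref{lem-iota-gv}, and conclude by injectivity (Corollary~\ref{cor-injectivity}). You also spell out why $\iota_\ast$ is compatible with the exchange relations, which the paper asserts in one line. The locality step you flag as the main obstacle needs nothing from \cite{FG06,GS19}: the initial neighbours of vertices in $V_{\lambda,e}$ lie in the closed quadrilateral $\mathbb P_{4,e}$, hence in $\overline V_\lambda$, and a mutation at $k$ only changes arrows incident to $k$ or joining two neighbours of $k$, so an immediate induction shows this persists at every stage of the sequence.
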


\begin{proof}
We may assume that $\fS$ is connected, $\fS$ is not a bigon, and $\lambda'$ is obtained from $\lambda$ by flipping an edge $e\in\lambda$.  
Thus $\lambda' = (\lambda\setminus\{e\}) \cup \{e'\}$, where $e'\neq e$ is the unique ideal arc making $\lambda'$ a triangulation of $\fS$.  
Consequently,
\[
(\lambda')^\ast = (\lambda^\ast \setminus \{e\}) \cup \{e'\}.
\]

There exists a quadrilateral $\mathbb P_4$ in $\fS$ (possibly not embedded globally, but with embedded interior $\mathring{\mathbb P}_4$) containing $e$ and $e'$ as its two diagonals.  
Theorem~\ref{lem-mutation-A-seeds-flip} implies that there are vertices 
\[
v_1,\dots,v_r \in \mathring{\mathbb P}_4 \cap \OVA
\]
such that
\begin{align}\label{eq-mutation-flip}
    (\overline Q_{\lll}, \overline \Pi_{\lll}, \overline M_{\lll})
    = \mu_{v_r} \cdots \mu_{v_1}
      (\overline Q_{\last}, \overline \Pi_{\last}, \overline M_{\last}).
\end{align}

Observe that the mutation sequence $\mu_{v_1}\cdots\mu_{v_r}$ involves only vertices in $\V$.  
Thus, by \eqref{eq-gv-surfaces}, the following diagram commutes:
\begin{equation}\label{eq-com-coord}
\begin{tikzcd}[row sep=3em, column sep=5em]
 \Fr(\SS)
   \arrow[r, "\mu_{v_r}\circ\cdots \circ \mu_{v_1}"]
   \arrow[d, "\iota_\ast"]
 & \Fr(\SS)
   \arrow[d, "\iota_\ast"]  \\
 \Fr(\dSS)
   \arrow[r, "\mu_{v_r}\circ\cdots \circ \mu_{v_1}"]
 & \Fr(\dSS)
\end{tikzcd}.
\end{equation}

For each $v\in\OVA$, let ${\bf e}_v\in\mathbb Z^{\OVA}$ denote the standard basis vector, and when $v\in\VA$ we also regard ${\bf e}_v\in\mathbb Z^{\VA}$.

 {\bf Case 1}: \( v\in\V \)

\begin{align*}
 \iota_\ast\bigl(\mu_{v_r}\cdots\mu_{v_1}(M_\lambda({\bf e}_v))\bigr)
 &= \mu_{v_r}\cdots\mu_{v_1}\bigl(\iota_\ast(M_\lambda({\bf e}_v))\bigr)
   &&\text{by \eqref{eq-com-coord}} \\
 &= \mu_{v_r}\cdots\mu_{v_1}\bigl(\overline M_{\last}({\bf e}_v)\bigr)
   &&\text{by \eqref{eq-gv-surfaces}} \\
 &= \overline M_{\lll}({\bf e}_v)
   &&\text{by \eqref{eq-mutation-flip}} \\
 &= \iota_\ast(M_{\lambda'}({\bf e}_v))
   &&\text{by \eqref{eq-gv-surfaces}}.
\end{align*}
By Corollary~\ref{cor-injectivity},  
\[
\mu_{v_r}\cdots\mu_{v_1}(M_\lambda({\bf e}_v))
   = M_{\lambda'}({\bf e}_v).
\]

{\bf Case 2}: \( v\in \VA\setminus\V \)

\begin{align*}
 \iota_\ast\bigl(\mu_{v_r}\cdots\mu_{v_1}(M_\lambda({\bf e}_v))\bigr)
 &= \mu_{v_r}\cdots\mu_{v_1}\bigl(\iota_\ast(M_\lambda({\bf e}_v))\bigr)
   &&\text{by \eqref{eq-com-coord}} \\
 &= \mu_{v_r}\cdots\mu_{v_1}
    \Bigl(\bigl[\overline M_{\last}({\bf e}_v)\,
            \overline M_{\last}({\bf e}_{p(v)})^{-1}\bigr]\Bigr)
   &&\text{by Lemma~\ref{lem-iota-gv}} \\
 &= \bigl[\overline M_{\lll}({\bf e}_v)\,
          \overline M_{\lll}({\bf e}_{p(v)})^{-1}\bigr]
   &&\text{by \eqref{eq-mutation-flip}} \\
 &= \iota_\ast(M_{\lambda'}({\bf e}_v))
   &&\text{by Lemma~\ref{lem-iota-gv}}.
\end{align*}
By Corollary~\ref{cor-injectivity},  
\[
\mu_{v_r}\cdots\mu_{v_1}(M_\lambda({\bf e}_v))
   = M_{\lambda'}({\bf e}_v).
\]

This implies 
\begin{align*}
    ( Q_{\lambda'},  \Pi_{\lambda'}, M_{\lambda'})
    = \mu_{v_r} \cdots \mu_{v_1}
      ( Q_{\lambda},  \Pi_{\lambda},  M_{\lambda}).
\end{align*}
\end{proof}

\begin{remark}
Lemma~\ref{lem-com-reduced-stated} together with \cite[Corollary~4.18(c)]{huang2025quantum} shows that the isomorphism $\Psi_{\lambda'\lambda}^A$ in \cite[Equation~(244)]{LY23} is realized by a sequence of mutations.
\end{remark}

\begin{definition}\label{def-sln-quantum-cluster-al}

 Let $\fS$ be a generalized triangulable pb surface without interior punctures.
    Define   $$\mathsf{S}(\fS):=\mathsf{S}_{\fS,\lambda},\;
    \mathscr{A}_{\omega}(\fS):=
    \mathscr{A}_{\mathsf{S}(\fS)}^+,\;
     \mathscr{U}_{\omega}(\fS):=
    \mathscr{U}_{\mathsf{S}(\fS)}^+,\;
    \mathscr{A}_{\omega}^{\rm fr}(\fS):=
    \mathscr{A}_{\mathsf{S}(\fS)},\;
    \text{ and }
    \mathscr{U}_{\omega}^{\rm fr}({\fS}):=
    \mathscr{U}_{\mathsf{S}(\fS)}$$
    where $\mathscr A^+$, $\mathscr A$, $\mathscr U^+$, $\mathscr U$ are in Definition~\ref{def-quan-cluster-algebra}, and $\lambda$ is a triangulation of $\fS$.
   
\end{definition}

Proposition~\ref{prop-naturality} implies that  
$\mathsf{S}(\fS)$, $\mathscr{A}_{\omega}(\fS)$, $\mathscr{U}_{\omega}(\fS)$, $\mathscr{A}_{\omega}^{\rm fr}(\fS)$, and $\mathscr{U}_{\omega}^{\rm fr}(\fS)$
are independent of the choice of $\lambda$.

\subsection{A quasi-isomorphism}\label{sub-quas-sl2}

With the notation in \S\ref{sub-extended}, let $\fS$ be a generalized triangulable pb surface with a generalized triangulation $\lambda$. Let $Q^{{\rm qc}}_{\lambda^\ast}$ the quiver obtained from $\overline Q_{\lambda^\ast}$ by deleting all arrows incident to vertices in $\overline V_{\lambda^\ast}\setminus V'_\lambda$.

For any vertex $v\in \overline V_{\lambda^{\ast}}$, denote 
\begin{equation}\label{4eq:qc0}
    A^{\rm qc}_v=\begin{cases}
    [\overline A_{v} \overline A_{p(v)}^{-1}]\in \overline {\mathscr A}_{\omega}(\fS^\ast) & \mbox{if $v\in \overline V_{\lambda^\ast}\setminus \overline V_\lambda$},\\
    \overline A_v\in \overline {\mathscr A}_{\omega}(\fS^\ast) & \mbox{otherwise},
   \end{cases} 
\end{equation}
   where $p$ is defined in \eqref{eq-cov-pdef}.
   
\begin{definition}\label{def-qusi-com}
Recall that $\xi=\omega^n\in R$. Let $\mathcal A$ be a domain over $R$. Two elements $A_1,A_2\in \mathcal A$ are said to be \textbf{quasi-commutative} if there exists an integer $k$ such that
\[
A_1 A_2 = \xi^k A_2 A_1.
\]

The integer $k$ is uniquely determined; we denote it by $\Pi(A_1,A_2)$.
\end{definition}

It is straightforward to verify that for any $v,v'\in \overline V_{\lambda^\ast}$, $A^{\rm qc}_v$ and $A^{\rm qc}_{v'}$ are quasi-commutative, denote by $\Pi^{\rm qc}_{v,v'}$ the integer $\Pi(A^{\rm qc}_v,A^{\rm qc}_{v'})$ and $\Pi^{\rm qc}_{\lambda^\ast}=(\Pi^{\rm qc}_{v,v'})_{v,v'\in \overline V_{\lambda^\ast}}$.

Define 
\begin{equation*}
M^{\rm qc}_{\lambda^\ast}:\mathbb Z^{\overline V_{\lambda^\ast}}\to {\rm Frac}(\overline {\mathscr A}_{\omega}(\fS^\ast)),\qquad
{\bf k}=(k_v)_{v\in \overline{V}_{\lambda^*}}\longmapsto 
\left[\prod_{v\in \overline V_{\lambda^\ast}} (A^{\rm qc}_{v})^{k_v}\right].
\end{equation*}

The following is immediate.

\begin{equation}\label{4eq:qc}
   \left[\prod_{v\in \overline V_{\lambda^\ast}} (A^{\rm qc}_{v})^{Q^{\rm qc}_{\lambda^\ast} (v,u)}\right]=\left[\prod_{v\in \overline V_{\lambda^\ast}} (\overline A_{v})^{Q_{\lambda^\ast}(v,u)}\right].  
\end{equation}

\begin{lemma}
    The triple $(Q^{{\rm qc}}_{\lambda^\ast},\Pi^{\rm qc}_{\lambda^\ast}, M^{\rm qc}_{\lambda^\ast})$ is a quantum seed in ${\rm Frac}(\overline {\mathscr A}_{\omega}(\fS^\ast))$. Moreover, for any two generalized triangulations $\lambda$ and $\lambda'$ of $\fS$, we have $(Q^{{\rm qc}}_{\lambda^\ast},\Pi^{\rm qc}_{\lambda^\ast}, M^{\rm qc}_{\lambda^\ast})$ and $(Q^{{\rm qc}}_{\lambda'^\ast},\Pi^{\rm qc}_{\lambda'^\ast}, M^{\rm qc}_{\lambda'^\ast})$ are mutation equivalent.
\end{lemma}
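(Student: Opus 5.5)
We treat the two assertions separately. For the seed property, we compare with the quantum seed $\overline{\mathsf s}_{\lambda^\ast}=(\overline Q_{\lambda^\ast},\overline\Pi_{\lambda^\ast},\overline M_{\lambda^\ast})$ of Theorem~\ref{intro-thm-skein-inclusion-A}(a). Let ${\bf f}_v={\bf e}_v-{\bf e}_{p(v)}$ for $v\in\overline V_{\lambda^\ast}\setminus\overline V_\lambda$, and ${\bf f}_v={\bf e}_v$ otherwise. Then \eqref{4eq:qc0} gives $A^{\rm qc}_v=\overline M_{\lambda^\ast}({\bf f}_v)$ up to the Weyl normalization.

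\textbf{Step 1 (quantum torus).} The matrix $F$ with rows ${\bf f}_v$ is unitriangular. Indeed, $p(v)$ lies on $e_2$ of an attached triangle and has $k\neq 0$, and $p(p(v))=p(v)$ would force the diagonal entry to $0$. This is excluded because $p(v)\notin V'_\lambda$, while for such $w$ we have ${\bf f}_w={\bf e}_w$. Hence $F$ is invertible over $\mathbb Z$. It follows that $M^{\rm qc}_{\lambda^\ast}$ is a reparametrization of the monomial basis of the based quantum torus $\mathbb T(\overline{\mathsf s}_{\lambda^\ast})$. Condition (3) of Definition~\ref{def-quantum-seed} then holds, with $\Pi^{\rm qc}_{\lambda^\ast}=F\,\overline\Pi_{\lambda^\ast}F^T$, and the skew-field is ${\rm Frac}(\overline{\mathscr A}_\omega(\fS^\ast))$.

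\textbf{Step 2 (compatibility).} The quiver $Q^{\rm qc}_{\lambda^\ast}$ is obtained by deleting arrows only at vertices of $\overline V_{\lambda^\ast}\setminus V'_\lambda$, and these vertices lie on boundary edges $e_2$, hence are frozen. So the mutable vertex set is unchanged and condition (1) holds. For mutable $u$ and any $v$, we compute $\sum_k Q^{\rm qc}(k,u)\Pi^{\rm qc}(k,v)$ as the $\xi$-exponent of commuting $M^{\rm qc}(Q^{\rm qc}(-,u))$ past $A^{\rm qc}_v$. By \eqref{4eq:qc}, this equals the exponent of commuting $\overline M_{\lambda^\ast}(\overline Q_{\lambda^\ast}(-,u))$ past $\overline M_{\lambda^\ast}({\bf f}_v)$. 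By compatibility of $\overline{\mathsf s}_{\lambda^\ast}$, that exponent is $d_u\big(\delta_{u,v}-\delta_{u,p(v)}\big)$. Since $p(v)$ is frozen, $\delta_{u,p(v)}=0$, and we obtain $\delta_{u,v}d_u$.

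\textbf{Step 3 (mutation equivalence).} As in the proof of Proposition~\ref{prop-naturality}, we reduce to a single flip of an edge $e$ of $\lambda$ inside $\fS$. The flip sequence $\mu_{v_r}\cdots\mu_{v_1}$ of Theorem~\ref{lem-mutation-A-seeds-flip} uses only vertices of $\overline V_\lambda$. The attached triangles, and hence the map $p$ and the vectors ${\bf f}_v$, are the same for $\lambda$ and $\lambda'$. We carry the following invariant along the sequence: the current qc seed and the current $\overline{\mathsf s}$ seed have the same mutable subquiver, each cluster variable of the qc seed equals $\overline M({\bf f}_v)$ in the $\overline{\mathsf s}$ seed, and \eqref{4eq:qc} holds columnwise, i.e.
\[
\textstyle\sum_w Q^{\rm qc}(w,u){\bf f}_w=\overline Q(-,u).
\]
In other words, the identity map on cluster variables satisfies the four conditions of Proposition~\ref{prop:quasi-iso}. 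The exchange relation at $u$ uses only the column monomials, which agree by the invariant. Hence the mutated variable at $u$ is the same element in both seeds, and the other variables are untouched. Theorem~\ref{lem-mutation-A-seeds-flip} then identifies the result with $M^{\rm qc}_{\lambda'^\ast}$.

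The main obstacle is showing that the column invariant and the frozen-to-mutable part of $Q^{\rm qc}$ transform correctly, so that the mutated quiver is exactly $Q^{\rm qc}_{\lambda'^\ast}$. First we would invoke the general fact that quasi-homomorphisms commute with mutation (the argument behind \cite[Proposition~2.7]{CHL}). The point is that the matrix mutation \eqref{eq-mutation-Q} is piecewise linear in the mutable rows and columns and is compatible with the left action of $F$, because the rows of $F$ for mutable vertices are unit vectors. Then both mutated quivers agree with the quivers deduced from $\overline Q_{\lambda'^\ast}$ by deleting arrows at $\overline V_{\lambda'^\ast}\setminus V'_{\lambda'}$.
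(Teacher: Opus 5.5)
\paragraph{Steps 1 and 2.} These are correct. They also take a more explicit route than the paper. The paper observes, via Lemma~\ref{lem-iota-gv}, that the qc seed is the image of $\mathsf s_\lambda$ with the isolated frozen variables $\overline A_v$, $v\in\overline V_{\lambda^\ast}\setminus V'_\lambda$, adjoined, and then cites Lemma~\ref{lem-seed-skein} and Proposition~\ref{prop-naturality}. Your unitriangular $F$ gives the basis property directly. Your use of \eqref{4eq:qc} also covers the compatibility condition at the adjoined frozen vertices, which is not literally inherited from $\mathsf s_\lambda$.

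\paragraph{The gap in Step 3.} The invariant you propagate is the set of conditions of Proposition~\ref{prop:quasi-iso}, in particular $\sum_w Q^{\rm qc}(w,u)\mathbf f_w=\overline Q(-,u)$. This matches only the ratio of the two exchange monomials at $u$. It does not give $\sum_w[\pm Q^{\rm qc}(w,u)]_+\mathbf f_w=[\pm\overline Q(-,u)]_+$, because taking positive parts does not commute with right multiplication by $F$.

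In general a quasi-homomorphism sends a mutated cluster variable to its counterpart only up to a frozen monomial. For example, if $u$ had a neighbour $w$ with $\mathbf f_w=\mathbf e_w-\mathbf e_f$, the two mutated variables would differ by a factor $\overline A_f^{\pm1}$. That is also all the argument behind \cite[Proposition~2.7]{CHL} yields. Mutation equivalence with $\mathbf s^{\rm qc}_{\lambda'^\ast}$ needs exact equality, since in the target seed $A^{\rm qc}_u=\overline A_u$ for $u\in\overline V_{\lambda'}$.

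Your remark that the rows of $F$ at mutable vertices are unit vectors is also false for $n\ge 3$. Interior vertices of the attached triangles are mutable, and there $\mathbf f_v=\mathbf e_v-\mathbf e_{p(v)}$.

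\paragraph{How to close it.} The missing idea is a locality statement. Throughout the Fock--Goncharov flip sequence, every vertex adjacent to a mutated vertex lies in the interior or on the sides of $\mathbb P_{4,e}$, hence in $\overline V_\lambda$. There $\mathbf f_w=\mathbf e_w$ and $Q^{\rm qc}$ agrees entrywise with $\overline Q$, so the two exchange binomials coincide term by term. This is what the commutativity of \eqref{eq-com-coord} in the proof of Proposition~\ref{prop-naturality} rests on.

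Once this is in place, the quiver statement needs no appeal to \cite{CHL}. The vertices outside $V'_\lambda$ are isolated and stay isolated. Mutation at vertices of $V'_\lambda$ commutes with restriction to $V'_\lambda$. Hence $\mu_{v_r}\cdots\mu_{v_1}(Q^{\rm qc}_{\lambda^\ast})=Q^{\rm qc}_{\lambda'^\ast}$ follows directly from $\mu_{v_r}\cdots\mu_{v_1}(\overline Q_{\lambda^\ast})=\overline Q_{\lambda'^\ast}$.
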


\begin{proof}
    By Lemma~\ref{lem-iota-gv} and \eqref{4eq:qc0}, we see that the triple $(Q^{{\rm qc}}_{\lambda^\ast}, \Pi^{\rm qc}_{\lambda^\ast}, M^{\rm qc}_{\lambda^\ast})$ is obtained from the quantum seed ${\mathsf s}_\lambda = (Q_\lambda, \Pi_\lambda, M_\lambda)$ by adjoining isolated frozen variables $\overline A_v$ for $v \in \overline V_{\lambda^\ast} \setminus V'_\lambda$. Consequently, it forms a quantum seed. 
    Furthermore, Proposition~\ref{prop-naturality} implies that for any two generalized triangulations $\lambda$ and $\lambda'$ of $\fS$, the corresponding seeds $(Q^{{\rm qc}}_{\lambda^\ast}, \Pi^{\rm qc}_{\lambda^\ast}, M^{\rm qc}_{\lambda^\ast})$ and $(Q^{{\rm qc}}_{\lambda'^\ast}, \Pi^{\rm qc}_{\lambda'^\ast}, M^{\rm qc}_{\lambda'^\ast})$ are mutation equivalent.
\end{proof}

Let ${\mathscr A}_{\omega}^{\rm qc}(\fS^\ast)$ denote the quantum cluster algebra associated with the seed
${\mathbf s}^{\rm qc}_{\lambda^\ast}:=(Q^{{\rm qc}}_{\lambda^\ast},\Pi^{\rm qc}_{\lambda^\ast}, M^{\rm qc}_{\lambda^\ast})$. Observe that $\mathscr A_{\omega}^{\rm qc}(\fS^\ast)$ is isomorphic to the cluster algebra obtained from $\mathscr A_{\omega}(\fS)$ 
by first localizing at all frozen variables and then adjoining a corresponding set of new isolated invertible frozen variables.

\begin{proposition}\label{Prop-quasi-iso}
    The assignments $A^{\rm qc}_{v}\mapsto A^{\rm qc}_{v}$ with $v\in \overline V_{\lambda^{\ast}}$ induce a quasi-isomorphism (Definition \ref{def:QQH}) 
    \begin{equation*}
        f:{\mathscr A}_{\omega}^{\rm qc}(\fS^\ast)=\mathscr A_{{\mathbf s}^{\rm qc}_{\lambda^\ast}}\to \overline {\mathscr A}_{\omega}(\fS^\ast)=\mathscr A_{\overline {\mathbf s}_{\lambda^\ast}}.
    \end{equation*}
\end{proposition}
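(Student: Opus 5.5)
The plan is to apply the criterion of Proposition~\ref{prop:quasi-iso} (\cite[Proposition~2.7]{CHL}) to the seeds $\mathsf s={\mathbf s}^{\rm qc}_{\lambda^\ast}$ and $\mathsf s'=\overline{\mathsf s}_{\lambda^\ast}$. Both seeds live on the vertex set $\overline V_{\lambda^\ast}$ and have the same mutable set $\mathring{\overline V}_{\lambda^\ast}$. The map $\varphi$ is prescribed on initial cluster variables by $A^{\rm qc}_v\mapsto A^{\rm qc}_v$, where the right-hand side is viewed in $\mathbb T(\overline{\mathsf s}_{\lambda^\ast})$ through \eqref{4eq:qc0}. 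Concretely, $\varphi(M^{\rm qc}({\bf e}_v))=\overline M_{\lambda^\ast}({\bf f}_v)$, where ${\bf f}_v={\bf e}_v-{\bf e}_{p(v)}$ if $v\in\overline V_{\lambda^\ast}\setminus\overline V_\lambda$ and ${\bf f}_v={\bf e}_v$ otherwise. In other words, ${\bf f}_v$ is the $v$-th row of an extension of $C$ to all of $\overline V_{\lambda^\ast}$, taking the identity on the rows for $\overline V_{\lambda^\ast}\setminus V'_\lambda$. I would then check the four conditions one at a time.

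First, condition (1) of Definition~\ref{def:QQH} requires $Q^{\rm qc}_{\lambda^\ast}|_{\rm mut}=\overline Q_{\lambda^\ast}|_{\rm mut}$. This holds because $Q^{\rm qc}_{\lambda^\ast}$ is obtained only by deleting arrows at vertices of $\overline V_{\lambda^\ast}\setminus V'_\lambda$, and these vertices lie on the edges $e_2$ of the attached triangles, so they are frozen. Conditions (1) and (2) of Proposition~\ref{prop:quasi-iso} then follow directly from the form of ${\bf f}_v$. For a mutable $v$ (an interior vertex) we have $v\in\overline V_\lambda$ or $v$ is interior to an attached triangle. In the second case $p(v)$ lies on $e_2$, which is frozen, so ${\bf f}_v$ has coefficient $1$ at $v$ and $0$ at every other mutable vertex. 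For a frozen $v$, both $v$ and $p(v)$ are frozen, so ${\bf f}_v$ is supported on frozen vertices.

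Second, condition (4) asks that $\Pi^{\rm qc}({\bf e}_{v_1},{\bf e}_{v_2})=\overline\Pi_{\lambda^\ast}({\bf f}_{v_1},{\bf f}_{v_2})$. This holds by construction, since $\Pi^{\rm qc}_{v,v'}$ was defined as the quasi-commutation exponent of $A^{\rm qc}_v$ and $A^{\rm qc}_{v'}$ computed inside the quantum torus of $\overline{\mathsf s}_{\lambda^\ast}$, and that exponent is exactly $\overline\Pi_{\lambda^\ast}({\bf f}_v,{\bf f}_{v'})$.

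The main step is condition (3): for each mutable $u$ we need $\sum_w Q^{\rm qc}(w,u){\bf f}_w=\sum_{w'}\overline Q_{\lambda^\ast}(w',u){\bf e}_{w'}$. This is precisely identity \eqref{4eq:qc}. To prove it, note that $Q^{\rm qc}(w,u)=0$ for $w\notin V'_\lambda$ and $Q^{\rm qc}(w,u)=Q_\lambda(w,u)$ for $w\in V'_\lambda$. Taking the coefficient of ${\bf e}_x$ on the left-hand side gives $\sum_{z\in V'_\lambda}C(z,x)Q_\lambda(z,u)$. By Lemma~\ref{lem-compatible-pair}(a) this equals $\overline Q_{\lambda^\ast}(x,u)$, which is the coefficient of ${\bf e}_x$ on the right-hand side. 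Here I would double-check the case where $x$ lies on $e_2$: it uses the vanishing of $\overline Q_{\lambda^\ast}(x,u)+\sum_{z\in V_x}\overline Q_{\lambda^\ast}(z,u)$ for interior $u$. That vanishing comes from the local structure of the quiver in an attached triangle, and it is the only place where geometric input enters.

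Once the conditions hold, Proposition~\ref{prop:quasi-iso} gives a quasi-homomorphism $f$. To upgrade it to a quasi-isomorphism, I would construct the inverse in the same way, sending $\overline A_v\mapsto[A^{\rm qc}_vA^{\rm qc}_{p(v)}]$ for $v\notin\overline V_\lambda$. Its vectors are the rows of $C^{-1}$, which is unitriangular, and the same four conditions hold for it by symmetry. The two maps are mutually inverse on initial clusters, and hence on the whole cluster algebras.
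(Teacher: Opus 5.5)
Your proposal is correct and follows the paper's proof, which likewise just checks the conditions of Proposition~\ref{prop:quasi-iso}. Conditions (1)--(2) come from the shape of $A^{\rm qc}_v$ in \eqref{4eq:qc0}, and (4) from the definition of $\Pi^{\rm qc}_{\lambda^\ast}$. Condition (3) is \eqref{4eq:qc}, which you derive from Lemma~\ref{lem-compatible-pair}(a) where the paper only calls it immediate. Your explicit inverse for the ``iso'' part goes beyond what the paper writes, but it is sound.

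One small fix: $p(v)=v$ for every $v$ on an edge $e_2$. So the rule $\mathbf{f}_v=\mathbf{e}_v-\mathbf{e}_{p(v)}$, and likewise your inverse $\overline A_v\mapsto[A^{\rm qc}_vA^{\rm qc}_{p(v)}]$, must be restricted to $v\in V'_\lambda\setminus\overline V_\lambda$. This matches your description of $\mathbf{f}_v$ as rows of $C$ extended by identity rows. The same index slip appears in \eqref{4eq:qc0} itself.
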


\begin{proof}
    In view of \eqref{4eq:qc0}, conditions (1) and (2) of Proposition~\ref{prop:quasi-iso} are satisfied. Condition (3) follows directly from \eqref{4eq:qc}, while condition (4) is immediate. Consequently, all the hypotheses of Proposition~\ref{prop:quasi-iso} are met, and the result follows.
\end{proof}

\subsection{The inclusion of $\SS$ into the quantum cluster algebra $\mathscr A_\omega (\fS)$}\label{subsec-inclusion-A}
In this subsection, we first state Proposition~\ref{prop-flips-L}, which constitutes the key step in the proof of the first main theorem of this section (Theorem~\ref{thm-skein-inclusion-A}). 
We then introduce the necessary definitions required for Proposition~\ref{prop-flips-L}.

Let $\fS$ be a triangulable pb surface equipped with a triangulation $\lambda$, and let $p$ be a puncture of $\fS$ lying on $\partial \fS$.  
Suppose the triangles incident to $p$ are \( \tau_1, \dots, \tau_k \), allowing possible repetitions.

For each \( 1 \le i \le n-1 \), we construct a multi-subset  
\( V(\lambda,p,i) \subset \overline{V}_\lambda \) as follows:
\begin{enumerate}
    \item Every vertex in \( V(\lambda,p,i) \) lies in one of the triangles  
    \( \tau_1, \dots, \tau_k \).
    \item For each \( 1 \le j \le k \), choose barycentric coordinates on \( \tau_j \) such that  
    \( p = (n,0,0) \).  
    Then \( V(\lambda,p,i) \cap \tau_j \) consists of the small vertices whose first barycentric
    coordinate is \( n - i \).
\end{enumerate}

For each \( v \in \overline{V}_\lambda \), we define a multi-subset  
\( V(\lambda,p,v) \subset \{1,2,\dots,n-1\} \) by
\begin{align*}
    V(\lambda,p,v)
    := \{\, 1 \le i \le n-1 \mid v \in V(\lambda,p,i) \,\},
\end{align*}
where the multiplicity of \( i \) in \( V(\lambda,p,v) \) is the multiplicity of \( v \) in  
\( V(\lambda,p,i) \).

\begin{figure}[h]
    \centering
    \includegraphics[width=0.25\linewidth]{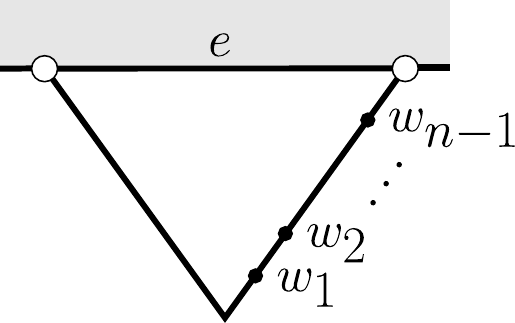}
    \caption{Labeling of small vertices contained in $e_2$ in an attached triangle.}
    \label{fig:labeling-wi}
\end{figure}

We use $\partial \lambda$ to denote the set of boundary edges of $\lambda$.  
Recall that for each $e \in \partial \lambda$, there is an attached triangle $\tau$ in $\fS^\ast$ (see Figure~\ref{Fig;attaching}(A)).  
Let $p_e$ denote the unique vertex of $\tau$ that does not lie in $\overline \fS$ (refer to \S\ref{intro-sec-reduced} for $\overline \fS$).  
The small vertices in $\overline V_\tau \cap e_2$ are labeled as in Figure~\ref{fig:labeling-wi} by 
$w_1, \ldots, w_{n-1}$, where $e_2$ is illustrated in Figure~\ref{Fig;attaching}(A).  
Define $\overline A_{e,i} :=\overline A_{w_i} \in \Aa$.
Then we have the following.

\begin{proposition}\label{prop-flips-L}
    Let $\fS$ be a triangulable pb surface without interior punctures. Let $\lambda$ be a triangulation of $\fS$ and $\lambda'$ a triangulation of $\fS^\ast$.  For $v\in \VA$, we have

    \[
        L_{\lambda}\!\left(\mu_{\lambda'\lambda^\ast}(A_v)\right)
        =
        \left[
        \mu_{\lambda'\lambda^\ast}(\overline A_v)
        \prod_{e \in \partial \lambda}\;
        \prod_{i \in V(\lambda',p_e,v)}
           \overline A_{e,i}^{-1}
        \right]
        \in \Aa,
    \]
    where $L_\lambda\colon \As \longrightarrow \Aa$ is defined in~\eqref{eq-def-L-lamda} and 
    $\mu_{\lambda'\lambda^\ast}$ is the mutation sequence defined in~\eqref{eq-Theta-change2}.
\end{proposition}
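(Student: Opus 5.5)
We argue by induction on the length of a triangulation sweep $\Lambda=(\lambda_1=\lambda^\ast,\dots,\lambda_m=\lambda')$ of $\fS^\ast$. Only flips of non-boundary edges of $\fS^\ast$ occur; all mutations in $\mu_{\lambda'\lambda^\ast}$ are therefore at vertices of $\mathring{\overline V}_{\lambda_i}$, which are mutable for both seeds $\mathsf s_\lambda$ and $\overline{\mathsf s}_{\lambda^\ast}$. Hence $\mu_{\lambda'\lambda^\ast}$ makes sense on both sides. For $m=1$ the claim says $L_\lambda(A_v)=[\overline A_v\prod_e\prod_{i\in V(\lambda^\ast,p_e,v)}\overline A_{e,i}^{-1}]$. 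By \eqref{eq-def-L-lamda} and \eqref{def-matrix-C}, $L_\lambda(A_v)=\overline A^{{\bf e}_vC}$, which is $\overline A_v$ if $v\in\V$ and $[\overline A_v\overline A_{p(v)}^{-1}]$ otherwise. So the base case reduces to a combinatorial check. In $\lambda^\ast$ the only triangle at $p_e$ is the attached one, with $p_e$ the vertex opposite $e_1$. The vertices at first coordinate $n-i$ relative to $p_e$ form a row meeting $e_2$ at exactly one point $w_i$. We must verify that $v$ lies on such a row precisely when $v\in\VA\setminus\V$ with $p(v)=w_i$. This follows from \eqref{eq-cov-pdef}, after matching the labelling $w_i$ of Figure~\ref{fig:labeling-wi} with $p(v)=(0,n-k,k)$.

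For the inductive step it suffices to treat a single flip $\lambda_1\to\lambda_2$ at an edge $e$ with quadrilateral $\mathbb P_{4,e}$, because the right-hand side only depends on $\lambda'$. Assume the formula for $\lambda_1$, so that $L_\lambda$ sends the seed $\mu_{\lambda_1\lambda^\ast}(\mathsf s_\lambda)$ to the seed $\mu_{\lambda_1\lambda^\ast}(\overline{\mathsf s}_{\lambda^\ast})$ with each cluster variable twisted by the frozen monomial $F_v:=\prod_e\prod_{i\in V(\lambda_1,p_e,v)}\overline A_{e,i}^{-1}$. We then run through the Fock--Goncharov--Shen sequence $v_1,\dots,v_r$ one mutation at a time. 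A quantum mutation at $k$ is preserved by $L_\lambda$ up to frozen twists exactly when two conditions hold:
\[
F_k^{-1}\prod_{j}F_j^{[Q(j,k)]_+}=\prod_j F_j^{[-Q(j,k)]_+}\cdot F_k^{-1}\cdot(\text{correction}),
\]
and the two exchange monomials acquire the same twist. The $\Pi$-compatibility of the twist is automatic since $L_\lambda$ is an algebra map. Concretely, the new variable at $k$ gets the twist $F_k'$ with $F_kF_k'=\prod_jF_j^{[Q(j,k)]_+}=\prod_jF_j^{[-Q(j,k)]_+}$ on frozen $\overline A_{e,i}$. The $\overline A_{e,i}$ are frozen vertices on the $e_2$ edges, isolated from mutable vertices in the relevant part of $Q_\lambda$ after the $C$-twist. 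This is the content of Lemma~\ref{lem-compatible-pair}(a): $\sum_z C(z,x)Q_\lambda(z,y)=\overline Q_{\lambda^\ast}(x,y)$. Hence the mutation on the $\mathsf s_\lambda$ side maps to the mutation on the $\overline{\mathsf s}$ side, provided the twist vectors satisfy the balancing condition $\sum_j Q(j,k)\,\mathrm{tw}(j)=0$. Here $\mathrm{tw}(j)\in\mathbb Z^{\{w_i\}}$ records the exponent vector of $F_j$, viewed as a vector indexed by the $w_i$.

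The key point is therefore to identify the twists along the flip. We claim that after mutating at all of $V^0_{\lambda_1,e},\dots,V^{t}_{\lambda_1,e}$, the twist vector of each vertex is still given by the indicator of rows at $p_e$, but now for the intermediate combinatorics. At the end it becomes $V(\lambda_2,p_e,\cdot)$. The needed identity is local. Mutating at a vertex $k$ in $\mathbb P_{4,e}$ with current twists $\mathrm{tw}$, the vertex $k$ receives $\sum_j[Q(j,k)]_+\mathrm{tw}(j)-\mathrm{tw}(k)$. Since $\mathrm{tw}(j)$ records on which ``level lines'' around the corner $p_e$ the vertex $j$ lies, and these lines are unions of straight rows, the balancing $\sum_jQ(j,k)\mathrm{tw}(j)=0$ should follow from the fact that each row through the relevant corners of $\mathbb P_{4,e}$ meets the star of $k$ with equal in- and out-weight. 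This is the analogue of the Lemma~\ref{lem-compatible-pair}(a) computation done for $\overline Q_{\lambda^\ast}$. The new twist of $k$ then corresponds to reflecting $k$ across the diagonal, which matches the row structure after the flip.

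The main obstacle is this bookkeeping. We must show that the multisets $V(\lambda_i,p_e,v)$ (with multiplicities, since triangles at $p_e$ may repeat) transform exactly as the tropical frozen part of $g$-vectors under the FGS sequence. The plan is to reduce, via the sub-seed lemma, to the quadrilateral $\mathbb P_{4,e}$, where at most two corners can be $p_e$. We would then check the two cases, $p_e$ at an endpoint of $e$ or at an endpoint of $e'$, by explicit induction over the layers $V^i_{\lambda,e}$, using the equality of frozen exchange monomials at each layer. Once this is established, Corollary~\ref{cor-injectivity} is not even needed: the equality holds directly in $\Aa$.
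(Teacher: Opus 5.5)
Your overall plan is the paper's: induction along a sweep, localisation to $\mathbb P_{4,e}$, and a layer-by-layer induction over the Fock--Goncharov--Shen sequence, with the twist of a vertex given by the indicator of its rows at $p_e$. The computational core of the paper's version is Lemma~\ref{lem-P4-flip-iso}.

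The gap is in your mutation-compatibility step. You claim the $\overline A_{e,i}=\overline A_{w_i}$ are isolated from mutable vertices. From this you conclude $F_kF_k'=\prod_{j}F_j^{[Q(j,k)]_+}=\prod_jF_j^{[-Q(j,k)]_+}$ with $j\in V'_\lambda$, i.e.\ $\sum_{j\in V'_\lambda}Q(j,k)\,\mathrm{tw}(j)=0$. This is false. In $\overline{\mathsf s}_{\lambda^\ast}$ and its mutations, the vertices $w_i$ on $e_2$ are joined to mutable vertices. Lemma~\ref{lem-compatible-pair}(a) with $x=w_i$ says precisely
\[
\sum_{z\in V'_\lambda}Q_\lambda(z,y)\,\mathrm{tw}(z)=\sum_i\overline Q_{\lambda^\ast}(w_i,y)\,{\bf e}_{w_i},
\]
and the right side is nonzero for mutable $y$ adjacent to $e_2$. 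Concretely, take $n=3$ and let $y=(1,1,1)$ be the centre of an attached triangle. Its neighbours $(2,0,1)$ and $(1,0,2)$ on $e_3$ are one an in-neighbour and one an out-neighbour, and they carry the different twists $\overline A_{(0,2,1)}^{-1}$ and $\overline A_{(0,1,2)}^{-1}$; its neighbours on $e_1$ are untwisted. So the two source exchange monomials acquire different twists, and your formula for $F_k'$ gives two different answers. The discrepancy is exactly the factor $\prod_i\overline A_{w_i}^{[\pm\overline Q(w_i,k)]_+}$ that appears in the target exchange monomials.

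To repair this, include the $w_i$ among the vertices, with twist $F_{w_i}=\overline A_{e,i}^{-1}$. This is what your own row-indicator rule assigns, since $w_i$ lies on row $i$ at $p_e$. With this:
\begin{itemize}
\item the correct condition is $\sum_{j\in\overline V_{\lambda_1}}\overline Q(j,k)\,\mathrm{tw}(j)=0$ over the full target quiver;
\item the new twist is $F_k'=F_k^{-1}\prod_jF_j^{[\overline Q(j,k)]_+}$;
\item the quantum $\xi$-factors match, because $\sum_j\overline Q(j,k)\overline\Pi(j,w)=0$ for frozen $w$.
\end{itemize}
You also need not re-check balancing at every step. The twist is a grading that makes the initial exchange binomials homogeneous, so all later ones are homogeneous too, and only the new twists need tracking.

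The paper reaches the same point differently. It uses Proposition~\ref{Prop-quasi-iso} to know in advance that the answer has the form $[\mu_{\lambda'\lambda^\ast}(\overline A_v)\overline A^{{\bf k}_v}]$, and then reduces to $\omega^{\frac12}=1$. It twists by free isolated variables $x_i$; there your homogeneous balancing is literally correct, with the vertices $ni$ twisted by $x_i^{-1}$. It then specialises $x_i\mapsto\overline A_{ni}$ through $P_1,P_2$ (Lemma~\ref{lem-P4-flip-iso-1}). Since $P_1$ sends the frozen $\overline A_{ni}$ to $1$, it is an algebra map only in the commutative setting, which is why the classical reduction is needed. Your corrected route avoids both the classical reduction and the $P_1,P_2$ device.

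A minor point: $p_e$ can occupy more than two corners of $\mathbb P_{4,e}$ when there are loops at $p_e$. This is harmless, since twists multiply over corner occurrences.
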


\def\bP{\mathbb{P}}
\def\Afour{\oa(\bP_4,\lambda_4)}
\def\Afourx{\oa(\bP_4,\lambda_4)\otimes_{R} R[x_1^{\pm 1},\cdots, x_{n-1}^{\pm 1}]}

Before proving Proposition~\ref{prop-flips-L}, we first state some useful lemmas.

Let $\lambda_4$ be the triangulation of $\mathbb{P}_4$ shown in Figure~\ref{fig:P4-labeling}, where the small vertices in 
$\overline V_{\lambda_4}$ are labeled as in that figure.
Let $\lambda_{4}' \neq \lambda_4$ be another triangulation of $\mathbb{P}_4$.
Let $p$ be the vertex of $\mathbb{P}_4$ adjacent to the small vertices $(n-1,0)$ and $(n,1)$ (see Figure~\ref{fig:P4-labeling}).

Define an algebra embedding
\[
    f\colon 
    \oa(\mathbb{P}_4,\lambda_4)
    \longrightarrow
    \oa(\mathbb{P}_4,\lambda_4)
    \otimes_R R[x_1^{\pm1},\ldots,x_{\,n-1}^{\pm1}],
    \qquad
    \overline A_v \longmapsto 
    \overline A_v \!\!\prod_{i \in V(\lambda_4,p,v)} \! x_i^{-1},
\]
for each $v \in \overline V_{\lambda_4}$.

\begin{lemma}\label{lem-P4-flip-iso}
    For each $v \in \overline V_{\lambda_4}$, we have
    \[
        f\!\left(\mu_{\lambda_4'\lambda_4}(\overline A_v)\right)
        =
        \mu_{\lambda_4'\lambda_4}(\overline A_v)
        \prod_{i \in V(\lambda_4',p,v)} x_i^{-1}\in \Afourx,
    \]
    where $\mu_{\lambda_4'\lambda_4}$ is defined in~\eqref{def-eq-mul}.
\end{lemma}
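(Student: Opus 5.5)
The plan is to treat $f$ as a change of coefficients that rescales each cluster variable by a monomial in the central variables $x_i$, and to track this rescaling through the Fock--Goncharov--Shen sequence $\mu_{\lambda_4'\lambda_4}=\mu_{v_r}\cdots\mu_{v_1}$ by induction on the number of mutations. Since $\mathbb P_4$ has only two triangulations, $\lambda_4'$ is the flip of $\lambda_4$ along the diagonal $e$. The $x_i$ are central, so $f$ is an algebra embedding: the defining $q$-commutation relations are preserved. Hence $f$ extends to the skew-fields of fractions, and the seed $\overline{\mathsf s}_{\lambda_4}$ is carried to a seed with the same $\overline Q_{\lambda_4}$ and $\overline\Pi_{\lambda_4}$, whose cluster variables are $\overline A_v\prod_{i\in V(\lambda_4,p,v)}x_i^{-1}$. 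This is exactly the setting of a cluster algebra with extra (central, invertible) coefficients.

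The key lemma is the following. Suppose that at some stage the images of the cluster variables are $A'_u\,x^{-{\bf m}(u)}$, where ${\bf m}(u)\in\mathbb Z^{n-1}$. For a mutation at a mutable vertex $k$, the exchange relation
$$A''_k=M(-{\bf e}_k+\textstyle\sum_j[Q(j,k)]_+{\bf e}_j)+M(-{\bf e}_k+\sum_j[-Q(j,k)]_+{\bf e}_j)$$
is carried by $f$ to the same expression in the rescaled variables, up to the central monomial factors. The two terms acquire the factors $x^{{\bf m}(k)-\sum_j[Q(j,k)]_+{\bf m}(j)}$ and $x^{{\bf m}(k)-\sum_j[-Q(j,k)]_+{\bf m}(j)}$. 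These agree if and only if
$$\sum_j Q(j,k)\,{\bf m}(j)=0,$$
that is, the $x$-grading is balanced at $k$. In that case $f$ sends the new variable to itself times $x^{-{\bf m}'(k)}$ with ${\bf m}'(k)=-{\bf m}(k)+\sum_j[Q(j,k)]_+{\bf m}(j)$. No $q$-power corrections arise, because the $x_i$ are central and Weyl ordering ignores them.

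The proof therefore reduces to two combinatorial statements. First, at every step of the sequence, each mutated vertex $v_t$ satisfies the balancing condition for the current quiver $\mu_{v_{t-1}}\cdots\mu_{v_1}(\overline Q_{\lambda_4})$, with ${\bf m}$ the current exponents. Second, the final exponent vectors are given by ${\bf m}(v)_i=$ the multiplicity of $i$ in $V(\lambda_4',p,v)$.

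One natural route is to interpret ${\bf m}(v)_i$ as a linear ``level function'': it counts the vertices at distance $n-i$ from $p$, measured in the barycentric coordinate of each triangle at $p$. For a fixed $i$, the set $V(\lambda,p,i)$ is the union of the small-vertex rows parallel to the opposite sides in the triangles at $p$. Balance at an interior vertex $k$ follows because the quiver $\Gamma_\lambda$ is built from triangles whose arrows cancel along level lines. Concretely, for $k$ in the interior with ${\bf m}$ given by levels around $p$, the in- and out-arrows through $k$ cross level lines symmetrically.

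Rather than check each intermediate quiver directly, the cleaner approach is an invariance argument. The balancing vectors form the kernel-type space $\{{\bf m}:\sum_j Q(j,k){\bf m}(j)=0\ \forall k\in\mathcal V_{\rm mut}\}$. This space is carried by the tropical mutation rule above, which is the $g$-vector/frozen-grading rule and is linear once balancing holds. I would show that $V(\lambda,p,i)$ defines such a grading for every triangulation, as noted above. I would then use the fact that a grading balanced for one seed transforms by the formula ${\bf m}'(k)=-{\bf m}(k)+\sum_j[Q(j,k)]_+{\bf m}(j)$ into a balanced grading for the mutated seed. The remaining task is to identify the result with the level grading for $\lambda_4'$.

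This identification is the main obstacle. I would attempt it by tracking the intermediate seeds of the FGS sequence explicitly, using the layered sets $V^{i}_{\lambda,e}$. If tracking the intermediate seeds proves unwieldy, an alternative is to exploit uniqueness. A degree-$x$ grading on the cluster variables of the final seed is determined by its values on a cluster. The final cluster is also the initial cluster of $\lambda_4'$, and $f$ restricted to the frozen variables is unchanged, since frozen vertices are never mutated. So it suffices to check the claimed degrees on the interior vertices of $\lambda_4'$ via a single direct computation, for example with $n=2$ first and then by the grid structure of the level sets.
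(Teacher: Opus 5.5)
Your framework is sound as far as it goes. Because the $x_i$ are central, $f$ commutes with the mutation at $k$ up to an $x$-monomial exactly when $\sum_j Q(j,k)\,\mathbf m(j)=0$. Balanced gradings are carried to balanced gradings by $\mathbf m'(k)=-\mathbf m(k)+\sum_j[Q(j,k)]_+\mathbf m(j)$, and the level grading of $\lambda_4$ is balanced at every mutable vertex. This gives $f(\mu_{\lambda_4'\lambda_4}(\overline A_v))=\mu_{\lambda_4'\lambda_4}(\overline A_v)\,x^{-\mathbf m_{\rm fin}(v)}$ for \emph{some} exponent vectors $\mathbf m_{\rm fin}(v)$.

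The entire content of the lemma, however, is that $\mathbf m_{\rm fin}(v)$ is the multiplicity vector of $V(\lambda_4',p,v)$, and this is exactly what you leave open. Your uniqueness fallback does not supply it. The claim that a grading is determined by its values on a cluster is circular when that cluster is the final one. The frozen values, which are indeed unchanged, need not determine the mutable ones: the balance equations constrain $\mathbf m$ only through the mutable columns of $Q$, whose mutable part can be degenerate. Already for $n=2$, the balance equation of $\overline Q_{\lambda_4'}$ at its unique mutable vertex $(1,1)$ involves only its four frozen neighbours, so balance plus the frozen data allows any value of $\mathbf m(1,1)$. The correct value $1$ (rather than $2$, even though $(1,1)$ lies in both triangles at $p$) comes only from the computation $\mathbf m'(1,1)=-0+1$. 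So checking that the level grading of $\lambda_4'$ is balanced is a consistency check, not a proof, and you give no mechanism for passing from $n=2$ to general $n$ via the grid structure.

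Your first suggestion, tracking the layers of the Fock--Goncharov--Shen sequence, is precisely the paper's proof, and it is the whole argument rather than a detail. The paper inducts over $V^{0}_{\lambda_4,e},\dots,V^{n-2}_{\lambda_4,e}$. It uses that when $v\in V^{i,t}_{\lambda_4,e}$ is mutated it has exactly four neighbours $v_1\to v$, $v_3\to v$, $v\to v_2$, $v\to v_4$ (\cite[Figure~10.3]{FG06}). Here $v_1,v_2$ carry one common $x$-factor and $v_3,v_4$ another, which is your balance condition made explicit. This gives $f(\mu_i\cdots\mu_0(\overline A_v))=\mu_i\cdots\mu_0(\overline A_v)\,x_{n-i-1+t}^{-1}$, and the last step matches these exponents with $V(\lambda_4',p,i)$.

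Alternatively, your graded viewpoint allows a shorter finish. A homogeneous cluster variable has degree $G\cdot g$, where $G$ is the level grading of $\lambda_4$ and $g$ its extended $g$-vector with respect to $\overline{\mathsf s}_{\lambda_4}$. This holds because the monomial $\overline A^{g}$ occurs in its Laurent expansion and each $\hat y_k$ has degree $0$ by balance. Pairing $G$ with the explicit $g$-vectors of the flipped seed in Lemma~\ref{lem:gp4}(c) gives $x_t^{-1}$ at $(s,t)$ when $s+t\ge n$ and $x_{n-s}^{-1}$ when $s+t\le n$, which are the claimed exponents. One of these computations has to be supplied.
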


\begin{proof}
Let $e$ be the diagonal in Figure~\ref{fig:P4-labeling}. To simplify the notation, for each $0\leq i\leq n-2$ and $0\leq t\leq i$, we use $V_{i,t}$ and $V_i$ to denote $V_{\lambda_4,e}^{i,t}$ (see \eqref{def-Vite}) and $V_{\lambda_4,e}^{i}$ (see \eqref{def-Vie}) respectively. For each $0\leq i\leq n-2$, define
$$
\mu_i:=\prod_{v\in V_i}\mu_v,
$$
where the order of the product is arbitrary. We will use the induction on $i$ to prove the following statement: for each $0\leq i\leq n-2$, $0\leq t\leq i$, and $v\in V_{i,t}$, we have
\begin{align}\label{eq-f-mui}
f(\mu_i\cdots \mu_0 (\overline A_v))= \mu_i\cdots \mu_0 (\overline A_v) x_{n-i-1+t}^{-1}.
\end{align}

When $i=0$: Note that $\mu_0(\overline A_v) = \mu_v(\overline A_v)$. The full subquiver of $\Gamma_{\lambda_4}$, consisting of $v$ and vertices adjacent to $v$, is as shown in Figure~\ref{fig:quiver-v}, where
$$
f(\overline A_{v}) = \a_{v} f(\overline A_{v_1}) = \a_{v_1},\quad
f(\overline A_{v_2}) = \a_{v_2},\quad
f(\overline A_{v_3}) = \a_{v_3}x_{n-1}^{-1},\quad
f(\overline A_{v_4}) = \a_{v_4}x_{n-1}^{-1}.
$$
Then clearly, we have
\begin{align*}
f(\mu_0(\overline A_v)) = f(\mu_v(\overline A_v)) = \mu_v(\overline A_v) x_{n-1}^{-1} = \mu_0(\overline A_0) x_{n-1}^{-1}.
\end{align*}

\begin{figure}[ht]
    \centering
    \begin{tikzpicture}[>=stealth, node distance=2cm]
      \node (v)  at (0,0) {$v$};
      \node (v1) [right of=v] {$v_1$};
      \node (v3) [left of=v] {$v_3$};
      \node (v2) [below right of=v] {$v_2$};
      \node (v4) [above left of=v] {$v_4$};

      \draw[<-] (v2) -- (v);
      \draw[<-] (v4) -- (v);

      \draw[<-] (v) -- (v1);
      \draw[<-] (v) -- (v3);
    \end{tikzpicture}
    \caption{The full subquiver consisting of $v$ and vertices adjacent to $v$.}
    \label{fig:quiver-v}
\end{figure}

When $i=1$: Note that $\mu_1(\mu_0(\overline A_v)) = \mu_v(\mu_0(\overline A_v))$. The full subquiver of $\mu_0(\Gamma_{\lambda_4})$, consisting of $v$ and vertices adjacent to $v$, is as shown in Figure~\ref{fig:quiver-v} (see \cite[Figure~10.3]{FG06}).

When $t=0$, we have $v\in V(\lambda_4,p,n-1)$ (note that $V(\lambda_4,p,n-1)\cap V_0=\emptyset$), $v_1,v_2\in V_{0}$, and $v_3,v_4\in V(\lambda_4,p,n-2)$ (note that $V(\lambda_4,p,n-2)\cap V_0=\emptyset$). Then
\begin{align*}
f(\mu_0(\overline A_{v})) &= f(\a_v) = \a_v x_{n-1}^{-1} = \mu_0(\a_v) x_{n-1}^{-1}\\
f(\mu_0(\overline A_{v_1})) &= \mu_0(\overline A_{v_1}) x_{n-1}^{-1} \quad \text{(by the case when $i=0$)}\\
f(\mu_0(\overline A_{v_2})) &= \mu_0(\overline A_{v_2}) x_{n-1}^{-1} \quad \text{(by the case when $i=0$)}\\
f(\mu_0(\overline A_{v_3})) &= f(\a_{v_3}) = \a_{v_3} x_{n-2}^{-1} = \mu_0(\a_{v_3}) x_{n-2}^{-1}\\
f(\mu_0(\overline A_{v_4})) &= f(\a_{v_4}) = \a_{v_4} x_{n-2}^{-1} = \mu_0(\a_{v_4}) x_{n-2}^{-1}
\end{align*}
This implies that
\begin{align*}
f(\mu_1\mu_0(\overline A_v)) = f(\mu_v\mu_0(\overline A_v)) = \mu_v\mu_0(\overline A_v) x_{n-2}^{-1} = \mu_1\mu_0(\overline A_v) x_{n-2}^{-1}.
\end{align*}

When $t=1$, we have $v,v_1,v_2\notin \sqcup_{1\leq j\leq n-1} V(\lambda_4,p,j) \sqcup V_{0}$ and $v_3,v_4\in V_{0}$. Then
\begin{align*}
f(\mu_0(\overline A_{v})) &= f(\a_v) = \a_v = \mu_0(\a_v) \\
f(\mu_0(\overline A_{v_2})) &= f(\a_{v_1}) = \a_{v_1} = \mu_0(\a_{v_1}) \\
f(\mu_0(\overline A_{v_2})) &= f(\a_{v_2}) = \a_{v_2} = \mu_0(\a_{v_2}) \\
f(\mu_0(\overline A_{v_3})) &= \mu_0(\overline A_{v_3}) x_{n-1}^{-1} \quad \text{(by the case when $i=0$)}\\
f(\mu_0(\overline A_{v_4})) &= \mu_0(\overline A_{v_4}) x_{n-1}^{-1} \quad \text{(by the case when $i=0$)}.
\end{align*}
This implies that
\begin{align*}
f(\mu_1\mu_0(\overline A_v)) = f(\mu_v\mu_0(\overline A_v)) = \mu_v\mu_0(\overline A_v) x_{n-1}^{-1} = \mu_1\mu_0(\overline A_v) x_{n-1}^{-1}.
\end{align*}

Assume that \eqref{eq-f-mui} holds for $i\leq m-1$ ($m\geq 2$). We will prove that \eqref{eq-f-mui} also holds for $i=m$. Note that
\[
\mu_{m}\cdots\mu_0(\overline A_v) = \mu_v(\mu_{m-1}\cdots\mu_0(\overline A_v)).
\]
The full subquiver of $\mu_{m-1}\mu_0(\Gamma_{\lambda_4})$, consisting of $v$ and vertices adjacent to $v$, is as shown in Figure~\ref{fig:quiver-v} (see \cite[Figure~10.3]{FG06}).

When $t=0$, we have $v\in V(\lambda_4,p,n-m)$ (note that $V(\lambda_4,p,n-m)\cap (\cup_{1\leq l\leq m-1}V_l)=\emptyset$), $v_1,v_2\in V_{m-1,0}$, and $v_3,v_4\in V(\lambda_4,p,n-m-1)$ (note that $V(\lambda_4,p,n-m-1)\cap (\cup_{1\leq l\leq m-1}V_l)=\emptyset$). Then
\begin{align*}
f(\mu_{m-1}\cdots\mu_0(\overline A_v)) &= f(\a_v) = \a_v x_{n-m}^{-1} = \mu_{m-1}\cdots\mu_0(\overline A_v) x_{n-m}^{-1}\\
f(\mu_{m-1}\cdots\mu_0(\overline A_{v_1})) &= \mu_{m-1}\cdots\mu_0(\overline A_{v_1}) x_{n-m}^{-1} \quad \text{(by induction assumption)}\\
f(\mu_{m-1}\cdots\mu_0(\overline A_{v_2})) &= \mu_{m-1}\cdots\mu_0(\overline A_{v_2}) x_{n-m}^{-1} \quad \text{(by induction assumption)}\\
f(\mu_{m-1}\cdots\mu_0(\overline A_{v_3})) &= f(\a_{v_3}) = \a_{v_3} x_{n-m-1}^{-1} = \mu_{m-1}\cdots\mu_0(\overline A_v) x_{n-m-1}^{-1}\\
f(\mu_{m-1}\cdots\mu_0(\overline A_{v_4})) &= f(\a_{v_4}) = \a_{v_4} x_{n-m-1}^{-1} = \mu_{m-1}\cdots\mu_0(\overline A_{v_4}) x_{n-m-1}^{-1}.
\end{align*}
This implies that
\begin{align*}
f(\mu_{m}\cdots\mu_0(\overline A_v))
= f(\mu_v(\mu_{m-1}\cdots\mu_0(\overline A_v)))
= \mu_v(\mu_{m-1}\cdots\mu_0(\overline A_v)) x_{n-m-1}^{-1}
= \mu_{m}\cdots\mu_0(\overline A_v) x_{n-m-1}^{-1}.
\end{align*}

When $1\leq t\leq m-1$, we have $v\in V_{m-2,t-1}$, $v_1,v_2\in V_{m-1,t}$ and $v_3,v_4\in V_{m-1,t-1}$. Note that $V_{m-1}\cap V_{m-2}=\emptyset$. Then the induction assumption implies that
\begin{align*}
f(\mu_{m-1}\cdots\mu_0(\overline A_v))
= f(\mu_{m-2}\cdots\mu_0(\a_v))
&= \mu_{m-2}\cdots\mu_0(\a_v) x_{n-m+t}^{-1}
= \mu_{m-1}\cdots\mu_0(\overline A_v) x_{n-m+t}^{-1} \\
f(\mu_{m-1}\cdots\mu_0(\overline A_{v_1}))
&= \mu_{m-1}\cdots\mu_0(\overline A_{v_1}) x_{n-m+t}^{-1} \\
f(\mu_{m-1}\cdots\mu_0(\overline A_{v_2}))
&= \mu_{m-1}\cdots\mu_0(\overline A_{v_2}) x_{n-m+t}^{-1} \\
f(\mu_{m-1}\cdots\mu_0(\overline A_{v_3}))
&= \mu_{m-1}\cdots\mu_0(\overline A_v) x_{n-m+t-1}^{-1}\\
f(\mu_{m-1}\cdots\mu_0(\overline A_{v_4}))
&= \mu_{m-1}\cdots\mu_0(\overline A_{v_4}) x_{n-m+t-1}^{-1}.
\end{align*}
This implies that
\begin{align*}
f(\mu_{m}\mu_0(\overline A_v))
= f(\mu_v(\mu_{m-1}\mu_0(\overline A_v)))
= \mu_v(\mu_{m-1}\mu_0(\overline A_v)) x_{n-m+t-1}^{-1}
= \mu_{m}\mu_0(\overline A_v) x_{n-m+t-1}^{-1}.
\end{align*}

When $t=m$, we have $v,v_1,v_2\notin \sqcup_{1\leq j\leq n-1} V(\lambda_4,p,j) \cup (\cup_{0\leq l\leq m-1} V_{l})$ and $v_3,v_4\in V_{m-1,m-1}$. Then
\begin{align*}
f(\mu_{m-1}\cdots\mu_0(\overline A_{v}))
&= f(\a_v) = \a_v = \mu_{m-1}\cdots\mu_0(\a_v) \\
f(\mu_{m-1}\cdots\mu_0(\overline A_{v_2}))
&= f(\a_{v_1}) = \a_{v_1} = \mu_{m-1}\cdots\mu_0(\a_{v_1}) \\
f(\mu_{m-1}\cdots\mu_0(\overline A_{v_2}))
&= f(\a_{v_2}) = \a_{v_2} = \mu_{m-1}\cdots\mu_0(\a_{v_2}) \\
f(\mu_{m-1}\cdots\mu_0(\overline A_{v_3}))
&= \mu_{m-1}\cdots\mu_0(\overline A_{v_3}) x_{n-1}^{-1} \quad \text{(by the induction assumption)}\\
f(\mu_{m-1}\cdots\mu_0(\overline A_{v_4}))
&= \mu_{m-1}\cdots\mu_0(\overline A_{v_3}) x_{n-1}^{-1} \quad \text{(by the induction assumption)}.
\end{align*}
This implies that
\begin{align*}
f(\mu_m\cdots\mu_0(\overline A_v))
= f(\mu_v\mu_{m-1}\cdots\mu_0(\overline A_v))
= \mu_v\mu_{m-1}\cdots\mu_0(\overline A_v) x_{n-1}^{-1}
= \mu_m\cdots\mu_0(\overline A_v) x_{n-1}^{-1}.
\end{align*}

Thus the induction implies \eqref{eq-f-mui} for each $0\leq i\leq n-2$ and $0\leq t\leq i$.

For each $0\leq i\leq n-2$ and $0\leq t\leq i$, define
$$
V_{i,t}^\partial=\{(i+1-t, 1+t), (n-1-t,n-i+t)\}.
$$
Note that $V_{i,t}^\partial \cap V_{l}=\emptyset$ for $l\geq i+1$. Then, for each $v\in V_{i,t}^\partial$, we have
\begin{align}\label{eq-mu-p4-Ax}
f(\mu_{\lambda_4'\lambda_4}(\overline A_v))
= f(\mu_{i}\cdots\mu_0(\overline A_v))
= \mu_{i}\cdots\mu_0(\overline A_v) x_{n-i-1+t}^{-1}
=\mu_{\lambda_4'\lambda_4}(\overline A_v) x_{n-i-1+t}^{-1}.
\end{align}

It is straightforward to check that, for each $1\leq i\leq n-1$, we have
\begin{align}\label{eq-mu-p4-Ax-1}
V(\lambda_4',p,i)
=(\sqcup_{0\leq j\leq i-1} V_{n-i-1+j,j}^\partial)\sqcup \{(n-i,0), (n,i)\}.
\end{align}

Equations~\eqref{eq-mu-p4-Ax} and \eqref{eq-mu-p4-Ax-1} imply that, for each $v\in V(\lambda_4',p,i)$, we have
\begin{align}\label{eq-mu-p4-Ax-2}
f(\mu_{\lambda_4'\lambda_4}(\overline A_v)) = \overline A_v x_{i}^{-1}.
\end{align}

Note that
\begin{align}\label{eq-mu-p4-Ax-3}
\mathring{\overline{V}}_{\lambda_4}= \sqcup_{1\leq i\leq n-1} V(\lambda_4',p,i),
\end{align}
where $\mathring{\overline{V}}_{\lambda_4}\subset \overline V_{\lambda_4}$ is the set of all the mutable vertices. Then Equations~\eqref{eq-mu-p4-Ax-2} and \eqref{eq-mu-p4-Ax-3} complete the proof.
\end{proof}

\def\Afo{\overline{\mathcal{A}}(\bP_4,\lambda_4)}\
\def\Afw{\widetilde{\mathcal{A}}(\bP_4,\lambda_4)}
\def\Afoux{\overline{\mathcal{A}}(\bP_4,\lambda_4)\otimes \mathbb Z[x_1^{\pm 1},\cdots, x_{n-1}^{\pm 1}]}

When $\omega^{\frac{1}{2}}=1$ and $R=\mathbb Z$, we use $\overline{\mathcal A}$ and $\mathcal A$ to denote 
$\overline{\mathcal A}_\omega$ and $\mathcal A_\omega$, respectively.
Define
\[
\Afw := \mathbb Z[\overline A_v^{\pm 1} \mid v \in V_4 := \overline V_{\lambda_4} \setminus \{n1, n2, \ldots, (n, n-1)\} ].
\]
Let $\Gamma_4$ be the full subquiver of $\Gamma_{\lambda_4}$ consisting of all vertices in $V_4$.
Then $(\Gamma_4, (\overline A_v)_{v \in V_4})$ forms a classical seed.
Define the following projections:
\begin{align*}
    P_1 &\colon \Afo \longrightarrow \Afw, \qquad
    \overline A_v \longmapsto
    \begin{cases}
        \overline A_v, & v \in V_v,\\
        1, & \text{otherwise},
    \end{cases}
    \\[0.8em]
    P_2 &\colon \Afoux \longrightarrow \Afo, \qquad
    \begin{cases}
        \overline A_v \longmapsto \overline A_v, & v \in \overline V_4,\\[0.3em]
        x_i \longmapsto \overline A_{ni}, & 1 \le i \le n-1.
    \end{cases}
\end{align*}
Then we have the following lemma.

\begin{lemma}\label{lem-pro-mut}
    Let $v_1,\cdots,v_m\in \mathring{\overline{V}}_{\lambda_4}$, and $h(x_1,\cdots,x_{n-1})\in \mathbb Z[x_1^{\pm 1},\cdots, x_{n-1}^{\pm 1}]$. Then:

\begin{enumerate}[label={\rm (\alph*)}]\itemsep0.3em
    \item  $P_1(\mu_{v_1}\cdots \mu_{v_m}(\overline A_{v_1}))
=\mu_{v_1}\cdots \mu_{v_m}(\overline A_{v_1})\in \Afw.$

\item $P_2(\mu_{v_1}\cdots \mu_{v_m}(\overline A_{v_1})h(x_1,\cdots,x_{n-1}))
=\mu_{v_1}\cdots \mu_{v_m}(\overline A_{v_1})h(\overline A_{n1},\cdots,\overline A_{n,n-1})\in \Afo.$
\end{enumerate}
\end{lemma}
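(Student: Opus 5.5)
Both parts follow one principle: mutation at mutable vertices only involves the variables attached to vertices adjacent to the mutated vertex, together with the exchange matrix restricted to those vertices. Projecting away or specializing variables that never enter a mutable-adjacent exchange relation (or enter only as a monomial factor) therefore commutes with mutation. We work in the classical setting $\omega^{\frac12}=1$, so all tori are commutative Laurent polynomial rings and $P_1,P_2$ are ring homomorphisms.

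For (a), first observe from Figure~\ref{fig:P4-labeling} that the removed vertices $n1,\dots,(n,n-1)$ lie on one boundary edge of $\mathbb P_4$, so they are frozen. By the definition of $V_4$, the seed $(\Gamma_4,(\overline A_v)_{v\in V_4})$ is the sub-seed (Definition~\ref{def:subseed}) obtained by deleting these frozen vertices, with all mutable vertices kept. Mutations of quivers at mutable vertices commute with deleting frozen vertices, since the mutation rule \eqref{eq-mutation-Q} for entries among the remaining vertices only uses entries among the remaining vertices and $k$. Hence $\mu_{v_1}\cdots\mu_{v_m}(\Gamma_4)$ is the full subquiver of $\mu_{v_1}\cdots\mu_{v_m}(\Gamma_{\lambda_4})$ on $V_4$.

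We then prove by induction on the number of mutations that, for every intermediate seed, $P_1$ sends each cluster variable of the big seed at a vertex $u\in V_4$ to the corresponding cluster variable of the small seed. At the base step this holds by the definition of $P_1$. For the inductive step, apply the ring homomorphism $P_1$ to the exchange relation at a mutable vertex $k$. The two monomials on the right-hand side become the monomials of the small exchange relation, because the factors indexed by deleted frozen vertices are sent to $1$ and the exponents on $V_4$ agree by the previous paragraph. The left-hand side is $A_k'A_k$ with $P_1(A_k)$ equal to the small variable, which is nonzero, so $P_1(A'_k)$ is the small $A'_k$. One must check that $P_1$ is defined on the mutated variables. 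This holds because every cluster variable lies in the Laurent ring in the initial variables by the Laurent phenomenon, and $P_1$ is defined on that ring. This gives (a), reading both sides as the cluster variables of the respective seeds.

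For (b), $P_2$ specializes $x_i\mapsto\overline A_{ni}$ and is the identity on the other $\overline A_v$. Since $P_2$ is a ring homomorphism, $P_2(X\,h)=P_2(X)\,h(\overline A_{n1},\dots,\overline A_{n,n-1})$. It therefore suffices to show $P_2(X)=X$ for $X=\mu_{v_1}\cdots\mu_{v_m}(\overline A_{v_1})$, viewed in $\Afo\subset\Afoux$. This holds because $X$ is a Laurent polynomial in the $\overline A_v$ alone, with no $x_i$, and $P_2$ restricted to $\Afo$ is the identity.

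The only delicate point is the bookkeeping in (a): confirming that the deleted vertices are frozen and have no arrows to one another that would matter. Since they are all frozen, arrows among them never affect mutation formulas, so this step is routine once the picture is checked.
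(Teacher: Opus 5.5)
Your proof is correct and is essentially the paper's. For (a) the paper just cites \cite[Proposition~2.29]{CZ} and \cite[Lemma~5.4]{HLY}, and your induction (mutation at mutable vertices commutes with deleting frozen vertices; push each exchange relation through the ring map $P_1$, using the Laurent phenomenon so $P_1$ is defined and the domain property to cancel) is the standard argument behind those results, while (b) is, as in the paper, immediate since $P_2$ is a ring homomorphism that is the identity on $\overline{\mathcal{A}}(\mathbb{P}_4,\lambda_4)$. One terminological slip: $(\Gamma_4,(\overline A_v)_{v\in V_4})$ is not a sub-seed in the sense of Definition~\ref{def:subseed}, because the deleted vertices $(n,1),\dots,(n,n-1)$ are joined by arrows to mutable vertices, which is exactly why the exchange binomials change and $P_1$ is needed; since you only use that $\Gamma_4$ is the full subquiver on $V_4$, the argument is unaffected.
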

\begin{proof}
Part (a) is well known; see, for example, \cite[Proposition~2.29]{CZ} and \cite[Lemma~5.4]{HLY}.

    Part (b) is immediate from the definition of $P_2$.
\end{proof}


It is straightforward to check that $f$ induces the following $\mathbb Z$-algebra embedding
\[
    \bar f\colon 
    \Afw
    \longrightarrow
    \Afo,
    \qquad
    \overline A_v \longmapsto 
    \overline A_v \!\!\prod_{i \in V(\lambda_4,p,v)} \! \overline A_{ni}^{-1} \text{for each $v \in V_{4}$}.
\]

\begin{lemma}\label{lem-P4-flip-iso-1}
    For each $v \in  V_{4}$, we have
    \[
        \bar f\!\left(\mu_{\lambda_4'\lambda_4}(\overline A_v)\right)
        =
        \mu_{\lambda_4'\lambda_4}(\overline A_v)
        \prod_{i \in V(\lambda_4',p,v)} \overline A_{ni}^{-1}\in \Afo,
    \]
    where $\mu_{\lambda_4'\lambda_4}$ is defined in~\eqref{def-eq-mul}.
\end{lemma}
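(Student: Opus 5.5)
The plan is to derive Lemma~\ref{lem-P4-flip-iso-1} from Lemma~\ref{lem-P4-flip-iso} by specializing to the classical case $\omega^{\frac12}=1$ and composing with the projections $P_1$ and $P_2$ introduced just before. First I check the factorization
\[
\bar f = P_2\circ f\circ \iota ,
\]
where $\iota\colon \Afw\to\Afo$ is the inclusion. For $v\in V_4$, $f(\overline A_v)=\overline A_v\prod_{i\in V(\lambda_4,p,v)}x_i^{-1}$. Applying $P_2$, which sends $x_i\mapsto \overline A_{ni}$, gives exactly $\bar f(\overline A_v)$. I also need $P_2\circ f$ to kill the dependence on the frozen vertices $ni$: since $V(\lambda_4,p,v)$ consists only of vertices not among the $ni$ (those lie on the edge opposite $p$), nothing on $V_4$ is affected.

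Next, fix $v\in V_4$ and write $\mu_{\lambda_4'\lambda_4}(\overline A_v)$ as a cluster variable computed in the seed $(\Gamma_{\lambda_4},(\overline A_u))$. For $v$ frozen (on the boundary of $\mathbb P_4$) the claim follows directly from the definition of $\bar f$ together with $V(\lambda_4',p,v)=V(\lambda_4,p,v)$ for boundary vertices, which I would check from \eqref{eq-mu-p4-Ax-1}. For mutable $v$, Lemma~\ref{lem-pro-mut}(a) says that $P_1$ fixes the mutated variable. This is because the vertices $ni$ are frozen and only adjacent to vertices on one side, so the cluster variable obtained in the full seed coincides with the one obtained in the sub-seed $\Gamma_4$. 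Hence $\mu_{\lambda_4'\lambda_4}(\overline A_v)$, computed in $\Afw$, is the same element as its computation in $\Afo$, possibly after setting $\overline A_{ni}=1$.

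Finally, I apply $P_2$ to Lemma~\ref{lem-P4-flip-iso} specialized at $\omega^{\frac12}=1$:
\[
P_2 f\bigl(\mu_{\lambda_4'\lambda_4}(\overline A_v)\bigr)=P_2\Bigl(\mu_{\lambda_4'\lambda_4}(\overline A_v)\prod_{i\in V(\lambda_4',p,v)}x_i^{-1}\Bigr),
\]
and use Lemma~\ref{lem-pro-mut}(b) with $h=\prod x_i^{-1}$ to get the right-hand side of the claim. For the left-hand side, $P_2f$ agrees with $\bar f$ on Laurent polynomials in $\{\overline A_u\}_{u\in V_4}$. So I need the element $\mu_{\lambda_4'\lambda_4}(\overline A_v)$ inside $\Afo$ to have no $\overline A_{ni}$-dependence, or to be handled via its $P_1$-image.

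The main obstacle is precisely this compatibility. The variable $\mu(\overline A_v)$ computed in the full seed may involve the frozen $\overline A_{ni}$, whereas $\bar f$ is only defined on $\Afw$. I would resolve it by working with the sub-seed $\Gamma_4$ throughout and interpreting the left side as $\bar f$ applied to the $\Gamma_4$-cluster variable. Then I would show that the frozen-variable contributions of $\overline A_{ni}$ in the full seed, which are products of $\overline A_{ni}$ coming from the half-arrows at $p$'s opposite edge, are exactly compensated. Concretely, I would verify at each mutation step, by the same induction as in Lemma~\ref{lem-P4-flip-iso}, that the exchange binomials of $\Gamma_4$ map under $\bar f$ to those of the full quiver up to the monomial factor. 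This mirrors the Figure~\ref{fig:quiver-v} analysis with $x_i$ replaced by $\overline A_{ni}$.
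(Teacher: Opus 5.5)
Your outline uses the right ingredients ($P_1$, $P_2$, Lemma~\ref{lem-P4-flip-iso} at $\omega^{\frac{1}{2}}=1$, and Lemma~\ref{lem-pro-mut}). You also correctly note that you need $P_2\circ f$ to kill the dependence on the $\overline A_{ni}$. But that is exactly the step you leave open, and your justification for it is wrong.

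The factorization $\bar f=P_2\circ f\circ\iota$ on $\Afw$ is not enough. What is needed is
\[
\bar f\circ P_1 \;=\; P_2\circ f \qquad\text{on all of } \Afo .
\]
On $\overline A_v$ with $v\in V_4$ both sides agree by definition. For the deleted vertices one needs $P_2 f(\overline A_{ni})=1$. This holds because $(n,i)$ lies on the boundary edge of $\mathbb P_4$ \emph{incident} to $p$ (recall $p$ is adjacent to $(n,1)$), with first barycentric coordinate $n-i$ relative to $p$. So $V(\lambda_4,p,ni)=\{i\}$, $f(\overline A_{ni})=\overline A_{ni}x_i^{-1}$, and $P_2$ sends this to $1$. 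You place these vertices on the edge opposite $p$. In that picture $V(\lambda_4,p,ni)$ would be empty, $P_2f(\overline A_{ni})=\overline A_{ni}\neq 1$, and the factorization through $P_1$ would fail.

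With this identity, the proof is the paper's short chain. Let $Y=\mu_{\lambda_4'\lambda_4}(\overline A_v)\in\Afo$ be the full-seed variable. Lemma~\ref{lem-pro-mut}(a) identifies the left-hand side with $\bar f(P_1Y)$, and then
\[
\bar f(P_1Y)=P_2f(Y)=P_2\Bigl(Y\prod_{i\in V(\lambda_4',p,v)}x_i^{-1}\Bigr)=Y\prod_{i\in V(\lambda_4',p,v)}\overline A_{ni}^{-1}
\]
by Lemma~\ref{lem-P4-flip-iso} and Lemma~\ref{lem-pro-mut}(b). No frozen/mutable case split is needed.

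Your fallback is to redo the mutation-by-mutation induction and show that the $\overline A_{ni}$-contributions are compensated. This is unnecessary, and as written it is only a promise. It would require tracking the arrows to the vertices $ni$ in every intermediate quiver, which you neither carry out nor set up with the correct geometry.

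Also, Lemma~\ref{lem-pro-mut}(a) does not say the full-seed and sub-seed variables coincide. It says the latter is the $P_1$-image of the former, and the former may depend on the $\overline A_{ni}$. That is why the identity above, not $\overline A_{ni}$-independence of $Y$, is the right thing to prove.
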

\begin{proof}
    We have 
    \begin{align*}
        \bar f\!\left(\mu_{\lambda_4'\lambda_4}(\overline A_v)\right) &= \bar f\, P_1\!\left(\mu_{\lambda_4'\lambda_4}(\overline A_v)\right) \qquad \text{(by Lemma~\ref{lem-pro-mut}(a))}\\
        & = P_2\; f \!\left(\mu_{\lambda_4'\lambda_4}(\overline A_v)\right) \qquad \text{(by the definition of $\bar f$)}\\
        & = P_2 \!\left(
        \mu_{\lambda_4'\lambda_4}(\overline A_v)
        \prod_{i \in V(\lambda_4',p,v)} x_i^{-1}\right) \qquad \text{(by Lemma~\ref{lem-P4-flip-iso})}\\
        & = 
        \mu_{\lambda_4'\lambda_4}(\overline A_v)
        \prod_{i \in V(\lambda_4',p,v)} A_{ni}^{-1} \qquad \text{(by Lemma~\ref{lem-pro-mut}(b))}.
    \end{align*}
\end{proof}

\def\Zq{\mathbb{Z}[\omega^{\pm \frac{1}{2}}]}

\begin{proof}[Proof of Proposition~\ref{prop-flips-L}]
It follows from Proposition~\ref{Prop-quasi-iso} that 
\begin{align}\label{eq-Lla-Zq}
    L_{\lambda}\!\left(\mu_{\lambda'\lambda^\ast}(A_v)\right)
    = [\mu_{\lambda'\lambda^\ast}(\overline A_v)\, \overline A^{{\bf k}_v}]
    \in \Aa 
    \qquad \text{for some ${\bf k}_v \in \mathbb Z^{\overline V_{\rm fro}}$},
\end{align}
where $\overline V_{\rm fro}$ is the set of small vertices in $\overline V_{\lambda^\ast}$ contained in the boundary of $\fS^\ast$.  
Note that $R$ is a $\Zq$-algebra.  
By applying the functor $R \otimes_{\Zq} (-)$, we see that the vector ${\bf k}_v$ is independent of the choice of the ground ring $R$ and the invertible element $\omega^{\frac{1}{2}}$.  
Therefore, we may assume $R = \mathbb Z$ and $\omega^{\frac{1}{2}} = 1$.  
Note that the triangulation $\lambda'$ is obtained from $\lambda^\ast$ by a sequence of flips.
Then the proposition follows directly from Lemmas~\ref{lem-P4-flip-iso} and \ref{lem-P4-flip-iso-1}.
\end{proof}

Before stating the first main theorem of this section, we record one additional useful lemma.
Recall that $\mathbb P_2$ has a unique generalized triangulation, which we continue to denote by $\mathbb P_2$. 
Let $\lambda$ be the induced triangulation of $\mathbb P_4=\mathbb P_2^\ast$ (see Figure~\ref{Fig;attaching}(B)).
We write
\[
L:=L_{\mathbb P_2}\colon \cA(\mathbb P_2) \longrightarrow \overline{\mathcal{A}}_\omega(\mathbb P_4,\lambda)
\]
for the algebra embedding defined in \eqref{eq-def-L-lamda}.
We label the small vertices of $\overline V_\lambda$ as in Figure~\ref{Fig;essential-P4}(B), so that the subset 
$V_{\mathbb P_2}\subset \overline V_\lambda$ inherits this labeling.

Define
\[
\mathcal A_4:=
\overline{\mathcal{A}}_\omega(\mathbb P_4,\lambda)\otimes _R 
R[x_1^{\pm 1},y_1^{\pm 1},\dots, x_{n-1}^{\pm 1}, y_{n-1}^{\pm 1}],
\]
and consider the algebra embedding
\[
F\colon \overline{\mathcal{A}}_\omega(\mathbb P_4,\lambda)\longrightarrow \mathcal A_4,
\qquad 
\overline A_{ij}\longmapsto
\begin{cases}
    \a_{ij}, & 1\le i=j\le n-1,\\[1mm]
    \a_{ij}\,x_{\,i-j}^{-1}, & 0\le j<i\le n,\\[1mm]
    \a_{ij}\,y_{\,n-j+i}^{-1}, & 0\le i<j\le n,
\end{cases}
\]
where we set $\a_{n0}=\a_{0n}=x_n=y_0=1$.

We can now state the lemma.

\begin{lemma}\label{lem-gL}
\begin{enumerate}[label={\rm (\alph*)}]\itemsep0.3em
\item  In $\mathcal A_4$, we have
\[
\begin{aligned}
& F(\overline A_{i-1,n-1})
    = \bigl[\,y_i^{-1}\,\overline A_{i-1,n-1}\,\bigr],
    &&  1\leq i\leq n, \\[1.2ex]
& F\!\left(\mu^{\diamondsuit}_{j}(\overline A_{n-1,n-2})\right)
    = \bigl[\,\mu^{\diamondsuit}_{j}(\overline A_{n-1,n-2})\,x_j^{-1}\,\bigr],
    && 1\leq j\leq n, \\[1.2ex]
& F\!\left(
(\mu_{n-1,n-1}\cdots \mu_{i+1,i+1}\mu_{ii})
\circ \mu^{\diamondsuit}_j \circ \mu^{\triangle}_{i}(\overline A_{n-1,n-1})
\right)  \\
&\qquad = \Bigl[
(\mu_{n-1,n-1}\cdots \mu_{i+1,i+1}\mu_{ii})
\circ \mu^{\diamondsuit}_j \circ \mu^{\triangle}_{i}(\overline A_{n-1,n-1})
\; y_i^{-1}\,x_j^{-1}
\Bigr],
    &&  n>i\ge j\geq 1, \\[1.2ex]
& F\!\left(
(\mu_{n-1,n-1}\cdots \mu_{i+1,i+1}\mu_{ii})
\circ \mu^{\diamondsuit}_j \circ \mu^{\diamondsuit}_{(i;j)}(\overline A_{n-1,n-1})
\right) \\
&\qquad = \Bigl[
(\mu_{n-1,n-1}\cdots \mu_{i+1,i+1}\mu_{ii})
\circ \mu^{\diamondsuit}_j \circ \mu^{\diamondsuit}_{(i;j)}(\overline A_{n-1,n-1})
\;y_i^{-1}\,x_j^{-1}
\Bigr],
    && n>j>i\geq 1,
\end{aligned}
\]
where all mutations are as in Theorem~\ref{intro-thm-skein-inclusion-A} (f).

\item In $\overline{\mathcal{A}}_\omega(\mathbb P_4)$, we have
\[
\begin{aligned}
& L(A_{i-1,n-1})
    = \bigl[\,\overline A_{in}^{-1}\,\overline A_{i-1,n-1}\,\bigr],
    && 1\leq i\leq n, \\[1.2ex]
& L\!\left(\mu^{\diamondsuit}_{j}(A_{n-1,n-2})\right)
    = \bigl[\,\mu^{\diamondsuit}_{j}(\overline A_{n-1,n-2})\,\overline A_{j0}^{-1}\,\bigr],
    && 1\leq j\leq n, \\[1.2ex]
& L\!\left(
(\mu_{n-1,n-1}\cdots \mu_{i+1,i+1}\mu_{ii})
\circ \mu^{\diamondsuit}_j \circ \mu^{\triangle}_{i}(A_{n-1,n-1})
\right)  \\
&\qquad = \Bigl[
(\mu_{n-1,n-1}\cdots \mu_{i+1,i+1}\mu_{ii})
\circ \mu^{\diamondsuit}_j \circ \mu^{\triangle}_{i}(\overline A_{n-1,n-1})
\;\overline A_{in}^{-1}\,\overline A_{j0}^{-1}
\Bigr],
    &&  n>i\ge j\geq 1, \\[1.2ex]
& L\!\left(
(\mu_{n-1,n-1}\cdots \mu_{i+1,i+1}\mu_{ii})
\circ \mu^{\diamondsuit}_j \circ \mu^{\diamondsuit}_{(i;j)}(A_{n-1,n-1})
\right) \\
&\qquad = \Bigl[
(\mu_{n-1,n-1}\cdots \mu_{i+1,i+1}\mu_{ii})
\circ \mu^{\diamondsuit}_j \circ \mu^{\diamondsuit}_{(i;j)}(\overline A_{n-1,n-1})
\;\overline A_{in}^{-1}\,\overline A_{j0}^{-1}
\Bigr],
    &&  n>j>i\geq 1.
\end{aligned}
\]
\end{enumerate}
\end{lemma}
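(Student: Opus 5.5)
Part (a) is an analogue of Lemma~\ref{lem-P4-flip-iso}, and I will prove it in the same way: track monomial factors through each mutation. The key observation is the following. Let $\mathsf s$ be a seed in $\overline{\mathcal A}_\omega(\mathbb P_4,\lambda)$ and fix a mutable vertex $v$. Suppose $F$ sends every current cluster variable $A_w$ (for $w$ adjacent to $v$ or $w=v$) to $A_w$ times a monomial $m_w$ in the $x$'s and $y$'s. Suppose further that
\[
\prod_{w\to v} m_w = \prod_{v\to w} m_w = m_v\, m'_v .
\]
Then $F(\mu_v(A_v)) = \mu_v(A_v)\, m'_v$, because the $x_i,y_i$ are central in $\mathcal A_4$ and the exchange relation is homogeneous in them. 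The first identity in (a) needs no mutation: by definition $F(\overline A_{i-1,n-1}) = \overline A_{i-1,n-1}\, y_{\,n-(n-1)+(i-1)}^{-1} = \overline A_{i-1,n-1}\, y_i^{-1}$. For the remaining identities I will use the decomposition of the quadrilateral into the two triangles adjacent to the two boundary punctures. Compare $F$ with two copies of the map $f$ of Lemma~\ref{lem-P4-flip-iso}: one for the puncture $p$ adjacent to the vertices $i0$, with variables $x$, and one for the opposite puncture adjacent to the vertices $0j$/$in$, with variables $y$. Vertices $ij$ with $i>j$ carry $x_{i-j}^{-1}$, vertices with $i<j$ carry $y_{n-j+i}^{-1}$, and diagonal vertices carry nothing. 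This is exactly the "level sets $V(\lambda,p,k)$" grading for the two punctures.

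The proof will go by induction along each mutation sequence $\mu^{\diamondsuit}_j$, $\mu^\triangle_i$, $\mu^{\diamondsuit}_{(i;j)}$ and $\mu_{n-1,n-1}\cdots\mu_{ii}$. The invariant to carry is that after each partial sequence, every cluster variable is the cluster variable times a single $x_k^{-1}$ or $y_k^{-1}$ (or $1$), where the index is read off from the current quiver. Concretely, mutations inside the lower triangle region only involve $x$-weights, and the homogeneity check reduces to the local configuration of Figure~\ref{fig:quiver-v} (two in-arrows, two out-arrows) used in the proof of Lemma~\ref{lem-P4-flip-iso}. In that configuration, the two incoming neighbours share a weight and the two outgoing neighbours share a weight, so the weights shift by one level. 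After $\mu^{\diamondsuit}_j$, the vertex $(n-1,n-2)$ carries $x_j^{-1}$; this gives the second identity. For the last two identities, the final string $\mu_{n-1,n-1}\cdots\mu_{ii}$ passes through the diagonal, and there the $x$- and $y$-weights meet. I expect the mutated diagonal variable to pick up $x_j^{-1}$ from the $\mu^{\diamondsuit}_j$ side and $y_i^{-1}$ from the $\mu^\triangle_i$ (resp.\ $\mu^{\diamondsuit}_{(i;j)}$) side. To simplify, I will use the reduction to $R=\mathbb Z$, $\omega^{1/2}=1$, exactly as in the proof of Proposition~\ref{prop-flips-L}: the exponents of the central variables do not depend on the ground ring, and the Weyl ordering of central factors is trivial.

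For (b), I will deduce it from (a) as in Lemma~\ref{lem-P4-flip-iso-1}. Set $P_2\colon \mathcal A_4\to\overline{\mathcal A}_\omega(\mathbb P_4,\lambda)$ by $x_k\mapsto \overline A_{k0}$ and $y_k\mapsto \overline A_{kn}$; these are the frozen vertices on the $e_2$ edges of the two attached triangles, i.e.\ $\overline V_{\lambda}\setminus V'_{\mathbb P_2}$. Let $P_1$ kill those frozen variables. By the definition \eqref{eq-def-L-lamda} of $L$ via the matrix $C$ and the map $p$, we get $L = P_2\circ F$ on $\mathcal A_\omega(\mathbb P_2,\mathbb P_2)$, viewed inside via $P_1$. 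The projection-commutes-with-mutation fact of Lemma~\ref{lem-pro-mut}(a) lets me replace $\mu(A)$ by $P_1(\mu(\overline A))$. Applying $P_2$ to the identities in (a) then gives (b).

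The main obstacle is the bookkeeping in (a) for the mixed sequences $\mu^{\diamondsuit}_j\circ\mu^{\diamondsuit}_{(i;j)}$ followed by the diagonal string. There I must identify, at each step, the full subquiver around the mutated vertex and check that both sides of the exchange relation carry the same monomial. I would first verify this for $n=3,4$ by hand, then extract the general pattern using the Fock--Goncharov description of intermediate quivers \cite[Figure~10.3]{FG06}.
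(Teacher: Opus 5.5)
Your mechanism is the paper's. The $x_k,y_k$ are central, so if the exchange relation at the mutated vertex is homogeneous in them (over all neighbours, frozen ones included), then $F$ of the new variable equals that variable times a monomial. Part (b) then follows from (a) via $x_k\mapsto\overline A_{k0}$, $y_k\mapsto\overline A_{kn}$ in the classical specialization, as in Lemma~\ref{lem-P4-flip-iso-1}.

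The specialization $\omega^{1/2}=1$ is unnecessary in (a), but it is needed in (b). There your $P_1$ sets non-central frozen variables to $1$, which is not an algebra map in the quantum setting; the reduction is justified by Proposition~\ref{Prop-quasi-iso}.

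The gap is in how you run the induction for (a): the invariant and the local picture you plan to use are wrong.

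The sequences $\mu^{\diamondsuit}_j$, $\mu^{\triangle}_i$, $\mu^{\diamondsuit}_{(i;j)}$ are not flip sequences, so neither the configuration of Figure~\ref{fig:quiver-v} nor \cite[Figure~10.3]{FG06} applies. Take the first mutation of $\mu^{\diamondsuit}_j$, at $(j,1)$ in the initial quiver:
\begin{itemize}
\item its in-neighbours $(j-1,1),(j,0),(j+1,2)$ carry $F$-weights $x_{j-2}^{-1},x_j^{-1},x_{j-1}^{-1}$;
\item its out-neighbours $(j+1,1),(j,2),(j-1,0)$ carry $x_j^{-1},x_{j-2}^{-1},x_{j-1}^{-1}$.
\end{itemize}
The relation is homogeneous, but $F(\mu_{j1}(\overline A_{j1}))=\mu_{j1}(\overline A_{j1})\,x_{j-2}^{-1}x_j^{-1}$. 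So for $j\ge 3$ your invariant (``a single $x_k^{-1}$, $y_k^{-1}$, or $1$'') already fails at the first step. The ``shift by one level'' picture also cannot produce the factor $x_j^{-1}$: it enters through arrows to the frozen vertices $(j,0)$ and $(j-1,0)$ on the $e_2$-edge.

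The correct invariant is that the variable at $(l,m)$, after its mutation in $\mu^{\diamondsuit}_j$, carries $x_{l-m-1}^{-1}x_j^{-1}$ (with $x_0=1$). This gives $x_j^{-1}$ at $(n-1,n-2)$. To prove it you need the intermediate quivers of this specific sequence:
\begin{itemize}
\item along row $l=j$, for $m\ge 2$, the vertex $(j,m)$ has out-neighbours $(j,m\pm1)$ and in-neighbours $(j-1,m),(j+1,m+1),(j,0)$;
\item at the start of row $l>j$, the vertex $(l,l-j+1)$ has out-neighbours $(l,l-j+2),(j-1,0),(l+1,l-j+1)$ and in-neighbours $(l-1,l-j),(l+1,l-j+2)$;
\item afterwards, $(l,m)$ has out-neighbours $(l,m\pm1)$ and in-neighbours $(l-1,m-1),(l+1,m+1)$.
\end{itemize}
These are \eqref{eq-Q1}--\eqref{eq-Q3}, the latter two proved in Appendix~\ref{app-proof}. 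With them the homogeneity check at each step is immediate.

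The remaining identities need the same kind of explicit analysis, now with the frozen vertices on $\{kn\}$ supplying $y_i^{-1}$. Alternatively, as the paper remarks, you can compute $g$-vectors instead of tracking quivers.
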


\begin{proof}
    (a) The first equality follows directly from the definition of $F$.  
We will therefore prove only the second equality, since the same techniques apply to the third and fourth ones.

Recall that 
\[
\mu^{\diamondsuit}_j:= \mu^r_{(n-1;j-1)} \mu^r_{(n-2;j-1)} \cdots \mu^r_{(j;j-1)},
\]
where $\mu_{(s;t)}^r$ is defined in \eqref{def-must}.  
Set $x_0=1$.  
For each $j \le l \le n-1$ and $l-j+1 \le m \le l-1$, we will prove
\begin{equation}\label{eq-g-muuuu}
\begin{split}
    &F\!\left(\mu_{lm}\cdots \mu_{l,l-j+1} \mu_{(s-1;j-1)}^r \cdots \mu_{(j;j-1)}^r(\overline A_{lm})\right)\\
    =&\mu_{lm}\cdots \mu_{l,l-j+1}\mu_{(s-1;j-1)}^r\cdots \mu_{(j;j-1)}^r(\overline A_{lm}) 
      x_{l-m-1}^{-1} x_j^{-1},
\end{split}
\end{equation}
by induction on~$l$.

 Base case $l=j$:
When $l=j$, \eqref{eq-g-muuuu} reduces to
\begin{equation}\label{eq-g-muuuu1}
    F\bigl(\mu_{jm}\cdots \mu_{j1}(\overline A_{jm})\bigr)
    = \mu_{jm}\cdots \mu_{j1}(\overline A_{jm})\, x_{j-m-1}^{-1} x_j^{-1}.
\end{equation}

It is straightforward that
\[
F(\mu_{j1}(\a_{j1})) = \mu_{j1}(\a_{j1})\, x_{j-2}^{-1} x_j^{-1}.
\]
For each $2\leq m< j$, define
\[
Q_1 := \mu_{j,m-1}\cdots \mu_{j1}(\overline Q_\lambda).
\]
The following can be proved by induction on $m$ and mutation rules:
\begin{align}\label{eq-Q1}
Q_1(jm, v)=
\begin{cases}
    1 & v=(j,m+1), (j,m-1),\\
    -1 & v=(j-1,m), (j+1,m+1), (j,0),\\
    0 & \text{otherwise}.
\end{cases}
\end{align}
Using \eqref{eq-Q1}, one verifies \eqref{eq-g-muuuu1} by induction on~$m$.

Inductive step $l>j$:
Define
\[
Q_2 := \mu_{(l-1;j-1)}^r\cdots \mu_{(j;j-1)}^r(\overline Q_\lambda).
\]
By Appendix~\ref{app-proof}, we have
\begin{align}\label{eq-Q2}
Q_2((l,l-j+1), v)=
\begin{cases}
    1 & v=(l,l-j+2), (j-1,0), (l+1,l-j+1),\\
    -1 & v=(l-1,l-j), (l+1,l-j+2),\\
    0 & \text{otherwise}.
\end{cases}
\end{align}
Combining \eqref{eq-Q2} with the induction hypothesis in $l$, we compute
\[
F\!\left(\mu_{l,l-j+1}\mu_{(l-1;j-1)}^r\cdots \mu_{(j;j-1)}^r(\overline A_{l,l-j+1})\right)
=
\mu_{l,l-j+1}\mu_{(l-1;j-1)}^r\cdots \mu_{(j;j-1)}^r(\overline A_{l,l-j+1})\,
x_{j-2}^{-1} x_j^{-1}.
\]

Now let $l-j+1 < m \le l-1$, and define
\[
Q_3 := \mu_{l,m-1} \cdots \mu_{l,l-j+1} \mu_{(l-1;j-1)}^r\cdots \mu_{(j;j-1)}^r(\overline Q_\lambda).
\]
Appendix~\ref{app-proof} gives
\begin{align}\label{eq-Q3}
Q_3((l,m), v)=
\begin{cases}
    1 & v=(l,m-1), (l,m+1),\\
    -1 & v=(l-1,m-1), (l+1,m+1),\\
    0 & \text{otherwise}.
\end{cases}
\end{align}
Using \eqref{eq-Q3} together with the induction hypothesis on~$l$, one proves \eqref{eq-g-muuuu} by induction on~$m$.  
This completes the proof of \eqref{eq-g-muuuu} by double induction.

Finally, setting $l=n-1$ and $m=n-2$ in \eqref{eq-g-muuuu} yields
\[
F\!\left(\mu^{\diamondsuit}_{j}(\overline A_{n-1,n-2})\right)
    = \mu^{\diamondsuit}_{j}(\overline A_{n-1,n-2})\, x_j^{-1}.
\]

    (b) Due to Proposition~\ref{Prop-quasi-iso}, we may assume that $R=\mathbb Z$ and $\omega^{1/2}=1$. 
Note that 
\[
\mathcal A(\mathbb P_2)
    = \overline{\mathcal A}(\mathbb P_4)\big/(\,\a_{i n}=\a_{i0}=1,\; 1\le i\le n-1\,),
\qquad
\overline{\mathcal A}(\mathbb P_4)
    = \mathcal A_4\big/(\,\a_{in}=y_i,\; \a_{i0}=x_i,\; 1\le i\le n-1\,),
\]
and that $L$ is induced by $F$.  
Thus, statement (b) follows from statement (a) together with an argument analogous to that in Lemma~\ref{lem-pro-mut}(a).
\end{proof}

\begin{remark}
The proof of Lemma \ref{lem-gL} can alternatively be established via the computation of $g$-vectors, following the same approach as in the proof of Lemma \ref{lem:quasi2} below.
\end{remark}

The following theorem is the first main result of this section. It asserts that the stated ${\rm SL}_n$–skein algebra is contained in the quantum cluster algebra $\mathscr A_{\omega}(\fS)$. We further show that these two algebras coincide when $n=2$ or when the pb surface is a polygon.
Although there has been substantial work on realizing the reduced stated ${\rm SL}_n$–skein algebra as a quantum cluster algebra \cite{muller2016skein,ishibashi2023skein,LY22,huang2025quantum}, to the best of our knowledge, the quantum cluster algebra structure of the (non-reduced) stated ${\rm SL}_n$–skein algebra has not yet been addressed in the literature.

\begin{theorem}\label{thm-skein-inclusion-A}
Let $\fS$ be a generalized triangulable pb surface without interior punctures, such that every connected component contains at least two punctures. 
    Then we have $$\SS\subset \mathscr A_{\omega}(\fS).$$
Moreover, every stated essential arc is a cluster variable
in $\mathscr A_{\omega}(\fS)$.
    
This inclusion is compatible with the inclusion $\widetilde{\cS}_\omega(\fS^*) \subset \overline{\mathscr A}_{\omega}(\fS^*)$ established in Theorem~\ref{intro-thm-skein-inclusion-A}(c). Specifically, the following diagram commutes:
\begin{equation}\label{eq-com-skein-A}
    \begin{tikzcd}[
    row sep=0.7cm,       
    column sep=1cm,   ]
    \SS \arrow[d, " "', hook] \arrow[rrrr, "\iota_\ast",hook] &&&& \dSS \arrow[d, " ", hook] \\  
    {\mathscr A}_{\omega}(\fS) \arrow[rr, 
        "{\begin{minipage}{3cm}\centering \scriptsize adding isolated \\ frozen variables \end{minipage}}"',hook] 
        &&  {\mathscr A}_{\omega}^{\rm qc}(\fS^\ast) \arrow[rr, "\text{quasi-iso.}"'] &&  \overline {\mathscr A}_{\omega}(\fS^\ast),
\end{tikzcd}
\end{equation}
where $\iota_*$ is the algebra embedding defined in \eqref{eq-def-iota-dS}.
\end{theorem}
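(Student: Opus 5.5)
By Lemma~\ref{lem-essential}, $\SS$ is generated by finitely many stated essential arcs, and every stated corner arc among them may be taken counterclockwise oriented. Since $\mathscr A_{\omega}(\fS)$ is an $R$-subalgebra of $\Fr(\SS)$, the inclusion $\SS\subset\mathscr A_\omega(\fS)$ reduces to showing that each such stated essential arc $\alpha$ is a cluster variable in some seed of $\mathsf S(\fS)$. The inclusion $\mathscr A_\omega(\fS)\subset\mathscr U_\omega(\fS)$ is Theorem~\ref{thm-inclusion-quantum}. The commutativity of \eqref{eq-com-skein-A} will follow from the construction: $L_\lambda$ is compatible with $\iota_\ast$ (Lemma~\ref{lem-com-reduced-stated}), and both $L_\lambda$ and the composite of the bottom row send $A_v$ to $A^{\rm qc}_v$ (Lemma~\ref{lem-iota-gv}, \eqref{4eq:qc0}, Proposition~\ref{Prop-quasi-iso}).

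To realize a given arc $\alpha$, I work with $\mathbb T$-coordinates after pushing forward. First I choose a generalized triangulation $\lambda$ of $\fS$ and a triangulation $\lambda'$ of $\fS^\ast$ adapted to $\alpha$. If $\alpha$ is a corner arc at a puncture $v_1$, then $\lambda'$ should contain edges $e_1,e_2,e_3$ as in Figure~\ref{Fig;tau-v1}(A). Otherwise $\lambda'$ should contain $c_1,\dots,c_5$ as in Figure~\ref{Fig;essential-P4}(A); the bigon case uses $\mathbb P_4=\mathbb P_2^\ast$ directly. Since $\alpha$ is essential in $\fS$, its image $\iota_\ast(\alpha)$ is a stated essential arc of $\fS^\ast$. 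By Theorem~\ref{intro-thm-skein-inclusion-A}(d)--(f), $\iota_\ast(\alpha)$ therefore equals $[\mu(\overline A_w)\cdot(\text{frozen monomial})]$ for an explicit mutation sequence $\mu$ of the seed $\overline{\mathsf s}_{\lambda'}$, with $\overline{\mathsf s}_{\lambda'}=\mu_{\lambda'\lambda^\ast}(\overline{\mathsf s}_{\lambda^\ast})$.

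The key step is to lift this formula back to $\mathscr A_\omega(\fS)$. I plan to show that
\[
L_\lambda\bigl(\mu\,\mu_{\lambda'\lambda^\ast}(A_w)\bigr)=\bigl[\mu\,\mu_{\lambda'\lambda^\ast}(\overline A_w)\cdot(\text{the same frozen monomial})\bigr].
\]
In the corner-arc case the frozen factors $\overline A_{i0}^{-1}$ etc.\ should coincide with the factors $\overline A_{e,i}^{-1}$ that Proposition~\ref{prop-flips-L} produces, with the indices matched via $V(\lambda',p_e,v)$. For that case I intend to extend the inductive argument of Lemma~\ref{lem-P4-flip-iso} along $\mu_{(k;j)}$ and $\overline\mu_{(k;t)}$, using the twisting map $f$ by auxiliary variables $x_i$. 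In the case of arcs crossing the quadrilateral, Lemma~\ref{lem-gL}(b) supplies exactly the required formulas $L(\cdots)=[\cdots\overline A_{in}^{-1}\overline A_{j0}^{-1}]$; via localization to $\mathbb P_4$, Proposition~\ref{prop-flips-L} then transports them into $\fS$.

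Once this identity holds, $\tr$, $L_\lambda$ and $\overline{\rm tr}_{\lambda'^\ast}$ are injective, so Lemma~\ref{lem-com-reduced-stated} gives
\[
\tr(\alpha)=\tr\bigl(\mu\,\mu_{\lambda'\lambda^\ast}(A_w)\bigr).
\]
Hence $\alpha$ is a cluster variable, and this yields formulas of the type \eqref{pro-inclusion-3}, \eqref{pro-inclusion-4}, \eqref{eq-alpha-counterclock}. The main obstacle is the bookkeeping: one has to check that the frozen correction monomials given by Proposition~\ref{prop-flips-L} for the vertex $w$ in the final seed match exactly the frozen factors in \eqref{intro-eq-Cij} and \eqref{into-eq-Dij}. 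This requires computing $V(\lambda',p_e,w)$ for the relevant $w$ and identifying $\overline A_{e,i}$ with $\overline A_{i0}$ (respectively $\overline A_{in}$). I would first settle this when $\lambda'=\lambda^\ast$ is adapted near the boundary, and then use Proposition~\ref{prop-naturality} to move freely between triangulations.
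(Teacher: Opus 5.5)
Your handling of stated essential arcs that are not corner arcs, and of the bigon via $\mathbb P_2^\ast=\mathbb P_4$, is essentially the paper's Case~2. The corner-arc case, however, rests on a false premise.

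You apply Theorem~\ref{intro-thm-skein-inclusion-A}(d),(e) to $\iota_\ast(\alpha)$, which treats the image of a corner arc of $\fS$ as a corner arc of $\fS^\ast$. It is not one. The embedding $\iota$ sends every boundary edge $e$ of $\fS$ to the edge $e_2$ of its attached triangle, so both endpoints of $\iota_\ast(\alpha)$ lie on $e_2$-edges of two \emph{different} attached triangles. By contrast, at every boundary puncture of $\fS^\ast$ one of the two adjacent boundary edges is an $e_3$-edge. At an old puncture $p$ between $e$ and $a$, the point $p$ is the vertex $v_1$ of one attached triangle and $v_2$ of the other. At a new puncture $p_e$, the adjacent edges are $e_2,e_3$ of the same triangle. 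Hence $\iota_\ast(\alpha)$ cuts off two punctures of $\fS^\ast$, and \eqref{intro-eq-Cij}, \eqref{intro-eq-bar-Cij} do not describe it. Your planned extension of Lemma~\ref{lem-P4-flip-iso} along $\mu_{(k;j)}$ and $\overline\mu_{(k;t)}$, and the matching of the frozen factors of (d),(e) with the $\overline A_{e,i}$, therefore target the wrong element.

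What works instead is to treat corner arcs exactly as you treat the bigon. Take $\lambda$ containing the edge $e_4$ that closes the triangle with $e$ and $a$. Let $\lambda'$ be obtained from $\lambda^\ast$ by flipping $e$, which is an interior edge of $\lambda^\ast$. Then $e',e_2,a_3,a_2$ bound an embedded quadrilateral with diagonal $a$, and $\iota_\ast(\alpha)$ joins its opposite sides $e_2$ and $a_2$; this is precisely the local picture of $\mathbb P_2^\ast$. So \eqref{into-eq-Dij} applies, with frozen factors $\overline A_{in}^{-1}\overline A_{j0}^{-1}$ sitting on the two $e_2$-edges. Lemma~\ref{lem-gL}(b), combined with Proposition~\ref{prop-flips-L} for the single flip $\mu_e$, then computes $L_\lambda$ of the corresponding cluster variable and yields \eqref{pro-inclusion-3}; no new induction is needed.

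Your closing suggestion to first settle the case $\lambda'=\lambda^\ast$ also cannot work. In $\lambda^\ast$ each $e_2$-edge lies only in its own attached triangle, and two attached triangles never share an edge. So no quadrilateral of $\lambda^\ast$ has two $e_2$-edges as opposite sides. At least one flip at an edge of $\partial\lambda$ is therefore unavoidable, which is exactly why Proposition~\ref{prop-flips-L} is needed.
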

\begin{proof}
    Recall that a properly embedded oriented arc in $\fS$ is called an essential arc
    if its endpoints lie on two distinct components of $\partial \fS$.
    By Lemma~\ref{lem-essential}, the algebra $\SS$ is generated by finitely many stated essential arcs, none of which is a clockwise oriented corner arc.
    Let $\alpha$ be any stated essential arc in $\SS$ that is not a clockwise oriented corner arc.
    It suffices to show that $\alpha \in \mathscr A_{\omega}(\fS)$.

\medskip
\noindent
\textbf{Case 1: $\alpha$ is a counterclockwise oriented corner arc.}
    Write 
    \begin{align}\label{pro-inclusion-al}
        \alpha = 
        \begin{array}{c}\includegraphics[scale=0.8]{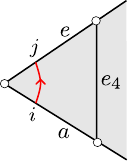}\end{array},
    \end{align}
    where the two boundary edges incident to $\alpha$ are labeled $e$ and $a$, and the ideal arc forming a triangle with $e$ and $a$ is labeled $e_4$.  
    Then
    \begin{align}\label{pro-inclusion-al1}
        \iota_\ast(\alpha) =
        \begin{array}{c}\includegraphics[scale=0.8]{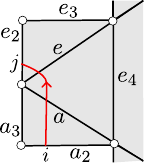}\end{array} 
        \in \dSS,
    \end{align}
    where $\iota_\ast$ is defined in \eqref{eq-def-iota-dS}, and where the two additional edges attached to $e$ (resp. $a$) are denoted $e_2,e_3$ (resp.\ $a_2,a_3$).

    Let $\lambda$ be a triangulation of $\fS$ containing $e_4$, and let $\lambda'$ be the triangulation of $\fS^\ast$ obtained from $\lambda^\ast$ by a flip at $e$, yielding
    \(
        \lambda' = (\lambda^\ast \setminus\{e\}) \cup \{e'\}.
    \)
    The arcs $e',e_2,a_3,a_2$ bound an embedded $\mathbb P_4$ in $\fS^\ast$, denoted $P_4$, with $a$ as its diagonal.  
    Identifying $a_3,a_2,e',e_2$ with $c_1,c_2,c_3,c_4$ in Figure~\ref{Fig;essential-P4}(A) induces, via Figure~\ref{Fig;essential-P4}(B), a labeling of the small vertices in $\overline V_{\lambda'}\cap P_4$.
    Set $\mu_e=\mu_{\lambda'\lambda^\ast}$.
    Then Theorem~\ref{intro-thm-skein-inclusion-A}(c) implies
    \begin{align}\label{pro-inclusion-1}
        \iota_\ast(\alpha) =
        \begin{cases}
            [\,\overline A_{in}^{-1} \cdot \mu_e(\overline A_{i-1,n-1})\,], & j=n, \\[1mm]
            [\,\mu^{\diamondsuit}_{j} \circ \mu_e(\overline A_{n-1,n-2}) \cdot \overline A_{j0}^{-1}\,], & i=n, \\[1mm]
            [\,(\mu_{n-1,n-1}\cdots \mu_{i+1,i+1}\mu_{ii}) \circ 
              \mu^{\diamondsuit}_{j} \circ \mu^{\triangle}_{i} \circ 
              \mu_e(\overline A_{n-1,n-1})
              \cdot \overline A_{in}^{-1} \overline A_{j0}^{-1}\,], & n>i\ge j,\\[1mm]
            [\,(\mu_{n-1,n-1}\cdots\mu_{i+1,i+1}\mu_{ii}) \circ 
              \mu^{\diamondsuit}_{j} \circ \mu^{\diamondsuit}_{(i;j)}\circ 
              \mu_e(\overline A_{n-1,n-1})
              \cdot \overline A_{in}^{-1}\overline A_{j0}^{-1}\,], & n>j>i.
         \end{cases}
    \end{align}

    Using Proposition~\ref{prop-flips-L} and Lemma~\ref{lem-gL}(b), we obtain
    \begin{align}\label{pro-inclusion-2}
        \begin{aligned}
            &L_\lambda(\mu_e(A_{i-1,n-1})) 
                = [\,\overline A_{in}^{-1}\mu_e(\overline A_{i-1,n-1})\,],
            && 1\le i\le n,\\[1.2ex]
            &L_\lambda(\mu^{\diamondsuit}_{j}\circ \mu_e(A_{n-1,n-2}))
                = [\,\mu^{\diamondsuit}_{j}\circ \mu_e(\overline A_{n-1,n-2})\overline A_{j0}^{-1}\,],
            && 1\le j\le n,\\[1.2ex]
            &L_\lambda\!\left(
                (\mu_{n-1,n-1}\cdots\mu_{i+1,i+1}\mu_{ii})
                \circ \mu^{\diamondsuit}_{j} \circ \mu^{\triangle}_{i} \circ 
                \mu_e(A_{n-1,n-1})
            \right) \\
            &\qquad= \Bigl[
                (\mu_{n-1,n-1}\cdots\mu_{i+1,i+1}\mu_{ii})
                \circ \mu^{\diamondsuit}_{j} \circ \mu^{\triangle}_{i}\circ 
                \mu_e(\overline A_{n-1,n-1})
                \overline A_{in}^{-1}\overline A_{j0}^{-1}
            \Bigr],
            && n>i\ge j\ge 1,\\[1.2ex]
            &L_\lambda\!\left(
                (\mu_{n-1,n-1}\cdots\mu_{i+1,i+1}\mu_{ii})
                \circ \mu^{\diamondsuit}_{j} \circ \mu^{\diamondsuit}_{(i;j)} \circ 
                \mu_e(A_{n-1,n-1})
            \right) \\
            &\qquad= \Bigl[
                (\mu_{n-1,n-1}\cdots\mu_{i+1,i+1}\mu_{ii})
                \circ \mu^{\diamondsuit}_{j}\circ \mu^{\diamondsuit}_{(i;j)}\circ 
                \mu_e(\overline A_{n-1,n-1})
                \overline A_{in}^{-1}\overline A_{j0}^{-1}
            \Bigr],
            && n>j>i\ge 1.
        \end{aligned}
    \end{align}

    Combining \eqref{pro-inclusion-1}, \eqref{pro-inclusion-2}, and Lemma~\ref{lem-com-reduced-stated}, we conclude that
    \begin{align}\label{pro-inclusion-3}
        \alpha =
        \begin{cases}
           \mu_e(A_{i-1,n-1}), & j=n,\\[1mm]
           \mu^{\diamondsuit}_{j}\circ\mu_e(A_{n-1,n-2})\,], & i=n,\\[1mm]
           (\mu_{n-1,n-1}\cdots\mu_{i+1,i+1}\mu_{ii})\circ \mu^{\diamondsuit}_j\circ\mu^{\triangle}_i\circ\mu_e(A_{n-1,n-1}), & n>i\ge j,\\[1mm]
            (\mu_{n-1,n-1}\cdots\mu_{i+1,i+1}\mu_{ii})\circ \mu^{\diamondsuit}_j\circ\mu^{\diamondsuit}_{(i;j)}\circ\mu_e(A_{n-1,n-1}), & n>j>i.
        \end{cases}
    \end{align}
    Hence $\alpha\in \mathscr A_{\omega}(\fS)$.

\medskip
\noindent
\textbf{Case 2: $\alpha$ is not a corner arc.}
    Suppose $\alpha$ is incident to boundary edges $c_2$ and $c_4$.  
    Since it is not a corner arc, there exist ideal arcs $c_1$ and $c_3$ such that 
    $c_1,c_2,c_3,c_4$ and $\alpha$ appear as in Figure~\ref{Fig;essential-P4}, with $\alpha$ shown in red.
    Then
    \begin{align}\label{pro-inclusion-5}
        \iota_\ast(\alpha) =
        \begin{array}{c}\includegraphics[scale=0.5]{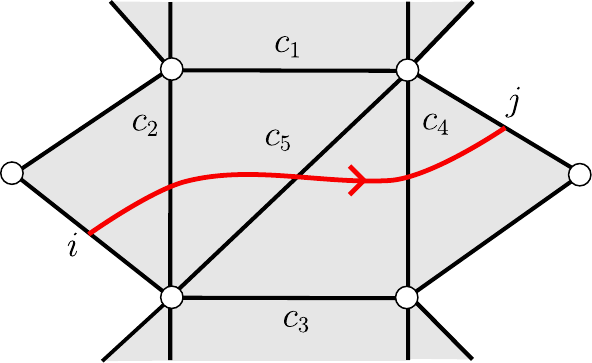}\end{array}
        \in \dSS.
    \end{align}

    Let $\lambda_1$ be a triangulation of $\fS$ containing $c_1,c_3$, and let $\lambda_2$ be obtained from $\lambda_1^\ast$ by flipping $c_2$ and $c_4$.  
    Let $c_2'$ (resp.\ $c_4'$) be the arc obtained from $c_2$ (resp.\ $c_4$) by the corresponding flip, and let $c_5,c_6$ be the boundary edges of $\fS^\ast$ adjacent to the red arc in \eqref{pro-inclusion-5}.  
    Then $c_2',c_4',c_5,c_6$ bound an embedded $\mathbb P_4$, denoted $P_4'$, containing $\iota_\ast(\alpha)$.
    Identifying $P_4'$ with Figure~\ref{Fig;essential-P4}(A) so that $\iota_\ast(\alpha)$ matches the red arc there, the labeling in Figure~\ref{Fig;essential-P4}(B) induces a labeling on $\overline V_{\lambda_2}\cap P_4'$.

    Set $\mu_{c_2c_4}=\mu_{\lambda_2\lambda_1^\ast}$.  
    By the same argument as in Case~1, using Theorem~\ref{intro-thm-skein-inclusion-A}(c), Proposition~\ref{prop-flips-L}, and Lemma~\ref{lem-gL}(a), we obtain
    \begin{align}\label{pro-inclusion-4}
        \alpha =
        \begin{cases}
            \mu_{c_2c_4}(A_{i-1,n-1}), & j=n,\\[1mm]
           \mu^{\diamondsuit}_{j}\circ\mu_{c_2c_4}(A_{n-1,n-2}), & i=n,\\[1mm]
           (\mu_{n-1,n-1}\cdots\mu_{i+1,i+1}\mu_{ii})\circ 
               \mu^{\diamondsuit}_j\circ\mu^{\triangle}_{i}\circ
               \mu_{c_2c_4}(A_{n-1,n-1}), & n>i\ge j,\\[1mm]
            (\mu_{n-1,n-1}\cdots\mu_{i+1,i+1}\mu_{ii})\circ 
               \mu^{\diamondsuit}_j\circ\mu^{\diamondsuit}_{(i;j)}\circ
               \mu_{c_2c_4}(A_{n-1,n-1}), & n>j>i.
        \end{cases}
    \end{align}
    Hence $\alpha\in \mathscr A_{\omega}(\fS)$ in this case as well.

\medskip
Combining both cases, every generator of $\SS$ lies in $\mathscr A_\omega(\fS)$, completing the proof of $\SS\subset \mathscr A_{\omega}(\fS)$.

The commutative diagram \eqref{eq-com-skein-A} follows immediately from Lemmas~ \ref{lem-iota-gv}, \ref{lem-com-reduced-stated}, \eqref{4eq:qc0} and Proposition \ref{Prop-quasi-iso}.
\end{proof}

Equations \eqref{pro-inclusion-3} and \eqref{pro-inclusion-3} give a formula for expressing any stated essential arc that is not a clockwise oriented corner arc as an exchangeable cluster variable.  
The following remark provides an explicit formula for expressing a clockwise oriented stated corner arc as an exchangeable cluster variable in $\mathscr A_{\omega}(\fS)$.

\begin{remark}
    Let $\beta$ be the stated corner arc obtained from the arc in \eqref{pro-inclusion-al} by reversing its orientation and swapping the states at its two endpoints.
    We keep the same labeling conventions for ideal arcs as in \eqref{pro-inclusion-al} and \eqref{pro-inclusion-al1}, and we use the notations $\lambda$, $\lambda'$, $\mu_e$, $P_4$, and $e'$ as in Case~1 of the proof of Theorem~\ref{thm-skein-inclusion-A}.

    Identifying $e', e_2, a_3, a_2$ with $c_1, c_2, c_3, c_4$ in Figure~\ref{Fig;essential-P41}(A) determines, via Figure~\ref{Fig;essential-P41}(B), a labeling of the small vertices in $\overline V_{\lambda'} \cap P_4$.

    For any $j>1$, define  
    \[
        \overline \mu^{\diamondsuit}_j
        =\bar \mu_{(1;n-j)}\cdots \bar \mu_{(j-2;n-j)} \bar \mu_{(j-1;n-j)},
    \]
    where $\overline{\mu}_{(k;t)}$ is defined in \eqref{intro-eq-bar-mukj}.  
    For any $i,j$ with $i\ge j>1$, define
    \[
        \overline \mu^{\diamondsuit}_{(i;j-1)}
        =\left(\mu_{(2;1)}\mu_{(3;2)}\cdots\mu_{(j-1;j-2)}\right)
        \circ
        \left(\mu_{(j;j-1)}\mu_{(j+1;j-1)}\cdots \mu_{(i-1;j-1)}\right),
    \]
    where $\mu_{(k;j)}$ is defined in \eqref{intro-eq-mukj}.

    Using an argument parallel to the proof of \eqref{pro-inclusion-3}, together with \cite[Equation~(12)]{huang2025quantum}, we obtain
    \begin{align}\label{eq-alpha-counterclock}
       \beta=
    \begin{cases}
        \mu_e(\overline A_{i1}),
        & \text{if } i\ge j=1, \\[1.5mm]
        \overline \mu^{\diamondsuit}_{j}\circ \mu_e(\overline A_{\overline{12}}),
        & \text{if } j>i=1, \\[1.5mm]
        \left(\mu_{j-1,j-1}\cdots \mu_{22}\mu_{11}\right)
        \circ \overline \mu^{\diamondsuit}_j
        \circ \overline \mu^{\diamondsuit}_{(i;j-1)}
        \circ \mu_e(\overline A_{j-1,j-1}),
        & \text{if } i\ge j>1, \\[1.5mm]
        \left(\mu_{i-1,i-1}\cdots \mu_{22}\mu_{11}\right)
        \circ \overline \mu^{\diamondsuit}_j
        \circ \overline \mu^{\diamondsuit}_{(i;i-1)}
        \circ \mu_e(\overline A_{i-1,i-1}),
        & \text{if } j>i>1.
    \end{cases}
   \end{align}
\end{remark}

\begin{figure}[h]
    \centering
    \includegraphics[width=220pt]{new-essential-la-P4.pdf}
    \caption{(A) The picture for an essential arc, in red. (B) The labeling for small vertices contained in the quadrilateral bounded by $c_1\cup c_2\cup c_3\cup c_4$ for $n=4$.}\label{Fig;essential-P41}
\end{figure}

\subsection{$\SS= \mathscr A_{\omega}(\fS)=\mathscr U_{\omega}(\fS)$ when $n=2$}

Let $\mathcal{A}$ be an $R$-algebra.
A \textbf{prime element} of $\mathcal{A}$ is a nonzero, nonunit element $a \in \mathcal{A}$ such that
\[
a\mathcal{A} = \mathcal{A}a
\quad \text{and} \quad
\mathcal{A}/(a) \text{ is a domain}.
\]
Here $(a) := \mathcal{A}a = a\mathcal{A}$ denotes the principal ideal generated by $a$.

Let $\fS$ be a triangulable pb surface without interior punctures.
Equation~\eqref{eq-inclusion-A-s} implies that all frozen variables in ${\mathscr A}_\omega(\fS)$ (see \S\ref{sec-mutation-quantum}) lie in ${\cS}_\omega(\fS)$. 
The following lemma, whose proof is deferred to Appendix~\ref{Appendix-B-frozen}, will be used to establish the main result of this subsection.

\begin{lemma}\label{lem-prime-ele}
    When $n=2$, each frozen variable in $\SS$ is a prime element. 
\end{lemma}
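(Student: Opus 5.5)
We have to show that, for $n=2$, each frozen variable $a=\ga_v$ (with $v\in\VA$ a boundary vertex) is a prime element of $\SS$. Throughout, $\fS$ is taken with every component having at least two punctures, as in Theorem~\ref{intro-thm-sl2}. For $n=2$, each frozen variable is a stated arc: either a corner arc going around a boundary puncture, or the corner arc in an attached triangle pulled back to $\fS$. Concretely, it is a boundary-parallel arc with states making it ``good'', for instance $C(p)_{21}$ up to normalization, or an arc isotopic into a boundary edge region.

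There are two properties to check for $a$: normality and domain quotient.

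\textbf{Normality.} Show $a\SS=\SS a$. Since $\SS$ is generated by stated essential arcs (Lemma~\ref{lem-essential}), it suffices to show that for each generator $\alpha$ there is an $\alpha'$ with $a\alpha=\alpha' a$. I would use the height-exchange relations \eqref{wzh.eight} near the boundary together with the fact that $a$ is a boundary-parallel corner arc. Moving $a$ past $\alpha$ changes only boundary heights. The $\delta_{j<i}$ correction term vanishes for the extremal state choice that makes $a$ frozen, so $a\alpha=\omega^{k}\alpha a$ for diagrams in a suitable basis. Alternatively, and more cleanly, I would use the quantum trace: $a$ is quasi-commutative with all $\ga_u$ by Theorem~\ref{thm-stated-trace}(a). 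For the essential arcs, I would invoke the $q$-commutation of frozen corner arcs with all simple stated diagrams, using the basis of simple stated diagrams with increasing states from \cite{le2018triangular}.

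\textbf{Domain quotient.} Identify $\SS/(a)$ with a known domain. The key idea is that killing the frozen corner arc at puncture $p$ relates to the reduced skein algebra. For $n=2$, the bad arc $C(p)_{12}$ generates the kernel of a ``partial reduction'' at $p$. For the frozen vertex on the $e_2$-side of an attached triangle, the frozen variable corresponds (via $\iota_\ast$ and Lemma~\ref{lem-iota-gv}) to the bad arc at the new puncture. So I would show $\SS/(a)\cong \cS_\omega(\fS)/I_p$, where $I_p$ is the ideal generated by the bad arcs at a single boundary puncture (of one orientation). This is a partially reduced stated skein algebra. It has a basis of simple stated diagrams avoiding the bad arc at $p$, from the Lê--Yu basis theorem for the reduced algebra applied locally, and it admits a leading-term (degree) filtration whose associated graded is a quantum torus-like monomial algebra. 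A filtered algebra whose associated graded is a domain is itself a domain. Alternatively, I would embed it via a partially reduced quantum trace into a quantum torus, using the $n=2$ injectivity in Theorem~\ref{thm-trace-A}.

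\textbf{Main obstacle.} The hard step is the domain claim, specifically proving that $(a)$ is exactly the ideal generated by bad arcs at $p$ and identifying a basis of the quotient. My approach would be a Gröbner-type argument on the simple-diagram basis, ordered by the number of endpoints at the corner near $p$. Multiplication by $a$ is injective and maps basis elements to leading terms that add one corner arc. The quotient basis is therefore the set of diagrams with no such corner, and the product in the quotient is computed by the leading-term calculus, which is a twisted monoid algebra and hence a domain. Finally, $a$ is nonunit because it is not invertible in $\SS$: its trace is a monomial with positive exponent at a frozen vertex, not in $\im\tr$ inverted form.
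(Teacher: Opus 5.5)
\textbf{Misidentified frozen variables.} Your description of the frozen variables is off, and this affects both halves of the argument. For $n=2$, the small vertices on the boundary edges of $\fS$ lie in the interior of $\fS^\ast$, so they are mutable. The only frozen vertices are the vertices $(1,0,1)$ on $e_3$ of the attached triangles. For these, $g''_v=M^{[1,1]}_{[2,2]}(c)$ is the counterclockwise corner arc $C(p)_{12}$. This is a \emph{bad} arc, one for each boundary puncture $p$, not a good arc such as $C(p)_{21}$.

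Once this is corrected, normality is exactly the fact $\alpha C_p\overset{\omega}{=}C_p\alpha\overset{\omega}{=}\alpha\sqcup C_p$ from \cite{LY22}, which the paper cites. Your height-exchange argument works precisely because the states are extremal; for a good arc the correction term would not vanish. Your trace-based alternative does not suffice by itself, since $\SS$ is not generated by the $\ga_u$. Moreover, $\SS/(a)$ is by definition the partially reduced algebra $\mathcal S_\omega(\fS)/(C_p)$. So the step you flag as the main obstacle, identifying $(a)$ with the bad-arc ideal at $p$, is a tautology.

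\textbf{The gap: the domain property.} Your leading-term argument cannot work. In $\mathcal S_\omega(\fS)/(C_p)$ one has $C_p(+)\,C_p(-)=1$ (Costantino--L\^e, Lemma~\ref{B-lem-prod-1}). Here two nonzero elements, each with two endpoints at the corner near $p$, multiply to an element with no endpoints. The unique top-degree diagram of the product in $\mathcal S_\omega(\fS)$ contains $C_p$ as its inner arc, so it dies in the quotient. Consequently, for your filtration (by endpoints, or by endpoints near $p$), the associated graded is not a domain. The quotient is not a twisted monoid algebra on the surviving diagrams; it contains units of positive degree.

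Your fallback does not apply either. Theorem~\ref{thm-trace-A} concerns the fully reduced algebra, in which bad arcs at all punctures are killed, whereas here only the one at $p$ is. The injective partially reduced trace is exactly what must be constructed.

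\textbf{What the paper does instead} (Appendix~\ref{Appendix-B-frozen}):
\begin{itemize}
\item It shows the splitting maps along interior ideal arcs remain injective on the algebras $\mathcal S_\omega^I(\fS)$, adapting \cite[Theorem~7.6]{CLL}; this needs care when exactly one endpoint of the arc lies in $I$. This reduces the problem to triangles.
\item On $\mathbb P_3$, the corner-arc elements $X_e$ span an embedded positive quantum torus, and every element can be pushed into it by multiplying by a monomial in the $X_e$.
\item Each $X_{e(p)}$ is a non-zero-divisor. It is invertible if $p\in I$. Otherwise a leading-term argument applies, after first clearing copies of $C_{p'}(-)$ with $X_{e(p')}^k$ when the other endpoint $p'$ lies in $I$.
\end{itemize}
Hence $\mathcal S_\omega^I(\mathbb P_3)$ Ore-embeds into a quantum torus and is a domain, and this gives the domain property in general.
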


The following result establishes the equality between the stated ${\rm SL}_2$-skein algebra and the (upper) quantum cluster algebras $\mathscr A_{\omega}(\fS)$ and $\mathscr U_{\omega}(\fS)$. This is the second main theorem of this section.

\begin{theorem}\label{thm-skein-eq-A-two}
Let $\fS$ be a pb surface without interior punctures. We require that every component of $\fS$ contains at least two punctures.
    When $n=2$, we have 
$$\SS= \mathscr A_{\omega}(\fS)=\mathscr U_{\omega}(\fS).$$
\end{theorem}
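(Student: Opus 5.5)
The argument has three steps, following the outline in the introduction: first the equality $\SS=\mathscr A_\omega(\fS)$, then $\mathscr A^{\rm fr}_\omega(\fS)=\mathscr U^{\rm fr}_\omega(\fS)$ by transport from $\fS^\ast$, and finally a descent to the non-localized upper cluster algebra using primeness of frozen variables. Bigon components are generalized triangulable, so Theorem~\ref{thm-skein-inclusion-A} applies throughout. It already gives $\SS\subset\mathscr A_\omega(\fS)$, and Theorem~\ref{thm-inclusion-quantum} gives $\mathscr A_\omega(\fS)\subset\mathscr U_\omega(\fS)$. It therefore remains to prove $\mathscr A_\omega(\fS)\subset\SS$ and $\mathscr U_\omega(\fS)\subset \mathscr A_\omega(\fS)$.

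\textbf{Step 1: $\mathscr A_\omega(\fS)\subset\SS$.} I would show that every cluster variable of $\mathscr A_\omega(\fS)$ is represented by a stated arc in $\SS$. The frozen variables $\ga_v$ with $v\notin\mathring{\overline V}_{\lambda^\ast}$ already lie in $\SS$ by \eqref{eq-inclusion-A-s}.

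For $n=2$, the mutable part of $Q_\lambda$ is the ordinary quiver of the triangulation $\lambda^\ast$ of $\fS^\ast$, restricted to the interior edges. Hence every seed in $\mathsf S(\fS)$ is reached by flips, and every mutable cluster variable is of the form $\mu_{\lambda'\lambda^\ast}(A_v)$, where $\lambda'$ is a triangulation of $\fS^\ast$ and $v$ is the small vertex on an interior edge $c$ of $\lambda'$. By Proposition~\ref{prop-flips-L}, $L_\lambda$ sends this element to $\mu_{\lambda'\lambda^\ast}(\overline A_v)$ times inverse frozen variables $\overline A_{e,1}^{-1}$, one for each endpoint of $c$ at a puncture $p_e$. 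Here $\mu_{\lambda'\lambda^\ast}(\overline A_v)$ is the ideal arc $c$ in $\overline{\cS}_\omega(\fS^\ast)$, by Muller's theorem or by Theorem~\ref{intro-thm-skein-inclusion-A}.

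I would then isotope $c$ off the attached triangles, sliding each endpoint at $p_e$ onto the edge $e$ of $\fS$. This produces a stated arc $\alpha_c$ in $\fS$. Its image $\iota_\ast(\alpha_c)$ equals $c$ times the corner-arc factors $\overline A_{e,1}^{-1}$; this is a local computation in $\mathbb P_3$ analogous to Lemma~\ref{lem-iota-gv}. Lemma~\ref{lem-com-reduced-stated} and Corollary~\ref{cor-injectivity} then give $\mu_{\lambda'\lambda^\ast}(A_v)=\alpha_c\in\SS$. The exponents of the $\overline A_{e,1}^{-1}$ factors must match, so this identification has to be checked carefully. This gives \eqref{intro-eq-skein=a}.

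\textbf{Step 2: $\mathscr A^{\rm fr}_\omega(\fS)=\mathscr U^{\rm fr}_\omega(\fS)$.} By \cite{muller2016skein}, $\overline{\mathscr A}_\omega(\fS^\ast)=\overline{\mathscr U}_\omega(\fS^\ast)$. Proposition~\ref{Prop-quasi-iso} gives a quasi-isomorphism $\mathscr A^{\rm qc}_\omega(\fS^\ast)\to\overline{\mathscr A}_\omega(\fS^\ast)$. Quasi-isomorphisms identify the upper cluster algebras as well, once frozen variables are inverted (cf.\ \cite{KWQ,CHL}). Hence $\mathscr A^{\rm qc}=\mathscr U^{\rm qc}$. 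Removing the isolated invertible frozen variables, which split off as a tensor factor of Laurent polynomials, yields $\mathscr A^{\rm fr}_\omega(\fS)=\mathscr U^{\rm fr}_\omega(\fS)$. Combined with Step 1, this gives $\SS^{\rm fr}=\mathscr A^{\rm fr}_\omega(\fS)=\mathscr U^{\rm fr}_\omega(\fS)$.

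\textbf{Step 3: $\mathscr U_\omega(\fS)\subset\SS$.} By Lemma~\ref{lem-prime-ele}, each frozen variable is prime in $\SS=\mathscr A_\omega(\fS)$. I would apply \cite[Theorem~C]{QY}: if $\mathscr A^{\rm fr}=\mathscr U^{\rm fr}$ and the frozen variables are prime in $\mathscr A$, then $\mathscr A=\mathscr U$ in the non-inverted setting. Equivalently, one shows $\mathscr U_\omega(\fS)\subset\bigcap_{\text{frozen }x}\SS[x^{-1}]=\SS$, where the equality uses primeness.

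The main obstacle I expect is Step 1: matching the frozen correction factors exactly, so that every cluster variable, not just the initial ones, comes from an honest stated arc of $\fS$. This includes the arcs with both endpoints on the same boundary edge and the arcs reached after flipping boundary-adjacent edges of $\lambda^\ast$.
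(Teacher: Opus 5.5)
Your proposal is correct and follows the paper's proof essentially step for step. The steps are: $\mathscr A_\omega(\fS)\subset\SS$ via Proposition~\ref{prop-flips-L}, Lemma~\ref{lem-com-reduced-stated} and injectivity of $\iota_\ast$; then Muller's equality $\overline{\mathscr A}_\omega(\fS^\ast)=\overline{\mathscr U}_\omega(\fS^\ast)$ transported through the quasi-isomorphism of Proposition~\ref{Prop-quasi-iso}; and finally Lemma~\ref{lem-prime-ele} combined with \cite[Theorem~C]{QY}. The local identification you flag as the main obstacle in Step~1 is handled in the paper by a case split on the multiset of endpoints of the arc at the punctures $p_e$ (empty, one, two equal, two distinct), using relation~\eqref{wzh.seven} in $\overline{\cS}_\omega(\fS^\ast)$ to absorb each factor $\overline A_{e,1}^{-1}$.
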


\begin{proof}

 Theorems~\ref{thm-inclusion-quantum} and \ref{thm-skein-inclusion-A} show that 
\begin{align}\label{inclusions-S-A-U}
   \SS\subset \mathscr A_{\omega}(\fS)\subset \mathscr U_{\omega}(\fS)
\end{align}

We will first show $\SS= \mathscr A_{\omega}(\fS)$.
Due to \eqref{inclusions-S-A-U}, it therefore suffices to show that 
    $\mathscr A_{\omega}(\fS)\subset \SS$.
    
    Let $\lambda$ be a generalized triangulation of $\fS$.  
    We identify $\overline V_{\lambda^\ast}$ with $\lambda^\ast$, and under this identification we denote the subset $V_\lambda' \subset \lambda^\ast$ by $\lambda'$.
    Let $v\in\lambda'$, and let $v_1,\ldots,v_m$ be a sequence of edges in $\lambda^\ast$ contained in the interior of $\fS^\ast$.
    Then every cluster variable of $\mathscr A_\omega(\fS)$ is of the form 
    \[
        \mu_{v_m}\cdots \mu_{v_1}(A_v).
    \]
    Let $\bar\lambda$ be the triangulation of $\fS^\ast$ obtained from $\lambda^\ast$ by flips along $v_1,\ldots,v_m$.
    Denote by $\partial\lambda$ the set of boundary edges of $\lambda$.
    Recall that for each $e\in\partial\lambda$, there is an attached triangle $\tau$ in $\fS^\ast$ (see Figure~\ref{Fig;attaching}(A)).  
    Let $p_e$ be the unique vertex of $\tau$ not lying in $\fS$.
    Define the multi-set
    \[
        V(\bar\lambda,v):=
        \{\,e\in \partial\lambda \mid \text{one end of $v$ is connected to $p_e$}\,\}.
    \]
    There are four possible cases:
    \[
        V(\bar\lambda,v)=\emptyset,\qquad 
        V(\bar\lambda,v)=\{e_0\},\qquad
        V(\bar\lambda,v)=\{e_1,e_2\}\text{ with }e_1=e_2,\qquad
        V(\bar\lambda,v)=\{e_1,e_2\}\text{ with }e_1\neq e_2.
    \]
    Proposition~\ref{prop-flips-L} gives
    \[
        L_\lambda\bigl(\mu_{v_m}\cdots \mu_{v_1}(A_v)\bigr)
        = \left[
            \mu_{v_m}\cdots \mu_{v_1}(\overline A_v)\,
            \prod_{e\in V(\bar\lambda,v)} \a_{e,1}^{-1}
          \right]\in\Aa,
    \]
    where $L_\lambda$ is defined in~\eqref{eq-def-L-lamda}.

    Note that $v$ is an edge of $\bar\lambda$ (possibly a boundary edge). Theorem~\ref{lem-mutation-A-seeds-flip} implies
    \[
        \mu_{v_m}\cdots \mu_{v_1}(\overline A_v)
            = {\gaa}_{v,\bar\lambda}
            \in \overline{\mathcal A}_\omega(\fS^\ast),
    \]
    where ${\gaa}_{v,\bar\lambda}$ is the element ${\gaa}_v$ defined in \S\ref{sec;A_tori} with respect to the triangulation~$\bar\lambda$.

    \medskip
    \noindent\textbf{Case 1: $V(\bar\lambda,v)=\emptyset$.}
    In this case,
    \begin{align*}
        L_\lambda (\mu_{v_m}\cdots \mu_{v_1}(A_v))
        &=
        \left[\begin{array}{c}\includegraphics[scale=0.6]{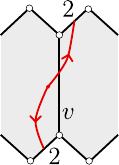}\end{array}\right]_{\rm norm}
        =\iota_\ast\!\left(
            \left[\begin{array}{c}\includegraphics[scale=0.6]{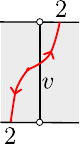}\end{array}\right]_{\rm norm}
        \right),
    \end{align*}
    where $[\;\;]_{\rm norm}$ is defined in \S\ref{sub-sec-invariant} and $\iota_\ast$ in \eqref{def-iota-al}.  
    Then Lemma~\ref{lem-com-reduced-stated} gives
    \[
        \mu_{v_m}\cdots \mu_{v_1}(A_v)
            =
            \left[\begin{array}{c}\includegraphics[scale=0.6]{gv-one.pdf}\end{array}\right]_{\rm norm}
            \in\SS.
    \]

    \medskip
    \noindent\textbf{Case 2: $V(\bar\lambda,v)=\{e_0\}$.}
    We have
    \begin{align}\label{eq-picture-1}
        L_\lambda (\mu_{v_m}\cdots \mu_{v_1}(A_v))
        =
        \left[
            \begin{array}{c}\includegraphics[scale=0.75]{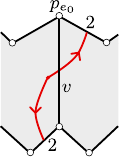}\end{array}
            \overline A_{e_0,1}^{-1}
        \right].
    \end{align}
    Relation~\eqref{wzh.seven} gives
    \begin{align}\label{eq-picture-2}
        \begin{array}{c}\includegraphics[scale=0.75]{gv-three.pdf}\end{array} 
        =
        \left[
            \begin{array}{c}\includegraphics[scale=0.75]{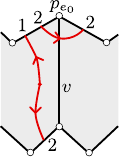}\end{array}
        \right]_{\rm norm}
        \in\overline{\cS}_\omega(\fS^\ast)
        \subset
        \overline{\mathcal A}_\omega(\fS^\ast).
    \end{align}
    It is straightforward that
    \begin{align}\label{eq-picture-3}
        \begin{array}{c}\includegraphics[scale=0.75]{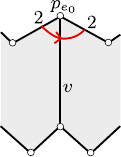}\end{array}
        =
        \overline A_{e_0,1}
        \in
        \overline{\mathcal A}_\omega(\fS^\ast).
    \end{align}

    Combining \eqref{eq-picture-1}, \eqref{eq-picture-2}, and \eqref{eq-picture-3} gives
    \[
        L_\lambda (\mu_{v_m}\cdots \mu_{v_1}(A_v))
        =
        \begin{array}{c}\includegraphics[scale=0.75]{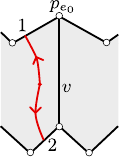}\end{array}
        =
        \iota_\ast\!\left(
            \begin{array}{c}\includegraphics[scale=0.75]{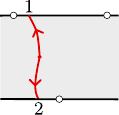}\end{array}
        \right).
    \]
    Lemma~\ref{lem-com-reduced-stated} therefore yields
    \[
        \mu_{v_m}\cdots \mu_{v_1}(A_v)
        =
        \begin{array}{c}\includegraphics[scale=0.75]{gv-six.pdf}\end{array}
        \in \SS.
    \]

    \medskip
 
    The same argument used in Case~2 applies to the remaining two cases as well.
Then we have
\begin{align}\label{eq-pro-sl221}
    \SS = \mathscr A_{\omega}(\fS).
\end{align}

\medskip

It is shown in \cite{muller2016skein} that
\[
\overline{\mathscr A}_\omega(\fS^*)
=
\overline{\mathscr U}_\omega(\fS^*).
\]
Together with \eqref{eq-pro-sl221}, the quasi-isomorphism (Definition~\ref{def:QQH}) in Proposition~\ref{Prop-quasi-iso} implies that
\begin{align}\label{eq-pro-sl22}
    \cS_\omega^{\rm fr}(\fS)
    =
    \mathscr{A}_{\omega}^{\rm fr}(\fS)
    =
    \mathscr{U}_{\omega}^{\rm fr}(\fS),
\end{align}
where $\cS_\omega^{\rm fr}(\fS)$ denotes the localization of $\cS_\omega(\fS)$ at the multiplicative subset generated by all frozen variables, and $\mathscr{A}_{\omega}^{\rm fr}(\fS)$ and $\mathscr{U}_{\omega}^{\rm fr}(\fS)$ are defined in Definition~\ref{def-sln-quantum-cluster-al}.

Using Lemma~\ref{lem-prime-ele} together with \eqref{eq-pro-sl22}, \cite[Theorem~C]{QY} implies that
\begin{align}
    \SS = \mathscr U_{\omega}(\fS).
\end{align}
Combining this with \eqref{inclusions-S-A-U}, we conclude that
\begin{align*}
   \SS = \mathscr A_{\omega}(\fS) = \mathscr U_{\omega}(\fS).
\end{align*}

    \end{proof}

\begin{remark}
Combined with the identity $\overline{\cS}_\omega(\fS^\ast)=\overline{\mathscr A}_{\omega}(\fS^\ast)$
from \cite{muller2016skein,LY22}, the commutative diagram \eqref{eq-com-skein-A} and Theorem~\ref{thm-skein-inclusion-A} imply, from the perspective of quantum cluster algebras, that the reduced stated ${\rm SL}_2$-skein algebra of the extended pb surface $\fS^\ast$ is obtained from the stated ${\rm SL}_2$-skein algebra of $\fS$ by first localizing at all frozen variables and then adjoining an equal number of new invertible frozen variables.
\end{remark}

\section{Polygon case}\label{sec-polygon}

Let $\mathcal{A}$ be an $\mathbb Z[\omega^{\pm \frac{1}{2}}]$-algebra with a basis $B$. We call $B$ a \emph{positive basis} if, for any $b, b' \in B$, the product $bb'$ can be expressed as a linear combination of elements in $B$ with coefficients in $\mathbb{Z}_{\ge 0}[\omega^{\pm \frac{1}{2}}]$. 

Consider the $(k+2)$-gon $\mathbb{P}_{k+2}$, whose boundary punctures are labeled $1, 2, \dots, k+2$ in clockwise order. 
Equation~\eqref{eq-inclusion-A-s} implies that all frozen variables in ${\mathscr A}_\omega(\mathbb P_{k+2})$ (see \S\ref{sec-mutation-quantum}) lie in ${\cS}_\omega(\mathbb{P}_{k+2})$. We denote by $\mathcal{F}$ the multiplicative subset of ${\cS}_\omega(\mathbb{P}_{k+2})$ generated by all frozen variables. Recall that ${\cS}_\omega^{\rm fr}(\mathbb{P}_{k+2})$ denotes the localization of ${\cS}_\omega(\mathbb{P}_{k+2})$ at the multiplicative subset $\mathcal{F}$.

The two main results of this section are summarized in the following theorems. They establish quantum cluster algebra structures on both the reduced stated ${\rm SL}_n$-skein algebra $\overline{\cS}_{\omega}(\mathbb{P}_{k+2})$ and the localized algebra ${\cS}_\omega^{\rm fr}(\mathbb{P}_{k+2})$. Furthermore, they show that the theta basis forms a rotation-invariant, positive basis for each of these algebras.

Note that Theorem~\ref{thm-trace-A}(c) implies 
$\widetilde\cS_{\omega}(\mathbb P_{k+2})=\overline\cS_{\omega}(\mathbb P_{k+2})$

\begin{theorem}\label{thm:poly1} For any $k\geq 1$,
    we have $\overline\cS_{\omega}(\mathbb P_{k+2})=\overline{\mathscr A}_\omega(\mathbb P_{k+2})=\overline{\mathscr U}_\omega(\mathbb P_{k+2})$ (see \eqref{eq-def-OA-cluster}). Moreover, the theta basis of $\overline{\mathscr U}_\omega(\mathbb P_{k+2})$, introduced in \cite{GHKK,davison2021strong}, is rotation-invariant and forms a positive basis for $\overline\cS_{\omega}(\mathbb P_{k+2})$.
\end{theorem}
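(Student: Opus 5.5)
The plan follows the strategy outlined in the introduction. By Theorem~\ref{intro-thm-skein-inclusion-A}(c) and Theorem~\ref{thm-trace-A}(c) we already have $\overline\cS_{\omega}(\mathbb P_{k+2})=\widetilde\cS_{\omega}(\mathbb P_{k+2})\subset\overline{\mathscr A}_\omega(\mathbb P_{k+2})\subset\overline{\mathscr U}_\omega(\mathbb P_{k+2})$, using Theorem~\ref{thm-inclusion-quantum} for the second inclusion. It therefore suffices to show $\overline{\mathscr U}_\omega(\mathbb P_{k+2})\subset\overline\cS_{\omega}(\mathbb P_{k+2})$.

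\emph{Step 1: pass to an auxiliary seed with known generators.} Compare with the seed used in \cite{SW21,Q2024}. Its mutable part is the same as that of $\overline Q_\lambda$ for a fan (or suitable) triangulation $\lambda$ of $\mathbb P_{k+2}$, but its frozen part may differ. Following the model of Proposition~\ref{Prop-quasi-iso}, I will construct a quasi-isomorphism $\overline{\mathscr A}^{\rm qc}_\omega(\mathbb P_{k+2})\to\overline{\mathscr A}_\omega(\mathbb P_{k+2})$. This amounts to choosing frozen Laurent monomials and verifying conditions (1)--(4) of Proposition~\ref{prop:quasi-iso}. Since quasi-isomorphisms preserve upper cluster algebras up to rescaling by frozen monomials, the result of \cite{Q2024} transfers. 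It gives $\overline{\mathscr A}=\overline{\mathscr U}$, and it shows that $\overline{\mathscr U}_\omega(\mathbb P_{k+2})$ is generated as an $R$-algebra by the images of the standard cluster variables (from maximal green sequences \eqref{eq:mustandard}) together with the frozen variables and their inverses.

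\emph{Step 2: the main step, identifying the standard cluster variables as skeins.} For each standard cluster variable I need a mutation sequence from $\overline{\mathsf s}_\lambda$ that realizes it. That sequence must match one of the sequences in \eqref{intro-eq-Cij}, \eqref{intro-eq-bar-Cij}, \eqref{into-eq-Dij}, possibly after flips of $\lambda$ via Theorem~\ref{lem-mutation-A-seeds-flip}. The tool is extended $g$-vectors. By sign-coherence, a cluster variable is determined by its $g$-vector together with its frozen part. I will compute the $g$-vectors on both sides inductively along the sequences, using the fact that the relevant full subquivers are equivalent (type-$A$ grids as in \cite[Figure~10.3]{FG06}). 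The expected conclusion is that each standard variable equals a stated essential arc (a corner arc or a $D_{ij}$) times a frozen monomial. This matching is the main obstacle: the indexing of the Shen--Weng sequences differs from the flip sequences, so the combinatorics must be done carefully.

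\emph{Step 3: conclusion and basis.} Frozen variables $\overline A_{i0}$ and similar elements are themselves corner arcs by \eqref{intro-eq-Cij}, with $j=i$ inductively. Are their inverses in $\overline\cS$? In the reduced skein algebra, boundary corner arcs are invertible, since they are quasi-units once bad arcs vanish. Granting that, every generator of $\overline{\mathscr U}$ lies in $\overline\cS_\omega(\mathbb P_{k+2})$, which gives equality.

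For the basis, since $\overline{\mathscr A}=\overline{\mathscr U}$ and the seed admits a maximal green sequence (hence has a full Fock--Goncharov cone), the theta basis of \cite{GHKK,davison2021strong} is a basis of $\overline{\mathscr U}$ with positive structure constants. For rotation invariance, rotation of the polygon maps $\lambda$ to another triangulation and induces an automorphism of $\overline\cS$. By Theorem~\ref{intro-thm-skein-inclusion-A}(b), this automorphism permutes seeds in $\overline{\mathsf S}$, so it is a cluster automorphism. The intrinsic nature of theta functions then shows that it permutes the theta basis.
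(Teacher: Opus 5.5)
Your proposal follows the paper's proof essentially step for step: the inclusion from Theorem~\ref{intro-thm-skein-inclusion-A}, then the quasi-isomorphism with the Shen--Weng/Qin seed giving $\overline{\mathscr A}_\omega=\overline{\mathscr U}_\omega$ generated by standard variables and frozen variables with their inverses (Propositions~\ref{prop-quasi-polygon} and~\ref{pro:standard}), then the $g$-vector identification of each standard variable with a stated essential arc up to frozen factors (Theorem~\ref{thm:standardgenerators}, where the matching goes through the sub-seed identification of Lemma~\ref{lem:B-d} for the flipped triangulations $\lambda_i$), and finally the intrinsic theta-basis argument for rotation invariance. The justification in Step~3 needs fixing: for $n\ge 3$ not all corner arcs are units in $\overline{\cS}_\omega$ (by \eqref{intro-eq-Cij}, when $v_1$ is the tip of an ear, $C_{21}$ is the exchangeable variable $\overline A_{21}$ times frozen factors), but you do not need that claim, since the case $j=i$ of \eqref{intro-eq-Cij}, with $\overline A_{00}=\overline A_{n0}=1$, telescopes to give both $\overline A_{i0}$ and $\overline A_{i0}^{-1}$, up to powers of $\omega^{\frac12}$, as products of the corner arcs $C_{tt}$.
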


\begin{theorem}\label{thm:poly2} For any $k\geq 0$, 
    we have $\cS_{\omega}^{\rm fr}(\mathbb P_{k+2})={\mathscr A}_\omega^{\rm fr}(\mathbb P_{k+2})={\mathscr U}_\omega^{\rm fr}(\mathbb P_{k+2})$ (Definition~\ref{def-sln-quantum-cluster-al}). Moreover, the theta basis of ${\mathscr U}_\omega^{\rm fr}(\mathbb P_{k+2})$, introduced in \cite{GHKK,davison2021strong}, is rotation-invariant and forms a positive basis for $\cS_{\omega}^{\rm fr}(\mathbb P_{k+2})$.
\end{theorem}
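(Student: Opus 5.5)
The plan is to reduce Theorem~\ref{thm:poly2} to Theorem~\ref{thm:poly1} applied to the larger polygon $\mathbb P_{k+2}^\ast$. Attaching a triangle to each boundary edge of $\mathbb P_{k+2}$ gives $\mathbb P_{k+2}^\ast\cong\mathbb P_{2k+4}$; for $k=0$ this is the triangulated $\mathbb P_4$ of Figure~\ref{Fig;attaching}(B). Both transfers go through the commutative diagram \eqref{eq-com-skein-A}, i.e.\ the chain \eqref{intro-eq-AA}: adjoin isolated frozen variables, then apply the quasi-isomorphism $f$ of Proposition~\ref{Prop-quasi-iso}.

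\emph{Step 1: $\mathscr A^{\rm fr}_\omega=\mathscr U^{\rm fr}_\omega$.}
\begin{itemize}
\item By Theorem~\ref{thm:poly1} for $\mathbb P_{2k+4}$ (valid since $2k+2\ge 1$), we have $\overline{\mathscr A}_\omega(\mathbb P_{2k+4})=\overline{\mathscr U}_\omega(\mathbb P_{2k+4})$.
\item A quasi-isomorphism in the sense of Definition~\ref{def:QQH} identifies the two cluster algebras after rescaling cluster variables by frozen Laurent monomials. It maps each $\mathbb T(\mathsf s)$ isomorphically onto the corresponding torus, hence identifies upper cluster algebras too (as in \cite{KWQ,CHL}). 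So $\mathscr A^{\rm qc}_\omega(\fS^\ast)=\mathscr U^{\rm qc}_\omega(\fS^\ast)$.
\item Since $\mathscr A^{\rm qc}_\omega$ is $\mathscr A^{\rm fr}_\omega(\mathbb P_{k+2})\otimes_R R[\text{isolated frozen}^{\pm1}]$ (quasi-polynomial twisted), and likewise for the upper algebras, comparing coefficients of monomials in the isolated frozen variables gives $\mathscr A^{\rm fr}_\omega(\mathbb P_{k+2})=\mathscr U^{\rm fr}_\omega(\mathbb P_{k+2})$.
\end{itemize}

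\emph{Step 2: $\cS^{\rm fr}_\omega=\mathscr A^{\rm fr}_\omega$.}
\begin{itemize}
\item Theorem~\ref{thm-skein-inclusion-A} gives $\cS_\omega(\mathbb P_{k+2})\subset\mathscr A_\omega(\mathbb P_{k+2})$. Localizing at the frozen variables gives $\cS^{\rm fr}_\omega\subset\mathscr A^{\rm fr}_\omega$; this uses that they lie in $\cS_\omega$ by \eqref{eq-inclusion-A-s}.
\item For the reverse inclusion I would use the generating set of \cite{Q2024} (Proposition~\ref{pro:standard}): $\overline{\mathscr A}^{\rm qc}_\omega(\mathbb P_{2k+4})$ is generated by the standard cluster variables of \cite{SW21} together with frozen variables. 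Transported through the quasi-isomorphisms, $\mathscr A^{\rm fr}_\omega(\mathbb P_{k+2})$ is then generated by the corresponding standard cluster variables and the frozen variables with their inverses.
\item It therefore suffices to show each such standard cluster variable $\mu_{v_m}\cdots\mu_{v_1}(A_v)$ lies in $\cS^{\rm fr}_\omega$. Apply $L_\lambda$. By Proposition~\ref{prop-flips-L}, its image is the corresponding variable of $\overline{\mathscr A}_\omega(\mathbb P_{2k+4})$ times an explicit product of $\overline A_{e,i}^{-1}$.
\item By Theorem~\ref{thm:standardgenerators} (standard variables equal stated essential arcs up to frozen factors), combined with Lemmas~\ref{lem:B-d}, \ref{lem:B4} and \ref{lem-gL}, this image equals $\iota_\ast$ of a stated essential arc of $\mathbb P_{k+2}$ times a frozen Laurent monomial.
\item Injectivity of $\mathrm{tr}_\lambda$ and Lemma~\ref{lem-com-reduced-stated} then pull this back to $\cS^{\rm fr}_\omega$.
\end{itemize}
I expect this bookkeeping of frozen factors to be the main obstacle. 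One must check that the $\overline A_{e,i}^{-1}$ factors produced by Proposition~\ref{prop-flips-L} exactly match the frozen factors in the arc formulas \eqref{intro-eq-Cij}, \eqref{intro-eq-bar-Cij} and \eqref{into-eq-Dij}. The leftover monomial must involve only frozen variables of $\fS$ and not the isolated ones; I would first try to verify this at $\omega^{1/2}=1$ via $g$-vectors, as in the proof of Proposition~\ref{prop-flips-L}.

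\emph{Step 3: theta basis.}
\begin{itemize}
\item Existence and positivity: since $\mathscr U^{\rm fr}_\omega=\mathscr A^{\rm fr}_\omega$ has a seed with full-rank exchange data (the compatible $\Pi_\lambda$ of Lemma~\ref{lem-compatible-pair}), the quantum theta functions of \cite{GHKK,davison2021strong} form a basis. Their structure constants lie in $\mathbb Z_{\ge0}[\omega^{\pm\frac12}]$ by \cite{davison2021strong}. Via Step 2 this basis is a positive basis of $\cS^{\rm fr}_\omega(\mathbb P_{k+2})$.
\item Rotation invariance: a rotation $\rho$ of $\mathbb P_{k+2}$ induces an algebra automorphism $\rho_\ast$ of $\cS_\omega$, with $\rho_\ast(\mathsf g_v)=\mathsf g_{\rho(v)}$ computed in the rotated triangulation $\rho(\lambda)$. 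Thus $\rho_\ast$ sends $\mathsf s_\lambda$ to $\mathsf s_{\rho(\lambda)}$. By Proposition~\ref{prop-naturality} this seed lies in the same mutation class $\mathsf S(\mathbb P_{k+2})$.
\item In particular $\rho_\ast$ permutes frozen variables, so it extends to $\cS^{\rm fr}_\omega$, and it is a quantum cluster automorphism. Theta functions depend only on the mutation class and are permuted by cluster automorphisms, so the basis is $\rho_\ast$-invariant.
\end{itemize}
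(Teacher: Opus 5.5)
Your architecture is what the paper's ``analogous argument'' amounts to. You get $\cS^{\rm fr}_\omega\subset\mathscr A^{\rm fr}_\omega$ from Theorem~\ref{thm-skein-inclusion-A}. You transport $\mathscr A^{\rm fr}_\omega=\mathscr U^{\rm fr}_\omega$, and generation by standard and frozen variables, through Proposition~\ref{Prop-quasi-iso}; your isolated-degree argument is fine, and this transfer is also how the paper obtains $\mathscr A^{\rm fr}_\omega=\mathscr U^{\rm fr}_\omega$ in Theorem~\ref{thm-skein-eq-A-two}. You reuse the theta-basis and rotation argument of Theorem~\ref{thm:poly1}.

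The gap is in the one step that carries the content, the analogue of Theorem~\ref{thm:standardgenerators}. You assert that $L_\lambda$ of each standard variable is $\iota_\ast$ of a stated essential arc of $\mathbb P_{k+2}$ times a frozen monomial, and nothing you cite gives this.
\begin{itemize}
\item Theorem~\ref{thm:standardgenerators}, with Lemmas~\ref{lem:B-d} and~\ref{lem:B4}, only identifies the standard variables of $\overline{\mathscr A}_\omega(\mathbb P_{2k+4})$ with stated arcs $D(js^i)$ of $\mathbb P_{2k+4}$.
\item Arcs in the image of $\iota_\ast$ have all endpoints on the $e_2$-edges, and these alternate with the $e_3$-edges around $\partial\mathbb P_{2k+4}$. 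So every corner arc $D^{(i,i+1)}$ (all generators with $s\ge 2$), and every $D^{(i,i+2)}$ joining two $e_3$-edges, is not $\iota_\ast$ of any stated arc diagram of $\mathbb P_{k+2}$.
\item Proposition~\ref{prop-flips-L} concerns only the flip sequences $\mu_{\lambda'\lambda^\ast}$, and only for triangulable $\fS$, so not $k=0$. It says nothing about the Shen--Weng sequences $\overset{\leftarrow}{\mu}_{\prec js^i}$ that define standard variables.
\item Lemma~\ref{lem-gL} is the $\mathbb P_4$ computation for arcs that are already $\iota_\ast$-images.
\end{itemize}

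You also misplace the difficulty. Cluster variables of $\mathscr A^{\rm qc}_\omega(\fS^\ast)$ have isolated degree zero, so the frozen monomial $\overline A^{\mathbf m}$ in $L_\lambda(E)=[D(js^i)\,\overline A^{\mathbf m}]$ is forced. In general it involves the isolated frozen variables of the attached triangles. What must be proved is that $[D(js^i)\,\overline A^{\mathbf m}]$ lies in $\iota_\ast(\cS^{\rm fr}_\omega(\mathbb P_{k+2}))$, a skein-theoretic statement that $g$-vectors at $\omega^{1/2}=1$ cannot decide. Nor should you expect the answer to be a single stated arc. In the paper's $n=3$, $k=0$ computation, the cluster variable attached to $D^{2,4}_{22}$, which joins the two $e_3$-edges, is the web $[\alpha]_{\rm norm}$.

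Closing the gap needs genuinely new input. One option is relations in $\overline{\cS}_\omega(\mathbb P_{2k+4})$ that push endpoints across the apices $p_e$, proving that $\overline{\cS}_\omega(\mathbb P_{2k+4})$ is generated by $\iota_\ast(\cS_\omega(\mathbb P_{k+2}))$, the isolated frozen variables, and inverses of frozen variables. Theorem~\ref{thm:poly1} plus your degree-zero argument would then give $\mathscr A^{\rm fr}_\omega\subset\cS^{\rm fr}_\omega$ at once. The other option is a generating set for $\mathscr A^{\rm fr}_\omega(\mathbb P_{k+2})$ whose members are already realized as stated arcs by \eqref{pro-inclusion-3}, \eqref{pro-inclusion-4} and \eqref{eq-alpha-counterclock}.
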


In the rest of this section, we prove Theorems 
\ref{thm:poly1} and \ref{thm:poly2}.

\vspace{1.5mm}

Let $\lambda$ be the star-like triangulation centered at vertex $1$, i.e., $\lambda=\{(1,i)\mid i=3,\cdots,k+1\}$, where $(1,i)$ is the obvious ideal arc in $\mathbb P_{k+2}$ connecting the vertices labeled by $1$ and $i$. Denote $\Delta_i$ the $i$-th triangle $(1,i+1,i+2)$ in $\lambda$. For each $i$, we label the vertex of $\overline{V}_\lambda$ inside $\Delta_i$ by $10^i,11^i,20^i,21^i,22^i,\cdots j0^i,j1^i,\cdots jj^i,\cdots$, as illustrated in Figure~\ref{Fig:lambdai} (where the top puncture, i.e. the one labeled by $i$, in Figure~\ref{Fig:lambdai} is the one labeled by $1$, and the superscripts in $js^i$ are omitted). In particular, we have $jj^i=j0^{i+1}$. We denote the cluster variable of the seed $\overline {\mathbf s}_\lambda$ associated with $\lambda$ of $\overline\cS_{\omega}(\mathbb P_{k+2})$ at $js^i$ by $\overline A_{js}{(i)}$. 


\subsection{A quasi-isomorphism}\label{sub-quasi-polygon}
For any integers $j, s$ satisfying $1 \leq j \leq n$ and $0 \leq s \leq j$, and for any integer $i$ with $1 \leq i \leq k$,
let
\begin{equation}\label{eq:qc}
    \overline A_{js}^{\rm qc}(i)=\begin{cases}
        \overline A_{js}(i) & \mbox{if $j=n$} \\
        \overline A_{js}(i) & \mbox{if $s=0$ and $i=1$} \\
        \bigl[\overline A_{js}(i)\cdot \overline A^{-1}_{ns} (i)\cdot \overline A^{-1}_{j0}(1)\cdot \prod_{t=1}^{i-1}(\overline A_{n,n-s-t+1}(i-t)\cdot \overline A^{-1}_{n,j-s-t+1}(i-t))\bigr] & \mbox{otherwise}
    \end{cases}
\end{equation}
with the convention that $\overline A_{n\ell}=1$ whenever $\ell\leq 0$ or $\ell\geq n$. Note that $\overline A_{js}^{\rm qc}(i)$ is well-defined (in particular, there is no ambiguity), as it satisfies the identity 
$$\overline A_{jj}^{\rm qc}(i)=\overline A_{j0}^{\rm qc}(i+1).$$

Let $\overline Q_\lambda^{\rm qc}$ be the weighted quiver obtained from $\overline Q_\lambda$ (see \eqref{eq-def-Q-lambda-re}) by removing all arrows incident to $js^i$ either when $j=n$, or when $s=0$ and $i=1$. Here we identify the weighted quiver with its signed adjacency matrix.

It is straightforward to verify that for any $js^i,j's'^{\;i'}\in \overline{V}_\lambda$, the elements $\overline A_{js}^{\rm qc}(i)$ and $\overline A_{j's'}^{\rm qc}(i')$ are quasi-commutative (Definition~\ref{def-qusi-com}), denote by 
$\overline \Pi^{\rm qc}({js^i},{j's'}^{\;i'})$ the integer satisfying 
\begin{equation*}
    \overline A_{js}^{\rm qc}(i)\overline A_{j's'}^{\rm qc}(i')=\xi^{\overline \Pi^{\rm qc}({js}^i,{j's'^{\; i'}})}\overline A_{j's'}^{\rm qc}(i')\overline A_{js}^{\rm qc}(i).
\end{equation*}

Recall that $\mathring{\overline V}_\lambda$ denotes the subset of $\overline{V}_\lambda$ consisting of vertices contained in the interior of $\mathbb P_{k+2}$.
For each $v=js^i\in\overline V_\lambda$, we use $\overline A^{\rm qc}_v$
(resp. $\overline A_v$) to denote  $\overline A^{\rm qc}_{js}(i)$ (resp. $\overline A_{js}(i)$).
The following lemma is immediate from \eqref{eq:qc}.

\begin{lemma}\label{lem:qc3}
    For any $u\in \mathring {\overline{V}}_\lambda$, we have
    \begin{equation}
        [\prod_{v\in \overline V_\lambda} (\overline A^{\rm qc}_{v})^{\overline Q^{\rm qc}_\lambda(v,u)}]=[\prod_{v\in \overline V_\lambda} (\overline A_{v})^{\overline Q_\lambda(v,u)}].
    \end{equation}
\end{lemma}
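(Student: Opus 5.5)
The plan is to compare exponent vectors. Both sides are Weyl-ordered monomials in the quantum torus $\overline{\mathcal A}_\omega(\mathbb P_{k+2},\lambda)$. A Weyl-ordered monomial is determined by its exponent vector. Moreover, $[\prod (\overline A^{\rm qc}_v)^{a_v}]$ equals $[\prod \overline A_w^{b_w}]$, where $b=\sum_v a_v\,{\bf f}_v$ and ${\bf f}_v$ is the exponent vector of $\overline A^{\rm qc}_v$ read off from \eqref{eq:qc}; this holds because Weyl ordering is compatible with substituting monomials. So it suffices to fix $u\in\mathring{\overline V}_\lambda$ and prove the integer identity
\[
\sum_{v\in\overline V_\lambda}\overline Q^{\rm qc}_\lambda(v,u)\,{\bf f}_v=\sum_{w\in\overline V_\lambda}\overline Q_\lambda(w,u)\,{\bf e}_w .
\]

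The first step is to record the structure of ${\bf f}_v$. By \eqref{eq:qc}, ${\bf f}_v={\bf e}_v+{\bf g}_v$, where ${\bf g}_v$ is supported on frozen vertices. More precisely, these are the vertices $n\ell^{\,i'}$ on the boundary edges $(i'+1,i'+2)$, together with the vertices $j0^1$ on the edge $(1,2)$. For frozen $v$, the correction ${\bf g}_v$ is $0$. Next, $\overline Q^{\rm qc}_\lambda(\cdot,u)$ agrees with $\overline Q_\lambda(\cdot,u)$ except at the deleted frozen vertices, where it is $0$. Since the deleted vertices are exactly the frozen ones, and their ${\bf g}$ vanishes, the identity reduces to showing
\[
\sum_{v\ \text{mutable}}\overline Q_\lambda(v,u)\,{\bf g}_v=\sum_{w\ \text{frozen}}\overline Q_\lambda(w,u)\,{\bf e}_w .
\]
The mutable-vertex contributions ${\bf e}_v$ cancel against the right-hand side, leaving exactly this frozen part.

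The second step is to verify this local identity by cases on the position of $u=js^i$. The column $\overline Q_\lambda(\cdot,u)$ has at most six nonzero entries $\pm1$, forming the standard hexagon pattern of the $n$-triangulation quiver of Figure~\ref{Fig;coord_ijk}. I plan to treat the following cases separately:
\begin{itemize}
\item $u$ strictly inside a triangle $\Delta_i$, away from all boundary rows;
\item $u$ adjacent to the boundary row $j=n-1$, so that neighbors $n\ell^i$ are frozen;
\item $u$ adjacent to the edge $(1,2)$, that is, $i=1$ and $s=1$;
\item $u$ on an interior diagonal $jj^i=j0^{i+1}$, where the hexagon is split across two triangles.
\end{itemize}
In each case I substitute the explicit ${\bf g}_v$ of the neighbors. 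These are the factors $\overline A_{ns}(i)^{-1}$, $\overline A_{j0}(1)^{-1}$, and the telescoping product $\prod_{t}\overline A_{n,n-s-t+1}(i-t)\overline A^{-1}_{n,j-s-t+1}(i-t)$. The aim is to check that the signed sum collapses to the frozen neighbors of $u$ with the correct signs.

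I expect the main obstacle to be the diagonal case and the telescoping products. The index shifts $n-s-t+1$ and $j-s-t+1$ differ between the two triangles glued along $(1,i+2)$. The convention $\overline A_{n\ell}=1$ for $\ell\le0$ or $\ell\ge n$, together with the consistency identity $\overline A^{\rm qc}_{jj}(i)=\overline A^{\rm qc}_{j0}(i+1)$, must be used carefully. My approach is to first check that for the neighbor pairs $(j,s),(j,s+1)$ and $(j\pm1,\cdot)$, the products differ by a single boundary factor. The hexagon sum then telescopes to at most two frozen terms. These terms should match the frozen neighbors $n\ell^{\,i'}$ of $u$ when $j=n-1$, and should vanish otherwise.
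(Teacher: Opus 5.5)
Your overall plan (pass to exponent vectors, then check the identity one column $u$ at a time) is the direct verification the paper means by ``immediate from \eqref{eq:qc}''. The compatibility of Weyl ordering with substituting monomials is also fine.

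The reduction step, however, rests on a false claim: not all frozen vertices are deleted. $\overline Q^{\rm qc}_\lambda$ removes only the arrows at $js^i$ with $j=n$, or with $s=0$ and $i=1$. Call this set of vertices $D=\{n\ell^{\,i}\}\cup\{j0^1\}$. The frozen vertices $jj^k$ ($1\le j\le n-1$) on the boundary edge $(k+2,1)$ keep all their arrows. They also fall into the ``otherwise'' case of \eqref{eq:qc}, so ${\bf g}_{jj^k}\neq 0$. The correct reduced statement is
\[
\sum_{v\notin D}\overline Q_\lambda(v,u)\,{\bf g}_v=\sum_{w\in D}\overline Q_\lambda(w,u)\,{\bf e}_w .
\]
Here the kept frozen vertices $jj^k$ contribute their corrections on the left. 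Their ${\bf e}_{jj^k}$ terms cancel, so they must not appear on the right.

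Your version already fails in the smallest case $n=3$, $k=1$. The only mutable vertex is $u=21^1$, and all six of its neighbours are frozen. So your left side is $0$, while your right side is the full column $\overline Q_\lambda(\cdot,u)\neq 0$. The lemma holds there only because of the two arrows of $\overline Q^{\rm qc}_\lambda$ at $u$ that survive, namely those to $11^1$ and $22^1$. By \eqref{eq:qc},
\[
\overline A^{\rm qc}_{11}(1)=[\overline A_{11}(1)\overline A_{31}(1)^{-1}\overline A_{10}(1)^{-1}],\qquad \overline A^{\rm qc}_{22}(1)=[\overline A_{22}(1)\overline A_{32}(1)^{-1}\overline A_{20}(1)^{-1}].
\]
Together these produce exactly the four deleted neighbours $10^1,20^1,31^1,32^1$ with the correct signs.

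Accordingly, your case list is missing the case where $u$ is adjacent to one of the vertices $jj^k$ on the edge $(k+2,1)$. That is precisely where the corrections ${\bf g}_{jj^k}$, including their telescoping products over $t=1,\dots,k-1$, have to be fed in. Once the reduction is repaired in this way, the case-by-case check you describe is the intended argument.
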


The following result is an immediate consequence of Lemma \ref{lem:qc3}.

\begin{lemma}\label{lem:compatible}
Let $u\in \mathring{\overline{V}}_\lambda,v\in \overline V_\lambda$. Then
   \begin{equation}\label{eq:qc1}
        \sum_{v_1\in\overline V_{\lambda}} \overline Q^{\rm qc}_\lambda(v_1,u) \Pi(\overline A^{\rm qc}_{v_1},\overline A_v)=\sum_{v_1\in\overline V_{\lambda}} \overline Q_\lambda(v_1,u) \Pi(\overline A_{v_1},\overline A_v)
        \quad (\text{see  Definition~\ref{def-qusi-com} for $\Pi(-,-)$}).
    \end{equation} 
    Moreover, we have
  \begin{equation}
        \sum_{v_1\in\overline V_{\lambda}} \overline Q^{\rm qc}_\lambda(v_1,u)\overline \Pi^{\rm qc}_{\lambda}(v_1,v)=\sum_{v_1\in\overline V_{\lambda}} \overline Q_\lambda(v_1,u)\overline \Pi_{\lambda}(v_1,v)=2n\delta_{u,v},
    \end{equation}   
  where $\overline \Pi_\lambda$ is defined in \eqref{eq-prod-Qpi}.
\end{lemma}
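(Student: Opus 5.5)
The plan is to derive both identities from Lemma~\ref{lem:qc3} and the compatibility identity for the original seed. That identity is the $\overline{\mathsf s}_\lambda$-analogue of Lemma~\ref{lem-compatible-pair}(b): by \eqref{eq-prod-PQ} and $\overline\Pi_\lambda=\frac1n\overline P_\lambda$, we have $\sum_{v_1}\overline Q_\lambda(v_1,u)\overline\Pi_\lambda(v_1,v)=2n\delta_{u,v}$ for $u$ mutable. This holds because $\overline P_\lambda\overline Q_\lambda$ has block $-2n^2\,\mathrm{Id}$ on $\mathring{\overline V}_\lambda$ and zero block below it. Antisymmetry of $\overline P_\lambda$ and $\overline Q_\lambda$ converts $(\overline P_\lambda\overline Q_\lambda)(v,u)$ into $-\sum_{v_1}\overline Q_\lambda(v_1,u)\overline P_\lambda(v_1,v)$, which gives the sign.

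For \eqref{eq:qc1}, I would use that $\Pi(-,-)$ from Definition~\ref{def-qusi-com} is bilinear on Weyl-ordered monomials in pairwise quasi-commuting elements. Concretely, if $X=[\prod_{v_1}X_{v_1}^{a_{v_1}}]$, then $\Pi(X,Y)=\sum_{v_1}a_{v_1}\Pi(X_{v_1},Y)$, since the Weyl normalization is a scalar power of $\omega$ and drops out of the commutation exponent. Apply $\Pi(-,\overline A_v)$ to both sides of the identity in Lemma~\ref{lem:qc3}. The left side gives $\sum_{v_1}\overline Q^{\rm qc}_\lambda(v_1,u)\Pi(\overline A^{\rm qc}_{v_1},\overline A_v)$, and the right side gives $\sum_{v_1}\overline Q_\lambda(v_1,u)\Pi(\overline A_{v_1},\overline A_v)$. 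Here we need that each $\overline A^{\rm qc}_{v_1}$ quasi-commutes with $\overline A_v$. This is clear because both are Laurent monomials in the quantum torus of $\overline{\mathsf s}_\lambda$.

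For the second identity, note that $\Pi(\overline A_{v_1},\overline A_v)=\overline\Pi_\lambda(v_1,v)$ by \eqref{eq-gaa-com}, so the right side of \eqref{eq:qc1} is $2n\delta_{u,v}$. It remains to replace $\overline A_v$ by $\overline A^{\rm qc}_v$ in the left side. By \eqref{eq:qc}, $\overline A^{\rm qc}_v=[\overline A_v\cdot m_v]$, where $m_v$ is a Laurent monomial in frozen variables only (vertices $n\ell^{i}$ and $j0^1$, all on the boundary); it is trivial when $v$ itself is frozen of the first two types. By bilinearity,
\[
\sum_{v_1}\overline Q^{\rm qc}_\lambda(v_1,u)\overline\Pi^{\rm qc}_\lambda(v_1,v)=\sum_{v_1}\overline Q^{\rm qc}_\lambda(v_1,u)\Pi(\overline A^{\rm qc}_{v_1},\overline A_v)+\sum_{w}c_{v,w}\sum_{v_1}\overline Q^{\rm qc}_\lambda(v_1,u)\Pi(\overline A^{\rm qc}_{v_1},\overline A_w),
\]
where $w$ runs over frozen vertices with exponents $c_{v,w}$. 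Applying \eqref{eq:qc1} with $v$ replaced by each frozen $w$, the inner sums become $2n\delta_{u,w}=0$, since $u$ is mutable and $w$ is frozen. This leaves exactly $2n\delta_{u,v}$.

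The only delicate point is bookkeeping. One must check that \eqref{eq:qc1} is valid for arbitrary $v\in\overline V_\lambda$, including frozen $v$, which it is, since Lemma~\ref{lem:qc3} does not depend on $v$. One must also check that the correction monomials in \eqref{eq:qc} involve only frozen vertices, which is immediate from the indices appearing there. I expect no real obstacle beyond stating the bilinearity of $\Pi$ carefully.
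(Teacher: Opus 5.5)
Your proposal is correct and follows essentially the same route as the paper. Both apply $\Pi(-,\overline A_v)$ to Lemma~\ref{lem:qc3}, invoke the base identity $\sum_{v_1}\overline Q_\lambda(v_1,u)\overline\Pi_\lambda(v_1,v)=2n\delta_{u,v}$, and then replace $\overline A_v$ by $\overline A^{\rm qc}_v$, which is harmless because the correction factors in \eqref{eq:qc} are frozen and the identity gives zero on them. The only difference is that you derive the base identity directly from \eqref{eq-prod-PQ} and the antisymmetry of $\overline P_\lambda$, whereas the paper cites it from \cite{huang2025quantum}; you also write out the frozen-correction bookkeeping that the paper compresses into ``by \eqref{eq:qc} and \eqref{eq:qc3}''.
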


\begin{proof}
First, by direct calculation and the properties of the pairing $\Pi$, we obtain
\begin{equation}
 \begin{split}
  \sum_{v_1\in\overline V_{\lambda}} \overline Q^{\rm qc}_\lambda(v_1,u) \Pi(\overline A^{\rm qc}_{v_1},\overline A_v) 
  &= \Pi \left(\prod_{v_1\in\overline V_{\lambda}}(\overline A^{\rm qc}_{v_1})^{ \overline Q^{\rm qc}_\lambda(v_1,u)},\overline A_v\right) \\
  &=  \Pi \left(\prod_{v_1\in \overline V_{\lambda}}(\overline A_{v_1})^{ \overline Q_\lambda(v_1,u)},\overline A_v\right)
  \quad \text{(by Lemma~\ref{lem:qc3})}\\
  &= \sum_{v_1\in \overline V_{\lambda}} \overline Q_\lambda(v_1,u) \Pi(\overline A_{v_1},\overline A_v).
   \end{split}
\end{equation} 
This proofs \eqref{eq:qc1}.

It is shown in \cite{huang2025quantum} that
\begin{equation}\label{eq:qc2}
     \sum_{v_1\in \overline V_{\lambda}} \overline Q_\lambda(v_1,u)\overline \Pi_{\lambda}({v_1},v)=2n\delta_{u,v}.
\end{equation}
Combining \eqref{eq:qc1} and \eqref{eq:qc2} yields
\begin{equation}\label{eq:qc3}
    \sum_{v_1\in\overline V_{\lambda}} \overline Q^{\rm qc}_\lambda(v_1,u) \Pi(\overline A^{\rm qc}_{v_1},\overline A_v)=2n\delta_{u,v}.
\end{equation}

Finally, we compute
\begin{equation}
   \begin{split}
  \sum_{v_1\in\overline V_{\lambda}} \overline Q^{\rm qc}_\lambda(v_1,u)\overline \Pi^{\rm qc}_{\lambda}(v_1,v)
  &= \sum_{v_1\in \overline V_{\lambda}} \overline Q^{\rm qc}_\lambda(v_1,u)\overline \Pi_{\lambda}(\overline A^{\rm qc}_{v_1},\overline A^{\rm qc}_v)\\
  &= \sum_{v_1\in \overline V_{\lambda}} \overline Q^{\rm qc}_\lambda(v_1,u)\overline \Pi_{\lambda}(\overline A^{\rm qc}_{v_1},\overline A_v)
  \quad \text{(by \eqref{eq:qc} and \eqref{eq:qc3})}\\
  &= 2n\delta_{u,v} \quad \text{by \eqref{eq:qc3}}.
  \end{split}
\end{equation}
This completes the proof.
\end{proof}

Define 
\begin{equation*}
\overline M^{\rm qc}_\lambda:\mathbb Z^{\overline V_\lambda}\to {\rm Frac}(\overline\cS_{\omega}(\mathbb P_{k+2})), \qquad 
    {\bf k}=(k_v)_{v\in \overline{V}_{\lambda}}\longmapsto 
\left[\prod_{v\in \overline V_\lambda} (\overline A^{\rm qc}_{v})^{k_v}\right].    
\end{equation*}

\begin{corollary}
The triple $\overline{\mathsf s}^{\rm qc}_\lambda=(\overline Q^{\rm qc}_\lambda, \overline{\Pi}^{\rm qc}_\lambda,  \overline M^{\rm qc}_\lambda)$ is a quantum seed in ${\rm Frac}(\overline\cS_{\omega}(\mathbb P_{k+2}))$.
\end{corollary}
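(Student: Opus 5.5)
We verify the three conditions of Definition~\ref{def-quantum-seed} for $\overline{\mathsf s}^{\rm qc}_\lambda$ in the order (1), (3), (2), in parallel with the proof that $(Q^{\rm qc}_{\lambda^\ast},\Pi^{\rm qc}_{\lambda^\ast},M^{\rm qc}_{\lambda^\ast})$ is a seed in \S\ref{sub-quas-sl2}.

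For condition (1), $\overline Q^{\rm qc}_\lambda$ is obtained from $\overline Q_\lambda$ only by deleting arrows at certain vertices. Every such vertex $js^i$ has $j=n$, or $s=0$ and $i=1$, so it lies on $\partial\mathbb P_{k+2}$ and is frozen. The mutable part of the quiver is therefore untouched. Moreover, every remaining edge between two frozen vertices still has weight $\tfrac12$, and every edge touching a mutable vertex has weight $1$. Hence $\overline Q^{\rm qc}_\lambda$ is an exchange matrix with $\mathcal V_{\rm mut}=\mathring{\overline V}_\lambda$.

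For condition (3), I would use \eqref{eq:qc} to see that the change from $\{\overline A_v\}$ to $\{\overline A^{\rm qc}_v\}$ is unimodular in the exponent lattice, i.e., the exponent matrix $T$ has determinant $\pm1$. Each $\overline A^{\rm qc}_v$ equals $\overline A_v$ times a Weyl-ordered Laurent monomial in the frozen variables $\overline A_{n\ell}(\cdot)$ and $\overline A_{j0}(1)$. These correction variables are themselves unchanged by \eqref{eq:qc}, since they fall under the first two cases. So, ordering the vertices with these special frozen ones first, $T$ is unitriangular. It follows that $M^{\rm qc}({\bf k})=\overline A^{{\bf k}T}$ up to a power of $\xi^{1/2}$. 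The elements $\overline A^{\rm qc}_v$ pairwise quasi-commute, so the Weyl-ordered monomials satisfy \eqref{eq-M-relation} with $\Pi=\overline\Pi^{\rm qc}_\lambda$, which is integral by construction. Since $T$ is invertible over $\mathbb Z$, the set $\overline M^{\rm qc}_\lambda(\mathbb Z^{\overline V_\lambda})$ is again an $R$-basis of the same based quantum torus $\overline{\mathcal A}_\omega(\mathbb P_{k+2},\lambda)$. That torus has skew-field ${\rm Frac}(\overline\cS_\omega(\mathbb P_{k+2}))$ by \eqref{intro-identity-Frac} and Theorem~\ref{thm-trace-A}(c).

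Condition (2) is exactly the second identity of Lemma~\ref{lem:compatible}:
\[
\sum_{v_1}\overline Q^{\rm qc}_\lambda(v_1,u)\,\overline\Pi^{\rm qc}_\lambda(v_1,v)=2n\,\delta_{u,v}
\]
for $u$ mutable, so we can take $d_u=2n>0$.

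The only point needing care is the unitriangularity claim in condition (3), and specifically checking that the frozen correction factors in \eqref{eq:qc} are never themselves modified. This follows because they are all of the form $\overline A_{n\ell}$ or $\overline A_{j0}(1)$. Everything else is inherited from the seed $\overline{\mathsf s}_\lambda$ of Theorem~\ref{intro-thm-skein-inclusion-A}(a).
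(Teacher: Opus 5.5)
Your proof is correct and follows the paper's route. The paper's entire proof is the line ``It follows immediately by Lemma~\ref{lem:compatible},'' which is exactly your verification of the compatibility condition with $d_u=2n$. Your checks of the exchange-matrix condition and of the basis/skew-field condition fill in what the paper leaves implicit; the latter rests on the unitriangular exponent change from \eqref{eq:qc}, whose correction factors are all of the unmodified types $\overline A_{n\ell}(\cdot)$ and $\overline A_{j0}(1)$.
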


\begin{proof}
    It follows immediately by Lemma \ref{lem:compatible}.
\end{proof}




Let $\overline {\mathscr A}_{\omega}^{\rm qc}(\mathbb P_{k+2})$ denote the quantum cluster algebra associated with the seed
$\overline{\mathsf s}^{\rm qc}_\lambda=(\overline Q^{\rm qc}_\lambda, \overline{\Pi}^{\rm qc}_\lambda,  \overline M^{\rm qc}_\lambda)$.

\begin{proposition}\label{prop-quasi-polygon}
    The assignments $\overline A^{\rm qc}_{{js}}(i) \mapsto \overline A^{\rm qc}_{{js}}(i)$ induce a quasi-isomorphism (Definition \ref{def:QQH}) 
    \begin{equation}\label{eq:quasi-iso}
        f:\overline {\mathscr A}^{\rm qc}_{\omega}(\mathbb P_{k+2})=\mathscr A_{\overline{\mathsf s}^{\rm qc}_\lambda}\to \mathscr A_{\overline {\mathsf s}_\lambda}=\overline {\mathscr A}_{\omega}(\mathbb P_{k+2}).
    \end{equation}
    Moreover, we have $\overline {\mathscr A}^{\rm qc}_{\omega}(\mathbb P_{k+2})=\overline {\mathscr A}_{\omega}(\mathbb P_{k+2})$ as $R$-algebras.
\end{proposition}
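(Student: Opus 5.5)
The plan is to verify the four conditions of Proposition~\ref{prop:quasi-iso} for the map $\varphi$ sending the initial cluster variables $\overline A^{\rm qc}_v$ of $\overline{\mathsf s}^{\rm qc}_\lambda$ to the same elements of ${\rm Frac}(\overline\cS_\omega(\mathbb P_{k+2}))$, now expanded in the based quantum torus $\mathbb T(\overline{\mathsf s}_\lambda)$. Afterwards I will upgrade the quasi-homomorphism to an equality of subalgebras.

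First, the two quivers have the same mutable part. The vertices $js^i$ with $j=n$, or with $s=0$ and $i=1$, are exactly the boundary (frozen) vertices of $\overline V_\lambda$, because the sides of the fan triangulation are $(1,2)$, $(i+1,i+2)$ and $(1,k+2)$. Hence deleting the arrows incident to them leaves $\overline Q_\lambda|_{\mathring{\overline V}_\lambda}$ unchanged. I will double check this identification against the labeling in Figure~\ref{Fig:lambdai}.

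Next, conditions (1) and (2) are read directly off \eqref{eq:qc}. For a mutable $v=js^i$, $\overline A^{\rm qc}_v$ is the Weyl-ordered product of $\overline A_v$ with powers of $\overline A_{ns}(i)$, $\overline A_{j0}(1)$ and $\overline A_{n,\ast}(\cdot)$, all of which are frozen. So ${\bf f}_v$ has coefficient $1$ at $v$ and $0$ at every other mutable vertex. For frozen $v$, ${\bf f}_v={\bf e}_v$. One has to check that the convention $\overline A_{n\ell}=1$ never cancels the $\overline A_v$ factor itself; it does not, since the extra factors all sit on the boundary. Condition (3) is exactly Lemma~\ref{lem:qc3}. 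Condition (4) holds because $\overline\Pi^{\rm qc}$ was defined as the commutation exponents of the elements $\overline A^{\rm qc}_v$, and these are the images themselves. By Proposition~\ref{prop:quasi-iso} this gives the quasi-homomorphism $f$.

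For the second assertion, I use that quasi-homomorphisms intertwine mutations \cite{CHL}. Thus for any mutation sequence, $f$ sends each cluster variable of $\overline{\mathsf s}^{\rm qc}_\lambda$ to the corresponding cluster variable of $\overline{\mathsf s}_\lambda$ times a Laurent monomial in frozen variables. Since $f$ is the identity on the common ambient skew-field, the two cluster algebras are identified as subalgebras once we know the frozen monomials are invertible on both sides.

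To get this, I will show that the frozen variables of $\overline{\mathsf s}^{\rm qc}_\lambda$ coincide with those of $\overline{\mathsf s}_\lambda$. Both sets are $\{\overline A_{js}(i): j=n\}\cup\{\overline A_{j0}(1)\}$, because \eqref{eq:qc} leaves the frozen ones unchanged. Both algebras invert the frozen variables, so every frozen monomial correction lies in both algebras. It follows that each cluster variable of one algebra lies in the other, giving $\overline{\mathscr A}^{\rm qc}_\omega(\mathbb P_{k+2})=\overline{\mathscr A}_\omega(\mathbb P_{k+2})$.

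The main obstacle I expect is a clean justification that $f$ intertwines mutations at every step, including the frozen monomial corrections. This is the content of \cite[Proposition~2.7]{CHL}, and I will invoke it rather than reprove it. The remaining care is bookkeeping the well-definedness $\overline A^{\rm qc}_{jj}(i)=\overline A^{\rm qc}_{j0}(i+1)$ so that ${\bf f}_v$ is unambiguous.
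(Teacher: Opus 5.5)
Your check of conditions (1)--(4) of Proposition~\ref{prop:quasi-iso} via \eqref{eq:qc} and Lemma~\ref{lem:qc3} is exactly the paper's proof. The paper does not argue the ``Moreover'' part separately. Deriving it from the fact that a quasi-homomorphism matches cluster variables up to frozen Laurent monomials \cite{CHL} is a reasonable way to supply it.

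There is, however, a concrete error in your list of frozen vertices. The boundary edge $(1,k+2)$ carries the frozen vertices $jj^k$, $1\le j\le n-1$; compare the frozen variables listed in Proposition~\ref{pro:standard}. These are neither of the form $j=n$ nor of the form $s=0,\ i=1$. For them \eqref{eq:qc} is in the ``otherwise'' case, so $\overline A^{\rm qc}_{jj}(k)=\bigl[\overline A_{jj}(k)\,\overline A^{-1}_{nj}(k)\,\overline A^{-1}_{j0}(1)\prod_{t=1}^{k-1}\overline A_{n,n-j-t+1}(k-t)\bigr]\neq\overline A_{jj}(k)$. Hence your claims that ${\bf f}_v={\bf e}_v$ for every frozen $v$, and that the two seeds have the same frozen variables, are false. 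Neither error is fatal:
\begin{itemize}
\item Condition (2) only requires ${\bf f}_v$ to vanish on mutable coordinates, and it still does.
\item For the equality of algebras you only need the two families of frozen variables to generate the same $R$-algebra of frozen Laurent monomials. This holds because $\overline A^{\rm qc}_{jj}(k)$ is $\overline A_{jj}(k)$ times a Laurent monomial in the common frozen variables $\overline A_{ns}(i)$ and $\overline A_{j0}(1)$, which is a unitriangular change.
\end{itemize}
With this substitution your two-sided containment argument goes through.

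Separately, the identity $\overline A^{\rm qc}_{jj}(i)=\overline A^{\rm qc}_{j0}(i+1)$, which you defer as bookkeeping and the paper asserts, fails for \eqref{eq:qc} as printed once $n\ge 3$ and $k\ge 3$. For $n=3$ the formula gives $\overline A^{\rm qc}_{22}(2)=\bigl[\overline A_{22}(2)\,\overline A^{-1}_{32}(2)\,\overline A^{-1}_{20}(1)\,\overline A_{31}(1)\bigr]$. It also gives $\overline A^{\rm qc}_{20}(3)=\bigl[\overline A_{20}(3)\,\overline A^{-1}_{20}(1)\,\overline A^{-1}_{32}(2)\,\overline A_{32}(1)\,\overline A^{-1}_{31}(1)\bigr]$. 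Since $\overline A_{22}(2)=\overline A_{20}(3)$, these differ by the factor $\overline A_{32}(1)\overline A_{31}(1)^{-2}$.

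Lemma~\ref{lem:qc3} is also affected. Take $n=2$, $k=3$ and the mutable vertex on the diagonal $(1,4)$, whose exchange monomials are $\overline A_{10}(2)\overline A_{21}(3)$ and $\overline A_{11}(3)\overline A_{21}(2)$. The two sides of Lemma~\ref{lem:qc3}, computed from \eqref{eq:qc}, differ by $\overline A_{21}(1)^{\pm1}$. So the frozen correction factors must be re-derived, consistently on the diagonals and so that condition (3) holds, before your verification of (2)--(4) or the paper's is complete. This does not change the structure of the argument, since only frozen factors are involved, but it is not mere bookkeeping.
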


\begin{proof}
Since $\overline A_{ns}(i)$, $\overline A_{j0}(1)$, $\overline A_{n,n-s-t+1}(i-t)$, and $\overline A_{n,j-s-t+1}(i-t)$ are  frozen variables, conditions (1) and (2) of Proposition \ref{prop:quasi-iso} follow directly from \eqref{eq:qc}. Furthermore, condition (3) is satisfied by Lemma \ref{lem:qc3}, while condition (4) holds trivially. This completes the proof.
\end{proof}

\begin{definition}\label{def:proportional}
Let $\mathscr A$ be a quantum cluster algebra. For two reflection invariant elements $x,x'\in \mathscr A$, we say that $x$ and $x'$ are \textbf{proportional}, denoted by $x\asymp x'$ if $x^{-1}x'\in R\langle A_v^{\pm 1}\mid v\in \mathcal V\setminus \mathcal{V}_{\rm mut}\rangle$.
\end{definition}

For each $2\leq i\leq k$, let $\lambda_i$ denote the triangulation obtained from $\lambda$ by flipping the arc $(1,i+1)\in \lambda$, and let $\overline {\mathsf{s}} _{\lambda_i}^{\rm qc}$ and ${\mathsf{s}} _{\lambda_i}$ be the associated quantum seeds in $\overline {\mathscr A}_{\omega}^{\rm qc}(\mathbb P_{k+2})$ and $\overline {\mathscr A}_{\omega}(\mathbb P_{k+2})$, respectively. 
Denote the vertices of the triangle $(i,i+1,i+2)$ by $10,11,20,21,22,\cdots$, as illustrated in Figure~\ref{Fig:lambdai}, and the quantum cluster variables of $\overline {\mathscr A}_{\omega}^{\rm qc}(\mathbb P_{k+2})$ corresponding to these vertices of $\overline {\mathsf{s}} _{\lambda_i}^{\rm qc}$ by 
$$\overline A^{\rm qc}_{10}\langle i\rangle, \overline A^{\rm qc}_{11}\langle i\rangle, \overline A^{\rm qc}_{10}\langle i\rangle, \overline A^{\rm qc}_{21}\langle i\rangle, \overline A^{\rm qc}_{22}\langle i\rangle,\cdots$$  respectively. For any $j,s$ with $1\leq s\leq j\leq n-1$, denote by $\overline A_{js}\langle i\rangle$ the quantum cluster variable of $\overline {\mathscr A}_{\omega}(\mathbb P_{k+2})$ corresponding to $\overline A_{js}^{\rm qc}\langle i\rangle$ via the quasi-isomorphism $f$ in \eqref{eq:quasi-iso}.

\begin{figure}[h]
    \centering
    \includegraphics[width=150pt]{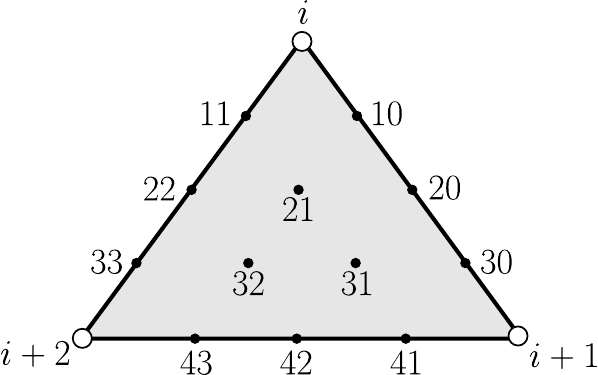}
    \caption{Labels of the vertices inside the triangle $(i,i+1,i+2)$ in $\lambda_i$ when $n=4$.}\label{Fig:lambdai}
\end{figure}

For any integers $j,s$ with $1 < s \leq j \leq n-1$, define
\begin{equation}\label{eq:mutationdiamond}
    \widetilde{\mu}_{(j,s)}^{\diamondsuit}
=    (\mu_{n-1,s-1}\cdots \mu_{j+1,s-1}\mu_{j,s-1})\cdots
    (\mu_{n-s+2,s-1}\cdots\mu_{j-s+4,2} \mu_{j-s+3,2})
    (\mu_{n-s+1,1}\cdots \mu_{j-s+3,1} \mu_{j-s+2,1}).
\end{equation}
See Figure~\ref{Fig:mu} for a pictorial illustration of $\widetilde{\mu}_{(j,s)}^{\diamondsuit}$.

\begin{figure}[h]
    \centering
    \includegraphics[width=120pt]{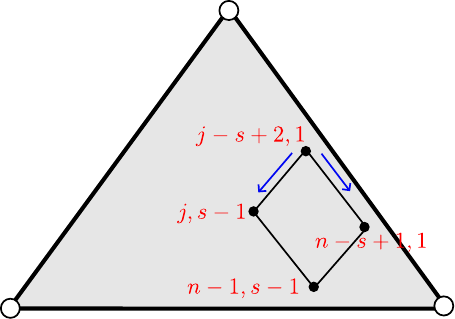}
    \caption{A pictorial illustration of $\widetilde{\mu}_{(j,s)}^{\diamondsuit}$}\label{Fig:mu}
\end{figure}

\begin{lemma}\label{lem:quasi2}
For any integers $j, s$ with $1 \leq s \leq j \leq n-1$ and any $i$ with $2 \leq i \leq k$, the following identities hold:

\begin{enumerate}[label={\rm (\alph*)}, itemsep=0.3em]

\item 
\begin{equation}
\overline{A}^{\mathrm{qc}}_{js}\langle i\rangle
=
\Bigl[
\overline{A}_{js}\langle i\rangle \cdot 
\overline{A}^{-1}_{ns}(i)\cdot \overline{A}^{-1}_{nj}(i-1) \cdot
\prod_{t=1}^{i-1} \bigl( \overline{A}_{n,n-s-t+1}(i-t) \cdot \overline{A}^{-1}_{n,j-s-t+1}(i-t) \bigr)
\cdot
\prod_{t=1}^{i-1} \bigl( \overline{A}^{-1}_{n,n-t+1}(i-t) \cdot \overline{A}_{n,j-t+1}(i-t) \bigr)
\Bigr].
\end{equation}

\item For $1<s$, the following equality holds for the seeds $\widetilde{\mu}_{(j,s)}^{\diamondsuit}(\overline {\mathsf{s}} _{\lambda}^{\rm qc})$ and $\widetilde{\mu}_{(j,s)}^{\diamondsuit}(\overline {\mathsf{s}} _{\lambda})$
\begin{equation}
\widetilde{\mu}_{(j,s)}^{\diamondsuit}\bigl(\overline{A}^{\mathrm{qc}}_{n-1,s-1}(1)\bigr)
=
\Bigl[
\widetilde{\mu}_{(j,s)}^{\diamondsuit}\bigl(\overline{A}_{n-1,s-1}(1)\bigr) \cdot 
\overline{A}^{-1}_{j-s+1,0}(1) \cdot 
\overline{A}^{-1}_{ns}(1)
\Bigr].
\end{equation}

\item For $1<s$, the following equality holds for the seeds $\widetilde{\mu}_{(j,s)}^{\diamondsuit}(\overline {\mathsf{s}} _{\lambda_i}^{\rm qc})$ and $\widetilde{\mu}_{(j,s)}^{\diamondsuit}(\overline {\mathsf{s}} _{\lambda_i})$
\begin{equation}
\begin{split}
\widetilde{\mu}_{(j,s)}^{\diamondsuit}\bigl(\overline{A}^{\mathrm{qc}}_{n-1,s-1}\langle i\rangle\bigr)
= \Bigl[ &
\widetilde{\mu}_{(j,s)}^{\diamondsuit}\bigl(\overline{A}_{n-1,s-1}\langle i\rangle\bigr) \cdot 
\overline{A}^{-1}_{n,j-s+1}(i) \cdot 
\overline{A}^{-1}_{ns}(i) \\
& \cdot 
\prod_{t=1}^{i-1} \bigl( \overline{A}_{n,n-s-t+1}(i-t) \cdot \overline{A}^{-1}_{n,j-s-t+1}(i-t) \bigr) \\
& \cdot 
\prod_{t=1}^{i-1} \bigl( \overline{A}_{n,n-s-t+2}(i-t) \cdot \overline{A}^{-1}_{n,j-s-t+2}(i-t) \bigr)
\Bigr].
\end{split}
\end{equation}

\end{enumerate}
\end{lemma}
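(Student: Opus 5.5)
The plan is to use extended $g$-vectors and the fact that a quasi-isomorphism acts on cluster variables by a monomial correction in frozen variables. This is the approach suggested in the remark after Lemma~\ref{lem-gL}. Proposition~\ref{prop-quasi-polygon} says that $f$ satisfies conditions (1)--(4) of Proposition~\ref{prop:quasi-iso}. Hence for any mutation sequence $\mu$ and any vertex $v$, we have
\[
f\bigl(\mu(\overline A^{\rm qc}_v)\bigr)=\bigl[\mu(\overline A_v)\cdot \overline A^{{\bf h}}\bigr],
\]
where ${\bf h}$ is supported on frozen vertices. The corrections ${\bf h}$ are determined by the extended $g$-vectors. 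If $\mu(\overline A_v)$ has extended $g$-vector ${\bf g}$ with respect to $\overline{\mathsf s}_\lambda$, then $\mu(\overline A^{\rm qc}_v)$ has the same $g$-vector ${\bf g}$ with respect to $\overline{\mathsf s}^{\rm qc}_\lambda$. So its image is $[\prod_v (\overline A^{\rm qc}_v)^{g_v}]$ times the same $F$-polynomial, and by Lemma~\ref{lem:qc3} the $\hat y$-variables agree on both sides. Expanding each $\overline A^{\rm qc}_v$ via \eqref{eq:qc} then gives ${\bf h}=\sum_v g_v\,{\bf c}_v$. Here ${\bf c}_v$ is the frozen correction vector of $\overline A^{\rm qc}_v$ read off from \eqref{eq:qc}, namely ${\bf c}_v=0$ if $j=n$ or ($s=0$, $i=1$), and otherwise the exponent vector of $\overline A^{-1}_{ns}(i)\overline A^{-1}_{j0}(1)\prod_t(\cdots)$. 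Each identity therefore reduces to computing the $g$-vector of the relevant cluster variable and summing.

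For (a), $\overline A_{js}\langle i\rangle$ is obtained from $\overline{\mathsf s}_\lambda$ by the flip sequence $\mu_{\lambda_i\lambda}$ of \eqref{def-eq-mul} at the arc $(1,i+1)$. I would compute its $g$-vector by tropicalizing along the Fock--Goncharov sweep $V^0,V^1,\dots,V^{n-2}$, following the structure in the proof of Lemma~\ref{lem-P4-flip-iso}. A more economical route is to imitate that lemma directly. Introduce formal variables $x_\ell,y_\ell$ recording the frozen factors $\overline A_{n\ell}(i-t)$ and $\overline A_{j0}(1)$ that appear in the corrections ${\bf c}_v$ of the quadrilateral $\Delta_{i-1}\cup\Delta_i$ and its neighbours. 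Then prove by induction on the layer $V^m$ that each mutated variable picks up exactly the product of $x$'s and $y$'s predicted by the statement. In each layer the local quiver has the shape of Figure~\ref{fig:quiver-v}, the two in-neighbours carry a common correction and the two out-neighbours carry another common correction, so the exchange relation is homogeneous in the frozen corrections and the correction propagates. At the end, the $\prod_{t}$ factors of \eqref{eq:qc} for the vertex $js^{i}$ combine with those coming from $js^{i-1}$-type vertices. This produces the two telescoping products in (a) together with the boundary factors $\overline A^{-1}_{ns}(i)\overline A^{-1}_{nj}(i-1)$.

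For (b) and (c), the same bookkeeping is applied to $\widetilde\mu^{\diamondsuit}_{(j,s)}$, starting from $\overline{\mathsf s}_\lambda$ and $\overline{\mathsf s}_{\lambda_i}$ respectively. I would first describe the local quivers along $\widetilde\mu^{\diamondsuit}_{(j,s)}$, namely the diagonal strips in Figure~\ref{Fig:mu}, by an induction analogous to \eqref{eq-Q1}--\eqref{eq-Q3} and Appendix~\ref{app-proof}. I would then run the same homogeneity induction, strip by strip, on the corrections. For (c), the input corrections of the seed $\overline{\mathsf s}^{\rm qc}_{\lambda_i}$ are those given by (a), which explains the extra products in (c) compared with (b). The main obstacle is the middle step: checking that at every mutation in $\widetilde\mu^{\diamondsuit}_{(j,s)}$ (and in $\mu_{\lambda_i\lambda}$) the corrections of the in- and out-neighbours really are equal. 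This is where the index shifts $n-s-t+1$ versus $n-s-t+2$ arise, and I expect a careful case split at the first and last mutation of each strip, where boundary/frozen vertices enter. To make the argument airtight I would work classically, with $\omega^{1/2}=1$ and $R=\mathbb Z$, as in the proof of Proposition~\ref{prop-flips-L}, since the correction exponents are independent of the quantum parameter.
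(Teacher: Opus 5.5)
Your reduction in the first paragraph rests on a false claim, and it produces the wrong correction. Let $D$ be the set of vertices $js^i$ with $j=n$, or with $s=0$ and $i=1$. These are the vertices whose arrows are deleted in $\overline Q^{\rm qc}_\lambda$, and for them ${\bf c}_v=0$. The quivers $\overline Q_\lambda$ and $\overline Q^{\rm qc}_\lambda$, and all their mutations at mutable vertices, agree outside the rows and columns indexed by $D$, and $D$ stays isolated in the qc quiver. Hence the two extended $g$-vectors agree off $D$, and the $F$-polynomials and $\hat y$'s agree. However, ${\bf g}^{\rm qc}|_D=0$, whereas ${\bf g}|_D$ is in general nonzero. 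In separation-formula terms, the tropical denominators $F|_{\mathbb P}(y)$ differ because the coefficient tuples differ. This is exactly what happens in the lemma. Corollary~\ref{cor:g2} gives $\mathbf e_{nj^{i-1}}+\mathbf e_{js^i}-\mathbf e_{j0^i}$ for the original seed versus $\mathbf e_{js^i}-\mathbf e_{j0^i}$ for the qc seed. Lemma~\ref{lem:g-vector111} gives $\mathbf e_{j-s+1,0}+\mathbf e_{n,s-1}+\mathbf e_{js}-\mathbf e_{j,s-1}$ versus $\mathbf e_{js}-\mathbf e_{j,s-1}$. The correct correction is ${\bf h}=\sum_v g_v{\bf c}_v-{\bf g}|_D$, where ${\bf g}$ is the extended $g$-vector in $\overline{\mathsf s}_\lambda$, not ${\bf h}=\sum_v g_v{\bf c}_v$. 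Your formula would make (b) come out with $\overline A_{n,s-1}(1)\overline A^{-1}_{ns}(1)$ instead of $\overline A^{-1}_{j-s+1,0}(1)\overline A^{-1}_{ns}(1)$. It would also give (a) without the factor $\overline A^{-1}_{nj}(i-1)$. You assert that this factor is produced, but it cannot come from $\sum_v g_v{\bf c}_v$: it is precisely the term $-{\bf g}|_D$.

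The homogeneity route you sketch has the same defect, so it is not just a matter of a careful case split. The corrections ${\bf c}_v$ do not form a grading for $\overline Q_\lambda$. By Lemma~\ref{lem:qc3}, for mutable $k$ one has $\sum_u\overline Q_\lambda(u,k){\bf c}_u=\sum_{u\in D}\overline Q_\lambda(u,k)\mathbf e_u$, which is nonzero whenever $k$ is adjacent to $D$. So the ``common correction'' check fails at exactly the boundary mutations you flagged. Lemma~\ref{lem-P4-flip-iso} avoids this because $f$ also rescales the boundary vertices themselves: the vertices $(n,i)$ receive $x_i^{-1}$. Those vertices are then removed by $P_1$, and $P_2$ substitutes $x_i\mapsto\overline A_{ni}$ (Lemma~\ref{lem-P4-flip-iso-1}). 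The analogue here is to give each $u\in D$ the weight $-\mathbf e_u$. This makes the weights a grading, so every exchange relation is automatically homogeneous. The degree of $\mu(\overline A_v)$ is then $\sum_{u\notin D}g_u{\bf c}_u-{\bf g}|_D$, and after specializing the $D$-variables (Lemma~\ref{lem-pro-mut}) you get the correct ${\bf h}$. Alternatively, do what the paper does. Compute the extended $g$-vector of each variable in its own seed, use proportionality from \cite{CHL}, and compare the two $g$-vector monomials, each of which occurs with coefficient $1$. With either fix, your planned $g$-vector computations along the Fock--Goncharov sweep and along $\widetilde\mu^{\diamondsuit}_{(j,s)}$ are essentially the paper's.
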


We left the proof of Lemma \ref{lem:quasi2} to Appendix \ref{app:lemmas1234}.


\subsection{Standard cluster variables for $\overline {\mathscr A}_{\omega}^{\rm qc}(\mathbb P_{k+2})$ }\label{sec:standard var}

For the triangle $\Delta_i=(1,i+1,i+2)$, let 
\begin{equation}\label{eq:Ii}
    I(\Delta_i):=\mathring{\overline V}_{\Delta_i}\sqcup\{11^i,22^i,\ldots, (n-1,n-1)^i\}
\end{equation}
be the set of small vertices in $\Delta_i$ which excludes the frozen vertices on the bottom and right edges of $\Delta_i$. We define a linear order on $I(\Delta_i)$ as follows: \[\text{$js\prec j's'$ in $I(\Delta_i)$ iff either $s<s'$, or $s=s'$  and $j> j'$}.\] 
For example, in Figure \ref{Fig:lambdai}, we have $n=4$ and 
\[I(\Delta_i)=\{31\prec 21\prec 11\prec 32 \prec 22 \prec 33\}.\]
Notice that the sequence $(3,2,1,3,2,3)$ of row indices of $(31,21,11,32,22,33)$ yields a reduced word for the longest element $w_0$ of the Weyl group of ${\rm SL}_4$. 

For the star-like triangulation $\lambda=\cup_{i=1}^k \Delta_i$ of $\mathbb P_{k+2}$, let \[I(\lambda):=\sqcup_{i=1}^k I(\Delta_i)=\mathring{\overline V}_{\lambda}\cup \{11^k,22^k,\ldots,(n-1,n-1)^k\}.\]
Note that $I(\Delta_i)\cap I(\Delta_i') =\emptyset$ for $i\neq i'$.
Then we extend the linear order on each $I(\Delta_i)$ for $1\leq i\leq k$ to a linear order on $I(\lambda)$ as follows: for any $x\in I(i')$ and $y\in I(i)$ with $i'<i$, we set $x\prec y$.

\begin{remark}
Observe that the minimal element in $I(\lambda)$ is $(n-1,1)^1$
 and the maximal element in $I(\lambda)$ is $(n-1,n-1)^k$. 
\end{remark}

We observe that the vertices in $I(\lambda)$ are arranged into $n-1$ rows. The vertices in the $j$-th row are given by the sequence
\[j1^1,\ldots,jj^1, j1^2,\ldots,jj^2,\ldots,j1^k,\ldots,jj^k,\]
which form a full subquiver as follows:
\[
\xymatrixrowsep{5mm}
\xymatrixcolsep{5mm}
\xymatrix{
 \fbox{$jj^k$}&&(j,j-1)^k\ar[ll] && \;\ldots\ldots\;\ar[ll]&&j2^1\ar[ll]&&j1^1\ar[ll] 
}
\]

For a vertex $js^i\in I(\lambda)$ in the $j$-th row, we introduce the following notations:
\begin{itemize}
    \item $d(v_1,v_2)$: the length of the path between the vertices $v_1$ and $v_2$ in the above quiver.
    \item $(js^i)^*$: the unique vertex in the above (ice) quiver satisfying 
    $d(jj^k, (js^i)^*)=d(j1^1,js^i)+1,$ 
    e.g., $(j1^1)^*=(j,j-1)^k$. By convention, we set $(jj^k)^*=\emptyset$ for the frozen vertex $jj^k$.  
    \item $\overset{\leftarrow}{\mu}_{js^i}$: the sequence of mutations defined by mutating the above (ice) quiver from the vertex $j1^1$ to  $(js^i)^*$ from right to left, i.e.,
    $ \overset{\leftarrow}{\mu}_{js^i}:=\mu_{(js^i)^*}\cdots \mu_{j2^1}\mu_{j1^1}.$
    By convention, we set  $\overset{\leftarrow}{\mu}_{jj^k}=\mathrm{id}$ for the frozen vertex $jj^k$.
\end{itemize}

Thus, we obtain a collection $\{\overset{\leftarrow}{\mu}_{js^i} \mid js^i \in I(\lambda)\}$ of mutation sequences. Furthermore, certain compositions of these mutation sequences will play a crucial role in the subsequent analysis.

Now we fix a vertex $js^i$ in the linearly ordered set $I(\lambda)=(I(\lambda),\prec)$ and define
\begin{equation}\label{eq:mustandard}
    \overset{\leftarrow}{\mu}_{\prec js^{i}}:=\prod_{\ell t^{i'}\prec js^{i}}\overset{\leftarrow}{\mu}_{\ell t^{i'}},
\end{equation}
where the factors in the above composition are ordered according to the increasing order of the indices with respect to $\prec$ from right to left.

For any $1\leq i'<k$, denote 
\begin{equation}\label{eq:mustandard1}
    \overset{\leftarrow}\mu(\Delta_{i'}):=\prod_{v\in I(\Delta_{i'})}\overset{\leftarrow}\mu_v
\end{equation}
with the factors are ordered according to the increasing order of the indices with respect to $\prec$ from right to left.

Thus, we have
\begin{equation}\label{eq:muleftarrow}
   \overset{\leftarrow}{\mu}_{\prec js^{i}}=\bigl(\prod_{\Delta_{i}\ni \ell t^{i}\; \prec js^{i}}\overset{\leftarrow}{\mu}_{\ell t^{i}}\bigr) \overset{\leftarrow}\mu(\Delta_{i-1})\cdots\overset{\leftarrow}\mu(\Delta_{2})\overset{\leftarrow}\mu(\Delta_{1}) 
\end{equation}


Recall that the vertices of $\mathbb P_{k+2}$ are labeled $1, \dots, k+2$, with indices regarded modulo $k+2$. Let $a, b$ be integers such that $a \not\equiv b \pmod{k+2}$, and let $1 \le i, j \le n$. There exists a unique properly embedded oriented arc $D^{(a,b)}$ (or simply $D^{ab}$) in $\mathbb P_{k+2}$, up to isotopy, starting from a interior point of the boundary edge $(a, a+1)$ to a interior point of the boundary edge $(b, b+1)$. We denote by $D^{(a,b)}_{(p,q)}$ (or simply $D^{ab}_{pq}$) the stated essential arc obtained by assigning state $p$ to the endpoint on edge $(a, a+1)$ and state $q$ to the endpoint on edge $(b, b+1)$. In particular, if $b=a+1$, then $D^{(a,b)}_{(p,q)}$ is a corner arc; see Figure~\ref{Fig:Pn-Dij}.

\begin{figure}[h]
    \centering
    \includegraphics[width=80pt]{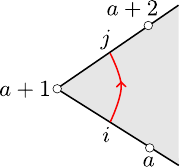}
    \caption{A pictorial illustration of $D_{ij}^{a,a+1}$.}\label{Fig:Pn-Dij}
\end{figure}

For a vertex $js^i\in I(\lambda)$, we introduce the following notations:
\begin{itemize}
    \item $\overline E^{\rm qc}(js^i)$: the cluster variable in the seed $\overset{\leftarrow}{\mu}_{\prec js^{i}}(\overline{\mathsf s}^{\rm qc}_\lambda)$ of $\overline {\mathscr A}^{\rm qc}_{\omega}(\mathbb P_{k+2})$ located at the vertex $j1^1\in I(\lambda)$.
    \item $\overline E(js^i)$: the cluster variable in the seed $\overset{\leftarrow}{\mu}_{\prec js^{i}}(\overline{\mathsf s}_\lambda)$ of $\overline {\mathscr A}_{\omega}(\mathbb P_{k+2})$ located at the vertex $j1^1\in I(\lambda)$.
    \item $D(js^i)$: the stated essential arc in $\overline{\mathscr{S}}_{\omega}(\mathbb{P}_{k+2})$ defined by $D(js^i):=D^{(a,b)}_{(p,q)}$, where the pairs $(a,b)$ and $(p,q)$ are determined by $js^i$ as follows:
\begin{equation}\label{eqn:D-ab-pq}
  (a,b):=\begin{cases}
    (i,i+2), & s=1,\\
    (i,i+1), & s>1,
  \end{cases} \quad \text{and} \quad (p,q):=\begin{cases}
      (j,1), & s=1,\\
      (j-s+1, \;n-s+1), & s>1.
  \end{cases}
\end{equation}
\end{itemize}

In particular, we have 
\begin{equation}\label{eq:Ej0}
\overline E^{\rm qc}(j1^1)=\overline A^{\rm qc}_{j1}(1), \quad \overline E^{\rm qc}(j1^i)=\overset{\leftarrow}\mu(\Delta_{i-1})\cdots\overset{\leftarrow}\mu(\Delta_{2})\overset{\leftarrow}\mu(\Delta_{1})(\overline A^{\rm qc}_{j1}(1)), 
\end{equation}
\begin{equation}\label{eq:Ej1}
  \overline E(j1^1)=\overline A_{j1}(1),\quad \overline E(j1^i)=\overset{\leftarrow}\mu(\Delta_{i-1})\cdots\overset{\leftarrow}\mu(\Delta_{2})\overset{\leftarrow}\mu(\Delta_{1})(\overline A_{j1}(1)).  
\end{equation}

The cluster variables in the sets $\{ \overline E^{\rm qc}(js^i)\mid js^i\in I(\lambda)\}$ and $\{ \overline E(js^i)\mid js^i\in I(\lambda)\}$ are referred to as the \textbf{standard (cluster) variables} of the cluster algebras $\overline {\mathscr A}^{\rm qc}_{\omega}(\mathbb P_{k+2})$ and $\overline {\mathscr A}_{\omega}(\mathbb P_{k+2})$, respectively.

\begin{remark}
By Proposition \ref{prop-quasi-polygon} and Theorem \ref{intro-thm-skein-inclusion-A}, one has 
$\overline{\mathscr{S}}_{\omega}(\mathbb{P}_{k+2})\subseteq \overline{\mathscr{A}}_{\omega}(\mathbb{P}_{k+2})
=\overline{\mathscr{A}}^{\rm qc}_{\omega}(\mathbb{P}_{k+2}).$
Furthermore, every cluster variable in $\overline{\mathscr{A}}^{\rm qc}_{\omega}(\mathbb{P}_{k+2})$ is proportional to a cluster variable of $\overline{\mathscr{A}}_{\omega}(\mathbb{P}_{k+2})$ in the sense of Definition \ref{def:proportional}. In addition, each stated essential arc representing an element of $\overline{\mathscr{S}}_{\omega}(\mathbb{P}_{k+2})$ is also proportional to a cluster variable in $\overline{\mathscr{A}}_{\omega}(\mathbb{P}_{k+2})$.
\end{remark}

\medskip

In the remainder of this section, we prove the equality
\[
\overline{\mathscr{S}}_{\omega}(\mathbb{P}_{k+2}) = \overline{\mathscr{A}}_{\omega}(\mathbb{P}_{k+2}).
\]
In light of the preceding remark, it suffices to establish the reverse inclusion $\overline{\mathscr{A}}_{\omega}(\mathbb{P}_{k+2}) \subseteq \overline{\mathscr{S}}_{\omega}(\mathbb{P}_{k+2})$. The key steps of the proof are outlined below.

\textbf{Step (i):} We first demonstrate that the cluster algebra $\overline{\mathscr{A}}_{\omega}(\mathbb{P}_{k+2})$ is generated by the standard elements $\{\overline{E}(js^i) \mid js^i\in I(\lambda)\}$, along with the frozen variables and their inverses. This follows directly from the analogous result for $\overline{\mathscr{A}}^{\rm qc}_{\omega}(\mathbb P_{k+2})$ (see Proposition \ref{pro:standard}).

\textbf{Step (ii):} We then verify that $\overline{E}(js^i)\asymp D(js^i) \in \overline{\mathscr{S}}_{\omega}(\mathbb{P}_{k+2})$ (or equivalently, $\overline{E}^{\rm qc}(js^i) \asymp D(js^i)$) for all $js^i\in I(\lambda)$; see Theorem \ref{thm:standardgenerators}. 

To execute Step (ii), recall that for $2\leq i\leq k$, 
$\lambda_i$ denotes the triangulation obtained from $\lambda$ by flipping the arc $(1,i+1)\in \lambda$, and  $\overline {\mathsf{s}} _{\lambda_i}^{\rm qc}$ denotes the quantum seed of $\overline {\mathscr A}_{\omega}^{\rm qc}(\mathbb P_{k+2})$ associated with the triangulation $\lambda_i$. We will also consider the quantum seed  
    \begin{equation}\label{eq:ti}
        \overline {\mathsf{t}} _{i}^{\rm qc}:=\overset{\leftarrow}\mu(\Delta_{i-1})\cdots\overset{\leftarrow}\mu(\Delta_{2})\overset{\leftarrow}\mu(\Delta_{1})(\overline {\mathsf{s}} _{\lambda}^{\rm qc})
    \end{equation}
    in $\overline {\mathscr A}_{\omega}^{\rm qc}(\mathbb P_{k+2})$. By convention, for $i=1$, we set $\overline {\mathsf{s}} _{\lambda_1}^{\rm qc}=\overline {\mathsf{s}} _{\lambda}^{\rm qc}=\overline {\mathsf{t}} _{1}^{\rm qc}$. 

Let $\Delta_i'$ denote the triangle in the triangulation $\lambda_i$ with vertices $(i,i+1,i+2)$, and define
$$I(\Delta_i'):=\mathring{\overline V}_{\Delta_i'}\sqcup\{11,22,\ldots, (n-1,n-1)\}\quad \text{(see Figure~\ref{Fig:lambdai}).}$$ 

The argument relies on the following key observations:

\begin{enumerate}
    \item The seeds $\overline {\mathsf{s}} _{\lambda_i}^{\rm qc}$ and $\overline {\mathsf{t}} _{i}^{\rm qc}$ share a common sub-seed (see Definition \ref{def:subseed}), which we denote by $\overline {\mathsf{t}} _{i}^{\rm sub}$ (see Lemma \ref{lem:B-d}). This sub-seed is supported on the vertex sets $I(\Delta_i')$ and $I(\Delta_1)$, respectively, with the vertex sets $\{11,22,\cdots, (n-1,n-1)\}$ and $\{11^1,22^1,\cdots, (n-1,n-1)^1\}$, respectively, frozen.
    \item Both the standard variable $\overline E^{\rm qc}(js^i)$ and the stated essential arc $D(js^i)$ can be derived from the sub-seed $\overline {\mathsf{t}} _{i}^{\rm sub}$ via specific, explicit mutation sequences (see Lemmas \ref{lem:B-d}, \ref{lem:B4}, and \cite[Theorem~1.2]{huang2025quantum}).
    \item Lemmas \ref{lem:B-d} and \ref{lem:B4} are established through the direct computation of $g$-vectors, a process made feasible by the explicit nature of the mutation sequences involved.
\end{enumerate}

\medskip

\medskip

It is shown in \cite[Theorem 8.10(1),(3)]{Q2024} that the $R$-algebra
$\overline{\mathscr A}^{\mathrm{qc}}_{\omega}(\mathbb P_{k+2})
=
\overline{\mathscr U}^{\mathrm{qc}}_{\omega}(\mathbb P_{k+2})$
is generated by the standard cluster variables,  frozen variables and the inverse of frozen variables  in 
$\overline{\mathscr A}_{\omega}(\mathbb P_{k+2})$. 
Proposition~\ref{prop-quasi-polygon} then implies the following.

\begin{proposition}\label{pro:standard}
For any integer $k\geq 1$, the quantum cluster algebra $\overline{\mathscr{A}}_{\omega}(\mathbb{P}_{k+2})$ coincides with the quantum upper cluster algebra $\overline{\mathscr{U}}_{\omega}(\mathbb{P}_{k+2})$.
Furthermore, as an $R$-algebra, it is generated by
\begin{itemize}
    \item the standard cluster variables $\overline{E}(js^i)$ for all $js^i\in I(\lambda)$, and
    \item the frozen variables and their inverses: $\overline{A}^{\pm1}_{j0}(1)$, $\overline{A}^{\pm1}_{ns}(i)$, and $\overline{A}^{\pm1}_{jj}(k)$,
          where $j,s\in \{1,\dots,n-1\}$ and $i\in \{1,\dots,k\}$.
\end{itemize}
\end{proposition}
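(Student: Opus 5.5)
The plan is to transport the result of \cite[Theorem 8.10(1),(3)]{Q2024} for $\overline{\mathscr A}^{\rm qc}_\omega(\mathbb P_{k+2})$ across the quasi-isomorphism $f$ of Proposition~\ref{prop-quasi-polygon}. First, I will check that the seed $\overline{\mathsf s}^{\rm qc}_\lambda$ is exactly the seed considered in \cite{Q2024}. Its exchange quiver $\overline Q^{\rm qc}_\lambda$ is obtained from $\overline Q_\lambda$ by deleting all arrows at the frozen vertices $n s^i$ and $j0^1$. The remaining frozen vertices are the $jj^k$ on the right edge. This is the ``double Bruhat cell / $w_0$-reduced word'' type quiver, obtained by concatenating $k$ copies of the reduced word $(n-1,\dots,1,n-1,\dots,2,\dots,n-1)$, and it is the setting of \cite{SW21,Q2024}. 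The quantum compatibility needed there is supplied by Lemma~\ref{lem:compatible}. Granting this identification, \cite{Q2024} gives $\overline{\mathscr A}^{\rm qc}_\omega=\overline{\mathscr U}^{\rm qc}_\omega$. It also gives generation by the standard variables $\overline E^{\rm qc}(js^i)$ (defined via the maximal green sequences $\overset{\leftarrow}{\mu}_{\prec js^i}$, matching \eqref{eq:mustandard}) together with the frozen variables and their inverses.

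Second, I transfer both statements along $f$. Since all frozen variables are inverted on both sides, $f$ is an isomorphism of $R$-algebras; Proposition~\ref{prop-quasi-polygon} already records this as the equality $\overline{\mathscr A}^{\rm qc}_\omega=\overline{\mathscr A}_\omega$ inside $\mathrm{Frac}(\overline\cS_\omega(\mathbb P_{k+2}))$. Here I use the standard property of quasi-isomorphisms \cite{KWQ,CHL}: $f$ commutes with mutations and sends each cluster variable of a seed to the corresponding cluster variable times a Laurent monomial in frozen variables. It therefore sends each Laurent torus $\mathbb T(\mathsf s^{\rm qc})$ onto $\mathbb T(\mathsf s)$ for corresponding seeds, since frozen variables are invertible. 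Hence $f$ maps $\overline{\mathscr U}^{\rm qc}_\omega$ onto $\overline{\mathscr U}_\omega$, and $\overline{\mathscr A}_\omega=\overline{\mathscr U}_\omega$ follows. For generation, $f(\overline E^{\rm qc}(js^i))=\overline E(js^i)\cdot(\text{frozen monomial})$, because both are obtained by the same mutation sequence from corresponding seeds. Since the frozen variables are invertible generators, the images of the generators of \cite{Q2024} generate the same algebra as the $\overline E(js^i)$ together with $\overline A^{\pm1}_{j0}(1)$, $\overline A^{\pm1}_{ns}(i)$, $\overline A^{\pm1}_{jj}(k)$. The $\overline A^{\rm qc}$ frozen variables are also monomials in these by \eqref{eq:qc}.

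The main obstacle is the first step. I must verify that the quiver and the standard-variable conventions here coincide with those in \cite{Q2024}, in particular that the row-by-row ordering $\prec$ on $I(\lambda)$ and the mutation sequences $\overset{\leftarrow}{\mu}_{js^i}$ reproduce that paper's standard cluster variables. I would check this triangle by triangle: first observe that the row-$j$ subquiver is the linear $A$-type chain drawn above, and then match the inter-row arrows of $\overline Q^{\rm qc}_\lambda$ with those dictated by the reduced word. Finally, I note that the frozen sets agree, once the isolated frozen vertices created by the arrow deletion are accounted for as invertible coefficients.
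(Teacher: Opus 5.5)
Your proposal is correct and takes the same route as the paper: cite \cite[Theorem~8.10(1),(3)]{Q2024} for $\overline{\mathscr A}^{\rm qc}_\omega(\mathbb P_{k+2})$, then transfer both $\mathscr A=\mathscr U$ and the generating set through the quasi-isomorphism of Proposition~\ref{prop-quasi-polygon}. The paper takes the identification of the seed $\overline{\mathsf s}^{\rm qc}_\lambda$ and of the standard variables with Qin's setting as given by the citation, so your flagged verification step, and your details on the tori and frozen monomials, are elaboration the paper leaves implicit rather than a deviation from it.
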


\begin{lemma}\label{lem:B-d}
For each integer $i$ with $2 \le i \le k$, the following assertions hold:

\begin{enumerate}[label={\rm (\alph*)}]\itemsep0.3em
\item In the quiver of $\overline{\mathsf{s}}_{\lambda_i}^{\rm qc}$, there are no arrows connecting vertices in $\mathring{\overline{V}}_{\Delta_i'}$ to those outside \[
I(\Delta_i'):= \mathring{\overline{V}}_{\Delta_i'} \sqcup \big\{11, \dots, (n-1,n-1)\big\}.
\] Consequently, we can define a sub-seed $\overline{\mathsf{s}}_{i}^{\rm sub}$ of $\overline{\mathsf{s}}_{\lambda_i}^{\rm qc}$ supported on
$I(\Delta_i')$
with the vertex set $\{11,22,\dots,(n-1,n-1)\}$ frozen.

\item In the quiver of $\overline{\mathsf{t}}_{i}^{\rm qc}$, there are no arrows connecting vertices in $\mathring{\overline{V}}_{\Delta_1}$ to those outside $I(\Delta_1)$. 
Consequently, we can define a sub-seed $\overline{\mathsf{t}}_{i}^{\rm sub}$ of $\overline{\mathsf{t}}_{i}^{\rm qc}$ supported on $I(\Delta_1)$, with the vertex set $\{11^1,\dots, (n-1,n-1)^1\}$ frozen.

\item Under the vertex identification $js^1 \mapsto js$ from $I(\Delta_1)$ to $I(\Delta_i')$, we have $\overline{\mathsf{t}}_{i}^{\rm sub} = \overline{\mathsf{s}}_{i}^{\rm sub}$. Equivalently, these two sub-seeds share identical cluster variables and quivers. In particular, the cluster variables of $\overline{\mathsf{t}}_{i}^{\rm sub}$ are precisely
\[
\big\{\,\overline{A}^{\rm qc}_{js}\langle i\rangle \,\big|\, js \in I(\Delta_i')\,\big\},
\]
and by \eqref{eq:Ej0}, we have 
\[
\overline{E}^{\rm qc}(j1^i) = \overline{A}^{\rm qc}_{j1}\langle i\rangle.
\]
\end{enumerate}
\end{lemma}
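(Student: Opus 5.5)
Part (a) is local to the quiver $\overline Q^{\rm qc}_{\lambda_i}$. The quiver $\overline Q_{\lambda_i}$ is obtained by gluing the triangle quivers $\Gamma_\tau$. In $\lambda_i$ the triangle $\Delta_i'=(i,i+1,i+2)$ has two boundary edges, $(i,i+1)$ and $(i+1,i+2)$, and one interior edge $(i,i+2)$. The vertices on $(i,i+2)$ are exactly $11,22,\dots,(n-1,n-1)$. The interior vertices $\mathring{\overline V}_{\Delta_i'}$ of $\Gamma_{\Delta_i'}$ are therefore adjacent only to
\begin{itemize}
\item other interior vertices of $\Delta_i'$,
\item the vertices $11,\dots,(n-1,n-1)$ on $(i,i+2)$,
\item the frozen vertices on the two boundary edges.
\end{itemize}
The last group lies on boundary edges of $\mathbb P_{k+2}$, so these vertices are of the form $ns^{i'}$ or $j0^1$. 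In $\overline Q^{\rm qc}$ all arrows incident to such vertices are deleted by definition. I will check this against the labelling of Figure~\ref{Fig:lambdai}, via the change of coordinates between $\lambda_i$ and the star triangulation: the vertices on the two boundary edges of $\Delta_i'$ are frozen boundary vertices of $\mathbb P_{k+2}$. Since the qc-quiver differs from the original only at arrows incident to those frozen vertices, (a) follows. The sub-seed $\overline{\mathsf s}^{\rm sub}_i$ is then defined by Definition~\ref{def:subseed}.

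For (b) and (c) I would argue that $\overline{\mathsf t}^{\rm qc}_i$ and $\overline{\mathsf s}^{\rm qc}_{\lambda_i}$ differ by mutations away from $\Delta_1$, and compare them through a common seed. The sequence $\overset{\leftarrow}\mu(\Delta_{i-1})\cdots\overset{\leftarrow}\mu(\Delta_1)$ is built from row-wise mutations running through all triangles. My plan is to prove by induction on $i$ that the quiver of $\overline{\mathsf t}^{\rm qc}_i$, restricted to the rows $j1^1,\dots,jj^1$, has the same shape as the $\Gamma_{\Delta_i'}$-quiver. The induction should also show that arrows from $\mathring{\overline V}_{\Delta_1}$ leave $I(\Delta_1)$ only to frozen vertices, and these arrows disappear in the qc-version.

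The main obstacle is this quiver computation and the identification of the cluster variables. I would handle it through $g$-vectors, as announced in the text (observation (3)). I will compute the extended $g$-vectors of the cluster variables at $I(\Delta_1)$ in $\overline{\mathsf t}^{\rm qc}_i$ with respect to $\overline{\mathsf s}^{\rm qc}_\lambda$, using the row-by-row structure of $\overset{\leftarrow}\mu$: each row is a type $A$ path, and its $g$-vectors are given by a shift. I will compare these with the $g$-vectors of $\overline A^{\rm qc}_{js}\langle i\rangle$, which are obtained from the flip $\lambda\to\lambda_i$ by means of $\mu_{\lambda_i\lambda}$ and Lemma~\ref{lem:quasi2}(a).

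Equality of $g$-vectors (together with the sign-coherence/uniqueness results for $g$-vectors in \cite{GHKK}) gives equality of the cluster variables, since cluster variables are determined by their extended $g$-vectors. The exchange matrices of the two sub-seeds agree by the duality between $g$-vectors and $c$-vectors, or directly by the same induction on $i$. The quantum matrices $\Pi$ are then determined by the variables, which proves (c). The final claim $\overline E^{\rm qc}(j1^i)=\overline A^{\rm qc}_{j1}\langle i\rangle$ follows by comparing (c) with \eqref{eq:Ej0}.
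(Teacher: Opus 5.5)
\textbf{Overall.} Your outline follows the paper's route. Part (a) comes from the definition of the qc-seed: isolated frozen vertices stay isolated under mutation, so the quiver of $\overline{\mathsf s}^{\rm qc}_{\lambda_i}$ is $\overline Q_{\lambda_i}$ with the arrows at the $ns^{i'}$ and $j0^1$ removed, and the boundary vertices of $\Delta_i'$ are $ns^{i-1}$ and $ns^{i}$. Part (c) comes from comparing $g$-vectors with respect to $\overline{\mathsf s}^{\rm qc}_\lambda$, as the paper does via Corollaries~\ref{cor:g2} and \ref{cor:g1}, together with injectivity of $g$-vectors.

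\textbf{The gap is in (b).} This is where the content of the lemma lies, and the induction you propose would not close.
\begin{itemize}
\item Your inductive statement only tracks the quiver restricted to $I(\Delta_1)$. Every sweep $\overset{\leftarrow}{\mu}_{js^{i}}$ starts at $j1^1$ but runs along the whole $j$-th row through $\Delta_2,\Delta_3,\dots$ up to $(js^{i})^*$; for $s=i=1$ it goes all the way to $(j,j-1)^k$.
\item Mutations at vertices such as $j1^2$, which is adjacent to $jj^1=j0^2$, change arrows incident to $I(\Delta_1)$. How they change them depends on the quiver far outside $I(\Delta_1)$, so the restricted statement cannot be propagated from $i$ to $i+1$.
\item Showing that arrows from $\mathring{\overline V}_{\Delta_1}$ leaving $I(\Delta_1)$ go ``only to frozen vertices'' would also not suffice. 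The arrows at the frozen vertices $jj^k$ are not deleted in $\overline Q^{\rm qc}_\lambda$, so arrows to them would have to be ruled out separately.
\end{itemize}

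\textbf{The missing idea} is the stronger invariant of Lemma~\ref{lem:subseed}. In $\overline{\mathsf t}^{\rm qc}_i$, for every $\ell\le k+1-i$, the whole block $\bigcup_{j\le \ell}I(\Delta_j)$ is decoupled from the rest except through $\{11^\ell,\dots,(n-1,n-1)^\ell\}$, and it carries the star-like quiver $Q^\ell_{\rm sub}$ of \eqref{eq:Qlambda}. In other words, $\overset{\leftarrow}{\mu}(\Delta_1)$ peels off one triangle and leaves the star-like configuration of a polygon with one fewer triangle. On that configuration $\overset{\leftarrow}{\mu}(\Delta_2)$ acts exactly as $\overset{\leftarrow}{\mu}(\Delta_1)$ did, since the endpoints $(js^2)^*$ land in the matching positions, and so on. The paper proves this by iterating the row lemmas \ref{lem:mutation-row1}--\ref{lem:mutation-row3} on the concatenated quivers. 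It then takes $\ell=1$ for (b) and for the quiver part of (c).

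\textbf{The $g$-vector route does not bypass this.}
\begin{itemize}
\item The row-wise ``shift'' formula (Lemma~\ref{lem:g-vector-formula1}) needs the sweeps to be green \cite{SW21}. It also needs to know that, at the moment $js^{i}$ is mutated, its only negative neighbours are $j(s\pm1)^{i}$, which is again the quiver computation.
\item The $g$/$c$-duality $G_tB_t=B^0C_t$ would need the full $G$-matrix of $\overline{\mathsf t}^{\rm qc}_i$, not only its columns at $I(\Delta_1)$. So it cannot recover the subquiver on $I(\Delta_1)$.
\end{itemize}
Once Lemma~\ref{lem:subseed} is established, the rest of your plan goes through as in the paper.
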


The proof of Lemma \ref{lem:B-d} is deferred to Appendix \ref{app:lemmas1234}.


Note that the $j$-th row of the quiver of the sub-seed $\overline{\mathsf{t}}_{i}^{\rm sub}$, which is supported on the vertex set $\{j1^1,j2^1,\ldots,jj^1\}$, is given below:
\[
\xymatrixrowsep{5mm}
\xymatrixcolsep{5mm}
\xymatrix{
jj^1 &&(j,j-1)^1\ar[ll] && \;\ldots\ldots\;\ar[ll]&&j2^1\ar[ll]&&j1^1\ar[ll]
}
\]

We now introduce two families of mutation sequences associated to the sub-seed $\overline{\mathsf{t}}_{i}^{\rm sub}$:
\begin{itemize}
    \item For all integers $d,j$ satisfying $1 \le d < j\leq n-1$, define
    $\mu^1_{(j,d)}:=\mu_{jd^1}\cdots \mu_{j2^1}\mu_{j1^1}.$
    \item For every integer $\ell$ with $1\leq \ell\leq n-2$, set
    $\mu_{\ell}^{\rm up}:=\mu^1_{(\ell+1,1)}\cdots \mu^1_{(n-2,n-2-\ell)} \mu^1_{(n-1,n-1-\ell)}.$
\end{itemize}

\begin{lemma}\label{lem:B4}
For any $js^i\in I(\lambda)$ with $s\geq 2$, the standard variable $\overline{E}^{\rm qc}(js^i)$ in $\overline{\mathscr{A}}^{\rm qc}_{\omega}(\mathbb{P}_{k+2})$ can be obtained from the sub-seed $\overline{\mathsf{t}}_{i}^{\rm sub}$ in the following two ways:
\begin{enumerate}[label={\rm (\alph*)}]\itemsep0.3em
\item The standard variable $\overline{E}^{\rm qc}(js^i)$ coincides with the cluster variable at vertex $j1^1\in I(\Delta_1)$ in the seed
$\mu_{s-1}^{\rm up}\cdots\mu_2^{\rm up}\mu_{1}^{\rm up}(\overline{\mathsf{t}}_{i}^{\rm sub})$.
In particular, with respect to the initial seed $\overline{\mathsf{s}}_{\lambda}^{\rm qc}$, we have
\begin{equation}
\overline{E}^{\rm qc}(js^i) = \mu_{s-1}^{\rm up}\cdots\mu_2^{\rm up}\mu_{1}^{\rm up}\circ \overleftarrow{\mu}(\Delta_{i-1})\cdots\overleftarrow{\mu}(\Delta_{2})\overleftarrow{\mu}(\Delta_{1})(\overline{A}^{\rm qc}_{j1}(1)).
\end{equation}

\item The standard variable $\overline{E}^{\rm qc}(js^i)$ coincides with the cluster variable at vertex $(n-1,s-1)^1\in I(\Delta_1)$ in the seed
$\widetilde{\mu}_{(j,s)}^{\diamondsuit}(\overline{\mathsf{t}}_{i}^{\rm sub})$,
where $\widetilde{\mu}_{(j,s)}^{\diamondsuit}$ is defined in \eqref{eq:mutationdiamond}.
In particular, with respect to the seed $\overline{\mathsf{s}}_{\lambda_i}^{\rm qc}$ containing $\overline{\mathsf{t}}_{i}^{\rm sub}$ as a sub-seed, we have
\[
\overline{E}^{\rm qc}(js^i) = \widetilde{\mu}_{(j,s)}^{\diamondsuit}(\overline{A}^{\rm qc}_{n-1,s-1}\langle i\rangle).
\]
\end{enumerate}
\end{lemma}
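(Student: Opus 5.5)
Both parts reduce to computations inside the sub-seed $\overline{\mathsf t}_i^{\rm sub}$ from Lemma~\ref{lem:B-d}. That sub-seed is supported on $I(\Delta_1)$ with the diagonal vertices $11^1,\dots,(n-1,n-1)^1$ frozen, and by Lemma~\ref{lem:B-d}(c) it coincides with $\overline{\mathsf s}_i^{\rm sub}\subset\overline{\mathsf s}_{\lambda_i}^{\rm qc}$. Its mutable part is a type-$A$ triangular grid, so every cluster variable reached by mutating inside it is determined, as an element of $\overline{\mathscr A}^{\rm qc}_\omega(\mathbb P_{k+2})$, by its extended $g$-vector with respect to $\overline{\mathsf t}_i^{\rm sub}$. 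The sub-seed lemma (mutations of a sub-seed agree with mutations of the ambient seed) then transfers all identities back to $\overline{\mathsf s}_\lambda^{\rm qc}$ and $\overline{\mathsf s}_{\lambda_i}^{\rm qc}$. The plan is therefore to compute $g$-vectors of three things and compare them: the standard variable $\overline E^{\rm qc}(js^i)$, the variable from (a), and the variable from (b).

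For (a), I unwind the definition \eqref{eq:muleftarrow}: $\overleftarrow{\mu}_{\prec js^i}$ equals the product of the $\overleftarrow{\mu}_{\ell t^i}$ with $\ell t^i\prec js^i$ inside $\Delta_i$, applied after $\overleftarrow\mu(\Delta_{i-1})\cdots\overleftarrow\mu(\Delta_1)$, and the latter produces $\overline{\mathsf t}_i^{\rm qc}$. Since $\prec$ orders $I(\Delta_i)$ first by $s$ and then by decreasing $j$, the factors with $t<s$ group into blocks $t=1,\dots,s-1$, each block running over rows $\ell=n-1,\dots,t$. Two checks are needed. First, each row sequence $\overleftarrow{\mu}_{\ell t^i}$ runs along row $\ell$ of the whole polygon, from $\ell1^1$ to $(\ell t^i)^*$. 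I will show that, after the preceding mutations, only its portion inside the $\Delta_1$-part of $\overline{\mathsf t}_i^{\rm qc}$ (under the identification of Lemma~\ref{lem:B-d}) changes the variable that ends up at $j1^1$. The remaining mutations either occur at vertices disconnected from the relevant full subquiver or act as a cyclic shift of the row that only relabels positions. Second, after this reduction the block for $t$ becomes exactly $\mu^{\rm up}_t=\mu^1_{(t+1,1)}\cdots\mu^1_{(n-1,n-1-t)}$. Reading off the variable at $j1^1$ then gives (a). I expect the first check to be the main obstacle. My approach is to track the quiver row by row, in the style of \eqref{eq-Q1}--\eqref{eq-Q3} and Appendix~\ref{app-proof}, using induction on $t$. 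The inductive claim is that after $\mu_{t-1}^{\rm up}\cdots\mu^{\rm up}_1$ the mutable part is again a grid of the same shape, shifted by one column, with the frozen diagonal attached in the same way.

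For (b), I compute the $g$-vector of the cluster variable at $(n-1,s-1)^1$ in $\widetilde\mu^{\diamondsuit}_{(j,s)}(\overline{\mathsf t}_i^{\rm sub})$ and the $g$-vector of the variable in (a), both with respect to $\overline{\mathsf t}_i^{\rm sub}$. The sequence $\widetilde\mu^{\diamondsuit}_{(j,s)}$ consists of $s-1$ diagonal strips (Figure~\ref{Fig:mu}), and each strip is a maximal-green-type sequence along a line of the grid. I will use the tropical (sign-coherent) $g$-vector recursion, checking at each step the sign of the relevant $c$-vector; this is the same method the remark after Lemma~\ref{lem-gL} indicates. I expect both computations to give the same vector: $-\mathbf e_{(n-1,s-1)}$ plus contributions supported on the frozen diagonal, specifically at $(j-s+1,j-s+1)$ and $(n-1,s-1)$ up to the frozen shift, matching the frozen factors that appear in Lemma~\ref{lem:quasi2}(b),(c). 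Equal extended $g$-vectors force equal cluster variables in the quantum setting, since cluster monomials are parametrized by $g$-vectors. This proves (b), and the final formula in (b) follows from Lemma~\ref{lem:B-d}(c), which identifies the variable at $(n-1,s-1)^1$ with $\overline A^{\rm qc}_{n-1,s-1}\langle i\rangle$.
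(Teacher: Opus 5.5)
There is a gap in (a), at the step you yourself flag as the main obstacle. For $i<k$, the sweep $\overleftarrow{\mu}_{\ell t^i}$ applied to $\overline{\mathsf t}^{\rm qc}_i$ ends at distance $t$ from $\ell\ell^{k+1-i}$. It therefore passes through every vertex of row $\ell$ in $I(\Delta_1)$, including the frozen vertex $\ell\ell^1$ of $\overline{\mathsf t}^{\rm sub}_i$. By contrast, $\mu^{\rm up}_t$ stops at $(\ell,\ell-t)^1$.

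So the mutations you must discard are not only those outside the $\Delta_1$-part. For $t\ge 2$ they include:
\begin{itemize}
\item mutations at the sub-seed mutable vertices $(\ell,\ell-t+1)^1,\dots,(\ell,\ell-1)^1$;
\item the mutation at $\ell\ell^1$, whose variable and incident arrows really change before later blocks use them.
\end{itemize}
As stated, your first check would leave $\prod_\ell\mu^1_{(\ell,\ell-1)}$ in each block, not $\mu^{\rm up}_t$.

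The device meant to bridge this, a cyclic shift that ``only relabels positions'', is not justified. Knowing that the quiver returns to a grid of the same shape (your inductive claim, or tracking as in \eqref{eq-Q1}--\eqref{eq-Q3}) is information about exchange matrices only. It is fully compatible with new cluster variables at the swept vertices, which is what a row sweep produces. Your route would need a genuine locality statement: at the moment of the last mutation at $j1^1$, neither its incident arrows nor the variables at its neighbours depend on any extra mutation, across all $s-1$ blocks. That is the real content of (a), and it is neither formulated nor proved.

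The paper sidesteps this by doing for the standard variable what your first paragraph announces and your (a) then abandons: it computes its $g$-vector.
\begin{itemize}
\item The sweeps form a green sequence (Shen--Weng). So the recursion \eqref{eq:recg1}, Lemma~\ref{lem:g-vector-formula1} and Corollary~\ref{cor:g1}(a) give $g(\overline E^{\rm qc}(js^i))=\mathbf e_{js^i}-\mathbf e_{j,s-1^i}$ with respect to $\overline{\mathsf s}^{\rm qc}_\lambda$.
\item For $i=1$, the same lemma with $k=1$ gives this vector for the $\mu^{\rm up}$-variable.
\item Lemma~\ref{lem:g-vector111} gives it for the $\widetilde\mu^{\diamondsuit}_{(j,s)}$-variable.
\end{itemize}
Hence all three coincide when $i=1$.

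General $i$ is not handled by truncating sweeps but by Lemma~\ref{lem:subseed}. The $\Delta_i$-block of $\overleftarrow{\mu}_{\prec js^i}$ stays inside the sub-seed of $\overline{\mathsf t}^{\rm qc}_i$ on $I(\Delta_1)\cup\cdots\cup I(\Delta_{k+1-i})$. That sub-seed has the star quiver of $\mathbb P_{k+3-i}$, and there the block is exactly the sequence defining the standard variable at $js^1$. The $i=1$ case then gives (a), and Lemma~\ref{lem:B-d}(c) gives (b).

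Your route to (b) is the paper's, but your predicted common $g$-vector is wrong. Relative to the sub-seed it is $\mathbf e_{js}-\mathbf e_{j,s-1}$, with no $(n-1,s-1)$ component unless $j=n-1$. Also, the mutable part is not of type $A$ once $n\ge 5$; this is harmless, since $g$-vectors separate cluster variables in general.
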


\begin{remark}
In the lemma above, we focus on the case $s\geq 2$. The case $s=1$ has been addressed in Lemma \ref{lem:B-d}.
\end{remark}

We left the proof of Lemma \ref{lem:B4} to Appendix \ref{app:lemmas1234}.

\smallskip

The following theorem is the third main result of this section. It gives a skein-theoretic realization of the standard cluster variables. More precisely, every standard cluster variable is proportional to a stated essential arc in $\overline{\cS}_\omega(\mathbb{P}_{k+2})$. This theorem also serves as a critical final step in the proofs of Theorems~\ref{thm:poly1} and~\ref{thm:poly2}.

\begin{theorem}\label{thm:standardgenerators}
For any $js^i\in I(\lambda)$, we have $\overline E^{\rm qc}(js^i)\asymp D(js^i)\in \overline{\cS}_\omega(\mathbb P_{k+2})$. More precisely, for any integer $i$ with $1\leq i\leq k$ and any integers $s,j$ with $1\leq s\leq j\leq n-1$, the following statements hold.

\begin{enumerate}[label={\rm (\alph*)}]\itemsep0.3em
    \item For $i=1$, we have  \begin{equation*}
   \overline E^{\rm qc}(js^1)
  =\begin{cases}
      \Bigl[D(js^1)\cdot  \overline A_{n,n-1}(2)\cdot \overline A^{-1}_{n1}(1)\Bigr] & \mbox{ if $s=1$},\vspace{1pt}\\
      \Bigl[D(js^1)\cdot \overline A_{n,s-1}(1)\cdot \overline A_{ns}^{-1}(1) \Bigr]   & \mbox{ if $s\geq 2$.} 
  \end{cases}
    \end{equation*}


\item For $2\leq i\leq k$, we have two sub-cases: 

$\bullet$ If $s=1$, then we have 
\begin{equation*}
   \begin{aligned}
 \overline E^{\rm qc}(j1^i)=
\begin{cases}
\Bigl[
D(j1^i)\cdot \overline A^{-1}_{n1}(i)\cdot \overline A_{n,n-1}(i+1)
\cdot \prod_{t=1}^{i-1} \bigl(
\overline{A}_{n,n-t}(i-t)\cdot \overline{A}^{-1}_{n,j-t}(i-t) \\
\qquad\qquad\qquad\qquad\cdot \overline{A}^{-1}_{n,n-t+1}(i-t)\cdot \overline{A}_{n,j-t+1}(i-t)
\bigr)
\Bigr], & i<k, \\[6pt]
\Bigl[ D(j1^i)\cdot \overline A^{-1}_{n1}(k)\cdot \overline A_{11}(k)
\cdot \prod_{t=1}^{k-1} \bigl(
\overline{A}_{n,n-t}(k-t)\cdot \overline{A}^{-1}_{n,j-t}(k-t) \\
\qquad\qquad\qquad\qquad\cdot \overline{A}^{-1}_{n,n-t+1}(k-t)\cdot \overline{A}_{n,j-t+1}(k-t)
\bigr) \Bigr], & i=k.
\end{cases}
\end{aligned}
\end{equation*}

$\bullet$ If $s\geq 2$, then we have
\begin{equation*}
\begin{split}
  \overline E^{\rm qc}(js^i)
= \Bigl[ &
 D(js^i) \cdot \overline A_{n,j-s+1}(i-1)\cdot \overline A_{n,s-1}(i)\cdot 
\overline{A}^{-1}_{n,j-s+1}(i) \cdot 
\overline{A}^{-1}_{ns}(i) \\
& \cdot 
\prod_{t=1}^{i-1} \bigl( \overline{A}_{n,n-s-t+1}(i-t) \cdot \overline{A}^{-1}_{n,j-s-t+1}(i-t) \bigr) \\
& \cdot 
\prod_{t=1}^{i-1} \bigl( \overline{A}_{n,n-s-t+2}(i-t) \cdot \overline{A}^{-1}_{n,j-s-t+2}(i-t) \bigr)
\Bigr].
\end{split}
\end{equation*}
\end{enumerate}
\end{theorem}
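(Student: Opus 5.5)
The idea is to reduce every standard variable to a cluster variable in a single triangle or quadrilateral. In that local picture, Theorem~\ref{intro-thm-skein-inclusion-A}(d)--(f) already expresses the stated essential arcs $D^{(a,b)}_{(p,q)}$ as cluster variables of $\overline{\mathscr A}_\omega(\mathbb P_{k+2})$ times explicit frozen monomials. We then only need to convert from the seed $\overline{\mathsf s}_\lambda$ to the quasi-commutative seed $\overline{\mathsf s}^{\rm qc}_\lambda$. That conversion uses the definition \eqref{eq:qc} and Lemma~\ref{lem:quasi2}. We treat $i=1$ and $2\le i\le k$ separately, and within each we split into $s=1$ and $s\ge2$.

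\textbf{Case $i=1$.} For $s=1$ we have $\overline E^{\rm qc}(j1^1)=\overline A^{\rm qc}_{j1}(1)$ by \eqref{eq:Ej0}. By \eqref{eq:qc} this equals $[\overline A_{j1}(1)\overline A_{n1}^{-1}(1)\overline A_{j0}^{-1}(1)]$. The arc $D(j1^1)=D^{(1,3)}_{(j,1)}$ is a corner arc at the vertex $2$ of $\Delta_1$. We apply the case $j=1$ of \eqref{intro-eq-Cij}, choosing the labeling of Figure~\ref{Fig;tau-v1}(B) with $v_1$ equal to the puncture $2$. This gives $D(j1^1)$ as $\overline A_{j1}(1)$ times inverses of two frozen variables on the sides of $\Delta_1$. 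Comparing exponents yields the stated formula, and the Weyl ordering is automatic because all factors quasi-commute.

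For $s\ge2$ we use Lemma~\ref{lem:B4}(b) with $i=1$. This identifies $\overline E^{\rm qc}(js^1)$ with $\widetilde\mu^\diamondsuit_{(j,s)}(\overline A^{\rm qc}_{n-1,s-1}(1))$, and Lemma~\ref{lem:quasi2}(b) converts it to $[\widetilde\mu^\diamondsuit_{(j,s)}(\overline A_{n-1,s-1}(1))\,\overline A^{-1}_{j-s+1,0}(1)\overline A^{-1}_{ns}(1)]$. The remaining task is to recognise $\widetilde\mu^\diamondsuit_{(j,s)}(\overline A_{n-1,s-1}(1))$, up to frozen factors, as the corner arc $D^{(1,2)}_{(j-s+1,n-s+1)}$ at the puncture $2$. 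For this we use \eqref{intro-eq-Cij}, case $1<j<i$, or its mirror \eqref{intro-eq-bar-Cij}, relabeling the states $(j-s+1,n-s+1)$ to match the indices there. The frozen factors $\overline A_{n,s-1}(1)\overline A^{-1}_{ns}(1)$ should then come out of the $\overline A_{i0}^{-1}\overline A_{jj}^{-1}$ factors.

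\textbf{Case $2\le i\le k$.} By Lemma~\ref{lem:B-d}(c), $\overline E^{\rm qc}(j1^i)=\overline A^{\rm qc}_{j1}\langle i\rangle$. By Lemma~\ref{lem:B4}(b), for $s\ge2$ we also have $\overline E^{\rm qc}(js^i)=\widetilde\mu^\diamondsuit_{(j,s)}(\overline A^{\rm qc}_{n-1,s-1}\langle i\rangle)$, which is a mutation inside the triangle $\Delta_i'=(i,i+1,i+2)$ of the triangulation $\lambda_i$. Lemma~\ref{lem:quasi2}(a) and (c) rewrite these in terms of the ordinary cluster variables $\overline A_{j1}\langle i\rangle$ and $\widetilde\mu^\diamondsuit_{(j,s)}(\overline A_{n-1,s-1}\langle i\rangle)$ of the seed $\overline{\mathsf s}_{\lambda_i}$, up to the explicit products $\prod_{t=1}^{i-1}$ of frozen variables. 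Since $\overline{\mathsf s}_{\lambda_i}$ is the honest triangulation seed of Theorem~\ref{intro-thm-skein-inclusion-A}(a), we can then run the corner-arc formulas \eqref{intro-eq-Cij} in $\Delta_i'$ at the puncture $i+1$ exactly as in the case $i=1$. This gives $D(j1^i)=D^{(i,i+2)}_{(j,1)}$ and $D(js^i)=D^{(i,i+1)}_{(j-s+1,n-s+1)}$, each multiplied by frozen variables on the sides of $\Delta_i'$.

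The edges of $\Delta_i'$ are the boundary edges $(i,i+1)$ and $(i+1,i+2)$ and the diagonal $(i,i+2)$. For $i<k$ the frozen variables on the side $(i+1,i+2)$ are the $\overline A_{n,\ast}(i+1)$. When $i=k$ this side is the last boundary edge, and its frozen variables are $\overline A_{\ast\ast}(k)$. This is why the case $i=k$ produces $\overline A_{11}(k)$ instead of $\overline A_{n,n-1}(i+1)$.

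\textbf{Main obstacle.} The hard part is the bookkeeping that matches the conventions: the labeling of Figure~\ref{Fig;tau-v1} at the relevant puncture against the labels $js$ of Figure~\ref{Fig:lambdai}, and the mutation sequence $\widetilde\mu^\diamondsuit_{(j,s)}$ against the sequence $\mu_{(j';j'-1)}\cdots\mu_{(i'-1;j'-1)}$ of \eqref{intro-eq-Cij}. If the two sequences do not match literally, we would compare $g$-vectors in the sub-seed $\overline{\mathsf t}^{\rm sub}_i$, which is a one-triangle seed. Two cluster variables there with equal $g$-vectors coincide, so it suffices to check that both sides have the same $g$-vector modulo frozen directions. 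The last step is to collect and cancel the frozen monomials coming from \eqref{eq:qc}, Lemma~\ref{lem:quasi2} and the corner-arc formulas.
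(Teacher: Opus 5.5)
Your treatment of the case $s\ge 2$ follows the paper's own route: Lemma~\ref{lem:B4}(b) gives $\overline E^{\rm qc}(js^i)=\widetilde\mu^\diamondsuit_{(j,s)}(\overline A^{\rm qc}_{n-1,s-1}\langle i\rangle)$, Lemma~\ref{lem:quasi2}(b),(c) strips off the frozen factors, and the corner-arc formula \eqref{intro-eq-Cij} at the puncture $i+1$ identifies the result with the corner arc $D^{(i,i+1)}$, up to frozen factors on its two sides. That half is fine.

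\textbf{The case $s=1$ has a genuine gap.} By definition, $D^{(a,b)}$ runs from the boundary edge $(a,a+1)$ to the boundary edge $(b,b+1)$, and it is a corner arc only when $b=a+1$.

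\begin{itemize}
\item \emph{The arc is not a corner arc.} $D(j1^i)=D^{(i,i+2)}_{(j,1)}$ runs from $(i,i+1)$ to $(i+2,i+3)$. It is \emph{not} a corner arc at $i+1$, nor at vertex $2$ when $i=1$. It crosses the diagonal $(i,i+2)$, and in $\lambda_i$ with $i<k$ it also crosses $(1,i+2)$.
\item \emph{The corner-arc formula gives the wrong frozen factors.} Running \eqref{intro-eq-Cij} at the puncture $i+1$ would express a corner arc $D^{(i,i+1)}$ in terms of $\overline A_{j1}\langle i\rangle$, with frozen factors on the two sides through $i+1$. It can never produce the factor $\overline A_{n,n-1}(i+1)$ (resp.\ $\overline A_{11}(k)$, or $\overline A_{n,n-1}(2)$ when $i=1$) in the statement. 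That variable sits on the edge $(i+2,i+3)$, outside $\Delta_i'$.
\item \emph{A related mislabelling.} Your claim that the $\overline A_{n,*}(i+1)$ lie on the side $(i+1,i+2)$ is off by one: that side carries $\overline A_{n,*}(i)$.
\end{itemize}
Since the frozen monomial in such an identity is uniquely determined, ``comparing exponents'' cannot yield the stated formula.

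\textbf{What is needed instead.} Use the essential-arc formula \cite[Theorem~1.2(c)]{huang2025quantum} (the analogue of \eqref{into-eq-Dij}) in the quadrilateral $(i,i+1,i+2,i+3)$ with diagonal $(i,i+2)$.

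\begin{itemize}
\item \emph{Case $i=1$.} This quadrilateral is already present in $\lambda$. The formula gives $D^{1,3}_{j1}=[\overline A_{j1}(1)\,\overline A^{-1}_{j0}(1)\,\overline A^{-1}_{n,n-1}(2)]$. Combined with $\overline E^{\rm qc}(j1^1)=[\overline A_{j1}(1)\,\overline A^{-1}_{n1}(1)\,\overline A^{-1}_{j0}(1)]$, this yields (a).
\item \emph{Case $2\le i<k$.} First flip $(1,i+2)$ in $\lambda_i$. This flip mutates only inside the quadrilateral $(1,i,i+2,i+3)$, so the variables $\overline A_{js}\langle i\rangle$ at the vertices of $\Delta_i'$ are unchanged. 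One obtains $D^{i,i+2}_{j1}=[\overline A_{j1}\langle i\rangle\,\overline A^{-1}_{nj}(i-1)\,\overline A^{-1}_{n,n-1}(i+1)]$.
\item \emph{Case $i=k$.} No flip is needed, because $(i+2,i+3)=(k+2,1)$ is a side of $\Delta_k$. The same formula holds with $\overline A_{11}(k)$ in place of $\overline A_{n,n-1}(i+1)$.
\end{itemize}
Combining these with Lemma~\ref{lem:B-d}(c) and Lemma~\ref{lem:quasi2}(a) gives the $s=1$ formulas in (b).
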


\begin{proof}
(a) If $s=1$, by \eqref{eq:Ej0}, we have 
\begin{equation*}
     \overline E^{\rm qc}(j1^1)=\overline A^{\rm qc}_{j1}(1)=[\overline A_{j1}(1)\cdot \overline A^{-1}_{n1}(1)\cdot \overline A^{-1}_{j0}(1)].
\end{equation*}
Using \cite[Theorem 1.2 (c)]{huang2025quantum}, we obtain
\begin{equation}
     \overline E^{\rm qc}(j1^1)=\Bigl[\left(D^{1,3}_{j1}\cdot \overline A_{j0}(1)\cdot \overline A_{n,n-1}(2)\right)\cdot \overline A^{-1}_{n1}(1)\cdot \overline A^{-1}_{j0}(1) \Bigr]=\Bigl[D^{1,3}_{j1}\cdot  \overline A_{n,n-1}(2)\cdot \overline A^{-1}_{n1}(1)\Bigr].
\end{equation}
Thus the result is true for $s=1$ by \eqref{eqn:D-ab-pq}.

If $s\geq 2$, by Lemma \ref{lem:B4} (b), we have 
\begin{equation}
     \overline E^{\rm qc}(js^1)=\widetilde \mu_{(j,s)}^{\diamondsuit}(\overline A^{\rm qc}_{n-1,s-1}\langle 1\rangle)=\widetilde \mu_{(j,s)}^{\diamondsuit}(\overline A^{\rm qc}_{n-1,s-1}(1)),
\end{equation}
where the mutation sequence $\widetilde \mu_{(j,s)}^{\diamondsuit}$
is taken for the seed   $\overline {\mathsf{s}} _{\lambda_1}^{\rm qc}=\overline {\mathsf{s}} _{\lambda}^{\rm qc}$.
Thus, by Lemma \ref{lem:quasi2}(b) we obtain
\begin{equation}\label{eq:ccc}
     \overline E^{\rm qc}(js^1)=[\widetilde \mu_{(j,s)}^{\diamondsuit}(\overline A_{n-1,s-1}(1))\cdot \overline A_{j-s+1,0}^{-1}(1)\cdot \overline A_{ns}^{-1}(1)].
\end{equation}
Using Theorem \ref{intro-thm-skein-inclusion-A}(d) (please note the different labels of the vertices), we obtain
\begin{equation}\label{eq:cc}
  D_{n-1-j+s,s}^{1,2}= [\widetilde \mu_{(j,s)}^{\diamondsuit}(\overline A_{n-1,s-1}(1))\cdot \overline A^{-1}_{j-s+1,0}(1)\cdot \overline A^{-1}_{n,s-1}(1)]. 
\end{equation}

By \eqref{eq:ccc} and \eqref{eq:cc}, we obtain
\begin{equation}
   \overline E^{\rm qc}(js^1)=   
   [D_{n-1-j+s,s}^{1,2}\cdot \overline A_{n,s-1}(1)\cdot \overline A_{ns}^{-1}(1)].
\end{equation}
Thus the result is true for $s\geq 2$ by \eqref{eqn:D-ab-pq}.


(b) Now we suppose $2\leq i\leq k$.  If $s=1$, by 
Lemmas~\ref{lem:B-d} (c) and \ref{lem:quasi2}, we have
\begin{equation}\label{eq:asy1}
    \overline E^{\rm qc}(j1^i)
    =\overline A_{j1}^{\rm qc}\langle i\rangle=
    \Bigl[
\overline{A}_{j1}\langle i\rangle \cdot
p 
\Bigr],
\end{equation}
where $p=\overline{A}^{-1}_{n1}(i) \cdot 
\overline{A}^{-1}_{nj}(i-1) \cdot
\prod_{t=1}^{i-1} \bigl( \overline{A}_{n,n-t}(i-t) \cdot \overline{A}^{-1}_{n,j-t}(i-t) \bigr)
\cdot
\prod_{t=1}^{i-1} \bigl( \overline{A}^{-1}_{n,n-t+1}(i-t) \cdot \overline{A}_{n,j-t+1}(i-t)\bigr)$.

Recall that, for each $2\leq i\leq k$, the triangulation $\lambda_i$ is obtained from $\lambda$ by flipping the arc $(1,i+1)\in \lambda$.
The vertices inside the triangle $(i,i+1,i+2)$ in $\lambda_i$ are labeled as in Figure~\ref{Fig:lambdai}.

Let $\lambda'$ be the triangulation obtained from $\lambda_i$ by flipping at $(1,i+2)$ in the case $i<k$, and let $\overline {\mathsf{s}} _{\lambda'}^{\rm qc}$ be the corresponding seed of $\overline {\mathscr A}_{\omega}^{\rm qc}(\mathbb P_{k+2})$. The cluster variable associated with the vertex $js$ in the triangle $(i,i+1,i+2)$ is identical for both seeds $\overline {\mathsf{s}} _{\lambda_i}^{\rm qc}$ and $\overline {\mathsf{s}} _{\lambda'}^{\rm qc}$. By \cite[Theorem~1.2(c)]{huang2025quantum} (please note the different labels of the vertices), we have
\begin{equation}\label{eq:asy21}
D^{i,i+2}_{j1}
=
\begin{cases}
\Bigl[
\overline A_{j1}\langle i\rangle \cdot (\overline A_{j0}\langle i\rangle)^{-1} \cdot \overline A^{-1}_{n,n-1}(i+1)
\Bigr]
=
\Bigl[
\overline A_{j1}\langle i\rangle \cdot \overline A^{-1}_{nj}(i-1) \cdot \overline A^{-1}_{n,n-1}(i+1)
\Bigr], & i<k, \\[6pt]
\Bigl[
\overline A_{j1}\langle k\rangle \cdot (\overline A_{j0}\langle k\rangle)^{-1} \cdot \overline A^{-1}_{1,1}(k)
\Bigr]
=
\Bigl[
\overline A_{j1}\langle k\rangle \cdot \overline A^{-1}_{nj}(k-1) \cdot \overline A^{-1}_{1,1}(k)
\Bigr], & i=k.
\end{cases}
\end{equation}

Combine \eqref{eq:asy1} and \eqref{eq:asy21}, we obtain
\begin{equation*}
\begin{aligned}
\overline E^{\rm qc}(j1^i)=
\begin{cases}
\Bigl[
D^{i,i+2}_{j1}\cdot \overline A^{-1}_{n1}(i)\cdot \overline A_{n,n-1}(i+1)
\cdot \prod_{t=1}^{i-1} \bigl(
\overline{A}_{n,n-t}(i-t)\cdot \overline{A}^{-1}_{n,j-t}(i-t) \\
\qquad\qquad\qquad\qquad\cdot \overline{A}^{-1}_{n,n-t+1}(i-t)\cdot \overline{A}_{n,j-t+1}(i-t)
\bigr)
\Bigr], & i<k, \\[6pt]
\Bigl[ D^{k,k+2}_{j1}\cdot \overline A^{-1}_{n1}(k)\cdot \overline A_{11}(k)
\cdot \prod_{t=1}^{k-1} \bigl(
\overline{A}_{n,n-t}(k-t)\cdot \overline{A}^{-1}_{n,j-t}(k-t) \\
\qquad\qquad\qquad\qquad\cdot \overline{A}^{-1}_{n,n-t+1}(k-t)\cdot \overline{A}_{n,j-t+1}(k-t)
\bigr) \Bigr], & i=k.
\end{cases}
\end{aligned}
\end{equation*}
Thus the result is true for $s=1$ by \eqref{eqn:D-ab-pq}.

If $s\geq 2$,  by Lemma~\ref{lem:B4} (b), we have 
\begin{equation}
 \begin{split}
      \overline E^{\rm qc}(js^i)=
       \widetilde \mu_{(j,s)}^{\diamondsuit}(\overline A^{\rm qc}_{n-1,s-1}\langle i\rangle),
   \end{split}
\end{equation} 
where the mutation sequence  $\widetilde \mu_{(j,s)}^{\diamondsuit}$ is taken for the seed $\overline {\mathsf{s}} _{\lambda_i}^{\rm qc}$.
Thus, by Lemma \ref{lem:quasi2}(c) we obtain
\begin{equation}\label{eq:cc2}
\begin{split}
 \overline E^{\rm qc}(js^i)
= \Bigl[ &
\widetilde{\mu}_{(j,s)}^{\diamondsuit}\bigl(\overline{A}_{n-1,s-1}\langle i\rangle\bigr) \cdot 
\overline{A}^{-1}_{n,j-s+1}(i) \cdot 
\overline{A}^{-1}_{ns}(i) \\
& \cdot 
\prod_{t=1}^{i-1} \bigl( \overline{A}_{n,n-s-t+1}(i-t) \cdot \overline{A}^{-1}_{n,j-s-t+1}(i-t) \bigr) \\
& \cdot 
\prod_{t=1}^{i-1} \bigl( \overline{A}_{n,n-s-t+2}(i-t) \cdot \overline{A}^{-1}_{n,j-s-t+2}(i-t) \bigr)
\Bigr].
\end{split}
\end{equation}

Using Theorem \ref{intro-thm-skein-inclusion-A}(d) (please note the different of the labels of the vertices), we obtain
\begin{equation}\label{eq:cc1}
   D^{i,i+1}_{j-s+1,n+1-s}=\Bigl[\widetilde{\mu}_{(j,s)}^{\diamondsuit}\bigl(\overline{A}_{n-1,s-1}\langle i\rangle\bigr)\cdot \overline A_{n,j-s+1}^{-1}(i-1)\cdot \overline A^{-1}_{n,s-1}(i) \Bigr].
\end{equation}

Therefore, by \eqref{eq:cc2} and \eqref{eq:cc1}, we obtain

\begin{equation*}
\begin{split}
\overline E^{\rm qc}(js^i)
= \Bigl[ &
 D^{i,i+1}_{j-s+1,n+1-s} \cdot \overline A_{n,j-s+1}(i-1)\cdot \overline A_{n,s-1}(i)\cdot 
\overline{A}^{-1}_{n,j-s+1}(i) \cdot 
\overline{A}^{-1}_{ns}(i) \\
& \cdot 
\prod_{t=1}^{i-1} \bigl( \overline{A}_{n,n-s-t+1}(i-t) \cdot \overline{A}^{-1}_{n,j-s-t+1}(i-t) \bigr) \\
& \cdot 
\prod_{t=1}^{i-1} \bigl( \overline{A}_{n,n-s-t+2}(i-t) \cdot \overline{A}^{-1}_{n,j-s-t+2}(i-t) \bigr)
\Bigr].
\end{split}
\end{equation*}
Thus the result is true for $s\geq 2$ by \eqref{eqn:D-ab-pq}. The proof is complete.
\end{proof}

\begin{proof}[Proof of Theorems \ref{thm:poly1} and \ref{thm:poly2}]
First, we establish the equality of the algebras in Theorem \ref{thm:poly1}. By Theorem~\ref{intro-thm-skein-inclusion-A}, we have the inclusion
\begin{equation}\label{eq:oneside}
 \overline{\mathscr{S}}_{\omega}(\mathbb{P}_{k+2}) = \widetilde{\mathscr{S}}_{\omega}(\mathbb{P}_{k+2}) \subseteq \overline{\mathscr{A}}_\omega(\mathbb{P}_{k+2}).   
\end{equation}
From \eqref{intro-eq-Cij}, it follows that all frozen variables of $\overline{\mathscr{A}}_\omega(\mathbb{P}_{k+2})$ and their inverses are contained in $\overline{\mathscr{S}}_{\omega}(\mathbb{P}_{k+2})$. Furthermore, by Theorem~\ref{thm:standardgenerators}, all standard cluster variables belong to $\overline{\mathscr{S}}_{\omega}(\mathbb{P}_{k+2})$. Consequently, applying Proposition~\ref{pro:standard}, we obtain the reverse inclusion:
\begin{equation}\label{eq:anotherside}
    \overline{\mathscr{S}}_{\omega}(\mathbb{P}_{k+2}) \supseteq \overline{\mathscr{A}}_\omega(\mathbb{P}_{k+2}) = \overline{\mathscr{U}}_\omega(\mathbb{P}_{k+2}).
\end{equation}
Thus we have $\overline{\mathscr{S}}_{\omega}(\mathbb{P}_{k+2}) = \overline{\mathscr{A}}_\omega(\mathbb{P}_{k+2}) = \overline{\mathscr{U}}_\omega(\mathbb{P}_{k+2})$.

Next, we prove that the theta basis is invariant under rotation. Let $\Theta$ denote the theta basis of $\overline{\mathscr{A}}_\omega(\mathbb{P}_{k+2}) = \overline{\mathscr{U}}_\omega(\mathbb{P}_{k+2})$. For any triangulation $\lambda$ of $\mathbb{P}_{k+2}$, as $R[\overline A^{\pm 1}_v\mid v\in \overline V_\lambda\setminus \mathring {\overline{V}}_\lambda]$ basis, it is parametrized by
\begin{equation}
    \Theta = \left\{ \vartheta_g^{\overline{\mathsf{s}}_\lambda} \mid g \in \mathbb{Z}^{\mathring{\overline{V}}_\lambda} \right\},
\end{equation}
where $\vartheta_g^{\overline{\mathsf{s}}_\lambda}$ is the theta function associated with $g$ relative to the initial seed $\overline{\mathsf{s}}_\lambda$. It is known that the theta basis is independent of the choice of initial seed and thus is independent of the choice of the triangulation $\lambda$. 

Let $\sigma$ be a rotation of the polygon $\mathbb{P}_{k+2}$. This rotation induces a cluster automorphism on the upper cluster algebra $\overline{\mathscr{U}}_\omega(\mathbb{P}_{k+2})$ and bijection $\sigma \colon \mathbb{Z}^{\overline{V}_\lambda} \to \mathbb{Z}^{\overline{V}_{\sigma(\lambda)}}, \mathbb{Z}^{\mathring{\overline{V}}_\lambda} \to \mathbb{Z}^{\mathring{\overline{V}}_{\sigma(\lambda)}}$. For simplicity, we denote both maps by $\sigma$. Since $\sigma(\lambda)$ is also a valid triangulation of $\mathbb{P}_{k+2}$, we have
\begin{equation}
    \sigma\left(\vartheta_{g}^{\overline{\mathsf{s}}_\lambda}\right) = \vartheta_{\sigma(g)}^{\overline{\mathsf{s}}_{\sigma(\lambda)}}.
\end{equation}
Since $\Theta$ is defined intrinsically regardless of the triangulation, the image of the basis under $\sigma$ satisfies
\begin{equation*}
    \sigma(\Theta) = \left\{ \vartheta_{\sigma(g)}^{\overline{\mathsf{s}}_{\sigma(\lambda)}} \mid g \in \mathbb{Z}^{\mathring{\overline{V}}_\lambda} \right\} = \left\{ \vartheta_{h}^{\overline{\mathsf{s}}_{\sigma(\lambda)}} \mid h \in \mathbb{Z}^{\mathring{\overline{V}}_{\sigma(\lambda)}} \right\} = \Theta.
\end{equation*}
Thus, the theta basis $\Theta$ is invariant under the rotation $\sigma$.

This completes the proof of Theorem~\ref{thm:poly1}. 

Theorem~\ref{thm:poly2} follows by an analogous argument.
\end{proof}

\section{An example: web interpretation of the dual canonical basis of $\mathcal{O}_q({\rm SL}_3)$}

In this section, we assume $n=3$ and refer to a ‘$3$-web’ simply as a ‘web’.
We investigate the cluster structures of $\overline{\mathscr{A}}_\omega(\mathbb{P}_{4})=\overline{\cS}_{\omega}(\mathbb{P}_{4})$ and ${\mathscr{A}}_\omega(\mathbb P_2)$. As will be verified later, $\cS_{\omega}(\mathbb{P}_{2})={\mathscr{A}}_\omega(\mathbb P_2)=\mathscr U_{\omega}(\mathbb{P}_{2})$, see Lemma \ref{lem:A=U=skein}. Specifically, we first provide a web interpretation of the cluster variables and subsequently describe the clusters. Since the cluster types of both $\overline{\mathscr{A}}_\omega(\mathbb{P}_{4})$ and ${\mathscr{A}}_\omega(\mathbb P_2)$ are of type $D_4$, each contains 16 exchangeable cluster variables and 50 clusters.

In accordance with \cite{FST}, the set of exchangeable cluster variables is in one-to-one correspondence with the set of non-boundary tagged arcs of the once-punctured quadrilateral, denoted by $\mathbb{P}_{4,1}$. Furthermore, the set of clusters is in one-to-one correspondence with the set of tagged triangulations of~$\mathbb{P}_{4,1}$, and the flip operation on tagged triangulations is compatible with seed mutation. We label the boundary punctures clockwise as $1,2,3,4$ and the interior puncture as $0$. For distinct $i, j \in \{1,2,3,4\}$, let $\gamma_{ij}$ denote the arc from $i$ to $j$ such that $0$ lies to the left of $\gamma_{ij}$. For any $i$, we denote by $\gamma_{i}$ the tagged arc connecting $0$ and $i$ which is tagged plain at $0$, and by $\gamma^{\rm tag}_i$ the tagged arc connecting $0$ and $i$ which is tagged notched at $0$. For further details, we refer the reader to \cite{FST}. Consequently, the set of non-boundary tagged arcs is given by
\[
\{\gamma_{i},\gamma^{\rm tag}_{i}\mid i=1,2,3,4\}\cup \{\gamma_{13},\gamma_{24},\gamma_{31},\gamma_{42}\}\cup \{\gamma_{12},\gamma_{23},\gamma_{34},\gamma_{41}\}.
\] 

According to \cite[Definition 7.4]{FST}, two tagged arcs $\gamma$ and $\gamma'$ are \textbf{compatible} if they do not cross, up to isotopy with respect to $\mathbb P_{4,1}\setminus \{0,1,2,3,4\}$, in the interior, with the exception of the case where both $\gamma$ and $\gamma'$ connect the puncture $0$ with different taggings and have distinct other endpoints. A \textbf{tagged triangulation} is a maximal collection of pairwise compatible tagged arcs.

\vspace{2mm}

By applying Theorem~\ref{intro-thm-skein-inclusion-A}, we determine all cluster variables of $\overline{\mathscr{A}}_\omega(\mathbb{P}_{4})$ and, in conjunction with the geometric description provided in \cite{FST}, determine all of its clusters (see \cite{ishibashi2023skein} for a related work).
Furthermore, utilizing the quasi-isomorphism established in \S\ref{sub-quasi-polygon}, we obtain the complete description of the cluster variables (see \eqref{eq:initialcluqc}-\eqref{eq:cv4qc}) and clusters (see Theorem~\ref{thm:webint}) for ${\mathscr{A}}_\omega(\mathbb P_2)$.
Using these results, we provide a web interpretation
of the dual canonical basis of ${\mathscr{A}}_\omega(\mathbb P_2)=\mathcal O_q({\rm SL}_3)$ (see Theorem~\ref{thm-basis-bigon}).
We refer to \cite{le_sikora2025} for another construction of a web interpretation of this dual canonical basis.

\subsection{Cluster structure of $\overline{\mathscr{A}}_\omega(\mathbb{P}_{4})=\overline{\cS}_{\omega}(\mathbb{P}_{4})$ when $n=3$}\label{sec:projskein}

Recall that for each puncture $p \in \{1,2,3,4\}$ and indices $i,j \in \{1,2,3\}$, the corner arcs $C(p)_{ij}$ and $\overline{C}(p)_{ij}$ are stated arcs, as depicted in Figure \ref{Fig;badarc}.

By Theorem~\ref{intro-thm-skein-inclusion-A}(d) and~(e), we determine the $8$ frozen variables as follows:

\begin{equation}\label{eq:frozen}
\begin{split}
   \overline A_{01}&=\overline C(1)_{33}=(C(1)_{11})^{-1}, \qquad\overline A_{02}= {C(1)_{33}}=(\overline{C}(1)_{11})^{-1},\\
   \overline A_{10}&= {C(2)_{33}}=(\overline{C}(2)_{11})^{-1},\qquad \overline A_{20}=\overline {C}(2)_{33}=(C(2)_{11})^{-1},\\
   \overline A_{31}&= {C(3)_{33}}=(\overline{C}(3)_{11})^{-1},\qquad \overline A_{32}=\overline {C}(3)_{33}=(C(3)_{11})^{-1},\\
   \overline A_{13}&=\overline {C}(4)_{33}=(C(4)_{11})^{-1},\qquad \overline A_{23}= {C(4)_{33}}=(\overline{C}(4)_{11})^{-1}.
\end{split}
\end{equation}

\begin{figure}
    \centering
    \includegraphics[width=0.2\linewidth]{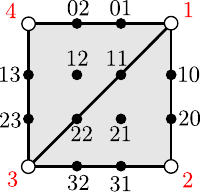}
    \caption{The labeling of $\overline V_\lambda$ when $n=3$.}
    \label{fig:P4-3}
\end{figure}

For the triangulation $\lambda$ of $\mathbb{P}_4$ with the labels of vertices depicted in Figure \ref{fig:P4-3}, we associate the seed $\overline{\mathsf{s}}_\lambda$ with the triangulation $\{\gamma_1,\gamma_2,\gamma_3,\gamma_4\}$ of $\mathbb{P}_{4,1}$. Specifically, we assign $\overline A_{\gamma_1}:=\overline A_{11}$, $\overline A_{\gamma_2}:=\overline A_{21}$, $\overline A_{\gamma_3}:=\overline A_{22}$, and $\overline A_{\gamma_4}:=\overline A_{12}$. Applying Theorem~\ref{intro-thm-skein-inclusion-A}(d) and~(e), we obtain:
\begin{equation}\label{eq:initialclu}
\begin{split}
   \overline A_{\gamma_1}&=[\overline {C}(4)_{31}\cdot \overline A_{01}]=[\overline {C}(4)_{31}\cdot \overline {C}(1)_{33}], \quad
   \overline A_{\gamma_2}=[{C(2)_{21}}\cdot \overline A_{31} \cdot \overline A_{10}]=[{C(2)_{21}}\cdot C(3)_{33} \cdot {C(2)_{33}}],\\
   \overline A_{\gamma_3}&=[\overline {C}(2)_{31}\cdot \overline A_{32}]=[\overline {C}(2)_{31}\cdot \overline {C}(3)_{33}], \quad
   \overline A_{\gamma_4}=[{C(4)_{21}}\cdot \overline A_{23} \cdot \overline A_{02}]=[{C(4)_{21}}\cdot C(4)_{33} \cdot {C(1)_{33}}].
\end{split}
\end{equation}

For any tagged arc $\gamma$, we denote the associated exchangeable cluster variable by $\overline A_\gamma$.

Consider the corner arc $C(4)_{32}$. By Theorem~\ref{intro-thm-skein-inclusion-A}(d), we have 
\[C(4)_{32}=[\mu_{\overline A_{12}}(\overline A_{12})\cdot \overline A^{-1}_{01}]=[\mu_{\overline A_{\gamma_4}}(\overline A_{\gamma_4})\cdot \overline A^{-1}_{01}].\] 
Consequently,
\begin{equation}\label{eq:oneflip}
    \overline A_{\gamma_{13}}=\mu_{\overline A_{\gamma_4}}(\overline A_{\gamma_4})=[C(4)_{32}\cdot \overline A_{01}]=[C(4)_{32}\cdot \overline {C}(1)_{33}].
\end{equation}

By symmetry, we obtain
\begin{equation}\label{eq:oneflip1}
    \overline A_{\gamma_{31}}=[C(2)_{32}\cdot \overline {C}(3)_{33}].
\end{equation}

Next, consider the essential arc $D^{3,1}_{22}$, where $D^{3,1}_{22}$ is defined right above Theorem~\ref{thm:standardgenerators}. By Theorem~\ref{intro-thm-skein-inclusion-A}(f), we have 
\[D^{3,1}_{22}=[\mu_{\overline A_{22}}\mu_{\overline A_{21}}(\overline A_{22})\cdot \overline A^{-1}_{20}\cdot \overline A^{-1}_{23}]=[\mu_{\overline A_{\gamma_3}}\mu_{\overline A_{\gamma_3}}(\overline A_{\gamma_3})\cdot \overline A^{-1}_{20}\cdot \overline A^{-1}_{23}].\] 
Thus,
\begin{equation}\label{eq:oneflip2}
    \overline A_{\gamma_{41}}=\mu_{\overline A_{\gamma_3}}\mu_{\overline A_{\gamma_2}}(\overline A_{\gamma_3})=[D^{3,1}_{22}\cdot \overline A_{20}\cdot \overline A_{23}]=[D^{3,1}_{22}\cdot \overline{C}(2)_{33}\cdot {C(4)_{33}}].
\end{equation}

By symmetry, we have
\begin{equation}\label{eq:oneflip3}
    \overline A_{\gamma_{23}}=\mu_{\overline A_{\gamma_1}}\mu_{\overline A_{\gamma_4}}(\overline A_{\gamma_1})=[D^{1,3}_{22}\cdot \overline A_{10}\cdot \overline A_{13}]=[D^{1,3}_{22}\cdot \overline{C}(4)_{33}\cdot {C(2)_{33}}].
\end{equation}

Let $\lambda'$ be another triangulation of $\mathbb{P}_4$. By Theorem~\ref{lem-mutation-A-seeds-flip}, $\overline{\mathsf{s}}_{\lambda'}=\mu_{\overline A_{21}}\mu_{\overline A_{12}}\mu_{\overline A_{22}}\mu_{\overline A_{11}}(\overline{\mathsf{s}}_\lambda)$. This corresponds to the triangulation $\mu_{\gamma_2}\mu_{\gamma_4}\mu_{\gamma_3}\mu_{\gamma_1}\{\gamma_1,\gamma_2,\gamma_3,\gamma_4\}=\{\gamma_{24},\gamma_{42},\gamma^{\rm tag}_2,\gamma^{\rm tag}_4\}$ of $\mathbb{P}_{4,1}$. Analogous to \eqref{eq:initialclu}, we derive:
\begin{equation}\label{eq:cv1}
\begin{split}
   \overline A_{\gamma_{24}}&=\mu_{\gamma_2}\mu_{\gamma_4}\mu_{\gamma_3}\mu_{\gamma_1}(\overline A_{\gamma_{1}})=[{C(1)_{21}}\cdot \overline A_{10}\cdot \overline A_{02}]=[{C(1)_{21}}\cdot {C(2)_{33}}\cdot C(1)_{33}], \\
   \overline A_{\gamma_4^{\rm tag}}&=\mu_{\gamma_2}\mu_{\gamma_4}\mu_{\gamma_3}\mu_{\gamma_1}(\overline A_{\gamma_2})=[\overline {C}(1)_{31}\cdot \overline A_{20}]=[\overline {C}(1)_{31}\cdot \overline{C}(2)_{33}],\\
   \overline A_{\gamma_{42}}&=\mu_{\gamma_2}\mu_{\gamma_4}\mu_{\gamma_3}\mu_{\gamma_1}(\overline A_{\gamma_{3}})=[{C(3)_{21}}\cdot \overline A_{23}\cdot \overline A_{31}]=[{C(3)_{21}}\cdot {C(4)_{33}}\cdot C(3)_{33}], \\
   \overline A_{\gamma_2^{\rm tag}}&=\mu_{\gamma_2}\mu_{\gamma_4}\mu_{\gamma_3}\mu_{\gamma_1}(\overline A_{\gamma_4})=[\overline {C}(3)_{31}\cdot \overline A_{13}]=[\overline {C}(3)_{31}\cdot \overline{C}(4)_{33}].
   \end{split}
\end{equation}

Similar to \eqref{eq:oneflip} and \eqref{eq:oneflip1}, we obtain
\begin{equation}\label{eq:cv2}
\begin{split}
      \overline A_{\gamma_1^{\rm tag}}&=\mu_{\gamma_1}\bigl(\mu_{\gamma_2}\mu_{\gamma_4}\mu_{\gamma_3}\mu_{\gamma_1}(\overline A_{\gamma_1})\bigr)=[C(1)_{32}\cdot \overline A_{20}]=[C(1)_{32}\cdot \overline{C}(2)_{33}],\\
        \overline A_{\gamma_3^{\rm tag}}&=\mu_{\gamma_3}\bigl(\mu_{\gamma_2}\mu_{\gamma_4}\mu_{\gamma_3}\mu_{\gamma_1}(\overline A_{\gamma_3})\bigr)=[C(3)_{32}\cdot \overline A_{13}]=[C(3)_{32}\cdot \overline{C}(4)_{33}].
\end{split}
\end{equation}

Similarly, following \eqref{eq:oneflip2} and \eqref{eq:oneflip3}, we have
\begin{equation}\label{eq:cv3}
 \overline A_{\gamma_{12}}=[D_{22}^{2,4}\cdot \overline A_{01}\cdot \overline A_{31}]=[D_{22}^{2,4}\cdot \overline {C}(1)_{33}\cdot C(3)_{33}],\quad \overline A_{\gamma_{34}}=[D_{22}^{4,2}\cdot \overline {C}(3)_{33}\cdot C(1)_{33}].
\end{equation}

In summary, combining \eqref{eq:initialclu}-\eqref{eq:cv3}, we obtain all cluster variables of $\overline{\mathscr{A}}_\omega(\mathbb{P}_{4})=\overline{\cS}_{\omega}(\mathbb{P}_{4})$. Furthermore, the $50$ distinct clusters of $\overline{\mathscr{A}}_\omega(\mathbb{P}_{4})=\overline{\cS}_{\omega}(\mathbb{P}_{4})$ are in one-to-one correspondence with the set of tagged triangulations of $\mathbb{P}_{4,1}$.

\subsection{Cluster structure of ${\mathscr{A}}_\omega(\mathbb P_2)
$ when $n=3$}\label{sec:clustervar}

Recall that we use $\mathbb P_2$ to denote the unique generalized triangulation of $\mathbb P_2$, and that the triangulation $\mathbb P_2^*$ of $\mathbb P_4$ is shown in Figure~\ref{Fig;attaching}(B) (see also Figure~\ref{fig:P4-3}). 
From \eqref{eq-com-skein-A}, there is an algebra embedding
\[
\iota_* \colon ({\cS}_{\omega}(\mathbb{P}_{2})\subseteq ) {\mathscr{A}}_\omega(\mathbb P_2) \longrightarrow \overline{\mathscr{A}}_\omega(\mathbb P_4) = \overline{\cS}_{\omega}(\mathbb{P}_{4}).
\]
Via this embedding, we identify ${\mathscr{A}}_\omega(\mathbb P_2)$ with a subalgebra of $\overline{\mathscr{A}}_\omega(\mathbb P_4) = \overline{\cS}_{\omega}(\mathbb{P}_{4})$.

In this subsection, we use the calculations in \S\ref{sec:projskein} to give a web interpretation of all the $20$ cluster variables in ${\mathscr{A}}_\omega(\mathbb P_2)$ and prove ${\mathscr{A}}_\omega(\mathbb P_2) = {\cS}_{\omega}(\mathbb{P}_{2})$.

As in \S\ref{sec:projskein}, for the triangulation $\lambda=\mathbb P_2^*$ of $\mathbb{P}_4$ with the labels of vertices depicted in Figure~\ref{fig:P4-3}, we have 
$$V_{\mathbb P_2} = \{01,02,32,31,11,12,21,22\}.$$

Recall that for each $i,j \in \{1,2,3\}$, we defined a stated arc $b_{ij} \in {\cS}_{\omega}(\mathbb{P}_{2})$ in \S\ref{sec:bigon}. We use $\cev{b}_{ij}$ to denote the stated arc obtained from 
$b_{ij}$ by reversing its orientation.

By Lemma \ref{lem-iota-gv} and \eqref{eq:frozen}, we obtain the $4$ frozen cluster variables of ${\mathscr{A}}_\omega(\mathbb P_2)$ as follows:
\begin{equation}\label{eq:frozen1}
\begin{split}
   A_{01}&= [\overline A_{01}\cdot  \overline A^{-1}_{23}]=[\overline {C}(1)_{33}\cdot \overline {C}(4)_{11}]=\cev{b}_{13}, \qquad A_{02}= [\overline A_{02}\cdot  \overline A^{-1}_{13}]=[{C(1)_{33}}\cdot {C(4)_{11}}]={b}_{13},\\
   A_{31}&= [\overline A_{31}\cdot  \overline A^{-1}_{20}]=[{C(3)_{33}}\cdot {C(2)_{11}}]=\cev{b}_{31}, \qquad A_{32}= [\overline A_{32}\cdot  \overline A^{-1}_{10}]=[\overline {C}(3)_{33}\cdot \overline {C}(2)_{11}]={b}_{31}.
\end{split}
\end{equation}

By Lemma \ref{lem-iota-gv} and \eqref{eq:frozen}, the $4$ initial exchangeable cluster variables of ${\mathscr{A}}_\omega(\mathbb P_2)$ are \begin{equation}\label{eq:initialcluqc}
\begin{split}
   A_{11}=A_{\gamma_1}&=\overline A_{\gamma_1}=[\overline {C}(4)_{31}\cdot \overline {C}(1)_{33}]=\cev{b}_{33},\\
  A_{21}= A_{\gamma_2}&=[\overline A_{\gamma_2}\cdot \overline A_{10}^{-1}]=[{C(2)_{21}}\cdot C(3)_{33}]= \cev{b}_{32},\\
 A_{33}= A_{\gamma_3}&=\overline A_{\gamma_3}=[\overline {C}(2)_{31}\cdot \overline {C}(3)_{33}]=b_{33}, \\
  A_{12}=A_{\gamma_4}&=[\overline A_{\gamma_4} \cdot \overline A_{23}^{-1}]=[{C(4)_{21}}\cdot {C(1)_{33}}]=b_{23}.
\end{split}
\end{equation}

For the remaining cluster variables, we compute by direct calculation:
\begin{align}
 A_{\gamma_{31}}& =\mu_{A_{\gamma_{2}}}(A_{\gamma_2}) =[\mu_{\overline A_{\gamma_{2}}}(\overline A_{\gamma_2})\cdot \overline A^{-1}_{20}]=[\overline A_{\gamma_{31}}\cdot \overline A^{-1}_{20}]\\&=[C(2)_{32}\cdot \overline{C}(3)_{33}\cdot {C(2)_{11}}]=b_{32},\nonumber\\[5pt]
A_{\gamma_{13}}& =[C(4)_{32}\cdot \overline{C}(1)_{33}\cdot {C(4)_{11}}]=\cev{b}_{23},\\[5pt]
A_{\gamma_{24}}& =\mu_{A_{\gamma_{1}}}(A_{\gamma_1}) =[\mu_{\overline A_{\gamma_{1}}}(\overline A_{\gamma_1})\cdot \overline A^{-1}_{10}\cdot  \overline A^{-1}_{23}]=[\overline A_{\gamma_{24}}\cdot \overline A^{-1}_{10}\cdot  \overline A^{-1}_{23}]\\&=[C(1)_{21}\cdot C(1)_{33}\cdot \overline{C}(4)_{11}]=\cev{b}_{12},\nonumber\\[5pt]
A_{\gamma_{42}}& =[C(3)_{21}\cdot C(3)_{33}\cdot \overline{C}(2)_{11}]=b_{21},\\[5pt]
A_{\gamma^{\rm tag}_{1}} &= \mu_{A_{\gamma_3}}\mu_{A_{\gamma_2}}\mu_{A_{\gamma_{4}}}(A_{\gamma_3}) 
= [\mu_{\overline A_{\gamma_3}}\mu_{\overline A_{\gamma_2}}\mu_{\overline A_{\gamma_{4}}}(\overline A_{\gamma_3})\cdot \overline A^{-1}_{13}\cdot \overline A^{-1}_{20}] \\
&= [\overline A_{\gamma^{\rm tag}_{1}}\cdot \overline A^{-1}_{13}\cdot \overline A^{-1}_{20}] = [C(1)_{32}\cdot {C(4)_{11}}]=b_{12}, \nonumber\\[5pt]
A_{\gamma^{\rm tag}_{3}}& =[C(3)_{32}\cdot {C(2)_{11}}]=\cev{b}_{21},\\[5pt]
A_{\gamma_{23}} &= \mu_{A_{\gamma_1}}\mu_{A_{\gamma_{4}}}(A_{\gamma_1}) 
= [\mu_{\overline A_{\gamma_1}}\mu_{\overline A_{\gamma_{4}}}(A_{\overline \gamma_1})\cdot \overline A^{-1}_{13}\cdot \overline A^{-1}_{10}] \\
&= [\overline A_{\gamma_{23}}\cdot \overline A^{-1}_{13}\cdot \overline A^{-1}_{10}] = [D^{1,3}_{22}\cdot \overline{C}(4)_{33}\cdot C(2)_{33}\cdot \overline{C}(2)_{11}\cdot C(4)_{11}]=D^{1,3}_{22}=\cev{b}_{22},\nonumber \\[6pt]
A_{\gamma_{41}} &= D^{3,1}_{22}=b_{22},\\[5pt]
A_{\gamma_2^{\rm tag}} &= \mu_{A_{\gamma_4}}\mu_{A_{\gamma_3}}\mu_{A_{\gamma_{1}}}(A_{\gamma_4}) = [\mu_{\overline A_{\gamma_4}}\mu_{\overline A_{\gamma_3}}\mu_{\overline A_{\gamma_{1}}}(\overline A_{\gamma_4})\cdot \overline A^{-1}_{13}\cdot \overline A^{-1}_{10}] 
\\
&= [\overline A_{\gamma_2^{\rm tag}}\cdot \overline A^{-1}_{13}\cdot \overline A^{-1}_{10}] = [\overline{C}(3)_{31}\cdot \overline{C}(2)_{11}]={b}_{11}, \nonumber\\[5pt]
A_{\gamma_4^{\rm tag}} &= [\overline{C}(1)_{31}\cdot \overline{C}(4)_{11}]=\cev b_{11},\\[5pt]
A_{\gamma_{12}} &= \mu_{A_{\gamma_3}}\mu_{A_{\gamma_{4}}}(A_{\gamma_3}) = [\mu_{\overline A_{\gamma_3}}\mu_{\overline A_{\gamma_{4}}}(A_{\overline \gamma_3})\cdot \overline A^{-1}_{13}\cdot \overline A^{-1}_{10}] \\
&= [\overline A_{\gamma_{12}}\cdot \overline A^{-1}_{13}\cdot \overline A^{-1}_{10}]= [D^{2,4}_{22}\cdot \overline{C}(1)_{33}\cdot C(3)_{33}\cdot \overline{C}(2)_{11}\cdot C(4)_{11}]
, \nonumber\\[5pt]
A_{\gamma_{34}} &= [D^{4,2}_{22}\cdot \overline{C}(3)_{33}\cdot C(1)_{33}\cdot \overline{C}(4)_{11}\cdot C(2)_{11}].
\end{align}

Let 
\begin{align*}
    \alpha:= \raisebox{-.25in}{

\begin{tikzpicture}
\tikzset{->-/.style=

{decoration={markings,mark=at position #1 with

{\arrow{latex}}},postaction={decorate}}}

\filldraw[draw=white,fill=gray!20] (0,0) rectangle (1.5, 1.5);
\draw [line width =1pt,decoration={markings, mark=at position 0.2 with {\arrow{<}}},postaction={decorate}] (0,0)--(0,1.5);
\draw [line width =1pt,decoration={markings, mark=at position 0.2 with {\arrow{<}}},postaction={decorate}] (1.5,0)--(1.5,1.5);
\draw [line width =0.8pt,decoration={markings, mark=at position 0.5 with {\arrow{<}}},postaction={decorate}](0,0.5)--(0.75,0.5);
\draw [line width =0.8pt,decoration={markings, mark=at position 0.5 with {\arrow{<}}},postaction={decorate}](1.5,0.5)--(0.75,0.5);
\draw [line width =0.8pt,decoration={markings, mark=at position 0.5 with {\arrow{>}}},postaction={decorate}](0,1)--(0.75,1);
\draw [line width =0.8pt,decoration={markings, mark=at position 0.5 with {\arrow{>}}},postaction={decorate}](1.5,1)--(0.75,1);
\draw [line width =0.8pt,decoration={markings, mark=at position 0.5 with {\arrow{>}}},postaction={decorate}](0.75,0.5)--(0.75,1);
\node [left] at(0,0.5) {$3$};
\node [right] at(1.5,0.5) {$1$};
\node [left] at(0,1) {$1$};
\node [right] at(1.5,1) {$3$};
\end{tikzpicture}
} ,\qquad \cev{\alpha}:= \raisebox{-.25in}{

\begin{tikzpicture}
\tikzset{->-/.style=

{decoration={markings,mark=at position #1 with

{\arrow{latex}}},postaction={decorate}}}

\filldraw[draw=white,fill=gray!20] (0,0) rectangle (1.5, 1.5);
\draw [line width =1pt,decoration={markings, mark=at position 0.2 with {\arrow{<}}},postaction={decorate}] (0,0)--(0,1.5);
\draw [line width =1pt,decoration={markings, mark=at position 0.2 with {\arrow{<}}},postaction={decorate}] (1.5,0)--(1.5,1.5);
\draw [line width =0.8pt,decoration={markings, mark=at position 0.5 with {\arrow{>}}},postaction={decorate}](0,0.5)--(0.75,0.5);
\draw [line width =0.8pt,decoration={markings, mark=at position 0.5 with {\arrow{>}}},postaction={decorate}](1.5,0.5)--(0.75,0.5);
\draw [line width =0.8pt,decoration={markings, mark=at position 0.5 with {\arrow{<}}},postaction={decorate}](0,1)--(0.75,1);
\draw [line width =0.8pt,decoration={markings, mark=at position 0.5 with {\arrow{<}}},postaction={decorate}](1.5,1)--(0.75,1);
\draw [line width =0.8pt,decoration={markings, mark=at position 0.5 with {\arrow{<}}},postaction={decorate}](0.75,0.5)--(0.75,1);
\node [left] at(0,0.5) {$3$};
\node [right] at(1.5,0.5) {$1$};
\node [left] at(0,1) {$1$};
\node [right] at(1.5,1) {$3$};
\end{tikzpicture}
}.
\end{align*}

Relation~\eqref{wzh.five} implies
\begin{align*}
    D^{2,4}_{22}\cdot \overline{C}(1)_{33}\cdot C(3)_{33}\cdot \overline{C}(2)_{11}\cdot C(4)_{11}
    &\overset{\omega}{=}\alpha,\\
    D^{4,2}_{22}\cdot \overline{C}(3)_{33}\cdot C(1)_{33}\cdot \overline{C}(4)_{11}\cdot C(2)_{11}
    &\overset{\omega}{=}\cev{\alpha},
\end{align*}
where $X \overset{\omega}{=} Y$ means that
$X = \omega^{\frac{k}{2}}\, Y$
for some integer $k$.
Since the cluster variables $A_{\gamma_{12}}$ and $A_{\gamma_{34}}$ are reflection invariant (see \S\ref{sub-sec-invariant}), it follows that both $\alpha$ and $\cev{\alpha}$ are reflection-normalizable. Therefore,
\begin{equation}\label{eq:cv4qc}
A_{\gamma_{12}} = [\alpha]_{\mathrm{norm}}, \qquad 
A_{\gamma_{34}} = [\cev{\alpha}]_{\mathrm{norm}}.
\end{equation}

In summary, by synthesizing the results from \eqref{eq:initialcluqc}-\eqref{eq:cv4qc}, we determine the complete set of cluster variables for ${\mathscr{A}}_\omega(\mathbb{P}_{2})$. Furthermore, the $50$ distinct clusters of ${\mathscr{A}}_\omega(\mathbb{P}_{2})$ are in one-to-one correspondence with the set of tagged triangulations of the once-punctured quadrilateral $\mathbb{P}_{4,1}$.

\begin{lemma}\label{lem:A=U=skein}
    We have $\cS_{\omega}(\mathbb{P}_{2})={\mathscr{A}}_\omega(\mathbb P_2)=\mathscr U_{\omega}(\mathbb{P}_{2})$. 
\end{lemma}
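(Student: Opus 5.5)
Our plan is to prove the three equalities in two steps: first $\cS_{\omega}(\mathbb{P}_{2})={\mathscr{A}}_\omega(\mathbb P_2)$ by direct comparison of generators, and then ${\mathscr{A}}_\omega(\mathbb P_2)=\mathscr U_{\omega}(\mathbb{P}_{2})$ by passing to the localization and removing it with primeness, as in the proof of Theorem~\ref{thm-skein-eq-A-two}.

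\emph{Step 1: $\cS_{\omega}(\mathbb{P}_{2})={\mathscr{A}}_\omega(\mathbb P_2)$.} The bigon has two punctures, so Theorem~\ref{thm-skein-inclusion-A} gives $\cS_{\omega}(\mathbb{P}_{2})\subset{\mathscr{A}}_\omega(\mathbb P_2)$. For the reverse inclusion, recall that ${\mathscr{A}}_\omega(\mathbb P_2)={\mathscr{A}}^+_{\mathsf S}$ is generated by the cluster variables, since frozen variables are not inverted. These are exactly the $4$ frozen and $16$ exchangeable variables listed in \eqref{eq:frozen1}, \eqref{eq:initialcluqc} and the subsequent display, together with \eqref{eq:cv4qc}. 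Each of them has been identified with an explicit element of $\cS_{\omega}(\mathbb{P}_{2})$: a stated arc $b_{ij}$ or $\cev b_{ij}$, or one of the webs $[\alpha]_{\rm norm}$, $[\cev\alpha]_{\rm norm}$. Since the mutation class is of finite type $D_4$, this list is complete, and the reverse inclusion follows.

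\emph{Step 2: the localized equality.} Since the type is finite, the unlocalized cluster algebra with invertible coefficients satisfies $\mathscr{A}=\mathscr{U}$ by the standard result for acyclic or finite type cluster algebras (Berenstein--Fomin--Zelevinsky in the classical case, Goodearl--Yakimov / Muller in the quantum case). Alternatively, we can transport $\overline{\mathscr A}_\omega(\mathbb P_4)=\overline{\mathscr U}_\omega(\mathbb P_4)$ from Theorem~\ref{thm:poly1} through the quasi-isomorphism of Proposition~\ref{Prop-quasi-iso}. Either route yields ${\mathscr{A}}^{\rm fr}_\omega(\mathbb P_2)={\mathscr U}^{\rm fr}_\omega(\mathbb P_2)$.

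\emph{Step 3: removing the localization.} We apply \cite[Theorem~C]{QY}, which requires that each frozen variable $b_{13},\cev b_{13},b_{31},\cev b_{31}$ be a prime element of $\cS_{\omega}(\mathbb{P}_{2})=\mathcal O_q({\rm SL}_3)$. This gives $\cS_\omega(\mathbb P_2)=\mathscr U_\omega(\mathbb P_2)$, and combined with Step 1 and Theorem~\ref{thm-inclusion-quantum} this finishes the proof.

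The main obstacle is the primeness in Step 3. Up to the identification with $\mathcal O_q({\rm SL}_3)$, the $b$'s are the corner matrix coefficients $u_{13}$ and $u_{31}$, and the $\cev b$'s are $2\times2$ quantum minors. These are covariant elements, so normality $a\mathcal A=\mathcal A a$ should follow from the quantum-minor commutation relations. For the domain property we would try to identify $\mathcal O_q({\rm SL}_3)/(u_{13})$ with a quantum Schubert-type quotient, which is known to be a domain. Failing that, we would use the quantum trace: show that the ideal $(a)$ equals $\mathcal A\cap a\,\mathcal A_\omega(\mathbb P_2,\lambda)$ and imitate the argument of Lemma~\ref{lem-prime-ele} in Appendix~\ref{Appendix-B-frozen}.
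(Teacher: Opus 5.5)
Your Step~1 is exactly the paper's argument for $\cS_\omega(\mathbb P_2)=\mathscr A_\omega(\mathbb P_2)$. Step~2 is also fine: Theorem~\ref{thm:poly2} with $k=0$ already gives $\mathscr A^{\rm fr}_\omega(\mathbb P_2)=\mathscr U^{\rm fr}_\omega(\mathbb P_2)$. For the acyclic route, the quantum reference is Berenstein--Zelevinsky \cite{BZ}.

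The gap is Step~3. Removing the localization via \cite[Theorem~C]{QY} rests entirely on $b_{13},\cev b_{13},b_{31},\cev b_{31}$ being prime in $\cS_\omega(\mathbb P_2)=\mathcal O_q({\rm SL}_3)$, and you only list strategies for this.

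None of those strategies is supported by the paper. The argument of Appendix~\ref{Appendix-B-frozen} is specific to $n=2$. It uses three ingredients:
\begin{itemize}
\item the basis of increasingly stated, positively ordered multicurves;
\item the identification of each frozen variable with a bad arc $C_p$, which is normal and satisfies $C_p\alpha=\omega^{k/2}(\alpha\sqcup C_p)$ for basis elements $\alpha$;
\item the domain property of the partially reduced algebras $\mathcal S^I_\omega(\fS)$.
\end{itemize}
The paper has no $n=3$ analogue of any of these. Moreover, under $\cS_\omega(\mathbb P_2)\cong\mathcal O_q({\rm SL}_3)$ the elements $\cev b_{13},\cev b_{31}$ are $2\times 2$ corner quantum minors. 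Their normality, and the domain property of the corresponding quotients, need a real argument.

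The claim itself is true: corner quantum minors generate completely prime ideals of $\mathcal O_q({\rm SL}_3)$. So your route can be completed by importing this from the literature on $\mathcal O_q(G)$. As written, however, your proof of $\mathscr A_\omega(\mathbb P_2)=\mathscr U_\omega(\mathbb P_2)$ is incomplete.

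The paper avoids this step entirely. Since the mutable part is of type $D_4$, hence acyclic, it applies \cite[Theorem~3.7]{BMS}. That result gives $\mathscr A=\mathscr U$ for acyclic cluster algebras \emph{without} inverting the frozen variables, and the paper notes it carries over to the quantum setting. No localization, quasi-isomorphism or primeness is needed.

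Your route would yield primeness of the frozen variables as a by-product and parallels the proof of Theorem~\ref{thm-skein-eq-A-two}. But it requires a substantially harder input than the lemma itself.
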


\begin{proof}
    The inclusion $\cS_{\omega}(\mathbb{P}_{2})\supseteq {\mathscr{A}}_\omega(\mathbb P_2)$ holds because all cluster variables of ${\mathscr{A}}_\omega(\mathbb P_2)$ lie in $\cS_{\omega}(\mathbb{P}_{2})$. 
    Coupled with Theorem~\ref{thm-skein-inclusion-A}, we obtain $\cS_{\omega}(\mathbb{P}_{2})={\mathscr{A}}_\omega(\mathbb P_2)$. 
    Furthermore, as the cluster type of ${\mathscr{A}}_\omega(\mathbb P_2)$ is $D_4$ (which is acyclic), it follows from \cite[Theorem~3.7]{BMS}---a result that extends to the quantum setting---that ${\mathscr{A}}_\omega(\mathbb P_2)={\mathscr{U}}_\omega(\mathbb P_2)$. 
The lemma is thus established.
\end{proof}

\subsection{Dual canonical basis of $\mathcal O_q(\rm SL_3)$ and web-interpretation}\label{sub-dual-basis-web}

Based on the discussion in  \S\ref{sec:clustervar}, the cluster variables of the algebra ${\mathscr{A}}_\omega(\mathbb P_2)={\cS}_{\omega}(\mathbb{P}_{2})= \mathcal{O}_q({\rm SL}_3)$ consist of $18$ stated corner arcs, denoted by the set $\{b_{ij},\cev{b}_{ij}\mid i,j=1,2,3\}$ (which includes $4$ frozen variables: $b_{11},b_{33},\cev{b}_{11},\cev{b}_{33}$), and $2$ normalized stated webs, $[\alpha]_{\rm norm}$ and $[\cev{\alpha}]_{\rm norm}$. For each cluster variable $A$, we associate a set of corner arcs $C(A)$ defined as follows:
\begin{equation}
    C(A)=\begin{cases}
        \{A\} & \mbox{if $A$ is a corner arc,}\\
       \{\cev{b}_{33}, {b}_{11}\}  & \mbox{if $A=[\alpha]_{\rm norm}$,}\\
      \{b_{33}, \cev{b}_{11}\} & \mbox{if $A=[\cev{\alpha}]_{\rm norm}$.}
    \end{cases}
\end{equation}

To determine whether two cluster variables belong to the same cluster, we fix three points on each boundary component of $\mathbb{P}_2$, labeled $1,2,3$ in counterclockwise order, as illustrated in Figure~\ref{fig:P2-labeled-points}.
A \textbf{labeled arc} $C$ is a properly embedded oriented curve in $\mathbb{P}_2$ connecting the left point $i$ to the right point $j$ for some $i,j \in \{1,2,3\}$. We define $l(C):=i$ and $r(C):=j$.

For each stated corner arc $b_{ij}$ (resp.\ $\cev{b}_{ij}$), we denote by $\langle b_{ij} \rangle$ (resp.\ $\langle \cev{b}_{ij} \rangle$) the labeled arc oriented from the left point $i$ to the right point $j$ (resp.\ from the right point $j$ to the left point $i$).

\begin{figure}[htbp]
    \centering
    \raisebox{-0.25in}{
    \begin{tikzpicture}
    \tikzset{->-/.style={
        decoration={markings,mark=at position #1 with {\arrow{latex}}},
        postaction={decorate}
    }}

    \filldraw[draw=white,fill=gray!20] (0,0) rectangle (1.5, 1.5);
    \draw[line width=1pt] (0,0)--(0,1.5);
    \draw[line width=1pt] (1.5,0)--(1.5,1.5);

    \node at(0,0.3) {$\bullet$};
    \node[left] at(0,0.3) {$3$};
    \node at(0,0.75) {$\bullet$};
    \node[left] at(0,0.75) {$2$};
    \node at(0,1.2) {$\bullet$};
    \node[left] at(0,1.2) {$1$};

    \node at(1.5,0.3) {$\bullet$};
    \node[right] at(1.5,0.3) {$1$};
    \node at(1.5,0.75) {$\bullet$};
    \node[right] at(1.5,0.75) {$2$};
    \node at(1.5,1.2) {$\bullet$};
    \node[right] at(1.5,1.2) {$3$};

    \end{tikzpicture}
    }
    \caption{Three labeled points on each boundary component of $\mathbb P_2$.}
    \label{fig:P2-labeled-points}
\end{figure}

\begin{definition}\label{def:comp}
Two labeled arcs $C$ and $C'$ are called \textbf{ compatible} if one of the following three conditions, $(1)$, $(2)$, or $(3)$, holds. They are called \textbf{strongly compatible} if one of the conditions $(1)$, $(2)$, or $(3')$ holds.
    \begin{enumerate}
        \item[$(1)$] $C$ and $C'$ do not intersect, which corresponds to the condition $$(l(C)-l(C'))(r(C)-r(C'))<0.$$
        \item[$(2)$] $C$ and $C'$ intersect on the boundary of $\mathbb{P}_2$, i.e., $(l(C)-l(C'))(r(C)-r(C'))=0$, with the following exception: the intersection point has state $2$ and the orientations of $C$ and $C'$ at this intersection point are different.
        \item[$(3)$] The curves $C$ and $C'$ intersect in the interior of $\mathbb{P}_2$, i.e.,
$(l(C)-l(C'))(r(C)-r(C'))>0,$
and $C$ and $C'$ have opposite orientations (that is, one is oriented from left to right, and the other from right to left).
\item[$(3')$] The curves $C$ and $C'$ intersect in the interior of $\mathbb{P}_2$, i.e., $(l(C)-l(C'))(r(C)-r(C'))>0,$
and $C$ and $C'$ have opposite orientations; moreover, at least one of $C$ or $C'$ has state $2$ on one boundary.
    \end{enumerate}


Two cluster variables $A$ and $A'$ are called \textbf{compatible} if
\begin{itemize}
    \item $\langle A \rangle$ and $\langle A' \rangle$ are strongly compatible when both $A$ and $A'$ are corner arcs; or
    \item for every $C \in C(A)$ and $C' \in C(A')$, the corresponding labeled arcs $\langle C \rangle$ and $\langle C' \rangle$ are compatible whenever at least one of $A$ or $A'$ is not a stated corner arc.
\end{itemize}
\end{definition}

The following theorem is the first main result of this section. It provides a geometric characterization of all clusters of 
${\cS}_{\omega}(\mathbb{P}_{2})=\mathcal O_q({\rm SL}_3)$.
We will later use this result to give a web interpretation of the dual canonical basis of $\mathcal{O}_q(\mathrm{SL}_3)$.

\begin{theorem}\label{thm:webint}
    Two cluster variables $A$ and $A'$ belong to the same cluster if and only if they are compatible. Furthermore, a cluster constitutes a maximal set of pairwise compatible cluster variables.
\end{theorem}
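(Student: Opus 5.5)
The plan is to transport the known description of clusters of type $D_4$ through the explicit dictionary built in \S\ref{sec:clustervar}. By \cite{FST}, two exchangeable cluster variables $A_\gamma,A_{\gamma'}$ lie in a common cluster if and only if the tagged arcs $\gamma,\gamma'$ of $\mathbb P_{4,1}$ are compatible. Clusters are exactly the tagged triangulations, which are maximal compatible sets. Frozen variables lie in every cluster. So it suffices to prove the pairwise statement: $A_\gamma$ and $A_{\gamma'}$ are compatible in the sense of Definition~\ref{def:comp} if and only if $\gamma,\gamma'$ are compatible tagged arcs.

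The second sentence of Theorem~\ref{thm:webint} then follows formally. Any pairwise compatible set of cluster variables corresponds to a pairwise compatible set of tagged arcs. Such a set is contained in a tagged triangulation, so a maximal pairwise compatible set is a cluster. For the frozen variables $b_{11},b_{33},\cev b_{11},\cev b_{33}$, I would check directly that each is compatible with every one of the 20 cluster variables. Their labeled arcs join the extreme points $1$ or $3$, so any crossing with another arc happens on the boundary at a point with state $1$ or $3$, or in the interior with opposite orientation; in either case condition (2) or (3) applies.

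The core step is a finite verification. Using \eqref{eq:initialcluqc}--\eqref{eq:cv4qc}, I list the dictionary
$\gamma_1\leftrightarrow\cev b_{33}$, $\gamma_2\leftrightarrow\cev b_{32}$, $\gamma_3\leftrightarrow b_{33}$, $\gamma_4\leftrightarrow b_{23}$, $\gamma_{31}\leftrightarrow b_{32}$, $\gamma_{13}\leftrightarrow\cev b_{23}$, $\gamma_{24}\leftrightarrow\cev b_{12}$, $\gamma_{42}\leftrightarrow b_{21}$, $\gamma_1^{\rm tag}\leftrightarrow b_{12}$, $\gamma_3^{\rm tag}\leftrightarrow\cev b_{21}$, $\gamma_{23}\leftrightarrow\cev b_{22}$, $\gamma_{41}\leftrightarrow b_{22}$, $\gamma_2^{\rm tag}\leftrightarrow b_{11}$, $\gamma_4^{\rm tag}\leftrightarrow\cev b_{11}$, $\gamma_{12}\leftrightarrow[\alpha]_{\rm norm}$, $\gamma_{34}\leftrightarrow[\cev\alpha]_{\rm norm}$. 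This leaves $\binom{16}{2}=120$ pairs to check. I would reduce the work with symmetries. The rotation of $\mathbb P_{4,1}$ by two steps should correspond to reversing orientation ($b\leftrightarrow\cev b$). The reflection symmetry should correspond to $i\mapsto\bar i$ together with swapping sides. Both symmetries preserve the conditions (1), (2), (3), (3'), since those conditions only involve differences of labels and orientations. After this reduction, I compare FST-compatibility with the explicit conditions, treating three classes in turn: corner/corner pairs using (1), (2), (3'); pairs involving $[\alpha]_{\rm norm}$ or $[\cev\alpha]_{\rm norm}$ using $C(A)$ and (1), (2), (3); and the pair $\{[\alpha]_{\rm norm},[\cev\alpha]_{\rm norm}\}$, which is incompatible because $\gamma_{12}$ and $\gamma_{34}$ cross.

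A useful cross-check is the following. For each tagged triangulation, the five or so exchangeable variables should pairwise satisfy the labeled-arc conditions. For each flip, the two exchanged variables should fail them; their exchange relation in $\mathcal O_q({\rm SL}_3)$ is a quantum Plücker- or determinant-type relation of the form \eqref{wzh.eight}. It suffices to test the 16 incompatible pairs per variable against the complement. Alternatively, I can verify the 50 triangulations using a spanning set of flips starting from $\{\gamma_1,\dots,\gamma_4\}$.

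The main obstacle is making sure that the exceptional clauses match exactly. These are the state-$2$ exception in (2), the extra requirement in (3') for corner arcs, and the use of the weaker (3) for $[\alpha]_{\rm norm}$. Concretely, the two families that need care are the plain/notched pairs at the same boundary puncture, such as $\gamma_2$ versus $\gamma_2^{\rm tag}$, i.e.\ $\cev b_{32}$ versus $b_{11}$, and the crossing pairs whose arcs both avoid state $2$. I would check these case by case rather than rely on symmetry.
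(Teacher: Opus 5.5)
Your route is the same as the paper's: identify clusters with tagged triangulations of $\mathbb P_{4,1}$ via \cite{FST}, transport along the dictionary \eqref{eq:initialcluqc}--\eqref{eq:cv4qc}, and compare with Definition~\ref{def:comp}. Your dictionary is correct, and the $120$ pairs of exchangeable variables do match.

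\textbf{The frozen-variable step fails.} The elements $b_{11},b_{33},\cev b_{11},\cev b_{33}$ are not frozen. In your own dictionary they are $A_{\gamma_2^{\rm tag}},A_{\gamma_3},A_{\gamma_4^{\rm tag}},A_{\gamma_1}$. The parenthetical in \S\ref{sub-dual-basis-web} calling them frozen contradicts \eqref{eq:frozen1}. Nor are they compatible with everything. For example, $\langle b_{11}\rangle$ and $\langle b_{33}\rangle$ cross in the interior with the same orientation. This is as it must be, since $\gamma_2^{\rm tag}$ and $\gamma_3$ are incompatible tagged arcs. Your justification fails because nothing forces an interior crossing with an arc ending at states $1,3$ to have opposite orientations.

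By \eqref{eq:frozen1} the frozen variables are $b_{13},\cev b_{13},b_{31},\cev b_{31}$. For these the correct argument is that $(1-l(C'))(3-r(C'))\le 0$, respectively $(3-l(C'))(1-r(C'))\le 0$, for every labeled arc $C'$. So there is never an interior crossing, and any shared boundary point carries state $1$ or $3$. Hence (1) or (2) holds, including for the elements of $C([\alpha]_{\rm norm})$ and $C([\cev\alpha]_{\rm norm})$. With this repair your derivation of the maximality statement goes through.

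\textbf{Your symmetry reduction is misdescribed.} Rotating $\mathbb P_{4,1}$ by two steps acts as $b_{ij}\leftrightarrow\cev b_{ji}$; for instance $\gamma_2=\cev b_{32}\mapsto\gamma_4=b_{23}$. It does not act as $b_{ij}\leftrightarrow\cev b_{ij}$. That map sends the radial arc $\gamma_2$ to the diagonal $\gamma_{31}$, so it is not induced by any homeomorphism of $\mathbb P_{4,1}$, even combined with a change of tagging.

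Similarly, $b_{ij}\mapsto b_{\bar j\bar i}$, $\cev b_{ij}\mapsto\cev b_{\bar j\bar i}$ corresponds to the reflection exchanging $1\leftrightarrow 4$, $2\leftrightarrow 3$ composed with changing the tag at $0$. A bare reflection need not act by any relabeling at all: the one fixing $2$ and $4$ sends $\gamma_{12}$ to $\gamma_{23}$, i.e.\ $[\alpha]_{\rm norm}$ to $\cev b_{22}$. A symmetry may be used to cut down the case check only after you verify it on both the tagged-arc and labeled-arc sides. Otherwise, run through all $120$ pairs directly.
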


\begin{proof}
    According to \cite[Definition 7.4]{FST}, two tagged arcs $\gamma$ and $\gamma'$ in $\mathcal P_{4,1}$ are compatible if they do not cross, up to isotopy with respect to $\mathbb P_{4,1}\setminus \{0,1,2,3,4\}$, in the interior, with the exception of the case where both $\gamma$ and $\gamma'$ connect the puncture $0$ with different taggings and have distinct other endpoints. The result then follows directly from the bijection between the set of cluster variables and tagged arcs \eqref{eq:initialcluqc}--\eqref{eq:cv4qc}, in conjunction with Definition \ref{def:comp}.
\end{proof}

For a complete list of clusters, we refer the reader to Appendix \ref{app-order}.

\medskip

To give a web interpretation of the dual canonical basis of 
${\cS}_{\omega}(\mathbb{P}_{2})=\mathcal O_q({\rm SL}_3)$, 
we first present several relations in ${\cS}_{\omega}(\mathbb{P}_{2})$ 
in Lemmas~\ref{lem-basis1}--\ref{lem-basis4}.

\begin{lemma}\label{lem-basis1}
When $n=3$, we have the following relations:

\begin{enumerate}[label={\rm (\alph*)}]\itemsep0.3em

\item $\begin{array}{c}\includegraphics[scale=0.75]{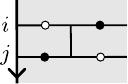}\end{array} \overset{\omega}{=}
\begin{array}{c}\includegraphics[scale=0.75]{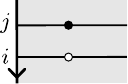}\end{array},\qquad
\begin{array}{c}\includegraphics[scale=0.75]{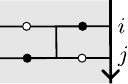}\end{array} \overset{\omega}{=}
\begin{array}{c}\includegraphics[scale=0.75]{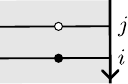}\end{array}$ when $2\in \{i,j\}$ and $i\neq j$.

\item
 \begin{align*}
     \begin{array}{c}\includegraphics[scale=0.75]{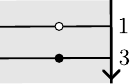}\end{array} &= q
\begin{array}{c}\includegraphics[scale=0.75]{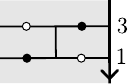}\end{array} + q^{-\frac{1}{3}} \begin{array}{c}\includegraphics[scale=0.75]{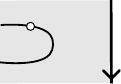}\end{array},\\
 \begin{array}{c}\includegraphics[scale=0.75]{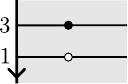}\end{array} &= q^{-1}
\begin{array}{c}\includegraphics[scale=0.75]{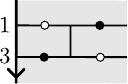}\end{array} + q^{\frac{1}{3}} \begin{array}{c}\includegraphics[scale=0.75]{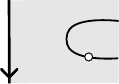}\end{array}.
\end{align*}
\end{enumerate}
    
\end{lemma}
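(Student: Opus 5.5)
The plan is to prove both parts by purely local computations in $\cS_\omega(\mathbb P_2)$ with $n=3$, using only the defining relations \eqref{w.cross}--\eqref{wzh.eight}. Here $\mathbbm c_i=(-q)^{i-3}q^{-1/3}$ and $\bar i=4-i$, so $\bar 2=2$. In each pictured identity the two diagrams should differ only in a small neighbourhood of one boundary edge, or of one crossing. The first step is therefore to isolate that neighbourhood, a shaded box in the sense of the relations, and rewrite only inside it. Everything outside the box, including the height order of the other endpoints, is left untouched.

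For (a), I expect the two diagrams to differ by exchanging the heights of two consecutive endpoints on the same boundary edge, with states $i\neq j$, or equivalently by sliding an endpoint past a nearby strand. I would first turn the height exchange into a crossing next to the boundary. Then I would apply \eqref{wzh.eight}, which rewrites that crossing as $q^{\frac13}q^{\delta_{i,j}}$ times the swapped diagram. When the states are in the order $j<i$, there is an extra term with coefficient $\delta_{j<i}(q^{-1}-q)$. The key claim is that this extra term vanishes whenever $2\in\{i,j\}$ and $i\neq j$.

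I plan to show this vanishing in one of two ways, depending on the picture. If the residual term contains a returning arc whose two ends lie on the same boundary edge with states $i,j$, then \eqref{wzh.six} gives a factor $\delta_{\bar j,i}$. Since one of the states is $2$, we have $\bar 2=2$, and $i\neq j$, we get $\bar j\neq i$, so this factor is zero. If instead the residual term has its endpoints joined through a trivalent vertex, \eqref{wzh.five} expands it as an antisymmetrised sum, and the swapped configuration cancels against it. In either case what remains is the original diagram times a power of $q^{\pm\frac13}$, which is a power of $\omega$. This is exactly the relation $\overset{\omega}{=}$.

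For (b), I would resolve the single interior crossing of the left-hand diagram by the oriented skein relation \eqref{w.cross}, or, if the crossing is adjacent to the boundary, by \eqref{wzh.eight}. One resulting term is the smoothed diagram. The other is a diagram that relation \eqref{w.twist} or \eqref{wzh.four} reduces to a trivalent-vertex web, picking up the factor $\mathbbm t=q^{-8/3}$ or the normalisation $(-q)^{-3}$ together with the sign $(-q^{2/3})^{\ell(\sigma)}$. Combining these constants should produce exactly the coefficients $q$ and $q^{-1/3}$ in the first identity. The second identity should follow by applying the reflection $*$ of \S\ref{sub-sec-invariant}, which switches crossings and sends $q\mapsto q^{-1}$, so it is obtained from the first by $q\mapsto q^{-1}$.

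The main obstacle is bookkeeping rather than ideas. One must choose correctly which local relation applies to each picture and track every fractional power of $q$ (the $q^{\pm\frac1n}$ factors, $\mathbbm t$, and the constants $\mathbbm c_i$), so that the coefficients come out as exactly $q,q^{-\frac13}$ and not merely up to a unit. I would check the exponents against a small case, for instance by applying $\tr$ via Theorem~\ref{thm-stated-trace} and comparing leading monomials.
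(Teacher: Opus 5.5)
\paragraph{Part (a).} The mechanism you propose for (a) does not work. Relation \eqref{wzh.eight} concerns two strands of the \emph{same} orientation. Its extra term is $q^{\frac13}\delta_{j<i}(q^{-1}-q)$ times the uncrossed diagram with state $i$ above $j$, which contains neither a returning arc nor a vertex and is nonzero. For $(i,j)=(2,1)$ or $(3,2)$, both of which satisfy your hypothesis, the same-orientation height exchange is genuinely two-term, so your "key claim" is false.

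The hypothesis $2\in\{i,j\}$, $i\neq j$ is equivalent to $j\neq\bar i$, and it matters for a different reason. The identities in (a) involve a trivalent vertex: this is how Proposition~\ref{prop-MS-cluster-monomial} uses (a) to pass from products of strongly compatible corner arcs, which cross, to the webs $W(S)$, which have $H$-shaped pieces instead. The paper derives (a) from \eqref{wzh.five}. Pushing the vertex into the boundary produces $\mathbbm a\sum_{\sigma\in S_3}(-q)^{-\ell(\sigma)}$ times stated strands. The condition $j\neq\bar i$ then makes all but one summand drop out, because a returning arc whose states are not of the form $(k,\bar k)$ vanishes by \eqref{wzh.six}.

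You have the right numerical fact ($\bar 2=2$), but you attach it to a relation where it plays no role, and your argument never produces the vertex. You also need to check signs. The relation $\overset{\omega}{=}$ allows only powers of $\omega^{1/2}$, while $(-q)^{-\ell(\sigma)}$ and $\mathbbm c_i=(-q)^{i-3}q^{-1/3}$ carry signs that must cancel.

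\paragraph{Part (b).} The plan for (b) has the same problem. Relation \eqref{w.cross} expresses one crossing through the opposite crossing and the oriented smoothing. None of these terms contains a vertex, and \eqref{w.twist} only removes kinks, so no term "reduces to a trivalent-vertex web". In the interior, the only defining relation involving vertices is \eqref{wzh.four}. Extracting a crossing-versus-$H$ identity from it requires capping off a strand of the sink--source pair and inverting the resulting six-term braid expansion, which you do not do.

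The paper instead works at the boundary, using \eqref{wzh.five} together with \eqref{wzh.seven}. The latter rewrites a returning arc next to the boundary as $\sum_i \mathbbm c_{\bar i}^{-1}$ times strands with states $(i,\bar i)$. Where (b) is applied (Lemmas~\ref{lem-basis5} and \ref{lem-basis8}, with arcs of type $b_{11}$ and $\cev{b}_{33}$), the boundary states are the complementary pair $\{1,3\}$, exactly the case excluded in (a). There the returning arc does not vanish, and a second term survives. Deducing the second identity of (b) from the first via the reflection $*$ is reasonable, provided the pictured identity really is the $*$-image of the first.
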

\begin{proof}
    (a) follows from relation~\eqref{wzh.five}.
    (b) follows from relations~\eqref{wzh.five} and \eqref{wzh.seven}.
\end{proof}

\begin{lemma}\cite{LS21,frohman20223}\label{lem-basis2}
    When $n=3$, we have the following relation:
    \begin{align*}
         \raisebox{-.15in}{
	\begin{tikzpicture}
		\tikzset{->-/.style=
			
			{decoration={markings,mark=at position #1 with
					
					{\arrow{latex}}},postaction={decorate}}}
		\filldraw[draw=white,fill=gray!20] (0,0.2) rectangle (1.2, 1.2);
		\draw [line width =0.8pt,decoration={markings, mark=at position 0.7 with {\arrow{>}}},postaction={decorate}](0.4,0.4)--(0,0.4);
		\draw [line width =0.8pt,decoration={markings, mark=at position 0.7 with {\arrow{>}}},postaction={decorate}](0,1)--(0.4,1);
		\draw [line width =0.8pt,decoration={markings, mark=at position 0.6 with {\arrow{>}}},postaction={decorate}](1.2,0.4)--(0.8,0.4);
        \draw [line width =0.8pt,decoration={markings, mark=at position 0.6 with {\arrow{>}}},postaction={decorate}](0.8,1)--(1.2,1);
        \draw[line width =0.8pt] (0.4,0.4)rectangle (0.8, 1);
	\end{tikzpicture}
}
    = \raisebox{-.15in}{
	\begin{tikzpicture}
		\tikzset{->-/.style=
			
			{decoration={markings,mark=at position #1 with
					
					{\arrow{latex}}},postaction={decorate}}}
		\filldraw[draw=white,fill=gray!20] (0,0.2) rectangle (1.2, 1.2);
		\draw [line width =0.8pt,decoration={markings, mark=at position 0.5 with {\arrow{>}}},postaction={decorate}](1.2,0.4)--(0,0.4);
		\draw [line width =0.8pt,decoration={markings, mark=at position 0.5 with {\arrow{>}}},postaction={decorate}](0,1)--(1.2,1);
	\end{tikzpicture}
} + \raisebox{-.15in}{
	\begin{tikzpicture}
		\tikzset{->-/.style=
			
			{decoration={markings,mark=at position #1 with
					
					{\arrow{latex}}},postaction={decorate}}}
		\filldraw[draw=white,fill=gray!20] (0,0.2) rectangle (1.2, 1.2);
         \draw[line width =0.8pt,decoration={markings, mark=at position 0.7 with {\arrow{<}}},postaction={decorate}]
  plot[smooth] coordinates {(0,0.4) (0.5,0.7) (0,1)};
  \draw[line width =0.8pt,decoration={markings, mark=at position 0.7 with {\arrow{>}}},postaction={decorate}]
  plot[smooth] coordinates {(1.2,0.4) (0.7,0.7) (1.2,1)};
	\end{tikzpicture}
}.
    \end{align*}
\end{lemma}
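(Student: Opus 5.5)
The relation is local: both sides agree outside a small disk $B$ in the interior of $\mathbb P_2$, and inside $B$ the three diagrams have the same four boundary points and orientations. Since the relations \eqref{w.cross}--\eqref{wzh.four} are also local, it suffices to prove the identity in the ${\rm SL}_3$-skein module of the disk $B$ with these boundary data. The identity then propagates to $\cS_\omega(\mathbb P_2)$, because inserting $B$ defines a linear map compatible with all defining relations. The plan has three steps.

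\textbf{Step 1 (the local module is spanned by the two right-hand diagrams).} I will use the Sikora / Lê--Sikora identification of the ${\rm SL}_n$-skein module of a disk with boundary points with the space of $U_q(\mathfrak{sl}_n)$-invariants of the corresponding tensor product. Here the boundary labels give $V\otimes V\otimes V^*\otimes V^*$ (up to ordering). For $n=3$ this invariant space is $2$-dimensional. I will show that the two crossingless diagrams on the right (the pair of parallel arcs and the pair of turn-backs) are linearly independent, for instance by pairing them against their mirror closures. Hence
\[
\text{square}=a\cdot(\text{parallel arcs})+b\cdot(\text{turn-backs})\qquad\text{for some } a,b\in R .
\]

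\textbf{Step 2 (elementary evaluations).} I need three reductions: the unknot value $[3]$ from \eqref{w.unknot}, and the bigon and triangle reductions. The bigon reduction says a bigon face equals $[2]$ times an arc, up to the sign conventions built into \eqref{wzh.four}. To obtain it, I will apply \eqref{wzh.four} for $n=3$ to a sink--source pair, resolve the resulting positive braids with \eqref{w.cross}, and remove kinks with \eqref{w.twist}. Alternatively, I will simply cite \cite{LS21,frohman20223}, where these consequences are recorded with exactly these constants.

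\textbf{Step 3 (determining $a,b$).} I will close the local picture in the two possible ways: join the left legs to each other and the right legs to each other, or join top legs to bottom legs, each closure producing one extra bigon face. Take the closure in which the parallel arcs become a single loop and the turn-backs become two loops. In it, the square reduces by two bigon removals to $[2]^2$ times a loop-free pattern, so this closure evaluates to $[2]^2$, the parallel arcs to $[3]$, and the turn-backs to $[3]^2$. The other closure gives the symmetric equation. Solving the resulting $2\times 2$ system should give $a=b=1$, matching the statement.

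The main obstacle is sign and $q$-power bookkeeping. One must check that the normalization of \eqref{wzh.four} (with the factor $(-q)^{-3}$ and the weights $(-q^{2/3})^{\ell(\sigma)}$) and the framing and crossing constants combine to give the bigon coefficient with the sign that makes both coefficients exactly $1$, not merely up to $\overset{\omega}{=}$. I would first verify this on the classical specialization $\omega^{1/2}=1$, where the relation is the familiar Kuperberg square relation. I would then track the $q$-powers, using reflection invariance under $*$ to force the coefficients to be bar-invariant and hence equal to $1$.
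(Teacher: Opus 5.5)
The paper gives no argument for this lemma; it only cites \cite{LS21,frohman20223}, where the square relation comes out of the defining relations. Your strategy (dimension count plus closures) is a different and workable route, but as written Step~3 gives the wrong answer, and Step~1 depends on a theorem you have not justified.

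\textbf{Step 3.} The closure of the square does not evaluate to $[2]^2$. Join the left legs to each other and the right legs to each other. Two opposite sides of the square become bigons. Removing the first bigon leaves a theta graph, and removing the second leaves a \emph{loop}, not the empty diagram. So the value is $(-[2])^2[3]=[2]^2[3]$; the bigon coefficient is $-[2]$ in this normalization, but it enters squared. With your value, the system $a[3]+b[3]^2=[2]^2=a[3]^2+b[3]$ forces $a=b=[2]^2/([3]+[3]^2)=[3]^{-1}$, contradicting your claim. With the correct value, $a[3]+b[3]^2=a[3]^2+b[3]=[2]^2[3]$ gives $a=b$ (since $[3]\neq 1$). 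Then $a=[2]^2/(1+[3])=1$, because $[2]^2=[3]+1$. The exact evaluations therefore pin down $a,b$, and no separate sign bookkeeping is needed. Your closing appeal to reflection invariance is not a valid argument in any case, since a bar-invariant element of $R$ need not equal $1$.

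\textbf{Step 1.} Knowing that the invariant space is $2$-dimensional and that $P,T$ have independent images only gives $\mathrm{RT}(\text{square})=a\,\mathrm{RT}(P)+b\,\mathrm{RT}(T)$ after applying the Reshetikhin--Turaev map. To get the identity in the skein module, you need this map to be injective on the relative skein module of the ball with these four boundary points. That is completeness of the defining relations: Kuperberg for $n=3$, or Cautis--Kamnitzer--Morrison in general, after matching their normalizations with \eqref{w.cross}--\eqref{wzh.four}. You cannot get the spanning from Kuperberg-type reductions instead, because reducing a square is exactly the relation being proved. This black box is heavier than the lemma itself. It also holds over $F=\mathbb{Q}(\omega^{1/2})$ (generic $q$), not over $R$. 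Pushed into a surface, it only yields $c\,(\text{square}-P-T)=0$ for some nonzero $c\in R$, so you must add a torsion-freeness argument. For $\mathbb P_2$, where the lemma is used, torsion-freeness follows from the injective trace $\mathrm{tr}_\lambda$ of Theorem~\ref{thm-stated-trace}, which lands in a quantum torus, a free $R$-module. A direct skein derivation from the defining relations, as in the cited sources, avoids both completeness and the descent issue. Your route buys a conceptual explanation of why both coefficients equal $1$.
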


\begin{lemma}\cite[Lemma~4.9]{LY23}\label{lem-basis3}
     When $n=3$, we have the following relation:
    \begin{align}\label{eq-Yilr}
    \begin{array}{c}\includegraphics[scale=1.8]{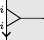}\end{array}=
        \begin{array}{c}\includegraphics[scale=1.8]{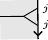}\end{array}=0,
    \end{align}
    where $i,j \in \{1,2,3\}$, and the orientations of the webs are arbitrary.
\end{lemma}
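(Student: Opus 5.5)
The plan is to reduce the statement to the local, bigon-type relations that are already available, and to handle the stated ends through the boundary relations. The statement (Lemma~\ref{lem-basis3}) is cited from \cite[Lemma~4.9]{LY23}. Its two pictured webs each have a trivalent vertex connected to a boundary edge by two strands with states $i$ and $j$, and the vertex's orientation forces these strands to form a configuration that reduces to a corner-type arc with coinciding states.

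First, I will isotope the web so that the two strands meeting the boundary edge are consecutive in the height order. Then I will apply relation~\eqref{wzh.five} in the form specialized to $n=3$. That relation replaces the $3$-valent vertex attached to the boundary by an alternating sum $\mathbbm a\sum_{\sigma\in S_3}(-q)^{-\ell(\sigma)}(\cdots)$ of three parallel stated strands with states $\sigma(1),\sigma(2),\sigma(3)$. Read in reverse, it expresses any vertex adjacent to the boundary through the antisymmetrizer.

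The key step is the observation that when two of the three strands at the vertex end on the same boundary edge, the antisymmetric structure combined with the height exchange relation~\eqref{wzh.eight} makes the resulting combination vanish. Concretely, I will use~\eqref{wzh.eight} to swap the two adjacent endpoints. Swapping turns the vertex term into $-q^{\pm1}$ times itself plus a correction term, and the correction term involves a returning arc. That returning arc is killed by~\eqref{wzh.six} unless its states are complementary, $\bar j = i$. In the remaining case, the returning arc reduces via~\eqref{wzh.six} and~\eqref{w.twist} to the same web with a loop removed. This gives a linear equation $X = c\,X$ with $c\neq 1$ generically, which should force $X=0$.

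I expect the main obstacle to be tracking the scalars exactly enough to see that $c\ne1$. The sign and $q$-power bookkeeping in~\eqref{wzh.five} and~\eqref{wzh.eight} must not cancel, and the argument has to work over $R=\mathbb Z[\omega^{\pm1/2}]$ without dividing by $1-c$.

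To avoid that division, I would try a different route first. I would cut along an arc separating the local picture into a bigon and use the isomorphism $\cS_\omega(\mathbb P_2)\cong\mathcal O_q({\rm SL}_3)$ of Theorem~\ref{Hopf}. Under this identification the configuration becomes a product of $2\times 2$ quantum minors with repeated row index, that is $M^{I}_{J}$ with $|I|<2$, which is zero by the $q$-antisymmetry defining the quantum minors. The other picture, with reversed orientations, follows by applying the reflection $*$ or orientation reversal, since both preserve the vanishing.
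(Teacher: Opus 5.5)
The paper gives no argument for this statement; it simply quotes \cite[Lemma~4.9]{LY23}. Judged on its own, your proposal has a genuine gap: neither route actually derives the vanishing.

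The fact to be used is that the two legs of the vertex ending consecutively on the same boundary edge carry the \emph{same} state. This is what the labels $i$, resp.\ $j$, record. Your write-up never pins down where equality of states enters, and it is essential: for two different states the analogous web is in general nonzero.

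In your first route the mechanism is misdescribed.
\begin{itemize}
\item The two boundary legs of a source or sink have the same orientation. So \eqref{wzh.eight} is the relevant height exchange, and it has no returning-arc term; such a term arises only for oppositely oriented strands.
\item With equal states the $\delta_{j<i}$ term drops out, and the crossing just contributes $q^{1+\frac1n}$. No loops appear anywhere.
\item The competing scalar would have to come from absorbing a crossing of two adjacent legs into the vertex. That relation is not among \eqref{w.cross}--\eqref{wzh.eight}, so you would have to derive it.
\item Even then you only get $(q+q^{-1})X=0$ up to a unit. Concluding $X=0$ needs a torsion-freeness plus base-change argument, which you flag but do not supply.
\end{itemize}

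The bigon route, your intended main argument, does not close the gap.
\begin{itemize}
\item Theorem~\ref{Hopf} defines $g_{big}$ only on the arcs $b_{ij}$. So ``the configuration becomes a $q$-minor with a repeated row index'' is exactly what has to be proved, not an input.
\item ``$M^I_J$ with $|I|<2$'' is not meaningful, since quantum minors require $|I|=|J|$.
\item Cutting off a bigon along an ideal arc parallel to the edge needs injectivity of the splitting map, and it only relocates the same local computation into $\cS_\omega(\mathbb P_2)$.
\end{itemize}

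The missing step is short and needs no bigon.
\begin{enumerate}
\item Isotope the vertex to a height strictly between the (consecutive) heights of its two boundary endpoints.
\item Push it into the edge between them using \eqref{wzh.five}, or its image under $*$ if the height order runs the other way.
\item Each of the two legs now becomes a returning arc with consecutive endpoints, joining a new endpoint of state $\sigma(\cdot)$ to an old endpoint of state $i$.
\item By \eqref{wzh.six}, or its image under $*$, each such arc vanishes unless $\sigma(\cdot)=\bar i$.
\item A permutation cannot take the value $\bar i$ twice, so every term of the sum is $0$, whatever the third leg does.
\end{enumerate}
This argument is local at the boundary edge and independent of orientation, so it covers both pictures. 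Since orientations are already arbitrary in the statement, the two pictures do not differ by orientation, and appealing to $*$ or orientation reversal for the second picture is beside the point.
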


For a positive integer $k$ and $0\leq t\leq k-1$, let $J_{k,t}$ denote the set of stated web diagrams in $\mathbb{P}_2$ that contain the local configuration shown in Figure~\ref{fig:JK}.

\begin{figure}[htbp]
    \centering
    \includegraphics[width=0.4\linewidth]{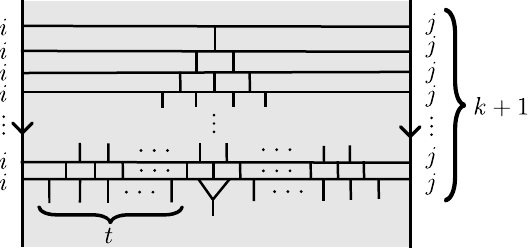}
    \caption{The local configuration appearing in diagrams in $J_{k,t}$, where $i,j\in\{1,2,3\}$ and the orientation of the web diagram is arbitrary.}
    \label{fig:JK}
\end{figure}

\begin{lemma}\label{lem-basis4}
Let $k$ be a positive integer and $0 \leq t \leq k-1$. For any $W \in J_{k,t}$, we have 
\[
W = 0 \in \cS_\omega(\mathbb{P}_2).
\]
\end{lemma}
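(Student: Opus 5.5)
The plan is an induction on $k$, using Lemma~\ref{lem-basis3} as the base case and the local relations \eqref{wzh.four}--\eqref{wzh.eight} together with Lemmas~\ref{lem-basis1} and~\ref{lem-basis2} to push the configuration toward the boundary. I read Figure~\ref{fig:JK} as follows. A configuration in $J_{k,t}$ is a ladder-like piece of web: $k$ parallel strands run into a boundary edge, carrying states $i,j$ at the two extreme endpoints. They are connected by rungs, or trivalent vertices, in such a way that after $t$ rung-removals one arrives at the local picture of Lemma~\ref{lem-basis3}, namely a trivalent vertex, or a short $Y$, whose two legs end on the same boundary edge with adjacent heights. Since $W$ contains this local piece, and the skein relations are local, it suffices to show that the local piece itself is zero. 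Concretely, we work in $\cS_\omega(\mathbb P_3)$, or in a small disk neighborhood meeting the boundary, and then apply the embedding functoriality $f_*$ recalled in \S\ref{sub-def-reduced-skein} to transport the vanishing to $\cS_\omega(\mathbb P_2)$.

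The base case is $k=1$, which forces $t=0$. Here the configuration is exactly the one in \eqref{eq-Yilr}, so $W=0$ by Lemma~\ref{lem-basis3}.

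For the inductive step, take $W\in J_{k,t}$ with $k\ge 2$. The boundary-closest rung or vertex sits next to two endpoints of consecutive heights. We apply the square-removal relation of Lemma~\ref{lem-basis2}, or the boundary relation \eqref{wzh.seven}, to the face adjacent to the boundary. This expresses $W$ as an $R$-linear combination of diagrams of two kinds:
\begin{itemize}
\item diagrams lying in $J_{k-1,t'}$ for some $t'\le k-2$, which vanish by the induction hypothesis;
\item diagrams in which a returning arc has both endpoints on the same boundary edge, which are then evaluated by \eqref{wzh.six}.
\end{itemize}
For the second kind, the $\delta_{\bar j,i}$ factor in \eqref{wzh.six} kills the term unless the states are complementary. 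Otherwise the reduced diagram again contains a configuration of type $J_{k-1,\cdot}$ or the base configuration. When the index $t$ interacts with the ordering of heights, I would first use the height-exchange relation \eqref{wzh.eight}, via Lemma~\ref{lem-basis1}(a), to move the relevant endpoints into adjacent position; this changes the diagram only by a power of $\omega$ or creates terms that are again in some $J$.

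The main obstacle is bookkeeping the states. After each reduction we must verify that every surviving term genuinely lies in some $J_{k',t'}$ with $k'<k$, or in the base configuration. In particular the state pair $(i,j)$ at the extremes has to remain of the forbidden type, and the heights must stay consecutive so that Lemma~\ref{lem-basis3} applies. If the direct induction on $k$ becomes tangled, I would instead induct on $t$ for fixed $k$, using the lexicographic order $(k,t)$. At each step I would resolve only the single face nearest the boundary, and show by inspecting the states permitted by \eqref{wzh.five} and \eqref{wzh.six} that each term strictly lowers $(k,t)$.
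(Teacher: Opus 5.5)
Your skeleton is right: induction on $k$, resolve a square with Lemma~\ref{lem-basis2}, then finish with the induction hypothesis and Lemma~\ref{lem-basis3}. But both the base case and the inductive step, as you describe them, are wrong where the actual work happens.

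\textbf{Base case.} It is not immediate. A diagram in $J_{1,0}$ is not itself one of the configurations of \eqref{eq-Yilr}. The configuration of Figure~\ref{fig:JK} always contains a square, and already for $k=1$ you must apply Lemma~\ref{lem-basis2} to it. This writes $W$ as a sum of two diagrams, each containing one of the local diagrams of \eqref{eq-Yilr}. Only then does Lemma~\ref{lem-basis3} kill both terms.

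\textbf{Inductive step.} The real content is a case analysis on $t$, which records where the square sits in the configuration; your proposal never carries it out. Applying Lemma~\ref{lem-basis2} to the square gives exactly $W=W_1+W_2$, with three cases:
\begin{itemize}
\item for $1\le t\le k-2$, $W_1\in J_{k-1,t-1}$ and $W_2\in J_{k-1,t}$;
\item for $t=0$, $W_1$ contains the left local diagram of \eqref{eq-Yilr} and $W_2\in J_{k-1,0}$;
\item for $t=k-1$, $W_1$ contains the right local diagram of \eqref{eq-Yilr} and $W_2\in J_{k-1,k-2}$.
\end{itemize}
So the terms that do not fall back into some $J_{k-1,\cdot}$ are killed by Lemma~\ref{lem-basis3}, not by \eqref{wzh.six}.

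The mechanism you substitute fails. Relation \eqref{wzh.six} kills a returning arc only when its two states are not complementary. For complementary states you simply assert, without argument, that the remaining diagram lies in some $J_{k-1,\cdot}$.

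Relatedly, there is no state bookkeeping to do here. You also do not need \eqref{wzh.seven}, \eqref{wzh.eight}, Lemma~\ref{lem-basis1}(a), or height exchanges. The states $i,j$ are arbitrary both in Figure~\ref{fig:JK} and in \eqref{eq-Yilr}; the vanishing is forced by the shape of the configurations, not by a condition on the states.

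As written, neither your base case nor your inductive step is established.
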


\begin{proof}
We prove the statement by induction on $k$.

For the base case $k=1$, the result follows from Lemmas~\ref{lem-basis2} and \ref{lem-basis3}.

For the induction step, observe that the local configuration in Figure~\ref{fig:JK} contains a square. 
Given any $W \in J_{k,t}$, applying Lemma~\ref{lem-basis2} to this square yields
$W = W_1 + W_2,$
where
\begin{align*}
    \begin{cases}
        W_1 \text{ contains the left local diagram in \eqref{eq-Yilr}}, \quad W_2 \in J_{k-1,0}, & t=0,\\[4pt]
        W_1 \in J_{k-1,t-1}, \quad W_2 \in J_{k-1,t}, & 1 \leq t \leq k-2,\\[4pt]
        W_1 \text{ contains the right local diagram in \eqref{eq-Yilr}}, \quad W_2 \in J_{k-1,k-2}, & t = k-1.
    \end{cases}
\end{align*}
The conclusion then follows from the induction hypothesis together with Lemma~\ref{lem-basis3}.
\end{proof}

We introduce the following definition, which will be used to define stated web diagrams in $\mathbb P_2$ corresponding to cluster monomials in ${\mathscr{A}}_\omega(\mathbb P_2) = {\cS}_{\omega}(\mathbb{P}_{2})$ (Definition~\ref{def-quan-cluster-algebra}(d)).

\begin{definition}
A \textbf{compatible system} is a collection of pairwise  compatible labeled arcs. 
A \textbf{system} is a maximal compatible system.
A \textbf{weighted (compatible) system} is a (compatible) system in which each labeled arc is assigned a nonnegative integer, called its weight.
\end{definition}

Let $S$ be a weighted compatible system. 
We place $S$ in general position so that any two labeled arcs in $S$ realize their minimal intersection numbers. 
If $S$ contains $\langle b_{ij}\rangle$ and $\langle \cev{b}_{ij}\rangle$ for $i,j\in \{1,3\}$, then their general position in $S$ is defined so that the oriented circle in $\mathbb P_2$ formed by $\langle b_{ij}\rangle$ and $\langle \cev{b}_{ij}\rangle$ is counterclockwise; see Figure~\ref{fig:moves}(A).
We define a stated web diagram $W(S)$ in $\mathbb P_2$ associated to $S$ via the following steps:
\begin{itemize}
    \item Replace each labeled arc in $S$ with $m$ parallel copies of that arc, where $m$ is its weight, to obtain a collection $S'$ of labeled arcs. We require that any two labeled arcs in $S'$ realize their minimal intersection numbers.

    \item Obtain $S''$ from $S'$ by replacing each crossing $\raisebox{-.10in}{
	
	\begin{tikzpicture}[scale=0.6, rotate=90]
		\tikzset{->-/.style=
			
			{decoration={markings,mark=at position #1 with
					
					{\arrow{latex}}},postaction={decorate}}}
		\filldraw[draw=white,fill=gray!20] (-0,-0.2) rectangle (1, 1.2);
		\draw [line width =0.6pt,decoration={markings, mark=at position 0.5 with {\arrow{>}}},postaction={decorate}](0.6,0.6)--(1,1);
		\draw [line width =0.6pt,decoration={markings, mark=at position 0.5 with {\arrow{>}}},postaction={decorate}](0.6,0.4)--(1,0);
		\draw[line width =0.6pt] (0,0)--(0.6,0.6);
		\draw[line width =0.6pt] (0,1)--(0.4,0.6);
		\draw[line width =0.6pt] (0.4,0.6)--(0.6,0.4);
	\end{tikzpicture}
}$ with $\raisebox{-.12in}{
	\begin{tikzpicture}[scale=0.6, rotate=90]
		\tikzset{->-/.style=
			
			{decoration={markings,mark=at position #1 with
					
					{\arrow{latex}}},postaction={decorate}}}
		\filldraw[draw=white,fill=gray!20] (0,-0.2) rectangle (1.2, 1.2);
		\draw [line width =0.6pt,decoration={markings, mark=at position 0.7 with {\arrow{>}}},postaction={decorate}](0,0)--(0.4,0.5);
		\draw [line width =0.6pt,decoration={markings, mark=at position 0.7 with {\arrow{>}}},postaction={decorate}](0,1)--(0.4,0.5);
		\draw[line width =0.6pt] (0.4,0.5)--(0.8,0.5);
		\draw [line width =0.6pt,decoration={markings, mark=at position 0.6 with {\arrow{>}}},postaction={decorate}](0.8,0.5)--(1.2,0);
        \draw [line width =0.6pt,decoration={markings, mark=at position 0.6 with {\arrow{>}}},postaction={decorate}](0.8,0.5)--(1.2,1);
	\end{tikzpicture}
}$.
    
    \item Construct a crossingless web diagram $\widetilde S$ in $\mathbb P_2$ from $S''$ by applying the procedure shown in Figure~\ref{fig:moves}(B) for each $i=1,2,3$. Denote by $\partial_i$ the set of endpoints of $\widetilde S$ obtained from this procedure.
    
    \item Finally, obtain the stated web $W(S)$ from $\widetilde S$ by assigning state $i$ to each endpoint in $\partial_i$, for each $i=1,2,3$.
    The height ordering of $W(S)$ along each component of $\partial \mathbb{P}_2$ is indicated by the two arrows in
     $
\raisebox{-.08in}{

\begin{tikzpicture}
\tikzset{->-/.style=

{decoration={markings,mark=at position #1 with

{\arrow{latex}}},postaction={decorate}}}

\filldraw[draw=white,fill=gray!20] (0,0) rectangle (0.5, 0.5);
\draw [line width =0.6pt,decoration={markings, mark=at position 0.5 with {\arrow{<}}},postaction={decorate}](0,0)--(0,0.5);
\draw [line width =0.6pt,decoration={markings, mark=at position 0.5 with {\arrow{<}}},postaction={decorate}] (0.5,0)--(0.5,0.5);
\end{tikzpicture}
}
$.
\end{itemize}

\begin{figure}[htbp]
    \centering
    
    \begin{minipage}{0.48\textwidth}
        \centering
        \raisebox{-0.25in}{
        \begin{tikzpicture}
        \tikzset{->-/.style={
            decoration={markings,mark=at position #1 with {\arrow{latex}}},
            postaction={decorate}
        }}

        \filldraw[draw=white,fill=gray!20] (0,0) rectangle (1.5, 1.5);
        \draw[line width=1pt] (0,0)--(0,1.5);
        \draw[line width=1pt] (1.5,0)--(1.5,1.5);
        \draw [line width =0.8pt,decoration={markings, mark=at position 0.5 with {\arrow{<}}},postaction={decorate}] 
            (0,1) .. controls (0.5,1.2) and (1,1.1) .. (1.5,0.5);
        \draw [line width =0.8pt,decoration={markings, mark=at position 0.5 with {\arrow{>}}},postaction={decorate}] 
            (0,1) .. controls (0.5,0.3) and (1,0.4) .. (1.5,0.5);
        \end{tikzpicture}
        }
        
        \vspace{4pt}
        (A)
    \end{minipage}
    \hspace{-0.1\textwidth} 
    \begin{minipage}{0.48\textwidth}
        \centering
        \raisebox{-0.25in}{
        \begin{tikzpicture}
            \filldraw[draw=none, fill=gray!20] (0,0) rectangle (1.5,1.5);
            
            \draw[line width=1pt] (0,0) -- (0,1.5);
            \draw[line width=1pt] (1.5,0) -- (1.5,1.5);
            
            \draw[line width=0.8pt] (0,0.75) -- (0.5,1.2);
            \draw[line width=0.8pt] (0,0.75) -- (0.5,1);
            \draw[line width=0.8pt] (0,0.75) -- (0.5,0.3);
            
            \draw[line width=0.8pt] (1.5,0.75) -- (1,1.2);
            \draw[line width=0.8pt] (1.5,0.75) -- (1,1);
            \draw[line width=0.8pt] (1.5,0.75) -- (1,0.3);
            
            \node at (0,0.75) {$\bullet$};
            \node[left] at (0,0.75) {$i$};
            \node at (0.4,0.8) {$\vdots$};
            
            \node at (1.5,0.75) {$\bullet$};
            \node[right] at (1.5,0.75) {$i$};
            \node at (1.1,0.8) {$\vdots$};
        \end{tikzpicture}
        }
        $\!\longrightarrow\!$
        \raisebox{-0.25in}{
        \begin{tikzpicture}
            \filldraw[draw=none, fill=gray!20] (0,0) rectangle (1.5,1.5);
            
            \draw[line width=1pt] (0,0) -- (0,1.5);
            \draw[line width=1pt] (1.5,0) -- (1.5,1.5);
            
            \draw[line width=0.8pt] (0,1.2) -- (0.5,1.2);
            \draw[line width=0.8pt] (0,1) -- (0.5,1);
            \draw[line width=0.8pt] (0,0.3) -- (0.5,0.3);
            
            \draw[line width=0.8pt] (1.5,1.2) -- (1,1.2);
            \draw[line width=0.8pt] (1.5,1) -- (1,1);
            \draw[line width=0.8pt] (1.5,0.3) -- (1,0.3);
            
            \node at (0,0.75) {$\bullet$};
            \node[left] at (0,0.75) {$i$};
            \node at (0.4,0.8) {$\vdots$};
            
            \node at (1.5,0.75) {$\bullet$};
            \node[right] at (1.5,0.75) {$i$};
            \node at (1.1,0.8) {$\vdots$};
        \end{tikzpicture}
        }
        
        \vspace{4pt}
        (B)
    \end{minipage}
    
    \caption{(A) The general position of $\langle b_{ij}\rangle, \langle \cev{b}_{ij}\rangle$ in $S$.  (B) The procedure to obtain $\widetilde{S}$ from $S''$.}
    \label{fig:moves}
\end{figure}

See \eqref{eq-intro-exam} for an example.

As we will see in Proposition~\ref{prop-MS-cluster-monomial}, each $W(S)$ is reflection-normalizable (see \S\ref{sub-sec-invariant}), and its normalization is a cluster monomial  in 
${\mathscr{A}}_\omega(\mathbb P_2) = {\cS}_{\omega}(\mathbb{P}_{2})$. 
To prove Proposition~\ref{prop-MS-cluster-monomial}, we require Lemmas~\ref{lem-basis5}--\ref{lem-basis8}, which establish relations in ${\cS}_{\omega}(\mathbb{P}_{2})$ involving $W(S)$.

\vspace{2mm}

For each nonnegative integer $k$, define
$$
W_k:= W\left(
\raisebox{-0.25in}{
    \begin{tikzpicture}
    \tikzset{->-/.style={
        decoration={markings,mark=at position #1 with {\arrow{latex}}},
        postaction={decorate}
    }}

    \filldraw[draw=white,fill=gray!20] (0,0) rectangle (1.5, 1.5);
    \draw[line width=1pt] (0,0)--(0,1.5);
    \draw[line width=1pt] (1.5,0)--(1.5,1.5);
    \draw [line width =0.8pt,decoration={markings, mark=at position 0.8 with {\arrow{>}}},postaction={decorate}] (0,1.2)--(1.5,0.3);
    \draw [line width =0.8pt,decoration={markings, mark=at position 0.8 with {\arrow{>}}},postaction={decorate}] (1.5,1.2)--(0,0.3);

    \node at(0,0.3) {$\bullet$};
    \node[left] at(0,0.3) {$3$};
    \node at(0,0.75) {$\bullet$};
    \node[left] at(0,0.75) {$2$};
    \node at(0,1.2) {$\bullet$};
    \node[left] at(0,1.2) {$1$};

    \node at(1.5,0.3) {$\bullet$};
    \node[right] at(1.5,0.3) {$1$};
    \node at(1.5,0.75) {$\bullet$};
    \node[right] at(1.5,0.75) {$2$};
    \node at(1.5,1.2) {$\bullet$};
    \node[right] at(1.5,1.2) {$3$};

     \node at(0.4,1.2) {$k$};
    \node at(1.1,1.2) {$k$};

    \end{tikzpicture}
    } \right),
$$
where $k$ indicates the weight of the corresponding labeled arc. 

Assume that $k\geq 1$. For any $1\leq t\leq k+1$, we define 
$W_{k,t}\in \cS_\omega (\mathbb P_2)$ as following
\begin{align*}
    W_{k,t}:=
    \begin{cases}
        W_1 W_k & t=1,\\
        \text{the web diagram in Figure~\ref{fig:WK}}
        & 2\leq t\leq k,\\
        W_{k+1} & t=k+1.
    \end{cases}
\end{align*}
Then we have the following.

\begin{figure}[htbp]
    \centering
    \includegraphics[width=0.4\linewidth]{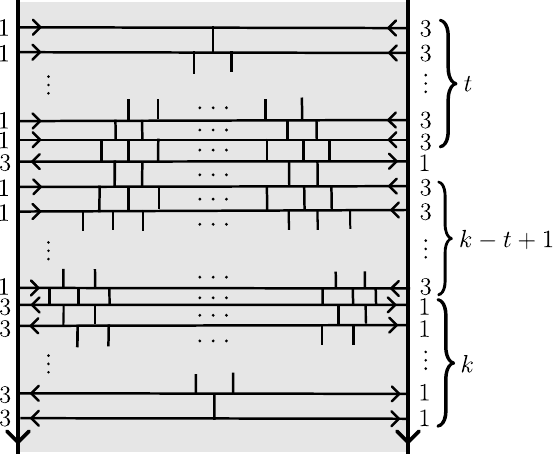}
    \caption{$W_{k,t}$ when $2\leq t\leq k-1$.}
    \label{fig:WK}
\end{figure}

\begin{lemma}\label{lem-basis5}
For each $1 \leq t \leq k$, we have 
\[
W_{k,t} = W_{k,t+1} \in \cS_\omega(\mathbb{P}_2).
\]
\end{lemma}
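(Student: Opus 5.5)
We will prove $W_{k,t}=W_{k,t+1}$ by a local skein computation, without using the cluster structure. First we describe how $W_{k,t}$ and $W_{k,t+1}$ differ. The diagram $W_1W_k$ is obtained by stacking the single ``X'' web $W_1$ on top of $W_k$. The intermediate diagrams in Figure~\ref{fig:WK} are obtained by sliding the top strand pair of $W_1$ down through the $k$ parallel layers of $W_k$, one layer at a time. In each slide, the pair of strands from $W_1$ passes through one ``H''-shaped crossing region of $W_k$. That crossing region was produced from a crossing of $S'$ by the replacement rule, and the states $1$ and $3$ are fixed at the ends. So $W_{k,t}$ and $W_{k,t+1}$ agree outside a disk containing one square face together with the adjacent boundary segments.

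The key step is to apply Lemma~\ref{lem-basis2} to the square face that sits in $W_{k,t}$ but not in $W_{k,t+1}$. This writes $W_{k,t}=W'+W''$. One resolution, say $W'$, reproduces the configuration of $W_{k,t+1}$ after an isotopy; the sliding is arranged exactly so that this works. We must check that no framing or height-order coefficient appears. Since $W_1W_k$ is stacked with the height order indicated by the arrows on $\partial\mathbb P_2$, and since Lemma~\ref{lem-basis2} carries no coefficient, this check reduces to observing that the isotopy does not change the relative heights of the boundary endpoints with state $1$ or $3$.

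The other resolution $W''$ contains, near the boundary, a configuration of the type in Figure~\ref{fig:JK}. Concretely, it has a cup whose two endpoints lie on the same boundary component and carry equal states ($1$ and $1$, or $3$ and $3$), separated by $t'$ strands and sitting inside a ladder of $k'$ squares. Hence $W''\in J_{k',t'}$ for suitable $0\le t'\le k'-1$, and Lemma~\ref{lem-basis4} gives $W''=0$. For the extreme values $t=1$ and $t=k$, the ladder degenerates, and we instead apply Lemma~\ref{lem-basis3} directly, or Lemma~\ref{lem-basis1}(a) to move a state-$2$ or mixed endpoint when needed. For $t=k$ the result $W_{k,k+1}=W_{k+1}$ holds by definition once the last square is resolved, since the resulting diagram is precisely the one produced from $S$ with weight $k+1$.

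The main obstacle is the bookkeeping: we must identify exactly which $J_{k',t'}$ contains the vanishing term, and show that the states at the cup endpoints are equal and that the cup is nested correctly, as Figure~\ref{fig:JK} requires. We plan to handle this by drawing the three cases $t=1$, $2\le t\le k-1$, and $t=k$ separately. In each case we track the endpoints with states $1$ and $3$ along each boundary edge, using the fixed counterclockwise labeling of Figure~\ref{fig:P2-labeled-points}. Composing the equalities for $t=1,\dots,k$ then gives $W_1W_k=W_{k+1}$, which is how the lemma will be used later.
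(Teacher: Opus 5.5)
Your proposal has a genuine gap. The moves you allow cannot change the boundary data, but the lemma requires it to change.

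Read the states of the endpoints along each boundary edge in increasing height, i.e.\ in the direction of the arrows used to define $W(S)$, with exponents denoting repetition. For $W_{k+1}=W_{k,k+1}$ the left edge reads $1^{k+1}\,3^{k+1}$ and the right edge reads $3^{k+1}\,1^{k+1}$. Since $W_1$ is stacked on top in $W_{k,1}=W_1W_k$, its endpoints come last, so there the edges read $1^{k}\,3^{k}\,1\,3$ and $3^{k}\,1^{k}\,3\,1$.

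This boundary sequence is preserved by isotopy and by every move in your plan:
\begin{itemize}
\item Lemma~\ref{lem-basis2} is an interior relation.
\item Lemmas~\ref{lem-basis3} and \ref{lem-basis4} only kill terms.
\item Lemma~\ref{lem-basis1}(a) needs an endpoint of state $2$, and there are none here.
\end{itemize}
So along the chain $W_{k,1},\dots,W_{k,k+1}$, the endpoint coming from $W_1$ must be moved past $k$ endpoints of complementary state ($\bar 1=3$) and opposite orientation, on each edge. Your check that ``the isotopy does not change the relative heights of the boundary endpoints'' therefore fails for the diagrams of Figure~\ref{fig:WK}. Each such exchange is an instance of Lemma~\ref{lem-basis1}(b), a two-term relation with coefficients $q,q^{-1/3}$ (resp.\ $q^{-1},q^{1/3}$), so coefficients do appear.

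Second, the extra terms do not vanish individually. Your picture of ``one resolution is $W_{k,t+1}$, the other lies in some $J_{k',t'}$'' is not what happens.

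In the paper, for $2\le t\le k$ the local picture of $W_{k,t}$ is expanded using Lemma~\ref{lem-basis1}(b). Besides the picture of $W_{k,t+1}$, this produces correction terms carrying coefficients $q^{4/3}$, $q^{-4/3}$ and $1$. Lemmas~\ref{lem-basis2} and \ref{lem-basis4} are then used only to simplify these correction terms. Each becomes a nonzero multiple of one common web:
\begin{itemize}
\item $q^{-1/3}$ times it, by the boundary cap relation \eqref{wzh.six};
\item $q^{1/3}$ times it, by \cite[Equation~(56)]{LS21};
\item $-(q+q^{-1})$ times it, by the bigon relation of \cite{frohman20223}.
\end{itemize}
The lemma then follows from the cancellation $q^{4/3}q^{-1/3}+q^{-4/3}q^{1/3}-(q+q^{-1})=0$.

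Your argument is missing both the boundary exchange relation and this three-term cancellation. The step $t=1$ likewise needs Lemma~\ref{lem-basis1}, not only Lemma~\ref{lem-basis3}.
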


\begin{proof}
A direct calculation using Lemmas~\ref{lem-basis1}-\ref{lem-basis3}  gives $W_{k,1} = W_{k,2}$.

Lemma~\ref{lem-basis1}(b) implies
\begin{align}
\begin{array}{c}\includegraphics[scale=0.4]{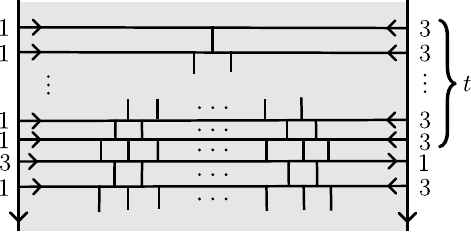}\end{array}
&=
\begin{array}{c}\includegraphics[scale=0.4]{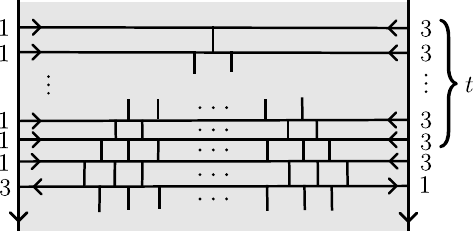}\end{array}
+ q^{\frac{4}{3}} 
\begin{array}{c}\includegraphics[scale=0.4]{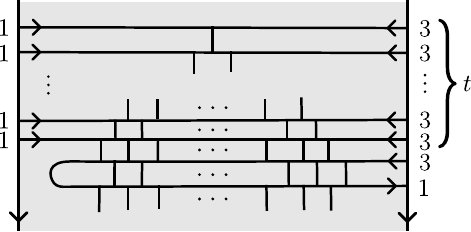}\end{array} \label{eq-basis1}\\
&=
q^{-\frac{4}{3}}
\begin{array}{c}\includegraphics[scale=0.4]{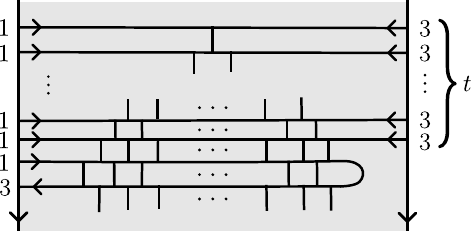}\end{array}
+ 
\begin{array}{c}\includegraphics[scale=0.4]{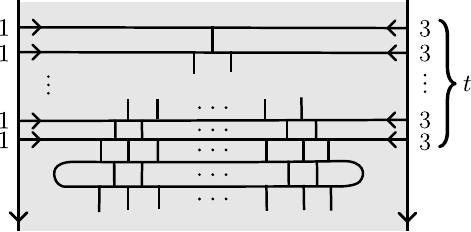}\end{array}. \nonumber
\end{align}

Next, we simplify each term. We have
\begin{align}\label{eq-basis2}
\begin{array}{c}\includegraphics[scale=0.4]{JK3.pdf}\end{array}
&=
\begin{array}{c}\includegraphics[scale=0.4]{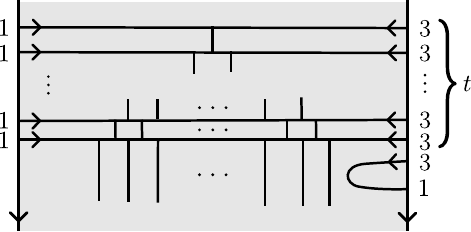}\end{array}
= q^{-\frac{1}{3}}
\begin{array}{c}\includegraphics[scale=0.4]{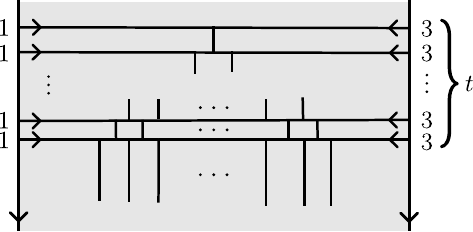}\end{array},
\end{align}
where the first equality follows from repeated applications of Lemmas~\ref{lem-basis2} and \ref{lem-basis4}, and the second equality follows from \eqref{wzh.six}.

Similarly,
\begin{align}\label{eq-basis3}
\begin{array}{c}\includegraphics[scale=0.4]{JK4.pdf}\end{array}
&=
\begin{array}{c}\includegraphics[scale=0.4]{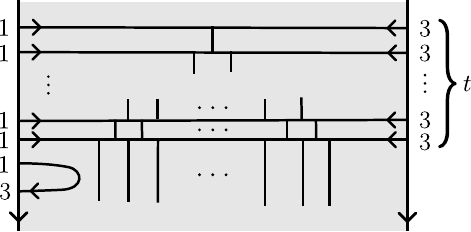}\end{array}
= q^{\frac{1}{3}}
\begin{array}{c}\includegraphics[scale=0.4]{JK8.pdf}\end{array},
\end{align}
where the first equality again follows from repeated applications of Lemmas~\ref{lem-basis2} and \ref{lem-basis4}, and the second equality follows from \cite[Equation~(56)]{LS21}.

Finally, let 
$s =
\begin{cases}
\frac{t}{2}, & \text{if } t \text{ is even},\\
\frac{t-1}{2}, & \text{if } t \text{ is odd}.
\end{cases}$
Then
\begin{align}\label{eq-basis4}
\begin{array}{c}\includegraphics[scale=0.4]{JK5.pdf}\end{array}
&=
\begin{array}{c}\includegraphics[scale=0.4]{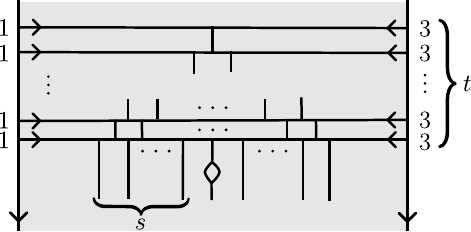}\end{array}
= -(q + q^{-1})
\begin{array}{c}\includegraphics[scale=0.4]{JK8.pdf}\end{array},
\end{align}
where the first equality follows from repeated applications of Lemmas~\ref{lem-basis2} and \ref{lem-basis4}, and the second equality follows from \cite[Equation~(1)]{frohman20223}.

Substituting \eqref{eq-basis2}--\eqref{eq-basis4} into \eqref{eq-basis1}, we obtain
\[
\begin{array}{c}\includegraphics[scale=0.4]{JK11.pdf}\end{array}
=
\begin{array}{c}\includegraphics[scale=0.4]{JK22.pdf}\end{array}.
\]
This proves $W_{k,t} = W_{k,t+1}$  for all $2 \leq t \leq k$
This completes the proof.

\end{proof}

\begin{lemma}\label{lem-basis6}
    For any nonnegative integer $k$, we have 
    $W_1^k=W_k\in\cS_\omega(\mathbb P_2)$.
\end{lemma}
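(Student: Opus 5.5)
We argue by induction on $k$, using Lemma~\ref{lem-basis5} as the engine. The cases $k=0$ and $k=1$ are immediate: $W_0$ is the empty diagram, i.e.\ $1$, and $W_1^1=W_1$ by definition. The induction hypothesis will be $W_1^{k}=W_{k}$ for some $k\geq 1$, and we aim to show $W_1^{k+1}=W_{k+1}$.

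For the inductive step we write $W_1^{k+1}=W_1\cdot W_1^{k}$, so the hypothesis gives $W_1^{k+1}=W_1W_k$. By the definition preceding Lemma~\ref{lem-basis5}, $W_1W_k=W_{k,1}$. Applying Lemma~\ref{lem-basis5} successively for $t=1,2,\dots,k$ produces the chain
\[
W_{k,1}=W_{k,2}=\cdots=W_{k,k}=W_{k,k+1}.
\]
Since $W_{k,k+1}=W_{k+1}$ by definition, we obtain $W_1^{k+1}=W_{k+1}$, which completes the induction.

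The only delicate point is conventions, not content. We must check that the product $W_1W_k$, i.e.\ $W_1$ stacked above $W_k$ with the height order along each boundary edge as fixed in the construction of $W(S)$, is exactly the diagram the paper denotes $W_{k,1}$. We must also check that the endpoint case of the chain, $W_{k,k}=W_{k,k+1}=W_{k+1}$, is covered by Lemma~\ref{lem-basis5} (whose range is $1\le t\le k$) and produces the diagram $W(S)$ with weight $k+1$ on each of the two crossing labeled arcs, with no extra power of $\omega$. Both follow from the definitions of $W_{k,t}$ and of $W(S)$, so the argument reduces entirely to Lemma~\ref{lem-basis5}.
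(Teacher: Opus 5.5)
Your proof is correct and is essentially the paper's argument: induction on $k$, writing $W_1^{k+1}=W_1W_k=W_{k,1}$ and chaining Lemma~\ref{lem-basis5} over $t=1,\dots,k$ to reach $W_{k,k+1}=W_{k+1}$. The paper compresses this chain into a single citation of that lemma, and the convention points you flag are handled by the definition of $W_{k,t}$.
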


\begin{proof}
We prove the statement by induction on $k$.

For the base cases $k=0$ and $k=1$, the result is immediate.

Assume that the statement holds for some $k \geq 1$. Then
\[
W_1^{k+1} = W_1  W_1^k = W_1  W_k = W_{k+1},
\]
where the second equality follows from the induction hypothesis and the third equality follows from Lemma~\ref{lem-basis5}. This completes the proof.
\end{proof}

\begin{lemma}\label{lem-basis7}
    For any positive integer $t$, we have 
    $$\begin{array}{c}\includegraphics[scale=0.4]{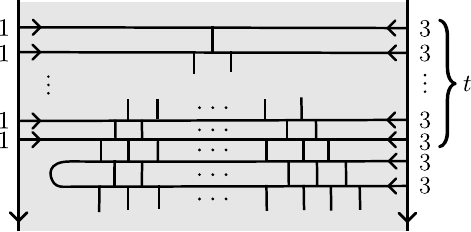}\end{array}=0\in\cS_\omega(\mathbb P_2).$$
\end{lemma}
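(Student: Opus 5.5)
We cannot see the picture in Lemma~\ref{lem-basis7}, but judging from its position right after Lemmas~\ref{lem-basis4}--\ref{lem-basis6}, it is most likely a stated web in $\mathbb P_2$ containing a ladder of $t$ squares. The ladder is presumably capped on one end by a configuration whose endpoints carry a repeated state, or which contains a bad local diagram of the type appearing in Lemma~\ref{lem-basis3}. We therefore plan to prove it exactly as Lemma~\ref{lem-basis4} was proved: by induction on $t$, resolving one square at a time with the square-removal relation of Lemma~\ref{lem-basis2}.

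\emph{Base case $t=1$.} Apply Lemma~\ref{lem-basis2} to the unique square. This gives two terms.
\begin{itemize}
\item In the first term (the two parallel strands), the remaining diagram should contain a pair of adjacent endpoints with equal state attached to a common vertex. This matches the left or right local picture in \eqref{eq-Yilr}, so the term vanishes by Lemma~\ref{lem-basis3}.
\item In the second term (the two turn-backs), we obtain a cap returning to the boundary. It is evaluated by \eqref{wzh.six}, which carries the factor $\delta_{\bar j,i}$. This factor is zero for the states forced by the picture; alternatively, the term again contains a diagram of type \eqref{eq-Yilr}.
\end{itemize}
We will also use relation \eqref{wzh.five} or \eqref{wzh.eight} to move endpoints past each other on the boundary if needed. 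This only costs powers of $\omega$, so vanishing is preserved.

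\emph{Inductive step.} Resolve the square nearest to the capped end using Lemma~\ref{lem-basis2}. Each of the two resulting diagrams is one of the following:
\begin{itemize}
\item a diagram of the same shape with $t-1$ squares, which vanishes by the induction hypothesis;
\item a diagram lying in some $J_{k',t'}$, which vanishes by Lemma~\ref{lem-basis4};
\item a diagram containing one of the local pictures in \eqref{eq-Yilr}, which vanishes by Lemma~\ref{lem-basis3}.
\end{itemize}
This is the same three-way case split ($t=0$, middle, end) used in the proof of Lemma~\ref{lem-basis4}. We will follow that case split, tracking which boundary states the new cap or strand inherits.

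The main obstacle is the bookkeeping at the ends of the ladder. One must check that after resolving the end square, the leftover boundary-adjacent piece really matches a configuration already known to vanish (Lemma~\ref{lem-basis3}, Lemma~\ref{lem-basis4}, or the $t-1$ case), rather than some new nonzero shape. If it does not match directly, we would first use the height-exchange relation \eqref{wzh.eight} and the sink/source relation \eqref{wzh.five} to rearrange the states at the boundary into the form of Figure~\ref{fig:JK}, and then invoke Lemma~\ref{lem-basis4} directly.
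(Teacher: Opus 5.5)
Your plan is essentially the paper's argument. The paper applies the square relation of Lemma~\ref{lem-basis2} repeatedly, at each step discarding the term that lies in some $J_{k,t}$ by Lemma~\ref{lem-basis4}, until a single diagram remains that contains one of the local pictures in \eqref{eq-Yilr} and hence vanishes by Lemma~\ref{lem-basis3}. Neither the explicit induction on $t$ nor the auxiliary relations \eqref{wzh.five}, \eqref{wzh.six}, \eqref{wzh.eight} are needed.
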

\begin{proof}
    We have 
    $$\begin{array}{c}\includegraphics[scale=0.4]{W1.pdf}\end{array}=\begin{array}{c}\includegraphics[scale=0.4]{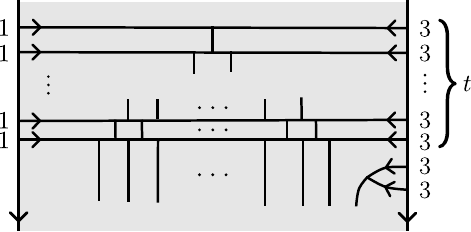}\end{array}=0,$$
    where the first equality follows from repeated applications of Lemmas~\ref{lem-basis2} and \ref{lem-basis4}, and the second equality follows from Lemma~\ref{lem-basis3}.
\end{proof}

\begin{lemma}\label{lem-basis8}
For any nonnegative integers $k,t$, we have 

\begin{enumerate}[label={\rm (\alph*)}]\itemsep0.3em

\item $
    W\left(
\raisebox{-0.25in}{
    \begin{tikzpicture}
    \tikzset{->-/.style={
        decoration={markings,mark=at position #1 with {\arrow{latex}}},
        postaction={decorate}
    }}

    \filldraw[draw=white,fill=gray!20] (0,0) rectangle (1.5, 1.5);
    \draw[line width=1pt] (0,0)--(0,1.5);
    \draw[line width=1pt] (1.5,0)--(1.5,1.5);
    \draw [line width =0.8pt,decoration={markings, mark=at position 0.8 with {\arrow{>}}},postaction={decorate}] (0,1.2)--(1.5,0.3);
    \draw [line width =0.8pt,decoration={markings, mark=at position 0.8 with {\arrow{>}}},postaction={decorate}] (1.5,1.2)--(0,0.3);

    \node at(0,0.3) {$\bullet$};
    \node[left] at(0,0.3) {$3$};
    \node at(0,0.75) {$\bullet$};
    \node[left] at(0,0.75) {$2$};
    \node at(0,1.2) {$\bullet$};
    \node[left] at(0,1.2) {$1$};

    \node at(1.5,0.3) {$\bullet$};
    \node[right] at(1.5,0.3) {$1$};
    \node at(1.5,0.75) {$\bullet$};
    \node[right] at(1.5,0.75) {$2$};
    \node at(1.5,1.2) {$\bullet$};
    \node[right] at(1.5,1.2) {$3$};
    \node at(1.1,1.2) {$k$};
     \node at(0.4,1.2) {$k$};

    \end{tikzpicture}
    } \right) \cdot
    W\left(
\raisebox{-0.25in}{
    \begin{tikzpicture}
    \tikzset{->-/.style={
        decoration={markings,mark=at position #1 with {\arrow{latex}}},
        postaction={decorate}
    }}

    \filldraw[draw=white,fill=gray!20] (0,0) rectangle (1.5, 1.5);
    \draw[line width=1pt] (0,0)--(0,1.5);
    \draw[line width=1pt] (1.5,0)--(1.5,1.5);
    \draw [line width =0.8pt,decoration={markings, mark=at position 0.8 with {\arrow{>}}},postaction={decorate}] (1.5,1.2)--(0,0.3);

    \node at(0,0.3) {$\bullet$};
    \node[left] at(0,0.3) {$3$};
    \node at(0,0.75) {$\bullet$};
    \node[left] at(0,0.75) {$2$};
    \node at(0,1.2) {$\bullet$};
    \node[left] at(0,1.2) {$1$};

    \node at(1.5,0.3) {$\bullet$};
    \node[right] at(1.5,0.3) {$1$};
    \node at(1.5,0.75) {$\bullet$};
    \node[right] at(1.5,0.75) {$2$};
    \node at(1.5,1.2) {$\bullet$};
    \node[right] at(1.5,1.2) {$3$};
    \node at(1.1,1.2) {$t$};

    \end{tikzpicture}
    } \right)
    \overset{\omega}{=}
     W\left(
\raisebox{-0.25in}{
    \begin{tikzpicture}
    \tikzset{->-/.style={
        decoration={markings,mark=at position #1 with {\arrow{latex}}},
        postaction={decorate}
    }}

    \filldraw[draw=white,fill=gray!20] (0,0) rectangle (1.5, 1.5);
    \draw[line width=1pt] (0,0)--(0,1.5);
    \draw[line width=1pt] (1.5,0)--(1.5,1.5);
    \draw [line width =0.8pt,decoration={markings, mark=at position 0.8 with {\arrow{>}}},postaction={decorate}] (0,1.2)--(1.5,0.3);
    \draw [line width =0.8pt,decoration={markings, mark=at position 0.8 with {\arrow{>}}},postaction={decorate}] (1.5,1.2)--(0,0.3);

    \node at(0,0.3) {$\bullet$};
    \node[left] at(0,0.3) {$3$};
    \node at(0,0.75) {$\bullet$};
    \node[left] at(0,0.75) {$2$};
    \node at(0,1.2) {$\bullet$};
    \node[left] at(0,1.2) {$1$};

    \node at(1.5,0.3) {$\bullet$};
    \node[right] at(1.5,0.3) {$1$};
    \node at(1.5,0.75) {$\bullet$};
    \node[right] at(1.5,0.75) {$2$};
    \node at(1.5,1.2) {$\bullet$};
    \node[right] at(1.5,1.2) {$3$};
    \node at (0.55,0.3)  {$k+t$};   
     \node at(0.4,1.2) {$k$};

    \end{tikzpicture}
    } \right)$

\item $
    W\left(
\raisebox{-0.25in}{
    \begin{tikzpicture}
    \tikzset{->-/.style={
        decoration={markings,mark=at position #1 with {\arrow{latex}}},
        postaction={decorate}
    }}

    \filldraw[draw=white,fill=gray!20] (0,0) rectangle (1.5, 1.5);
    \draw[line width=1pt] (0,0)--(0,1.5);
    \draw[line width=1pt] (1.5,0)--(1.5,1.5);
    \draw [line width =0.8pt,decoration={markings, mark=at position 0.8 with {\arrow{>}}},postaction={decorate}] (0,1.2)--(1.5,0.3);
    \draw [line width =0.8pt,decoration={markings, mark=at position 0.8 with {\arrow{>}}},postaction={decorate}] (1.5,1.2)--(0,0.3);

    \node at(0,0.3) {$\bullet$};
    \node[left] at(0,0.3) {$3$};
    \node at(0,0.75) {$\bullet$};
    \node[left] at(0,0.75) {$2$};
    \node at(0,1.2) {$\bullet$};
    \node[left] at(0,1.2) {$1$};

    \node at(1.5,0.3) {$\bullet$};
    \node[right] at(1.5,0.3) {$1$};
    \node at(1.5,0.75) {$\bullet$};
    \node[right] at(1.5,0.75) {$2$};
    \node at(1.5,1.2) {$\bullet$};
    \node[right] at(1.5,1.2) {$3$};
    \node at(1.1,1.2) {$k$};
     \node at(0.4,1.2) {$k$};

    \end{tikzpicture}
    } \right)\cdot 
    W\left(
\raisebox{-0.25in}{
    \begin{tikzpicture}
    \tikzset{->-/.style={
        decoration={markings,mark=at position #1 with {\arrow{latex}}},
        postaction={decorate}
    }}

    \filldraw[draw=white,fill=gray!20] (0,0) rectangle (1.5, 1.5);
    \draw[line width=1pt] (0,0)--(0,1.5);
    \draw[line width=1pt] (1.5,0)--(1.5,1.5);
      \draw [line width =0.8pt,decoration={markings, mark=at position 0.8 with {\arrow{>}}},postaction={decorate}] (0,1.2)--(1.5,0.3);

    \node at(0,0.3) {$\bullet$};
    \node[left] at(0,0.3) {$3$};
    \node at(0,0.75) {$\bullet$};
    \node[left] at(0,0.75) {$2$};
    \node at(0,1.2) {$\bullet$};
    \node[left] at(0,1.2) {$1$};

    \node at(1.5,0.3) {$\bullet$};
    \node[right] at(1.5,0.3) {$1$};
    \node at(1.5,0.75) {$\bullet$};
    \node[right] at(1.5,0.75) {$2$};
    \node at(1.5,1.2) {$\bullet$};
    \node[right] at(1.5,1.2) {$3$};
    \node at(0.4,1.2) {$t$};

    \end{tikzpicture}
    } \right)
    \overset{\omega}{=}
     W\left(
\raisebox{-0.25in}{
    \begin{tikzpicture}
    \tikzset{->-/.style={
        decoration={markings,mark=at position #1 with {\arrow{latex}}},
        postaction={decorate}
    }}

    \filldraw[draw=white,fill=gray!20] (0,0) rectangle (1.5, 1.5);
    \draw[line width=1pt] (0,0)--(0,1.5);
    \draw[line width=1pt] (1.5,0)--(1.5,1.5);
    \draw [line width =0.8pt,decoration={markings, mark=at position 0.8 with {\arrow{>}}},postaction={decorate}] (0,1.2)--(1.5,0.3);
    \draw [line width =0.8pt,decoration={markings, mark=at position 0.8 with {\arrow{>}}},postaction={decorate}] (1.5,1.2)--(0,0.3);

    \node at(0,0.3) {$\bullet$};
    \node[left] at(0,0.3) {$3$};
    \node at(0,0.75) {$\bullet$};
    \node[left] at(0,0.75) {$2$};
    \node at(0,1.2) {$\bullet$};
    \node[left] at(0,1.2) {$1$};

    \node at(1.5,0.3) {$\bullet$};
    \node[right] at(1.5,0.3) {$1$};
    \node at(1.5,0.75) {$\bullet$};
    \node[right] at(1.5,0.75) {$2$};
    \node at(1.5,1.2) {$\bullet$};
    \node[right] at(1.5,1.2) {$3$};
     \node at(1.1,1.2) {$k$};
    \node at (0.95,0.3)  {$k+t$};   
    \end{tikzpicture}
    } \right)$
\end{enumerate}
\end{lemma}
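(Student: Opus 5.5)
I would prove (a) and deduce (b) from it by the left--right symmetry of $\mathbb P_2$: the reflection of the bigon exchanging $e_l$ and $e_r$ together with orientation reversal turns the configuration of (a) into that of (b). The reflection anti-involution $*$ and the relations \eqref{w.cross}--\eqref{wzh.eight} are compatible with this symmetry up to powers of $\omega$, and the conclusion is only claimed up to $\overset{\omega}{=}$.

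For (a), I first use Lemma~\ref{lem-basis6} to write the first factor as $W_k=W_1^k$. The second factor is $t$ parallel copies of the arc $\langle\cev b_{31}\rangle$ (from right $3$ to left $3$), with no crossings. Stated arcs with the same underlying arc and states commute up to powers of $\omega$ (compare Lemma~\ref{lem-basis1}(a) and relation \eqref{wzh.eight} with $i=j$). So the product is $\omega$-equal to $W_k\cdot \cev b_{31}^{\,t}$, and $\cev b_{31}^{\,t}$ is itself $\omega$-equal to $W$ of the weight-$t$ single arc. The task is then to show that stacking $t$ extra copies of the arc $3\to 3$ onto $W_k$ gives, after resolving, exactly the diagram $W(S)$ with weight $k+t$ on that arc.

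I would induct on $t$, the case $t=0$ being trivial. For the inductive step, I multiply the right-hand diagram for $t-1$ by one more copy of $\cev b_{31}$ and isotope it alongside the existing $k+t-1$ parallel copies. The new strand ends at the points of state $3$ on both boundary edges. In the construction of $W(S)$, all endpoints with state $3$ are fused at a single labeled point and then spread out as in Figure~\ref{fig:moves}(B). Adding the strand therefore only changes the height order among endpoints with the same state $3$ on each boundary.

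Reordering two consecutive same-state endpoints is governed by \eqref{wzh.eight} with $i=j$, which gives a pure power $q^{\frac1n}q$ with no correction term. So the new strand can be slid into its parallel position, and it produces no crossings with the $k$ crossings of the $\langle b_{13}\rangle$ strands, because it runs parallel to the other $3$--$3$ strands. If the strand must pass over the $H$-shaped vertex pairs produced by the crossing replacements, I would instead slide it along the boundary and use the same height-exchange relation at each endpoint.

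The main obstacle is making sure the new strand, placed in general position with the $k$ crossing $\langle b_{13}\rangle$ strands, actually realizes the minimal-intersection picture of $W(S)$. Concretely, the $k+t$ copies of $\langle\cev b_{31}\rangle$ must each cross the $k$ copies of $\langle b_{13}\rangle$ in the prescribed way, and those crossings must be replaced by $H$-webs. The product $W_k\cdot\cev b_{31}$ is a stacking, so it creates genuine over/under crossings between the new strand and the $b_{13}$ strands, not $H$-webs.

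I therefore expect to need a computation analogous to Lemma~\ref{lem-basis5}. I would resolve each new crossing with relation \eqref{w.cross} together with the $n=3$ crossing-resolution formula into an $H$-web plus a smoothing, and then show the smoothing terms vanish using Lemmas~\ref{lem-basis3}, \ref{lem-basis4} and \ref{lem-basis7}. I would try first to reduce the general case to a single-crossing local model via the squares of Lemma~\ref{lem-basis2}, exactly as in the proof of Lemma~\ref{lem-basis5}, so that only an overall power of $\omega$ survives.
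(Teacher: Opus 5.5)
There is a genuine gap, and it sits at the step you treat as routine. First, a labelling point: the arcs are $\langle b_{11}\rangle$ (left $1$ to right $1$) and $\langle\cev b_{33}\rangle$ (right $3$ to left $3$), not $\langle b_{13}\rangle$ and $\langle\cev b_{31}\rangle$. This is not cosmetic, because whether a height exchange is a pure $\omega$-power depends on the states.

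In $W_k\cdot \cev b_{33}^{\,t}$ the new strands lie below $W_k$. So on the left edge their incoming state-$3$ endpoints are lower than every endpoint of $W_k$, including the $k$ outgoing state-$1$ endpoints of the $\langle b_{11}\rangle$ strands. But in $W(S)$ heights increase downward on both edges, so on the left edge every state-$3$ endpoint must be higher than every state-$1$ endpoint. Each new endpoint therefore has to be exchanged past $k$ oppositely oriented endpoints of complementary state ($\bar 1=3$). Such exchanges are governed by the two-term relation of Lemma~\ref{lem-basis1}(b), not by \eqref{wzh.eight} with $i=j$. This cannot be avoided: the new strand meets each $\langle b_{11}\rangle$ strand once, and in a stacked product that intersection can only be recorded as height exchanges at one of the two edges. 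Hence your claim that adding the strand only permutes state-$3$ endpoints is false, and so is the fallback of sliding it along the boundary with the same relation.

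Relatedly, there are no interior crossings to resolve. The new strand is entirely below $W_k$ and its endpoints are the lowest on both edges, so it is isotopic to a corner arc around the top puncture, disjoint from $W_k$. All the content lies in moving its left endpoint down past the $k$ state-$1$ endpoints. The paper does exactly this: it applies Lemma~\ref{lem-basis1}(b) repeatedly at the left edge, which is where the new $H$-webs along each added strand come from, and disposes of the extra terms with Lemma~\ref{lem-basis7}. The remaining state-$3$ exchanges are pure powers, and one inducts on $t$.

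Your crossing-resolution idea can only be repaired by placing the crossings at the left edge, where the height bookkeeping forces them; it then becomes essentially the same computation. You would still have to identify the smoothing terms and show they vanish, which the proposal only asserts. For $k=1$ the smoothing leaves a vertex with two consecutive state-$3$ legs on the right edge, killed by Lemma~\ref{lem-basis3}; in general this is the content of Lemma~\ref{lem-basis7}.

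Finally, reducing (b) to (a) by a left--right mirror needs an actual (anti-)automorphism of $\cS_\omega(\mathbb P_2)$. The mirror reverses the orientation of $\mathbb P_2$ and sends labels $i\mapsto\bar i$, so it induces nothing by itself. It is simpler to run the same argument at the right edge, where in (b) the new incoming state-$1$ endpoints must pass the $k$ outgoing state-$3$ endpoints.
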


\begin{proof}
    (a) follows from repeated applications of Lemma~\ref{lem-basis1}(b) and Lemma~\ref{lem-basis7}.

The same argument used in (a) also applies to (b).
\end{proof}

\begin{remark}\label{rem-basis}
Let $W$ be a stated web diagram in $\mathbb{P}_2$. We denote by $\cev{W}$ the stated web diagram obtained from $W$ by reversing all its orientations. It is shown in \cite[Corollary~4.8]{LS21} that there exists an algebra isomorphism from $\cS_\omega(\mathbb{P}_2)$ to itself sending each stated web diagram $W$ to $\cev{W}$. 

This isomorphism implies that Lemmas~\ref{lem-basis6} and \ref{lem-basis8} remain valid after reversing the orientations of the stated web diagrams.
\end{remark}

\def\WS{\mathcal {WS}}

For each labeled arc $C$, we denote by $\widetilde{C}$ the stated corner in $\mathbb P_2$ such that 
$\langle \widetilde{C} \rangle = C$.

For each weighted system $S$, we define an associated cluster monomial  as follows.
\begin{itemize}
    \item If every pair of elements in $S$ is strongly compatible, define 
    \[
    A(S):= \{\widetilde{C}\mid C\in S \}.
    \]
    Then define 
    \[
    f_S\colon A(S)\rightarrow \mathbb N,\qquad A \mapsto w(\langle A \rangle),
    \]
    where $w(\langle A \rangle)$ denotes the weight of $\langle A \rangle$ in $S$.

    \item If $S$ contains $\langle b_{11} \rangle$ and $\langle \cev{b}_{33} \rangle$, define 
    \[
    A(S):=
    \begin{cases}
        \{\widetilde{C}\mid C\in S\setminus\{\langle b_{11} \rangle, \langle \cev{b}_{33} \rangle\}\}
        \sqcup \{[\alpha]_{\rm norm}\} \sqcup \{b_{11}\}, 
        & w(\langle b_{11} \rangle)\geq w(\langle \cev{b}_{33} \rangle),\\[6pt]
        \{\widetilde{C}\mid C\in S\setminus\{\langle b_{11} \rangle, \langle \cev{b}_{33} \rangle\}\}
        \sqcup \{[\alpha]_{\rm norm}\} \sqcup \{\cev{b}_{33}\}, 
        & w(\langle b_{11} \rangle)< w(\langle \cev{b}_{33} \rangle).
    \end{cases}
    \]
    Then define 
    \[
    f_S\colon A(S)\rightarrow \mathbb N
    \]
    by
    \[
    f_S(A)=
    \begin{cases}
        w(\langle A\rangle), 
        & \langle A\rangle \in S\setminus\{\langle b_{11} \rangle, \langle \cev{b}_{33} \rangle\},\\[4pt]
        \min\{w(\langle b_{11} \rangle),\, w(\langle \cev{b}_{33} \rangle)\}, 
        & A=[\alpha]_{\rm norm},\\[4pt]
        w(\langle b_{11} \rangle)- w(\langle \cev{b}_{33} \rangle), 
        & A=b_{11},\\[4pt]
        w(\langle \cev{b}_{33} \rangle)- w(\langle b_{11} \rangle), 
        & A=\cev{b}_{33}.
    \end{cases}
    \]

    \item If $S$ contains $\langle \cev{b}_{11} \rangle$ and $\langle b_{33} \rangle$, define 
    \[
    A(S):=
    \begin{cases}
        \{\widetilde{C}\mid C\in S\setminus\{\langle \cev{b}_{11} \rangle, \langle b_{33} \rangle\}\}
        \sqcup \{[\cev{\alpha}]_{\rm norm}\} \sqcup \{\cev{b}_{11}\}, 
        & w(\langle \cev{b}_{11} \rangle)\geq w(\langle b_{33} \rangle),\\[6pt]
        \{\widetilde{C}\mid C\in S\setminus\{\langle \cev{b}_{11} \rangle, \langle b_{33} \rangle\}\}
        \sqcup \{[\cev{\alpha}]_{\rm norm}\} \sqcup \{b_{33}\}, 
        & w(\langle \cev{b}_{11} \rangle)< w(\langle b_{33} \rangle).
    \end{cases}
    \]
    Then define 
    \[
    f_S\colon A(S)\rightarrow \mathbb N
    \]
    by
    \[
    f_S(A)=
    \begin{cases}
        w(\langle A\rangle), 
        & \langle A\rangle \in S\setminus\{\langle \cev{b}_{11} \rangle, \langle b_{33} \rangle\},\\[4pt]
        \min\{w(\langle \cev{b}_{11} \rangle),\, w(\langle b_{33} \rangle)\}, 
        & A=[\cev{\alpha}]_{\rm norm},\\[4pt]
        w(\langle \cev{b}_{11} \rangle)- w(\langle b_{33} \rangle), 
        & A=\cev{b}_{11},\\[4pt]
        w(\langle b_{33} \rangle)- w(\langle \cev{b}_{11} \rangle), 
        & A=b_{33}.
    \end{cases}
    \]
\end{itemize}

By Theorem~\ref{thm:webint}, the set $A(S)$ forms a cluster in $\cS_\omega (\mathbb P_2)$. 
We define the cluster monomial associated to $S$ by
\[
M(S):= \left[\prod_{A\in A(S)}  A^{f_S(A)}\right].
\]

We use $\mathcal{WS}$ to denote the set of all weighted systems in $\mathbb{P}_2$.
 Theorem~\ref{thm:webint} immediately implies the following.

 \begin{lemma}\label{lem-bij-basis}
     The map $M\colon \WS \rightarrow \{\text{cluster monomials in ${\mathscr{A}}_\omega(\mathbb P_2)=\cS_\omega(\mathbb P_2)$}\}$ is a bijection.
 \end{lemma}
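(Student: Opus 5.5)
The statement to prove is Lemma~\ref{lem-bij-basis}: $M\colon \mathcal{WS}\to\{\text{cluster monomials}\}$ is a bijection. The plan is to factor $M$ through the combinatorics of clusters supplied by Theorem~\ref{thm:webint}. That theorem identifies clusters with maximal sets of pairwise compatible cluster variables, and these in turn correspond to tagged triangulations of $\mathbb P_{4,1}$. A cluster monomial is, in this setting, a monomial $\bigl[\prod A^{f(A)}\bigr]$ with $f\ge 0$ supported on one cluster, frozen variables included, since we work in $\mathscr A^+$. Quantum cluster monomials from different clusters with different supports are distinct: their $g$-vectors differ, as in the theory of \cite{GHKK}. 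Hence cluster monomials are in bijection with pairs (cluster, $f\in\mathbb N^{\text{cluster}}$) modulo the identification coming from shared variables. Equivalently, they are in bijection with functions $f$ on the set of cluster variables whose support is a compatible set.

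\textbf{Step 1: compatible arcs versus compatible variables.} Build a dictionary between labeled arcs and cluster variables. There are 18 labeled arcs $\langle b_{ij}\rangle,\langle\cev b_{ij}\rangle$ and 20 cluster variables. Each corner arc $A$ maps to the labeled arc $\langle A\rangle$. The two webs $[\alpha]_{\rm norm}$ and $[\cev\alpha]_{\rm norm}$ do not correspond to single arcs; they correspond to the pairs $C(A)=\{\langle\cev b_{33}\rangle,\langle b_{11}\rangle\}$ and $\{\langle b_{33}\rangle,\langle\cev b_{11}\rangle\}$.

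The key check uses Definition~\ref{def:comp}. Two arcs are compatible but not strongly compatible exactly when they cross in the interior, have opposite orientations, and neither carries state $2$. Among the arcs, the only such pairs are $\{\langle b_{11}\rangle,\langle\cev b_{33}\rangle\}$ and $\{\langle\cev b_{11}\rangle,\langle b_{33}\rangle\}$.

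Consequently, a maximal compatible set of labeled arcs (a system) $S$ falls into one of two kinds:
\begin{itemize}
\item a strongly compatible set, which is then a cluster consisting of corner arcs; or
\item a set containing one of these two special pairs.
\end{itemize}
In the second case, replacing the pair by $\alpha$ (resp.\ $\cev\alpha$) together with one member of the pair yields a cluster. This is exactly the construction of $A(S)$ in the definition of $M(S)$, and it shows that $M(S)$ is well defined and is a cluster monomial.

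\textbf{Step 2: injectivity.} Reconstruct $S$ from the support and exponents of $M(S)$. Frozen and corner-arc exponents give the weights directly. An exponent $m$ on $[\alpha]_{\rm norm}$, together with an exponent $r$ on $b_{11}$ or on $\cev b_{33}$, recovers the weights $(m+r,m)$ or $(m,m+r)$ on the special pair, because the two options for $A(S)$ are distinguished by which of $b_{11},\cev b_{33}$ is present. Since distinct $(A(S),f_S)$ give distinct cluster monomials, the map is injective.

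\textbf{Step 3: surjectivity.} Take a cluster monomial supported on a compatible set $X$ of cluster variables. If $X$ has no web variable, then $\{\langle A\rangle : A\in X\}$ is strongly compatible; extend it by zero weights to a maximal compatible system. If $X$ contains, say, $[\alpha]_{\rm norm}$, then by Theorem~\ref{thm:webint} $X$ cannot contain both $b_{11}$ and $\cev b_{33}$ with positive exponent. Reverse the construction: assign weights to $\langle b_{11}\rangle$ and $\langle\cev b_{33}\rangle$ accordingly, and set the remaining weights from the exponents.

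\textbf{Main obstacle.} The weak point is maximality and zero weights: different systems with zero weights could give the same monomial. To handle this, I would first try to restrict $\mathcal{WS}$ to a canonical choice. Failing that, interpret a weighted system up to deletion of zero-weight arcs, so that $S$ is a compatible set together with positive weights. This is implicitly the intended reading, and then Steps 2 and 3 go through. The arc-compatibility case check in Step 1 is a finite verification against the list of clusters in Appendix~\ref{app-order}.
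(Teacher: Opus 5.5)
Your proposal is correct and is essentially the paper's argument. The paper only states that Lemma~\ref{lem-bij-basis} follows immediately from Theorem~\ref{thm:webint}. Your Steps 1--3 unpack exactly that claim: the two non-strongly-compatible pairs $\{\langle b_{11}\rangle,\langle\cev{b}_{33}\rangle\}$ and $\{\langle\cev{b}_{11}\rangle,\langle b_{33}\rangle\}$; recovering the weights from the exponents of $[\alpha]_{\rm norm}$ (or $[\cev{\alpha}]_{\rm norm}$) and the surviving corner arc; and distinctness of cluster monomials via $g$-vectors.

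Your ``main obstacle'' is a real imprecision in the statement, not a flaw in your proof. Taken literally, two different systems with the same positive-weight part have the same image; for example, any two systems with all weights $0$ are both sent to $1$. So injectivity requires identifying weighted systems that differ only by zero-weight arcs, as you propose. Under that reading your argument goes through.
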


The following proposition shows that $W(S)$ is reflection-normalizable (see \S\ref{sub-sec-invariant}) and that $[W(S)]_{\rm norm} = M(S)$. This constitutes the final and crucial step in providing a web interpretation of the dual canonical basis of ${\cS}_{\omega}(\mathbb{P}_{2})=\mathcal O_q({\rm SL}_3)$.

\begin{proposition}\label{prop-MS-cluster-monomial}
For each $S\in \WS$, the following statements hold:
\begin{enumerate}[label={\rm (\alph*)}, itemsep=0.3em]
    \item $M(S) \overset{\omega}{=} W(S)$.
    \item $W(S)$ is reflection-normalizable, and $[W(S)]_{\rm norm} = M(S)$.
\end{enumerate}
\end{proposition}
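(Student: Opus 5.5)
We prove (a) first and deduce (b) from it. For (b): $M(S)$ is a quantum cluster monomial, a Weyl-ordered product of pairwise quasi-commuting reflection-invariant cluster variables (the corner arcs $b_{ij},\cev b_{ij}$ are reflection invariant, and $[\alpha]_{\rm norm},[\cev\alpha]_{\rm norm}$ are so by construction). Hence ${\bf Rf}(M(S))=M(S)$. If (a) gives $M(S)=\omega^{m/2}W(S)$, then ${\bf Rf}$ yields $*(W(S))=\omega^{m}W(S)$. So $W(S)$ is reflection-normalizable, and since the exponent is unique (every component of $\mathbb P_2$ has boundary), $[W(S)]_{\rm norm}=M(S)$.

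For (a), we first reduce to the crossing part of $S$. Two labeled arcs in $S$ that are compatible by condition (1) or (2) of Definition~\ref{def:comp} do not cross in the interior. Their stacked product is therefore, up to a power of $\omega$, a disjoint union of arcs. Reordering endpoints on the boundary costs only a power of $\omega$: for states involving $2$ this is Lemma~\ref{lem-basis1}(a), and for equal states it follows from \eqref{wzh.eight}. Consequently, whenever $S$ has no interior crossings, $W(S)\overset{\omega}{=}\prod_{C\in S}\widetilde C^{\,w(C)}$, and this is $M(S)$ up to a power of $\omega$. By Theorem~\ref{thm:webint}, this covers exactly the systems whose arcs are pairwise strongly compatible.

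The remaining systems have interior crossings of type (3) but not (3'). Inspecting the labels shows these arise only from pairs $\{\langle b_{11}\rangle,\langle\cev b_{33}\rangle\}$ or $\{\langle\cev b_{11}\rangle,\langle b_{33}\rangle\}$, which are exactly the two cases in the definition of $A(S)$. We treat the first; the second follows by the orientation-reversal isomorphism of Remark~\ref{rem-basis}. Put $k=\min\{w(\langle b_{11}\rangle),w(\langle\cev b_{33}\rangle)\}$ and $t=|w(\langle b_{11}\rangle)-w(\langle\cev b_{33}\rangle)|$. The crossing part of $W(S)$ is the web in Lemma~\ref{lem-basis8} with parameters $k,t$, so it equals $W_k\cdot b_{11}^{t}$ or $W_k\cdot\cev b_{33}^{\,t}$ up to $\omega$. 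By Lemma~\ref{lem-basis6}, $W_k=W_1^k$, and $W_1\overset{\omega}{=}\alpha\overset{\omega}{=}[\alpha]_{\rm norm}$ by construction of $W(S)$ together with \eqref{eq:cv4qc}. The other arcs of $S$ are compatible with both $\langle b_{11}\rangle$ and $\langle\cev b_{33}\rangle$ without interior crossings, so they split off as in the previous paragraph. Altogether $W(S)\overset{\omega}{=}[\alpha]_{\rm norm}^k\,b_{11}^t\prod\widetilde C^{\,w(C)}\overset{\omega}{=}M(S)$.

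The main obstacle is making the splitting rigorous: one must show that the crossing block and the non-crossing arcs of $W(S)$ genuinely factor as a stacked product. This requires choosing the height orders consistently with the prescribed height convention of $W(S)$, and verifying that the vertex-creating move at crossings (Figure~\ref{fig:moves}(B)) and the parallel copies interact only inside the local pictures covered by Lemmas~\ref{lem-basis5}--\ref{lem-basis8}. We would check this by listing, via Appendix~\ref{app-order}, the finitely many cluster types containing $\langle b_{11}\rangle,\langle\cev b_{33}\rangle$, and verifying each of them.
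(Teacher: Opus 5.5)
Your deduction of (b) from (a) is fine and matches the paper. The gap is in (a).

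\textbf{Strongly compatible systems can have interior crossings.} You identify the pairwise strongly compatible systems with the systems that have no interior crossings, but condition (3') of Definition~\ref{def:comp} \emph{is} an interior crossing. For example, $\{b_{11},b_{12},\cev b_{21},\cev b_{23}\}$ is a cluster in which every pair is strongly compatible. Yet $\langle b_{11}\rangle$ and $\langle b_{12}\rangle$ both cross $\langle\cev b_{23}\rangle$ in the interior. In fact, 26 of the 50 clusters listed in Appendix~\ref{app-order} are of this kind. For such $S$, every crossing of parallel copies has been replaced in $W(S)$ by an H-shaped web with two trivalent vertices. So $W(S)$ is not a disjoint union of arcs, and your first reduction says nothing about it.

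\textbf{The $\alpha$-case does not split.} The same error breaks this case, because the other arcs do \emph{not} avoid $\langle b_{11}\rangle$ and $\langle\cev b_{33}\rangle$. In the cluster $\{[\alpha]_{\rm norm},b_{11},b_{21},\cev b_{32}\}$, $\langle b_{21}\rangle$ crosses $\langle\cev b_{33}\rangle$ and $\langle\cev b_{32}\rangle$ crosses $\langle b_{11}\rangle$, both of type (3'). So the block of Lemma~\ref{lem-basis8} is glued to the remaining strands through H-webs and does not simply factor off. Your closing plan checks only the clusters containing $\langle b_{11}\rangle$ and $\langle\cev b_{33}\rangle$, so it would leave most cases untreated.

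\textbf{The missing ingredient} is a local relation trading a (3')-crossing for its H-resolution. Resolving a crossing of two oppositely oriented strands for $n=3$ gives the H-web plus a turn-back term. The turn-back term vanishes by \eqref{wzh.six} precisely because one of the two states at the relevant boundary is $2$, so the two states there are not complementary. This is what Lemma~\ref{lem-basis1}(a) supplies in the paper's proof, alongside Lemmas~\ref{lem-basis6} and~\ref{lem-basis8} for the $W_k$-block.

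\textbf{Height reordering is not free.} Your blanket claim that reordering boundary endpoints costs only a power of $\omega$ is false. By \eqref{wzh.eight}, exchanging the heights of two same-oriented endpoints with distinct states picks up the extra term $\delta_{j<i}(q^{-1}-q)$ in one of the two directions. The paper avoids this by fixing an explicit multiplication order for each cluster: $b_{31}>\cev b_{31}>\cdots>b_{13}>\cev b_{13}$, with the exchangeable variables ordered as in Appendix~\ref{app-order}. With this order, the ordered product reproduces the height convention of $W(S)$ using only Lemmas~\ref{lem-basis1}(a), \ref{lem-basis6}, \ref{lem-basis8} and Remark~\ref{rem-basis}. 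Some such choice, covering all 50 clusters, is what your argument still has to supply.
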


\begin{proof}
    (a) Note that $\{b_{31}, \cev{b}_{31}, b_{13}, \cev{b}_{13}\}\subset A(S)$. 
We introduce a linear order on $A(S)\setminus \{b_{31}, \cev{b}_{31}, b_{13}, \cev{b}_{13}\}$ in Appendix~\ref{app-order}. 
For any $A\in A(S)\setminus \{b_{31}, \cev{b}_{31}, b_{13}, \cev{b}_{13}\}$, we impose
\[
b_{31}> \cev{b}_{31}> A > b_{13} > \cev{b}_{13}.
\]
This defines a linear order on $A(S)$. Accordingly, the product $\prod_{A\in A(S)} A^{f_S(A)}$ is taken with respect to this order from left to right, that is,
\[
\prod_{A\in A(S)} A^{f_S(A)} 
= A_1^{f_S(A_1)} A_2^{f_S(A_2)} \cdots A_8^{f_S(A_8)},
\]
where $A_1 > A_2 > \cdots > A_8$.

By the definition of this linear order, together with Lemmas~\ref{lem-basis1}(a), \ref{lem-basis6}, \ref{lem-basis8}, and Remark~\ref{rem-basis}, we obtain
\[
\prod_{A\in A(S)} A^{f_S(A)} \overset{\omega}{=} W(S).
\]
Hence, $M(S) \overset{\omega}{=} W(S)$.

    (b) It is well-known that the cluster monomial $M(S)$ is reflection invariant (see \S\ref{sub-sec-invariant}).
Then  (a) implies that $ W(S)$ 
is reflection-normalizable and $[W(S)]_{\rm norm} = M(S)$.
\end{proof}

\begin{remark}
Although we fixed a linear order on $A(S)$ in the proof of Proposition~\ref{prop-MS-cluster-monomial}(a), such an order is not unique; other linear orders on $A(S)$ can also be used to prove Proposition~\ref{prop-MS-cluster-monomial}(a).
\end{remark}

\begin{remark}
Although the definition of $W(S)$ depends on the choice of orientation of the circle in Figure~\ref{fig:moves}, Proposition~\ref{prop-MS-cluster-monomial}(b) implies that $[W(S)]_{\rm norm}$ is independent of this choice.
\end{remark}

The following theorem is the second main result of this section. It shows that 
$$\{[W(S)]_{\rm norm} \mid S \in \mathcal{WS}\}$$ forms the dual canonical basis of ${\cS}_{\omega}(\mathbb{P}_{2})=\mathcal O_q({\rm SL}_3)$, thereby providing a web interpretation of this basis.

\begin{theorem}\label{thm-basis-bigon}
The set $\{[W(S)]_{\rm norm} \mid S \in \mathcal{WS}\}$ is the dual canonical basis of ${\cS}_{\omega}(\mathbb{P}_{2})=\mathcal O_q({\rm SL}_3)$.
\end{theorem}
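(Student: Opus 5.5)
The plan is to combine three ingredients already established: the bijection of Lemma~\ref{lem-bij-basis}, the identification $[W(S)]_{\rm norm}=M(S)$ of Proposition~\ref{prop-MS-cluster-monomial}(b), and the known fact, recalled in the introduction (cf.\ \cite[Theorem~4.26]{BR}), that the quantum cluster monomials of $\mathcal O_q({\rm SL}_3)$ form its dual canonical basis.

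First, I would record that Lemma~\ref{lem:A=U=skein} gives $\cS_\omega(\mathbb P_2)={\mathscr A}_\omega(\mathbb P_2)$, and that Theorem~\ref{Hopf} identifies this algebra with $\mathcal O_q({\rm SL}_3)$. By Proposition~\ref{prop-MS-cluster-monomial}(b), the map $S\mapsto [W(S)]_{\rm norm}$ coincides with $S\mapsto M(S)$. By Lemma~\ref{lem-bij-basis}, the latter is a bijection from $\mathcal{WS}$ onto the set of cluster monomials of ${\mathscr A}_\omega(\mathbb P_2)$ in the sense of Definition~\ref{def-quan-cluster-algebra}(d). Here frozen variables appear with nonnegative exponents, since they are not invertible in $\mathcal O_q({\rm SL}_3)$. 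Hence $\{[W(S)]_{\rm norm}\mid S\in\mathcal{WS}\}$ is exactly the set of (reflection-invariant, Weyl-ordered) quantum cluster monomials.

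The remaining step is to match this set with the dual canonical basis. I would invoke the theorem of Berenstein--Rupel that, for the standard quantum cluster structure on $\mathcal O_q({\rm SL}_3)$, the quantum cluster monomials form the dual canonical basis. This structure is of finite type $D_4$, with the minors $b_{ij}$ among its variables.

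The main obstacle is normalization. One must check that our seed, the frozen variables $b_{13},\cev b_{13},b_{31},\cev b_{31}$, and the Weyl-ordered, bar-invariant normalization $[\cdot]$ agree with the conventions of \cite{BR} under $g_{big}$, up to the dictionary $\omega^{1/2}\leftrightarrow$ their $q$. To handle this, I would first check that our initial cluster \eqref{eq:frozen1}--\eqref{eq:initialcluqc} consists of quantum minors, namely stated corner arcs $b_{ij}$ and $\cev b_{ij}$, which are images of quantum minors via the antipode. Next, I would verify that the bar involution of \cite{BR} corresponds to the reflection $*$, since both are anti-involutions fixing the quantum minors and inverting the quantum parameter. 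Then both sides are bar-invariant, with leading term a monomial in the same initial cluster, so they agree. Finally, because frozen exponents are nonnegative and the monomials span $\cS_\omega(\mathbb P_2)$ (by finite type and $\mathscr A=\mathscr U$), no extra localized elements arise. This makes the identification of sets exact.
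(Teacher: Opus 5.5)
Your proposal follows the paper's proof, which likewise combines Lemma~\ref{lem-bij-basis}, Proposition~\ref{prop-MS-cluster-monomial}, and the cited fact \cite[Theorem~4.26]{BR} that cluster monomials form the dual canonical basis of $\mathcal O_q({\rm SL}_3)$; the paper does not carry out the normalization check you sketch and only remarks that the definitions of cluster monomials may differ slightly. One correction: the reference \cite{BR} is Rhoades' paper on $\mathbb Z[x_{11},\dots,x_{33}]$, not a result of Berenstein--Rupel.
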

\begin{proof}
   It is well known that the cluster monomials in  ${\mathscr{A}}_\omega(\mathbb P_2)=\cS_\omega(\mathbb P_2)$
  form the dual canonical basis of $\mathcal O_q({\rm SL}_3)$ (cf. \cite[Theorem 4.26]{BR}, noting that the definition of (quantum) cluster monomials may differ slightly). Then the statement follows from Lemma~\ref{lem-bij-basis} and Proposition~\ref{prop-MS-cluster-monomial}.
\end{proof}

\begin{remark}\label{rem-Fro}
When the ground ring $R$ is the complex field $\mathbb{C}$ and $\omega$ is a nonzero complex number, we write 
$\cS_\omega(\fS;\mathbb{C})$ for $\cS_\omega(\fS)$.
For general $n$, suppose that $\omega$ is a root of unity such that the order of $\omega^{2n^2}$ is $N$, and set $\eta=\omega^{N^2}$. 
Let $\fS$ be a pb surface such that each connected component of $\fS$ has nonempty boundary.
There is an algebra embedding \cite{kim2025frobenius}, called the {\bf Frobenius map},
\[
\Phi\colon \cS_\eta(\fS;\mathbb{C})\longrightarrow \cS_\omega(\fS;\mathbb{C}),
\]
which sends each stated crossingless arc diagram $\alpha$ to $\omega^{\frac{k}{2}} \alpha^N$, where $k$ is an integer depending on $\alpha$. In particular, $k=0$ when $\alpha$ is a stated essential arc, i.e., when $\alpha$ connects two distinct boundary components of $\fS$.

In \cite{kim2025frobenius}, the authors compute $\Phi(W)$ when the stated $n$-web $W$ consists only of arcs and loops.
It is a difficult and important problem to compute $\Phi(W)$ for a general stated $n$-web.
Very few examples are known for $\Phi(W)$ when $W$ contains sinks or sources, even in the case $n=3$.

Theorem~\ref{thm-basis-bigon} determines $\Phi(W(S))$ for each weighted system $S\in\WS$.
For $S\in\WS$, let $NS$ denote the weighted system obtained from $S$ by multiplying the weight of each labeled arc by $N$ (while keeping the underlying system unchanged).
It is well known that $\Phi$ sends each cluster monomial to its $N$-th power. Hence, Theorem~\ref{thm-basis-bigon} implies
\[
\Phi(W(S))=W(NS).
\]
\end{remark}

\newpage

\begin{appendices}
\section{Proofs of equalities \eqref{eq-Q2} and \eqref{eq-Q3}}\label{app-proof}

\begin{proof}[Proof of \eqref{eq-Q2} and \eqref{eq-Q3}]
For any $l > j$ and $1 \leq k < j$, the following can be proved by double induction on $l$ and $k$, using \eqref{eq-Q1} and the mutation rules (it can also be verified using Keller's quiver mutation applet): In the quiver
    \[
Q_2 := \mu_{(l-1;j-1)}^r\cdots \mu_{(j;j-1)}^r(\overline Q_\lambda),
\]
we have:

\begin{itemize}
    \item If $k=j-1$, then
    \begin{align}
Q_2((l,l-j+1), v)=
\begin{cases}
    1 & \text{if } v=(l,l-j+2), (j-1,0), (l+1,l-j+1),\\
    -1 & \text{if } v=(l-1,l-j), (l+1,l-j+2),\\
    0 & \text{otherwise}.
\end{cases}
\end{align}
    \item If $1\leq k<j-1$, then
    \begin{align}\label{eq:induc}
Q_2((l,l-k), v)=
\begin{cases}
    1 & \text{if } v=(l,l-k+1), (l-1,l-k-2), (l+1,l-k),\\
    -1 & \text{if } v=(l,l-k-1),(l-1,l-k-1),(l+1,l-k+1),\\
    0 & \text{otherwise}.
\end{cases}
\end{align}
\end{itemize}
In particular, \eqref{eq-Q2} holds.

Furthermore, using \eqref{eq:induc}, for any $l>j$, we proceed by induction on $m$ with $l-j+1<m\leq l-1$. In the quiver
\[
Q_3 := \mu_{l,m-1} \cdots \mu_{l,l-j+1} \mu_{(l-1;j-1)}^r\cdots \mu_{(j;j-1)}^r(\overline Q_\lambda),
\]
we have
\begin{align}
Q_3((l,m), v)=
\begin{cases}
    1 & \text{if } v=(l,m-1), (l,m+1),\\
    -1 & \text{if } v=(l-1,m-1), (l+1,m+1),\\
    0 & \text{otherwise}.
\end{cases}
\end{align}
This establishes \eqref{eq-Q3}.
\end{proof}

\section{Proof of Lemma~\ref{lem-prime-ele}}\label{Appendix-B-frozen}

It was proved in \cite[Theorem~11.1]{LS21} that the stated ${\rm SL}_2$-skein algebra is isomorphic to the stated skein algebra introduced in \cite{le2018triangular}. 
The stated skein algebra of a pb surface $\fS$ is defined using stated tangles in $\fS \times (-1,1)$.

\subsection{Stated skein algebras}\label{sub-S-iso}
A tangle in $\fS \times (-1,1)$ is a properly embedded one-dimensional submanifold $\alpha \subset \fS \times (-1,1)$ equipped with a framing such that, at each point of $\partial \alpha$, the framing is given by the positive direction of $(-1,1)$. 
If $\alpha$ is further equipped with a map $s \colon \partial \alpha \to \{-,+\}$, then we call $\alpha$ a {\bf stated tangle}.

The \textbf{stated skein algebra} $\mathcal S_{\omega}(\fS)$ of $\fS$ is defined as the quotient of the $R$-module freely generated by isotopy classes of stated tangles in $\fS \times (-1,1)$, subject to the relations \eqref{cross}--\eqref{hight}. 
As in the stated ${\rm SL}_2$-skein algebra, the algebra structure on $\mathcal S_{\omega}(\fS)$ is given by stacking stated tangles.

\begin{equation}\label{cross}
\raisebox{-.20in}{
\begin{tikzpicture}
\filldraw[draw=white,fill=gray!20] (-0,-0.2) rectangle (1, 1.2);
\draw [line width =1pt](0.6,0.6)--(1,1);
\draw [line width =1pt](0.6,0.4)--(1,0);
\draw[line width =1pt] (0,0)--(0.4,0.4);
\draw[line width =1pt] (0,1)--(0.4,0.6);
\draw[line width =1pt] (0.6,0.6)--(0.4,0.4);
\end{tikzpicture}
}=
q^{-\frac{1}{2}}
\raisebox{-.20in}{
\begin{tikzpicture}
\filldraw[draw=white,fill=gray!20] (-0,-0.2) rectangle (1, 1.2);
 \draw[line width =1pt] plot[smooth] coordinates {(0,1) (0.4,0.5) (0,0)};
\draw[line width =1pt] plot[smooth] coordinates {(1,1) (0.6,0.5) (1,0)};
\end{tikzpicture}
}
+
 q^{\frac{1}{2}}
\raisebox{-.20in}{
\begin{tikzpicture}
\filldraw[draw=white,fill=gray!20] (-0,-0.2) rectangle (1, 1.2);
\draw[line width =1pt] plot[smooth] coordinates {(0,1) (0.5,0.6) (1,1)};
\draw[line width =1pt] plot[smooth] coordinates {(0,0) (0.5,0.4) (1,0)};
\end{tikzpicture}
}, 
\end{equation}
\begin{equation}\label{unknot}
\raisebox{-.15in}{
\begin{tikzpicture}
\filldraw[draw=white,fill=gray!20] (-0,-0) rectangle (1, 1);
\draw [line width =1pt] (0.5,0.5) circle (0.3);
\end{tikzpicture}
}=-(q+q^{-1})
\raisebox{-.15in}{
\begin{tikzpicture}
\filldraw[draw=white,fill=gray!20] (-0,-0) rectangle (1, 1);
\end{tikzpicture}
},
\end{equation}
\begin{equation}\label{arc}
\raisebox{-.20in}{
\begin{tikzpicture}
\filldraw[draw=white,fill=gray!20] (0,0) rectangle (1.2, 0.6);
\draw [line width =1.5pt,decoration={markings, mark=at position 0.9 with {\arrow{>}}},postaction={decorate}](0,-0)--(1.2,0);
\draw [line width =1pt]  (0.8 ,0) arc (0:180:0.3 and 0.35);
\node[below] at (0.8 ,0) {\small $+$};
\node[below] at (0.2 ,0) {\small $-$};
\end{tikzpicture}
}=q^{\frac{1}{4}}
\raisebox{-.10in}{
\begin{tikzpicture}
\filldraw[draw=white,fill=gray!20] (0,0) rectangle (1.2, 0.6);
\draw [line width =1.5pt](0,-0)--(1.2,0);
\end{tikzpicture}
},\;\;
\raisebox{-.20in}{
\begin{tikzpicture}
\filldraw[draw=white,fill=gray!20] (0,0) rectangle (1.2, 0.6);
\draw [line width =1.5pt,decoration={markings, mark=at position 0.9 with {\arrow{>}}},postaction={decorate}](0,-0)--(1.2,0);
\draw [line width =1pt]  (0.8 ,0) arc (0:180:0.3 and 0.35);
\node[below] at (0.8 ,0) {\small $+$};
\node[below] at (0.2 ,0) {\small $+$};
\end{tikzpicture}
}=
\raisebox{-.20in}{
\begin{tikzpicture}
\filldraw[draw=white,fill=gray!20] (0,0) rectangle (1.2, 0.6);
\draw [line width =1.5pt,decoration={markings, mark=at position 0.9 with {\arrow{>}}},postaction={decorate}](0,-0)--(1.2,0);
\draw [line width =1pt]  (0.8 ,0) arc (0:180:0.3 and 0.35);
\node[below] at (0.8 ,0) {\small $-$};
\node[below] at (0.2 ,0) {\small $-$};
\end{tikzpicture}
} =0,
\end{equation}
\begin{equation}\label{hight}
\raisebox{-.25in}{
\begin{tikzpicture}
\filldraw[draw=white,fill=gray!20] (-0.2,-0) rectangle (1.2, 0.8);
\draw [line width =1.5pt,decoration={markings, mark=at position 0.93 with {\arrow{>}}},postaction={decorate}](-0.2,-0)--(1.2,0);
\draw [line width =1pt](0.2,0)--(0.2,0.8);
\draw [line width =1pt](0.8,0)--(0.8,0.8);
\node[below] at (0.2,0) {\small $-$};
\node[below] at (0.8,0) {\small $+$};
\end{tikzpicture}
}=q
\raisebox{-.25in}{
\begin{tikzpicture}
\filldraw[draw=white,fill=gray!20] (-0.2,-0) rectangle (1.2, 0.8);
\draw [line width =1.5pt,decoration={markings, mark=at position 0.93 with {\arrow{>}}},postaction={decorate}](-0.2,-0)--(1.2,0);
\draw [line width =1pt](0.2,0)--(0.2,0.8);
\draw [line width =1pt](0.8,0)--(0.8,0.8);
\node[below] at (0.2,0) {\small $+$};
\node[below] at (0.8,0) {\small $-$};
\end{tikzpicture}
} - q^{\frac{5}{4}}
\raisebox{-.11in}{
\begin{tikzpicture}
\filldraw[draw=white,fill=gray!20] (-0.2,-0) rectangle (1.2, 0.8); 
\draw [line width =1.5pt](-0.2,-0)--(1.2,0);
 \draw[line width =1pt] plot[smooth] coordinates {(0,0.8) (0.5, 0.3) (1,0.8)};
\end{tikzpicture}
}.
\end{equation}

The isomorphism constructed in \cite[Theorem~11.1]{LS21} between $\mathcal S_{\omega}(\fS)$ and $\cS_\omega(\fS)$ for $n=2$ sends each stated tangle $\alpha$ in $\fS \times (-1,1)$ to a stated $2$-web in $\fS \times (-1,1)$ by assigning an arbitrary orientation to $\alpha$ and replacing each minus state by $1$ and each plus state by $2$.

Note that our $q^{\frac{1}{4}}$ corresponds to $q^{-\frac{1}{2}}$ in \cite{le2018triangular,LY22}.
We have the following lemma.

\begin{lemma}[Height exchange relations, Lemma 2.4 of \cite{le2018triangular}]\label{B-height-ex}
For $v\in \{\pm 1\}$, we have
    $$
    \raisebox{-.20in}{
\begin{tikzpicture}
\filldraw[draw=white,fill=gray!20] (-0.2,-0) rectangle (1.2, 0.8);
\draw [line width =1.5pt,decoration={markings, mark=at position 0.19 with {\arrow{<}}},postaction={decorate}](-0.2,-0)--(1.2,0);
\draw [line width =1pt](0.2,0)--(0.2,0.8);
\draw [line width =1pt](0.8,0)--(0.8,0.8);
\node[below] at (0.2,0) {\small $v$};
\node[below] at (0.8,0) {\small $+$};
\end{tikzpicture}
} = q^{\frac{v}{2}}
\raisebox{-.20in}{
\begin{tikzpicture}
\filldraw[draw=white,fill=gray!20] (-0.2,-0) rectangle (1.2, 0.8);
\draw [line width =1.5pt,decoration={markings, mark=at position 0.93 with {\arrow{>}}},postaction={decorate}](-0.2,-0)--(1.2,0);
\draw [line width =1pt](0.2,0)--(0.2,0.8);
\draw [line width =1pt](0.8,0)--(0.8,0.8);
\node[below] at (0.2,0) {\small $v$};
\node[below] at (0.8,0) {\small $+$};
\end{tikzpicture}
},\qquad
    \raisebox{-.20in}{
\begin{tikzpicture}
\filldraw[draw=white,fill=gray!20] (-0.2,-0) rectangle (1.2, 0.8);
\draw [line width =1.5pt,decoration={markings, mark=at position 0.19 with {\arrow{<}}},postaction={decorate}](-0.2,-0)--(1.2,0);
\draw [line width =1pt](0.2,0)--(0.2,0.8);
\draw [line width =1pt](0.8,0)--(0.8,0.8);
\node[below] at (0.2,0) {\small $-$};
\node[below] at (0.8,0) {\small $v$};
\end{tikzpicture}
} = q^{-\frac{v}{2}}
\raisebox{-.20in}{
\begin{tikzpicture}
\filldraw[draw=white,fill=gray!20] (-0.2,-0) rectangle (1.2, 0.8);
\draw [line width =1.5pt,decoration={markings, mark=at position 0.93 with {\arrow{>}}},postaction={decorate}](-0.2,-0)--(1.2,0);
\draw [line width =1pt](0.2,0)--(0.2,0.8);
\draw [line width =1pt](0.8,0)--(0.8,0.8);
\node[below] at (0.2,0) {\small $-$};
\node[below] at (0.8,0) {\small $v$};
\end{tikzpicture}
},
    $$
    where we identify \(+\) with \(+1\) and \(-\) with \(-1\).
\end{lemma}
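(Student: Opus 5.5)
We must prove the height exchange relations (Lemma \ref{B-height-ex}) in the stated skein algebra $\mathcal S_\omega(\fS)$ using only the defining relations \eqref{cross}--\eqref{hight}. The argument is local: everything happens in a neighbourhood of a boundary edge, so we may work in a small half-disk and treat the two strands as parallel vertical arcs ending on the oriented boundary line. The left-hand sides have the height order reversed relative to the right-hand sides: the boundary arrow points the other way. Reversing the height order of two adjacent endpoints is the same, up to isotopy, as taking the right-hand diagram and adding a crossing between the two strands just above the boundary. So the plan is to write each left-hand side as a diagram with one crossing and resolve it.

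Concretely, I would first isotope the left-hand diagram so that the two endpoints have the standard height order (arrow pointing right), at the cost of a single crossing between the strands near the boundary; which strand is over is dictated by the height order. Applying \eqref{cross} to this crossing gives two terms, with coefficients $q^{\pm\frac12}$. In the first, the strands are simply parallel with the standard height order, i.e. the right-hand side diagram. In the second, a cap returns to the boundary with the states $(v,+)$ or $(-,v)$ at its two feet, and the other two ends form an arc above. The returning arc is evaluated by \eqref{arc}: it is zero when its states are $(+,+)$ or $(-,-)$, and equals $q^{\frac14}$ times the empty diagram when they are $(-,+)$ in the correct order.

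Then I split into the cases $v=+$ and $v=-$ for each of the two identities. When $v=+$ in the first identity the cap has states $(+,+)$ and dies, leaving $q^{\frac12}$ times the standard diagram, matching $q^{\frac v2}$. When $v=-$ in the second identity the cap has states $(-,-)$ and dies, leaving $q^{\frac12}=q^{-\frac v2}$. In the remaining mixed cases ($v=-$ in the first, $v=+$ in the second) the cap survives. Here I would not evaluate the cap directly but instead use \eqref{hight}, which is exactly the relation trading the $(-,+)$ and $(+,-)$ orders with a cap correction term $q^{\frac54}$. I combine it with the crossing resolution so that the cap terms cancel, leaving the coefficient $q^{-\frac12}$, respectively $q^{-\frac12}$ again. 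Equivalently, one can derive the mixed cases by resolving the crossing with the opposite sign convention (mirror), then solving the resulting linear system with \eqref{hight}.

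The main obstacle is the bookkeeping: identifying which strand is over after the isotopy, and which smoothing of \eqref{cross} produces the returning cap versus the parallel strands. Then one must check that the powers $q^{\pm\frac12}$, $q^{\frac14}$, $q^{\frac54}$ conspire to cancel the cap term in the mixed case. I would fix conventions by first checking the $(+,+)$ case, where only one term survives, and using it to calibrate the sign of the crossing before doing the mixed case.
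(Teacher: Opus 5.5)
Your strategy is the standard one: isotope the left-hand side to the standard height order at the cost of one crossing, resolve it with \eqref{cross}, and deal with the returning arc. The paper gives no proof of its own, only the citation to \cite{le2018triangular}. The two pure cases ($v=+$ in the first identity, $v=-$ in the second) work as you describe. Two bookkeeping points are misstated, though.

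\begin{itemize}
\item You cannot ``calibrate'' the crossing sign using the $(+,+)$ case, since that assumes the statement. The geometry fixes it: the strand ending at the higher endpoint lies over. With \eqref{cross} this puts $q^{\frac12}$ on the parallel smoothing and $q^{-\frac12}$ on the smoothing consisting of a cup plus a returning arc at the boundary.
\item The two endpoints physically swap positions during the isotopy. So the parallel smoothing carries the states $(+,v)$, resp.\ $(v,-)$, from left to right. It is the right-hand side diagram only in the pure cases; in both mixed cases it is the $(+,-)$ diagram.
\end{itemize}

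The genuine gap is in the mixed cases. There the returning arc produced by the other smoothing has states $(+,-)$ in the standard order. That is exactly the arc that \eqref{arc} does not evaluate: it only covers the $(-,+)$ arc and the two equal-state arcs. Write $D_{(a,b)}$ for two parallel strands with states $a,b$ (left to right) in the standard order, $U$ for the cup, and $\kappa$ for the value of the $(+,-)$ returning arc. The common left-hand side of both mixed cases, after using \eqref{hight} in the form $D_{(+,-)}=q^{-1}D_{(-,+)}+q^{\frac14}U$, becomes
\[
q^{\frac12}D_{(+,-)}+q^{-\frac12}\kappa\,U \;=\; q^{-\frac12}D_{(-,+)}+\bigl(q^{\frac34}+q^{-\frac12}\kappa\bigr)U .
\]
So the ``cap terms cancel'' precisely when $\kappa=-q^{\frac54}$. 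You cannot get this ``without evaluating the cap directly''. The mirror/linear-system variant does not help either: it only introduces further unknown returning arcs (with reversed height order) and never pins down $\kappa$.

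The missing step is to compute $\kappa$ from the defining relations. Join the two top ends in \eqref{hight}. The left side becomes the $(-,+)$ arc, equal to $q^{\frac14}$ by \eqref{arc}. The right side becomes $q\kappa$ plus $-q^{\frac54}$ times a trivial loop, evaluated by \eqref{unknot}:
\[
q^{\frac14}=q\,\kappa-q^{\frac54}\bigl(-(q+q^{-1})\bigr),
\]
so $\kappa=-q^{\frac54}$. With this value the $U$-terms cancel, and both mixed cases give $q^{-\frac12}D_{(-,+)}$, as the lemma requires.
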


A {\bf simple arc} in $\fS$ is a proper embedding 
$\alpha \colon [0,1] \to \fS$.
We call $\alpha$ a {\bf trivial arc} if it is relatively homotopic to an embedded interval in $\partial \fS$.

A {\bf simple loop} in $\fS$ is an embedded circle in the interior of $\fS$. 
A simple loop is called {\bf trivial} if it bounds an embedded disk in $\fS$.

A {\bf multicurve} $\beta$ in $\fS$ is a finite collection of pairwise disjoint simple arcs and simple loops containing no trivial arc or loop.

For each boundary component $c$ of $\fS$, let $\partial_c \beta$ denote $(\partial \beta)\cap c$. 
We call $\beta$ an $\fS$-tangle diagram if, for every boundary component $c$ of $\fS$, a linear order $\le_c$ on $\partial_c \beta$ is specified. 
A state $s$ of $\beta$ is called {\bf increasingly stated} if 
$s(E_1) \le s(E_2)$ whenever $E_1 \le_c E_2$ for $E_1, E_2 \in \partial_c \beta$.

Recall that the orientation of $\partial \fS$ induced by the orientation of $\fS$ is the positive orientation of $\partial \fS$. 
We say that $\beta$ is {\bf positively ordered} if, for each boundary component $c$ of $\fS$, the linear order on $\partial_c \beta$ is induced by the positive orientation of $c$; that is, the order of points in $\partial_c \beta$ increases when moving in the positive orientation of $c$.

Let $B(\fS)$ denote the set of increasingly stated, positively ordered multicurves in $\fS$.  
By \cite[Theorem~2.8]{le2018triangular}, $\cS_\omega(\fS)$ is a free $R$-module with basis $B(\fS)$.

\subsection{Partially reduced stated skein algebras}

For each puncture $p$ contained in $\partial \overline{\fS}$, there exists a unique bad arc $C_p$ as shown in Figure~\ref{fig:badarc-sl2}(A). 
Let $\alpha \in B(\fS)$. Isotope $C_p$ so that it lies in a sufficiently small neighborhood of $p$ and satisfies 
$C_p \cap \alpha = \emptyset$. 
We then define $\alpha \sqcup C_p$ to be the disjoint union of $\alpha$ and $C_p$. 
Note that $\alpha \sqcup C_p$ still belongs to $B(\fS)$. 
Then we have the following lemma.

\begin{figure}[htbp]
\centering
\raisebox{-.15in}{
\begin{tikzpicture}
\filldraw[draw=white,fill=gray!20] (-1.5,0) rectangle (1.5,1);
\draw[line width=1.5pt] (-1.5,0)--(1.5,0);
\draw[line width=1pt] (1,0) arc (0:180:1 and 0.7);
\filldraw[white, draw=black] (0,0) circle (0.1);
\node at (-1,-0.2) {\small $+$};
\node at (0,-0.3) {\small $p$};
\node at (1,-0.2) {\small $-$};
\node at (0,-0.8) {\small $(A)$};
\end{tikzpicture}
}\qquad
\raisebox{-.15in}{
\begin{tikzpicture}
\filldraw[draw=white,fill=gray!20] (-1.5,0) rectangle (1.5,1);
\draw[line width=1.5pt] (-1.5,0)--(1.5,0);
\draw[line width=1pt] (1,0) arc (0:180:1 and 0.7);
\filldraw[white, draw=black] (0,0) circle (0.1);
\node at (-1,-0.2) {\small $+$};
\node at (0,-0.3) {\small $p$};
\node at (1,-0.2) {\small $+$};
\node at (0,-0.8) {\small $(B)$};
\end{tikzpicture}
}\qquad
\raisebox{-.15in}{
\begin{tikzpicture}
\filldraw[draw=white,fill=gray!20] (-1.5,0) rectangle (1.5,1);
\draw[line width=1.5pt] (-1.5,0)--(1.5,0);
\draw[line width=1pt] (1,0) arc (0:180:1 and 0.7);
\filldraw[white, draw=black] (0,0) circle (0.1);
\node at (-1,-0.2) {\small $-$};
\node at (0,-0.3) {\small $p$};
\node at (1,-0.2) {\small $-$};
\node at (0,-0.8) {\small $(C)$};
\end{tikzpicture}
}
\caption{The bad arc $C_p$ in (A), and the stated arcs $C_p(+)$ and $C_p(-)$ in (B) and (C), respectively.}
\label{fig:badarc-sl2}
\end{figure}

\begin{lemma}\cite[Lemma 4.4]{LY22}\label{lem-Cp}
    ($a$) $C_p \,\mathcal S_\omega (\fS) =  \mathcal S_\omega (\fS)\, C_p$.

    ($b$) Let $\alpha \in B(\fS)$. In $\mathcal S_\omega (\fS)$, we have $\alpha\, C_p \overset{\omega}{=} C_p\,\alpha \overset{\omega}{=} \alpha \sqcup C_p.$
\end{lemma}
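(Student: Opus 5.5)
Part (a) follows from the local analysis of $C_p$ near the puncture $p$. The plan is to reduce (a) to (b) plus the basis theorem. By \cite[Theorem~2.8]{le2018triangular}, $\mathcal S_\omega(\fS)$ is free with basis $B(\fS)$. So it suffices to show that for every $\alpha\in B(\fS)$ both $\alpha C_p$ and $C_p\alpha$ are $\omega$-multiples of the single basis element $\alpha\sqcup C_p$. Then $C_p\alpha \overset{\omega}{=}\alpha C_p$, and linear extension gives $C_p\mathcal S_\omega(\fS)=\mathcal S_\omega(\fS)C_p$. So the real work is (b).

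For (b), isotope $C_p$ into a tiny neighborhood of $p$ so that its projection is disjoint from $\alpha$. Then $\alpha C_p$ and $C_p\alpha$ differ from $\alpha\sqcup C_p$ only in the height order of endpoints on the two boundary edges (or the single boundary component) adjacent to $p$. On the edge where $C_p$ has its $-$ endpoint, that endpoint sits at the extreme end of the edge near $p$. On the other edge, the $+$ endpoint sits at the extreme end near $p$.

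The key step is to move the $C_p$ endpoint past each consecutive endpoint of $\alpha$ in height, one at a time, using the height exchange relations of Lemma~\ref{B-height-ex}. Each exchange involves a $+$ endpoint adjacent to an arbitrary state $v$, or a $-$ endpoint adjacent to $v$, and so produces only a scalar $q^{\pm v/2}$. This is exactly why we pick the orientation conventions so that the $+$ and $-$ states of $C_p$ occupy the slots covered by the two relations. No extra terms of the kind in \eqref{hight} appear.

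After finitely many exchanges, the endpoints of $C_p$ sit in the positions prescribed by positive ordering, and the result is $\alpha\sqcup C_p$ up to a power of $\omega$. We must also check that $\alpha\sqcup C_p$ is increasingly stated. This holds because $C_p$'s $+$ endpoint is last on its edge and its $-$ endpoint is first on its edge, and the states of $\alpha$ lie in between.

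The main obstacle I expect is the bookkeeping of directions. We must confirm that for the product in each order ($C_p$ above $\alpha$, or below), the required exchanges are always of the forms in Lemma~\ref{B-height-ex}, with a $+$ endpoint moving toward one side or a $-$ endpoint toward the other. We also need the case where both ends of $C_p$ lie on the same boundary component (for example a monogon-like situation), where the two edges coincide and the orders must be handled simultaneously. I would first treat the generic case of two distinct edges, and then observe that the same exchanges apply segment by segment.
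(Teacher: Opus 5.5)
Your argument is correct and is essentially the standard proof behind the cited \cite[Lemma~4.4]{LY22}; the paper itself only quotes the result. Height exchange moves on the endpoints of $C_p$ give (b), and (a) then follows from (b) because $B(\fS)$ is a basis.

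The direction bookkeeping you flag does work out. In $\alpha C_p$, only the $+$ endpoint of $C_p$ is out of place: it is the last point, in the positive orientation, on the edge running into $p$. It must be raised past the endpoints of $\alpha$ preceding it, and each step is the first relation of Lemma~\ref{B-height-ex}. In $C_p\alpha$, only the $-$ endpoint is out of place: it is the first point on the edge running out of $p$. It must be lowered past the endpoints following it, and each step is the second relation.
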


We have the following relation in $\mathcal S_\omega (\fS)/(C_p)$, where $(C_p)=C_p \,\mathcal S_\omega (\fS) =  \mathcal S_\omega (\fS)\, C_p$.

\begin{lemma}[\cite{CLL}]\label{B-lem-prod-1}
   In $\mathcal S_\omega (\fS)/(C_p)$, we have 
   $C_p(+)C_p(-)=C_p(-)C_p(+)=1$, where $C_p(+)$ and $C_p(-)$ are stated arcs illustrated in Figure~\ref{fig:badarc-sl2}.
\end{lemma}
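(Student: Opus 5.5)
The plan is to prove the identity directly from the defining skein relations, working in the basis $B(\fS)$ of increasingly stated, positively ordered multicurves. First, fix a small neighbourhood $U$ of $p$, a half-disk meeting $\partial\fS$ in an interval containing $p$. All three stated arcs $C_p$, $C_p(+)$, $C_p(-)$ live in $U$, and the relation we want is local. It therefore suffices to prove it in $\mathcal S_\omega(\mathbb P_2)$, or in the relevant local quotient, and push it forward. Concretely, stack $C_p(+)$ over $C_p(-)$. In the diagram of the product, the two parallel arcs around $p$ have their four endpoints on the two boundary edges adjacent to $p$. On each edge, the endpoints come with states $(+,-)$ in some height order determined by the stacking.

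Next, I would use the height exchange relation (Lemma~\ref{B-height-ex}) together with relation~\eqref{hight} to reorder the endpoints on one boundary edge. This rewrites the product as a power of $q$ times the diagram with the "nested" endpoint states, plus a correction term. In the correction term, the two adjacent endpoints are joined by a small returning arc near the boundary, i.e.\ the diagram contains a trivial arc. Applying \eqref{arc} to that trivial arc, and resolving any crossing via \eqref{cross}, should reduce the correction term to a scalar multiple of the empty diagram. The main term should reduce to a diagram consisting of two nested corner arcs around $p$, one of which, after renormalising the states on both edges, is exactly $C_p$ with its $(+,-)$ state assignment, possibly disjoint-unioned with something else. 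In $\mathcal S_\omega(\fS)/(C_p)$ this term vanishes, because by Lemma~\ref{lem-Cp}(b) any diagram containing $C_p$ is $\omega$-proportional to $C_p$ times a basis element and hence lies in $(C_p)$. Thus $C_p(+)C_p(-)$ equals a scalar $c\in R$ modulo $(C_p)$. The symmetric computation (stack the other way) gives the same kind of statement for $C_p(-)C_p(+)$.

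The main obstacle is bookkeeping the powers of $q^{1/4}$ so that the resulting scalar is exactly $1$ in both orders. With the normalisation $q^{1/4}$ in \eqref{arc} and $-q^{5/4}$ in \eqref{hight}, I expect the factors $q^{1/4}\cdot q^{-5/4}\cdot q^{\pm1/2}\cdots$ to cancel, but this must be checked carefully with the stated orientation conventions. If the direct check is messy, my fallback is to verify the identity in the image of the quantum trace for $\mathbb P_2\cong$ a bigon neighbourhood. Alternatively, I would use the $\mathcal O_q(\mathrm{SL}_2)$ presentation, where $C_p(\pm)$, $C_p$ correspond to the entries $u_{22},u_{11},u_{12}$ (up to $q$-powers) of the quantum matrix for the corner arc. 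The identity then follows from $\det_q=u_{11}u_{22}-q^{-1}u_{12}u_{21}=1$, together with the companion relation $u_{22}u_{11}-qu_{12}u_{21}=1$, since the terms involving $u_{12}$ die modulo $(C_p)$.
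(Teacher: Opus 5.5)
The paper gives no proof of this lemma (it is quoted from \cite{CLL}), so your argument has to stand on its own. It does, once two amendments are made.

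\textbf{First amendment: the constant.} The $q$-bookkeeping you left open does come out to exactly $1$, but it uses a relation you did not list.

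\emph{The product $C_p(+)C_p(-)$.} In your crossingless picture, one edge carries the states $(+,-)$ in reversed height order. This is precisely the case not covered by Lemma~\ref{B-height-ex}. Slide the higher endpoint past the lower one and apply \eqref{cross}. This shows that the reversed $(+,-)$ pair equals
\begin{itemize}
\item $q^{1/2}$ times the correctly ordered picture with states swapped to $(-,+)$, plus
\item $q^{-1/2}\cdot q^{1/4}$ times the picture in which the two strands are joined by a cap.
\end{itemize}
The cap is not disposed of on the same edge. It fuses the two corner arcs into one arc that is boundary-parallel on the \emph{other} edge, worth $q^{1/4}$ by \eqref{arc}. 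So the constant is $q^{-1/2}q^{1/4}q^{1/4}=1$. The main term has $C_p$ as its innermost component, so it lies in $(C_p)$ by Lemma~\ref{lem-Cp}(b).

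A shorter route: draw the two arcs crossing once, so that both edges already carry $(-,+)$ in increasing height. A single use of \eqref{cross} then gives $q^{-1/2}(q^{1/4})^2=1$ plus $q^{1/2}$ times a diagram whose inner arc is $C_p$.

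\emph{The product $C_p(-)C_p(+)$.} Lemma~\ref{B-height-ex} contributes $q^{-1/2}$ on one edge. On the other edge, use \eqref{hight} in the form $(+,-)=q^{-1}(-,+)+q^{1/4}\,\mathrm{cap}$. The cap again yields a trivial arc worth $q^{1/4}$, so the constant is again $1$.

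\emph{The fallback.} ``Up to $q$-powers'' is not enough to get the constant $1$. You need the bigon generators to map to the stated arcs on the nose.

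\textbf{Second amendment: a missing hypothesis.} Both of your reductions silently assume that the two boundary edges adjacent to $p$ are distinct. This applies to ``the relation is local'' and to pushing forward from a bigon neighbourhood of the corner arc. Only under this assumption are the height orders at the two ends of the arcs independent, and only then is the bigon embedding multiplicative.

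Suppose instead a boundary circle of $\overline{\fS}$ contains only the puncture $p$. Then all four endpoints lie on one edge, and the stacking interleaves their heights globally. The constant is then no longer $1$: the same kind of computation gives $C_p(+)C_p(-)\equiv C_p(-)C_p(+)\equiv q^{-1/2}$. Consistently with this, in that case $C_p(\pm)$ are not reflection invariant but $q^{1/4}C_p(\pm)$ are, matching the factor $q^{1/4}$ in $X_e$ for an edge with coinciding endpoints.

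So state the distinct-edges hypothesis explicitly. With it, your proof is complete.
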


Let $I$ be a subset of punctures  contained in $\partial \overline{\fS}$. 
Define
\[
\mathcal S_\omega^I(\fS) := \mathcal S_\omega(\fS)/(C_p,p\in I),
\]
where $(C_p,p\in I)$ is the two sided idea of $\mathcal S_\omega(\fS)$ generated by $C_p,p\in I$.
Note that $\mathcal S_\omega^I(\fS)=\mathcal S_\omega(\fS)$ when $I=\emptyset$.
For an element $\alpha \in \mathcal S_\omega(\fS)$, we use the same notation to denote its image under the projection $\mathcal S_\omega(\fS)\to \mathcal S_\omega^I(\fS)$.

Let $B_I(\fS)$ be the subset of $B(\fS)$ consisting of increasingly stated, positively ordered multicurves in $\fS$ that contain no bad arcs $C_p$ for $p\in I$. 
Then Lemma~\ref{lem-Cp} implies the following.

\begin{lemma}\label{B-lem-basis}
    $B_I(\fS)$ is a basis of $\mathcal S_\omega^I(\fS)$.
\end{lemma}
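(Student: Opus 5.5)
The plan is to deduce the lemma from the basis $B(\fS)$ of $\mathcal S_\omega(\fS)$ (\cite[Theorem~2.8]{le2018triangular}) together with Lemma~\ref{lem-Cp}. The claim is that $B_I(\fS)$, the increasingly stated positively ordered multicurves containing no component $C_p$ for $p\in I$, is an $R$-basis of the quotient $\mathcal S^I_\omega(\fS)=\mathcal S_\omega(\fS)/(C_p,\ p\in I)$.

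First I identify the ideal $J:=(C_p,\ p\in I)$ as an $R$-module. By Lemma~\ref{lem-Cp}(a), each $C_p$ is normal, i.e. $C_p\mathcal S_\omega(\fS)=\mathcal S_\omega(\fS)C_p$. Hence
\[
J=\sum_{p\in I}\mathcal S_\omega(\fS)\,C_p .
\]
This is spanned by the products $\alpha C_p$ with $\alpha\in B(\fS)$ and $p\in I$. By Lemma~\ref{lem-Cp}(b), $\alpha C_p\overset{\omega}{=}\alpha\sqcup C_p$, which again lies in $B(\fS)$ and contains $C_p$ as a component.

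Conversely, any $\beta\in B(\fS)$ containing $C_p$ for some $p\in I$ can be written as $\beta=\beta'\sqcup C_p$ with $\beta'\in B(\fS)$. Removing a corner component near $p$ keeps the multicurve increasingly stated and positively ordered, since its endpoints are extremal in the order near $p$. So $\beta\overset{\omega}{=}\beta' C_p\in J$. It follows that $J$ is exactly the $R$-span of
\[
B(\fS)\setminus B_I(\fS)=\{\beta\in B(\fS): \beta \text{ contains some } C_p,\ p\in I\}.
\]

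Since this is a subset of the basis $B(\fS)$, the quotient $\mathcal S_\omega(\fS)/J$ is free with basis given by the images of the complement $B_I(\fS)$.

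The main point to check is the claim that $\beta'$ remains in $B(\fS)$, that is, that $C_p$ can be split off while preserving the "increasingly stated, positively ordered" normal form. The states of $C_p$ are $+$ then $-$ in the direction given in Figure~\ref{fig:badarc-sl2}(A). I expect this to hold because $\alpha\sqcup C_p\in B(\fS)$ for every $\alpha\in B(\fS)$, as the paper notes, and the decomposition is unique, since $C_p$ is determined as the outermost corner component at $p$.
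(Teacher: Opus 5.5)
Your argument is correct and is essentially the paper's proof. The paper only says that the lemma follows from Lemma~\ref{lem-Cp} together with the basis $B(\fS)$ of \cite{le2018triangular}, and your identification of $(C_p,\ p\in I)$ with the $R$-span of $B(\fS)\setminus B_I(\fS)$ is exactly the argument left implicit there.

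One small correction: the copy of $C_p$ is the \emph{innermost} corner component at $p$, not the outermost. If $\beta\in B(\fS)$ contains a copy of $C_p$, then increasingness forces every corner arc at $p$ nested inside it to also be a copy of $C_p$. That is what guarantees $\beta'\sqcup C_p=\beta$.
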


\def\cut{\mathsf{Cut}}
\def\pr{{\bf pr}}

Let $e$ be an ideal arc of $\fS$ such that it is contained in the interior of $\fS$. After cutting $\fS$ along $e$, we get a new pb surface $\fS'$, which has two copies $e_1,e_2$ for $c$ such that 
${\fS}= \fS'/(e_1=e_2)$. We use $\pr_e$ to denote the projection from $\fS'$ to $\fS$.  Suppose that $\alpha$ is a stated tangle diagram in $\fS$, which is transverse to $e$.
Let $s$ be a map from $e\cap\alpha$ to $\{\pm 1\}$, and let $h$ be a linear order on $e\cap\alpha$. Then there is a lift stated tangle diagram $\alpha(h,s)$ in $\fS'$. 
 For $i=1,2$, the heights of the endpoints of $\alpha(h,s)$ on $e_i$ are induced by $h$ (via $\pr_e$), and the states of the endpoints of $\alpha(h,s)$ on $e_i$ are induced by $s$ (via $\pr_e$).
Then the splitting map 
$$
\Theta_e : \mathcal{S}_\omega(\fS) \to \mathcal{S}_\omega(\fS')
$$
is defined by 
\begin{align}\label{eq-def-splitting}
    \Theta_e(\alpha) =\sum_{s\colon \alpha \cap e \to \{\pm 1\}} \alpha(h, s). 
\end{align}
Furthermore $\Theta_e$ is an algebra embedding \cite{le2018triangular}.

It is well-known that \cite{le2018triangular}
\begin{align}\label{com-splitting-skein}
    \Theta_{e_1}\circ \Theta_{e_2}
    = \Theta_{e_2}\circ \Theta_{e_1}
\end{align}
for any two disjoint interior ideal arcs $e_1,e_2$ of $\fS$. 

Let $I$ be a subset of punctures  contained in $\partial \overline{\fS}$. 
Let $I'$ denote the preimage of $I$ under the projection $\pr_e$.
Then the splitting map
\[
\Theta_e \colon \mathcal S_\omega (\fS) \to \mathcal S_\omega (\fS')
\]
induces an algebra homomorphism
\[
\Theta_e \colon \mathcal S_\omega^I (\fS) \to \mathcal S_\omega^{I'} (\fS').
\]


We obtain the following result.
Since the exact same technique as in \cite[Theorem~7.6]{CLL} applies here, we omit the detailed proof; see Remark~\ref{proof1} for a brief outline.

\begin{lemma}\label{lem-domain4}
The algebra homomorphism
\[
\Theta_e\colon \mathcal S_\omega^I(\fS)\rightarrow \mathcal S_\omega^{I'}(\fS')
\]
is an embedding.
\end{lemma}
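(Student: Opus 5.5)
The plan is to follow the strategy of \cite[Theorem~7.6]{CLL}, namely a leading-term (degree) argument with respect to the bases of Lemma~\ref{B-lem-basis}. First we fix the basis $B_I(\fS)$ of $\mathcal S_\omega^I(\fS)$ and the basis $B_{I'}(\fS')$ of $\mathcal S_\omega^{I'}(\fS')$. Every $\alpha\in B_I(\fS)$ is isotoped so that it meets $e$ minimally, and the image is computed by \eqref{eq-def-splitting}: $\Theta_e(\alpha)$ is the sum over all states $s$ on $\alpha\cap e$ of the lifts $\alpha(h,s)$.

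Next we filter both algebras and extract a leading term. Choose $h$ to be the linear order induced by the positive orientation of $e_1$. On $e_2$ this order is then the opposite of the positive orientation. Define $s_\alpha$ to be the state that is increasing with respect to $h$ and has a prescribed distribution of $\pm$ along $e$; the natural choice is the one used in \cite{le2018triangular} for injectivity, all $+$ on the first half and $-$ on the second half in a fixed balanced way. Reordering $\alpha(h,s_\alpha)$ to be positively ordered on $e_2$ uses the height exchange relations (Lemma~\ref{B-height-ex}) and relation \eqref{hight}. This produces $\omega^{k}\,\widetilde\alpha$ for a single basis element $\widetilde\alpha\in B(\fS')$, plus terms with fewer endpoints on $e_2$ or of lower state-degree. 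Introduce a grading by the number of endpoints on $e_1\cup e_2$ together with the sum of states on $e_1$. Then $\Theta_e(\alpha)=\omega^{k}\widetilde\alpha+(\text{lower terms})$, and the map $\alpha\mapsto\widetilde\alpha$ is injective on $B(\fS)$ because $\alpha$ is recovered by regluing.

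The remaining task is to pass to the quotients, and I expect this to be the main obstacle. We must show that $\widetilde\alpha$ lies in $B_{I'}(\fS')$ whenever $\alpha\in B_I(\fS)$, meaning that the leading lift contains no bad arc $C_{p'}$ for $p'\in I'$. We must also check that lower terms which become zero or reduce in the quotient do not cancel the leading term. Since $e$ is interior, a bad arc at $p'\in I'$ in the lift can only arise from a corner arc of $\alpha$ that crosses $e$ near $p=\pr_e(p')$, with the wrong states at the new endpoints. This is why the choice of $s_\alpha$ matters: we pick the leading state so that every corner piece adjacent to a cut puncture receives the state pattern opposite to $C_{p'}$, i.e.\ of type $C_{p'}(\pm)$ from Figure~\ref{fig:badarc-sl2}(B),(C), in the same way as \cite{CLL}. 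When we reduce modulo $(C_{p'})$ using Lemma~\ref{lem-Cp}(b) and Lemma~\ref{B-lem-prod-1}, the lower terms either vanish or stay of strictly lower degree.

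Granting this, the leading terms of a nonzero combination $\sum c_\alpha\alpha$ are distinct basis elements of $B_{I'}(\fS')$. Hence $\Theta_e(\sum c_\alpha\alpha)\neq0$, and $\Theta_e$ is injective.
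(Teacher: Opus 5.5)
\textbf{The step you grant fails when both endpoints of $e$ lie in $I$.} You cannot assign states on the cut edge corner piece by corner piece. An element of $B_{I'}(\fS')$ is increasingly stated along $e_1$. By \eqref{hight}, a lift $\alpha(h,s)$ whose states on $e_1$ are not increasing equals a power of $q$ times its \emph{sorted} version only modulo terms with two fewer endpoints. So modulo the lower filtration (cf.\ \eqref{ap-B-Dk}) you control only how many $+$'s sit on $e_1$. The endpoint nearest the initial puncture of $e_1$ always carries the smallest state, and the one nearest the terminal puncture carries the largest.

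\textbf{Counterexample.} Take $\fS=\mathbb P_4$, let $e$ be a diagonal from $p_1$ to $p_2$ with $p_1,p_2\in I$, and let $\alpha=C_{p_1}(+)\sqcup C_{p_2}(-)\in B_I(\fS)$. On one side of $e$, say along $e_1$ running positively from $p_1'$ to $p_2'$, the lift of $\alpha$ consists of two pieces:
\begin{itemize}
\item a corner arc at $p_1'$ whose other end, preceding $p_1'$, has state $+$;
\item a corner arc at $p_2'$ whose other end, following $p_2'$, has state $-$.
\end{itemize}
In the convention of Figure~\ref{fig:badarc-sl2}, $C_p$ has $+$ before $p$ and $-$ after it. So avoiding $C_{p_1'}$ forces state $+$ at the $p_1'$-end of $e_1$, and avoiding $C_{p_2'}$ forces $-$ at the $p_2'$-end. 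That is decreasing along $e_1$, so it is impossible in an increasingly stated lift. Hence every term of $\Theta_e(\alpha)$ with the maximal number of endpoints vanishes in $\mathcal S^{I'}_\omega(\fS')$ modulo lower filtration, and no $\widetilde\alpha$ exists. Yet $\Theta_e(\alpha)\neq 0$, since $\alpha$ is a unit by Lemma~\ref{B-lem-prod-1}. So the danger is not lower terms cancelling the leading one, as your last paragraph suggests; it is that the leading part itself can be zero. Once that happens, your comparison of leading terms across different $\alpha$ no longer works.

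\textbf{How the paper handles this.} Following \cite{CLL}, the paper splits according to how many endpoints of $e$ lie in $I$.
\begin{itemize}
\item If at most one endpoint is in $I$, then for every $\alpha\in B_I(\fS)$ some sorted lift contains no bad arc; this is \cite[Lemma~7.7]{CLL}. At a single cut puncture $p$, the only obstruction would be a corner arc of $\alpha$ at $p$ equal to $C_p$, which $\alpha\in B_I(\fS)$ excludes. So the top-filtration part of $\Theta_e(\alpha)$ is nonzero, and an argument like yours goes through. You must, however, take the leading state to be a \emph{surviving} one: even here the all-$+$ lift favoured by your state-sum grading can die, e.g.\ for $\alpha=C_{p_1}(-)$ with $p_1\in I$.
\item If both endpoints are in $I$, the degenerate situation above genuinely occurs. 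The paper then relies on the second case of the argument of \cite[Theorem~7.6]{CLL}.
\end{itemize}
Your proposal has neither this case distinction nor any treatment of the degenerate case, so it does not prove the lemma. Separately, your description of $s_\alpha$ is inconsistent: ``increasing with respect to $h$'' and ``$+$ on the first half, $-$ on the second'' describe opposite states.
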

For each \(\alpha \in B_I(\fS)\), let 
\begin{align}\label{ap-B-d}
    \text{\(d(\alpha)\):= the number of endpoints of \(\alpha\) lying on \(\partial \fS\).}
\end{align}
For each integer \(k\), let
\begin{align}\label{ap-B-Dk}
    D_k(\mathcal S_\omega^I(\fS)):=
    \text{\(R\)-submodule of \(\mathcal S_\omega^I(\fS)\) spanned by all
\(\alpha \in B_I(\fS)\) satisfying \(d(\alpha)\le k\).}
\end{align}
Define
\[
G_k(\mathcal S_\omega^I(\fS))
:=
D_k(\mathcal S_\omega^I(\fS))
/
D_{k-1}(\mathcal S_\omega^I(\fS)).
\]

\begin{remark}\label{proof1}

Let \(0 \neq x \in \mathcal S_\omega^I(\fS)\). We want to show that
\(\Theta_e(x) \neq 0\).
Since \(B_I(\fS)\) is an \(R\)-basis, there exists a nonempty finite subset
\(S \subset B_I(\fS)\) such that
\begin{equation*}
x=\sum_{\alpha \in S} c_\alpha \alpha,
\qquad 0 \neq c_\alpha \in R .
\end{equation*}
Let $t=\max_{\alpha \in S} i(\alpha,e),$
where \(i(\alpha,e)\) denotes the algebraic intersection number between \(\alpha\) and \(e\).
Then the subset
$S' := \{\alpha \in S \mid i(\alpha,e)=t\}$
is nonempty.

For \(\alpha \in S'\), define
$k=d(\alpha)+t.$
Let
$P \colon
D_k(\mathcal S_\omega^I(\fS'))
\to
G_k(\mathcal S_\omega^I(\fS'))$
denote the natural projection.
The proof of \cite[Theorem~7.6]{CLL} then divides into two cases:

\begin{itemize}
    \item[(1)]
    There exists \(\alpha \in S'\) such that
    $P\bigl(\Theta_e(\alpha)\bigr)\neq 0.$

    \item[(2)]
    For all \(\alpha \in S'\), we have
    $P\bigl(\Theta_e(\alpha)\bigr)=0.$
\end{itemize}

Suppose that \(e\) connects two punctures \(p_1,p_2\) (possibly \(p_1=p_2\)).

If \(p_1,p_2\in I\), then the exact proof of
\cite[Theorem~7.6]{CLL}
applies to Lemma~\ref{lem-domain4}.

If exactly one of \(p_1,p_2\) lies in \(I\), then
\cite[Lemma~7.7]{CLL} implies that, for every
\(\alpha \in S'\),
$P\bigl(\Theta_e(\alpha)\bigr)\neq 0.$
Then Case~(1) applies, and the proof of
\cite[Theorem~7.6]{CLL}
works without modification.

Finally, if neither \(p_1\) nor \(p_2\) lies in \(I\), then it is immediate that, for every
\(\alpha \in S'\),
$P\bigl(\Theta_e(\alpha)\bigr)\neq 0.$
Then Case~(1) again applies, and the proof of
\cite[Theorem~7.6]{CLL}
carries over verbatim.

\end{remark}

\subsection{Quantum trace maps}

Let $\fS$ be a triangulable pb surface with a triangulation $\lambda$. 
Suppose that $\fS$ contains no interior punctures. 
Recall that we use $\partial\lambda$ denote the set of all the boundary edges in $\lambda$.
Let $\partial\widehat{\lambda} = \{\hat{e} \mid e \in \partial\lambda\}$ be another copy of the set $\partial\lambda$, and set $\widehat{\lambda} = \lambda \sqcup {\partial\widehat\lambda}$. 

An ideal {\bf multiarc} in $\fS$ is a finite collection of disjoint ideal arcs. 
For each ideal multiarc $\alpha$, let $D(\alpha)$ be the simple tangle diagram obtained from $\alpha$ by slightly shifting all strands incident to each puncture $p$ so that they end on the boundary edge immediately to the left of $p$ (see \cite[Figure~2]{LY22}), and then imposing the positive order on each boundary edge.
Note when $e \in \partial \lambda$, $D(e)$ is a corner arc (see Figure~\ref{fig:Dee}).


For each $e \in \widehat{\lambda}$, we review $X_e \in \mathcal S_\omega(\fS)$ introduced in \cite{LY22}. 
For $e \in \lambda$, set
\[
X_e =
\begin{cases}
D(e)(+,+), & \text{if the endpoints of $e$ are distinct},\\
q^{\frac{1}{4}} D(e)(+,+), & \text{if the endpoints of $e$ coincide},
\end{cases}
\]
where $D(e)(+,+)$ denotes $D(e)$ equipped with positive states at both endpoints.

For $\hat e \in\partial \widehat{\lambda}$, define $X_{\hat{e}} \in \mathcal S_\omega(\fS)$ by
\[
X_{\hat{e}} =
\begin{cases}
D(e)(+,-), & \text{if the endpoints of $e$ are distinct},\\
q^{-\frac{1}{4}} D(e)(+,-), & \text{if the endpoints of $e$ coincide},
\end{cases}
\]
where $D(e)(+,-)$ denotes $D(e)$ with states assigned as in Figure~\ref{fig:Dee}. 
Then we have the following.

\begin{figure}
    \centering
    \includegraphics[width=0.7\linewidth]{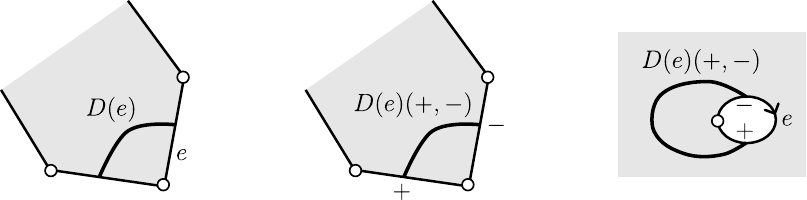}
    \caption{Left: the boundary edge $e$ and the corner arc $D(e)$. 
Middle: the bad arc $D(e)(+,-)$. 
Right: the bad arc $D(e)(+,-)$ when the two endpoints of $e$ coincide.}
    \label{fig:Dee}
\end{figure}

\begin{lemma}[\cite{LY22}]\label{lem-B-XXX}
    For each pair $a,b \in \widehat{\lambda}$, there exists an integer $\bar P(a,b)$ such that 
    $$X_a X_b = q^{\frac{1}{2}\bar P(a,b)} X_b X_a\in \mathcal S_\omega(\fS).$$
\end{lemma}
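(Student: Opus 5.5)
The commutation relation for $X_a X_b$ will be proved by reducing to the case where $a$ and $b$ are both concentrated near a single boundary edge or a single puncture, and then applying the height exchange relations of Lemma~\ref{B-height-ex} to the resulting simple diagrams.

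First, since the $X_a$ are represented by simple tangle diagrams, I will represent $X_a$ and $X_b$ by the diagrams $D(a)$ and $D(b)$ with the prescribed states. These can be isotoped so that their projections to $\fS$ are disjoint in the interior. This is possible because distinct ideal arcs in $\lambda$ are disjoint. For $\hat e$, the diagram $D(e)$ is a corner arc that can be pushed into a small neighbourhood of the boundary, and the same holds for $e$ itself when $e\in\partial\lambda$.

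Hence $X_aX_b$ and $X_bX_a$ are represented by the same diagram in $\fS$. The two products differ only in the height order of the endpoints lying on a common boundary component $c$: stacking $X_a$ above $X_b$ puts all endpoints of $D(a)$ on $c$ higher than those of $D(b)$.

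Next, I will reorder those endpoints one at a time. The key step is to check that each exchange of two adjacent endpoints produces only a scalar $q^{\pm\frac12}$ or $1$, and never the extra arc term appearing in \eqref{hight}. By Lemma~\ref{B-height-ex}, a swap of two adjacent endpoints with states $(v,+)$ or $(-,v)$ costs $q^{\pm v/2}$. So the only problematic case is a pair with states $(+,-)$ in the order that triggers \eqref{hight}.

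I expect this to be the main obstacle. The strategy is to use the geometry of $D(\cdot)$: every endpoint is shifted to the edge immediately to the left of its puncture, so on each boundary edge the endpoints of $D(a)$ and $D(b)$ coming from the same puncture appear in a definite cyclic order. The states are $+$ except for the single $-$ endpoint of $D(e)(+,-)$. I will check case by case (by whether the arcs share a puncture and whether $X_{\hat e}$ is involved) that the ordering is always of the type handled by Lemma~\ref{B-height-ex}. Where \eqref{hight} does appear, the extra term contains a returning arc with states $(+,+)$ or $(-,-)$, or a configuration that vanishes by \eqref{arc}; I will try to show this directly.

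The case where the endpoints of $e$ coincide needs the extra normalisation $q^{\pm1/4}$. That factor is a scalar, so it cancels on both sides and does not affect the argument. A simpler alternative I would try first is to cite the explicit computation in \cite{LY22}, where $\bar P$ is given by counting endpoints with signs, and verify that our conventions ($q^{1/4}\leftrightarrow q^{-1/2}$) only rescale exponents.

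Summing the exponents from all swaps gives an integer $\bar P(a,b)$ with $X_aX_b=q^{\frac12\bar P(a,b)}X_bX_a$.
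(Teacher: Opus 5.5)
Your reduction is the right one; the paper gives no argument here and simply cites \cite{LY22}, which is your stated fallback. You take $D(a)$ and $D(b)$ with disjoint projections, so that $X_aX_b$ and $X_bX_a$ differ only in the heights of endpoints on shared boundary edges, and you reorder via Lemma~\ref{B-height-ex}.

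The gap is the one step with real content: showing that no adjacent swap with $+$ on the left and $-$ on the right is ever needed. You announce this as a case check but do not carry it out. Moreover, the fallback you offer if such a swap did occur is wrong. The extra term produced by that swap (the cap term in \eqref{hight}) is the diagram in which the two adjacent strands are joined above the boundary. For $a\neq b$ this is a single arc running from the far endpoint of $D(a)$ to the far endpoint of $D(b)$, keeping their original states. It is in general nonzero, it contains no returning arc with states $(+,+)$ or $(-,-)$, and \eqref{arc} does not apply to it. So in that situation your argument would not show that the two products are proportional.

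What is missing is a positional fact that rules this case out.
\begin{itemize}
\item The only endpoint with state $-$ comes from $X_{\hat e}=D(e)(+,-)$. This is the bad arc at the puncture $p$ around which the corner arc $D(e)$ turns, and its $-$ endpoint lies on the boundary edge leaving $p$.
\item $D(e)$ is the innermost strand around $p$. For the pair $\{e,\hat e\}$, choose the projection in which $D(e)(+,-)$ is the inner of the two parallel copies.
\item Hence no endpoint of the other factor lies between this $-$ endpoint and $p$. So the $-$ endpoint is the first endpoint on its edge in the positive direction, i.e.\ the lowest in the positive order.
\item Different $\hat e$ give bad arcs at different punctures, so for $a\neq b$ each edge carries at most one $-$.
\end{itemize}
Now bring either $X_aX_b$ or $X_bX_a$ to the positively ordered diagram $D(a)\cup D(b)$. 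Every adjacent swap involves either two $+$ states or has the $-$ state on the left. These are exactly the two cases of Lemma~\ref{B-height-ex}, each costs $q^{\pm 1/2}$, and no cap term ever appears.

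This also covers a boundary component with a single puncture. There both endpoints of $X_{\hat e}$ lie on one edge, with the $-$ first and the $+$ last. With this fact in hand, your exponent count gives $X_aX_b=q^{\frac12\bar P(a,b)}X_bX_a$.
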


Note that our $\bar P(a,b)$ equals $-\bar P(a,b)$ in \cite{LY22}.

For each puncture $p$, there exists
a unique boundary edge $e(p)\in\lambda$ such that $D(e(p))$ is the corner arc surrounding the puncture $p$. 
Let $I$ be a subset of punctures of $\fS$.
Define the anti-symmetric matrix
$$P\colon \lambda_I:=\widehat\lambda\setminus \{\widehat{e(p)}\mid p\in I\}\; \times \; \lambda_I
\rightarrow\mathbb Z$$
to be the restriction of $\bar P$. 
Define the quantum torus
\begin{equation*}
\mathbb T_\omega(\fS,\lambda;I) = R \langle 
x_a^{\pm 1}, a \in \lambda_I \rangle \Big/ (
x_a 
x_b= q^{\frac{1}{2} P(a,b)} 
x_b x_a \text{ for } a,b\in \lambda_I ).
\end{equation*}

Lemma~\ref{lem-B-XXX} yields a well-defined algebra homomorphism
\[
\iota : \mathbb T_\omega^+(\fS,\lambda;I) \to \mathcal{S}_\omega^I(\fS), \qquad \iota(x_e)=X_e.
\]

\begin{lemma}\label{B-lem-inj-iota}
The algebra homomorphism
    $\iota : \mathbb T_\omega^+(\fS,\lambda;I) \to \mathcal{S}_\omega^I(\fS)$ is injective.
\end{lemma}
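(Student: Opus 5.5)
The plan is to show that the images $\iota(M)$ of the Weyl-ordered monomials $M\in\mathbb T_\omega^+(\fS,\lambda;I)$ are linearly independent in $\mathcal S_\omega^I(\fS)$. We do this by a leading-term argument with respect to the basis $B_I(\fS)$ of Lemma~\ref{B-lem-basis}.

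First, fix a monomial $x^{\mathbf k}=[\prod_{a\in\lambda_I}x_a^{k_a}]$ with $\mathbf k\in\mathbb N^{\lambda_I}$. Its image is, up to a power of $\omega$, the product $\prod_a X_a^{k_a}$. Each $X_e$ for $e\in\lambda$, and each $X_{\hat e}$, is a corner arc $D(e)$ with states $(+,+)$ or $(+,-)$, and the arcs $D(a)$ for distinct $a\in\widehat\lambda$ can be isotoped to be pairwise disjoint. Hence the product is, up to an $\omega$-power and height reordering, the simple diagram consisting of $k_a$ parallel copies of $D(a)$ for each $a$.

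Second, we reorder heights on each boundary edge into the positive order using the height exchange relations (Lemma~\ref{B-height-ex}) and relation \eqref{hight}. The key point to check is that every correction term coming from \eqref{hight} replaces two endpoints by a returning arc. Such a term is either a trivial-arc contribution that is killed or scaled by \eqref{arc}, or it has strictly fewer boundary endpoints. Thus, modulo $D_{k-1}(\mathcal S_\omega^I(\fS))$ with $k=d(\alpha_{\mathbf k})$, the element $\iota(x^{\mathbf k})$ equals a unit multiple of a single basis element $\alpha_{\mathbf k}\in B(\fS)$. This $\alpha_{\mathbf k}$ is the multicurve with $k_a$ parallel copies of $D(a)$, increasingly stated: on each boundary edge the $-$ states of the $X_{\hat e}$ endpoints come before the $+$ states.

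We must also check that $\alpha_{\mathbf k}\in B_I(\fS)$. The only bad arc that could occur is $C_p=D(e(p))(+,-)$, which corresponds to $X_{\widehat{e(p)}}$. That variable is excluded from $\lambda_I$ for $p\in I$, so $\alpha_{\mathbf k}$ contains no $C_p$ with $p\in I$, and its image in $\mathcal S^I_\omega(\fS)$ is a nonzero basis element.

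Third, the assignment $\mathbf k\mapsto\alpha_{\mathbf k}$ is injective. From the multicurve $\alpha_{\mathbf k}$ one recovers how many parallel arcs are isotopic to each $D(e)$, and, from the state pattern of each corner arc, how many of them are $(+,+)$ versus $(+,-)$. Interior edges $e\in\lambda$ give non-corner arcs $D(e)$, which are distinct isotopy classes, so their multiplicities are recovered directly.

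Now suppose $\sum c_{\mathbf k}\iota(x^{\mathbf k})=0$ with some $c_{\mathbf k}\neq 0$. Choose the maximal $d$-value among the $\mathbf k$ with nonzero coefficient and project to $G_d(\mathcal S_\omega^I(\fS))$. The leading terms $\alpha_{\mathbf k}$ are distinct basis elements, so all of these coefficients vanish, which is a contradiction. Hence $\iota$ is injective.

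The main obstacle is the second step. We must verify carefully that reordering the states to increasing order produces only lower-filtration terms, rather than terms with the same number of endpoints. This needs the order of the $\pm$ states at the corners. If this fails, the fallback is to use the splitting embedding $\Theta_e$ (Lemma~\ref{lem-domain4}) along all interior edges to reduce to triangles, where the computation is explicit.
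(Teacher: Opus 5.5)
Your proof is correct, and the basis element you isolate is the paper's: your $\alpha_{\mathbf k}$ is exactly $D(C_{\mathbf k};\mathbf k)$. The difference is in how you get from $\iota(x^{\mathbf k})$ to it.

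The paper, following \cite{LY22}, proves the exact identity $\iota(x^{\mathbf k})\overset{\omega}{=}D(C_{\mathbf k};\mathbf k)$ using only the height exchange relations of Lemma~\ref{B-height-ex}. Draw the $(+,-)$ copies of each corner arc innermost at their puncture. Then the only $-$ endpoints on a boundary edge occupy its first positions. So the one configuration Lemma~\ref{B-height-ex} does not cover (a $+$ endpoint immediately preceding a $-$ endpoint in the positive order) never has to be swapped, and no correction terms arise. Injectivity is then immediate, because $\mathbf k\mapsto D(C_{\mathbf k};\mathbf k)$ is an injection into the basis $B_I(\fS)$.

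You prove the identity only modulo $D_{d-1}(\mathcal S^I_\omega(\fS))$ and conclude by unitriangularity in $G_d$. This costs you the standard fact that a stated diagram with at most $d$ endpoints lies in $D_d$. In exchange you need no case check: modulo $D_{d-1}$, every height or state swap is multiplication by a unit, since the correction term in \eqref{hight} (or the cap--cup smoothing of a crossing near the boundary) has two fewer endpoints. So the ``main obstacle'' you flag is not real, and the splitting fallback is unnecessary.

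The paper's route gives the stronger statement that $\iota$ sends the monomial basis to unit multiples of distinct elements of $B_I(\fS)$. Yours gives only triangularity with respect to the endpoint filtration, which is all injectivity needs.

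A minor slip: for interior $e\in\lambda$, the arc $D(e)$ is not a corner arc, and there are extra factors $q^{\pm 1/4}$ when the endpoints of $e$ coincide. Neither affects the argument.
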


\begin{proof}
Here we use the technique in the proof of \cite[Theorem 4.1]{LY22}.

It suffices to prove that \( \iota \) maps the \( R\)-basis \( \{x^{\mathbf{k}} \mid \mathbf{k} \in \mathbb{N}^{\lambda_I} \} \) of \(\mathbb T_\omega^+(\fS,\lambda;I) \) injectively into a subset of an \( R\)-basis of \( \mathcal{S}_\omega^I(S) \).

Recall that we use $\partial\lambda$ to denote the set of all the boundary edges in $\lambda$. Define $\mathring{\lambda}=\lambda\setminus\partial\lambda$.
For \( \mathbf{k} \in \mathbb{N}^{\lambda_I} \), define \( \mathbf{k}' \in \mathbb{N}^\lambda \) by
\[
\mathbf{k}'(e) =
\begin{cases}
\mathbf{k}(a), & a \in \mathring{\lambda}\sqcup\{e(p)\mid p\in I\},\\
\mathbf{k}(a) + \mathbf{k}(\hat{a}), & a \in \partial\lambda\setminus \{e(p)\mid p\in I\}.
\end{cases}
\]
Let \( C_{\mathbf{k}} \) be the ideal multiarc consisting of \( \mathbf{k}'(a) \) parallel copies of \( a \) for each \( a \in \lambda \). Let \( D(C_{\bf k};\mathbf{k}) \) be the tangle diagram \( D(C_{\bf k}) \) equipped with increasing states such that, on each boundary edge \( a \in \partial\lambda\setminus\{e(p)\mid p\in I\} \), exactly \( \mathbf{k}(\hat{a}) \) endpoints carry negative states.

By the height exchange relation in Lemma~\ref{B-height-ex}, 
\begin{equation}\label{B-eq-iota-1}
     \iota(x^{\mathbf{k}})\overset{\omega}{=}  D(C_{\bf k};\mathbf{k}).
\end{equation}
Note that \( D(C_{\bf k};\mathbf{k}) \in B_I(\fS) \), and that \( D(C_{\mathbf{k}_1};\mathbf{k}_1) \neq D(C_{\mathbf{k}_2};\mathbf{k}_2) \) whenever \( \mathbf{k}_1 \neq \mathbf{k}_2 \). It follows that \( \iota \) is injective.
    
\end{proof}

Hence, we may identify \( \mathbb T_\omega^+(\fS,\lambda; I) \) with its image in \( \mathcal{S}_\omega^I(S) \) via the embedding \( \iota \).

By the same argument as in the proof of \cite[Lemma~4.6]{LY22}, we obtain the following.

\begin{lemma}\label{lem-domain1}
For any \( \alpha \in \mathcal{S}_\omega^I(S) \), there exists \( {\bf k} \in \mathbb{N}^{\lambda_I} \) such that both $x^{\bf k} \alpha$ and \(\alpha x^{\bf k} \) are contained in \(\mathbb T_\omega^+(\fS,\lambda; I) \).
Moreover, one can choose \( {\bf k} \) so that \( {\bf k}(\hat{a})=0 \) for all \( a \in \partial\lambda\setminus\{e(p)\mid p\in I\} \).
\end{lemma}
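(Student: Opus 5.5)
The plan is to mimic the proof of \cite[Lemma~4.6]{LY22}, working inside the basis $B_I(\fS)$ of Lemma~\ref{B-lem-basis}. Since $\mathbb T_\omega^+(\fS,\lambda;I)$ is closed under multiplication, and $x^{\bf k}x^{\bf k'}\overset{\omega}{=}x^{{\bf k}+{\bf k}'}$, it suffices to treat $\alpha$ a single basis element of $B_I(\fS)$. We can then take the maximum of the resulting vectors over the finitely many basis elements appearing in a general $\alpha$: if $x^{\bf k}\beta\in\mathbb T^+$, then $x^{{\bf k}+{\bf m}}\beta\overset{\omega}{=}x^{\bf m}x^{\bf k}\beta\in\mathbb T^+$ for ${\bf m}\in\mathbb N^{\lambda_I}$. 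The same reasoning handles multiplication on the right.

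For a single $\alpha\in B_I(\fS)$, we place $\alpha$ in minimal position with respect to the ideal arcs of $\lambda$. We choose ${\bf k}$ supported only on $\lambda$ (not on the hatted copies), which gives the ``moreover'' clause automatically. Concretely, ${\bf k}(e)$ will be a sufficiently large multiple of the geometric intersection number of $\alpha$ with $e$, together with a contribution from the boundary edges adjacent to the endpoints of $\alpha$.

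The key local computation is the product $X_e\cdot\alpha$, where $X_e=D(e)(+,+)$ is pushed near the punctures. We resolve the crossings of $D(e)$ with $\alpha$ using \eqref{cross}. In each resolution, the arc $D(e)$ with both states $+$ fuses with a strand of $\alpha$. After repeated multiplication by all $X_e$, each strand of $\alpha$ gets cut near every puncture it passes, and every term becomes a product of corner-type and edge-type arcs, that is, monomials in the $X_a$ and $X_{\hat a}$ up to powers of $\omega$.

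Endpoints of $\alpha$ on a boundary edge carrying state $-$ are handled in one of two ways. Where possible we use height exchange (Lemma~\ref{B-height-ex}) and \eqref{hight} to move them into $X_{\hat a}$ factors. At punctures $p\in I$ we instead use Lemma~\ref{B-lem-prod-1}: the bad arc vanishes there and $C_p(+)C_p(-)=1$, which lets us absorb the $-$ states into $X_{e(p)}$ without needing $\widehat{e(p)}$. Loops of $\alpha$ are treated the same way, by cutting them along every arc they cross.

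The main obstacle is showing that every term produced by these resolutions, not just the leading one, lands in $\mathbb T^+$, and doing so without using the hatted generators. I would first try the filtration argument of \cite{LY22}: order terms by the number of endpoints $d$ from \eqref{ap-B-d} and the intersection with $\lambda$, and show by induction that lower terms are already of the form $x^{\bf k''}$ times something handled earlier. Negative states at non-$I$ punctures are the delicate case. There I would check that $D(e)(+,-)$ arises only as a product of $X_a$ with an element already in $\mathbb T^+$, possibly after enlarging ${\bf k}$ further on adjacent edges.
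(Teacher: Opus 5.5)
Your reduction to a single element of $B_I(\fS)$ and the coordinatewise-maximum trick are fine. Appealing to \cite[Lemma~4.6]{LY22} is exactly what the paper does; its proof is just that citation.

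\textbf{The gap.} The step that carries all the content is not established. That step is: for some ${\bf k}$ supported on $\lambda$, every term of $x^{\bf k}\alpha$ and $\alpha x^{\bf k}$ lies in $\mathbb T_\omega^+(\fS,\lambda;I)$. You flag it yourself as the main obstacle, and the heuristic you offer for it is not correct as stated.
\begin{itemize}
\item Resolving the crossings of $D(e)$ with $\alpha$ by \eqref{cross} produces stated multicurves that in general still cross other edges of $\lambda$.
\item Multiplication does not cut strands; cutting is what the splitting map $\Theta_e$ does.
\item Even the stated corner arcs $D(e)(-,\pm)$ are not monomials in the $X_a, X_{\hat a}$.
\end{itemize}
So \emph{every term becomes a product of corner-type and edge-type arcs} merely restates the goal, and the filtration induction is only announced, not carried out.

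You also misplace the difficulty. For $a\in\partial\lambda\setminus\{e(p)\mid p\in I\}$, the element $X_{\hat a}=D(a)(+,-)$ is itself a generator of $\mathbb T_\omega^+(\fS,\lambda;I)$, so negative states at punctures outside $I$ are harmless. The condition ${\bf k}(\hat a)=0$ restricts only the multiplier $x^{\bf k}$, not the product.

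\textbf{How to close it.} No new skein computation is needed once the case $I=\emptyset$ with ${\bf k}$ supported on $\lambda$ is available; this is the situation of \cite[Lemma~4.6]{LY22}.
\begin{enumerate}
\item Given $\alpha\in\mathcal S_\omega^I(\fS)$, lift it to $\tilde\alpha\in\mathcal S_\omega(\fS)$.
\item Choose ${\bf k}\in\mathbb N^{\lambda}$ with $x^{\bf k}\tilde\alpha,\ \tilde\alpha x^{\bf k}\in\mathbb T_\omega^+(\fS,\lambda;\emptyset)$.
\item Apply the quotient map $\pi\colon\mathcal S_\omega(\fS)\to\mathcal S_\omega^I(\fS)$. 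For $p\in I$ we have $X_{\widehat{e(p)}}\overset{\omega}{=}C_p$ (Figure~\ref{fig:Dee}), which is $0$ in $\mathcal S_\omega^I(\fS)$. Hence every monomial containing some $X_{\widehat{e(p)}}$ with $p\in I$ is killed, and $\pi$ maps $\mathbb T_\omega^+(\fS,\lambda;\emptyset)$ onto $\mathbb T_\omega^+(\fS,\lambda;I)$.
\item Since ${\bf k}$ is supported on $\lambda\subset\lambda_I$, we have $\pi(x^{\bf k})=x^{\bf k}$.
\end{enumerate}
This gives the lemma, including the \emph{moreover} clause, for every $I$. Lemma~\ref{B-lem-prod-1} is not needed at this point.
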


Let $p\in I$. Recall that $C_p(+)$ and $C_p(-)$ are stated arcs illustrated in Figure~\ref{fig:badarc-sl2}.

\begin{lemma}\label{lem-B-Xab}
 Let $p\in I$, and let $\alpha\in B_I(\fS)$.
 There exists a nonnegative integer $k$ such that 
 $X_{e(p)}^k\alpha\overset{\omega}{=} \beta\in \mathcal S_\omega^I(\fS)$, where $\beta\in B_I(\fS)$ contains no copy of $C_p(-)$.
\end{lemma}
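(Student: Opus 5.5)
We argue by induction on the number $m$ of copies of $C_p(-)$ contained in $\alpha$: we multiply by $X_{e(p)}$ one copy at a time. If $m=0$, take $k=0$ and $\beta=\alpha$. The geometric setup is as follows. $X_{e(p)}$ is, up to a power of $q^{1/4}$, the corner arc $D(e(p))(+,+)=C_p(+)$ surrounding $p$. Every copy of $C_p(-)$ in $\alpha$ is parallel to this corner arc, and since $\alpha$ is a multicurve these copies are nested near $p$. Because $\alpha\in B_I(\fS)$ is increasingly stated and positively ordered, and $C_p(\pm)$ is the innermost kind of component near $p$, we may isotope $X_{e(p)}$ so that it sits in a small neighborhood of $p$ disjoint from the other components of $\alpha$. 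It then lies adjacent to an outermost copy of $C_p(-)$, or to an innermost one, depending on which side gives consistent heights.

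For the inductive step, assume $m\ge 1$. We first use the height exchange relations (Lemma~\ref{B-height-ex}) to move the endpoints of $X_{e(p)}$ past the endpoints of the other components on the two boundary edges at $p$. Each exchange costs only a power of $q^{1/2}$, so $X_{e(p)}\alpha\overset{\omega}{=}(\alpha\setminus c)\sqcup\bigl(C_p(+)C_p(-)\bigr)$, where $c$ is one chosen copy of $C_p(-)$ and the two arcs $C_p(+),C_p(-)$ sit stacked next to each other near $p$. Next we apply Lemma~\ref{B-lem-prod-1}, which gives $C_p(+)C_p(-)=1$ in $\mathcal S^I_\omega(\fS)$ since $p\in I$. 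This is a local identity: the neighborhood of $p$ containing these two arcs is embedded, and the relations are local, so the identity holds inside the larger diagram. The result is that $X_{e(p)}\alpha\overset{\omega}{=}\alpha'$, where $\alpha'$ is $\alpha$ with one copy of $C_p(-)$ removed. Finally we check that $\alpha'$, after reordering by height exchange (again only costing a power of $\omega$), is still increasingly stated and positively ordered and contains no bad arc $C_q$ for $q\in I$, so $\alpha'\in B_I(\fS)$ with $m-1$ copies. Induction then gives $X_{e(p)}^{m}\alpha\overset{\omega}{=}\beta$, so $k=m$ works.

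The main obstacle is the height and ordering bookkeeping needed to bring $X_{e(p)}$ adjacent to a copy of $C_p(-)$ in exactly the configuration to which Lemma~\ref{B-lem-prod-1} applies. The two arcs must be adjacent on both boundary edges, in the correct stacking order $C_p(+)$ above $C_p(-)$ or the reverse, and both orders are covered by the lemma. At the same time, the height exchanges must stay in the monomial regime. Lemma~\ref{B-height-ex} only covers pairs of states such as $(v,+)$ and $(-,v)$, so exchanging a $-$ endpoint past a $+$ endpoint in the wrong direction would produce the extra term from \eqref{hight}. To avoid this, I would place $X_{e(p)}$, whose endpoints both carry $+$, at the top of the height order on both edges. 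Since the multicurve is increasingly stated, the $+$ endpoints already lie at the top, so only $(v,+)$-type exchanges are needed. I would then verify that the innermost copy of $C_p(-)$ becomes adjacent after these moves. A possible complication arises when $e(p)$ has coinciding endpoints; in that case the scalar factor $q^{\pm1/4}$ is absorbed into $\overset{\omega}{=}$.
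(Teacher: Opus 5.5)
Your overall strategy is the paper's: use height exchanges to isolate the copies of $C_p(-)$, then combine $X_{e(p)}\overset{\omega}{=}C_p(+)$ with Lemma~\ref{B-lem-prod-1}. However, the height bookkeeping you propose does not work as stated, and you correctly identified it as the crux.

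Let $c$ be the boundary edge at $p$ whose positive orientation points away from $p$. On $c$, positions nearest $p$ come first in the positive direction, so they are lowest in the positive order. Hence in $\alpha$ the copies of $C_p(-)$ have the lowest endpoints on $c$. In $X_{e(p)}\alpha$, with $X_{e(p)}$ drawn innermost, the $+$-endpoint of $X_{e(p)}$ precedes all of them in position but is highest in height. Being increasingly stated puts $+$ states at the positive end of $c$, which is far from $p$. So it is not true that this $+$-endpoint ``already lies at the top'' in the sense you need.

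Lowering it means exchanging it with the endpoints that follow it. The first of these are the $-$-endpoints of the other copies of $C_p(-)$, plus any other $-$-states on $c$. A pair with $+$ first and $-$ second is exactly the case Lemma~\ref{B-height-ex} does not cover; it covers only ($v$ first, $+$ second) and ($-$ first, $v$ second). So your exchanges are of type $(+,v)$, not $(v,+)$, and they produce non-monomial terms as soon as $m\ge 2$ or $c$ carries another $-$-state. Placing $X_{e(p)}$ outside the copies instead only moves the same problem to the other edge at $p$.

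The repair is to move the other endpoints. On $c$, the $-$-endpoints of the copies of $C_p(-)$ precede everything else. Raising them to the top therefore uses only ($-$ first, $v$ second) exchanges, which are monomial. On the other edge at $p$ nothing needs to move, because there the innermost slots are already the highest. This gives
\[
\alpha\overset{\omega}{=}C_p(-)^k\beta, \qquad \beta\in B_I(\fS) \text{ containing no } C_p(-).
\]
Then
\[
X_{e(p)}^k\alpha\overset{\omega}{=}C_p(+)^kC_p(-)^k\beta=\beta
\]
by applying Lemma~\ref{B-lem-prod-1} $k$ times in the algebra $\mathcal S^I_\omega(\fS)$. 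This is exactly the paper's proof.

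Working with this product factorization also settles your ``local identity'' step. The identity $C_p(+)C_p(-)=1$ holds in a quotient by a two-sided ideal; it is not a defining skein relation. It can therefore be applied to a subdiagram only when that subdiagram is a product factor, and the factorization is what guarantees this.
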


\begin{proof}

Suppose that $\alpha$ contains $k$ copies of $C_p(-)$. 
Lemma~\ref{B-height-ex} implies
$\alpha\overset{\omega}{=} C_p(-)^k\beta$, where 
$\beta\in B_I(\fS)$ contains no copy of $C_p(-)$.
The definition of $X_{e(p)} $ implies $X_{e(p)} \overset{\omega}{=}
C_p(+)$. Then
Lemma~\ref{B-lem-prod-1} shows 
$X_{e(p)}^k\alpha\overset{\omega}{=} \beta\in \mathcal S_\omega^I(\fS)$.
\end{proof}

\begin{lemma}\label{lem-B-divisor}
For each puncture $p$ in $\partial\overline{\fS}$, the element 
$X_{e(p)} \in \mathcal S_\omega^I(\fS)$ is not a zero divisor.
\end{lemma}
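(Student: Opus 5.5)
We argue with a leading-term filtration on the basis $B_I(\fS)$ of Lemma~\ref{B-lem-basis}, treating left and right multiplication separately. Let $x=\sum_{\alpha\in S} c_\alpha\alpha\neq 0$ with $S\subset B_I(\fS)$ finite and $c_\alpha\neq 0$. We must show $X_{e(p)}x\neq 0$ and $x X_{e(p)}\neq 0$.

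The first step is to describe $X_{e(p)}\alpha$ for a single $\alpha\in B_I(\fS)$. Up to a power of $q$, $X_{e(p)}$ is the corner arc $D(e(p))$ around $p$ with both states $+$. We isotope it into a small neighborhood of $p$ and multiply it with $\alpha$ using the height exchange relations (Lemma~\ref{B-height-ex}) and the reordering relation \eqref{hight}.
\begin{itemize}
\item If $p\notin I$, the corner arc is disjoint from $\alpha$ in the projection. Reordering endpoints on the two boundary edges adjacent to $p$ yields $\alpha\sqcup D(e(p))(+,+)$, up to a power of $\omega$, plus lower terms with strictly smaller $d$ in the sense of \eqref{ap-B-d}.
\item If $p\in I$, we use Lemma~\ref{B-lem-prod-1}: $X_{e(p)}\overset{\omega}{=}C_p(+)$ is invertible modulo $(C_p)$, with inverse $C_p(-)$ up to a scalar. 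In this case the claim is immediate, since multiplication by an invertible element is injective.
\end{itemize}
So the only real work is the case $p\notin I$.

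For $p\notin I$, we use the filtration $D_k(\mathcal S^I_\omega(\fS))$ from \eqref{ap-B-Dk}. Write $x$ with top degree $k=\max_{\alpha\in S} d(\alpha)$. By the previous step,
\[
X_{e(p)}x\equiv \sum_{d(\alpha)=k}\omega^{m_\alpha} c_\alpha\,(\alpha\sqcup D(e(p))(+,+)) \pmod{D_{k+1}},
\]
which lies in $G_{k+2}$. The map $\alpha\mapsto \alpha\sqcup D(e(p))(+,+)$ is injective on $B_I(\fS)$. Indeed, the resulting multicurve is still increasingly stated and positively ordered. The new endpoints sit extreme on each of the two edges, and state $+$ is the largest state, so no reordering out of $B_I$ is needed. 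Moreover, $\alpha$ is recovered by deleting one outermost copy of the corner arc at $p$. The leading coefficients are therefore nonzero multiples of $c_\alpha$ on distinct basis elements, which gives $X_{e(p)}x\neq 0$. The right multiplication $xX_{e(p)}$ is handled the same way: the relevant height exchange again only produces powers of $q$ plus lower-degree terms.

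The main obstacle is checking that the non-leading terms produced when moving the $+,+$ endpoints past those of $\alpha$ genuinely drop in $d$. This is where relation \eqref{hight} enters, since it trades an $(-,+)$ pair for a $(+,-)$ pair plus a turnback term. We must confirm that these terms, once reduced to the basis via \eqref{arc}, lie in $D_{k+1}$. The point is that the $+$ states are maximal, so Lemma~\ref{B-height-ex} applies with only a scalar factor; any genuine correction comes from a cap that is removed by \eqref{arc}, and this lowers the endpoint count by two. If this check turns out to be delicate, the fallback is the approach of Lemma~\ref{lem-domain1}. We multiply by a monomial $x^{\mathbf k}$ so that the element lands in the quantum torus $\mathbb T^+_\omega(\fS,\lambda;I)$, which embeds by Lemma~\ref{B-lem-inj-iota}. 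There $X_{e(p)}=x_{e(p)}$ is a unit of the ambient torus and hence a non-zero-divisor. Since $x^{\mathbf k}$ is quasi-commuting with $X_{e(p)}$, this yields the claim for $x$ as well.
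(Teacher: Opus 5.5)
Your treatment of $p\in I$ is fine, but for $p\notin I$ the leading-term step fails as stated. Let $c$ be the boundary edge at $p$ underlying $e(p)$, and let $p'$ be its other endpoint. On $c$, the endpoint of the new corner arc is the \emph{first} point in the positive order, yet it is the highest. So you must move a higher $+$ endpoint sitting on the left past $\alpha$'s endpoints on $c$.

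Lemma~\ref{B-height-ex} gives a pure scalar only when the right endpoint is $+$ or the left endpoint is $-$. Against a $-$ endpoint of $\alpha$, the leading term has the two states swapped between the strands. In particular, $\alpha\sqcup D(e(p))(+,+)$ is not increasingly stated, contrary to your claim that $+$ sits where the largest state belongs.

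After re-sorting via \eqref{hight}, the top-degree part of $X_{e(p)}\alpha$ is a nonzero multiple of a diagram $D(\alpha,p)$. This is the multicurve $\alpha\sqcup D(e(p))$ whose sorted list of states on $c$ is $\alpha$'s list with one extra $+$ appended. Thus the new corner arc receives $\alpha$'s smallest state on $c$, and each strand of $\alpha$ ending on $c$ receives the next state in the list. The assignment $\alpha\mapsto D(\alpha,p)$ is still injective. However, $D(\alpha,p)$ need not lie in $B_I(\fS)$: a copy of $C_{p'}(-)$ in $\alpha$ becomes the bad arc $C_{p'}$. For instance, $X_{e(p)}C_{p'}(-)$ is a nonzero multiple of $C_p\sqcup C_{p'}$ plus a term with two fewer endpoints. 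So when $p'\in I$, its top-degree component vanishes in $\mathcal S^I_\omega(\fS)$, and comparing leading coefficients proves nothing.

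This is exactly the case the paper treats separately. When $p'\notin I$, your argument works once the leading term is corrected to $D(\alpha,p)$. When $p'\in I$, the paper first multiplies by a power of $X_{e(p')}$, which is invertible in $\mathcal S^I_\omega(\fS)$ by Lemma~\ref{B-lem-prod-1}. This strips every copy of $C_{p'}(-)$ from the basis elements (Lemma~\ref{B-lem-Xab}). The paper then uses that $X_{e(p)}$ and $X_{e(p')}$ quasi-commute, and only afterwards compares leading terms, which now lie in $B_I(\fS)$.

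Your fallback does not close the gap. After passing to $x^{\mathbf k}x\in\mathbb T^+_\omega(\fS,\lambda;I)$ you still have to cancel $x^{\mathbf k}$. That requires monomials in the $X_a$ to be non-zero-divisors in $\mathcal S^I_\omega(\fS)$, which is a strengthening of the statement being proved. Indeed, the paper uses the present lemma precisely for that cancellation in Lemma~\ref{lem-domain2}.
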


\begin{proof}
If $p \in I$, then Lemma~\ref{B-lem-prod-1} implies that 
$X_{e(p)}$ is invertible. In particular, $X_{e(p)}$ is not a zero divisor.

Henceforth, assume that $p \notin I$. 
We only prove that $X_{e(p)}$ is not a left zero divisor, since the same argument also proves that it is not a right zero divisor.

Let $p'$ be the puncture, distinct from $p$, connected to $p$ by the boundary edge $e(p)$. 
For any $\alpha \in B_I(\fS)$, there is a unique way to resolve all crossings in the product 
$X_{e(p)}\alpha$ such that the resulting crossingless stated tangle diagram contains no trivial arc. 
Denote this stated tangle diagram by $D(\alpha,p)$. Then
\begin{equation}\label{B-eq-Xe-leading}
    X_{e(p)}\alpha \overset{\omega}{=} D(\alpha,p)
    + D_{d-1}(\mathcal S_\omega^I(\fS))\qquad\text{(see \eqref{ap-B-Dk})},
\end{equation}
where $d=d(\alpha)+2$ (see \eqref{ap-B-d}).

Since $p \notin I$, we have
$D(\alpha,p)\in B_{I\setminus\{p'\}}(\fS)$. Note that $D(\alpha,p)$ uniquely determines $\alpha$. Hence
\begin{equation}\label{B-eq-Xe-leading1}
    D(\alpha,p)\neq D(\alpha',p)
    \quad \Longrightarrow \quad
    \alpha\neq \alpha'
\end{equation}
for any $\alpha,\alpha'\in B_I(\fS)$.

We now distinguish two cases.

\noindent
\textbf{Case 1:} $p'\notin I$.
In this case, $D(\alpha,p)\in B_I(\fS)$ for every $\alpha\in B_I(\fS)$. 
Together with \eqref{B-eq-Xe-leading} and \eqref{B-eq-Xe-leading1}, this implies that $X_{e(p)}$ is not a left zero divisor.

\noindent
\textbf{Case 2:} $p'\in I$.
Suppose
$X_{e(p)}\sum_{\alpha\in S} c_\alpha \alpha =0,$
where $S$ is a finite subset of $B_I(\fS)$ and $c_\alpha\in R$ for each $\alpha\in S$.

By Lemma~\ref{lem-B-Xab}, there exists a nonnegative integer $k$ such that
\[
X_{e(p')}^k \sum_{\alpha\in S} c_\alpha \alpha
=
\sum_{\alpha\in S} c_\alpha' \alpha',
\]
where each $c_\alpha' \overset{\omega}{=} c_\alpha$ and each $\alpha'$ contains no copy of $C_{p'}(-)$.

Since $\alpha'$ contains no $C_{p'}(-)$, we have
$D(\alpha',p)\in B_I(\fS).$
Moreover, since
$X_{e(p)}X_{e(p')}
\overset{\omega}{=}
X_{e(p')}X_{e(p)},$
it follows that
$X_{e(p)}
\sum_{\alpha\in S} c_\alpha' \alpha'
=0.$

Applying \eqref{B-eq-Xe-leading} and \eqref{B-eq-Xe-leading1}, we obtain
$c_\alpha'=0
\text{ for all } \alpha\in S.$
Hence $c_\alpha=0$ for all $\alpha\in S$. Therefore $X_{e(p)}$ is not a left zero divisor.

This completes the proof.
\end{proof}

We have the following.

\begin{lemma}\label{lem-domain2}
Let $I$ be a subset of punctures of $\mathbb P_3$. Then
    $\mathcal{S}_\omega^I(\mathbb P_3)$ is a domain.
\end{lemma}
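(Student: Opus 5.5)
We embed $\mathcal S_\omega^I(\mathbb P_3)$ into a quantum torus, which is a domain, after localizing at a suitable set of non-zero-divisors. Fix the unique triangulation $\lambda$ of $\mathbb P_3$, consisting of its three boundary edges; there are no interior edges. Then $\widehat\lambda=\lambda\sqcup\partial\widehat\lambda$ has six elements and $\lambda_I=\widehat\lambda\setminus\{\widehat{e(p)}\mid p\in I\}$. By Lemma~\ref{B-lem-inj-iota} we identify $\mathbb T_\omega^+(\mathbb P_3,\lambda;I)$ with its image in $\mathcal S_\omega^I(\mathbb P_3)$. This image is a quantum affine space with $q$-commuting generators $X_a$, hence a domain, and it sits inside the quantum torus $\mathbb T_\omega(\mathbb P_3,\lambda;I)$.

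The key step is to produce a multiplicatively closed set of non-zero-divisors that lands everything in the torus. By Lemma~\ref{lem-domain1}, for every $\alpha\in\mathcal S_\omega^I(\mathbb P_3)$ there is ${\bf k}\in\mathbb N^{\lambda_I}$ with ${\bf k}(\hat a)=0$ for all $a\in\partial\lambda\setminus\{e(p)\mid p\in I\}$ such that $x^{\bf k}\alpha\in\mathbb T_\omega^+$. Since ${\bf k}$ vanishes on the hatted entries, $x^{\bf k}$ is, up to a power of $\omega$, a product of the elements $X_{e}$ with $e\in\lambda$. Every boundary edge of $\mathbb P_3$ has the form $e(p)$ for a boundary puncture $p$, so Lemma~\ref{lem-B-divisor} shows that each $X_e$ is not a zero divisor. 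The $X_e$ pairwise $q$-commute by Lemma~\ref{lem-B-XXX}, so the set $\mathcal M=\{x^{\bf k}\}$ of monomials in them is multiplicatively closed up to units $\omega^{\pm 1/2}$ and consists of non-zero-divisors.

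Next we verify that $\mathcal M$ is an Ore set. For $m=X_e$ and $\alpha$, Lemma~\ref{lem-domain1} gives $x^{\bf k}$ with $\alpha x^{\bf k}\in\mathbb T^+$. In the quantum torus we can then write $\alpha x^{\bf k}X_e=X_e\,y$ for some element $y$ of the torus, and after multiplying by a further monomial we may take $y\in\mathbb T^+\subset\mathcal S$. This shows that the Ore conditions hold. The main obstacle is this Ore/compatibility check. A simpler route avoids it entirely: suppose $\alpha\beta=0$ with $\beta\neq0$. Choose ${\bf k}$ with $x^{\bf k}\alpha\in\mathbb T^+$ and ${\bf l}$ with $\beta x^{\bf l}\in\mathbb T^+$, both nonzero because the monomials are non-zero-divisors. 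Then $(x^{\bf k}\alpha)(\beta x^{\bf l})=x^{\bf k}(\alpha\beta)x^{\bf l}=0$ in the domain $\mathbb T^+$, which forces $x^{\bf k}\alpha=0$ and hence $\alpha=0$. This argument needs only Lemmas~\ref{lem-domain1}, \ref{lem-B-divisor}, \ref{B-lem-inj-iota} together with $\mathbb T^+$ being a domain, and I would use it.

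The remaining care point is that the monomial produced by Lemma~\ref{lem-domain1} really is a product of $X_e$ with $e\in\lambda$ only. This is exactly the "moreover" clause of that lemma, together with the fact that when $p\in I$ the element $X_{\widehat{e(p)}}$ is excluded from $\lambda_I$ while $X_{e(p)}$ is invertible by Lemma~\ref{B-lem-prod-1}.
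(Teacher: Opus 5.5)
The argument you settle on (your ``simpler route'') is correct and is essentially the paper's proof. You clear denominators on both sides via Lemma~\ref{lem-domain1}, use that $\mathbb T_\omega^+(\mathbb P_3;I)$ is a domain, and cancel the monomials in $X_{e_1},X_{e_2},X_{e_3}$ using Lemma~\ref{lem-B-divisor}; the Ore-localization paragraph is unnecessary and only sketched, but nothing in your final argument depends on it.
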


\begin{proof}
Suppose that $\alpha\beta=0$ for some 
$\alpha,\beta \in \mathcal{S}_\omega^I(\mathbb P_3)$.

Let $e_1$, $e_2$, and $e_3$ denote the three boundary edges of $\mathbb P_3$.
By Lemma~\ref{lem-domain1}, there exist nonnegative integers $k_1$, $k_2$, and $k_3$ such that
\[
X_{e_1}^{k_1} X_{e_2}^{k_2} X_{e_3}^{k_3}\alpha,
\quad
\beta X_{e_1}^{k_1} X_{e_2}^{k_2} X_{e_3}^{k_3}
\in \mathbb T_\omega^+(\mathbb P_3; I).
\]
Hence
\[
X_{e_1}^{k_1} X_{e_2}^{k_2} X_{e_3}^{k_3}
\alpha\beta
X_{e_1}^{k_1} X_{e_2}^{k_2} X_{e_3}^{k_3}
=0
\in \mathbb T_\omega^+(\mathbb P_3; I).
\]

Since $\mathbb T_\omega^+(\mathbb P_3; I)$ is a domain, it follows that
\[
X_{e_1}^{k_1} X_{e_2}^{k_2} X_{e_3}^{k_3}\alpha = 0
\qquad \text{or} \qquad
\beta X_{e_1}^{k_1} X_{e_2}^{k_2} X_{e_3}^{k_3}=0.
\]
By Lemma~\ref{lem-B-divisor}, we conclude that
\[
\alpha=0
\qquad \text{or} \qquad
\beta=0.
\]

Therefore, $\mathcal{S}_\omega^I(\mathbb P_3)$ is a domain.
\end{proof}

Since the triangulation of $\mathbb P_3$ is unique, we write $\mathbb T_\omega(\mathbb P_3; I)$ for $\mathbb T_\omega(\mathbb P_3, \lambda; I)$.

\begin{lemma}\label{lem-domain33}
Let $I$ be a set of punctures of $\mathbb P_3$.
    The algebra embedding
    $\iota : \mathbb T_\omega^+(\mathbb P_3;I) \to \mathcal{S}_\omega^I(\mathbb P_3)$  has a unique extension
    $\iota : \mathcal{S}_\omega^I(\mathbb P_3)  \to \mathbb T_\omega(\mathbb P_3;I)$ which is an algebra embedding.
\end{lemma}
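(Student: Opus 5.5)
The natural approach is an Ore-localization argument. By Lemma~\ref{lem-domain2}, $\mathcal S_\omega^I(\mathbb P_3)$ is a domain. By Lemma~\ref{lem-domain1}, the multiplicative set $\mathcal X$ of monomials $x^{\bf k}$, ${\bf k}\in\mathbb N^{\lambda_I}$ (identified with their images $\iota(x^{\bf k})$ up to powers of $\omega$), satisfies the following: for every $\alpha$ there is $x^{\bf k}\in\mathcal X$ with $x^{\bf k}\alpha$ and $\alpha x^{\bf k}$ in $\mathbb T_\omega^+(\mathbb P_3;I)$. By Lemma~\ref{lem-B-XXX}, the elements of $\mathcal X$ pairwise $q$-commute. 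So I would show that $\mathcal X$ is a (left and right) Ore set in $\mathcal S_\omega^I(\mathbb P_3)$ and that the localization $\mathcal S_\omega^I(\mathbb P_3)\mathcal X^{-1}$ equals $\mathbb T_\omega(\mathbb P_3;I)$. The map $\mathcal S_\omega^I(\mathbb P_3)\to \mathcal S_\omega^I(\mathbb P_3)\mathcal X^{-1}$ is injective because $\mathcal S_\omega^I(\mathbb P_3)$ is a domain (Lemma~\ref{lem-domain2}, or directly Lemma~\ref{lem-B-divisor}), so the extension is an embedding.

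\textbf{Construction.} Each $X_a$ is not a zero divisor (Lemma~\ref{lem-B-divisor}; note $X_{\hat e}$ is a bad arc $C_p(\pm)$-type element, which is invertible when $p\in I$ by Lemma~\ref{B-lem-prod-1} and otherwise handled by the same leading-term argument). Hence $\mathbb T_\omega^+(\mathbb P_3;I)\subset\mathcal S_\omega^I(\mathbb P_3)\subset \mathbb T_\omega^+(\mathbb P_3;I)\mathcal X^{-1}=\mathbb T_\omega(\mathbb P_3;I)$ inside a common skew-field, once the Ore property is established.

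\textbf{Ore condition.} Rather than verify the Ore condition abstractly, I would define $\iota(\alpha):=(x^{\bf k})^{-1}\,\iota^{-1}(x^{\bf k}\alpha)\in\mathbb T_\omega(\mathbb P_3;I)$, with ${\bf k}$ from Lemma~\ref{lem-domain1}. Well-definedness (independence of ${\bf k}$) follows from the $q$-commutativity of monomials and the injectivity of $\iota$ on $\mathbb T^+$ (Lemma~\ref{B-lem-inj-iota}). Additivity is clear after taking a common ${\bf k}$. Multiplicativity is the key point: for $\alpha,\beta$, choose ${\bf k}$ working for both. Then $x^{\bf k}\alpha\beta x^{\bf k} = (x^{\bf k}\alpha)(\beta x^{\bf k})$ lies in $\mathbb T^+$, which gives $\iota(\alpha\beta)=\iota(\alpha)\iota(\beta)$ after moving monomials using $q$-commutation, since conjugation by $x^{\bf k}$ preserves $\mathcal S_\omega^I$ up to scalars on $q$-commuting generators. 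I expect this step to be the main obstacle, because conjugation $x^{\bf k}\alpha x^{-\bf k}$ need not stay in $\mathcal S^I_\omega$ a priori. I would address it by noting that the $X_a$ are corner/boundary arcs, so by the height exchange relations (Lemma~\ref{B-height-ex}, Lemma~\ref{lem-Cp}) each basis element of $B_I(\mathbb P_3)$ $q$-commutes with each $X_a$. Conjugation therefore acts diagonally on the basis $B_I$ by powers of $\omega$.

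\textbf{Conclusion.} Injectivity of the extension then follows: $\iota(\alpha)=0$ forces $x^{\bf k}\alpha=0$, hence $\alpha=0$ by Lemma~\ref{lem-B-divisor}. Uniqueness holds because any extension must satisfy $\iota(x^{\bf k})\iota(\alpha)=\iota(x^{\bf k}\alpha)$, and $\iota(x^{\bf k})$ is invertible in the quantum torus.
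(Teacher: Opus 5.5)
Your overall strategy is exactly what the paper gets by citing \cite[Proposition~2.2]{LY22}, whose hypotheses are Lemmas~\ref{lem-domain1} and~\ref{lem-domain2}. That strategy is: define $\iota(\alpha):=(x^{\mathbf k})^{-1}\iota^{-1}(x^{\mathbf k}\alpha)$, check independence of $\mathbf k$ and additivity, get injectivity from Lemma~\ref{lem-domain2}, and get uniqueness from invertibility of monomials. Those parts are fine.

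The gap is in multiplicativity. You justify it by asserting that every element of $B_I(\mathbb P_3)$ $q$-commutes with every $X_a$, so that conjugation by monomials acts diagonally on $B_I$. This is false.

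Take any puncture $p\notin I$ (e.g.\ $I=\emptyset$). Consider the algebra map $\mathcal O_q(\mathrm{SL}_2)\cong\mathcal S_\omega(\mathbb P_2)\to\mathcal S^I_\omega(\mathbb P_3)$ induced by a tubular neighbourhood of the corner arc at $p$, as in the proof of Lemma~\ref{lem-essential}. It sends the $u_{ij}$ to the four stated corner arcs at $p$. The diagonal generators go to $C_p(+)\overset{\omega}{=}X_{e(p)}$ and $C_p(-)$, and $C_p(-)\in B_I(\mathbb P_3)$. Up to a common unit, the $q$-determinant relations give $C_p(+)C_p(-)=1+c_1w$ and $C_p(-)C_p(+)=1+c_2w$ with $c_1\neq c_2$. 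Here $w$ is the product of $C_p$ with the other mixed-state corner arc $\gamma$ at $p$. By Lemma~\ref{lem-Cp}(b), $w\overset{\omega}{=}\gamma\sqcup C_p$, which is a basis element of $B_I(\mathbb P_3)$ distinct from $\emptyset$. Hence no scalar $\zeta$ satisfies $C_p(+)C_p(-)=\zeta\,C_p(-)C_p(+)$, and your proposed resolution of the ``main obstacle'' fails.

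The step itself is true, and the fix is that you never need to move a monomial past $\alpha$. You only need to move it past elements of $\mathbb T^+$, where the quantum-torus relations already hold inside $\mathbb T^+\subset\mathcal S^I_\omega(\mathbb P_3)$. Suppose $t:=x^{\mathbf k}\alpha$ and $s:=x^{\mathbf l}\beta$ lie in $\mathbb T^+$. Then $t':=x^{\mathbf l}tx^{-\mathbf l}$ lies in $\mathbb T^+$, since conjugation only rescales monomials. The identity $x^{\mathbf l}t=t'x^{\mathbf l}$ holds in $\mathbb T^+$, hence in $\mathcal S^I_\omega(\mathbb P_3)$. Thus $x^{\mathbf l}x^{\mathbf k}\alpha\beta=t'x^{\mathbf l}\beta=t's\in\mathbb T^+$, and by your well-definedness
\[
\iota(\alpha\beta)=(x^{\mathbf l}x^{\mathbf k})^{-1}t's=x^{-\mathbf k}x^{-\mathbf l}\bigl(x^{\mathbf l}tx^{-\mathbf l}\bigr)s=\bigl(x^{-\mathbf k}t\bigr)\bigl(x^{-\mathbf l}s\bigr)=\iota(\alpha)\iota(\beta).
\]
This uses only the one-sided condition of Lemma~\ref{lem-domain1}. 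Your two-sided version via $(x^{\mathbf k}\alpha)(\beta x^{\mathbf k})$ also works, once you note that $(x^{\mathbf l}\beta)x^{\mathbf k}=x^{\mathbf l}(\beta x^{\mathbf k})$ in $\mathbb T^+$, so left and right fractions agree.

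A minor point: your remark on $X_{\hat e}$ is garbled. For $\hat e\in\lambda_I$ with $e=e(p)$, $X_{\hat e}\overset{\omega}{=}C_p$ is the bad arc with $p\notin I$; for $p\in I$ the index $\widehat{e(p)}$ is discarded because $C_p=0$. What Lemma~\ref{B-lem-prod-1} inverts is $X_{e(p)}\overset{\omega}{=}C_p(+)$. This is harmless, since Lemma~\ref{lem-domain2} already makes every nonzero element a non-zero-divisor.
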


\begin{proof}
    It follows from \cite[Proposition~2.2]{LY22}, Lemmas~\ref{lem-domain1}, and \ref{lem-domain2}.
\end{proof}

The following result shows that $\mathcal S_\omega^I(\fS)$ is a domain.

\begin{proposition}\label{prop-domain1}
Let $\fS$ be any pb surface, and let $I$ be a subset of the punctures contained in $\partial \overline\fS$. Then $\mathcal S_\omega^I(\fS)$ is a domain.
\end{proposition}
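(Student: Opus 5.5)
We reduce to the triangle case of Lemma~\ref{lem-domain2} by repeatedly cutting along interior ideal arcs and using the splitting embeddings of Lemma~\ref{lem-domain4}.

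\textbf{Step 1: reduce to connected $\fS$.} Suppose $\fS=\fS_1\sqcup\fS_2$. The stated skein algebra is then the tensor product $\mathcal S_\omega(\fS_1)\otimes_R\mathcal S_\omega(\fS_2)$. Likewise $\mathcal S^I_\omega(\fS)\cong \mathcal S^{I_1}_\omega(\fS_1)\otimes_R\mathcal S^{I_2}_\omega(\fS_2)$, since the bases $B_I$ of Lemma~\ref{B-lem-basis} factor. A tensor product of domains over $R=\mathbb Z[\omega^{\pm1/2}]$ need not be a domain in general. To avoid this, we instead realize everything inside a quantum torus, as in Step~3, so that connectedness never has to be assumed.

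\textbf{Step 2: dispose of non-triangulable surfaces.} The components to handle separately are $\mathbb P_1$, $\mathbb P_2$, closed surfaces and spheres with at most two punctures.
\begin{itemize}
\item For components without boundary, the stated skein algebra is the ordinary Kauffman bracket skein algebra, which is known to be a domain (Przytycki--Sikora, or the graded argument of \cite{le2018triangular}). In this case $I=\emptyset$.
\item For $\mathbb P_1$, the algebra is $R$.
\item For $\mathbb P_2$, we have $\mathcal O_q({\rm SL}_2)$, possibly quotiented by bad arcs. For $I\neq\emptyset$ this quotient is a Laurent polynomial ring or a quantum torus in one or two variables; both are domains.
\item For a general surface we can always add boundary punctures via the embedding $\fS\hookrightarrow\fS'$ into a larger surface (with $I$ unchanged). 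This embedding sends the basis $B_I(\fS)$ injectively into $B_I(\fS')$, so $\mathcal S^I_\omega(\fS)$ is a subalgebra of a triangulable case. Hence it suffices to treat triangulable $\fS$, with interior punctures allowed.
\end{itemize}
Interior punctures should be avoided too. Cutting along an arc ending at an interior puncture is still allowed by Lemma~\ref{lem-domain4}'s technique, but it is simplest to first embed $\fS$ into a surface with no interior punctures, obtained by removing a small disk around each interior puncture and placing a boundary puncture on it. This embedding again preserves the basis.

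\textbf{Step 3: cut into triangles.} Fix a triangulation $\lambda$ and apply the splitting maps $\Theta_e$ for all interior edges $e$. These commute by \eqref{com-splitting-skein}. Iterating Lemma~\ref{lem-domain4} gives an injective algebra homomorphism
\[
\mathcal S^I_\omega(\fS)\hookrightarrow \bigotimes_{\tau\in\mathbb F_\lambda}\mathcal S^{I_\tau}_\omega(\tau),
\]
where $I_\tau$ is the preimage of $I$ in $\tau$. By Lemma~\ref{lem-domain33}, each factor embeds into the quantum torus $\mathbb T_\omega(\mathbb P_3;I_\tau)$. The tensor product of these is again a quantum torus over $R$, and quantum tori over the domain $R$ are domains (the leading-monomial argument). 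Therefore $\mathcal S^I_\omega(\fS)$ is a domain.

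\textbf{Main obstacle.} The delicate point is the injectivity of the iterated splitting. Lemma~\ref{lem-domain4} is stated for a single cut, and after a cut the new punctures come from identified copies of old ones. We must check that $I'$ is exactly the set on which the next cut's hypothesis applies, and that the case analysis of Remark~\ref{proof1} covers edges whose endpoints coincide. A second concern is the basis-preservation claim for the enlarging embeddings in Step~2; this should follow directly from the description of $B_I$ as increasingly stated, positively ordered multicurves.
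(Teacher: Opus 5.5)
Your Step~3 is exactly the paper's argument for triangulable surfaces. You split along the interior edges by Lemma~\ref{lem-domain4} and embed each $\mathcal S^{I_\tau}_\omega(\tau)$ into $\mathbb T_\omega(\tau;I_\tau)$ by Lemma~\ref{lem-domain33}. Since all modules involved are free, hence flat, the tensor product of these embeddings is an embedding into a quantum torus. Iterating the splitting is not a real obstacle. You also do not need to remove interior punctures first: they never lie in $I$, and Remark~\ref{proof1} covers edges ending at them.

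The gap is in the non-triangulable components. The paper's input there is that for every connected non-triangulable $\fS$, the algebra $\mathcal S^I_\omega(\fS)$ embeds into a (localized) quantum torus \cite{detcherry2025embedding,le2018triangular,CLL,LY22}. This is precisely what lets the tensor-product argument run for disconnected surfaces. For a closed component $\Sigma$ of positive genus you only supply that $\mathcal S_\omega(\Sigma)$ is a domain, and your enlarging trick is unavailable: there is no boundary edge on which to add punctures and no puncture to blow up. As you note yourself in Step~1, being a domain says nothing about $\mathcal S_\omega(\Sigma)\otimes_R\mathcal S^{I}_\omega(\fS\setminus\Sigma)$. So ``it suffices to treat triangulable $\fS$'' fails as soon as $\fS$ has a closed component alongside other components; for connected $\fS$ your case analysis does suffice. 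You need an embedding of $\mathcal S_\omega(\Sigma)$ into a localized quantum torus, as the paper takes from \cite{detcherry2025embedding}, after which the flatness argument goes through.

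Your enlarging embedding is also not compatible with $I$ as claimed. Suppose $\iota\colon\fS\hookrightarrow\fS^\ast$ adds a puncture $p_e$ on a boundary edge $e$, and the end of $e$ at some $p\in I$ is the one moved next to $p_e$. Then $\iota_\ast(C_p)$ is an arc turning around both $p_e$ and $p$. Push it onto the new boundary edge between them and cut there. Up to units, this gives $C_p(\epsilon)\,C_{p_e}(-\epsilon)$ for some sign $\epsilon$, plus a multiple of $C_pC_{p_e}$. The first term is nonzero in $\mathcal S^I_\omega(\fS^\ast)$: $C_p(\pm)$ is invertible modulo $(C_p)$ by Lemma~\ref{B-lem-prod-1}, and $C_{p_e}(-\epsilon)$ is a nonzero element of $B_I(\fS^\ast)$.

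Hence $\iota_\ast$ does not descend to a map $\mathcal S^I_\omega(\fS)\to\mathcal S^I_\omega(\fS^\ast)$. That $B_I(\fS)$ maps injectively into $B_I(\fS^\ast)$ as a set does not produce an algebra homomorphism on the quotients. You can sometimes avoid the problem by moving the end at a puncture not in $I$. When both endpoints of every edge lie in $I$ (e.g.\ $\mathbb P_2$ with both punctures in $I$), you must fall back on your direct computation, which here gives $\mathcal S^I_\omega(\mathbb P_2)\cong R[a^{\pm1}]$.

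Separately, for $\mathbb P_1$ with $I=\{p\}$ the bad arc is a trivial arc, hence a unit. So $\mathcal S^{\{p\}}_\omega(\mathbb P_1)=0$ rather than $R$, and this degenerate case has to be excluded from the statement.
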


\begin{proof}

We first treat the case where $\fS$ is triangulable with a triangulation $\lambda$. Cutting $\fS$ along $\mathring{\lambda}$ yields a disjoint union of triangles $\bigsqcup_{\tau \in \mathbb{F}_\lambda} \tau$. Let $I'$ be the preimage of $I$, and define $I(\tau) := I' \cap \tau$. Then
\[
\mathcal S_\omega^{I'}\!\left(\bigsqcup_{\tau \in \mathbb{F}_\lambda} \tau\right)
=
\bigotimes_{\tau \in \mathbb{F}_\lambda}
\mathcal S_\omega^{I(\tau)}(\tau).
\]

Since both $\mathcal S_\omega^{I(\tau)}(\tau)$ and $\mathbb T_\omega(\tau; I(\tau))$ are flat, Lemma~\ref{lem-domain33} implies that
\[
\bigotimes_{\tau \in \mathbb{F}_\lambda}
\mathcal S_\omega^{I(\tau)}(\tau)
\xrightarrow{\ \iota\ }
\bigotimes_{\tau \in \mathbb{F}_\lambda}
\mathbb T_\omega(\tau; I(\tau))
\]
is an algebra embedding. By Lemma~\ref{lem-domain4}, the composition
\[
\mathcal S_\omega^I(\fS)
\xrightarrow{\ \Theta\ }
\bigotimes_{\tau \in \mathbb{F}_\lambda}
\mathcal S_\omega^{I(\tau)}(\tau)
\xrightarrow{\ \iota\ }
\bigotimes_{\tau \in \mathbb{F}_\lambda}
\mathbb T_\omega(\tau; I(\tau))
\]
is also an algebra embedding.

Since $\bigotimes_{\tau \in \mathbb{F}_\lambda} \mathbb T_\omega(\tau; I(\tau))$ is a quantum torus, it is a domain. Hence $\mathcal S_\omega^I(\fS)$ is a domain.

Now suppose that $\fS$ is connected and non-triangulable. Then $\fS$ is either a monogon, a bigon, a closed surface, or a sphere with one or two punctures. Results in \cite{detcherry2025embedding,le2018triangular,CLL,LY22} show that $\mathcal S_\omega^I(\fS)$ embeds into a quantum torus. Therefore, the same argument applies and $\mathcal S_\omega^I(\fS)$ is again a domain.
   
\end{proof}

\cite[Proposition~2.2]{LY22}, together with Lemma~\ref{lem-domain1} and Proposition~\ref{prop-domain1}, implies the following, which establishes the quantum trace map for $\mathcal{S}_\omega^I(\fS)$. Although this result is not needed in the present paper, it is of independent interest to record it here (see Remark~\ref{B-rem-skein-A}).

 \begin{corollary}\label{Cor-domain1}
Suppose that $\fS$ has no interior punctures and admits a triangulation $\lambda$. Let $I$ be a subset of the punctures of $\fS$. Then the algebra embedding
\[
\iota : \mathbb T_\omega^+(\fS,\lambda;I) \to \mathcal{S}_\omega^I(\fS)
\]
extends uniquely to an algebra embedding
\[
\iota : \mathcal{S}_\omega^I(\fS) \to \mathbb T_\omega(\fS,\lambda;I).
\]
\end{corollary}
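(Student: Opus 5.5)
The statement to prove is Corollary~\ref{Cor-domain1}: for triangulable $\fS$ without interior punctures, $\iota$ extends uniquely to an algebra embedding $\mathcal{S}_\omega^I(\fS)\to\mathbb T_\omega(\fS,\lambda;I)$. The plan is to apply the abstract criterion \cite[Proposition~2.2]{LY22}. That result says: if $A$ is a domain containing a quantum-torus positive part $\mathbb T^+$ generated by quasi-commuting monomials, and every element of $A$ can be multiplied on both sides into $\mathbb T^+$ by a monomial of $\mathbb T^+$, then the inclusion $\mathbb T^+\hookrightarrow A$ extends uniquely to an embedding $A\hookrightarrow\mathbb T$. Concretely, $\mathbb T^+\subset A\subset\mathbb T$, and $A$ is Ore with the same skew field. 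I therefore need to verify three hypotheses in our setting.

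\textbf{Step 1: $\iota$ is injective.} This holds by Lemma~\ref{B-lem-inj-iota}. So we identify $\mathbb T_\omega^+(\fS,\lambda;I)$ with the subalgebra of $\mathcal S_\omega^I(\fS)$ generated by the $X_a$, $a\in\lambda_I$. Lemma~\ref{lem-B-XXX} gives the required $q$-commutation relations, with matrix $P$ equal to the restriction of $\bar P$.

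\textbf{Step 2: $\mathcal S_\omega^I(\fS)$ is a domain.} This is Proposition~\ref{prop-domain1}.

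\textbf{Step 3: the two-sided Ore-type denominator condition.} For every $\alpha\in\mathcal S_\omega^I(\fS)$ we need some $\mathbf k\in\mathbb N^{\lambda_I}$ with $x^{\mathbf k}\alpha$ and $\alpha x^{\mathbf k}$ both in $\mathbb T_\omega^+$. This is exactly Lemma~\ref{lem-domain1}.

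\textbf{Assembly.} Given these three facts, define the extension by $\alpha\mapsto (x^{\mathbf k})^{-1}(x^{\mathbf k}\alpha)\in\mathbb T_\omega(\fS,\lambda;I)$. Three properties must then be checked.
\begin{itemize}
\item Independence of $\mathbf k$. If $\mathbf k\le\mathbf k'$, then $x^{\mathbf k'}$ and $x^{\mathbf k'-\mathbf k}x^{\mathbf k}$ agree up to a power of $\omega$, so the two expressions coincide. In general, compare both choices with their common upper bound $\max(\mathbf k,\mathbf k')$.
\item Multiplicativity. Use the right-sided version, which is why the lemma is two-sided: $x^{\mathbf k}\alpha\beta x^{\mathbf l}$ lies in $\mathbb T^+$ up to the appropriate rearrangement.
\item Injectivity. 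If the image of $\alpha$ is zero, then $x^{\mathbf k}\alpha=0$ in $\mathbb T^+\subset\mathcal S^I_\omega$, and since $\mathcal S^I_\omega$ is a domain, $\alpha=0$.
\end{itemize}
Uniqueness follows because any extension must send $x^{\mathbf k}\alpha$ to itself, and $x^{\mathbf k}$ is invertible in the torus.

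\textbf{Main obstacle.} All the substance lies in the cited inputs, so the genuine difficulty is Step 3, i.e.\ Lemma~\ref{lem-domain1} for the partially reduced algebra. If I had to re-derive it, I would follow \cite[Lemma~4.6]{LY22}. Multiplying by high powers of the interior edge elements $X_e$ and the corner arcs $X_{e(p)}$ resolves every basis element of $B_I(\fS)$, via the height-exchange relations of Lemma~\ref{B-height-ex} and the product relation $C_p(+)C_p(-)=1$ of Lemma~\ref{B-lem-prod-1}, into a sum of monomials $D(C_{\mathbf k};\mathbf k)$. Since the statement of Lemma~\ref{lem-domain1} is available, the corollary itself reduces to checking the hypotheses of \cite[Proposition~2.2]{LY22}.
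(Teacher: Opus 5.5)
Your proposal is correct and follows the paper's own proof. The paper obtains Corollary~\ref{Cor-domain1} by applying \cite[Proposition~2.2]{LY22} to two inputs: the domain property (Proposition~\ref{prop-domain1}) and the two-sided denominator condition (Lemma~\ref{lem-domain1}). It uses the injectivity of $\iota$ on $\mathbb T_\omega^+(\fS,\lambda;I)$ (Lemma~\ref{B-lem-inj-iota}) implicitly to identify the positive part with a subalgebra. Your explicit assembly, covering independence of $\mathbf k$, multiplicativity via the two-sided condition, injectivity and uniqueness, simply unpacks that cited proposition, and it is sound.
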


\begin{remark}\label{B-rem-skein-A}
Using the quantum trace map in Corollary~\ref{Cor-domain1} together with the proof technique of Theorem~\ref{thm-skein-eq-A-two}, we expect that $\mathcal{S}_\omega^I(\fS)$ is equivalent to an (upper) quantum cluster algebra with $|I|$ invertible frozen variables and $|\partial\lambda\setminus I|$ non-invertible frozen variables.
\end{remark}

\begin{remark}
Note that the quantum trace map constructed in \cite[Section~2]{LY22} applies to all triangulable  pb surfaces via the notion of a \emph{quasitriangulation}. Accordingly,  Corollary~\ref{Cor-domain1} extends readily to all triangulable pb surfaces by replacing ``triangulation'' with ``quasitriangulation'' in the statement.
\end{remark}

\begin{proof}[Proof of Lemma~\ref{lem-prime-ele}]
The definition of $\ga_v$ for $v \in V_\lambda' \setminus \overline{V}_\lambda$ in \S\ref{sub-extended}, together with the isomorphism in \cite[Theorem~11.1]{LS21} (see \S\ref{sub-S-iso}), shows that each frozen variable in $\cS_\omega(\fS)$ for $n=2$ corresponds to a bad arc in $\mathcal S_\omega(\fS)$ (see Figure~\ref{fig:badarc-sl2}). The result then follows from Lemma~\ref{lem-Cp}(a) and Proposition~\ref{prop-domain1}.
\end{proof}

\section{Proofs of Lemmas \ref{lem:quasi2}, \ref{lem:B-d} 
and \ref{lem:B4}}\label{app:lemmas1234}

\subsection{Some lemmas on quiver mutations}


For any integer $n \geq 0$, we define three families of quivers, denoted by $Q^1(n)$, $Q^2(n)$, and $Q^3(n)$ as follows.

The vertex set of $Q^1(n)$ is given by
\[
V(Q^1(n)) = \{1_1, 1_2, \dots, 1_n\} \cup \{2_0, 2_1, \dots, 2_n\}.
\]
We refer to the vertices $\{1_1, \dots, 1_n\}$ as \emph{level 1} vertices and $\{2_0, \dots, 2_n\}$ as \emph{level 2} vertices. In the graphical representation, these vertices are arranged horizontally from right to left.

The arrow set of $Q^1(n)$ consists of the following four families:
\begin{align*}
    &\{\, 1_{i+1} \leftarrow 1_i \mid i = 1, \dots, n-1 \,\}, \\
    &\{\, 2_{i+1} \leftarrow 2_i \mid i = 0, 1, \dots, n-1 \,\}, \\
    &\{\, 1_{i+1} \rightarrow 2_{i} \mid i =0, 1, \dots, n-1 \,\}, \\
    &\{\, 1_i \leftarrow 2_i \mid i = 1, \dots, n-1 \,\}.
\end{align*}

The quiver $Q^2(n)$ is obtained from $Q^1(n)$ by removing the vertex $2_0$ (and all incident arrows).  
The quiver $Q^3(n)$ is obtained from $Q^1(n)$ by adding an additional arrow $1_n \leftarrow 2_n$.

Given a sequence of quivers $Q^{i_1}(n_1), Q^{i_2}(n_2), \dots, Q^{i_k}(n_k)$ with $i_1, i_2, \dots, i_k \in \{1,2,3\}$, we define their concatenation
\[
Q^{i_k}(n_k) \# \cdots \# Q^{i_2}(n_2) \# Q^{i_1}(n_1)
\]
to be the quiver obtained as follows. Take the disjoint union of $Q^{i_1}(n_1), \dots, Q^{i_k}(n_k)$, arranged from right to left in this order, and for each $s = 1, \dots, k-1$, add the following arrows:

\begin{itemize}
    \item an arrow from the \emph{last} vertex of level 1 in $Q^{i_s}(n_s)$  to the \emph{first} vertex of level 1 in $Q^{i_{s+1}}(n_{s+1})$;
    \item an arrow from the \emph{last} vertex of level 2 in $Q^{i_s}(n_s)$ to the \emph{first} vertex of level 2 in $Q^{i_{s+1}}(n_{s+1})$;
    \item an arrow from the \emph{first} vertex of level 2 in $Q^{i_{s+1}}(n_{s+1})$ to the \emph{last} vertex of level 1 in $Q^{i_s}(n_s)$.
\end{itemize}

To make this precise, we label the vertices of each component $Q^{i_s}(n_s)$ by attaching a bracketed index $[s]$ to distinguish the components. Thus, a vertex originally denoted $1_j$ or $2_j$ in $Q^{i_s}(n_s)$ is written as $1_j[s]$ or $2_j[s]$. Then, for each $s = 1, \dots, k-1$, we add the arrows
\[
1_1[s+1] \leftarrow 1_{n_s}[s], \quad
2_{\delta(s)}[s+1] \leftarrow  2_{n_s}[s], \quad
2_{\delta(s)}[s+1] \rightarrow  1_{n_s}[s],
\]
where
\[
\delta(s) =
\begin{cases}
0 & \text{if } i_{s+1} \in \{1,3\}, \\
1 & \text{if } i_{s+1} = 2.
\end{cases}
\]


We refer to the level~$1$ and level~$2$ vertices of each component $Q^{i_s}(n_s)$ ($s = 1, \dots, k$) as the level~$1$ and level~$2$ vertices, respectively, of the concatenated quiver  
$Q^{i_k}(n_k) \# \cdots \# Q^{i_2}(n_2)\# Q^{i_1}(n_1).$

For any two quivers $Q$ and $Q'$ of the form  
\[
Q^{i_k}(n_k) \# \cdots \# Q^{i_2}(n_2)\# Q^{i_1}(n_1),
\]  
if the number of level~$2$ vertices in $Q$ equals the number of level~$1$ vertices in $Q'$, we define the quiver  
\[
\begin{bmatrix}
Q \\
Q'
\end{bmatrix}
\]  
to be the quiver obtained from the disjoint union of $Q$ and $Q'$ by identifying each level~$2$ vertex of $Q$ with the corresponding level~$1$ vertex of $Q'$,  where the identification is performed from left to right in order.

This construction extends naturally to longer sequences: for quivers $Q_1, \dots, Q_n$ of the same form, if  
\[
\#\{\text{level~$2$ vertices of } Q_s\} = \#\{\text{level~$1$ vertices of } Q_{s+1}\}
\quad \text{for all } s = 1, \dots, n-1,
\]  
we define the stacked quiver  
\[
\begin{bmatrix}
Q_1 \\
Q_2 \\
\vdots \\
Q_n
\end{bmatrix}
\]  
by successively identifying the level~$2$ vertices of $Q_s$ with the level~$1$ vertices of $Q_{s+1}$ for $s = 1, \dots, n-1$ (each identification is again performed from left to right in order).

Under this notation, let $\lambda$ be the star-like triangulation of $\mathbb P_{k+2}$ centered at vertex $1$. Recall the vertex set $I(\Delta_i)$ given by \eqref{eq:Ii}.
Let $Q_{\rm sub}$ be the full subquiver of $\overline Q^{\rm qc}_{\lambda}(\mathbb{P}_{k+2})$ induced by the vertex set 
$I(\Delta_1)\cup I(\Delta_2)\cup \cdots\cup I(\Delta_k)$
with the half-edges between the vertices $\{jj^k \mid j=1,\dots,n-1\}$ removed.

Then
\begin{equation}\label{eq:Qlambda}
Q_{\rm sub}\cong \begin{bmatrix}
\underbrace{Q^1(1) \# \cdots \# Q^1(1)}_{k\ \text{times}} \\
\underbrace{Q^1(2) \# \cdots \# Q^1(2)}_{k\ \text{times}} \\
\vdots \\
\underbrace{Q^1(n-2) \# \cdots \# Q^1(n-2)}_{k\ \text{times}}
\end{bmatrix}.
\end{equation}

The following three lemmas are direct consequences of quiver mutation rules; they can also be verified using Keller's quiver mutation applet.

\begin{lemma}\label{lem:mutation-row1}
Let $k \geq 0$, $\ell \geq 1$, and $s \leq \ell$.  
Consider the quiver 
\[
Q = \underbrace{Q^1(\ell) \# \cdots \# Q^1(\ell)}_{k \text{ times}}\# Q^1(s) .
\]
Then the first and second rows of $Q$ contains $s+k\ell$ and $s + 1 + k(\ell + 1)$ vertices, respectively.
We index these vertices from right to left as $v^1_1, v^1_2, \dots, v^1_{s + k\ell}$ and $v^2_1, v^2_2, \dots, v^2_{\,s + 1 + k(\ell + 1)}$.

\begin{enumerate}[label={\rm (\alph*)}, itemsep=0.3em]

\item The vertices in $\{v^1_r, r\leq s-1\}\cup\{v^2_r, r\leq s\}$ do not adjacent to the vertices in $\{v^1_r, r>s\}\cup\{v^2_r, r> s+1\}$ in the quiver $\mu_{v^2_{s+1}} \cdots \mu_{v^2_2} \mu_{v^2_1}(Q)$.

\item Let $Q'$ be the quiver obtained from $\mu_{v^2_{s + k(\ell + 1)}} \cdots \mu_{v^2_2} \mu_{v^2_1}(Q)$ by removing the vertex $v^2_{s + 1 + k(\ell + 1)}$.  
Then 
\[
Q' \cong Q^3(\ell)\# \underbrace{Q^1(\ell) \# \cdots \# Q^1(\ell)}_{k-1 \text{ times}}\# Q^2(s).
\]
\end{enumerate}
\end{lemma}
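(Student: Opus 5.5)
We prove both parts by tracking the quiver through the sequence of mutations along the second row, $\mu_{v^2_1},\mu_{v^2_2},\dots$, one vertex at a time. We induct on the number $m$ of mutations performed and record an explicit description of the neighbourhood of the ``front'' vertex. The arrows we must track fall into three groups. The horizontal level-1 arrows $v^1_{r+1}\leftarrow v^1_r$ and level-2 arrows $v^2_{r+1}\leftarrow v^2_r$ come from the blocks $Q^1(\cdot)$ together with the concatenation arrows. The diagonal arrows $v^1_{r+1}\to v^2_r$ and $v^1_r\leftarrow v^2_r$ are absent at the block boundaries, where the concatenation instead adds $2_0[t+1]\to 1_{n_t}[t]$. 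For each block this gives a square pattern in which the diagonal $1_i\leftarrow 2_i$ is missing exactly at the last index $i=n$ of each $Q^1$ block.

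The inductive invariant is the same kind of local shape used for \eqref{eq-Q1}--\eqref{eq-Q3}. After $\mu_{v^2_m}\cdots\mu_{v^2_1}$, the mutated vertex $v^2_m$ has exactly one incoming arrow, from $v^2_{m+1}$, and outgoing arrows to the level-1 vertices $v^1_m$ and, when it exists, $v^1_{m-1}$. Moreover, the already-mutated vertices $v^2_1,\dots,v^2_{m-1}$ are attached to the untouched region only through $v^2_m$. The arrows among the $v^1$'s are untouched except for the new arrow created between $v^1_{m-1}$ and $v^1_{m+1}$, or its cancellation. We verify the invariant by applying \eqref{eq-mutation-Q} at $v^2_{m+1}$. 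By the invariant, only the 2-paths through $v^2_{m+1}$ matter: $v^2_m\to v^2_{m+1}\to v^2_{m+2}$, $v^2_m\to v^2_{m+1}\to v^1_{m+1}$, and $v^1_{m+2}\to v^2_{m+1}\to\cdots$. The composites of these paths either cancel existing arrows or produce the arrows predicted by the invariant. This computation splits into an interior case, in which $v^2_{m+1}$ lies strictly inside a $Q^1(\ell)$ block, and a boundary case, in which $v^2_{m+1}$ is a $2_0[t]$ or $2_{n}[t]$ vertex. The boundary case is different because the diagonal $1_n\leftarrow 2_n$ is missing there.

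Part (a) is the invariant at $m=s+1$. The first block is $Q^1(s)$, and the vertex $v^2_{s+1}$ is its last level-2 vertex $2_s[1]$. Mutating there reverses the concatenation arrows $2_0[2]\leftarrow 2_s[1]$ and $2_0[2]\to 1_s[1]$. It also kills the only link from $\{v^1_r\}_{r\le s-1}\cup\{v^2_r\}_{r\le s}$ to the rest: the composite $v^2_s\to v^2_{s+1}\to v^2_{s+2}$ cancels against the arrow produced in the previous step, and the arrow $v^1_{s-1}$--$v^1_{s+1}$ created earlier cancels as well. We therefore check directly that, after these cancellations, no arrow joins the two vertex sets.

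For part (b) we continue the induction across all $k$ copies of $Q^1(\ell)$, stopping at $m=s+k(\ell+1)$, and then delete $v^2_{s+1+k(\ell+1)}$. Reading off the resulting arrows block by block from right to left gives the decomposition. The first block becomes $Q^2(s)$: after the mutations it has lost its $2_0$ role, because that vertex is now linked only to the next block's first level-2 vertex. Each intermediate block remains of type $Q^1(\ell)$ with shifted labels. The final block acquires the extra arrow $1_\ell\leftarrow 2_\ell$, turning it into $Q^3(\ell)$; this arrow comes from the last mutation composed with the deleted vertex's arrows. We expect the main obstacle to be the bookkeeping at block boundaries, namely confirming that the index shift $\delta(s)$ and the extra $Q^3$ arrow come out correctly. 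We would first settle this by a direct check for small $\ell,s,k$ (e.g.\ $\ell=2$, $s=1$, $k=1,2$), then formalize it as the boundary case of the induction.
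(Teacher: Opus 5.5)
Your plan (induct along the level-2 row and track the neighbourhood of the vertex about to be mutated) is the same direct computation the paper has in mind; the paper simply asserts that the lemma follows from the mutation rules. However, the invariant you write down is false, so the induction cannot be run as stated.

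Part of the problem is indexing. With the lemma's right-to-left labels, $v^2_1=2_0[1]$. So in the first block the diagonals are $v^1_r\to v^2_r$ ($1\le r\le s$) and $v^2_r\to v^1_{r-1}$ ($2\le r\le s$), not $v^1_{r+1}\to v^2_r$ and $v^2_r\to v^1_r$. Moreover, the last level-2 vertex $2_{n_t}[t]$ of each block has no level-1 neighbours; call these \emph{defect} vertices. Because of them, the offset between the two rows grows by one at every block boundary.

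Now take $s\ge 2$. Mutating at $v^2_1$ (in-neighbour $v^1_1$, out-neighbour $v^2_2$) creates $v^1_1\to v^2_2$, which cancels $v^2_2\to v^1_1$. Mutating next at $v^2_2$ (in-neighbour $v^1_2$, out-neighbours $v^2_1,v^2_3$) leaves $v^2_2$ with in-arrows from \emph{both} $v^2_1$ and $v^2_3$ and a single out-arrow, to $v^1_2$. You can confirm this on $Q^1(2)$ alone. So already at $m=2$ the mutated vertex is a sink of the level-2 path with one level-1 neighbour. It is not a vertex with a single in-arrow from $v^2_{m+1}$ and out-arrows to $v^1_m,v^1_{m-1}$.

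Also, every mutation in the sequence has a single in-neighbour, which is a level-1 vertex, and level-2 out-neighbours. So the only arrows ever created go from level 1 to level 2, and no arrow between $v^1_{m-1}$ and $v^1_{m+1}$ ever appears. Likewise, the 2-paths and cancellations you invoke for (a) are not the ones that occur. These were the composite $v^2_s\to v^2_{s+1}\to v^2_{s+2}$ and a ``reversal'' of $2_0[2]\to 1_s[1]$; the latter is not even incident to $v^2_{s+1}$.

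The invariant that does work is the following. Let $c(m)=m-d(m)$, where $d(m)$ is the number of defect vertices among $v^2_1,\dots,v^2_m$. Immediately before $\mu_{v^2_m}$, the vertex $v^2_m$ has exactly one in-arrow, from $v^1_{c(m)}$, and out-arrows exactly to $v^2_{m-1}$ (if $m\ge 2$) and $v^2_{m+1}$. The mutation then creates two arrows:
\begin{itemize}
\item $v^1_{c(m)}\to v^2_{m+1}$, which cancels the diagonal or concatenation arrow $v^2_{m+1}\to v^1_{c(m)}$, unless $v^2_{m+1}$ is a defect, in which case it becomes that vertex's in-arrow;
\item $v^1_{c(m)}\to v^2_{m-1}$, which is new unless $v^2_m$ is itself a defect, in which case it cancels $v^2_{m-1}\to v^1_{c(m)}$ and $v^2_{m-1}$ becomes the new defect.
\end{itemize}
Hence, in the already-mutated part, each $v^2_r$ carries arrows $v^1_{c(r+1)}\to v^2_r\to v^1_{c(r)}$, and these cancel to nothing exactly when $v^2_{r+1}$ is a defect. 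In other words, every level-2 vertex has moved one slot.

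For (a): at the defect $v^2_{s+1}=2_s[1]$ the two cancellations are $v^1_s\to v^2_s$ against $v^2_s\to v^1_s$, and $v^1_s\to v^2_{s+2}$ against the concatenation arrow $v^2_{s+2}\to v^1_s$. After this, the two vertex sets meet only through $\{v^1_s,v^2_{s+1}\}$.

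For (b): $v^2_r$ plays $2_r[1]$ for $1\le r\le s$, so it is $v^2_s$, not $v^2_1$, that ends up with no level-1 neighbours. Your description of the first block is only right when $s=1$. The old defect $v^2_{s+1}$ plays $2_0[2]$, and so on along the row. Set $M=s+1+k(\ell+1)$. The extra arrow of $Q^3(\ell)$ is just $v^2_{M-1}\to v^1_{s+k\ell}$, the reversal of the in-arrow at the last mutation, once the arrows to the deleted vertex $v^2_M$ are discarded.
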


\begin{lemma}\label{lem:mutation-row2}
Let $k \geq 0$, $\ell \geq 1$, and $s \leq \ell$.  
Consider the quiver 
\[
Q = 
\begin{bmatrix}
\underbrace{Q^1(\ell) \# \cdots \# Q^1(\ell)}_{k}\# Q^1(s) \\
 Q^3(\ell+1) \# \underbrace{Q^1(\ell+1) \# \cdots \# Q^1(\ell+1)}_{k-1}\# Q^2(s+1)
\end{bmatrix}.
\]
Then the three rows of $Q$ contains $s+k\ell$, $s + 1 + k(\ell + 1)$ and $s + 1 + k(\ell + 2)$ vertices, respectively.
We index these vertices from right to left as $v^1_1, v^1_2, \dots, v^1_{s + + k\ell}$;
$v^2_1, v^2_2, \dots, v^2_{\,s + 1 + k(\ell + 1)}$ and $v^3_1, v^3_2, \dots, v^3_{\,s + 1 + k(\ell + 2)}$.

\begin{enumerate}[label={\rm (\alph*)}, itemsep=0.3em]

\item The vertices in $\{v^1_r, r\leq s-1\}\cup\{v^2_r, r\leq s\}\cup\{v^3_r, r\leq s\}$ do not adjacent to the vertices in $\{v^1_r, r>s\}\cup\{v^2_r, r> s+1\}\cup\{v^3_r, r> s+1\}$ in the quiver $\mu_{v^2_{s+1}} \cdots \mu_{v^2_2} \mu_{v^2_1}(Q)$.

\item Let $Q'$ be the quiver obtained from $\mu_{v^2_{s + k(\ell + 1)}} \cdots \mu_{v^2_2} \mu_{v^2_1}(Q)$ by removing the vertex $v^2_{s + 1 + k(\ell + 1)}$.  
Then 
\[
Q' =
\begin{bmatrix}
Q^3(\ell) \# \underbrace{Q^1(\ell) \# \cdots \# Q^1(\ell)}_{k-1}\# Q^2(s) \\
\underbrace{Q^1(\ell+1) \# \cdots \# Q^1(\ell+1)}_{k} \# Q^1(s)
\end{bmatrix}.
\]
In particular, if $s=0$ then 
\[
Q' =
\begin{bmatrix}
Q^3(\ell) \# \underbrace{Q^1(\ell) \# \cdots \# Q^1(\ell)}_{k-1} \\
\underbrace{Q^1(\ell+1) \# \cdots \# Q^1(\ell+1)}_{k} \# Q^1(0)
\end{bmatrix}.
\]
\end{enumerate}
\end{lemma}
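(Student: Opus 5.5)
The plan is a direct verification of Lemma~\ref{lem:mutation-row2}, tracking only the rows that change. Mutating at the second-row vertices $v^2_1,\dots,v^2_{r}$ in order only affects arrows incident to the current mutated vertex and its neighbours. We therefore run an induction on $r$, describing the full neighbourhood of $v^2_{r+1}$ in $\mu_{v^2_r}\cdots\mu_{v^2_1}(Q)$.

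The natural route is to reduce to Lemma~\ref{lem:mutation-row1}. Rows one and two of $Q$ form exactly the quiver of Lemma~\ref{lem:mutation-row1}, so restricted to those rows the mutations reproduce its conclusions. The new ingredient is the third row, which is attached to the second row as the upper level of the block $Q^3(\ell+1)\#Q^1(\ell+1)\#\cdots\#Q^2(s+1)$. The inductive claim will state the following about $\mu_{v^2_r}\cdots\mu_{v^2_1}(Q)$:
\begin{itemize}
\item the vertex $v^2_{r+1}$ has incoming arrows from $v^2_r$ and from the appropriate vertex of row one;
\item it has outgoing arrows to $v^2_{r+2}$ and to one or two third-row vertices $v^3_{r'}$, $v^3_{r'+1}$ (with $r'$ shifted by one each time a block boundary is crossed);
\item all oriented 3-cycles through $v^2_{r+1}$ are of the standard triangular shape.
\end{itemize}

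The key steps, in order, are as follows.

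First, compute the initial local picture at $v^2_1$. This uses the definition of $Q^2(s+1)$ as the right end of the third row, which has no vertex $2_0$, and the fact that $v^2_1$ is the level~2 vertex $2_0$ of $Q^1(s)$.

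Second, prove the inductive step inside a single block $Q^1(\ell)$ of row one (respectively $Q^1(\ell+1)$ of row two). The mutation at $v^2_r$ creates the arrow $v^1_{\cdot}\to v^3_{\cdot}$ and cancels an existing arrow of the opposite direction. This is a standard "slide" of a triangle and follows from \eqref{eq-mutation-Q}.

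Third, handle the block boundaries. Here the concatenation arrows $1_1[s+1]\leftarrow 1_{n_s}[s]$, $2_{\delta(s)}[s+1]\leftarrow 2_{n_s}[s]$ and $2_{\delta(s)}[s+1]\to 1_{n_s}[s]$ interact with the third row. Crossing a boundary with $\delta=0$ versus $\delta=1$ is precisely what converts a $Q^1$-type piece into a $Q^2$- or $Q^3$-type piece.

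Part (a) is read off at $r=s+1$. At that stage, the triangles created by the first $s+1$ mutations have all been cancelled against the boundary arrows of $Q^1(s)$ and $Q^2(s+1)$. No arrow remains from $\{v^1_{\le s-1},v^2_{\le s},v^3_{\le s}\}$ to the rest; this is checked by listing the neighbours of $v^2_{s+1}$ and its two neighbours in rows one and three. Part (b) follows by continuing to $r=s+k(\ell+1)$ and then deleting $v^2_{s+1+k(\ell+1)}$. We then match the resulting arrows componentwise against the definitions of $Q^3(\ell)$, $Q^1(\ell)$, $Q^2(s)$ (top) and $Q^1(\ell+1)$, $Q^1(s)$ (bottom), and against the stacking rule. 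The special case $s=0$ is the same computation with the empty $Q^2(0)$ and the trivial $Q^1(0)$.

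The main obstacle is the bookkeeping at block boundaries, where the third row has one more vertex per block than the second. There the extra arrow $1_n\leftarrow 2_n$ in $Q^3$ must appear exactly once, at the leftmost block of the top row. I would first verify a single boundary crossing by hand for small $\ell$ (say $\ell=1,2$, $k=2$), and cross-check the result with Keller's applet. That local computation would then be used as the inductive template.
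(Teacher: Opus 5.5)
Your overall route (a mutation-by-mutation check along row~2, with Lemma~\ref{lem:mutation-row1} imported for rows 1--2) is the same brute-force verification the paper relies on; the paper only says the lemma is a direct consequence of the mutation rules. The reduction itself is sound: the full subquiver on rows 1--2 contains every mutated vertex, so it evolves autonomously. However, the invariant you propose to induct on is wrong, and in fact self-contradictory. Your hypothesis at stage $r-1$ asserts $v^2_r\to v^2_{r+1}$, and $\mu_{v^2_r}$ reverses that arrow, so at stage $r$ the vertex $v^2_{r+1}$ has an \emph{outgoing} arrow to $v^2_r$, not an incoming one. For $s\ge1$, $v^2_1$ has in-arrows from $v^1_1$ and $v^3_1$ and its only out-arrow goes to $v^2_2$. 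Propagating from there, the correct invariant is: when $v^2_r$ is mutated, its in-neighbours are exactly one row-1 vertex and one row-3 vertex, and its out-neighbours are $v^2_{r-1}$ and $v^2_{r+1}$. Consequently no arrow between rows 1 and 3 is ever created. Your ``slide'' that creates $v^1\to v^3$ and cancels an opposite arrow cannot occur, because $Q$ has no row-1/row-3 arrows at all: the top quiver lives on rows 1--2 and the bottom on rows 2--3. With the correct invariant there is a clean proof of (b). Rows 2--3 also form a full subquiver containing every mutated vertex, so (b) splits into three parts: Lemma~\ref{lem:mutation-row1} for rows 1--2, the analogous two-row computation for the bottom quiver (a general-$s$ version of Lemma~\ref{lem:mutation-row3}), and the sign check above.

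Part (a) cannot be read off at $r=s+1$; as literally stated it fails for every $s\ge1$ (with $k\ge1$). The bottom concatenation arrow $2_0[2]\to 1_{s+1}[1]$ is $v^3_{s+2}\to v^2_{s+1}$, and it is untouched by $\mu_{v^2_1},\dots,\mu_{v^2_s}$. After $\mu_{v^2_s}$ one has $v^2_{s+1}\to v^2_s$. Hence $\mu_{v^2_{s+1}}$ creates $v^3_{s+2}\to v^2_s$, and no arrow is present to cancel it. Already for $k=\ell=s=1$, the quiver $\mu_{v^2_2}\mu_{v^2_1}(Q)$ contains $v^3_3\to v^2_1$, which joins the two sets in (a). This arrow survives the remaining mutations and is exactly the concatenation arrow $2_0[2]\to 1_s[1]$ of $Q^1(\ell+1)^{\#k}\#Q^1(s)$ in (b), so it is forced. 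Your neighbour-listing at $v^2_{s+1}$ actually proves a corrected separation in which $v^3_{s+2}$ is added to the separating set $\{v^1_s,v^2_{s+1},v^3_{s+1}\}$, i.e.\ the second set uses $\{v^3_r : r>s+2\}$. Alternatively, the original sets are separated after only $\mu_{v^2_s}\cdots\mu_{v^2_1}$. The paper's one-line justification does not flag this, so you should state and prove the corrected version of (a).
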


\begin{lemma}\label{lem:mutation-row3}
Let $k \geq 0$, $\ell \geq 1$.  
Consider the quiver 
\[
Q = Q^3(\ell) \# \underbrace{Q^1(\ell) \# \cdots \# Q^1(\ell)}_{k-1 \text{ times}} \# Q^2(1),
\]
where for $k = 0$ the middle block is empty and the expression is interpreted as $Q = Q^2(1)$.  
The first and second rows of $Q$ contains $1 + k\ell$ and $2 + k(\ell + 1)$ vertices, respectively. We index these vertices from right to left as
$v^1_1, v^1_2, \dots, v^1_{1 + k\ell}$ and $v^2_1, v^2_2, \dots, v^2_{1 + k(\ell+1)}$.

\begin{enumerate}[label={\rm (\alph*)}, itemsep=0.3em]

\item The vertex $v^2_1$ does not adjacent to the vertices in $\{v^1_r, r>2\}\cup\{v^2_r, r> 2\}$ in the quiver $\mu_{v^1_1}(Q)$.

\item Let $Q'$ be the quiver obtained from $\mu_{v^1_{k\ell}} \cdots \mu_{v^1_2} \mu_{v^1_1}(Q)$ by removing the vertex $v_{\,1 + k(\ell + 1)}$.  
Then 
\[
Q' \cong  \underbrace{Q^1(\ell) \# \cdots \# Q^1(\ell)}_{k \text{ times}}\# Q^1(0).
\]
\end{enumerate}
\end{lemma}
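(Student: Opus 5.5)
The statement just above is Lemma~\ref{lem:mutation-row3}, a combinatorial claim about iterated quiver mutation. I would prove it by direct induction on the number of mutations performed along the first row, tracking the local shape of the quiver after each step. Write $Q_r:=\mu_{v^1_r}\cdots\mu_{v^1_1}(Q)$ with $Q_0=Q$. The inductive invariant will be the following: $Q_r$ agrees with $Q$ away from the vertices $v^1_{r},v^1_{r+1},v^2_{r},v^2_{r+1},v^2_{r+2}$, and it has a prescribed configuration near the ``front'' $v^1_{r+1}$. In that configuration the vertices already mutated, $v^1_1,\dots,v^1_r$, together with $v^2_1,\dots,v^2_{r}$, have been rearranged into the pattern $\underbrace{Q^1(\ell)\#\cdots}$ read from the right. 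The new vertex $v^1_{r+1}$ then has exactly two incoming and two outgoing arrows, as in Figure~\ref{fig:quiver-v}.

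First I would settle part (a). Mutation at $v^1_1$ only changes arrows between neighbours of $v^1_1$, so it suffices to list the neighbours. In $Q$, $v^1_1$ is a vertex of the $Q^2(1)$ block (or of $Q^3(\ell)$ if $k=0$), and its neighbours are among $v^1_2, v^2_1, v^2_2$. The composite arrows created by mutation therefore only join vertices with indices at most $2$. Hence $v^2_1$ acquires no arrow to any $v^1_r$ or $v^2_r$ with $r>2$. This is a one-line check from \eqref{eq-mutation-Q}.

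For part (b) the work is the inductive step. I would split the first row according to whether $v^1_{r+1}$ is interior to a block $Q^1(\ell)$ (or to $Q^3(\ell)$) or sits at a junction between two blocks, where the concatenation arrows $1_1[s+1]\leftarrow 1_{n_s}[s]$, $2_{\delta(s)}[s+1]\leftarrow 2_{n_s}[s]$ and $2_{\delta(s)}[s+1]\to 1_{n_s}[s]$ appear.
\begin{itemize}
\item In the interior case, the neighbourhood of $v^1_{r+1}$ in $Q_r$ is a $2$-in/$2$-out square. The mutation shifts the square one step to the left, by the same computation as in the proof of Lemma~\ref{lem-P4-flip-iso}.
\item At a junction, the extra arrow $1_n\leftarrow 2_n$ of $Q^3$, or the missing vertex $2_0$ of $Q^2$, changes the count of arrows. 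One checks that after the mutation the junction now has the gluing pattern of $Q^1(\ell)\#Q^1(\ell)$ with $\delta=0$.
\end{itemize}
After $k\ell$ steps the leftmost block has become $Q^1(\ell)$. The rightmost piece has shrunk to $Q^1(0)$, a single level-$2$ vertex. Deleting the vertex $v^2_{1+k(\ell+1)}$ removes the single leftover arrow and gives the claimed isomorphism.

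The main obstacle is the bookkeeping at the junctions, specifically the interaction with the extra arrow of $Q^3(\ell)$ on the left end. Generically the neighbourhoods are squares, but at the first and last junction the weights and cancellations must be checked by hand. I would do this for small $\ell$ (say $\ell=1,2$, $k=1,2$) explicitly to fix the local pictures, and then verify that these pictures are stable under the shift. As the paper notes, the computation can be double-checked with Keller's applet.
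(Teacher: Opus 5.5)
Your plan, an induction along the first row with explicit bookkeeping, is the kind of argument the paper has in mind (it only says the lemma follows directly from the mutation rules). However, the invariant you would carry is wrong, and the final step cannot work as stated. Assume $k\ge1$ and number the blocks from the right, so $v^1_1=1_1[1]$, $v^2_1=2_1[1]$, $v^2_2=2_0[2]$ and $v^2_{1+k(\ell+1)}=2_\ell[k+1]$. In $Q$, each $v^1_r$ receives exactly one arrow from level~$2$, from a vertex $w_r$. For $r\ge2$ it also sends exactly one arrow to level~$2$, to $w_{r-1}$. Here $w_1,\dots,w_{1+k\ell}$ are, from right to left, the level-$2$ vertices other than $v^2_1$ and the vertices $2_\ell[s]$ with $2\le s\le k$; the arrow $w_{1+k\ell}\to v^1_{1+k\ell}$ is the extra arrow of $Q^3(\ell)$. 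Inductively, after $\mu_{v^1_r}\cdots\mu_{v^1_1}$ the vertex $v^1_{r+1}$ has exactly one incoming arrow, from $w_{r+1}$, and outgoing arrows to $v^1_{r+2}$ and (if $r\ge1$) to $v^1_r$. This is not the two-in/two-out square of Figure~\ref{fig:quiver-v}, which comes from the flip sequence in $\mathbb P_4$. Mutating at $v^1_{r+1}$ creates $w_{r+1}\to v^1_r$, cancels $v^1_{r+2}\to w_{r+1}$, and never touches level-$2$ arrows. So the junctions need no separate analysis; they only determine which level-$2$ vertices are the $w_r$. Conversely, $Q_r$ does not agree with $Q$ outside a five-vertex window. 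Every $v^1_i$ with $i<r$ now carries $v^1_i\to w_i$ and $w_{i+1}\to v^1_i$ instead of $w_i\to v^1_i$ and $v^1_i\to w_{i-1}$. Also, the level-$2$ neighbour $w_{r+1}$ is $v^2_{r+2+m}$, with $m$ the number of vertices $2_\ell[s]$ to its right, so a window indexed by $r,r+1,r+2$ is off. In (a) you also need that $v^2_1$ is adjacent only to $v^2_2$ in $Q$ itself.

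The decisive gap is the deletion step. The vertex $v^2_{1+k(\ell+1)}=2_\ell[k+1]$ is never adjacent to a mutated vertex and keeps both of its arrows. Removing it leaves $1+k\ell$ level-$1$ and $k(\ell+1)$ level-$2$ vertices, whereas $Q^1(\ell)\#\cdots\#Q^1(\ell)\#Q^1(0)$ ($k$ copies) has $k\ell$ and $1+k(\ell+1)$. The failure is already visible for $k=\ell=1$. There $\mu_{v^1_1}(Q)$ is the oriented $4$-cycle $v^1_2\to v^1_1\to v^2_2\to v^2_3\to v^1_2$ together with $v^2_1\to v^2_2$. Deleting $v^2_3$ leaves the path $v^1_2\to v^1_1\to v^2_2\leftarrow v^2_1$, while $Q^1(1)\#Q^1(0)$ is a star centred at $2_0[2]$. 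So the statement as printed is misprinted, as is the level-$2$ count $2+k(\ell+1)$, which should be $1+k(\ell+1)$. The vertex to delete is the unmutated $v^1_{1+k\ell}$, exactly as Lemmas~\ref{lem:mutation-row1} and \ref{lem:mutation-row2} delete the unmutated last vertex of the mutated row. With that correction the invariant above finishes the proof. Deleting $v^1_{1+k\ell}$ removes only $v^1_{1+k\ell}\to v^1_{k\ell}$ and $w_{1+k\ell}\to v^1_{1+k\ell}$. Then sending the $r$-th level-$t$ vertex (counted from the right) to the $r$-th level-$t$ vertex of $Q^1(\ell)\#\cdots\#Q^1(\ell)\#Q^1(0)$ is an isomorphism. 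Under it, $v^2_1$ becomes the $Q^1(0)$ block and $v^1_1$ becomes $1_1$ of the first $Q^1(\ell)$, so the level-$1$ block boundaries shift by one position. That shift is what your description of the final picture misses; the small cases you planned to compute by hand would have exposed it.
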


Recall the seed $\overline{\mathsf{t}}_{i}^{\rm qc}$ given by \eqref{eq:ti} and vertex set $I(\Delta_i)$ by \eqref{eq:Ii}. For any $\ell\leq k$, denote by $Q_{\rm sub}^\ell$ the subquiver of $Q_{\rm sub}$ induced by the vertex set $I(\Delta_1)\cup \cdots \cup I(\Delta_\ell)$.

\begin{lemma}\label{lem:subseed}
For any $2\leq i\leq k$ and $1\leq \ell\leq k+1-i$, the following statements hold in the quiver of $\overline{\mathsf{t}}_{i}^{\rm qc}$:
\begin{enumerate}
    \item[(a)] There are no arrows connecting vertices in $\bigcup_{j=1}^\ell I(\Delta_j) \setminus \{11^\ell,\dots, (n-1,n-1)^\ell\}$ to those outside $\bigcup_{j=1}^\ell I(\Delta_j)$. Consequently, we can define a sub-seed $\overline{\mathsf{t}}_{i}^{{\rm sub},\ell}$ of $\overline{\mathsf{t}}_{i}^{\rm qc}$ supported on $\bigcup_{j=1}^\ell I(\Delta_j)$, with the vertex set $\{11^\ell,\dots, (n-1,n-1)^\ell\}$ frozen.
    \item[(b)] The subquiver $Q'$ induced by the vertex set $\bigcup_{j=1}^\ell I(\Delta_j)$ is isomorphic to $Q_{\rm sub}^\ell$.
\end{enumerate}

\end{lemma}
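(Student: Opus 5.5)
We argue by induction on $i$, using the explicit shape of $\overline{\mathsf t}_i^{\rm qc}=\overleftarrow{\mu}(\Delta_{i-1})\cdots\overleftarrow{\mu}(\Delta_1)(\overline{\mathsf s}^{\rm qc}_\lambda)$ together with the row-by-row mutation Lemmas~\ref{lem:mutation-row1}, \ref{lem:mutation-row2} and \ref{lem:mutation-row3}.

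\emph{Base case.} For $i=1$ (the convention $\overline{\mathsf t}_1^{\rm qc}=\overline{\mathsf s}_\lambda^{\rm qc}$), both claims are read off from \eqref{eq:Qlambda}. The vertices of $I(\Delta_1)\cup\cdots\cup I(\Delta_\ell)$ form the rightmost $\ell$ blocks $Q^1(j)$ in each row of the stacked quiver. In a concatenation $\#$, the only arrows joining the $\ell$-th block to the $(\ell+1)$-st leave from the last vertices of the $\ell$-th block. These last vertices are exactly $jj^\ell=j0^{\ell+1}$. Hence every non-frozen vertex of the union is isolated from the outside, giving (a), and the induced subquiver is $Q^\ell_{\rm sub}$ by construction, giving (b).

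\emph{Inductive step.} Suppose the statement holds for $\overline{\mathsf t}_{i}^{\rm qc}$ with all $\ell\le k+1-i$, and apply $\overleftarrow{\mu}(\Delta_i)$. By \eqref{eq:mustandard1}, this sequence runs through the vertices of $I(\Delta_i)$ in the order $\prec$, that is, row by row from $(n-1)$ upward. Each $\overleftarrow{\mu}_{js^i}$ mutates along the $j$-th row of the whole quiver, from $j1^1$ to $(js^i)^*$.

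I would track the quiver row by row.
\begin{itemize}
\item Lemma~\ref{lem:mutation-row1}(b) handles the first row mutated. It converts the shape $Q^1(\ell)\#\cdots\#Q^1(\ell)\#Q^1(s)$ into $Q^3(\ell)\#\cdots\#Q^2(s)$.
\item Lemma~\ref{lem:mutation-row2}(b) propagates this change through each subsequent pair of stacked rows.
\item Lemma~\ref{lem:mutation-row3}(b) restores the standard form $Q^1(\ell)\#\cdots\#Q^1(0)$ at the end.
\end{itemize}
The upshot is that, after $\overleftarrow{\mu}(\Delta_i)$, the quiver is again of the stacked form \eqref{eq:Qlambda}, but shifted by one triangle. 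In this new form, the blocks coming from $\Delta_1,\dots,\Delta_{k-i}$ occupy the rightmost positions, and the rest is glued on through the same three boundary arrows.

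Parts (a) of these three lemmas are the locality statements we need. They say that, at every intermediate stage, the vertices with index $\le s$ in each row do not communicate with those beyond $s+1$. This confirms that no new arrows appear between the interior of $\bigcup_{j\le\ell}I(\Delta_j)$ and the outside. Combining this locality with the restored block form gives (a) and (b) for $\overline{\mathsf t}_{i+1}^{\rm qc}$ and every $\ell\le k-i$.

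\emph{Main obstacle.} The hard step is the bookkeeping of identifications. We must match the vertex labels $js^{i'}$ after mutation with the positions $v^r_t$ in Lemmas~\ref{lem:mutation-row1}–\ref{lem:mutation-row3}, keeping in mind that $jj^{i'}=j0^{i'+1}$. We must also check that the removed vertices in those lemmas, such as $v^2_{s+1+k(\ell+1)}$, are precisely the frozen vertices $jj^k$. These frozen vertices were stripped of half-edges when defining $Q_{\rm sub}$.

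I would first verify this matching carefully for $n=3,4$, where it can be checked with Keller's applet. I would then write the general matching as an explicit order-preserving bijection between the rows, with blocks of sizes $1,\dots,n-2$.
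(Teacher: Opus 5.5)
Your overall strategy is the paper's: its proof is the one-line assertion that iterating Lemmas~\ref{lem:mutation-row1}--\ref{lem:mutation-row3} starting from \eqref{eq:Qlambda} gives (a) and the stacked form of $Q'$. Your induction on $i$ is a sensible way to organize this. By the hypothesis at $\ell=k+1-i$, $\overleftarrow{\mu}(\Delta_i)$ only mutates non-frozen vertices of $\overline{\mathsf t}_i^{{\rm sub},k+1-i}$, since in row $j$ it never goes past position $j(k+1-i)-1$ counted from $j1^1$. So every step is the computation of $\overleftarrow{\mu}(\Delta_1)$ on $Q^{K}_{\rm sub}$ with $K=k+1-i$.

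However, your account of that computation is wrong, and the inductive step fails as written. The order $\prec$ sorts first by $s$, so $\overleftarrow{\mu}(\Delta_i)$ consists of $n-1$ passes, not one: pass $s$ runs through rows $n-1,\dots,s$ and mutates the first $jK-s$ vertices of row $j$. Write $Q^1(j)^{\#m}$ for $m$ concatenated copies of $Q^1(j)$. Your single pass (Lemma~\ref{lem:mutation-row1}, then Lemma~\ref{lem:mutation-row2} repeatedly, then Lemma~\ref{lem:mutation-row3}), after discarding the $jj^K$, leaves the following layers:
\begin{itemize}
\item $Q^1(j)^{\#(K-1)}\#Q^1(j-1)$ for $2\le j\le n-2$;
\item $Q^1(1)^{\#(K-1)}\#Q^1(0)$ for the top layer.
\end{itemize}
Their blocks are shifted by one vertex per row against the triangles, so this is not $Q^{K-1}_{\rm sub}$ on $\bigcup_{j\le K-1}I(\Delta_j)$. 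Already for $n=3$, $K=2$, after the first pass $\mu_{11^1}\mu_{21^2}\mu_{22^1}\mu_{21^1}$ the only mutable vertex $21^1$ of $I(\Delta_1)$ carries just the arrow $21^1\to 22^1$. The arrow $11^1\to 21^1$ comes back only after the second pass $\mu_{22^1}\mu_{21^1}$. The partial-block parameter $s$ in Lemmas~\ref{lem:mutation-row1} and \ref{lem:mutation-row2} exists precisely so that they can be re-applied in passes $2,\dots,n-1$. After the last pass, the lemmas give each layer as $Q^3(j)\#Q^1(j)^{\#(K-2)}$. This agrees with $Q^1(j)^{\#(K-1)}$ only up to the arrow $(j+1,j+1)^{K-1}\to jj^{K-1}$ between frozen vertices.

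The same miscount undermines your argument for (a). The vertices deleted in parts (b) of the lemmas are the frozen $jj^K$ only in the first pass. In pass $s$ the deleted vertex of row $j$ is the one in position $jK-s+1$, so altogether you delete all of $I(\Delta_K)$, mutable vertices included. Parts (a) of the three lemmas do not control these vertices. They say only that, right after the level-two vertices of the first (rightmost, possibly partial) block have been mutated, that block meets the rest only through its last vertices. This is a statement about the right end of the rows at one intermediate moment, not ``at every intermediate stage.'' It says nothing about arrows between the retained mutable vertices and the vertices deleted at the left end. To prove (a) you must keep the deleted vertices in the bookkeeping and check that at the end they attach only to the last vertices $jj^{K-1}$ of the retained rows. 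In the example above, $21^2$ ends up adjacent only to $11^1$, $22^1$ and the frozen $11^2$, $22^2$. The paper's one-line proof does not spell this check out either.
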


\begin{proof}
By iteratively applying Lemmas \ref{lem:mutation-row1}, \ref{lem:mutation-row2}, and \ref{lem:mutation-row3} to \eqref{eq:Qlambda}, we observe that assertion (a) holds. Moreover, the subquiver $Q'$ takes the following form:
\begin{equation}\label{eq:Q'}
Q' = 
\begin{bmatrix}
\underbrace{Q^1(1) \# \cdots \# Q^1(1)}_{\ell} \\
\underbrace{Q^1(2) \# \cdots \# Q^1(2)}_{\ell}\\
\vdots\\
\underbrace{Q^1(n-2) \# \cdots \# Q^1(n-2)}_{\ell}
\end{bmatrix}.
\end{equation}
This establishes assertion (b).
\end{proof}

\subsection{$g$-vector}

Let $Q$ be an (ice) quiver with vertex set $\mathcal{V}$. The \textbf{framed quiver} $\widetilde{Q}$ associated with $Q$ is constructed by first converting all frozen vertices of $Q$ into mutable vertices and deleting all arrows between them, and then adjoining, for each vertex $v \in \mathcal{V}$, a new frozen vertex $v'$ together with a single arrow $v' \to v$. Consequently, the mutable vertex set of $\widetilde{Q}$ is $\mathcal V$ and the frozen set is $\mathcal V'$, where $\mathcal{V}' = \{v' \mid v \in \mathcal{V}\}$.

For any quiver $\widetilde{Q}'$ that is mutation-equivalent to $\widetilde{Q}$, a vertex $v \in \mathcal{V}$ is said to be \textbf{green} in $\widetilde{Q}'$ if $\widetilde{Q}'(w', v) \geq 0$ for all $w' \in \mathcal{V}'$, and \textbf{red} otherwise. By the sign-coherence of $c$-vectors (see, e.g.,~\cite{GHKK}), every vertex is either green or red.

A sequence of mutations $\mu_{v_m} \cdots \mu_{v_1}$ is called \textbf{green} if $v_{i+1}$ is green in the quiver $\mu_{v_i} \cdots \mu_{v_1}(\widetilde{Q})$ for all $i = 0, \dots, m-1$. Such a sequence is called \textbf{maximal green} if it is green and, additionally, every vertex in the final quiver $\mu_{v_m} \cdots \mu_{v_1}(\widetilde{Q})$ is red.

The commutative cluster algebra $\mathcal{A}^{\mathrm{prin}}_{Q}$ associated with the extended exchange matrix $\widetilde{Q}$ is referred to as the \textbf{cluster algebra with principal coefficients} (with initial exchange matrix $Q$). We denote the initial mutable cluster variables by $A_v$ ($v \in \mathcal{V}$) and the frozen variables by $A_{v'}$ ($v' \in \mathcal{V}'$).

By assigning degrees via
\[
\deg(A_v) = \mathbf{e}_v \quad \text{and} \quad \deg(A_{v'}) = -Q(-, v),
\]
the algebra $\mathcal{A}^{\mathrm{prin}}_{Q}$ becomes a graded algebra in which every cluster variable is homogeneous. Here, $\mathbf{e}_v \in \mathbb{Z}^{\mathcal{V}}$ denotes the standard basis vector indexed by $v$, and $Q(-,v)$ denotes the column of $Q$ indexed by $v$. The degree of a cluster variable $A$ is called its \textbf{$g$-vector}, denoted by $g(A)$.

Given a quantum seed $\mathsf{s}$ with quiver $Q$, let $A$ be a cluster variable of the quantum cluster algebra $\mathcal{A}_{\mathsf{s}}$. Let $\widetilde{A}$ be the cluster variable of $\mathcal{A}^{\mathrm{prin}}_{Q}$ corresponding to $A$. The \textbf{(extended) $g$-vector (with respect to the seed $\mathsf s$)} of $A$ is then defined as $g(\widetilde{A})$, and is denoted by $g(A)$.

\begin{lemma}\cite[Proposition 6.6]{FZ4}\label{lem:grec}
Let $\mathsf s=((\widetilde A_{v},v\in \mathcal V\sqcup \mathcal V'),\widetilde Q')$ be a seed of $\mathcal{A}^{\mathrm{prin}}_{Q}$. For any $v\in \mathcal V$, we have  
    $$g(\mu_v(\widetilde A_v))=\sum_{\substack{w \in \mathcal V \sqcup \mathcal V' \\ \widetilde Q'(w,v) > 0}}\widetilde Q'(w,v)g(\widetilde A_w)-g(\widetilde A_v)=\sum_{\substack{w \in \mathcal V \sqcup \mathcal V' \\ \widetilde Q'(w,v) < 0}}(-\widetilde Q'(w,v))g(\widetilde A_w)-g(\widetilde A_v).$$
\end{lemma}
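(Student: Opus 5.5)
The plan is to derive both formulas from the homogeneity of the exchange relation in $\mathcal{A}^{\mathrm{prin}}_{Q}$. In the seed $\mathsf s$, the exchange relation at a mutable vertex $v$ reads
\[
\mu_v(\widetilde A_v)\,\widetilde A_v=\prod_{\widetilde Q'(w,v)>0}\widetilde A_w^{\widetilde Q'(w,v)}+\prod_{\widetilde Q'(w,v)<0}\widetilde A_w^{-\widetilde Q'(w,v)} .
\]
Suppose every $\widetilde A_w$ of $\mathsf s$ is homogeneous, and suppose the two monomials on the right have the same degree. Then $\mu_v(\widetilde A_v)$ is a quotient of homogeneous elements in the graded Laurent ring $\mathbb Z[A_u^{\pm1},A_{u'}\mid u\in\mathcal V]$, so it is homogeneous. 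Its degree equals the degree of either monomial minus $g(\widetilde A_v)$, and this gives both expressions in the statement at once.

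So the content of the lemma is the following balancing identity: for every seed $(\widetilde A,\widetilde Q')$ mutation-equivalent to the initial one and every mutable $v$,
\[
\sum_{w\in\mathcal V\sqcup\mathcal V'}\widetilde Q'(w,v)\,g(\widetilde A_w)=0 .
\]
Together with homogeneity of all cluster variables, I would prove this by induction on the length of the mutation sequence from the initial seed.

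\textbf{Base case.} In the initial framed quiver $\widetilde Q$, the mutable part contributes $\sum_{w\in\mathcal V}Q(w,v)\mathbf e_w=Q(-,v)$. The frozen vertices contribute only through the single arrow $v'\to v$, giving $1\cdot\deg A_{v'}=-Q(-,v)$. These cancel, so the sum is zero. This is exactly why the degrees $\deg A_{v'}=-Q(-,v)$ were chosen.

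\textbf{Inductive step.} Form the matrix $G$ whose columns are the degrees of the extended cluster, and write $B=\widetilde Q'$ restricted to mutable columns. The identity says $GB=0$. After mutating at $k$:
\begin{itemize}
\item $G'$ agrees with $G$ except in column $k$, where $g'_k=-g_k+\sum_w[B(w,k)]_+g_w$; this uses the two monomials being balanced at $k$ by the induction hypothesis;
\item $B'$ is given by \eqref{eq-mutation-Q}.
\end{itemize}
I will check $G'B'(-,v)=0$ column by column. For $v=k$, $B'(-,k)=-B(-,k)$ and $B(k,k)=0$, so the sum reduces to $-GB(-,k)=0$. For $v\neq k$, I expand the sum and group terms. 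The correction $\tfrac12\bigl(B(w,k)|B(k,v)|+|B(w,k)|B(k,v)\bigr)$ in $B'(w,v)$ should cancel exactly against the change in the $k$-th column, $B'(k,v)g'_k=-B(k,v)\bigl(-g_k+\sum_w[B(w,k)]_+g_w\bigr)$. To see this I split into the cases $B(k,v)>0$ and $B(k,v)<0$, and use $GB(-,k)=0$ to trade $\sum_w[B(w,k)]_+g_w$ for $\sum_w[-B(w,k)]_+g_w$ where needed. Finally, homogeneity of the new variable then follows from the first paragraph, which closes the induction.

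I expect this sign bookkeeping in the case $v\neq k$ to be the only real obstacle; it is the standard matrix identity underlying ``$GB$ is mutation invariant''. If the case split becomes messy, I would instead use the factorization $B'=(J_k+E)B(J_k+F)$ of the mutated matrix into elementary sign-change matrices. With it the invariance becomes formal, since the update of $G$ is the matching right multiplication by $J_k+E$.
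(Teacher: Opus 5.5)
Your proof is correct. The paper gives no argument of its own, only the citation \cite[Proposition~6.6]{FZ4}. Your overall structure is the same as there: show each exchange relation is homogeneous, then read off the degree of $\mu_v(\widetilde A_v)$ from either monomial. The difference lies in how the balancing identity $\sum_w\widetilde Q'(w,v)\,g(\widetilde A_w)=0$ is established.

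In \cite{FZ4} this identity is the statement that $\hat y_v=\prod_w\widetilde A_w^{\widetilde Q'(w,v)}$ has degree $0$. It holds in the initial seed by the choice $\deg A_{v'}=-Q(-,v)$. It persists because the $\hat y$'s mutate by the $Y$-pattern rule, namely $\hat y_k\mapsto\hat y_k^{-1}$ and $\hat y_j\mapsto\hat y_j\,\hat y_k^{[\widetilde Q'(k,j)]_+}(1+\hat y_k)^{-\widetilde Q'(k,j)}$ for $j\neq k$, which visibly preserves degree $0$. That route is conceptual and is also what feeds the separation formula $F(\hat y)\,x^{g}$, but it presupposes the $Y$-pattern result. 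Yours is a self-contained linear-algebra check.

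Your check needs no case split. Using $[a]_+=\tfrac12(a+|a|)$ and $B(k,k)=0$, the $v\neq k$ column of $G'B'$ collapses to $\sum_w B(w,v)g_w+\tfrac12\bigl(|B(k,v)|-B(k,v)\bigr)\sum_w B(w,k)g_w$. This vanishes by the inductive hypothesis. Equivalently, your factorization argument goes through because $(J_k+E)^2=I$.

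The only tacit input is in your first paragraph. To call $\mu_v(\widetilde A_v)$ homogeneous, you use that it lies in the graded Laurent ring (the Laurent phenomenon) and that this ring is a domain. Alternatively, you can work with homogeneous rational functions, whose degree is well defined.
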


\subsection{$g$-vectors of $\overline A_{js}\langle i\rangle$ and $\overline A^{\rm qc}_{js}\langle i\rangle$} 

For any $k \geq 2$, let $\lambda$ denote the star-like triangulation at vertex $1$ of $\mathbb P_{k+2}$. 
For any $2 \leq i \leq k$ and $j,s$ with $1 \leq s \leq j \leq n-1$, we recall that the quantum cluster variables $\overline A^{\rm qc}_{js}\langle i\rangle$ of $\overline {\mathscr A}_{\omega}^{\rm qc}(\mathbb P_{k+2})$ and $\overline A_{js}\langle i\rangle$ of $\overline {\mathscr A}_{\omega}(\mathbb P_{k+2})$ are defined in Section \ref{sub-quasi-polygon}. In this section, we compute the $g$-vectors of $\overline A_{js}\langle i\rangle$ and $\overline A^{\rm qc}_{js}\langle i\rangle$ with respect to the seed $\overline s_{\lambda}$ and $\overline s^{\rm qc}_{\lambda}$, respectively. We shall only focus on the quadrilateral formed by the vertices $1,i,i+1,i+2$ inside $\mathbb P_{k+2}$.

Let $\mathbb P_4$ be a quadrilateral with triangulation $\lambda_{\rm sub}$, as in Figure \ref{fig:P4-labeling}. Let $\lambda'_{\rm sub}$ be another triangulation of $\mathbb P_4$. We label the vertices of $\overline V_\lambda$ as in Figure \ref{fig:P4-labeling}. For each $i=0,1,\cdots, n-2$ and each $0\leq t\leq i$, define $V_\lambda^{i,t}$ and $V_\lambda^i$ as in \eqref{def-Vite} and \eqref{def-Vie}, respectively. Let $\mathcal A^{\rm prin}_{\overline Q_{\lambda_{\rm sub}}}$ be the commutative cluster algebra with principal coefficients and initial exchange matrix $\overline Q_{\lambda_{\rm sub}}$. Let $\mathsf s_{\lambda_{\rm sub}}^{\rm prin}$ and $\mathsf s_{\lambda'_{\rm sub}}^{\rm prin}$ be the seeds of $\mathcal A^{\rm prin}_{\overline Q_{\lambda_{\rm sub}}}$ associated with the triangulations $\lambda_{\rm sub}$ and $\lambda'_{\rm sub}$, respectively. 
For each $i=0,1,\cdots,n-2$, denote by $\widetilde \mu_{i}$ the mutation sequence taken at all $v\in V_\lambda^i$. Then $\mathsf s_{\lambda'_{\rm sub}}^{\rm prin}=\widetilde\mu_{n-2}\cdots \widetilde \mu_{1}\widetilde \mu_{0}(\mathsf s_{\lambda_{\rm sub}}^{\rm prin})$ \cite{FG06,GS19}. For any $v\in \overline V_\lambda$ denote by $A_v$ and $A'_v$ the cluster variables corresponding to the vertex $v$ in the seeds $\mathsf s_{\lambda_{\rm sub}}^{\rm prin}$ and $\mathsf s_{\lambda'_{\rm sub}}^{\rm prin}$, respectively. 

For each $i=0, 1,\cdots,n-2$ and $v\in \overline V_\lambda$, denote by $g^{(i)}_v$ the $g$-vector of the cluster variable at $v$ in the seed $\widetilde\mu_{i}\cdots \widetilde \mu_{1}\widetilde \mu_{0}(\mathsf s_{\lambda_{\rm sub}}^{\rm prin})$. 

The following lemma follows immediately from the mutation rules.

\begin{lemma}\label{lem:Q1}
 For any $i\geq 0$, let $Q'=\widetilde\mu_{i}\cdots \widetilde \mu_{1}\widetilde \mu_{0}({\overline Q}_\lambda)$. Then for every vertex $v=(s,t)\in V_\lambda^i$, we have $Q'(w,v)\neq 0$ if and only if $w\in\{ (s-1,t),(s+1,t),(s,t-1),(s,t+1)\}$. Moreover, we have
 $$Q'((s,t\pm 1),(s,t))=-Q'((s\pm 1,t),(s,t))=1.$$
\end{lemma}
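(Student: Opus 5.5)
The statement is Lemma~\ref{lem:Q1}: after performing $\widetilde\mu_i\cdots\widetilde\mu_0$, every vertex $v=(s,t)\in V_\lambda^i$ is adjacent to exactly its four grid neighbours, with $Q'((s,t\pm1),(s,t))=1$ and $Q'((s\pm1,t),(s,t))=-1$.

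I would argue by induction on $i$, using the local description of the Fock--Goncharov flip sequence already invoked in the proof of Lemma~\ref{lem-P4-flip-iso}.

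The inductive claim is as follows. At the moment any vertex $v=(s,t)\in V_\lambda^i$ is about to be mutated in $\widetilde\mu_i$, i.e.\ in $\widetilde\mu_{i-1}\cdots\widetilde\mu_0(\overline Q_\lambda)$, its neighbourhood is the four-valent configuration of Figure~\ref{fig:quiver-v}. Two neighbours $(s,t\pm1)$ are sources into $v$ and two neighbours $(s\pm1,t)$ are targets of arrows out of $v$, each arrow of weight $1$, and there are no other arrows at $v$.

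\emph{Base case $i=0$.} This is read off from the initial quiver $\overline Q_\lambda$ of Figure~\ref{fig:P4-labeling}. The vertices of $V^0_\lambda$ lie on the diagonal $e$, and each one sits in four small triangles with exactly this alternating orientation.

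\emph{Inductive step.} Assume the configuration holds before $\widetilde\mu_i$. The vertices of $V^i_\lambda$ are pairwise non-adjacent at that stage, since they form a checkerboard sublattice of the grid. Hence the mutations inside $\widetilde\mu_i$ commute, and mutating at $v$ does not alter the arrows at any other vertex of $V^i_\lambda$.

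Mutation at $v$ reverses the four arrows at $v$. After $\widetilde\mu_i$, therefore, $Q'((s,t\pm1),(s,t))=1$ and $Q'((s\pm1,t),(s,t))=-1$ in the paper's sign convention. No new arrows are created at $v$, because composite arrows $w\to v\to w'$ only add arrows between the neighbours $w,w'$, not at $v$. This gives the statement for $V^i_\lambda$.

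Carrying the induction to $V^{i+1}_\lambda$ requires checking that, after $\widetilde\mu_i$, each vertex of $V^{i+1}_\lambda$ again has the four-valent configuration. The mutations at the vertices of $V^i_\lambda$ create and cancel diagonal arrows among their neighbours, and one must confirm that the net effect on each vertex of $V^{i+1}_\lambda$ is again exactly four grid arrows with the right orientation. This is the standard octahedron-recurrence bookkeeping of \cite[Figure~10.3]{FG06}, which the paper already cites, so I would invoke it directly.

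The main obstacle is this cancellation check. I would verify it with the explicit formula \eqref{eq-mutation-Q} on a generic interior vertex, and separately handle vertices adjacent to the boundary of $\mathbb P_4$, where some grid neighbours are frozen or absent. As a fallback, Keller's applet can be used for small $n$.
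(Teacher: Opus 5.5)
Your strategy is the paper's. The paper simply asserts that the lemma follows from the mutation rules, and in the proof of Lemma~\ref{lem-P4-flip-iso} it uses the same four-valent picture (Figure~\ref{fig:quiver-v}, via \cite[Figure~10.3]{FG06}) for the vertices about to be mutated. However, the orientation you put on that picture is backwards, and your conclusion does not follow from it.

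You assert that just before $\widetilde\mu_i$ one has $(s,t\pm1)\to v$ and $v\to(s\pm1,t)$. The paper's convention is $Q(w,v)=a(w,v)-a(v,w)$, and mutation at $v$ only reverses these arrows. Your premise would therefore give $Q'((s,t\pm1),v)=-1$ and $Q'((s\pm1,t),v)=+1$, which is the negative of the lemma and of what you then write down.

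By your own (correct) observation that no arrows at $v$ are created, the lemma forces the configuration before $\widetilde\mu_i$ to be $v\to(s,t\pm1)$ and $(s\pm1,t)\to v$. At the diagonal vertices of $\overline Q_\lambda$ this reads $(j,j)\to(j,j\pm1)$ and $(j\pm1,j)\to(j,j)$. This is also what the base case of Lemma~\ref{lem:Q2} requires: the arrow $(j+1,j+1)'\to(j+1,j)$ exists after $\widetilde\mu_0$ only if $(j+1,j+1)\to(j+1,j)$ beforehand. So your base case is false as stated, and the ``therefore'' step is a non sequitur; two sign errors cancel. Reverse the orientation in your inductive claim.

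Be aware also that your inductive hypothesis cannot carry the induction by itself, since it only records the neighbourhoods of $V^i_\lambda$ before $\widetilde\mu_i$. Whether $w\in V^{i+1}_\lambda$ is four-valent after $\widetilde\mu_i$ depends on the arrows at $w$ that are not incident to $V^i_\lambda$ beforehand. An example is its remaining arrows to $(s\pm1,t\pm1)$, which must be cancelled by 2-paths through $V^i_\lambda$. Already in passing from $V^0_\lambda$ to $V^1_\lambda$, the arrow of $\overline Q_\lambda$ between $(2,1)$ and $(1,0)$ disappears only because the mutation at $(1,1)$ adds an opposite one. Your hypothesis says nothing about such arrows at later stages.

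So the ``generic interior vertex'' computation with \eqref{eq-mutation-Q} that you propose cannot be carried out without a description of the whole intermediate quiver $\widetilde\mu_{i-1}\cdots\widetilde\mu_0(\overline Q_\lambda)$. That description is exactly what you import from \cite{FG06}, as the paper also does. Once the orientation is fixed, your argument stands at the paper's level of detail.

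A minor point: for $1\le s,t\le n-1$ none of the four grid neighbours is a corner of $\mathbb P_4$, so they always exist; near the boundary some of them are merely frozen, not absent.
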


\begin{lemma}\label{lem:Q2}
For any integer $i \geq 0$, let $\widetilde{Q}' = \widetilde{\mu}_i \cdots \widetilde{\mu}_1 \widetilde{\mu}_0(\widetilde{\overline{Q}}_\lambda).$  
Then for every vertex $v \in V_\lambda^{i+1}$, i.e., $v = (j+i+1-s,\, j+s)$ or $v = (j+s,\, j+i+1-s)$ for some integers $j,s$ satisfying  
$1 \leq j + s \leq j + i + 1 - s \leq n - 1,$
the following hold:
\begin{enumerate}[label=\textup{(\alph*)}]
    \item If $v = (j+i+1-s,\, j+s)$, then
    \begin{equation}\label{eq:mup41}
    \begin{aligned}
        \widetilde{Q}'((j+i+1-s,\, j)',\, v) &= \widetilde{Q}'((j+i+1-s,\, j+i+1)',\, v) = 1, \\
        \widetilde{Q}'((j+i+1-s,\, \ell)',\, v) &= 0 \quad \text{for all } \ell < j \text{ or } \ell > j+i+1.
    \end{aligned}
    \end{equation}
    
    \item If $v = (j+s,\, j+i+1-s)$, then
    \begin{equation}\label{eq:mup42}
    \begin{aligned}
        \widetilde{Q}'((j+s,\, j)',\, v) &= \widetilde{Q}'((j+s,\, j+i+1)',\, v) = 1, \\
        \widetilde{Q}'((j+s,\, \ell)',\, v) &= 0 \quad \text{for all } \ell < j \text{ or } \ell > j+i+1.
    \end{aligned}
    \end{equation}
\end{enumerate}
Consequently, the mutation sequence $\widetilde{\mu}_{i} \cdots \widetilde{\mu}_1 \widetilde{\mu}_0$ is a green sequence.
\end{lemma}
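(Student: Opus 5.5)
We argue by induction on $i$, tracking the framed quiver $\widetilde{Q}'=\widetilde\mu_i\cdots\widetilde\mu_0(\widetilde{\overline Q}_\lambda)$ column by column, and in particular the entries $\widetilde Q'(w',v)$ with $w'\in\mathcal V'$ a frozen principal-coefficient vertex and $v\in V^{i+1}_\lambda$. Recall that by \eqref{def-Vite}--\eqref{def-Vie} the sets $V^{i}_\lambda$ and $V^{i+1}_\lambda$ are disjoint. Hence a vertex $v\in V^{i+1}_\lambda$ is not mutated during $\widetilde\mu_i$, and its column changes at that step only through the mutations at its neighbours in $V^i_\lambda$.

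\emph{Base case.} Take $i=0$. In $\widetilde{\overline Q}_\lambda$ each mutable vertex $u$ has a single frozen arrow $u'\to u$. The vertices of $V^0_\lambda$ form the diagonal strip, and each has the four neighbours described in Lemma~\ref{lem:Q1}. Mutating at all of $V^0_\lambda$ creates arrows $u'\to v$ whenever $u'\to u\to v$ with $u\in V^0_\lambda$. For $v=(j+1-s,j+s)$ the relevant $u$ are the two diagonal neighbours, which are $(j+1-s,j)$ and $(j+1-s,j+1)$ in the row/column convention of the statement. This gives exactly the two entries $+1$ in \eqref{eq:mup41}, and all other primed entries vanish.

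\emph{Inductive step.} Suppose the statement holds at level $i-1$. Then the column of every $u\in V^i_\lambda$ in the framed quiver just before $\widetilde\mu_i$ has exactly two primed entries equal to $+1$, at $(\cdot,j)'$ and $(\cdot,j+i)'$, together with the entries inherited from its own original frozen vertex. We mutate at each $u\in V^i_\lambda$. By Lemma~\ref{lem:Q1} the arrows between $u$ and a vertex $v\in V^{i+1}_\lambda$ are known, namely $Q'((s,t\pm1),(s,t))=1$ and $Q'((s\pm1,t),(s,t))=-1$. Using \eqref{eq-mutation-Q}, the new entry is
\[
\widetilde Q'(w',v)+\tfrac12\bigl(\widetilde Q'(w',u)|Q'(u,v)|+|\widetilde Q'(w',u)|Q'(u,v)\bigr),
\]
summed over the two neighbours $u$ of $v$ in $V^i_\lambda$. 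The outer endpoints $j$ and $j+i+1$ come from the two extreme neighbours. The interior contributions cancel, because on the previous layer the coefficient entries telescope along a row.

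\emph{Main obstacle.} The hard part is keeping track of the signs: we must check that the only surviving entries are the two $+1$'s, with no leftover entries of the form $-1$ from earlier layers. To handle this, I would first strengthen the induction hypothesis so that it records the full primed column of every vertex in $V^{i}_\lambda$ and also of the vertices of $V^{i-1}_\lambda$ that have already been mutated (these are red, with negative primed entries). With that description in hand, the bookkeeping reduces to a finite local check around $v$.

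\emph{Green-sequence consequence.} Each $v\in V^{i+1}_\lambda$ has all primed entries $\ge0$, so it is green when it is mutated in $\widetilde\mu_{i+1}$. Since the order of mutation inside $V^{i+1}_\lambda$ does not matter (those vertices are pairwise non-adjacent, by Lemma~\ref{lem:Q1}), the whole sequence is green.
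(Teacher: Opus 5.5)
\paragraph{There is a genuine gap.} It sits exactly at what you call the main obstacle. You plan to prove that after $\widetilde\mu_i\cdots\widetilde\mu_0$ the only nonzero primed entries of $v\in V^{i+1}_\lambda$ are the two endpoint $+1$'s, with the interior contributions cancelling by telescoping. This is false already for $i=1$.

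Take $n\ge 4$ and the diagonal vertex $v=(j+1,j+1)\in V^{2}_\lambda$.
\begin{itemize}
\item $v$ was mutated in $\widetilde\mu_0$, so just before $\widetilde\mu_1$ its primed column is $-(j+1,j+1)'$.
\item By Lemma~\ref{lem:Q1} with $i=0$, after $\widetilde\mu_0$ the neighbours $(j+1,j)$ and $(j+1,j+2)$ point into $v$, while $v$ points to $(j,j+1)$ and $(j+2,j+1)$.
\item The columns of $V^1_\lambda$ are nonnegative, so only the first two neighbours contribute when $\widetilde\mu_1$ is applied. By the case $i=0$ their columns are $(j+1,j)'+(j+1,j+1)'$ and $(j+1,j+1)'+(j+1,j+2)'$.
\end{itemize}
Hence after $\widetilde\mu_1$ the column of $v$ is $(j+1,j)'+(j+1,j+1)'+(j+1,j+2)'$. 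For instance, for $n=4$ the vertex $(2,2)$ ends with $(2,1)'+(2,2)'+(2,3)'$. The two neighbour columns overlap on the interior of the interval, and $v$'s own red column removes only one copy of that overlap, so nothing telescopes away.

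Your base case has a related slip. For $i=0$ one must have $s=0$, so $(j+1-s,j)$ is $v$ itself. The entry at $v'$ is $v$'s own framing arrow, and only the diagonal neighbour $(j+1,j+1)$ points into $v$. If $(j,j)$ did too, you would get a third entry $(j,j)'$.

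\paragraph{What does hold is an interval invariant, and it makes your induction close.} For $v=(a,b)\in V^{i+1}_\lambda$ with parameter $j$ as in the statement, the primed column after $\widetilde\mu_i\cdots\widetilde\mu_0$ is exactly $\sum_{\ell=j}^{j+i+1}(a,\ell)'$.
\begin{itemize}
\item Case $s=0$ (say in case (a), so $v=(a,j)$). No neighbour of $v$ lies in $V^0_\lambda\cup\cdots\cup V^{i-1}_\lambda$, so its own column is still $(a,j)'$. The only neighbour in $V^i_\lambda$ pointing into $v$ is $(a,j+1)$, contributing $\sum_{\ell=j+1}^{j+i+1}(a,\ell)'$.
\item Case $s>0$. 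Then $v\in V^{i-1}_\lambda$ was mutated at the previous step, so its own column is $-\sum_{\ell=j+1}^{j+i}(a,\ell)'$ (this is $-v'$ when $i=1$). The same-row neighbours $(a,b\pm1)\in V^i_\lambda$ point into $v$ and add $\sum_{\ell=j}^{j+i}(a,\ell)'$ and $\sum_{\ell=j+1}^{j+i+1}(a,\ell)'$. The neighbours $(a\pm1,b)$ contribute nothing, because the arrow goes from $v$ to them and their columns are nonnegative.
\end{itemize}
This telescopes to the full interval. It gives the endpoint and vanishing assertions of the lemma, together with nonnegativity of every entry, hence greenness.

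\paragraph{Comparison with the paper.} The paper's induction tracks only the two endpoint entries and the vanishing outside $[j,j+i+1]$ in row $a$, which is all the lemma literally asserts. Nonnegativity of the interior entries, needed for the final ``green'' sentence, is not part of that. Your instinct that the green conclusion requires control of the whole column is right, but the column you set out to establish is the wrong one.
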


\begin{proof}
We proceed by induction on $i$. By symmetry, it suffices to prove part (a); part (b) follows dually.
 
When $i=0$, we have $v = (j+1,j)$, one checks directly from the definition of $\widetilde{\overline{Q}}_\lambda$ that \eqref{eq:mup41} holds. Thus the base case is verified.
 
Assume that the statement holds for all indices less than $i$. Set  
$\widetilde{Q}'' = \widetilde{\mu}_{i-1} \cdots \widetilde{\mu}_1 \widetilde{\mu}_0(\widetilde{\overline{Q}}_\lambda).$ 

We consider two cases.

\smallskip\noindent\textbf{Case 1: $s = 0$.}  
Then $v = (j+i+1,\, j)$. Note that $(j+i+1,\, j+1) \in V_\lambda^i$, while $v \notin V_\lambda^\ell$ for any $\ell \leq i$. By the induction hypothesis applied to $(j+i+1,\, j+1)$, we have
\begin{equation}\label{eq:mup43}
\begin{aligned}
  &  \widetilde{Q}''((j+i+1,\, j+i+1)',\, (j+i+1,\, j+1)) = 1, \\
  &  \widetilde{Q}''((j+i+1,\, \ell)',\, (j+i+1,\, j+1)) = 0 \quad \text{for all } \ell < j+1 \text{ or } \ell > j+i+1.
\end{aligned}
\end{equation}

By the induction hypothesis, the mutation sequence $\widetilde{\mu}_{i-1} \cdots \widetilde{\mu}_0$ is green. From Lemma~\ref{lem:Q1}, we see that the mutation sequence $\widetilde{\mu}_{i-1} \cdots \widetilde{\mu}_0$ does not affect the frozen vertices incident to $v$. In particular,
\[
\widetilde{Q}''((j+i+1,\, j)',\, v) = 1, \quad \text{and} \quad \widetilde{Q}''(w',\, v) = 0 \text{ for all other frozen vertices } w'.
\]
Now, applying the mutation $\widetilde{\mu}_i$ at the vertices in $V_\lambda^i$ (which includes $(j+i+1,\, j+1)$ but not $v$), the standard mutation rule yields precisely the arrow pattern described in \eqref{eq:mup41}. Hence the claim holds in this case.

\smallskip\noindent\textbf{Case 2: $s > 0$.}  
Then both $(j+i+1-s,\, j+s-1)$ and $(j+i+1-s,\, j+s+1)$ belong to $V_\lambda^i$. Moreover, observe that
\[
v = (j+i+1-s,\, j+s) = ((j+1) + (i-1) - (s-1),\, (j+1) + (s-1)),
\]
so $v \in V_\lambda^{i-1}$ as well. Applying the induction hypothesis to the relevant vertices at step $i-1$, we obtain
\begin{equation}\label{eq:mup44}
\begin{aligned}
    &\widetilde{Q}''((j+i+1-s,\, j)',\, (j+i+1-s,\, j+s-1)) = 1, \\
    &\widetilde{Q}''((j+i+1-s,\, j+i)',\, (j+i+1-s,\, j+s+1)) = 1, \\
    &\widetilde{Q}''((j+i+1-s,\, \ell)',\, v) = 0 \quad \text{for all } \ell < j+1 \text{ or } \ell > j+i, \\
    &\widetilde{Q}''((j+i+1-s,\, \ell)',\, (j+i+1-s,\, j+s\pm1)) = 0 \quad  \text{for all } \ell < j \text{ or } \ell > j+i+1.
\end{aligned}
\end{equation}
Furthermore, by Lemma~\ref{lem:Q1}, the quiver $\widetilde{Q}''$ contains arrows
\begin{equation}\label{eq:mup45}
    \widetilde{Q}''((j+i+1-s,\, j+s-1),\, v) = \widetilde{Q}''((j+i+1-s,\, j+s+1),\, v) = 1.
\end{equation}
Applying the mutation $\widetilde{\mu}_i$ at the vertices of $V_\lambda^i$ (which include $(j+i+1-s,\, j+s\pm1)$), the resulting quiver $\widetilde{Q}'$ acquires exactly the arrows stated in \eqref{eq:mup41}. 

This completes the proof.
\end{proof}








The following follows immediate by Lemmas \ref{lem:grec}, \ref{lem:Q1} and \ref{lem:Q2}.

\begin{lemma}
For every $ i = 1, \dots, n-2$ and integers $ j, s $ with $1 \leq j + s \leq j + i - s \leq n - 1,$
the following recurrence relations hold:
\begin{equation}\label{eq:recg}
\begin{aligned}
    g^{(i)}_{(j+i-s,\;j+s)} &= g^{(i-1)}_{(j+i-s,\,j+s+1)} + g^{(i-1)}_{(j+i-s,\,j+s-1)} - g^{(i-1)}_{(j+i-s,\;j+s)}, \\
    g^{(i)}_{(j+s,\;j+i-s)} &= g^{(i-1)}_{(j+s,\;j+i-s+1)} + g^{(i-1)}_{(j+s,\;j+i-s-1)} - g^{(i-1)}_{(j+s,\;j+i-s)}.
\end{aligned}
\end{equation}
\end{lemma}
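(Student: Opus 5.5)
The plan is to apply the $g$-vector recurrence of Lemma~\ref{lem:grec} to a single mutation inside the block $\widetilde\mu_i$, namely the mutation at $v=(j+i-s,j+s)$ (or its mirror image $(j+s,j+i-s)$). All other vertices in $V_\lambda^i$ are mutated as well, but we use that none of them is adjacent to $v$. By Lemma~\ref{lem:Q1}, vertices of the same layer $V_\lambda^{i}$ form a grid spaced by $2$ in one coordinate, so they are pairwise non-adjacent at that stage. Hence the order within $\widetilde\mu_i$ is irrelevant, and we may assume $v$ is mutated first within the block. The cluster variables at the neighbours of $v$ then still carry the $g$-vectors $g^{(i-1)}_w$.

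Next we identify the arrows at $v$ in the quiver $\widetilde Q''=\widetilde\mu_{i-1}\cdots\widetilde\mu_0(\widetilde{\overline Q}_\lambda)$ just before the mutation. On the mutable side, Lemma~\ref{lem:Q1} applied at level $i-1$ gives the four neighbours $(s',t'\pm1)$ and $(s'\pm1,t')$ of $v=(s',t')$. The two neighbours $(s',t'\pm1)$ point into $v$ with weight $1$, and the other two receive arrows from $v$. Here we need $v\in V_\lambda^{i-1}$, which is the reindexing $v=((j+1)+(i-1)-(s-1),\dots)$ already observed in the proof of Lemma~\ref{lem:Q2}. The boundary cases, where $v$ is a new vertex not in $V^{i-1}_\lambda$, are handled as in Case~1 of that proof.

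On the frozen side, Lemma~\ref{lem:Q2} (at level $i-1$) says that all principal-coefficient arrows at $v$ go into $v$, i.e.\ $v$ is green. Therefore the negative entries $\widetilde Q''(w,v)<0$ come only from the two mutable neighbours $(s'\pm1,t')$.

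We then use the second form of Lemma~\ref{lem:grec}, summing over $w$ with $\widetilde Q''(w,v)<0$. Because green-ness kills all frozen contributions, only mutable terms remain. With the sign convention that those negative entries are the $(s',t'\pm1)$ neighbours (consistent with $Q'((s,t\pm1),(s,t))=1$ and the standard orientation $\widetilde Q(w,v)$ meaning arrows $w\to v$), the $g$-vector of the new variable is $g^{(i-1)}_{(s',t'+1)}+g^{(i-1)}_{(s',t'-1)}-g^{(i-1)}_v$. This is exactly \eqref{eq:recg}, and the second line follows by the symmetry $(a,b)\mapsto(b,a)$.

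The main obstacle is sign bookkeeping. We must check which of the two neighbour pairs actually occurs with negative sign, so that the frozen terms genuinely vanish and are not merely absent from one of the two sums. If the orientation turns out reversed, we would instead use the first (positive) sum of Lemma~\ref{lem:grec}; green-ness again removes the frozen vertices $w'$ with $\widetilde Q''(w',v)\ge0$ only if their entries are zero. In that case we would appeal to the explicit frozen arrows in \eqref{eq:mup41} and argue that their $g$-vector contributions are absorbed. We would try the negative-sum form first.
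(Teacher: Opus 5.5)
\textbf{There is a genuine gap.} Your overall route is the one the paper uses; its proof just says the recurrence follows from Lemmas~\ref{lem:grec}, \ref{lem:Q1} and \ref{lem:Q2}. You mutate at $v$ inside $\widetilde\mu_i$, use greenness of $v$ (Lemma~\ref{lem:Q2} at level $i-1$) to remove the frozen terms from the negative-sum form of Lemma~\ref{lem:grec}, and read off the mutable negative entries.

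The gap is in deciding which neighbours of $v=(a,b)$ carry $\widetilde Q''(w,v)<0$ in $\widetilde Q''=\widetilde\mu_{i-1}\cdots\widetilde\mu_0(\widetilde{\overline Q}_\lambda)$. That choice is what determines the recurrence, and you get it wrong before patching it by assertion.
\begin{itemize}
\item You apply Lemma~\ref{lem:Q1} at level $i-1$ and claim $v\in V^{i-1}_\lambda$. This cannot happen: every vertex of $V^{i}_\lambda$ has $a-b\equiv i \pmod 2$, so $V^{i}_\lambda\cap V^{i-1}_\lambda=\emptyset$.
\item The reindexing you borrow from the proof of Lemma~\ref{lem:Q2} shifts the layer by two, not one. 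Here it would put $v$ in $V^{i-2}_\lambda$, and then all four neighbours of $v$ are mutated in block $i-1$, which changes the column of $v$.
\item Lemma~\ref{lem:Q1} describes the quiver \emph{after} $v$ itself has been mutated. Reading its orientation as the pre-mutation one, as you do (``$(a,b\pm1)$ point into $v$''), puts the negative entries at $(a\pm1,b)$. That yields the wrong recurrence $g_{(a+1,b)}+g_{(a-1,b)}-g_{(a,b)}$.
\item Your next sentence then asserts the opposite orientation ``by sign convention'', which contradicts what you just derived.
\item The positive-sum fallback does not repair this. Frozen vertices $w'$ with $\widetilde Q''(w',v)=1$ (cf.\ \eqref{eq:mup41}) contribute their nonzero degrees $-Q(-,w)$, and saying they are ``absorbed'' is not an argument.
\end{itemize}

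\textbf{The fix.} Apply Lemma~\ref{lem:Q1} at level $i$, where $v$ does lie, and undo the mutation at $v$.
\begin{itemize}
\item Mutation at $v$ negates the column of $v$.
\item The other mutations in $\widetilde\mu_i$ are at vertices not adjacent to $v$, so they leave that column unchanged.
\item Hence in $\widetilde Q''$ you have $\widetilde Q''((a,b\pm1),v)=-1$ and $\widetilde Q''((a\pm1,b),v)=+1$, with no other nonzero mutable entries.
\item Greenness gives $\widetilde Q''(w',v)\ge 0$ for every frozen $w'$.
\item The vertices $(a,b\pm1)$ have the wrong parity to lie in $V^i_\lambda$, so they still carry $g^{(i-1)}$.
\item The negative-sum form of Lemma~\ref{lem:grec} then gives exactly $g^{(i-1)}_{(a,b+1)}+g^{(i-1)}_{(a,b-1)}-g^{(i-1)}_{(a,b)}$.
\end{itemize}
This agrees with the proof of Lemma~\ref{lem:Q2}: for $v\in V^{i-1}_\lambda$ the arrows $(a,b\pm1)\to v$ present after block $i-1$ are reversed by block $i$, just before $v$ is mutated again.
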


\begin{lemma}\label{lem:gp4}
For any $v\in \overline V_{\lambda_{\rm sub}}$, denote by $\mathbf{e}_v\in \mathbb Z^{\overline V_{\lambda_{\rm sub}}}$ the standard vector associated with $v$. For each integer $i = 0, 1, \dots, n-2$ and integers $j, s$ satisfying $1 \leq j + s \leq j + i - s \leq n - 1$, the following identities hold:

\begin{enumerate}[label=\textup{(\alph*)}]
    \item 
    $g^{(i)}_{(j+i-s,\;j+s)} = \mathbf{e}_{(j+i-s,\,j+i+1)} + \mathbf{e}_{(j+s,\,j-1)} - \mathbf{e}_{(j+i-s,\;j+i-s)}$.
    
    \item 
    $g^{(i)}_{\,j+s,\;j+i-s} = \mathbf{e}_{(j+s,\,j+i+1)} + \mathbf{e}_{(j+s,\,j-1)} - \mathbf{e}_{(j+s,\;j+s)}$.

    \item In particular, for all $1 \leq s, t \leq n-1$, we have
    \begin{equation}\label{eq:g_n-1}
        g^{(n-2)}_{(s,t)} =
        \begin{cases}
            \mathbf{e}_{(s,n)} + \mathbf{e}_{(s,s+t-n)} - \mathbf{e}_{(s,s)} & \text{if } s + t \geq n, \\
            \mathbf{e}_{(s,0)} + \mathbf{e}_{(s,s+t)} - \mathbf{e}_{(s,s)}   & \text{if } s + t < n.
        \end{cases}
    \end{equation}
\end{enumerate}
\end{lemma}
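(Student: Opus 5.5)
The plan is to prove (a) and (b) together by induction on $i$, using the recurrence \eqref{eq:recg}. By the symmetry of the labeling of $\overline V_{\lambda_{\rm sub}}$ under $(x,y)\mapsto(y,x)$, which exchanges the two lines of \eqref{eq:recg} and the two families in Lemma~\ref{lem:Q2}, it is enough to treat (a). Part (c) is then a reindexing of the case $i=n-2$.

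For the base case $i=0$ we have $V^0_\lambda=\{(1,1),\dots,(n-1,n-1)\}$, and before any mutation $g$-vectors are standard vectors, $g^{(-1)}_w=\mathbf e_w$. I apply Lemma~\ref{lem:grec} at $v=(j,j)$ in the framed quiver $\widetilde{\overline Q}_\lambda$. The principal frozen vertex $v'$ has a nonnegative entry, since $v$ is green (Lemma~\ref{lem:Q2}). So I use the form of Lemma~\ref{lem:grec} that sums over the entries of one sign which involve only genuine vertices of $\overline V_{\lambda_{\rm sub}}$. By Lemma~\ref{lem:Q1} these are the two neighbours $(j,j\pm1)$, each with weight $1$. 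This gives $g^{(0)}_{(j,j)}=\mathbf e_{(j,j+1)}+\mathbf e_{(j,j-1)}-\mathbf e_{(j,j)}$, which is (a) with $i=s=0$. The same choice of sign works at every later step: by Lemma~\ref{lem:Q2} the frozen entries at each vertex being mutated stay nonnegative, so the whole sequence is green. This is exactly what makes \eqref{eq:recg} valid, and \eqref{eq:recg} involves no principal-coefficient vectors.

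For the inductive step I first record which $g^{(i-1)}$ are needed. A vertex that is not mutated in $\widetilde\mu_{i-1}$ keeps the $g$-vector it had at its last mutation, or its initial vector $\mathbf e_w$ if it has never been mutated. The three vectors on the right of \eqref{eq:recg} are of three kinds.
\begin{itemize}
\item The two horizontal neighbours of $v$ lie in $V^{i-1}_\lambda$, so their $g$-vectors are given by the induction hypothesis at level $i-1$ with the parameters $(j,s)$ shifted by one.
\item The vertex $v$ itself lies in $V^{i-2}_\lambda$ when $s>0$, so its $g$-vector is given by the induction hypothesis at level $i-2$.
\item When $s=0$, or when a neighbour lies on the boundary (second coordinate $0$ or $n$), that vertex has never been mutated and its $g$-vector is the standard vector.
\end{itemize}
Substituting the closed forms into \eqref{eq:recg}, the terms telescope. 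The $-\mathbf e$ contributions coming from $g^{(i-2)}_v$ cancel against $+\mathbf e$ contributions from the two neighbours. What survives is two outer standard vectors and one diagonal standard vector, which is the asserted formula.

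For (c), note that every vertex $(s,t)$ is mutated for the last time at the largest level $i$ with $(s,t)\in V^i_\lambda$. I solve for the parameters $j$ and $s$ in (a) or (b) at that level, separately in the regimes $s+t\ge n$ and $s+t<n$. In each regime one outer index becomes $n$ (respectively $0$) and the other becomes $s+t-n$ (respectively $s+t$), which gives \eqref{eq:g_n-1}.

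I expect the main obstacle to be the index bookkeeping in the inductive step. This means checking precisely which neighbours belong to $V^{i-1}_\lambda$, which belong to $V^{i-2}_\lambda$, and which are untouched edge vertices, together with the boundary cases $s=0$ and $j+s=1$. I would handle it by first checking the cancellation on the $n=4$ picture in Figure~\ref{fig:P4-labeling}, and then writing the general case with the same case split as in the proof of Lemma~\ref{lem:Q2}.
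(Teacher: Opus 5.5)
Your plan is the paper's proof: induction on $i$ via \eqref{eq:recg}, the base case from Lemma~\ref{lem:grec}, and in the inductive step the neighbours read off at level $i-1$ and $v$ itself at level $i-2$ (or unmutated when $s=0$), followed by telescoping, with (c) obtained by locating the last level at which $(s,t)$ is mutated.

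The one step you should drop is reducing (b) to (a) via $(x,y)\mapsto(y,x)$. Both lines of \eqref{eq:recg} shift the \emph{second} coordinate, so the swap does not exchange them; indeed Lemma~\ref{lem:Q1} shows the swap reverses the relevant arrows. Moreover, the target in (b) again has all three vectors in the row of its first coordinate $j+s$, so (b) is not the swap-image of (a). Instead, run the identical computation for (b). In the uniform parametrization $(j+i-s,\,j+s)$ with $0\le s\le i$, (b) is just (a) with $s$ replaced by $i-s$.

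Also, the telescoping actually produces $\mathbf{e}_{(j+i-s,\,j-1)}$, as in the paper's proof and consistent with (c). The printed $\mathbf{e}_{(j+s,\,j-1)}$ in (a) is therefore a typo, not the formula your computation verifies.
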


\begin{proof}
We prove parts (a) and (b) simultaneously by induction on $i$.

For the base case $i = 0$, the condition $1 \leq j + s \leq j - s \leq n - 1$ implies that $s = 0$ and $1 \leq j \leq n-1$. In this case, Lemma~\ref{lem:grec} yields
\[
g^{(0)}_{(j,j)} = g(\mu_{jj}(A_{(j,j)})) = g(A_{(j,j+1)}) + g(A_{(j,j-1)}) - g(A_{(j,j)})
= \mathbf{e}_{(j,j+1)} + \mathbf{e}_{(j,j-1)} - \mathbf{e}_{(j,j)},
\]
which coincides with both (a) and (b) when $s = 0$.

Now, assume that (a) and (b) hold for all indices at level $i-1$ (where $i > 0$); that is, for all $j', s'$ such that
\[
1 \leq j' + s' \leq j' + (i-1) - s' \leq n - 1.
\]
Fix integers $j, s$ satisfying $1 \leq j + s \leq j + i - s \leq n - 1$. 
Observe that the indices $(j + i - s, j + s-1)$ and $(j+i-s, j+s+1)$ can be written as $\bigl((j+1) + (i-1) - s,\; (j+1) + s\bigr)$, which belong to $V_\lambda^{i-1}$. Thus, they satisfy the inductive hypothesis. Applying (a) at level $i-1$, we obtain
\begin{equation}\label{eq:inductive_a}
    g^{(i-1)}_{(j+i-s,\;j+s-1)}
    = \mathbf{e}_{(j+i-s,\,j+i)} + \mathbf{e}_{(j+i-s,\,j-1)} - \mathbf{e}_{(j+i-s,\;j+i-s)},
\end{equation}
and
\begin{equation}\label{eq:inductive_a1}
    g^{(i-1)}_{(j+i-s,\;j+s+1)}
    = \mathbf{e}_{(j+i-s,\,j+i+1)} + \mathbf{e}_{(j+i-s,\,j)} - \mathbf{e}_{(j+i-s,\;j+i-s)}.
\end{equation}
Furthermore, note that $(j+i-s, j+s) = \bigl((j+1) + (i-2) - (s-1),\; (j+1) + (s-1)\bigr)$ lies in $V_\lambda^{i-2} \setminus V_\lambda^{i-1}$. Consequently, by applying (a) at level $i-2$, we have
\begin{equation}\label{eq:inductive_b}
    g^{(i-1)}_{(j+i-s,\;j+s)} = g^{(i-2)}_{(j+i-s,\;j+s)}
    = \mathbf{e}_{(j+i-s,\,j+i)} + \mathbf{e}_{(j+i-s,\,j)} - \mathbf{e}_{(j+i-s,\;j+i-s)}.
\end{equation}

Using the recurrence relation \eqref{eq:recg} together with \eqref{eq:inductive_a}, \eqref{eq:inductive_a1}, and \eqref{eq:inductive_b}, we compute:
\begin{align*}
   g^{(i)}_{(j+i-s,\;j+s)}
&= g^{(i-1)}_{(j+i-s,\;j+s+1)} + g^{(i-1)}_{(j+i-s,\;j+s-1)} - g^{(i-1)}_{(j+i-s,\;j+s)} \\
&= \mathbf{e}_{(j+i-s,\;j+i+1)} + \mathbf{e}_{(j+i-s,\;j-1)} - \mathbf{e}_{(j+i-s,\;j+i-s)}.
\end{align*}
This establishes identity (a). The proof of (b) follows analogously.

Finally, we prove part (c). We consider only the case $s+t \geq n$, as the case $s+t < n$ can be handled similarly. 
Assume $s \geq t$. Then $(s,t) \in V_\lambda^{2(n-1)-(s+t)}$ and $(s,t) \notin V_\lambda^{i}$ for all $i > 2(n-1)-(s+t)$. Therefore, by applying (a), we have
\[
g_{(s,t)}^{(n-2)} = g_{(s,t)}^{2(n-1)-(s+t)} = g_{\bigl((t+s-n+1)+(2n-2-s-t)-(n-1-s),\,(t+s-n+1)+(n-1-s)\bigr)}^{2(n-1)-(s+t)} = \mathbf{e}_{(s,n)} + \mathbf{e}_{(s,s+t-n)} - \mathbf{e}_{(s,s)}.
\]
The case $s < t$ can be proved similarly using (b). This completes the proof.
\end{proof}

For any $k \geq 2$, let $\lambda_1$ denote the star-like triangulation at vertex $1$ of $\mathbb P_{k+2}$. 
For any $2 \leq i \leq k$ and $j,s$ with $1 \leq s \leq j \leq n-1$, we recall that the quantum cluster variables $\overline A^{\rm qc}_{js}\langle i\rangle$ of $\overline {\mathscr A}_{\omega}^{\rm qc}(\mathbb P_{k+2})$ and $\overline A_{js}\langle i\rangle$ of $\overline {\mathscr A}_{\omega}(\mathbb P_{k+2})$ are defined in Section \ref{sub-quasi-polygon}.

Return $\mathbb P_{k+2}$, consider the quadrilateral with vertices $1, i, i+1, i+2$ inside $\mathbb P_{k+2}$. We observe the following correspondences between the labelings:

\begin{itemize}
    \item The vertex labeled $js$ in the triangle $(i,i+1,i+2)$, as shown in Figure \ref{Fig:lambdai}, corresponds to the label $(j, n-s)$ in Figure \ref{fig:P4-labeling}.
    
    \item The vertex labeled $(s,t)$ in Figure \ref{fig:P4-labeling} corresponds to the label $ts^{i-1}$ (if $s < t$) or $s,s-t^{i}$ (if $s \geq t$) in $\mathcal V_{\mathbb P_{k+2}}$, consistent with the conventions introduced at the beginning of Section \ref{sec-polygon}.
\end{itemize}

The following corollary follows immediately from Lemma \ref{lem:gp4}.

\begin{corollary}\label{cor:g2}
For any $2\leq i\leq k$ and $j,s$ with $1\leq s\leq j\leq n-1$, with respect to the quantum seeds $\overline {\mathsf s}_{\lambda}$ and $\overline {\mathsf s}^{\rm qc}_{\lambda}$, respectively, we have
\begin{equation}
    g(\overline A_{js}\langle i\rangle)=\textbf{e}_{nj^{\,i-1}}+\textbf{e}_{js^i}-\textbf{e}_{j0^i}, \qquad   g(\overline A^{\rm qc}_{js}\langle i\rangle)=\textbf{e}_{js^i}-\textbf{e}_{j0^i},
\end{equation}
where $\textbf{e}_{js^i}\in \mathbb Z^{\overline V_{\lambda}}$ denotes the standard vector associated with the vertex labeled $js^i$.
\end{corollary}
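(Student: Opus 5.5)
The plan is to compute both $g$-vectors by following the mutation sequence that realizes the flip $\lambda\to\lambda_i$, and to work entirely inside the quadrilateral with vertices $1,i,i+1,i+2$.

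\textbf{Reduction to the flip quadrilateral.} Flipping the arc $(1,i+1)$ only mutates at the $(n-1)^2$ interior vertices of this quadrilateral, namely the vertices of $V_{\lambda,e}$ with $e=(1,i+1)$. By Theorem~\ref{lem-mutation-A-seeds-flip} this flip is the Fock--Goncharov--Shen sequence $\widetilde\mu_{n-2}\cdots\widetilde\mu_0$. The $g$-vector of a cluster variable with respect to $\overline{\mathsf s}_\lambda$ only depends on mutations in the principal-coefficient algebra, via the recursion of Lemma~\ref{lem:grec}. Because the sequence only touches these vertices, the computation localizes to the sub-seed on the quadrilateral, with the boundary small vertices of the quadrilateral treated as frozen. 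This is justified by Lemma~\ref{lem-pro-mut}(a): $g$-vectors are unchanged when we pass to a full subquiver containing all mutated vertices and their neighbours.

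\textbf{The unmodified variables.} Lemma~\ref{lem:gp4}(c) already gives $g^{(n-2)}_{(s,t)}$ for every interior vertex $(s,t)$ of Figure~\ref{fig:P4-labeling}. I would first check the setup of that lemma: Lemma~\ref{lem:Q2} shows the sequence is green, so the positive-part formula in Lemma~\ref{lem:grec} applies at every step. Then I would translate labels. The vertex $js$ of the triangle $(i,i+1,i+2)$ in Figure~\ref{Fig:lambdai} is $(j,n-s)$ in Figure~\ref{fig:P4-labeling}. Here $s+t=j+n-s\ge n$ is equivalent to $j\ge s$, so the first case of \eqref{eq:g_n-1} applies and gives
\[
\mathbf e_{(j,n)}+\mathbf e_{(j,j-s)}-\mathbf e_{(j,j)}.
\]
Using the stated correspondence between Figure~\ref{fig:P4-labeling} and the labels $ts^{i-1}$ or $(s,s-t)^i$, these three vectors become $\mathbf e_{nj^{i-1}}$, $\mathbf e_{js^i}$ and $\mathbf e_{j0^i}$. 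This gives the first formula, $g(\overline A_{js}\langle i\rangle)=\mathbf e_{nj^{i-1}}+\mathbf e_{js^i}-\mathbf e_{j0^i}$. The main obstacle is this bookkeeping: orientations and the conventions $jj^i=j0^{i+1}$ must match exactly, and I would verify them on $n=3$ by hand.

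\textbf{The qc variables.} For $\overline A^{\rm qc}_{js}\langle i\rangle$ I would use the quasi-isomorphism of Proposition~\ref{prop-quasi-polygon}. Under it, cluster variables of the two seeds at the same mutation position differ by a Laurent monomial in frozen variables. The extended $g$-vector of such a monomial is just its exponent vector, since frozen variables have degree $\mathbf e_w$ and are never mutated. So it suffices to find the frozen factor relating $\overline A^{\rm qc}_{js}\langle i\rangle$ and $\overline A_{js}\langle i\rangle$, which is Lemma~\ref{lem:quasi2}(a). That lemma's proof is in the same appendix, so I would instead derive the factor directly from $g$-vectors.

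\textbf{Running the recursion in the qc seed.} In $\overline Q^{\rm qc}_\lambda$ the arrows into the frozen vertices $nj^{i-1}$ are deleted. The frozen contributions $\mathbf e_{(s,n)}$ in the recursion of Lemma~\ref{lem:grec} came from arrows to row-$n$ frozen vertices, so in the qc seed they should simply disappear. To make this precise, I would rerun the induction of Lemma~\ref{lem:gp4} with the qc frozen data, noting that the mutable parts agree. The expected result is $g(\overline A^{\rm qc}_{js}\langle i\rangle)=\mathbf e_{js^i}-\mathbf e_{j0^i}$. Since $j0^i$ is frozen only when $i=1$, this vector has to be read with respect to $\overline{\mathsf s}^{\rm qc}_\lambda$, whose frozen set differs. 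I would check this against Lemma~\ref{lem:qc3}, which guarantees that the mutable columns, and hence the $c$-vectors, coincide.
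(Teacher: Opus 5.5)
Your proposal is correct and is essentially the paper's argument: the paper derives the corollary directly from Lemma~\ref{lem:gp4}(c) with exactly your label translation $(j,n)\mapsto nj^{i-1}$, $(j,j-s)\mapsto js^i$, $(j,j)\mapsto j0^i$. The qc formula then comes from the same green recursion run in $\overline Q^{\rm qc}_\lambda$, whose mutable part (and hence greenness) is unchanged, and where the arrow-free frozen vertex $nj^{i-1}$ simply drops out.

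Two minor corrections. First, localizing to the quadrilateral follows directly from the recursion of Lemma~\ref{lem:grec}, since only columns of mutated vertices ever enter, rather than from Lemma~\ref{lem-pro-mut}(a). Second, $\overline{\mathsf s}^{\rm qc}_\lambda$ has the same frozen vertex set as $\overline{\mathsf s}_\lambda$; only the arrows at some frozen vertices are deleted.
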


\subsection{$g$-vectors for $\mathbb{P}_{k+2}$} Let $\lambda$ be the star-like triangulation at vertex $1$ of $\mathbb P_{k+2}$. Let $Q$ be the quiver obtained from $\overline Q_\lambda$ by removing the vertices $ns^i$ for $s\in \{1,2,\cdots, n-1\}, i\in \{1,2,\cdots,k\}$ and $j0^1$ for $j\in\{1,2,\cdots,n-1\}$. Let $\widetilde Q$ be the framed quiver of $Q$. Let $\mathcal{A}^{\mathrm{prin}}_{Q}$ denote the commutative cluster algebra with principal coefficients and initial exchange matrix given by the quiver $Q$.  
For each vertex ${js^i}$ of $Q$, let $\widetilde{A}_{js^i}$ be the corresponding initial cluster variable in $\mathcal{A}^{\mathrm{prin}}_{Q}$.

Recall the mutation sequences $\overset{\leftarrow}{\mu}_{js^{i}}$, $\overset{\leftarrow}{\mu}_{\prec js^{i}}$ and $\overset{\leftarrow}{\mu}(\Delta_i)$ defined in Section \ref{sec:standard var}. 

For integers $0 \leq \widetilde{s} \leq n-2$ and $1 \leq \widetilde{i} \leq k$, for any vertex $js^i$ of $\overline{Q}_\lambda$, define
\begin{equation*}
\widetilde{A}_{js^i}^{\widetilde{s}}[\widetilde{i}]
   \;=\;
\overset{\leftarrow}{\mu}_{j\widetilde s^{\;\widetilde i}}\overset{\leftarrow}{\mu}_{\prec j\widetilde s^{\;\widetilde i}}
   \bigl( \widetilde{A}_{js^i} \bigr).
\end{equation*}


\begin{remark}\label{rem:counterpart}
   Observe that for all integers $s, j$ satisfying $1 \leq s \leq j \leq n-1$ and all $i$ with $1 \leq i \leq k$, 
   \begin{itemize}
       \item the cluster variable $\widetilde{A}_{j1^1}^{\,s-1}[i]$ corresponds to the standard cluster variable $\overline E^{\rm qc}(js^i)$ as its commutative counterpart;
       \item the cluster variable $\widetilde{A}_{js^1}^{j}[i-1]$ serves as the commutative counterpart of the cluster variable in $\overline{\mathsf{t}}_{i}^{\rm sub}$ associated with the vertex $js^1$.
   \end{itemize}
\end{remark}


In this subsection, we compute the $g$-vector of $\widetilde{A}_{js^i}^{\widetilde{s}}[\widetilde{i}]$, which we denote by $g_{js^i}^{\widetilde{s}}[\widetilde{i}]$. We start with the following observation: if $j\geq \widetilde s$ and $i+\widetilde i\leq k+1$, then 
\begin{equation*}
\widetilde{A}_{js^i}^{\widetilde{s}}[\widetilde{i}]
   \;=\; 
\bigl( \mu_{j s^{i}} \cdots \mu_{j 1^{i}} \bigr)
    \bigl( \mu_{j j^{i-1}} \cdots \mu_{j 1^{i-1}} \bigr)
    \cdots
    \bigl( \mu_{j j^{2}} \cdots \mu_{j 1^{2}} \bigr)
    \bigl( \mu_{j j^{1}} \cdots \mu_{j 1^{1}} \bigr)  \overset{\leftarrow}{\mu}_{\prec j\widetilde s^{\;\widetilde i}}
   \bigl( \widetilde{A}_{js^i} \bigr).
\end{equation*}

\begin{lemma}
 For integers $1\leq \widetilde{s} \leq n-2$ and $1 \leq \widetilde{i} \leq k$, 
 
\begin{enumerate}[label=\textup{(\alph*)}]
    \item  we have the sequence of mutations 
   $\overset{\leftarrow}{\mu}_{j\widetilde s^{\;\widetilde i}}\overset{\leftarrow}{\mu}_{\prec j\widetilde s^{\;\widetilde i}}$
   is a sequence of green mutation for $\widetilde Q$. 
  \item Let $ j s^{i} $ be a vertex with $ j \geq \widetilde{s} $ and $ i +\widetilde i \leq k + 1 $. Consider the quiver
$$
\begin{aligned}
    Q' = {} & 
    \bigl( \mu_{j s^{i}} \cdots \mu_{j 1^{i}} \bigr)
    \bigl( \mu_{j j^{i-1}} \cdots \mu_{j 1^{i-1}} \bigr)
    \cdots
    \bigl( \mu_{j j^{2}} \cdots \mu_{j 1^{2}} \bigr)
    \bigl( \mu_{j j^{1}} \cdots \mu_{j 1^{1}} \bigr)  \overset{\leftarrow}{\mu}_{\prec j,\widetilde s^{\;\widetilde i}}(\widetilde{Q}).
\end{aligned}
$$
Then $ Q'_{v,\, j s^{i}} < 0 $ if and only if  
$$
    v = j (s-1)^{i} \quad (\text{provided } s > 1 \text{ or } i > 1) \quad \text{or} \quad v = j (s+1)^{i}.
$$
In these cases, we have $ Q'_{v,\, j s^{i}} = -1 $.

\item Consequently, for any vertex $ j s^{i} $ with $ j \geq \widetilde{s} $ and $ i \leq k + 1 - \widetilde{i} $, the following recurrence relation holds:
\begin{equation}\label{eq:recg1}
    g_{j s^{i}}^{\widetilde{s}}[\widetilde{i}] =
    \begin{cases}
        g_{j (s+1)^{i}}^{\widetilde{s}-1}[\widetilde{i}] - g_{j s^{i}}^{\widetilde{s}-1}[\widetilde{i}]
        & \text{if } s = 1 \text{ and } i = 1, \\[6pt]
        g_{j (s+1)^{i}}^{\widetilde{s}-1}[\widetilde{i}] + g_{j (s-1)^{i}}^{\widetilde{s}}[\widetilde{i}] - g_{j s^{i}}^{\widetilde{s}-1}[\widetilde{i}]
        & \text{otherwise.}
    \end{cases}
\end{equation}
\end{enumerate}
\end{lemma}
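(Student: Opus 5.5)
We argue by induction along the mutation sequence itself, ordered by $\prec$ on $I(\lambda)$, and within each row from right to left, proving (a) and (b) together. Statement (c) then follows from (b) and Lemma~\ref{lem:grec}, once (a) ensures that the $c$-vectors are sign-coherent in the expected direction.

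\emph{Setting up the induction.} We first use the decomposition \eqref{eq:Qlambda} of $Q_{\rm sub}$ into stacked concatenations of the blocks $Q^1(\ell)$. By Lemma~\ref{lem:subseed}, after performing $\overset{\leftarrow}\mu(\Delta_{1}),\dots,\overset{\leftarrow}\mu(\Delta_{\widetilde i-1})$, the mutable part of the quiver again has this block shape on $I(\Delta_1)\cup\cdots\cup I(\Delta_{k+1-\widetilde i})$. Lemmas~\ref{lem:mutation-row1}--\ref{lem:mutation-row3} then describe exactly how a single row sweep $\overset{\leftarrow}\mu_{j\widetilde s^{\,\widetilde i}}$ transforms the quiver. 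The first row sweep turns $Q^1(\ell)\#\cdots$ into $Q^3(\ell)\#\cdots\#Q^2(s)$. The next row restores the $Q^1$ shape one level lower. Lemma~\ref{lem:mutation-row3} handles the bottom row. We then track the frozen (framing) part separately, as in the proof of Lemma~\ref{lem:Q2}.

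\emph{Part (b).} The mutable arrows incident to $js^i$ at the moment of its mutation are read off from the blocks. Using Lemma~\ref{lem:mutation-row1}(a) and Lemma~\ref{lem:mutation-row2}(a), the only relevant neighbours are $j(s\pm1)^i$, $(j\pm1)$-row vertices, and the frozen tail. Among these, the incoming ones (negative entries) are exactly $j(s-1)^i$, which was just mutated in this row sweep, and $j(s+1)^i$. The vertex $j(s-1)^i$ is absent when $s=i=1$, since it is then the first vertex of the row. Both entries have weight $1$.

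\emph{Part (a) and greenness.} Greenness is an inductive statement on the framing columns. At each mutation we check that the column of $\widetilde Q'$ at the vertex being mutated, restricted to $\mathcal V'$, is nonnegative. This mirrors Lemma~\ref{lem:Q2}. The framed arrows reaching $js^i$ come only through the incoming neighbours $j(s\pm1)^i$, whose framing columns are nonnegative by induction. They reach $js^i$ with the correct orientation, so the mutation rule only adds nonnegative contributions. We expect this framing bookkeeping across the boundary between successive triangles $\Delta_i,\Delta_{i+1}$, that is, the $\#$-gluing arrows with $\delta(s)$, to be the main obstacle. There we would first verify the $k=2$ case by hand, or with Keller's applet, and then propagate it using the block-invariance of Lemmas~\ref{lem:mutation-row1}--\ref{lem:mutation-row3}.

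\emph{Part (c).} We apply Lemma~\ref{lem:grec} using the negative-entry form. The new variable at $js^i$ has $g$-vector equal to the sum of the current $g$-vectors of $j(s-1)^i$ and $j(s+1)^i$, minus the old one at $js^i$. At this stage $j(s-1)^i$ already carries the sweep-$\widetilde s$ value, while $j(s+1)^i$ and $js^i$ still carry the values from the previous sweep $\widetilde s-1$. This gives \eqref{eq:recg1}, with the first term of the recursion dropped when $s=i=1$.
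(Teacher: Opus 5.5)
\textbf{Parts (b) and (c).} Your treatment of these is essentially the paper's. Part (b) is read off from Lemmas~\ref{lem:mutation-row1}--\ref{lem:mutation-row3}, together with the block shape of Lemma~\ref{lem:subseed}. Part (c) is Lemma~\ref{lem:grec} in its negative-entry form, where greenness of the mutated vertex is exactly what removes the framing terms.

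\textbf{Part (a).} This is where the gap lies. The paper does not check greenness by hand; it takes (a) from \cite[Theorem~4.1]{SW21}. There the row sweeps arise as prefixes of a maximal green sequence, and any prefix of a green sequence is green. Note also that (b) as stated depends on (a): $Q'$ contains the framing vertices, so the ``only if'' direction requires $Q'_{w',js^i}\ge 0$ for every framing vertex $w'$.

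Your direct argument does not establish (a). Write $c_v$ for the framing column of $v$. A mutation at a green vertex $u$ replaces $c_v$ by $c_v+[\widetilde Q(u,v)]_+\,c_u$ for $v\neq u$, and replaces $c_u$ by $-c_u$.
\begin{itemize}
\item Your sign argument therefore only shows that the \emph{first} mutation at each vertex is green. In these sequences vertices are mutated repeatedly: for instance $j1^1$ is the first vertex mutated in every sweep $\overset{\leftarrow}{\mu}_{jt^{i'}}$ of row $j$. After its first mutation a vertex is red, and whether the accumulated additions have made its column nonnegative again by the next mutation is a question of magnitudes, not of signs.
\item Your inductive claim that the relevant neighbours have nonnegative framing columns is false for $j(s-1)^i$. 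That vertex is mutated immediately before $js^i$, so it is red at that moment.
\item Checking $k=2$ with Keller's applet and then ``propagating by block-invariance'' cannot work. Lemmas~\ref{lem:mutation-row1}--\ref{lem:mutation-row3} describe only the mutable subquiver and carry no information about the framing part.
\end{itemize}
To close the gap, either cite \cite[Theorem~4.1]{SW21} as the paper does, or compute the $c$-vectors in closed form along the sweeps. The second route means verifying inductively that the column of $js^i$ is nonnegative at each moment it is mutated, as Lemma~\ref{lem:Q2} does for the flip sequence.
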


\begin{proof}
Part~(a) follows from \cite[Theorem~4.1]{SW21}.  
Part~(b) follows from Lemmas~\ref{lem:mutation-row1}, \ref{lem:mutation-row2}, and~\ref{lem:mutation-row3}.  
Part~(c) is an immediate consequence of parts~(a) and~(b) together with Lemma~\ref{lem:grec}.
\end{proof}

\begin{lemma}\label{lem:g-vector-formula1}
    Let $0 \leq \widetilde{s} \leq n-1$ and $1 \leq \widetilde{i} \leq k$. For any vertex $js^i$ with $j \geq \widetilde{s}$ and $i+ \widetilde{i} \leq k+1$, we have
    \begin{equation}\label{eq:gpk}
        g_{js^i}^{\widetilde{s}}[\widetilde{i}] = {\bf e}_{j,s+\widetilde{s}^{\,i+\widetilde{i}-1}} - {\bf e}_{j\widetilde{s}^{\widetilde{i}}},
    \end{equation}
    where ${\bf e}_v$ denotes the standard vector associated with the vertex $v$ of $Q$. By convention, ${\bf e}_{j0^1} = {\bf 0}$, and if $s \geq j$, we set ${\bf e}_{js^i} = {\bf e}_{j,s-j^{\,i+1}}$.
\end{lemma}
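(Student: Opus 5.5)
The plan is a double induction, driven entirely by the recurrence \eqref{eq:recg1}: the outer induction is on $\widetilde s$, and for fixed $\widetilde s$ the inner induction runs along the $j$-th row in the mutation order, i.e.\ on $i$ and then on $s$.

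\textbf{Base case $\widetilde s=0$.} Here the mutation sequence $\overset{\leftarrow}{\mu}_{j0^{\widetilde i}}\overset{\leftarrow}{\mu}_{\prec j0^{\widetilde i}}$ does not touch the row-$j$ vertices $js^i$ with $i+\widetilde i\le k+1$, by Lemma~\ref{lem:subseed}. Hence $\widetilde A^{0}_{js^i}[\widetilde i]$ is the variable sitting at a shifted copy of the initial position. Lemma~\ref{lem:subseed}(b) identifies the relevant subquiver with $Q_{\rm sub}^{\ell}$, with the triangle index shifted by $\widetilde i-1$. The $g$-vector is then ${\bf e}_{js^{\,i+\widetilde i-1}}$, and the correction $-{\bf e}_{j0^{\widetilde i}}$ appears through the identification $jj^{i}=j0^{i+1}$ together with the convention ${\bf e}_{j0^1}=0$. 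I would check this directly for $\widetilde i=1$, where it is trivial. For larger $\widetilde i$, I would use the fact that the sequences $\overset{\leftarrow}{\mu}(\Delta_{1}),\dots$ translate the row pattern, as in Lemmas~\ref{lem:mutation-row1}--\ref{lem:mutation-row3}.

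\textbf{Inductive step for $\widetilde s\ge1$.} I apply \eqref{eq:recg1} and substitute the claimed formula.

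In the first case, $s=i=1$, the right-hand side is
\[
{\bf e}_{j,\,1+\widetilde s^{\,\widetilde i}}-{\bf e}_{j,\widetilde s-1^{\,\widetilde i}}-{\bf e}_{j,\,\widetilde s^{\,\widetilde i}}+{\bf e}_{j,\widetilde s-1^{\,\widetilde i}}.
\]
This telescopes to ${\bf e}_{j,1+\widetilde s^{\,\widetilde i}}-{\bf e}_{j\widetilde s^{\widetilde i}}$, which is the claim.

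In the general case, $g^{\widetilde s}_{j(s-1)^i}[\widetilde i]$ is already known from the inner induction, and its value is ${\bf e}_{j,s-1+\widetilde s}-{\bf e}_{j\widetilde s}$. The other two terms contribute ${\bf e}_{j,s+\widetilde s}-{\bf e}_{j,s-1+\widetilde s}$, with all superscripts $i+\widetilde i-1$. Their sum is again the claimed vector.

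The inner induction starts at $s=1$ for each $i>1$. In that case $j(s-1)^i=j0^i=j(j)^{i-1}$, which has already been treated in the previous triangle, so the order of the induction is consistent.

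\textbf{Main obstacle.} The delicate point is bookkeeping with the convention ${\bf e}_{js^i}={\bf e}_{j,s-j^{\,i+1}}$ when $s+\widetilde s\ge j$, that is, when an index wraps into the next triangle. I have to make sure that the recurrence's neighbours $j(s\pm1)^i$ shift consistently across the identification $jj^i=j0^{i+1}$. I also have to check that the range conditions $j\ge\widetilde s$ and $i+\widetilde i\le k+1$ are preserved for every term invoked on the right-hand side, in particular that $\widetilde s-1$ terms with $i+\widetilde i$ at the boundary are still covered. I would handle this by verifying that all indices in the recurrence stay inside the block where Lemma~\ref{lem:mutation-row1}(a) and Lemma~\ref{lem:mutation-row2}(a) guarantee that no external arrows interfere.
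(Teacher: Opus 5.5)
Your inductive step for $\widetilde s\ge 1$ is the paper's: the same use of \eqref{eq:recg1}, the same telescoping at $s=i=1$, and the same inner induction on the position $j(i-1)+s$ along row $j$, with $j0^i=jj^{i-1}$ covering $s=1$, $i>1$. The gap is in the base case $\widetilde s=0$ for $\widetilde i\ge 2$.

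\emph{The key claim is false.} You assert that $\overset{\leftarrow}{\mu}_{j0^{\widetilde i}}\overset{\leftarrow}{\mu}_{\prec j0^{\widetilde i}}$ does not touch the vertices $js^i$ with $i+\widetilde i\le k+1$. It is essentially the reverse. Already for $\widetilde i=2$ the sequence contains the factor $\overset{\leftarrow}{\mu}_{jj^{1}}=\mu_{(jj^1)^*}\cdots\mu_{j2^1}\mu_{j1^1}$. Since $(jj^1)^*=jj^{k-1}$, this factor mutates exactly the row-$j$ vertices with $i\le k-1$. The earlier factors $\overset{\leftarrow}{\mu}_{js^1}$, $s<j$, sweep them as well.

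\emph{The conclusion contradicts the premise.} A variable that is never mutated is the initial variable $\widetilde A_{js^i}$, with $g$-vector ${\bf e}_{js^i}$. The value you want, ${\bf e}_{js^{\,i+\widetilde i-1}}-{\bf e}_{j0^{\widetilde i}}$, is not even a unit vector when $\widetilde i\ge 2$.

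\emph{The cited lemmas cannot fill the gap.} Lemma~\ref{lem:subseed}(b) and Lemmas~\ref{lem:mutation-row1}--\ref{lem:mutation-row3} only say that the mutated quiver is isomorphic to a shifted copy of the initial one. A quiver isomorphism gives no information about $g$-vectors relative to the original seed.

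The missing idea is to make $\widetilde i$ the outermost induction variable. Since $j0^{\widetilde i}=jj^{\widetilde i-1}$, the definition gives directly
\[
\widetilde A^{0}_{js^i}[\widetilde i]=\widetilde A^{j}_{js^i}[\widetilde i-1].
\]
The paper writes the upper index as $n-1$; the extra sweeps in other rows do not change the variables on row $j$. So the case $\widetilde s=0$ at level $\widetilde i$ is just the last stage of the previous round. The formula at $(\widetilde i-1,\ \widetilde s=j)$ then gives
\[
{\bf e}_{j,s+j^{\,i+\widetilde i-2}}-{\bf e}_{jj^{\widetilde i-1}}={\bf e}_{js^{\,i+\widetilde i-1}}-{\bf e}_{j0^{\widetilde i}}
\]
by the wrap-around convention, and the range condition $i+(\widetilde i-1)\le k+1$ is inherited. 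This is how the paper argues: induction on $\widetilde i$, then on $\widetilde s$, then on $j(i-1)+s$.

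With your ordering (outer induction on $\widetilde s$ for all $\widetilde i$ simultaneously), the value at $\widetilde s=j$ is not yet available when the base case is needed. You therefore have to reorganize the induction lexicographically in $(\widetilde i,\widetilde s)$. After that, your inductive step goes through unchanged.
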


\begin{proof}
We proceed by induction on $\widetilde{i}$, $\widetilde{s}$, and $j(i-1)+s$.

We first consider the case $\widetilde{i} = 1$.

When $\widetilde{s} = 0$, we have $\widetilde{A}_{js^i}^{\widetilde{s}}[\widetilde{i}] = \widetilde{A}_{js^i}$ for any $s$, and thus $g_{js^i}^{0}[1] = {\bf e}_{js^{i}}={\bf e}_{js^{i}}-{\bf e}_{j0^{1}}$.

Now suppose $\widetilde{s}>0$, and assume that equation \eqref{eq:gpk} holds for all triples with $\widetilde{i} = 1$, $\widetilde{s}-1$, and arbitrary $j(i-1)+s$.

If $j(i-1)+s=1$, then $i=1$ and $s=1$. Thus by \eqref{eq:recg1}
\begin{equation}
\begin{aligned}
    g_{js^i}^{\widetilde{s}}[1]
    & =g_{j1^1}^{\widetilde{s}}[1]  
    = g_{j2^1}^{\widetilde{s}-1}[1] - g_{j1^1}^{\widetilde{s}-1}[1] \\
    &= \bigl({\bf e}_{j,1+\widetilde{s}^{1}} - {\bf e}_{j,\widetilde{s}-1^{1}}\bigr) 
       - \bigl({\bf e}_{j\widetilde{s}^{1}} - {\bf e}_{j,\widetilde{s}-1^{1}}\bigr) \\
    &= {\bf e}_{j,1+\widetilde{s}^{1}} - {\bf e}_{j,\widetilde{s}^{1}}.
\end{aligned}
\end{equation}
This implies  \eqref{eq:gpk} holds for $\widetilde{i}=1$, $\widetilde{s}$, and $j(i-1)+s=1$.

If $j(i-1)+s>1$, we further assume that \eqref{eq:gpk} holds for $\widetilde{i} = 1$, $\widetilde{s}$, and $j(i-1)+s-1$. Then by \eqref{eq:recg1}
\begin{equation}
\begin{aligned}
    g_{js^i}^{\widetilde{s}}[1] 
    &= g_{j,(s-1)^i}^{\widetilde{s}}[1] + g_{j,(s+1)^i}^{\widetilde{s}-1}[1] - g_{j,s^i}^{\widetilde{s}-1}[1] \\
    &= \bigl({\bf e}_{j,s+(\widetilde{s}-1)^{i}} - {\bf e}_{j\widetilde{s}^{1}}\bigr) 
       + \bigl({\bf e}_{j,s+\widetilde{s}^{i}} - {\bf e}_{j,\widetilde{s}-1^{1}}\bigr)- \bigl({\bf e}_{j,s+(\widetilde{s}-1)^{i}} - {\bf e}_{j,\widetilde{s}-1^{1}}\bigr) \\
    &= {\bf e}_{j,s+\widetilde{s}^{i}} - {\bf e}_{j,\widetilde{s}^{1}}.
\end{aligned}
\end{equation}

This establishes \eqref{eq:gpk} for all $\widetilde{i}=1$.

We now consider the case $\widetilde{i}>1$. Assume inductively that \eqref{eq:gpk} holds for $\widetilde{i}-1$, arbitrary $\widetilde{s}$, and all $j(i-1)+s-1$. Since $\widetilde{A}_{js^i}^{0}[\widetilde{i}]= \widetilde{A}_{js^i}^{n-1}[\widetilde{i}-1]$, the formula also holds for $\widetilde{s}=0$, any $js^i$, and the given $\widetilde{i}$.

Now let $\widetilde{s} > 0$, and assume that \eqref{eq:gpk} holds for $\widetilde{i}$, $\widetilde{s}-1$, and all $j(i-1)+s$.

If $j(i-1)+s=1$, then $i=s=1$. Thus by \eqref{eq:recg1}
\begin{equation}
\begin{aligned}
    g_{js^i}^{\widetilde{s}}[\widetilde{i}]  &=g_{j1^1}^{\widetilde{s}}[\widetilde{i}] 
    = g_{j2^i}^{\widetilde{s}-1}[\widetilde{i}] - g_{j1^i}^{\widetilde{s}-1}[\widetilde{i}] \\
    &= \bigl({\bf e}_{j,1+\widetilde{s}^{\widetilde{i}}} - {\bf e}_{j,(\widetilde{s}-1)^{\widetilde{i}}}\bigr)- \bigl({\bf e}_{j,1+(\widetilde{s}-1)^{\widetilde{i}}} - {\bf e}_{j,(\widetilde{s}-1)^{\widetilde{i}}}\bigr) \\
    &= {\bf e}_{j,1+\widetilde{s}^{\widetilde{i}}} - {\bf e}_{j,\widetilde{s}^{\widetilde{i}}}.
\end{aligned}
\end{equation}
This implies  \eqref{eq:gpk} holds for $\widetilde{i}$, $\widetilde{s}$, and $j(i-1)+s=1$.

If $s > 0$, we further assume that \eqref{eq:gpk} holds for $\widetilde{i}$, $\widetilde{s}$, and $j(i-1)+s-1$. Then by \eqref{eq:recg1}
\begin{equation}
\begin{aligned}
    &\quad\;\; g_{js^i}^{\widetilde{s}}[\widetilde{i}] 
    = g_{j,(s-1)^i}^{\widetilde{s}}[\widetilde{i}] + g_{j,(s+1)^i}^{\widetilde{s}-1}[\widetilde{i}] - g_{j,s^i}^{\widetilde{s}-1}[\widetilde{i}] \\
    &= \bigl({\bf e}_{j,s+(\widetilde{s}-1)^{i+\widetilde{i}-1}} - {\bf e}_{j\widetilde{s}^{\;\widetilde{i}}}\bigr)+ \bigl({\bf e}_{j,s+\widetilde{s}^{\;i+\widetilde{i}-1}} - {\bf e}_{j,\widetilde{s}-1^{\widetilde{i}}}\bigr)- \bigl({\bf e}_{j,s+(\widetilde{s}-1)^{i+\widetilde{i}-1}} - {\bf e}_{j,\widetilde{s}-1^{\widetilde{i}}}\bigr) \\
    &= {\bf e}_{j,s+\widetilde{s}^{\,i+\widetilde{i}-1}} - {\bf e}_{j,\widetilde{s}^{\,\widetilde{i}}}.
\end{aligned}
\end{equation}

This completes the induction and the proof.
\end{proof}

The following is immediate from Lemma \ref{lem:g-vector-formula1} and Remark \ref{rem:counterpart}.

\begin{corollary}\label{cor:g1}
  For all integers $s,j$ satisfying $1\leq s\leq j\leq n-1$ and all $i$ with $1\leq i\leq k$, we have 
  \begin{enumerate}
      \item[(a)] the $g$-vector of $\overline E^{\rm qc}(js^i)$ with respect to the seed $\overline {\mathsf s}^{\rm qc}_{\lambda}$ is $\mathbf{e}_{js^{i}}-\mathbf{e}_{j,s-1^{i}}$,
      \item[(b)] the $g$-vector of cluster variable in $\overline{\mathsf{t}}_{i}^{\rm sub}$ associated with the vertex $js^1$ with respect to the seed $\overline {\mathsf s}^{\rm qc}_{\lambda}$ is $\mathbf{e}_{js^{i}}-\mathbf{e}_{j0^{i}}$,
  \end{enumerate}
  where $\textbf{e}_{js^i}\in \mathbb Z^{\overline V_{\lambda}}$ denotes the standard vector associated with the vertex labeled $js^i$.
\end{corollary}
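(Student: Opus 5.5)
The plan is to obtain both parts of Corollary~\ref{cor:g1} by specializing Lemma~\ref{lem:g-vector-formula1} through the dictionary of Remark~\ref{rem:counterpart}. Before that, I will check that the $g$-vectors computed in $\mathcal A^{\rm prin}_{Q}$ are the extended $g$-vectors of the corresponding quantum cluster variables with respect to $\overline{\mathsf s}^{\rm qc}_\lambda$.

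\textbf{Step 1: reduction to the commutative principal-coefficient algebra.} In the quiver $\overline Q^{\rm qc}_\lambda$, every arrow incident to the vertices $ns^i$ and $j0^1$ has been removed, so these frozen vertices are isolated. Deleting them gives exactly the quiver $Q$ of the previous subsection.
\begin{enumerate}
\item[(i)] Mutations at mutable vertices never create arrows to isolated vertices. Hence the cluster variables of $\overline{\mathsf s}^{\rm qc}_\lambda$ reached by mutation sequences in $Q$ involve none of the isolated frozen variables.
\item[(ii)] It follows that the extended $g$-vectors of these variables have zero coordinate at every isolated vertex.
\item[(iii)] The extended $g$-vector of a quantum cluster variable is, by definition, that of its commutative counterpart in $\mathcal A^{\rm prin}_{Q}$. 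It depends only on the exchange matrix and the mutation sequence, and not on $\Pi$.
\end{enumerate}
So the $g$-vectors in question are the vectors $g_{js^i}^{\widetilde s}[\widetilde i]$, viewed inside $\mathbb Z^{\overline V_\lambda}$ by extension by zero.

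\textbf{Step 2: part (a).} By Remark~\ref{rem:counterpart}, $\overline E^{\rm qc}(js^i)$ corresponds to $\widetilde A^{\,s-1}_{j1^1}[i]$. I apply Lemma~\ref{lem:g-vector-formula1} with $\widetilde s=s-1$, $\widetilde i=i$ and vertex $j1^1$. Its hypotheses hold: $j\ge s-1$, and $1+i\le k+1$ because $i\le k$. The formula gives
\[
\mathbf e_{j,s^{\,i}}-\mathbf e_{j,s-1^{\,i}},
\]
which is the claim. In the edge case $s=1$, $i=1$, the term $\mathbf e_{j0^1}$ vanishes by the convention of the lemma. This is consistent with Step 1, since $j0^1$ is an isolated frozen vertex.

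\textbf{Step 3: part (b).}
\begin{enumerate}
\item[(i)] For $i\ge 2$, Remark~\ref{rem:counterpart} identifies the variable of $\overline{\mathsf t}^{\rm sub}_i$ at $js^1$ with $\widetilde A^{\,j}_{js^1}[i-1]$.
\item[(ii)] Lemma~\ref{lem:g-vector-formula1}, applied with $\widetilde s=j$ and $\widetilde i=i-1$, gives $\mathbf e_{j,s+j^{\,i-1}}-\mathbf e_{jj^{\,i-1}}$.
\item[(iii)] Since $s+j\ge j$, the convention of that lemma rewrites the first term as $\mathbf e_{js^{\,i}}$.
\item[(iv)] The identification $jj^{i-1}=j0^{i}$ rewrites the second term as $\mathbf e_{j0^{\,i}}$.
\end{enumerate}
For $i=1$ there is nothing to compute: $\overline{\mathsf t}^{\rm qc}_1=\overline{\mathsf s}^{\rm qc}_\lambda$, so the $g$-vector is $\mathbf e_{js^1}$. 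This equals $\mathbf e_{js^1}-\mathbf e_{j0^1}$ with $\mathbf e_{j0^1}=\mathbf 0$.

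\textbf{Main obstacle.} I expect the difficulty to be bookkeeping rather than mathematics. One must check that the index conventions of Lemma~\ref{lem:g-vector-formula1} match the vertex labels of $\overline V_\lambda$: namely the wrap-around rule $\mathbf e_{js^i}=\mathbf e_{j,s-j^{\,i+1}}$, the gluing $jj^{i}=j0^{i+1}$, and the treatment of the removed vertices $j0^1$. One must also confirm that the mutation sequences named in Remark~\ref{rem:counterpart} are exactly those used to define $\widetilde A^{\widetilde s}_{js^i}[\widetilde i]$. I would check these identifications carefully on small cases ($n=3$, $k=2$) before writing out the general argument.
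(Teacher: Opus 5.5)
Your proposal is correct and follows the paper's own argument: the paper simply declares the corollary immediate from Lemma~\ref{lem:g-vector-formula1} and Remark~\ref{rem:counterpart}, which is exactly your specialization. For part (a) you take $\widetilde s=s-1$, $\widetilde i=i$ at the vertex $j1^1$, and for part (b) you take $\widetilde s=j$, $\widetilde i=i-1$ at $js^1$. Your additional bookkeeping makes explicit what the paper leaves implicit: the reduction through the isolated frozen vertices, the convention $\mathbf e_{j0^1}=\mathbf 0$ (which the statement tacitly relies on), and the separate treatment of $i=1$ in (b), where Remark~\ref{rem:counterpart} would otherwise require the undefined index $\widetilde i=0$.
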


\subsection{$g$-vectors for $\mathbb P_3$} In this subsection, let $Q$ denote $\overline Q_{\mathbb{P}_3}$, and let $\widetilde{Q}$ denote its framed quiver.  We label the vertices of $\overline V_\lambda$ as in Figure \ref{Fig:lambdai}. Let $\mathcal{A}^{\mathrm{prin}}_{Q}$ denote the commutative cluster algebra with principal coefficients and initial exchange matrix given by the quiver $Q$.  
For each vertex $v$ of $Q$, let $\widetilde{A}_{v}$ be the corresponding initial cluster variable.

Let $Q^{\rm qc}$ be the quiver obtained from $Q$ by removing all vertices $j0$ for $j=1,\cdots,n-1$ and $ns$ for $s=1,\cdots,n-1$ and $\mathcal A_{Q^{\rm qc}}^{\rm prin}$ denote the commutative cluster algebra with principal coefficients and initial exchange matrix given by $Q^{\rm qc}$. For each vertex $v$ of $Q^{\rm qc}$, let $\widetilde{A}^{\rm qc}_{v}$ be the corresponding initial cluster variable.

Recall the mutation sequence $\widetilde \mu_{(j,s)}^\diamondsuit$ given in \eqref{eq:mutationdiamond}. In this subsection, we compute the $g$-vector of $\widetilde \mu^\diamondsuit_{(j,s)}(\widetilde A_{n-1,s-1})$ and $\widetilde \mu^\diamondsuit_{(j,s)}(\widetilde A^{\rm qc}_{n-1,s-1})$.





It is easy to see that the sequence of mutations $\widetilde{\mu}_{(j,s)}^{\diamondsuit}$ is a green sequence for $\widetilde{Q}$. This helps us compute the $g$-vector of $\widetilde{\mu}^{\diamondsuit}_{(j,s)}(\widetilde{A}_{n-1,s-1})$.

\begin{lemma}\label{lem:g-vector111}
For any integers $2 \leq s \leq j \leq n-1$, the $g$-vector of $\widetilde{\mu}^\diamondsuit_{(j,s)}(\widetilde{A}_{n-1,s-1})$ is
\[
{\bf e}_{j-s+1,0} + {\bf e}_{n,s-1} + {\bf e}_{j,s} - {\bf e}_{j,s-1},
\]
where ${\bf e}_v$ denotes the unit vector associated with the vertex $v$ of the quiver $Q$. 

Consequently, the $g$-vector of $\widetilde{\mu}^\diamondsuit_{(j,s)}(\widetilde{A}^{\mathrm{qc}}_{n-1,s-1})$ is
\[
{\bf e}_{j,s} - {\bf e}_{j,s-1},
\]
where now ${\bf e}_v$ denotes the unit vector associated with the vertex $v$ of the quiver $Q^{\mathrm{qc}}$.
\end{lemma}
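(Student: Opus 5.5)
We compute the $g$-vector by following the mutation sequence $\widetilde{\mu}^\diamondsuit_{(j,s)}$ one mutation at a time, using the recurrence of Lemma~\ref{lem:grec}. Since $\widetilde{\mu}^\diamondsuit_{(j,s)}$ is green for $\widetilde Q$, at each step we can use the form of Lemma~\ref{lem:grec} that sums over incoming arrows from mutable vertices only. That is, we sum over $w\in\mathcal V$ with $\widetilde Q'(w,v)>0$, excluding the frozen framing vertices: for a green vertex the framing arrows all point into $v$, so the other expression picks up no frozen contributions.

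The sequence \eqref{eq:mutationdiamond} consists of $s-1$ blocks. The $t$-th block ($t=1,\dots,s-1$) mutates along the diagonal
\[
(j-s+1+t,\,t),\ (j-s+2+t,\,t),\ \dots,\ (n-s+t,\,t).
\]
We first determine the quiver before each mutation, as in Lemmas~\ref{lem:Q1} and \ref{lem:Q2} and the appendix computations for \eqref{eq-Q2} and \eqref{eq-Q3}. We would prove, by double induction on the block index $t$ and on the position within the block, that when $(r,t)$ is mutated its mutable in-neighbours are exactly:
\begin{itemize}
\item the previously mutated vertex $(r-1,t)$ of the same block (or, for the first vertex of a block, an edge vertex on the boundary), and
\item the not-yet-mutated vertex $(r+1,t+1)$ (or an edge vertex when $r=n-1$),
\end{itemize}
each with multiplicity one. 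This is a local computation with the triangular grid quiver $\Gamma_{\mathbb P_3}$. It is the same kind of computation as \eqref{eq-Q1}--\eqref{eq-Q3}, and it can also be checked with Keller's applet for small $n$.

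With the neighbourhoods in hand, we guess and prove closed formulas for all intermediate $g$-vectors. Write $G_t(r)$ for the $g$-vector of the new variable at $(r,t)$ after the $t$-th block. The expected form is a telescoping one:
\[
G_t(r)={\bf e}_{\text{edge}}+{\bf e}_{(r+1,t+1)}-{\bf e}_{(j-s+t,\,t-?)}+\cdots,
\]
where the intermediate terms cancel pairwise in the recurrence, in the same way as in the proofs of Lemmas~\ref{lem:gp4} and \ref{lem:g-vector-formula1}. The induction runs first along a block and then across blocks. The last mutation is at $(n-1,s-1)$, and the formula there should specialize to
\[
{\bf e}_{j-s+1,0}+{\bf e}_{n,s-1}+{\bf e}_{j,s}-{\bf e}_{j,s-1}.
\]
In this formula the two boundary vectors come from the edge vertices on $e_3$ and on the bottom edge that enter at the first mutation of the first block and at the ends of the blocks.

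For the quasi-commutative version we pass to $Q^{\rm qc}$. This quiver is obtained from $Q$ by deleting the frozen vertices $j0$ and $ns$. Deleting frozen vertices does not change the mutable part or the mutation rules among mutable vertices, and the framing is taken only on the remaining vertices. So the same recurrence holds with every ${\bf e}_{j'0}$ and ${\bf e}_{ns'}$ set to zero. This gives ${\bf e}_{j,s}-{\bf e}_{j,s-1}$ immediately; equivalently, it is the image of the first formula under the coordinate projection.

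The main obstacle is the first step: pinning down precisely the local quiver, including boundary arrows, before each mutation in $\widetilde{\mu}^\diamondsuit_{(j,s)}$. The ends of the blocks, where the diagonal meets the frozen edges, are the delicate cases. I would handle them by verifying the base block $t=1$ explicitly. After that I would show that after each full block the unmutated region has the same shape as the original quiver shifted by one diagonal, which makes the induction self-similar.
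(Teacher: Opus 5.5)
Your overall strategy matches the paper's. You run through the green sequence $\widetilde{\mu}^\diamondsuit_{(j,s)}$, determine by induction the arrows at each vertex when it is mutated, apply Lemma~\ref{lem:grec}, telescope, and get the $Q^{\mathrm{qc}}$ statement by projection. The projection step is fine: the framed quiver of $Q^{\mathrm{qc}}$ is a full subquiver of $\widetilde Q$, and restriction to a full subquiver commutes with mutations at its vertices.

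\textbf{The recurrence is set up backwards.} As stated, it produces wrong vectors. For a green vertex $v$ the framing arrows point \emph{into} $v$, so the framing variables contribute to the sum over $\widetilde Q'(w,v)>0$. The framing-free expression in Lemma~\ref{lem:grec} is the one over $\widetilde Q'(w,v)<0$, i.e.\ over the \emph{outgoing} arrows of $v$. You cannot simply discard the framing terms from the incoming sum.

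The simplest case already shows this. Take $s=2$, $j=n-1$: a single mutation at $(n-1,1)$ in the initial quiver, where rows are oriented $(j,s)\to(j,s+1)$. There $(n-1,1)$ has outgoing arrows to $(n-2,0),(n,1),(n-1,2)$ and incoming arrows from $(n-1,0),(n-2,1),(n,2)$. The correct $g$-vector is ${\bf e}_{n-2,0}+{\bf e}_{n,1}+{\bf e}_{n-1,2}-{\bf e}_{n-1,1}$. Your recipe gives ${\bf e}_{n-1,0}+{\bf e}_{n-2,1}+{\bf e}_{n,2}-{\bf e}_{n-1,1}$.

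\textbf{The neighbourhood claim is also wrong.} The mutation at $(r-1,t)$ reverses the arrow between $(r-1,t)$ and $(r,t)$. The previously mutated vertex therefore ends up as an \emph{out}-neighbour of $(r,t)$, not an in-neighbour. Mutations also create non-local arrows; for example, $\mu_{j,1}$ creates $(j-1,1)\to(j+1,1)$. So the picture of a local grid computation that reproduces its shape after each block does not hold as stated.

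What the paper proves (for $s=2$, and for the last block in general) is the following. When $(r,s-1)$ is mutated, $j<r\le n-1$, its outgoing arrows go exactly to $(r-1,s-1)$ and $(r+1,s-1)$, each with multiplicity one. This gives
$g(r)=g(r-1)+{\bf e}_{r+1,s-1}-{\bf e}_{r,s-1}$, which telescopes to the target. The telescoping works only once you know the first mutation of the last block, at $(j,s-1)$, produces ${\bf e}_{j-s+1,0}+{\bf e}_{j+1,s-1}+{\bf e}_{j,s}-{\bf e}_{j,s-1}$. Establishing that value is exactly where the earlier blocks enter. Your intermediate closed forms are left undetermined (the ``$?$''), so this step is not addressed yet.
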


\begin{proof}

We first consider the case $s = 2$. We have
\[\widetilde{\mu}^\diamondsuit_{(j,s)}=\mu_{n-1,1}\cdots\mu_{j+1,1}\mu_{j1}.\]

If $j=n-1$, then 
\[g\bigl(\widetilde{\mu}^\diamondsuit_{(j,s)}(\widetilde{A}_{n-1,s-1})\bigr)=g\bigl(\mu_{n-1,s-1}(\widetilde{A}_{n-1,s-1})\bigr)={\bf e}_{n-2,0}+{\bf e}_{n1}+ {\bf e}_{n-1,2} - {\bf e}_{n-1,1}.\]

For $j < n-1$, the following property can be established by induction:

\begin{itemize}
    \item For any integer $j'$ satisfying $j < j' \leq n-1$, let $Q' = \mu_{(j'-1,1)} \circ \cdots \circ \mu_{(j+1,1)} \circ \mu_{(j,1)}(Q).$
    In the quiver $Q'$, there exists an arrow from $(j',1)$ to $v$ (i.e., $Q'((j',1), v) > 0$) if and only if $v = (j'-1, 1)$ or $v = (j'+1, 1)$. Moreover, in either case, the multiplicity of the arrow is exactly one, that is, $Q'((j',1), v) = 1$.
\end{itemize}

Thus, we can inductively prove that
\begin{equation}
   g\bigl(\mu_{j',1} \circ \cdots \circ \mu_{j+1,1} \circ \mu_{j1}(\widetilde{A}_{j',1})\bigr)
= {\bf e}_{j-1,0}+{\bf e}_{j'+1,1}+ {\bf e}_{j2} - {\bf e}_{j1}.
\end{equation}
In particular, we have
\begin{equation}
   g\bigl(\widetilde{\mu}^\diamondsuit_{(j,s)}(\widetilde{A}_{n-1,s-1})\bigr)
   = {\bf e}_{j-1,0}+{\bf e}_{n1}+ {\bf e}_{j2} - {\bf e}_{j1}.
\end{equation}

We then consider the case $s > 2$. We have 
\[\widetilde{\mu}^\diamondsuit_{(j,s)}=(\mu_{n-1,s-1}\cdots \mu_{j+1,s-1}\mu_{j,s-1})\cdots
    (\mu_{n-s+2,2}\cdots\mu_{j-s+4,2} \mu_{j-s+3,2})
    (\mu_{n-s+1,1}\cdots \mu_{j-s+3,1} \mu_{j-s+2,1}).\]

If $j=n-1$, then $\widetilde{\mu}^\diamondsuit_{(j,s)}=
    \mu_{n-1,s-1}\cdots \mu_{n-s+2} \mu_{n-s+1,1}$. We can prove similarly to the case $s=2$ that
  \[g\bigl(\widetilde{\mu}^\diamondsuit_{(j,s)}(\widetilde{A}_{n-1,s-1})\bigr)=g\bigl(\mu_{n-1,s-1}(\widetilde{A}_{n-1,s-1})\bigr)={\bf e}_{n-s,0}+{\bf e}_{n,s-1}+ {\bf e}_{n-1,s} - {\bf e}_{n-1,s-1}.\]  

For $j<n-1$, the following property can be established by induction
\begin{itemize}
    \item For any integer $j'$ satisfying $j < j' \leq n$, let 
    \begin{align*}
      \widetilde \mu_{(j,s;j')}^\diamondsuit & = (\mu_{j'-1,s-1}\cdots \mu_{j+1,s-1}\mu_{j,s-1})(\mu_{n-2,s-2}\cdots \mu_{j,s-2}\mu_{j-1,s-2})\\
      & \quad 
      \cdots(\mu_{n-s+2,2}\cdots\mu_{j-s+4,2} \mu_{j-s+3,2})
    (\mu_{n-s+1,1}\cdots \mu_{j-s+3,1} \mu_{j-s+2,1})
    \end{align*}
    and $Q'=\widetilde \mu_{(j,s;j')}^\diamondsuit(Q)$.
    In the quiver $Q'$, there exists an arrow from $(j',1)$ to $v$ (i.e., $Q'((j',s-1), v) > 0$) if and only if $v = (j'-1, s-1)$ or $v = (j'+1, s-1)$. Moreover, in either case, the multiplicity of the arrow is exactly one, that is, $Q'((j',s-1), v) = 1$.
\end{itemize}

Thus, we can inductively prove that for any $j<j'\leq n$

\begin{equation}
   g\bigl(\widetilde{\mu}^\diamondsuit_{(j,s;j'+1)}(\widetilde{A}_{j',s-1})\bigr)= {\bf e}_{j+s-1,0}+{\bf e}_{j'+1,s-1}+ {\bf e}_{js} - {\bf e}_{j,s-1}.
\end{equation}

In particular, 
\[
g\bigl(\widetilde{\mu}^\diamondsuit_{(j,s)}(\widetilde{A}_{n-1,s-1})\bigr) =g\bigl(\widetilde{\mu}^\diamondsuit_{(j,s;n)}(\widetilde{A}_{n-1,s-1})\bigr)={\bf e}_{j-s+1,0}+{\bf e}_{n,s-1}+ {\bf e}_{js} - {\bf e}_{j,s-1}.
\]


The proof is complete.
\end{proof}

\medskip

\begin{proof}[Proof of Lemma \ref{lem:B-d}]
Part (a) follows directly from the definition of $\overline {\mathsf s}_{\lambda_i}^{\rm qc}$.

Part (b) corresponds to the case $\ell=1$ of Lemma \ref{lem:subseed}(a). Furthermore, by Lemma \ref{lem:subseed}(b), the quiver of $\overline{\mathsf{t}}_{i}^{\rm sub}$ is isomorphic to that of $\overline{\mathsf{s}}_{i}^{\rm sub}$ under the vertex identification $js^1\mapsto js$.

Finally, by Corollaries \ref{cor:g2} and \ref{cor:g1}(b), the $g$-vector of $\overline A^{\rm qc}_{js}\langle i\rangle$ coincides with that of the cluster variable in $\overline{\mathsf{t}}_{i}^{\rm sub}$ associated with the vertex $js^1$ (with respect to the seed $\overline {\mathsf s}^{\rm qc}_{\lambda}$). Since distinct cluster variables possess distinct $g$-vectors (see, e.g., \cite{FK} for the commutative case; by \cite[Theorem 6.1]{BZ}, this statement extends to the quantum case), it follows that $\overline{\mathsf{t}}_{i}^{\rm sub} = \overline{\mathsf{s}}_{i}^{\rm sub}$.

The proof is complete.
\end{proof}

\begin{proof}[Proof of Lemma \ref{lem:B4}]
We first consider the case that $i=1$. Let $\overline E_{k=1}^{\rm qc}(js^1)$ denote the standard cluster variable of $\overline {\mathscr A}^{\rm qc}_{\omega}(\mathbb P_3)$ (for $k=1$) associated with the vertex $js^1$. Then 
\[
\overline E_{k=1}^{\rm qc}(js^1)=\mu_{s-1}^{\rm up}\cdots\mu_2^{\rm up}\mu_{1}^{\rm up}(\overline A^{\rm qc}_{js}(1)).
\]

By Corollary \ref{cor:g1}(a) (for both $k=1$ and general $k$) and Lemma \ref{lem:g-vector111}, the $g$-vector of $\mu_{s-1}^{\rm up}\cdots\mu_{1}^{\rm up}(\overline A^{\rm qc}_{js^1})$ with respect to the seed $\overline{\mathsf s}^{\rm qc}_\lambda$ is $\mathbf{e}_{js^1}-\mathbf{e}_{js^0}$, which coincides with those of $\overline E^{\rm qc}(js^1)$ and $\widetilde{\mu}_{(j,s)}^{\diamondsuit}(\overline{A}^{\rm qc}_{n-1,s-1}\langle 1\rangle)$. As in the proof of Lemma \ref{lem:B-d}, we have 
\[
\overline E^{\rm qc}(js^1)=\mu_{s-1}^{\rm up}\cdots\mu_2^{\rm up}\mu_{1}^{\rm up}(\overline A^{\rm qc}_{js^1})= \widetilde{\mu}_{(j,s)}^{\diamondsuit}(\overline{A}^{\rm qc}_{n-1,s-1}\langle 1\rangle).
\]
This establishes the case $i=1$.

We now consider the general $i$ case. By \eqref{eq:muleftarrow}, we have
\[ \overline E^{\rm qc}(js^i)=\prod_{\Delta_{i}\ni \ell t^{i}\; \prec js^{i}}\overset{\leftarrow}{\mu}_{\ell t^{i}}(\overline E^{\rm qc}(j1^i)),\]
where the mutation sequence is applied to the seed $\overline{\mathsf t}^{\rm qc}_i$, and the factors in the composition are ordered according to the increasing order of the indices with respect to $\prec$ from right to left. 

Note that the mutation sequence $\prod_{\Delta_{i}\ni \ell t^{i}\; \prec js^{i}}\overset{\leftarrow}{\mu}_{\ell t^{i}}$ involves only vertices in 
\[
\bigcup_{j=1}^{k+1-i}I(\Delta_j)\setminus \{11^{k+1-i},\dots,(n-1,n-1)^{k+1-i}\}.
\] 

By applying Lemma \ref{lem:subseed}(a) to the case $\ell=k+1-i$, we can write
\[ \overline E^{\rm qc}(js^i)=\prod_{\Delta_{i}\ni \ell t^{i}\; \prec js^{i}}\overset{\leftarrow}{\mu}_{\ell t^{i}}(\overline E^{\rm qc}(j1^i)),\]
where the mutation sequence is now applied to the sub-seed $\overline{\mathsf t}^{{\rm sub},k+1-i}_i$ of $\overline{\mathsf t}^{\rm qc}_i$. Moreover, by Lemma \ref{lem:subseed}(b), we see that the quiver of $\overline{\mathsf t}^{{\rm sub},k+1-i}_i$ is isomorphic to the quiver of the star-like triangulation of the polygon $\mathbb P_{k+3-i}$. Thus, we can apply the case $i=1$ to the polygon $\mathbb P_{k+3-i}$ and obtain
\[ \overline E^{\rm qc}(js^i)=\mu_{s-1}^{\rm up}\cdots\mu_2^{\rm up}\mu_{1}^{\rm up}(\overline{E}^{\rm qc}_{j1}(1))=\mu_{s-1}^{\rm up}\cdots\mu_2^{\rm up}\mu_{1}^{\rm up}\circ \overleftarrow{\mu}(\Delta_{i-1})\cdots\overleftarrow{\mu}(\Delta_{2})\overleftarrow{\mu}(\Delta_{1})(\overline{A}^{\rm qc}_{j1}(1)),\]
where the first mutation sequence is applied to the seed $\overline{\mathsf t}^{\rm qc}_i$ and the second is applied to the seed $\overline{\mathsf s}^{\rm qc}_{\lambda}$. This proves part (a). 

By Lemma \ref{lem:B-d}(c) and the case $i=1$, we have
\[\mu_{s-1}^{\rm up}\cdots\mu_2^{\rm up}\mu_{1}^{\rm up}(\overline{E}^{\rm qc}_{j1}(1))=\widetilde{\mu}_{(j,s)}^{\diamondsuit}(\overline{E}^{\rm qc}_{n-1,s-1}\langle i\rangle)=\widetilde{\mu}_{(j,s)}^{\diamondsuit}(\overline{A}^{\rm qc}_{n-1,s-1}\langle i\rangle),\]
where the first two mutation sequences are applied to the seed $\overline{\mathsf t}^{\rm qc}_i$ and the last is applied to the seed $\overline{\mathsf s}^{\rm qc}_{\lambda_i}$. This proves part (b). 

The proof is complete.
\end{proof}

\medskip

\begin{proof}[Proof of Lemma \ref{lem:quasi2}]
(a) By Corollary \ref{cor:g2}, the $g$-vector of $\overline{A}_{js}\langle i\rangle$ in $\mathcal{A}$ with respect to the seed $\overline{\mathsf s}_\lambda$ is
\[
\mathbf{e}_{n j^{\,i-1}} + \mathbf{e}_{j s^{\,i}} - \mathbf{e}_{j 0^{\,i}}.
\]
Thus, the Laurent expansion of $\overline{A}_{js}\langle i\rangle$ with respect to the seed $\overline{\mathsf{s}}_\lambda$ contains a unique term
\[
[A_{n j}(i-1) \cdot A_{j s}(i) \cdot A^{-1}_{j 0}(i)]
\]
with coefficient $1$.

Similarly, by Corollary \ref{cor:g2} the Laurent expansion of $\overline{A}^{\mathrm{qc}}_{js}\langle i\rangle$ with respect to the seed $\overline{\mathsf{s}}^{\mathrm{qc}}_\lambda$ contains a unique term
\[
[A^{\mathrm{qc}}_{j s}(i) \cdot (A^{\mathrm{qc}}_{j 0}(i))^{-1}]
\]
with coefficient $1$.

Moreover, by \cite[Proposition 2.7]{CHL}, we have $\overline{A}_{js}\langle i\rangle \asymp \overline{A}^{\mathrm{qc}}_{js}\langle i\rangle$ (see Definition \ref{def:proportional}). The desired equality then follows from \eqref{eq:qc}.

(b) As in part (a), Lemma \ref{lem:g-vector111} implies that the Laurent expansion of $\widetilde{\mu}_{(j,s)}^{\diamondsuit}(\overline{A}_{j-s+2,1}(1))$ with respect to the seed $\overline{\mathsf{s}}_\lambda$ contains a unique term
\[
[\overline{A}_{j-s+1,0}(1) \cdot \overline{A}_{j,s}(1) \cdot \overline{A}_{n,s-1}(1) \cdot \overline{A}_{j,s-1}^{-1}(1)].
\]

On the other hand, the Laurent expansion of $\widetilde{\mu}_{(j,s)}^{\diamondsuit}(\overline{A}^{\mathrm{qc}}_{j-s+2,1}(1))$ with respect to $\overline{\mathsf{s}}^{\rm qc}_\lambda$ contains a unique term
\[
[\overline{A}^{\mathrm{qc}}_{j,s}(1) \cdot (\overline{A}^{\mathrm{qc}}_{j,s-1}(1))^{-1}]
= [\overline{A}_{j,s}(1) \cdot \overline{A}_{n s}^{-1}(1) \cdot \overline{A}_{j 0}^{-1}(1) \cdot \overline{A}_{j,s-1}^{-1}(1)  \cdot \overline{A}_{n,s-1}(1)\cdot \overline{A}_{j0}(1)].
\]

By \cite[Proposition 2.7]{CHL}, we have
\[
\widetilde{\mu}_{(j,s)}^{\diamondsuit}(\overline{A}^{\mathrm{qc}}_{j-s+2,1}(1)) \asymp \widetilde{\mu}_{(j,s)}^{\diamondsuit}(\overline{A}_{j-s+2,1}(1)). \qquad{\text{(see Definition \ref{def:proportional})}}
\]
Therefore,
\begin{equation*}
\widetilde{\mu}_{(j,s)}^{\diamondsuit}(\overline{A}^{\mathrm{qc}}_{j-s+2,1}(1))
= [\widetilde{\mu}_{(j,s)}^{\diamondsuit}(\overline{A}_{j-s+2,1}(1)) \cdot \overline{A}_{j-s+1,0}^{-1}(1) \cdot \overline{A}_{n s}^{-1}(1)].
\end{equation*}

(c) The statement can be proved similarly to part (b).
\end{proof}

\section{A linear order on exchangeable cluster variables in $A(S)$}\label{app-order}

When an exchangeable cluster variable in $A(S)$ is neither $\alpha$ nor $\cev{\alpha}$, it corresponds to a labeled arc in $S$. 
We assign the red labels $\{1,2,3,4\}$ to these arcs; this induces a corresponding labeling of the exchangeable cluster variables. 
This labeling determines a linear order, with the convention that
\begin{align*}
    \text{variable labeled }1 \;>\; \text{variable labeled }2 \;>\; \text{variable labeled }3 \;>\; \text{variable labeled }4.
\end{align*}

We distinguish two main cases: either $A(S)$ contains neither $\alpha$ nor $\cev{\alpha}$, or it contains one of them.

\medskip
\noindent
\textbf{Case 1:} $A(S)$ contains neither $\alpha$ nor $\cev{\alpha}$.

In this case, we further divide into two subcases depending on whether the labeled arcs in $S$ intersect in the interior of $\mathbb P_2$.

\smallskip
\noindent
\textit{Subcase 1:} The labeled arcs in $S$ intersect in the interior of $\mathbb P_2$.

\medskip

$
\raisebox{-0.25in}{
    \begin{tikzpicture}
    \tikzset{->-/.style={
        decoration={markings,mark=at position #1 with {\arrow{latex}}},
        postaction={decorate}
    }}

    \filldraw[draw=white,fill=gray!20] (0,0) rectangle (1.5, 1.5);
    \draw[line width=1pt] (0,0)--(0,1.5);
    \draw[line width=1pt] (1.5,0)--(1.5,1.5);
    \draw [line width =0.8pt] (0,1.2)--(1.5,0.3);
    \draw [line width =0.8pt] (0,1.2)--(1.5,0.75);
    \draw [line width =0.8pt] (0,0.75)--(1.5,0.3);
    \draw [line width =0.8pt] (0,0.75)--(1.5,1.2);
       \node at(0,0.3) {$\bullet$};
    \node[left] at(0,0.3) {$3$};
    \node at(0,0.75) {$\bullet$};
    \node[left] at(0,0.75) {$2$};
    \node at(0,1.2) {$\bullet$};
    \node[left] at(0,1.2) {$1$};

    \node at(1.5,0.3) {$\bullet$};
    \node[right] at(1.5,0.3) {$1$};
    \node at(1.5,0.75) {$\bullet$};
    \node[right] at(1.5,0.75) {$2$};
    \node at(1.5,1.2) {$\bullet$};
    \node[right] at(1.5,1.2) {$3$};

    \node [text=red] at(0.3,0.65) {$1$};
    \node [text=red] at(0.75,0.75) {$2$};
    \node [text=red] at(1.2,0.85) {$3$};
    \node [text=red] at(1.2,1.15) {$4$};
    \end{tikzpicture}
    }
$,\quad
$
\raisebox{-0.25in}{
    \begin{tikzpicture}
    \tikzset{->-/.style={
        decoration={markings,mark=at position #1 with {\arrow{latex}}},
        postaction={decorate}
    }}

    \filldraw[draw=white,fill=gray!20] (0,0) rectangle (1.5, 1.5);
    \draw[line width=1pt] (0,0)--(0,1.5);
    \draw[line width=1pt] (1.5,0)--(1.5,1.5);
    
    \draw [line width =0.8pt] (0,1.2)--(1.5,0.3);
    
    
    
    \draw [line width =0.8pt] (0,0.75)--(1.5,0.75);
     \draw [line width =0.8pt] 
            (0,1.2) .. controls (0.5,1.35) and (1,1.2) .. (1.5,0.75);

    \draw [line width =0.8pt] (0,0.75)--(1.5,0.75);
     \draw [line width =0.8pt] 
            (0,0.75) .. controls (0.5,0.3) and (1,0.25) .. (1.5,0.3);

       \node at(0,0.3) {$\bullet$};
    \node[left] at(0,0.3) {$3$};
    \node at(0,0.75) {$\bullet$};
    \node[left] at(0,0.75) {$2$};
    \node at(0,1.2) {$\bullet$};
    \node[left] at(0,1.2) {$1$};

    \node at(1.5,0.3) {$\bullet$};
    \node[right] at(1.5,0.3) {$1$};
    \node at(1.5,0.75) {$\bullet$};
    \node[right] at(1.5,0.75) {$2$};
    \node at(1.5,1.2) {$\bullet$};
    \node[right] at(1.5,1.2) {$3$};

    \node [text=red] at(0.75,0.35) {$1$};
    \node [text=red] at(1.2,0.75) {$2$};
    \node [text=red] at(0.35,0.96) {$3$};
    \node [text=red] at(0.75,1.15) {$4$};
    \end{tikzpicture}
    }
$,\quad
$
\raisebox{-0.25in}{
    \begin{tikzpicture}
    \tikzset{->-/.style={
        decoration={markings,mark=at position #1 with {\arrow{latex}}},
        postaction={decorate}
    }}

    \filldraw[draw=white,fill=gray!20] (0,0) rectangle (1.5, 1.5);
    \draw[line width=1pt] (0,0)--(0,1.5);
    \draw[line width=1pt] (1.5,0)--(1.5,1.5);
    
    \draw [line width =0.8pt] (0,0.3)--(1.5,1.2);
    
    
    
    \draw [line width =0.8pt] (0,0.75)--(1.5,0.75);
     \draw [line width =0.8pt] 
            (1.5,1.2) .. controls (1,1.35) and (0.5,1.2) .. (0,0.75);

    \draw [line width =0.8pt] (0,0.75)--(1.5,0.75);
     \draw [line width =0.8pt] 
            (1.5,0.75) .. controls (1,0.3) and (0.5,0.25) .. (0,0.3);

       \node at(0,0.3) {$\bullet$};
    \node[left] at(0,0.3) {$3$};
    \node at(0,0.75) {$\bullet$};
    \node[left] at(0,0.75) {$2$};
    \node at(0,1.2) {$\bullet$};
    \node[left] at(0,1.2) {$1$};

    \node at(1.5,0.3) {$\bullet$};
    \node[right] at(1.5,0.3) {$1$};
    \node at(1.5,0.75) {$\bullet$};
    \node[right] at(1.5,0.75) {$2$};
    \node at(1.5,1.2) {$\bullet$};
    \node[right] at(1.5,1.2) {$3$};

    \node [text=red] at(0.75,0.35) {$1$};
    \node [text=red] at(1.2,0.75) {$3$};
    \node [text=red] at(0.3,0.54) {$2$};
    \node [text=red] at(0.75,1.2) {$4$};
    \end{tikzpicture}
    }
$,\quad
$
\raisebox{-0.25in}{
    \begin{tikzpicture}
    \tikzset{->-/.style={
        decoration={markings,mark=at position #1 with {\arrow{latex}}},
        postaction={decorate}
    }}

    \filldraw[draw=white,fill=gray!20] (0,0) rectangle (1.5, 1.5);
    \draw[line width=1pt] (0,0)--(0,1.5);
    \draw[line width=1pt] (1.5,0)--(1.5,1.5);
    \draw [line width =0.8pt] (0,1.2)--(1.5,0.3);
    \draw [line width =0.8pt] (0,1.2)--(1.5,0.75);
    \draw [line width =0.8pt] (0,0.75)--(1.5,0.3);
    \draw [line width =0.8pt] (0,0.3)--(1.5,0.75);
       \node at(0,0.3) {$\bullet$};
    \node[left] at(0,0.3) {$3$};
    \node at(0,0.75) {$\bullet$};
    \node[left] at(0,0.75) {$2$};
    \node at(0,1.2) {$\bullet$};
    \node[left] at(0,1.2) {$1$};

    \node at(1.5,0.3) {$\bullet$};
    \node[right] at(1.5,0.3) {$1$};
    \node at(1.5,0.75) {$\bullet$};
    \node[right] at(1.5,0.75) {$2$};
    \node at(1.5,1.2) {$\bullet$};
    \node[right] at(1.5,1.2) {$3$};

    \node [text=red] at(0.3,0.65) {$2$};
    \node [text=red] at(0.75,0.75) {$3$};
    \node [text=red] at(1.2,0.85) {$4$};
    \node [text=red] at(0.2,0.35) {$1$};
    \end{tikzpicture}
    }
$,\quad
$
\raisebox{-0.25in}{
    \begin{tikzpicture}
    \tikzset{->-/.style={
        decoration={markings,mark=at position #1 with {\arrow{latex}}},
        postaction={decorate}
    }}

    \filldraw[draw=white,fill=gray!20] (0,0) rectangle (1.5, 1.5);
    \draw[line width=1pt] (0,0)--(0,1.5);
    \draw[line width=1pt] (1.5,0)--(1.5,1.5);
    
    \draw [line width =0.8pt] (0,0.3)--(1.5,1.2);

    \draw [line width =0.8pt] (0,1.2)--(1.5,0.75);
     \draw [line width =0.8pt] 
            (1.5,1.2) -- (0,0.75);
     \draw [line width =0.8pt] 
            (1.5,0.75) -- (0,0.3);

       \node at(0,0.3) {$\bullet$};
    \node[left] at(0,0.3) {$3$};
    \node at(0,0.75) {$\bullet$};
    \node[left] at(0,0.75) {$2$};
    \node at(0,1.2) {$\bullet$};
    \node[left] at(0,1.2) {$1$};

    \node at(1.5,0.3) {$\bullet$};
    \node[right] at(1.5,0.3) {$1$};
    \node at(1.5,0.75) {$\bullet$};
    \node[right] at(1.5,0.75) {$2$};
    \node at(1.5,1.2) {$\bullet$};
    \node[right] at(1.5,1.2) {$3$};

    \node [text=red] at(0.5,0.4) {$1$};
    \node [text=red] at(0.2,0.8) {$3$};
    \node [text=red] at(0.75,0.75) {$2$};
    \node [text=red] at (0.2,1.15) {$4$};
    \end{tikzpicture}
    }
$,

\medskip

$
\raisebox{-0.25in}{
    \begin{tikzpicture}
    \tikzset{->-/.style={
        decoration={markings,mark=at position #1 with {\arrow{latex}}},
        postaction={decorate}
    }}

    \filldraw[draw=white,fill=gray!20] (0,0) rectangle (1.5, 1.5);
    \draw[line width=1pt] (0,0)--(0,1.5);
    \draw[line width=1pt] (1.5,0)--(1.5,1.5);
    
    \draw [line width =0.8pt] (0,0.3)--(1.5,1.2);

    \draw [line width =0.8pt] (0,0.75)--(1.5,0.3);
     \draw [line width =0.8pt] 
            (1.5,1.2) -- (0,0.75);
     \draw [line width =0.8pt] 
            (1.5,0.75) -- (0,0.3);

       \node at(0,0.3) {$\bullet$};
    \node[left] at(0,0.3) {$3$};
    \node at(0,0.75) {$\bullet$};
    \node[left] at(0,0.75) {$2$};
    \node at(0,1.2) {$\bullet$};
    \node[left] at(0,1.2) {$1$};

    \node at(1.5,0.3) {$\bullet$};
    \node[right] at(1.5,0.3) {$1$};
    \node at(1.5,0.75) {$\bullet$};
    \node[right] at(1.5,0.75) {$2$};
    \node at(1.5,1.2) {$\bullet$};
    \node[right] at(1.5,1.2) {$3$};

    \node [text=red] at(1.3,0.35) {$1$};
    \node [text=red] at(0.8,0.8) {$3$};
    \node [text=red] at(1.2,0.65) {$2$};
    \node [text=red] at (1,1.12) {$4$};
    \end{tikzpicture}
    }
$,\quad
$
\raisebox{-0.25in}{
    \begin{tikzpicture}
    \tikzset{->-/.style={
        decoration={markings,mark=at position #1 with {\arrow{latex}}},
        postaction={decorate}
    }}

    \filldraw[draw=white,fill=gray!20] (0,0) rectangle (1.5, 1.5);
    \draw[line width=1pt] (0,0)--(0,1.5);
    \draw[line width=1pt] (1.5,0)--(1.5,1.5);
    
    \draw [line width =0.8pt] (0,0.75)--(1.5,0.75);

    \draw [line width =0.8pt] (0,1.2)--(1.5,0.3);
     \draw [line width =0.8pt] 
            (1.5,1.2) -- (0,0.75);
     \draw [line width =0.8pt] 
            (1.5,0.75) -- (0,0.3);

       \node at(0,0.3) {$\bullet$};
    \node[left] at(0,0.3) {$3$};
    \node at(0,0.75) {$\bullet$};
    \node[left] at(0,0.75) {$2$};
    \node at(0,1.2) {$\bullet$};
    \node[left] at(0,1.2) {$1$};

    \node at(1.5,0.3) {$\bullet$};
    \node[right] at(1.5,0.3) {$1$};
    \node at(1.5,0.75) {$\bullet$};
    \node[right] at(1.5,0.75) {$2$};
    \node at(1.5,1.2) {$\bullet$};
    \node[right] at(1.5,1.2) {$3$};

    \node [text=red] at(0.2,0.35) {$1$};
    \node [text=red] at(0.3,1.05) {$3$};
    \node [text=red] at(1.05,0.8) {$2$};
    \node [text=red] at (1.3,1.15) {$4$};
    \end{tikzpicture}
    }
$,\quad
$
\raisebox{-0.25in}{
    \begin{tikzpicture}
    \tikzset{->-/.style={
        decoration={markings,mark=at position #1 with {\arrow{latex}}},
        postaction={decorate}
    }}

    \filldraw[draw=white,fill=gray!20] (0,0) rectangle (1.5, 1.5);
    \draw[line width=1pt] (0,0)--(0,1.5);
    \draw[line width=1pt] (1.5,0)--(1.5,1.5);
    
    \draw [line width =0.8pt] (0,0.3)--(1.5,1.2);
    
    
    
    \draw [line width =0.8pt] (0,0.75)--(1.5,0.75);
     \draw [line width =0.8pt] 
            (0,1.2) -- (1.5,0.75);

    \draw [line width =0.8pt] (0,0.75)--(1.5,0.75);
     \draw [line width =0.8pt] 
            (0,0.75) -- (1.5,0.3);

       \node at(0,0.3) {$\bullet$};
    \node[left] at(0,0.3) {$3$};
    \node at(0,0.75) {$\bullet$};
    \node[left] at(0,0.75) {$2$};
    \node at(0,1.2) {$\bullet$};
    \node[left] at(0,1.2) {$1$};

    \node at(1.5,0.3) {$\bullet$};
    \node[right] at(1.5,0.3) {$1$};
    \node at(1.5,0.75) {$\bullet$};
    \node[right] at(1.5,0.75) {$2$};
    \node at(1.5,1.2) {$\bullet$};
    \node[right] at(1.5,1.2) {$3$};

    \node [text=red] at(1.3,0.35) {$1$};
    \node [text=red] at(1.05,0.69) {$2$};
    \node [text=red] at(1.2,1.05) {$3$};
    \node [text=red] at(0.2,1.15) {$4$};
    \end{tikzpicture}
    }
$,\quad
$
\raisebox{-0.25in}{
    \begin{tikzpicture}
    \tikzset{->-/.style={
        decoration={markings,mark=at position #1 with {\arrow{latex}}},
        postaction={decorate}
    }}

    \filldraw[draw=white,fill=gray!20] (0,0) rectangle (1.5, 1.5);
    \draw[line width=1pt] (0,0)--(0,1.5);
    \draw[line width=1pt] (1.5,0)--(1.5,1.5);
    
    \draw [line width =0.8pt] (0,0.3)--(1.5,1.2);
    
    
    
    \draw [line width =0.8pt] (0,0.75)--(1.5,0.75);
     \draw [line width =0.8pt] 
            (0,1.2) -- (1.5,0.75);

    \draw [line width =0.8pt] (0,0.75)--(1.5,0.75);
     \draw [line width =0.8pt] (0,0.3) .. controls (0.5,0.2) and (1,0.3) .. (1.5,0.75);

       \node at(0,0.3) {$\bullet$};
    \node[left] at(0,0.3) {$3$};
    \node at(0,0.75) {$\bullet$};
    \node[left] at(0,0.75) {$2$};
    \node at(0,1.2) {$\bullet$};
    \node[left] at(0,1.2) {$1$};

    \node at(1.5,0.3) {$\bullet$};
    \node[right] at(1.5,0.3) {$1$};
    \node at(1.5,0.75) {$\bullet$};
    \node[right] at(1.5,0.75) {$2$};
    \node at(1.5,1.2) {$\bullet$};
    \node[right] at(1.5,1.2) {$3$};

    \node [text=red] at(0.9,0.35) {$1$};
    \node [text=red] at(0.4,0.5) {$2$};
    \node [text=red] at(0.2,0.75) {$3$};
    \node [text=red] at(0.2,1.15) {$4$};
    \end{tikzpicture}
    }
$,\quad
$
\raisebox{-0.25in}{
    \begin{tikzpicture}
    \tikzset{->-/.style={
        decoration={markings,mark=at position #1 with {\arrow{latex}}},
        postaction={decorate}
    }}

    \filldraw[draw=white,fill=gray!20] (0,0) rectangle (1.5, 1.5);
    \draw[line width=1pt] (0,0)--(0,1.5);
    \draw[line width=1pt] (1.5,0)--(1.5,1.5);
    
    \draw [line width =0.8pt] (0,1.2)--(1.5,0.3);
    
    
    
    \draw [line width =0.8pt] (0,0.75)--(1.5,0.75);
     \draw [line width =0.8pt] 
            (0,1.2) .. controls (0.5,1.3) and (1,1.2) .. (1.5,0.75);

    \draw [line width =0.8pt] (0,0.75)--(1.5,0.75);
     \draw [line width =0.8pt] (0,0.3) -- (1.5,0.75);

       \node at(0,0.3) {$\bullet$};
    \node[left] at(0,0.3) {$3$};
    \node at(0,0.75) {$\bullet$};
    \node[left] at(0,0.75) {$2$};
    \node at(0,1.2) {$\bullet$};
    \node[left] at(0,1.2) {$1$};

    \node at(1.5,0.3) {$\bullet$};
    \node[right] at(1.5,0.3) {$1$};
    \node at(1.5,0.75) {$\bullet$};
    \node[right] at(1.5,0.75) {$2$};
    \node at(1.5,1.2) {$\bullet$};
    \node[right] at(1.5,1.2) {$3$};

    \node [text=red] at(0.2,0.35) {$1$};
    \node [text=red] at(0.3,0.75) {$2$};
    \node [text=red] at(0.2,1.04) {$3$};
    \node [text=red] at(0.7,1.2) {$4$};
    \end{tikzpicture}
    }
$

\medskip

$
\raisebox{-0.25in}{
    \begin{tikzpicture}
    \tikzset{->-/.style={
        decoration={markings,mark=at position #1 with {\arrow{latex}}},
        postaction={decorate}
    }}

    \filldraw[draw=white,fill=gray!20] (0,0) rectangle (1.5, 1.5);
    \draw[line width=1pt] (0,0)--(0,1.5);
    \draw[line width=1pt] (1.5,0)--(1.5,1.5);
    
    \draw [line width =0.8pt] (0,0.75)--(1.5,0.75);

    \draw [line width =0.8pt] (0,1.2)--(1.5,0.3);
     \draw [line width =0.8pt] 
            (1.5,1.2) -- (0,0.75);
     \draw [line width =0.8pt] 
            (0,0.75) .. controls (0.5,0.3) and (1,0.15) .. (1.5,0.3);

       \node at(0,0.3) {$\bullet$};
    \node[left] at(0,0.3) {$3$};
    \node at(0,0.75) {$\bullet$};
    \node[left] at(0,0.75) {$2$};
    \node at(0,1.2) {$\bullet$};
    \node[left] at(0,1.2) {$1$};

    \node at(1.5,0.3) {$\bullet$};
    \node[right] at(1.5,0.3) {$1$};
    \node at(1.5,0.75) {$\bullet$};
    \node[right] at(1.5,0.75) {$2$};
    \node at(1.5,1.2) {$\bullet$};
    \node[right] at(1.5,1.2) {$3$};

    \node [text=red] at(0.8,0.3) {$1$};
    \node [text=red] at(1.13,0.75) {$3$};
        \node [text=red] at(1.3,0.45) {$2$};
    \node [text=red] at (1.3,1.15) {$4$};
    \end{tikzpicture}
    }
$,\quad
$
\raisebox{-0.25in}{
    \begin{tikzpicture}
    \tikzset{->-/.style={
        decoration={markings,mark=at position #1 with {\arrow{latex}}},
        postaction={decorate}
    }}

    \filldraw[draw=white,fill=gray!20] (0,0) rectangle (1.5, 1.5);
    \draw[line width=1pt] (0,0)--(0,1.5);
    \draw[line width=1pt] (1.5,0)--(1.5,1.5);
    
    \draw [line width =0.8pt] (0,0.75)--(1.5,0.75);

    \draw [line width =0.8pt] (0,0.3)--(1.5,1.2);
     \draw [line width =0.8pt] 
          (0,0.75)  .. controls (0.5,1.2) and (1,1.35) .. (1.5,1.2);
     \draw [line width =0.8pt] 
            (0,0.75) -- (1.5,0.3);

       \node at(0,0.3) {$\bullet$};
    \node[left] at(0,0.3) {$3$};
    \node at(0,0.75) {$\bullet$};
    \node[left] at(0,0.75) {$2$};
    \node at(0,1.2) {$\bullet$};
    \node[left] at(0,1.2) {$1$};

    \node at(1.5,0.3) {$\bullet$};
    \node[right] at(1.5,0.3) {$1$};
    \node at(1.5,0.75) {$\bullet$};
    \node[right] at(1.5,0.75) {$2$};
    \node at(1.5,1.2) {$\bullet$};
    \node[right] at(1.5,1.2) {$3$};

    \node [text=red] at(1.3,0.4) {$1$};
    \node [text=red] at(1.13,0.75) {$2$};
        \node [text=red] at(1.3,1.04) {$3$};
    \node [text=red] at (0.8,1.25) {$4$};
    \end{tikzpicture}
    }
$,\quad
$
\raisebox{-0.25in}{
    \begin{tikzpicture}
    \tikzset{->-/.style={
        decoration={markings,mark=at position #1 with {\arrow{latex}}},
        postaction={decorate}
    }}

    \filldraw[draw=white,fill=gray!20] (0,0) rectangle (1.5, 1.5);
    \draw[line width=1pt] (0,0)--(0,1.5);
    \draw[line width=1pt] (1.5,0)--(1.5,1.5);
    
    \draw [line width =0.8pt] (0,1.2)--(1.5,0.75);
    
    \draw [line width =0.8pt] (0,0.75)--(1.5,1.2);
    
 \draw [line width =0.8pt] (0,0.75)--(1.5,0.3);
 \draw [line width =0.8pt] (0,0.3)--(1.5,0.75);

       \node at(0,0.3) {$\bullet$};
    \node[left] at(0,0.3) {$3$};
    \node at(0,0.75) {$\bullet$};
    \node[left] at(0,0.75) {$2$};
    \node at(0,1.2) {$\bullet$};
    \node[left] at(0,1.2) {$1$};

    \node at(1.5,0.3) {$\bullet$};
    \node[right] at(1.5,0.3) {$1$};
    \node at(1.5,0.75) {$\bullet$};
    \node[right] at(1.5,0.75) {$2$};
    \node at(1.5,1.2) {$\bullet$};
    \node[right] at(1.5,1.2) {$3$};

    \node [text=red] at(0.2,0.35) {$1$};
    \node [text=red] at(0.3,0.65) {$2$};
    \node [text=red] at(1.15,0.85) {$4$};
    \node [text=red] at(1.3,1.15) {$3$};
    \end{tikzpicture}
    }
$.

\medskip

In each of the above $13$ figures, there are two possible orientations for the four labeled arcs. Hence, this subcase yields a total of $26$ clusters.

\medskip

\noindent
\textit{Subcase 2:} The labeled arcs in $S$ do not intersect in the interior of $\mathbb P_2$.

\medskip

$
\raisebox{-0.25in}{
    \begin{tikzpicture}
    \tikzset{->-/.style={
        decoration={markings,mark=at position #1 with {\arrow{latex}}},
        postaction={decorate}
    }}

    \filldraw[draw=white,fill=gray!20] (0,0) rectangle (1.5, 1.5);
    \draw[line width=1pt] (0,0)--(0,1.5);
    \draw[line width=1pt] (1.5,0)--(1.5,1.5);
    
    \draw [line width =0.8pt,decoration={markings, mark=at position 0.5 with {\arrow{<}}},postaction={decorate}]  (0,1.2).. controls (0.5,1.25) and (1,0.85) ..(1.5,0.3);
    \draw [line width =0.8pt,decoration={markings, mark=at position 0.5 with {\arrow{>}}},postaction={decorate}] (0,1.2).. controls (0.5,0.65) and (1,0.25) ..(1.5,0.3);
    
     \draw [line width =0.8pt] 
            (0,1.2) .. controls (0.5,1.35) and (1,1.35) .. (1.5,0.75);

     \draw [line width =0.8pt] 
            (0,0.75) .. controls (0.5,0.15) and (1,0.15) .. (1.5,0.3);

       \node at(0,0.3) {$\bullet$};
    \node[left] at(0,0.3) {$3$};
    \node at(0,0.75) {$\bullet$};
    \node[left] at(0,0.75) {$2$};
    \node at(0,1.2) {$\bullet$};
    \node[left] at(0,1.2) {$1$};

    \node at(1.5,0.3) {$\bullet$};
    \node[right] at(1.5,0.3) {$1$};
    \node at(1.5,0.75) {$\bullet$};
    \node[right] at(1.5,0.75) {$2$};
    \node at(1.5,1.2) {$\bullet$};
    \node[right] at(1.5,1.2) {$3$};

    \node [text=red] at(0.75,0.24) {$1$};
    \node [text=red] at(1.2,0.62) {$3$};
    \node [text=red] at(0.35,0.86) {$2$};
    \node [text=red] at(0.75,1.25) {$4$};
    \end{tikzpicture}
    }
$,\quad
$
\raisebox{-0.25in}{
    \begin{tikzpicture}
    \tikzset{->-/.style={
        decoration={markings,mark=at position #1 with {\arrow{latex}}},
        postaction={decorate}
    }}

    \filldraw[draw=white,fill=gray!20] (0,0) rectangle (1.5, 1.5);
    \draw[line width=1pt] (0,0)--(0,1.5);
    \draw[line width=1pt] (1.5,0)--(1.5,1.5);
    
    \draw [line width =0.8pt,decoration={markings, mark=at position 0.5 with {\arrow{>}}},postaction={decorate}]  (1.5,1.2).. controls (1,1.25) and (0.5,0.85) ..(0,0.3);
    \draw [line width =0.8pt,decoration={markings, mark=at position 0.5 with {\arrow{<}}},postaction={decorate}] (1.5,1.2).. controls (1,0.65) and (0.5,0.25) ..(0,0.3);
    
     \draw [line width =0.8pt] 
            (1.5,1.2) .. controls (1,1.35) and (0.5,1.35) .. (0,0.75);

     \draw [line width =0.8pt] 
            (1.5,0.75) .. controls (1,0.15) and (0.5,0.15) .. (0,0.3);

       \node at(0,0.3) {$\bullet$};
    \node[left] at(0,0.3) {$3$};
    \node at(0,0.75) {$\bullet$};
    \node[left] at(0,0.75) {$2$};
    \node at(0,1.2) {$\bullet$};
    \node[left] at(0,1.2) {$1$};

    \node at(1.5,0.3) {$\bullet$};
    \node[right] at(1.5,0.3) {$1$};
    \node at(1.5,0.75) {$\bullet$};
    \node[right] at(1.5,0.75) {$2$};
    \node at(1.5,1.2) {$\bullet$};
    \node[right] at(1.5,1.2) {$3$};

    \node [text=red] at(0.75,0.24) {$1$};
    \node [text=red] at(0.3,0.62) {$3$};
    \node [text=red] at(1.15,0.86) {$2$};
    \node [text=red] at(0.75,1.25) {$4$};
    \end{tikzpicture}
    }
$.

\medskip
In each of the above two figures, there are four possible orientations for the two unoriented labeled arcs. Therefore, this subcase yields a total of $8$ clusters.

\medskip

\textbf{Case 2:} $A(S)$ contains $\alpha$ or $\cev{\alpha}$. 
Note that, besides $\alpha$ or $\cev{\alpha}$, the set $A(S)$ contains three other exchangeable cluster variables. 
In the following, we describe the labeling of these three variables by elements of $\{1,2,3,4\}$; the remaining label is then assigned to $\alpha$ or $\cev{\alpha}$.

In this case, we further divide into two subcases according to whether $A(S)$ contains $\alpha$ or $\cev{\alpha}$.

\smallskip

\noindent
\textit{Subcase 1:} $A(S)$ contains $\alpha$.
\medskip

$
\raisebox{-0.25in}{
    \begin{tikzpicture}
    \tikzset{->-/.style={
        decoration={markings,mark=at position #1 with {\arrow{latex}}},
        postaction={decorate}
    }}

    \filldraw[draw=white,fill=gray!20] (0,0) rectangle (1.5, 1.5);
    \draw[line width=1pt] (0,0)--(0,1.5);
    \draw[line width=1pt] (1.5,0)--(1.5,1.5);
    
    \draw  [line width =0.8pt,decoration={markings, mark=at position 0.75 with {\arrow{<}}},postaction={decorate}] (0,0.3)--(1.5,1.2);

    \draw [line width =0.8pt,decoration={markings, mark=at position 0.2 with {\arrow{>}}},postaction={decorate}] (0,0.75)--(1.5,0.3);
     \draw  [line width =0.8pt,decoration={markings, mark=at position 0.39 with {\arrow{>}}},postaction={decorate}] 
            (1.5,0.75) -- (0,0.3);

       \node at(0,0.3) {$\bullet$};
    \node[left] at(0,0.3) {$3$};
    \node at(0,0.75) {$\bullet$};
    \node[left] at(0,0.75) {$2$};
    \node at(0,1.2) {$\bullet$};
    \node[left] at(0,1.2) {$1$};

    \node at(1.5,0.3) {$\bullet$};
    \node[right] at(1.5,0.3) {$1$};
    \node at(1.5,0.75) {$\bullet$};
    \node[right] at(1.5,0.75) {$2$};
    \node at(1.5,1.2) {$\bullet$};
    \node[right] at(1.5,1.2) {$3$};

    \node [text=red] at(1.3,0.35) {$1$};
    \node [text=red] at(0.8,0.8) {$4$};
    \node [text=red] at(1.2,0.65) {$2$};
    \end{tikzpicture}
    }
$,\quad
$
\raisebox{-0.25in}{
    \begin{tikzpicture}
    \tikzset{->-/.style={
        decoration={markings,mark=at position #1 with {\arrow{latex}}},
        postaction={decorate}
    }}

    \filldraw[draw=white,fill=gray!20] (0,0) rectangle (1.5, 1.5);
    \draw[line width=1pt] (0,0)--(0,1.5);
    \draw[line width=1pt] (1.5,0)--(1.5,1.5);
    
    \draw [line width =0.8pt,decoration={markings, mark=at position 0.2 with {\arrow{<}}},postaction={decorate}] (0,0.3)--(1.5,1.2);

     \draw [line width =0.8pt,decoration={markings, mark=at position 0.4 with {\arrow{>}}},postaction={decorate}]
            (0,1.2) -- (1.5,0.75);

     \draw [line width =0.8pt,decoration={markings, mark=at position 0.75 with {\arrow{>}}},postaction={decorate}]
            (0,0.75) -- (1.5,0.3);

       \node at(0,0.3) {$\bullet$};
    \node[left] at(0,0.3) {$3$};
    \node at(0,0.75) {$\bullet$};
    \node[left] at(0,0.75) {$2$};
    \node at(0,1.2) {$\bullet$};
    \node[left] at(0,1.2) {$1$};

    \node at(1.5,0.3) {$\bullet$};
    \node[right] at(1.5,0.3) {$1$};
    \node at(1.5,0.75) {$\bullet$};
    \node[right] at(1.5,0.75) {$2$};
    \node at(1.5,1.2) {$\bullet$};
    \node[right] at(1.5,1.2) {$3$};

    \node [text=red] at(1.3,0.35) {$1$};
    \node [text=red] at(1.2,1.05) {$3$};
    \node [text=red] at(0.2,1.15) {$4$};
    \end{tikzpicture}
    }
$,\quad
$
\raisebox{-0.25in}{
    \begin{tikzpicture}
    \tikzset{->-/.style={
        decoration={markings,mark=at position #1 with {\arrow{latex}}},
        postaction={decorate}
    }}

    \filldraw[draw=white,fill=gray!20] (0,0) rectangle (1.5, 1.5);
    \draw[line width=1pt] (0,0)--(0,1.5);
    \draw[line width=1pt] (1.5,0)--(1.5,1.5);
    
    \draw [line width =0.8pt,decoration={markings, mark=at position 0.75 with {\arrow{<}}},postaction={decorate}] (0,0.3)--(1.5,1.2);

     \draw [line width =0.8pt,decoration={markings, mark=at position 0.75 with {\arrow{>}}},postaction={decorate}] 
            (1.5,1.2) -- (0,0.75);
     \draw [line width =0.8pt,decoration={markings, mark=at position 0.3 with {\arrow{>}}},postaction={decorate}]
            (1.5,0.75) -- (0,0.3);

       \node at(0,0.3) {$\bullet$};
    \node[left] at(0,0.3) {$3$};
    \node at(0,0.75) {$\bullet$};
    \node[left] at(0,0.75) {$2$};
    \node at(0,1.2) {$\bullet$};
    \node[left] at(0,1.2) {$1$};

    \node at(1.5,0.3) {$\bullet$};
    \node[right] at(1.5,0.3) {$1$};
    \node at(1.5,0.75) {$\bullet$};
    \node[right] at(1.5,0.75) {$2$};
    \node at(1.5,1.2) {$\bullet$};
    \node[right] at(1.5,1.2) {$3$};

    \node [text=red] at(0.5,0.4) {$1$};
    \node [text=red] at(0.2,0.8) {$4$};
    \node [text=red] at(0.75,0.75) {$3$};
    \end{tikzpicture}
    }
$,\quad
$
\raisebox{-0.25in}{
    \begin{tikzpicture}
    \tikzset{->-/.style={
        decoration={markings,mark=at position #1 with {\arrow{latex}}},
        postaction={decorate}
    }}

    \filldraw[draw=white,fill=gray!20] (0,0) rectangle (1.5, 1.5);
    \draw[line width=1pt] (0,0)--(0,1.5);
    \draw[line width=1pt] (1.5,0)--(1.5,1.5);
    
    \draw [line width =0.8pt,decoration={markings, mark=at position 0.3 with {\arrow{<}}},postaction={decorate}] (0,0.3)--(1.5,1.2);

    \draw [line width =0.8pt,decoration={markings, mark=at position 0.88 with {\arrow{>}}},postaction={decorate}]  (0,1.2)--(1.5,0.75);
     \draw [line width =0.8pt,decoration={markings, mark=at position 0.75 with {\arrow{>}}},postaction={decorate}]
            (1.5,1.2) -- (0,0.75);

       \node at(0,0.3) {$\bullet$};
    \node[left] at(0,0.3) {$3$};
    \node at(0,0.75) {$\bullet$};
    \node[left] at(0,0.75) {$2$};
    \node at(0,1.2) {$\bullet$};
    \node[left] at(0,1.2) {$1$};

    \node at(1.5,0.3) {$\bullet$};
    \node[right] at(1.5,0.3) {$1$};
    \node at(1.5,0.75) {$\bullet$};
    \node[right] at(1.5,0.75) {$2$};
    \node at(1.5,1.2) {$\bullet$};
    \node[right] at(1.5,1.2) {$3$};

    \node [text=red] at(0.2,0.8) {$3$};
    \node [text=red] at(0.75,0.75) {$2$};
    \node [text=red] at (0.2,1.15) {$4$};
    \end{tikzpicture}
    }
$,\quad
$
\raisebox{-0.25in}{
    \begin{tikzpicture}
    \tikzset{->-/.style={
        decoration={markings,mark=at position #1 with {\arrow{latex}}},
        postaction={decorate}
    }}

    \filldraw[draw=white,fill=gray!20] (0,0) rectangle (1.5, 1.5);
    \draw[line width=1pt] (0,0)--(0,1.5);
    \draw[line width=1pt] (1.5,0)--(1.5,1.5);
    
    \draw [line width =0.8pt,decoration={markings, mark=at position 0.28 with {\arrow{>}}},postaction={decorate}] (0,1.2)--(1.5,0.3);

    \draw [line width =0.8pt,decoration={markings, mark=at position 0.37 with {\arrow{>}}},postaction={decorate}] (0,0.75)--(1.5,0.3);
    \draw [line width =0.8pt,decoration={markings, mark=at position 0.88 with {\arrow{<}}},postaction={decorate}] (0,0.3)--(1.5,0.75);
       \node at(0,0.3) {$\bullet$};
    \node[left] at(0,0.3) {$3$};
    \node at(0,0.75) {$\bullet$};
    \node[left] at(0,0.75) {$2$};
    \node at(0,1.2) {$\bullet$};
    \node[left] at(0,1.2) {$1$};

    \node at(1.5,0.3) {$\bullet$};
    \node[right] at(1.5,0.3) {$1$};
    \node at(1.5,0.75) {$\bullet$};
    \node[right] at(1.5,0.75) {$2$};
    \node at(1.5,1.2) {$\bullet$};
    \node[right] at(1.5,1.2) {$3$};

    \node [text=red] at(0.3,0.65) {$2$};
    \node [text=red] at(0.75,0.75) {$4$};

    \node [text=red] at(0.2,0.35) {$1$};
    \end{tikzpicture}
    }
$,

\medskip

$
\raisebox{-0.25in}{
    \begin{tikzpicture}
    \tikzset{->-/.style={
        decoration={markings,mark=at position #1 with {\arrow{latex}}},
        postaction={decorate}
    }}

    \filldraw[draw=white,fill=gray!20] (0,0) rectangle (1.5, 1.5);
    \draw[line width=1pt] (0,0)--(0,1.5);
    \draw[line width=1pt] (1.5,0)--(1.5,1.5);
    
    \draw [line width =0.8pt,decoration={markings, mark=at position 0.37 with {\arrow{>}}},postaction={decorate}] (0,1.2)--(1.5,0.3);
    \draw [line width =0.8pt,decoration={markings, mark=at position 0.7 with {\arrow{>}}},postaction={decorate}] (0,1.2)--(1.5,0.75);
    \draw [line width =0.8pt,decoration={markings, mark=at position 0.37 with {\arrow{>}}},postaction={decorate}] (0,0.75)--(1.5,0.3);

       \node at(0,0.3) {$\bullet$};
    \node[left] at(0,0.3) {$3$};
    \node at(0,0.75) {$\bullet$};
    \node[left] at(0,0.75) {$2$};
    \node at(0,1.2) {$\bullet$};
    \node[left] at(0,1.2) {$1$};

    \node at(1.5,0.3) {$\bullet$};
    \node[right] at(1.5,0.3) {$1$};
    \node at(1.5,0.75) {$\bullet$};
    \node[right] at(1.5,0.75) {$2$};
    \node at(1.5,1.2) {$\bullet$};
    \node[right] at(1.5,1.2) {$3$};

    \node [text=red] at(0.3,0.65) {$1$};
    \node [text=red] at(0.75,0.75) {$3$};
    \node [text=red] at(1.2,0.85) {$4$};
 
    \end{tikzpicture}
    }
$,\quad
$
\raisebox{-0.25in}{
    \begin{tikzpicture}
    \tikzset{->-/.style={
        decoration={markings,mark=at position #1 with {\arrow{latex}}},
        postaction={decorate}
    }}

    \filldraw[draw=white,fill=gray!20] (0,0) rectangle (1.5, 1.5);
    \draw[line width=1pt] (0,0)--(0,1.5);
    \draw[line width=1pt] (1.5,0)--(1.5,1.5);

    \draw [line width =0.8pt,decoration={markings, mark=at position 0.5 with {\arrow{>}}},postaction={decorate}] (0,1.2)--(1.5,0.3);
     \draw [line width =0.8pt,decoration={markings, mark=at position 0.4 with {\arrow{>}}},postaction={decorate}] 
            (1.5,1.2) -- (0,0.75);
     \draw [line width =0.8pt,decoration={markings, mark=at position 0.65 with {\arrow{>}}},postaction={decorate}] 
            (1.5,0.75) -- (0,0.3);

       \node at(0,0.3) {$\bullet$};
    \node[left] at(0,0.3) {$3$};
    \node at(0,0.75) {$\bullet$};
    \node[left] at(0,0.75) {$2$};
    \node at(0,1.2) {$\bullet$};
    \node[left] at(0,1.2) {$1$};

    \node at(1.5,0.3) {$\bullet$};
    \node[right] at(1.5,0.3) {$1$};
    \node at(1.5,0.75) {$\bullet$};
    \node[right] at(1.5,0.75) {$2$};
    \node at(1.5,1.2) {$\bullet$};
    \node[right] at(1.5,1.2) {$3$};

    \node [text=red] at(0.2,0.35) {$1$};
    \node [text=red] at(0.3,1.05) {$3$};

    \node [text=red] at (1.3,1.15) {$4$};
    \end{tikzpicture}
    }
$,\quad
$
\raisebox{-0.25in}{
    \begin{tikzpicture}
    \tikzset{->-/.style={
        decoration={markings,mark=at position #1 with {\arrow{latex}}},
        postaction={decorate}
    }}

    \filldraw[draw=white,fill=gray!20] (0,0) rectangle (1.5, 1.5);
    \draw[line width=1pt] (0,0)--(0,1.5);
    \draw[line width=1pt] (1.5,0)--(1.5,1.5);
    
    \draw [line width =0.8pt,decoration={markings, mark=at position 0.8 with {\arrow{>}}},postaction={decorate}] (0,1.2)--(1.5,0.3);
    \draw [line width =0.8pt,decoration={markings, mark=at position 0.4 with {\arrow{>}}},postaction={decorate}] (0,1.2)--(1.5,0.75);

    \draw [line width =0.8pt,decoration={markings, mark=at position 0.2 with {\arrow{<}}},postaction={decorate}] (0,0.75)--(1.5,1.2);
       \node at(0,0.3) {$\bullet$};
    \node[left] at(0,0.3) {$3$};
    \node at(0,0.75) {$\bullet$};
    \node[left] at(0,0.75) {$2$};
    \node at(0,1.2) {$\bullet$};
    \node[left] at(0,1.2) {$1$};

    \node at(1.5,0.3) {$\bullet$};
    \node[right] at(1.5,0.3) {$1$};
    \node at(1.5,0.75) {$\bullet$};
    \node[right] at(1.5,0.75) {$2$};
    \node at(1.5,1.2) {$\bullet$};
    \node[right] at(1.5,1.2) {$3$};

    \node [text=red] at(0.75,0.75) {$2$};
    \node [text=red] at(1.2,0.85) {$3$};
    \node [text=red] at(1.2,1.15) {$4$};
    \end{tikzpicture}
    }
$.

\medskip

\noindent
\textit{Subcase 2:} $A(S)$ contains $\cev{\alpha}$.

\medskip

$
\raisebox{-0.25in}{
    \begin{tikzpicture}
    \tikzset{->-/.style={
        decoration={markings,mark=at position #1 with {\arrow{latex}}},
        postaction={decorate}
    }}

    \filldraw[draw=white,fill=gray!20] (0,0) rectangle (1.5, 1.5);
    \draw[line width=1pt] (0,0)--(0,1.5);
    \draw[line width=1pt] (1.5,0)--(1.5,1.5);
    
    \draw  [line width =0.8pt,decoration={markings, mark=at position 0.75 with {\arrow{>}}},postaction={decorate}] (0,0.3)--(1.5,1.2);

    \draw [line width =0.8pt,decoration={markings, mark=at position 0.2 with {\arrow{<}}},postaction={decorate}] (0,0.75)--(1.5,0.3);
     \draw  [line width =0.8pt,decoration={markings, mark=at position 0.39 with {\arrow{<}}},postaction={decorate}] 
            (1.5,0.75) -- (0,0.3);

       \node at(0,0.3) {$\bullet$};
    \node[left] at(0,0.3) {$3$};
    \node at(0,0.75) {$\bullet$};
    \node[left] at(0,0.75) {$2$};
    \node at(0,1.2) {$\bullet$};
    \node[left] at(0,1.2) {$1$};

    \node at(1.5,0.3) {$\bullet$};
    \node[right] at(1.5,0.3) {$1$};
    \node at(1.5,0.75) {$\bullet$};
    \node[right] at(1.5,0.75) {$2$};
    \node at(1.5,1.2) {$\bullet$};
    \node[right] at(1.5,1.2) {$3$};

    \node [text=red] at(1.3,0.35) {$1$};
    \node [text=red] at(0.8,0.8) {$4$};
    \node [text=red] at(1.2,0.65) {$2$};
    \end{tikzpicture}
    }
$,\quad
$
\raisebox{-0.25in}{
    \begin{tikzpicture}
    \tikzset{->-/.style={
        decoration={markings,mark=at position #1 with {\arrow{latex}}},
        postaction={decorate}
    }}

    \filldraw[draw=white,fill=gray!20] (0,0) rectangle (1.5, 1.5);
    \draw[line width=1pt] (0,0)--(0,1.5);
    \draw[line width=1pt] (1.5,0)--(1.5,1.5);
    
    \draw [line width =0.8pt,decoration={markings, mark=at position 0.2 with {\arrow{>}}},postaction={decorate}] (0,0.3)--(1.5,1.2);

     \draw [line width =0.8pt,decoration={markings, mark=at position 0.4 with {\arrow{<}}},postaction={decorate}]
            (0,1.2) -- (1.5,0.75);

     \draw [line width =0.8pt,decoration={markings, mark=at position 0.75 with {\arrow{<}}},postaction={decorate}]
            (0,0.75) -- (1.5,0.3);

       \node at(0,0.3) {$\bullet$};
    \node[left] at(0,0.3) {$3$};
    \node at(0,0.75) {$\bullet$};
    \node[left] at(0,0.75) {$2$};
    \node at(0,1.2) {$\bullet$};
    \node[left] at(0,1.2) {$1$};

    \node at(1.5,0.3) {$\bullet$};
    \node[right] at(1.5,0.3) {$1$};
    \node at(1.5,0.75) {$\bullet$};
    \node[right] at(1.5,0.75) {$2$};
    \node at(1.5,1.2) {$\bullet$};
    \node[right] at(1.5,1.2) {$3$};

    \node [text=red] at(1.3,0.35) {$1$};
    \node [text=red] at(1.2,1.05) {$3$};
    \node [text=red] at(0.2,1.15) {$4$};
    \end{tikzpicture}
    }
$,\quad
$
\raisebox{-0.25in}{
    \begin{tikzpicture}
    \tikzset{->-/.style={
        decoration={markings,mark=at position #1 with {\arrow{latex}}},
        postaction={decorate}
    }}

    \filldraw[draw=white,fill=gray!20] (0,0) rectangle (1.5, 1.5);
    \draw[line width=1pt] (0,0)--(0,1.5);
    \draw[line width=1pt] (1.5,0)--(1.5,1.5);
    
    \draw [line width =0.8pt,decoration={markings, mark=at position 0.75 with {\arrow{>}}},postaction={decorate}] (0,0.3)--(1.5,1.2);

     \draw [line width =0.8pt,decoration={markings, mark=at position 0.75 with {\arrow{<}}},postaction={decorate}] 
            (1.5,1.2) -- (0,0.75);
     \draw [line width =0.8pt,decoration={markings, mark=at position 0.3 with {\arrow{<}}},postaction={decorate}]
            (1.5,0.75) -- (0,0.3);

       \node at(0,0.3) {$\bullet$};
    \node[left] at(0,0.3) {$3$};
    \node at(0,0.75) {$\bullet$};
    \node[left] at(0,0.75) {$2$};
    \node at(0,1.2) {$\bullet$};
    \node[left] at(0,1.2) {$1$};

    \node at(1.5,0.3) {$\bullet$};
    \node[right] at(1.5,0.3) {$1$};
    \node at(1.5,0.75) {$\bullet$};
    \node[right] at(1.5,0.75) {$2$};
    \node at(1.5,1.2) {$\bullet$};
    \node[right] at(1.5,1.2) {$3$};

    \node [text=red] at(0.5,0.4) {$1$};
    \node [text=red] at(0.2,0.8) {$4$};
    \node [text=red] at(0.75,0.75) {$3$};
    \end{tikzpicture}
    }
$,\quad
$
\raisebox{-0.25in}{
    \begin{tikzpicture}
    \tikzset{->-/.style={
        decoration={markings,mark=at position #1 with {\arrow{latex}}},
        postaction={decorate}
    }}

    \filldraw[draw=white,fill=gray!20] (0,0) rectangle (1.5, 1.5);
    \draw[line width=1pt] (0,0)--(0,1.5);
    \draw[line width=1pt] (1.5,0)--(1.5,1.5);
    
    \draw [line width =0.8pt,decoration={markings, mark=at position 0.3 with {\arrow{>}}},postaction={decorate}] (0,0.3)--(1.5,1.2);

    \draw [line width =0.8pt,decoration={markings, mark=at position 0.88 with {\arrow{<}}},postaction={decorate}]  (0,1.2)--(1.5,0.75);
     \draw [line width =0.8pt,decoration={markings, mark=at position 0.75 with {\arrow{<}}},postaction={decorate}]
            (1.5,1.2) -- (0,0.75);

       \node at(0,0.3) {$\bullet$};
    \node[left] at(0,0.3) {$3$};
    \node at(0,0.75) {$\bullet$};
    \node[left] at(0,0.75) {$2$};
    \node at(0,1.2) {$\bullet$};
    \node[left] at(0,1.2) {$1$};

    \node at(1.5,0.3) {$\bullet$};
    \node[right] at(1.5,0.3) {$1$};
    \node at(1.5,0.75) {$\bullet$};
    \node[right] at(1.5,0.75) {$2$};
    \node at(1.5,1.2) {$\bullet$};
    \node[right] at(1.5,1.2) {$3$};

    \node [text=red] at(0.2,0.8) {$3$};
    \node [text=red] at(0.75,0.75) {$2$};
    \node [text=red] at (0.2,1.15) {$4$};
    \end{tikzpicture}
    }
$,\quad
$
\raisebox{-0.25in}{
    \begin{tikzpicture}
    \tikzset{->-/.style={
        decoration={markings,mark=at position #1 with {\arrow{latex}}},
        postaction={decorate}
    }}

    \filldraw[draw=white,fill=gray!20] (0,0) rectangle (1.5, 1.5);
    \draw[line width=1pt] (0,0)--(0,1.5);
    \draw[line width=1pt] (1.5,0)--(1.5,1.5);
    
    \draw [line width =0.8pt,decoration={markings, mark=at position 0.28 with {\arrow{<}}},postaction={decorate}] (0,1.2)--(1.5,0.3);

    \draw [line width =0.8pt,decoration={markings, mark=at position 0.37 with {\arrow{<}}},postaction={decorate}] (0,0.75)--(1.5,0.3);
    \draw [line width =0.8pt,decoration={markings, mark=at position 0.88 with {\arrow{>}}},postaction={decorate}] (0,0.3)--(1.5,0.75);
       \node at(0,0.3) {$\bullet$};
    \node[left] at(0,0.3) {$3$};
    \node at(0,0.75) {$\bullet$};
    \node[left] at(0,0.75) {$2$};
    \node at(0,1.2) {$\bullet$};
    \node[left] at(0,1.2) {$1$};

    \node at(1.5,0.3) {$\bullet$};
    \node[right] at(1.5,0.3) {$1$};
    \node at(1.5,0.75) {$\bullet$};
    \node[right] at(1.5,0.75) {$2$};
    \node at(1.5,1.2) {$\bullet$};
    \node[right] at(1.5,1.2) {$3$};

    \node [text=red] at(0.3,0.65) {$2$};
    \node [text=red] at(0.75,0.75) {$4$};

    \node [text=red] at(0.2,0.35) {$1$};
    \end{tikzpicture}
    }
$,

\medskip

$
\raisebox{-0.25in}{
    \begin{tikzpicture}
    \tikzset{->-/.style={
        decoration={markings,mark=at position #1 with {\arrow{latex}}},
        postaction={decorate}
    }}

    \filldraw[draw=white,fill=gray!20] (0,0) rectangle (1.5, 1.5);
    \draw[line width=1pt] (0,0)--(0,1.5);
    \draw[line width=1pt] (1.5,0)--(1.5,1.5);
    
    \draw [line width =0.8pt,decoration={markings, mark=at position 0.37 with {\arrow{<}}},postaction={decorate}] (0,1.2)--(1.5,0.3);
    \draw [line width =0.8pt,decoration={markings, mark=at position 0.7 with {\arrow{<}}},postaction={decorate}] (0,1.2)--(1.5,0.75);
    \draw [line width =0.8pt,decoration={markings, mark=at position 0.37 with {\arrow{<}}},postaction={decorate}] (0,0.75)--(1.5,0.3);

       \node at(0,0.3) {$\bullet$};
    \node[left] at(0,0.3) {$3$};
    \node at(0,0.75) {$\bullet$};
    \node[left] at(0,0.75) {$2$};
    \node at(0,1.2) {$\bullet$};
    \node[left] at(0,1.2) {$1$};

    \node at(1.5,0.3) {$\bullet$};
    \node[right] at(1.5,0.3) {$1$};
    \node at(1.5,0.75) {$\bullet$};
    \node[right] at(1.5,0.75) {$2$};
    \node at(1.5,1.2) {$\bullet$};
    \node[right] at(1.5,1.2) {$3$};

    \node [text=red] at(0.3,0.65) {$1$};
    \node [text=red] at(0.75,0.75) {$3$};
    \node [text=red] at(1.2,0.85) {$4$};
 
    \end{tikzpicture}
    }
$,\quad
$
\raisebox{-0.25in}{
    \begin{tikzpicture}
    \tikzset{->-/.style={
        decoration={markings,mark=at position #1 with {\arrow{latex}}},
        postaction={decorate}
    }}

    \filldraw[draw=white,fill=gray!20] (0,0) rectangle (1.5, 1.5);
    \draw[line width=1pt] (0,0)--(0,1.5);
    \draw[line width=1pt] (1.5,0)--(1.5,1.5);

    \draw [line width =0.8pt,decoration={markings, mark=at position 0.5 with {\arrow{<}}},postaction={decorate}] (0,1.2)--(1.5,0.3);
     \draw [line width =0.8pt,decoration={markings, mark=at position 0.4 with {\arrow{<}}},postaction={decorate}] 
            (1.5,1.2) -- (0,0.75);
     \draw [line width =0.8pt,decoration={markings, mark=at position 0.65 with {\arrow{<}}},postaction={decorate}] 
            (1.5,0.75) -- (0,0.3);

       \node at(0,0.3) {$\bullet$};
    \node[left] at(0,0.3) {$3$};
    \node at(0,0.75) {$\bullet$};
    \node[left] at(0,0.75) {$2$};
    \node at(0,1.2) {$\bullet$};
    \node[left] at(0,1.2) {$1$};

    \node at(1.5,0.3) {$\bullet$};
    \node[right] at(1.5,0.3) {$1$};
    \node at(1.5,0.75) {$\bullet$};
    \node[right] at(1.5,0.75) {$2$};
    \node at(1.5,1.2) {$\bullet$};
    \node[right] at(1.5,1.2) {$3$};

    \node [text=red] at(0.2,0.35) {$1$};
    \node [text=red] at(0.3,1.05) {$3$};

    \node [text=red] at (1.3,1.15) {$4$};
    \end{tikzpicture}
    }
$,\quad
$
\raisebox{-0.25in}{
    \begin{tikzpicture}
    \tikzset{->-/.style={
        decoration={markings,mark=at position #1 with {\arrow{latex}}},
        postaction={decorate}
    }}

    \filldraw[draw=white,fill=gray!20] (0,0) rectangle (1.5, 1.5);
    \draw[line width=1pt] (0,0)--(0,1.5);
    \draw[line width=1pt] (1.5,0)--(1.5,1.5);
    
    \draw [line width =0.8pt,decoration={markings, mark=at position 0.8 with {\arrow{<}}},postaction={decorate}] (0,1.2)--(1.5,0.3);
    \draw [line width =0.8pt,decoration={markings, mark=at position 0.4 with {\arrow{<}}},postaction={decorate}] (0,1.2)--(1.5,0.75);

    \draw [line width =0.8pt,decoration={markings, mark=at position 0.2 with {\arrow{>}}},postaction={decorate}] (0,0.75)--(1.5,1.2);
       \node at(0,0.3) {$\bullet$};
    \node[left] at(0,0.3) {$3$};
    \node at(0,0.75) {$\bullet$};
    \node[left] at(0,0.75) {$2$};
    \node at(0,1.2) {$\bullet$};
    \node[left] at(0,1.2) {$1$};

    \node at(1.5,0.3) {$\bullet$};
    \node[right] at(1.5,0.3) {$1$};
    \node at(1.5,0.75) {$\bullet$};
    \node[right] at(1.5,0.75) {$2$};
    \node at(1.5,1.2) {$\bullet$};
    \node[right] at(1.5,1.2) {$3$};

    \node [text=red] at(0.75,0.75) {$2$};
    \node [text=red] at(1.2,0.85) {$3$};
    \node [text=red] at(1.2,1.15) {$4$};
    \end{tikzpicture}
    }
$.

\end{appendices}

\newpage

\bibliography{ref.bib}

@article {GLSB,
    AUTHOR = {Galashin, Pavel and Lam, Thomas and Sherman-Bennett, Melissa},
     TITLE = {Braid variety cluster structures, {II}: general type},
   JOURNAL = {Invent. Math.},
  FJOURNAL = {Inventiones Mathematicae},
    VOLUME = {243},
      YEAR = {2026},
    NUMBER = {3},
     PAGES = {1079--1127},
      ISSN = {0020-9910,1432-1297},
   MRCLASS = {13F60 (14M15)},
  MRNUMBER = {5008161},
       DOI = {10.1007/s00222-025-01390-5},
       URL = {https://doi.org/10.1007/s00222-025-01390-5},
}

@book{majid2000foundations,
  title={Foundations of quantum group theory},
  author={Majid, Shahn},
  year={2000},
  publisher={Cambridge university press}
}

@article{huang2025quantum,
  title={{Quantum cluster realization for projected stated ${\rm SL}_n$-skein algebras}},
  author={Huang, Min and Wang, Zhihao},
  journal={arXiv preprint arXiv:2509.25938},
  year={2025}
}

@article {CZ,
    AUTHOR = {Chang, Wen and Zhu, Bin},
     TITLE = {On rooted cluster morphisms and cluster structures in
              2-{C}alabi-{Y}au triangulated categories},
   JOURNAL = {J. Algebra},
  FJOURNAL = {Journal of Algebra},
    VOLUME = {458},
      YEAR = {2016},
     PAGES = {387--421},
}

@article {HLY,
    AUTHOR = {Huang, Min and Li, Fang and Yang, Yichao},
     TITLE = {On structure of cluster algebras of geometric type {I}: {I}n
              view of sub-seeds and seed homomorphisms},
   JOURNAL = {Sci. China Math.},
  FJOURNAL = {Science China. Mathematics},
    VOLUME = {61},
      YEAR = {2018},
    NUMBER = {5},
     PAGES = {831--854},
}

@article {FK,
    AUTHOR = {Fu, Changjian and Keller, Bernhard},
     TITLE = {On cluster algebras with coefficients and 2-{C}alabi-{Y}au
              categories},
   JOURNAL = {Trans. Amer. Math. Soc.},
  FJOURNAL = {Transactions of the American Mathematical Society},
    VOLUME = {362},
      YEAR = {2010},
    NUMBER = {2},
     PAGES = {859--895},
}

@article {CHL,
    AUTHOR = {Chang, Wen and Huang, Min and Li, Jian-Rong},
     TITLE = {Quasi-homomorphisms of quantum cluster algebras},
   JOURNAL = {J. Algebra},
  FJOURNAL = {Journal of Algebra},
    VOLUME = {638},
      YEAR = {2024},
     PAGES = {506--534},
      ISSN = {0021-8693,1090-266X},
   MRCLASS = {13F60 (20F36)},
  MRNUMBER = {4656645},
MRREVIEWER = {Hironori\ Oya},
       DOI = {10.1016/j.jalgebra.2023.09.036},
       URL = {https://doi.org/10.1016/j.jalgebra.2023.09.036},
}

@article{Q2024,
  title={{Analogs of the dual canonical bases for cluster algebras from Lie theory}},
  author={Qin, Fan},
  journal={arXiv preprint arXiv:2407.02480},
  year={2024}
}

@article{BR,
  title={{The cluster and dual canonical bases of $\mathbb Z[x_{11}, ..., x_{33}]$ are equal}},
  author={Rhoades, Brendon},
  journal={Discrete Mathematics and Theoretical Computer Science},
  volume={12 (5)},
  year={2020},
  pages={97-124},
}

@article {SW21,
    AUTHOR = {Shen, Linhui and Weng, Daping},
     TITLE = {Cluster structures on double {B}ott-{S}amelson cells},
   JOURNAL = {Forum Math. Sigma},
  FJOURNAL = {Forum of Mathematics. Sigma},
    VOLUME = {9},
      YEAR = {2021},
     PAGES = {Paper No. e66, 89},
}

@article {KWQ,
    AUTHOR = {Kimura, Yoshiyuki and Qin, Fan and Wei, Qiaoling},
     TITLE = {Twist automorphisms and {P}oisson structures},
   JOURNAL = {SIGMA Symmetry Integrability Geom. Methods Appl.},
  FJOURNAL = {SIGMA. Symmetry, Integrability and Geometry. Methods and
              Applications},
    VOLUME = {19},
      YEAR = {2023},
     PAGES = {Paper No. 105, 39},
      ISSN = {1815-0659},
   MRCLASS = {13F60 (17B63)},
  MRNUMBER = {4681480},
MRREVIEWER = {Jie\ Pan},
       DOI = {10.3842/SIGMA.2023.105},
       URL = {https://doi.org/10.3842/SIGMA.2023.105},
}

@article {FZ4,
    AUTHOR = {Fomin, Sergey and Zelevinsky, Andrei},
     TITLE = {Cluster algebras. {IV}. {C}oefficients},
   JOURNAL = {Compos. Math.},
  FJOURNAL = {Compositio Mathematica},
    VOLUME = {143},
      YEAR = {2007},
    NUMBER = {1},
     PAGES = {112--164},
      ISSN = {0010-437X,1570-5846},
   MRCLASS = {16S99 (05E15 14M17 22E46)},
  MRNUMBER = {2295199},
MRREVIEWER = {Christof\ Gei\ss},
       DOI = {10.1112/S0010437X06002521},
       URL = {https://doi.org/10.1112/S0010437X06002521},
}

@article{KimWang,
  title={{Naturality of ${\rm SL}_n$ quantum trace maps for surfaces}},
  author={Kim, Hyun Kyu and Wang, Zhihao},
  journal={arXiv preprint arXiv:2412.16959},
  year={2024}
}

@article {H2,
    AUTHOR = {Huang, Min},
     TITLE = {Positivity for quantum cluster algebras from unpunctured
              orbifolds},
   JOURNAL = {Trans. Amer. Math. Soc.},
  FJOURNAL = {Transactions of the American Mathematical Society},
    VOLUME = {376},
      YEAR = {2023},
    NUMBER = {2},
     PAGES = {1155--1197}}

@article {R,
    AUTHOR = {Rupel, Dylan},
     TITLE = {On a quantum analog of the {C}aldero-{C}hapoton formula},
   JOURNAL = {Int. Math. Res. Not. IMRN},
  FJOURNAL = {International Mathematics Research Notices. IMRN},
      YEAR = {2011},
    NUMBER = {14},
     PAGES = {3207--3236}}

@article{H3,
  title={Positivity for quantum cluster algebras from orbifolds},
  author={Huang, Min},
  journal={arXiv preprint arXiv:2406.03362},
  year={2024}
}

@article{muller2016skein,
  title={{Skein and cluster algebras of marked surfaces}},
  author={Muller, Greg},
  journal={Quantum topology},
  volume={7},
  number={3},
  pages={435--503},
  year={2016}
}

@article{bloomquist2025degenerations,
  title={Degenerations of skein algebras and quantum traces},
  author={Bloomquist, Wade and Karuo, Hiroaki and L{\^e}, Thang},
  journal={Transactions of the American Mathematical Society},
  volume={378},
  number={09},
  pages={6049--6108},
  year={2025}
}

@article{ishibashi2023skein,
  title={{Skein and cluster algebras of unpunctured surfaces for $\mathfrak{sl}_3$}},
  author={Ishibashi, Tsukasa and Yuasa, Wataru},
  journal={Mathematische Zeitschrift},
  volume={303},
  number={3},
  pages={72},
  year={2023},
  publisher={Springer}
}

@article{LY22,
  title={Quantum traces and embeddings of stated skein algebras into quantum tori},
  author={L{\^e}, Thang TQ and Yu, Tao},
  journal={Selecta Mathematica},
  volume={28},
  number={4},
  pages={66},
  year={2022},
  publisher={Springer}
}

@article{le2018triangular,
  title={Triangular decomposition of skein algebras},
  author={L{\^e}, Thang TQ},
  journal={Quantum Topology},
  volume={9},
  number={3},
  pages={591--632},
  year={2018}
}

@article {FZ,
    AUTHOR = {Fomin, Sergey and Zelevinsky, Andrei},
     TITLE = {Cluster algebras. {I}. {F}oundations},
   JOURNAL = {J. Amer. Math. Soc.},
  FJOURNAL = {Journal of the American Mathematical Society},
    VOLUME = {15},
      YEAR = {2002},
    NUMBER = {2},
     PAGES = {497--529},
}

@article {BFZ,
    AUTHOR = {Berenstein, Arkady and Fomin, Sergey and Zelevinsky, Andrei},
     TITLE = {Cluster algebras. {III}. {U}pper bounds and double {B}ruhat cells},
   JOURNAL = {Duke Math. J.},
  FJOURNAL = {Duke Mathematical Journal},
    VOLUME = {126},
      YEAR = {2005},
    NUMBER = {1},
     PAGES = {1--52},
}

@article {GHKK,
    AUTHOR = {Gross, Mark and Hacking, Paul and Keel, Sean and Kontsevich, Maxim},
     TITLE = {Canonical bases for cluster algebras},
   JOURNAL = {J. Amer. Math. Soc.},
  FJOURNAL = {Journal of the American Mathematical Society},
    VOLUME = {31},
      YEAR = {2018},
    NUMBER = {2},
     PAGES = {497--608},
}

@article {LS,
    AUTHOR = {Lee, Kyungyong and Schiffler, Ralf},
     TITLE = {Positivity for cluster algebras},
   JOURNAL = {Ann. of Math. (2)},
  FJOURNAL = {Annals of Mathematics. Second Series},
    VOLUME = {182},
      YEAR = {2015},
    NUMBER = {1},
     PAGES = {73--125},
}

@article {CGGLS,
    AUTHOR = {Casals, Roger and Gorsky, Eugene and Gorsky, Mikhail and Le, Ian and Shen, Linhui and Simental, Jos\'e},
     TITLE = {Cluster structures on braid varieties},
   JOURNAL = {J. Amer. Math. Soc.},
  FJOURNAL = {Journal of the American Mathematical Society},
    VOLUME = {38},
      YEAR = {2025},
    NUMBER = {2},
     PAGES = {369--479},
}

@article {M1,
    AUTHOR = {Muller, Greg},
     TITLE = {{$\mathcal A=\mathcal U$} for locally acyclic cluster algebras},
   JOURNAL = {SIGMA Symmetry Integrability Geom. Methods Appl.},
  FJOURNAL = {SIGMA. Symmetry, Integrability and Geometry. Methods and Applications},
    VOLUME = {10},
      YEAR = {2014},
     PAGES = {Paper 094, 8},
}

@article {GLS1,
    AUTHOR = {Geiss, Christof and Leclerc, Bernard and Schr\"oer, Jan},
     TITLE = {Quantum cluster algebras and their specializations},
   JOURNAL = {J. Algebra},
  FJOURNAL = {Journal of Algebra},
    VOLUME = {558},
      YEAR = {2020},
     PAGES = {411--422},
}

@article {S,
    AUTHOR = {Scott, Joshua S.},
     TITLE = {Grassmannians and cluster algebras},
   JOURNAL = {Proc. London Math. Soc. (3)},
  FJOURNAL = {Proceedings of the London Mathematical Society. Third Series},
    VOLUME = {92},
      YEAR = {2006},
    NUMBER = {2},
     PAGES = {345--380},

}

@article {IY1,
    AUTHOR = {Ishibashi, Tsukasa and Yuasa, Wataru},
     TITLE = {Skein and cluster algebras of unpunctured surfaces for
              {$\mathfrak{sp}_4$}},
   JOURNAL = {Adv. Math.},
  FJOURNAL = {Advances in Mathematics},
    VOLUME = {465},
      YEAR = {2025},
     PAGES = {Paper No. 110149, 68},
}

@article {BMS,
    AUTHOR = {Bucher, Eric and Machacek, John and Shapiro, Michael},
     TITLE = {Upper cluster algebras and choice of ground ring},
   JOURNAL = {Sci. China Math.},
  FJOURNAL = {Science China. Mathematics},
    VOLUME = {62},
      YEAR = {2019},
    NUMBER = {7},
     PAGES = {1257--1266},
}

@article {M,
    AUTHOR = {Muller, Greg},
     TITLE = {Locally acyclic cluster algebras},
   JOURNAL = {Adv. Math.},
  FJOURNAL = {Advances in Mathematics},
    VOLUME = {233},
      YEAR = {2013},
     PAGES = {207--247},
}

@article {CKQ,
    AUTHOR = {Cao, Peigen and Keller, Bernhard and Qin, Fan},
     TITLE = {The valuation pairing on an upper cluster algebra},
   JOURNAL = {J. Reine Angew. Math.},
  FJOURNAL = {Journal f\"ur die Reine und Angewandte Mathematik. [Crelle's
              Journal]},
    VOLUME = {806},
      YEAR = {2024},
     PAGES = {71--114},
}

@book {G,
    AUTHOR = {Ingermanson, Grace},
     TITLE = {Cluster {A}lgebras of {O}pen {R}ichardson {V}arieties},
      NOTE = {Thesis (Ph.D.)--University of Michigan},
 PUBLISHER = {ProQuest LLC, Ann Arbor, MI},
      YEAR = {2019},
     PAGES = {149},
}

@article{goodearl2017quantum,
  title={Quantum cluster algebra structures on quantum nilpotent algebras},
  author={Goodearl, KR and Yakimov, MT},
  journal={Mem. Amer. Math. Soc.},
  volume={247},
  number={1169},
  pages={vii+--119},
  year={2017}
}

@misc{le_sikora2025,
  author       = {L{\^e}, Thang TQ and Sikora, Adam S},
  title        = {Skein Algebras of Surfaces and Quantum Groups},
  year         = {2025},
  howpublished = {\url{https://app.icerm.brown.edu/assets/547/10247/10247_5760_Le_121120251030_Slides.pdf}},
  note         = {Slides presented in several talks by Thang T. Q. L\^e}
}

@article {FST,
    AUTHOR = {Fomin, Sergey and Shapiro, Michael and Thurston, Dylan},
     TITLE = {Cluster algebras and triangulated surfaces. {I}. {C}luster
              complexes},
   JOURNAL = {Acta Math.},
  FJOURNAL = {Acta Mathematica},
    VOLUME = {201},
      YEAR = {2008},
    NUMBER = {1},
     PAGES = {83--146},
}

@book {GSV,
    AUTHOR = {Gekhtman, Michael and Shapiro, Michael and Vainshtein, Alek},
     TITLE = {Cluster algebras and {P}oisson geometry},
    SERIES = {Mathematical Surveys and Monographs},
    VOLUME = {167},
 PUBLISHER = {American Mathematical Society, Providence, RI},
      YEAR = {2010},
     PAGES = {xvi+246},
}

@article {CLS,
    AUTHOR = {\c{C}anak\c{c}{\i}, \.{I}lke and Lee, Kyungyong and Schiffler, Ralf},
     TITLE = {On cluster algebras from unpunctured surfaces with one marked
              point},
   JOURNAL = {Proc. Amer. Math. Soc. Ser. B},
  FJOURNAL = {Proceedings of the American Mathematical Society. Series B},
    VOLUME = {2},
      YEAR = {2015},
     PAGES = {35--49},
}

@article {GY,
    AUTHOR = {Goodearl, K. R. and Yakimov, M. T.},
     TITLE = {The {B}erenstein-{Z}elevinsky quantum cluster algebra
              conjecture},
   JOURNAL = {J. Eur. Math. Soc. (JEMS)},
  FJOURNAL = {Journal of the European Mathematical Society (JEMS)},
    VOLUME = {22},
      YEAR = {2020},
    NUMBER = {8},
     PAGES = {2453--2509},
}

@article {IOS,
    AUTHOR = {Ishibashi, Tsukasa and Oya, Hironori and Shen, Linhui},
     TITLE = {{$\mathcal{A}=\mathcal{U}$} for cluster algebras from moduli spaces of
              {$G$}-local systems},
   JOURNAL = {Adv. Math.},
  FJOURNAL = {Advances in Mathematics},
    VOLUME = {431},
      YEAR = {2023},
     PAGES = {Paper No. 109256, 50},
}

@article{higgins2020triangular,
  title={{Triangular decomposition of ${\rm SL}_3$ skein algebras}},
  author={Higgins, Vijay},
  journal={Quantum Topology},
  volume={14},
  number={1},
  pages={1--63},
  year={2023}
}

@article{thurston2014positive,
  title={Positive basis for surface skein algebras},
  author={Thurston, Dylan Paul},
  journal={Proceedings of the National Academy of Sciences},
  volume={111},
  number={27},
  pages={9725--9732},
  year={2014},
  publisher={National Academy of Sciences}
}

@article{kim2025frobenius,
  title={{Frobenius homomorphisms for stated ${\rm SL}_n$-skein modules}},
  author={Kim, Hyun Kyu and L{\^e}, Thang TQ and Wang, Zhihao},
  journal={arXiv preprint arXiv:2504.08657},
  year={2025}
}

@book{JS,
  title={Algebraic geometry},
  author={Milne, James S},
  year={2012},
  publisher={Allied Publishers}
}

@article{bai2018quantum,
  title={{A quantum analog of generalized cluster algebras}},
  author={Bai, Liqian and Chen, Xueqing and Ding, Ming and Xu, Fan},
  journal={Algebras and Representation Theory},
  volume={21},
  number={6},
  pages={1203--1217},
  year={2018},
  publisher={Springer}
}

@book{KS,
  title={Quantum groups and their representations},
  author={Klimyk, Anatoli and Schm{\"u}dgen, Konrad},
  year={2012},
  publisher={Springer Science \& Business Media}
}

@article{L,
  title={On cluster algebras from once punctured closed surfaces},
  author={Sefi Ladkani},
  journal={arXiv preprint arXiv:1310.4454},
  year={2013}
}

@article {MW,
    AUTHOR = {Moon, Han-Bom and Wong, Helen},
     TITLE = {Consequences of the compatibility of skein algebra and cluster
              algebra on surfaces},
   JOURNAL = {New York J. Math.},
  FJOURNAL = {New York Journal of Mathematics},
    VOLUME = {30},
      YEAR = {2024},
     PAGES = {1648--1682},
}

@article {D,
    AUTHOR = {Davison, Ben},
     TITLE = {Positivity for quantum cluster algebras},
   JOURNAL = {Ann. of Math. (2)},
  FJOURNAL = {Annals of Mathematics. Second Series},
    VOLUME = {187},
      YEAR = {2018},
    NUMBER = {1},
     PAGES = {157--219},
}

@article {Q,
    AUTHOR = {Qin, Fan},
     TITLE = {Bases for upper cluster algebras and tropical points},
   JOURNAL = {J. Eur. Math. Soc. (JEMS)},
  FJOURNAL = {Journal of the European Mathematical Society (JEMS)},
    VOLUME = {26},
      YEAR = {2024},
    NUMBER = {4},
     PAGES = {1255--1312},
}

@article{LY23,
  title={{Quantum traces for $SL_n$-skein algebras}},
  author={L{\^e}, Thang TQ and Yu, Tao},
  journal={arXiv preprint arXiv:2303.08082},
  year={2023}
}

@article{GLS,
  title={Generic Caldero-Chapoton functions with coefficients and applications to surface cluster algebras},
  author={Gei\ss, Christof and Labardini-Fragoso, Daniel and Schr\"oer, Jan},
  journal={arXiv preprint 	arXiv:2007.05483},
  year={2020}
}

@article{frohman20223,
  title={{$SU (3)$-skein algebras and webs on surfaces}},
  author={Frohman, Charles and Sikora, Adam S},
  journal={Mathematische Zeitschrift},
  volume={300},
  number={1},
  pages={33--56},
  year={2022},
  publisher={Springer}
}

@article{LS21,
  title={{Stated $SL(n)$-skein modules and algebras}},
  author={L{\^e}, Thang TQ and Sikora, Adam S},
  journal={Journal of Topology},
  volume={17},
  number={3},
  pages={e12350},
  year={2024},
  publisher={Wiley Online Library}
}

@article{CLL,
  title={{Stated skein algebras of surfaces}},
  author={Costantino, Francesco and L{\^e}, Thang TQ},
  journal={Journal of the European Mathematical Society},
  volume={24},
  number={12},
  pages={4063--4142},
  year={2022}
}

@article{BZ,
  title={{Quantum cluster algebras}},
  author={Berenstein, Arkady and Zelevinsky, Andrei},
  journal={Advances in Mathematics},
  volume={195},
  number={2},
  pages={405--455},
  year={2005},
  publisher={Elsevier}
}

@article{detcherry2025embedding,
  title={{An embedding of skein algebras of surfaces into localized quantum tori from Dehn--Thurston coordinates}},
  author={Detcherry, Renaud and Santharoubane, Ramanujan},
  journal={Geometry \& Topology},
  volume={29},
  number={1},
  pages={313--348},
  year={2025},
  publisher={Mathematical Sciences Publishers}
}

@article{QY,
  title={Partially compactified quantum cluster structures on simple algebraic groups and the full Berenstein--Zelevinsky conjecture},
  author={Qin, Fan and Yakimov, Milen},
  journal={arXiv preprint arXiv:2504.05134},
  year={2025}
}

@article{gaetz2025rotation,
  title={{Rotation-invariant web bases from hourglass plabic graphs}},
  author={Gaetz, Christian and Pechenik, Oliver and Pfannerer, Stephan and Striker, Jessica and Swanson, Joshua P},
  journal={Inventiones mathematicae},
  pages={1--102},
  year={2025},
  publisher={Springer}
}

@inproceedings{douglas2024tropical,
  title={{Tropical Fock--Goncharov coordinates for-webs on surfaces I: construction}},
  author={Douglas, Daniel C and Sun, Zhe},
  booktitle={Forum of Mathematics, Sigma},
  volume={12},
  pages={e5},
  year={2024},
  organization={Cambridge University Press}
}

@article{mandel2021scattering,
  title={Scattering diagrams, theta functions, and refined tropical curve counts},
  author={Mandel, Travis},
  journal={Journal of the London Mathematical Society},
  volume={104},
  number={5},
  pages={2299--2334},
  year={2021},
  publisher={Wiley Online Library}
}

@article{davison2021strong,
  title={Strong positivity for quantum theta bases of quantum cluster algebras},
  author={Davison, Ben and Mandel, Travis},
  journal={Inventiones mathematicae},
  volume={226},
  number={3},
  pages={725--843},
  year={2021},
  publisher={Springer}
}

@article{Kim20,
  title={{${\rm SL}_3$-laminations as bases for ${\rm PGL}_3$ cluster varieties for surfaces}},
  author={Kim, Hyun Kyu},
  journal={arXiv preprint arXiv:2011.14765},
  year={2020}
}

@article{Cohn,
  title={{Skew fields, Theory of general division rings}},
  author={Cohn, Paul Moritz and Strambach, K},
  journal={Jahresbericht der Deutschen Mathematiker Vereinigung},
  volume={100},
  number={1},
  pages={6--8},
  year={1998},
  publisher={Berlin: Georg Reimer, 1892-}
}

@article{Kim21,
  title={Naturality of ${\rm SL}_3$ quantum trace maps for surfaces.},
  author={Kim, Hyun Kyu},
  journal={Quantum Topology},
  volume={16},
  number={1},
  year={2025}
}

@article{Sik05,
  title={{Skein theory for $SU(n)$-quantum invariants}},
  author={Sikora, Adam S},
  journal={Algebraic \& Geometric Topology},
  volume={5},
  number={3},
  pages={865--897},
  year={2005},
  publisher={Mathematical Sciences Publishers}
}

@article{Lab09,
  title={{Quivers with potentials associated to triangulated surfaces}},
  author={Labardini-Fragoso, Daniel},
  journal={Proceedings of the London Mathematical Society},
  volume={98},
  number={3},
  pages={797--839},
  year={2009},
  publisher={Oxford University Press}
}

@article{GS19,
  title={{Quantum geometry of moduli spaces of local systems and representation theory}},
  author={Goncharov, Alexander and Shen, Linhui},
  journal={arXiv preprint arXiv:1904.10491},
  year={2019}
}

@article{FG06,
  title={{Moduli spaces of local systems and higher Teichm{\"u}ller theory}},
  author={Fock, Vladimir and Goncharov, Alexander},
  journal={Publications Math{\'e}matiques de l'IH{\'E}S},
  volume={103},
  pages={1--211},
  year={2006}
}

\end{document}

\subsection{Triangle case}

For the triangle $\mathbb P_3$, let $Q$ denote the associated skew-symmetric matrix (equivalently, quiver). Let $Q^{{\rm qc}}$ be the quiver obtained from $Q$ by deleting all arrows incident to the vertices $\overline {ii}$ and $\overline {jn}$ for $i\in \{1,2,\cdots,n\}$. For any $\overline{ij}\in \mathcal V(\mathbb P_3)$, let 
\begin{equation*}
    \overline A_{\overline{ij}}^{\rm qc}=\begin{cases}
       [\overline A_{\overline{ij}}\cdot \overline A_{\overline {jj}}^{-1}\cdot \overline A_{\overline {n-j+i, n}}^{-1}], & \mbox{ if $j\neq n$ and $i\neq j$}\\
       A_{ij}, & \mbox{ if $j=n$ or $i=j$}
    \end{cases}\in \overline {\mathcal A}(\mathbb P_3). 
\end{equation*}


It is straightforward to verify that for any $ij,i'j'\in \mathcal V(\mathbb P_3)$, the elements $\overline A_{\overline{ij}}^{\rm qc}$ and $\overline A_{\overline{i'j'}}^{\rm qc}$ are quasi-commutative, denote by 
$\overline \Pi^{\rm qc}(\overline{ij},\overline{i'j'})$ the integer satisfying $\overline A_{\overline{ij}}^{\rm qc}\overline A_{\overline{i'j'}}^{\rm qc}=\omega^{\overline \Pi^{\rm qc}(\overline{ij},\overline{i'j'})}\overline A_{\overline{i'j'}}^{\rm qc}\overline A_{\overline{ij}}^{\rm qc}$. 

Define 
\begin{equation*}
\overline M^{\rm qc}:\mathbb Z^{\mathcal V}\to {\rm Frac}(\widetilde\cS_{\omega}(\mathbb P_3)), \qquad 
    {\overline {ij}} \longmapsto \overline A_{\overline{ij}}^{\rm qc}.    
\end{equation*}

\begin{lemma}
The triple $(\overline Q^{\rm qc}, \overline{\Pi}^{\rm qc},  \overline M^{\rm qc})$ is a quantum seed in ${\rm Frac}(\widetilde\cS_{\omega}(\mathbb P_3))$.
\end{lemma}

Let $\overline {\mathcal A}_{\omega}^{\rm qc}(\mathbb P_3)$ denote the quantum cluster algebra associated with the seed
$(\overline Q^{\rm qc}, \overline{\Pi}^{\rm qc},  \overline M^{\rm qc})$.

\begin{proposition}
    The assignments $\overline A^{\rm qc}_{\overline{ij}}\mapsto \overline A_{\overline{ij}}$ induce a quasi-isomorphism (in the sense of \cite[Definition 2.1]{CHL}) 
    \begin{equation}\label{eq:quasi-iso}
        f:\overline {\mathcal A}^{\rm qc}_{\omega}(\mathbb P_3)\to \overline {\mathcal A}_{\omega}(\mathbb P_3).
    \end{equation}
\end{proposition}

{\bf Standard cluster variables} of $\overline{\mathcal A} ^{\rm qc}(\mathbb P_3)$: $A^{s}_t$ for $s,t>0$ with $s+t\leq n$.

For any $k,t$ with $k+t<n$, denote 
\begin{equation}
    \overline \mu^{(k;t)}=\mu_{\overline{k,k+1}}\mu_{\overline{k,k+2}}\ldots \mu_{\overline{k,k+t}}.
\end{equation}

\begin{lemma}
    We have $A^{s}_t=
    \begin{cases}
    \overline A^{\rm qc}_{\overline{n-s-1,n-s}} & \mbox{ if $t=1$},\vspace{1pt}\\
    \overline\mu^{(n+1-s-t;t-1)} \ldots \overline\mu^{(n-t-1;t-1)} \overline\mu^{(n-t;t-1)}(\overline A^{\rm qc}_{\overline{n+1-s-t,n+2-s-t}})  & \mbox{ if $t>1$}.
    \end{cases}$
\end{lemma}

\begin{proposition}
   Under the quasi-isomorphism $f$ in \eqref{eq:quasi-iso}, we have 
   \begin{equation}
       f(A^{s}_t)=
    \begin{cases}
    [\overline A_{\overline{n-s-1,n-s}} \cdot \overline A^{-1}_{\overline{n-s,n-s}}\cdot \overline A^{-1}_{\overline{n-1,n}}] & \mbox{ if $j=1$},\vspace{1pt}\\
    [\overline\mu^{(n+1-s-t;t-1)} \ldots \overline\mu^{(n-t-1;t-1)} \overline\mu^{(n-t;t-1)}(\overline A_{\overline{n+1-s-t,n+2-s-t}}) \cdot \overline A^{-1}_{\overline{n-s-t,n-s-t}}\cdot \overline A^{-1}_{\overline{n-t,n}}]  & \mbox{ if $j>1$}.
    \end{cases}
   \end{equation}
    Consequently, we have
      \begin{equation}
       f(A^{s}_t)=
    \begin{cases}
     [\overline A_{\overline{n-s-1,n-s}} \cdot \overline A^{-1}_{\overline{n-s,n-s}}\cdot \overline A^{-1}_{\overline{n-1,n}}] & \mbox{ if $j=1$},\vspace{1pt}\\
    [\overline C_{n-t+1,n+1-s-t} \cdot \overline A^{-1}_{\overline{n-t,n}} \cdot \overline A_{\overline{n-t+1,n}}]  & \mbox{ if $t>1$}.
    \end{cases}
   \end{equation}
\end{proposition}

Then $\overline A^{\rm qc}_{\omega}(\mathbb P_3)=\mathbb Q\langle A^{s}_t\mid s+t\leq n\}\rangle\langle \overline A_{\overline{ii}}^{-1},\overline A_{\overline{in}}^{-1}, \overline A_{\overline{0i}}^{-1}\mid 1\leq i\leq n-1\rangle$. (add reference).

Denote by $\overline{\mathcal A}^{{\rm frozen(v_1)}}(\mathbb P_3)$ the $\mathbb Z[\omega^{\pm 1}]$-subalgebra of $\overline{\mathcal A}^{{\rm frozen(v_1)}}(\mathbb P_3)$ generated by all cluster variables and $ \overline A_{\overline{ii}}^{\pm1},\overline A_{\overline{in}}^{\pm1}, 1\leq i\leq n-1$. 

Denote by $\widetilde\cS^{{\rm frozen(v_1)}}_{\omega}(\mathbb P_3)$ the $\mathbb Z[\omega^{\pm 1}]$-subalgebra of $\widetilde\cS_{\omega}(\mathbb P_3)$ generated by all corner arcs $\overline C_{ij}$ and $ \overline A_{\overline{ii}}^{\pm1},\overline A_{\overline{in}}^{\pm1}, 1\leq i\leq n-1$. 

\begin{proposition}
  $\overline{\mathcal A}_\omega^{{\rm frozen(v_1)}}(\mathbb P_3)=\widetilde\cS^{{\rm frozen(v_1)}}_{\omega}(\mathbb P_3)$. Moreover, the corner arcs $C_{ij}\in \widetilde\cS^{{\rm frozen(v_1)}}_{\omega}(\mathbb P_3)$.  
\end{proposition}

\begin{proposition}
    Under an embedding of surfaces $\mathbb P_3\hookrightarrow \fS$, we have the following embedding of $\mathbb Z[\omega^{\pm 1}]$-algebras
    \begin{equation*}
     \begin{cases}
        \widetilde\cS^{{\rm frozen(v_1)}}_{\omega}(\mathbb P_3)\hookrightarrow \widetilde\cS_{\omega}(\fS)\\
        \overline{\mathcal A}_\omega^{{\rm frozen(v_1)}}(\mathbb P_3)\hookrightarrow  \overline{\mathcal A}_\omega(\fS).
     \end{cases}  
    \end{equation*}
\end{proposition}

\subsection{$g$-vectors for $\mathbb{P}_{k+2}$} Let $\lambda$ be the star-like triangulation at vertex $1$ of $\mathbb P_{k+2}$. Let $Q$ be the quiver obtained from $\overline Q_\lambda$ by removing the vertices $ns^i$ for $s\in \{1,2,\cdots, n-1\}, i\in \{1,2,\cdots,k\}$ and $j0^1$ for $j\in\{1,2,\cdots,n-1\}$. Let $\widetilde Q$ be the framed quiver of $Q$. Let $\mathcal{A}^{\mathrm{prin}}_{Q}$ denote the commutative cluster algebra with principal coefficients and initial exchange matrix given by the quiver $Q$.  
For each vertex ${js^i}$ of $Q$, let $\widetilde{A}_{js^i}$ be the corresponding initial cluster variable in $\mathcal{A}^{\mathrm{prin}}_{Q}$.

Recall the mutation sequences $\overset{\leftarrow}{\mu}_{\prec js^{i}}$ and $\overset{\leftarrow}{\mu}(\Delta_i)$ defined in \eqref{eq:mustandard} and \eqref{eq:mustandard1}, respectively. 

For integers $0 \leq \widetilde{s} \leq n-1$ and $1 \leq \widetilde{i} \leq k$, for any vertex $js^i$ of $\overline{Q}_\lambda$, define
\begin{equation*}
   \widetilde{A}_{js^i}^{\widetilde{s}}[\widetilde{i}]
   \;=\;
   \mu_{\widetilde{s}}^\Delta(\widetilde{i}) \circ \cdots \circ \mu_2^\Delta(\widetilde{i}) \circ \mu_1^\Delta(\widetilde{i})
   \circ \bigl( \mu^\Delta(\widetilde{i}-1) \circ \cdots \circ \mu^\Delta(1) \bigr)
   \bigl( \widetilde{A}_{js^i} \bigr).
\end{equation*}
\begin{equation*}
  \widetilde E(js^i;\widetilde s,\widetilde i)
   \;=\;
  \overset{\leftarrow}{\mu}_{\prec (n,\widetilde s+1)^{\widetilde i}}
   \bigl( \widetilde{A}_{js^i} \bigr).
\end{equation*}

We have 
\begin{equation}
   \overset{\leftarrow}{\mu}_{\prec (j,\widetilde s+1)^{\widetilde i}}(\widetilde Q) =
   \overset{\leftarrow}{\mu}_{(j+1,\widetilde s+1)^{\widetilde i}}\cdots  
   \overset{\leftarrow}{\mu}_{(n-1,\widetilde s+1)^{\widetilde i}} \overset{\leftarrow}{\mu}_{(n,\widetilde s+1)^{\widetilde i}}\overset{\leftarrow}{\mu}_{\prec (n,\widetilde s+1)^{\widetilde i}}(\widetilde Q)
\end{equation}

Note that for all integers $s, j$ with $1 \leq s \leq j \leq n-1$ and all $i$ with $1 \leq i \leq k$, the cluster variable $\widetilde{A}_{j1^1}^{\,s-1}[i]$ is the commutative counterpart of the standara variable $\overline E^{\rm qc}(js^i)$.

It is straightforward to verify that $\widetilde{A}_{js^i}^{0}[\widetilde{i}]=\widetilde{A}_{js^i}^{n-1}[\widetilde{i}-1]$ and
\[
   \widetilde{A}_{js^i}^{\widetilde{s}}[\widetilde{i}] = \widetilde{A}_{js^i}^{j}[\widetilde{i}] \quad \text{if } j < \widetilde{s},\qquad
   \widetilde{A}_{js^i}^{\widetilde{s}}[\widetilde{i}] = \widetilde{A}_{js^i}^{j}[k+1 - \widetilde{i}] \quad \text{if } i > k + 1 - \widetilde{i}.
\]

In this subsection, we compute the $g$-vector of $\widetilde{A}_{js^i}^{\widetilde{s}}[\widetilde{i}]$, which we denote by $g_{js^i}^{\widetilde{s}}[\widetilde{i}]$. We have the following observation.

\begin{lemma}
 For integers $1\leq \widetilde{s} \leq n-1$ and $1 \leq \widetilde{i} \leq k$, 
 
\begin{enumerate}[label=\textup{(\alph*)}]
    \item  we have the sequence of mutations 
   $\overset{\leftarrow}{\mu}_{\prec (j,\widetilde s+1)^{\widetilde i}}$ 
   is a sequence of green mutation for $\widetilde Q$. 
   \huang{}
  \item Let $ j s^{i} $ be a vertex with $ j \geq \widetilde{s} $ and $ i \leq k + 1 - \widetilde{i} $. Consider the quiver
$$
\begin{aligned}
    Q' = {} & 
    \bigl( \mu_{j s^{i}} \cdots \mu_{j 1^{i}} \bigr)
    \bigl( \mu_{j j^{i-1}} \cdots \mu_{j 1^{i-1}} \bigr)
    \cdots
    \bigl( \mu_{j j^{2}} \cdots \mu_{j 1^{2}} \bigr)
    \bigl( \mu_{j j^{1}} \cdots \mu_{j 1^{1}} \bigr) \\
    & \circ \mu_{j+1}\bigl((\widetilde{i}-1)(\widetilde{s}+1)+\widetilde{s}\bigr)
    \circ \cdots
    \circ \mu_{n-2}\bigl((\widetilde{i}-1)(n-2)+\widetilde{s}\bigr) \circ \mu_{n-1}\bigl((\widetilde{i}-1)(n-1)+\widetilde{s}\bigr)\\
    & 
    \circ \bigl(\overset{\leftarrow}{\mu}(\Delta_{i-1}) \cdots  \overset{\leftarrow}{\mu}(\Delta_1) \bigr)(\widetilde{Q}).
\end{aligned}
$$
Then $ Q'_{v,\, j s^{i}} < 0 $ if and only if  
$$
    v = j (s-1)^{i} \quad (\text{provided } s > 1 \text{ or } i > 1) \quad \text{or} \quad v = j (s+1)^{i}.
$$
In these cases, we have $ Q'_{v,\, j s^{i}} = -1 $.

\item Consequently, for any vertex $ j s^{i} $ with $ j \geq \widetilde{s} $ and $ i \leq k + 1 - \widetilde{i} $, the following recurrence relation holds:
\begin{equation}\label{eq:recg1}
    g_{j s^{i}}^{\widetilde{s}}[\widetilde{i}] =
    \begin{cases}
        g_{j (s+1)^{i}}^{\widetilde{s}-1}[\widetilde{i}] - g_{j s^{i}}^{\widetilde{s}-1}[\widetilde{i}]
        & \text{if } s = 1 \text{ and } i = 1, \\[6pt]
        g_{j (s+1)^{i}}^{\widetilde{s}-1}[\widetilde{i}] + g_{j (s-1)^{i}}^{\widetilde{s}}[\widetilde{i}] - g_{j s^{i}}^{\widetilde{s}-1}[\widetilde{i}]
        & \text{otherwise.}
    \end{cases}
\end{equation}
\end{enumerate}
\end{lemma}

\begin{proof}
Part~(a) follows from \cite[Theorem~4.1]{SW21}.  
Part~(b) follows from Lemmas~\ref{lem:mutation-row1}, \ref{lem:mutation-row2}, and~\ref{lem:mutation-row3}.  
Part~(c) is an immediate consequence of parts~(a) and~(b) together with Lemma~\ref{lem:grec}.
\end{proof}

\begin{lemma}\label{lem:g-vector-formula1}
    Let $0 \leq \widetilde{s} \leq n-1$ and $1 \leq \widetilde{i} \leq k$. For any vertex $js^i$ with $j \geq \widetilde{s}$ and $i \leq k+1 - \widetilde{i}$, we have
    \begin{equation}\label{eq:gpk}
        g_{js^i}^{\widetilde{s}}[\widetilde{i}] = {\bf e}_{j,s+\widetilde{s}^{\,i+\widetilde{i}-1}} - {\bf e}_{j\widetilde{s}^{\widetilde{i}}},
    \end{equation}
    where ${\bf e}_v$ denotes the unit vector associated with the vertex $v$ of $Q$. By convention, ${\bf e}_{j0^1} = {\bf 0}$, and if $s \geq j$, we set ${\bf e}_{js^i} = {\bf e}_{j,s-j^{\,i+1}}$.
\end{lemma}

\begin{proof}
We proceed by induction on $\widetilde{i}$, $\widetilde{s}$, and $j(i-1)+s$.

We first consider the case $\widetilde{i} = 1$.

When $\widetilde{s} = 0$, we have $\widetilde{A}_{js^i}^{\widetilde{s}}[\widetilde{i}] = \widetilde{A}_{js^i}$ for any $s$, and thus $g_{js^i}^{0}[1] = {\bf e}_{js^{i}}={\bf e}_{js^{i}}-{\bf e}_{j0^{1}}$.

Now suppose $\widetilde{s}>0$, and assume that equation \eqref{eq:gpk} holds for all triples with $\widetilde{i} = 1$, $\widetilde{s}-1$, and arbitrary $j(i-1)+s$.

If $j(i-1)+s=1$, then $i=1$ and $s=1$. Thus by \eqref{eq:recg1}
\begin{equation}
\begin{aligned}
    g_{js^i}^{\widetilde{s}}[1]
    & =g_{j1^1}^{\widetilde{s}}[1]  
    = g_{j2^1}^{\widetilde{s}-1}[1] - g_{j1^1}^{\widetilde{s}-1}[1] \\
    &= \bigl({\bf e}_{j,1+\widetilde{s}^{1}} - {\bf e}_{j,\widetilde{s}-1^{1}}\bigr) 
       - \bigl({\bf e}_{j\widetilde{s}^{1}} - {\bf e}_{j,\widetilde{s}-1^{1}}\bigr) \\
    &= {\bf e}_{j,1+\widetilde{s}^{1}} - {\bf e}_{j,\widetilde{s}^{1}}.
\end{aligned}
\end{equation}
This implies  \eqref{eq:gpk} holds for $\widetilde{i}=1$, $\widetilde{s}$, and $j(i-1)+s=1$.

If $j(i-1)+s>1$, we further assume that \eqref{eq:gpk} holds for $\widetilde{i} = 1$, $\widetilde{s}$, and $j(i-1)+s-1$. Then by \eqref{eq:recg1}
\begin{equation}
\begin{aligned}
    g_{js^i}^{\widetilde{s}}[1] 
    &= g_{j,(s-1)^i}^{\widetilde{s}}[1] + g_{j,(s+1)^i}^{\widetilde{s}-1}[1] - g_{j,s^i}^{\widetilde{s}-1}[1] \\
    &= \bigl({\bf e}_{j,s+(\widetilde{s}-1)^{i}} - {\bf e}_{j\widetilde{s}^{1}}\bigr) 
       + \bigl({\bf e}_{j,s+\widetilde{s}^{i}} - {\bf e}_{j,\widetilde{s}-1^{1}}\bigr)- \bigl({\bf e}_{j,s+(\widetilde{s}-1)^{i}} - {\bf e}_{j,\widetilde{s}-1^{1}}\bigr) \\
    &= {\bf e}_{j,s+\widetilde{s}^{i}} - {\bf e}_{j,\widetilde{s}^{1}}.
\end{aligned}
\end{equation}

This establishes \eqref{eq:gpk} for all $\widetilde{i}=1$.

We now consider the case $\widetilde{i}>1$. Assume inductively that \eqref{eq:gpk} holds for $\widetilde{i}-1$, arbitrary $\widetilde{s}$, and all $j(i-1)+s-1$. Since $\widetilde{A}_{js^i}^{0}[\widetilde{i}]= \widetilde{A}_{js^i}^{n-1}[\widetilde{i}-1]$, the formula also holds for $\widetilde{s}=0$, any $js^i$, and the given $\widetilde{i}$.

Now let $\widetilde{s} > 0$, and assume that \eqref{eq:gpk} holds for $\widetilde{i}$, $\widetilde{s}-1$, and all $j(i-1)+s$.

If $j(i-1)+s=1$, then $i=s=1$. Thus by \eqref{eq:recg1}
\begin{equation}
\begin{aligned}
    g_{js^i}^{\widetilde{s}}[\widetilde{i}]  &=g_{j1^1}^{\widetilde{s}}[\widetilde{i}] 
    = g_{j2^i}^{\widetilde{s}-1}[\widetilde{i}] - g_{j1^i}^{\widetilde{s}-1}[\widetilde{i}] \\
    &= \bigl({\bf e}_{j,1+\widetilde{s}^{\widetilde{i}}} - {\bf e}_{j,(\widetilde{s}-1)^{\widetilde{i}}}\bigr)- \bigl({\bf e}_{j,1+(\widetilde{s}-1)^{\widetilde{i}}} - {\bf e}_{j,(\widetilde{s}-1)^{\widetilde{i}}}\bigr) \\
    &= {\bf e}_{j,1+\widetilde{s}^{\widetilde{i}}} - {\bf e}_{j,\widetilde{s}^{\widetilde{i}}}.
\end{aligned}
\end{equation}
This implies  \eqref{eq:gpk} holds for $\widetilde{i}$, $\widetilde{s}$, and $j(i-1)+s=1$.

If $s > 0$, we further assume that \eqref{eq:gpk} holds for $\widetilde{i}$, $\widetilde{s}$, and $j(i-1)+s-1$. Then by \eqref{eq:recg1}
\begin{equation}
\begin{aligned}
    &\quad\;\; g_{js^i}^{\widetilde{s}}[\widetilde{i}] 
    = g_{j,(s-1)^i}^{\widetilde{s}}[\widetilde{i}] + g_{j,(s+1)^i}^{\widetilde{s}-1}[\widetilde{i}] - g_{j,s^i}^{\widetilde{s}-1}[\widetilde{i}] \\
    &= \bigl({\bf e}_{j,s+(\widetilde{s}-1)^{i+\widetilde{i}-1}} - {\bf e}_{j\widetilde{s}^{\;\widetilde{i}}}\bigr)+ \bigl({\bf e}_{j,s+\widetilde{s}^{\;i+\widetilde{i}-1}} - {\bf e}_{j,\widetilde{s}-1^{\widetilde{i}}}\bigr)- \bigl({\bf e}_{j,s+(\widetilde{s}-1)^{i+\widetilde{i}-1}} - {\bf e}_{j,\widetilde{s}-1^{\widetilde{i}}}\bigr) \\
    &= {\bf e}_{j,s+\widetilde{s}^{\,i+\widetilde{i}-1}} - {\bf e}_{j,\widetilde{s}^{\,\widetilde{i}}}.
\end{aligned}
\end{equation}

This completes the induction and the proof.
\end{proof}